\DeclareMathOperator{\eqlaw}{\stackrel{\Lc}{=}}
\newtheorem{theorem}{Theorem}
\newtheorem{conjecture}[theorem]{Conjecture}
\newtheorem{lemma}[theorem]{Lemma}
\newtheorem{proposition}[theorem]{Proposition}
\newtheorem{corollary}[theorem]{Corollary}
\theoremstyle{definition} % For roman text in the body
\newtheorem{definition}[theorem]{Definition}
\newtheorem{remark}[theorem]{Remark}
\newtheorem{rmk}[theorem]{Remark}
\numberwithin{theorem}{section}
\numberwithin{equation}{section}
\newcommand{\Z}{{\mathbb Z}}
\newcommand{\T}{{\mathbb T}}
\newcommand{\R}{{\mathbb R}}
\newcommand{\Q}{{\mathbb Q}}
\newcommand{\C}{{\mathbb C}}
\newcommand{\N}{{\mathbb N}}
\newcommand{\hohc}[1]{[ #1 )}
\newcommand{\Exp}{\mathbb{E}}
\newcommand{\E}{\mathbb{E}}
\newcommand{\prob}{\mathbb{P}}
\newcommand{\filt}{\mathscr{F}}
\newcommand{\Gfilt}{\mathscr{H}}
\renewcommand{\Pr}{\prob}
\DeclareDocumentCommand \one { o }
{%
  \IfNoValueTF {#1}
  {\mathbf{1}  }
  {\mathbf{1}\left\{ {#1} \right\} }%
}
\newcommand{\Var}{\operatorname{Var}}
\newcommand{\Beta}{\operatorname{Beta}}
\newcommand{\Exponential}{\operatorname{Exp}}
\newcommand{\Gam}{\operatorname{Gamma}}
\newcommand{\Unif}{\operatorname{Unif}}
\newcommand{\As}{\operatorname{a.s.}}
\newcommand{\lawequals}{\overset{\mathscr{L}}{=}}
\DeclareDocumentCommand{\Prto} {o} {
  \IfNoValueTF {#1}
  {\overset{\Pr}{\longrightarrow}}
  { \xrightarrow[ #1 \to \infty]{\Pr }}
}
\DeclareDocumentCommand{\Asto} {o} {
  \IfNoValueTF {#1}
  {\overset{\operatorname{a.s.}}{\longrightarrow}}
  {
    \xrightarrow[ #1 \to \infty]{\operatorname{a.s.} }
% \underset{#1 \to \infty}{\overset{\operatorname{a.s.}}{\longrightarrow}}
  }
}
\DeclareDocumentCommand{\Mgfto} {o} {
  \IfNoValueTF {#1}
  {\overset{\operatorname{mgf}}{\longrightarrow}}
  { \xrightarrow[ #1 \to \infty]{\operatorname{mgf} }}
}
\DeclareDocumentCommand{\Wkto} {o} {
  \IfNoValueTF {#1}
  {\overset{(d)}{\longrightarrow}}
  { \xrightarrow[ #1 \to \infty]{(d) }}
}
\newcommand{\Ex}{\operatorname{Ex}}
\newcommand{\Ext}{\operatorname{Ext}}
\newcommand{\Extr}{\operatorname{Extr}}
\newcommand{\Extre}{\operatorname{Extre}}
\newcommand{\corO}[1]{\textcolor{blue}{ {#1} }}
\author{Elliot Paquette\thanks{Department of Mathematics, McGill University, Montreal, Canada.  Partially supported by a
NSERC discovery grant. {\tt email:elliot.paquette@mcgill.ca}.}\; and Ofer Zeitouni\thanks{Department of Mathematics, Weizmann Institute of Sciences, Rehovot, Israel.  Partially supported by the European Research Council (ERC) under the European Union's Horizon 2020 research and innovation programme (grant agreement No. 692452), and by the Israel Science Foundation grant number 421/20. {\tt email:ofer.zeitouni@weizmann.ac.il}.}}
\title{The extremal landscape for the C$\beta$E ensemble}
\begin{document}
\maketitle
\begin{abstract}
  We consider the extremes of the  logarithm of the characteristic polynomial of matrices from the C$\beta$E ensemble. We prove convergence in distribution of the centered maxima (of the real and imaginary parts) towards the sum of a Gumbel variable and another independent variable,  which we characterize as the total mass of a ``derivative martingale''. We also provide a description of the landscape near extrema points.
\end{abstract}
\section{Introduction}
The Circular-$\beta$ ensemble (C$\beta$E) is a distribution on $n$ points $(e^{i\omega_1},e^{i\omega_2}, \dots, e^{i\omega_n})$ on the unit circle with a joint density given by
\begin{equation}
\label{eq-density}
\frac{1}{Z_{n,\beta}} \prod_{1 \leq j < k \leq n} | e^{i \omega_j} - e^{i \omega_k} |^{\beta} d \omega_1 \cdots d \omega_n.
\end{equation}
In the special case of $\beta=2$ this is the joint distribution of eigenvalues of a Haar-distributed unitary random matrix.  The characteristic polynomial $X_n(z) \coloneqq \prod_{j=1}^n (1-e^{i\omega_j} z)$ of the C$\beta$E has attracted a considerable interest, for its connections to the theories of logarithmically--correlated fields and (when $\beta=2$) analytic number theory.

A particular quantity of interest is $M_n\coloneqq\max_{|z|=1}
\log |X_n(z)|$. Let
\begin{equation}
\label{eq-mn}
 m_n = \log n - \tfrac{3}{4}\log\log n.
\end{equation}
The random matrix
part of the Fyodorov--Hiary--Keating conjecture \cite{FHK12}
states that in the special case that $\beta=2$, $M_n-m_n$ converges in distribution
towards a limiting random variables $R_2$, with
\begin{equation}
  \label{eq-R2}
  P(R_2\in dx)=4e^{2x} K_0(2e^{x})dx.
\end{equation}
It was later observed in \cite{SZ} that the probability density in \eqref{eq-R2} is the law of the sum of two independent Gumbel random variables.

For general $\beta>0$, an important step forward was obtained by
\cite{CMN}, who proved that $M_n-\sqrt{2/\beta}m_n$ is tight. One of our  main results
strengthens this to convergence in distribution, and gives a description
of the limiting law.
\begin{theorem}
  \label{theo-main}
  The sequence of random variables $M_n-\sqrt{2/\beta}m_n$ converges in distribution
  to a random variables $R_\beta$. Further,
  \begin{equation}
    \label{eq-convolution}
    R_\beta=
  C_\beta+G_\beta+\frac{1}{\sqrt{2\beta}}\log (\mathscr{B}_\infty(\beta)) ,
%+\frac{1}{\sqrt{2\beta}}\log( A_\beta(\psi))),
\end{equation}
where $C_\beta$ is an (implicit) constant, $G_\beta$ is Gumbel distributed with
parameter $1/\sqrt{2\beta}$, and
%$A_\beta(\psi)$ and
$\mathscr{B}_\infty(\beta)$ is a random variable that is  independent
of $G_\beta$.
\end{theorem}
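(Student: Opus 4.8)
The plan is to follow the by-now standard strategy for convergence of the maximum of a log-correlated field, as pioneered for branching Brownian motion and adapted to the CUE by work around the Fyodorov--Hiary--Keating conjecture, but carried out in the C$\beta$E setting using the operator/recursion structure that \cite{CMN} exploited to prove tightness. The first step is to set up a good approximation of $\log|X_n(e^{i\theta})|$ by a branching-type object. Concretely, one uses the OPUC Szeg\H{o} recursion (the Verblunsky coefficients $\gamma_0,\dots,\gamma_{n-1}$, which for the C$\beta$E are independent with $\gamma_k$ rotationally invariant of a known radial law) to write $\log X_n$ as a sum of contributions organised in a dyadic fashion in scale, exhibiting the field as an approximate branching random walk on a binary tree of depth $\log_2 n$, with approximately Gaussian increments of variance $\approx \tfrac{1}{\beta}$ per unit log-scale. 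This identifies the leading order $\sqrt{2/\beta}\log n$ and the logarithmic correction $-\tfrac34\sqrt{2/\beta}\log\log n$ in $m_n$, the $3/4$ being the universal correction for the maximum of a log-correlated field in dimension one.

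The second step is to construct the limiting object $\mathscr{B}_\infty(\beta)$ as the total mass of a \emph{derivative martingale}. One defines, at scale $t=\log k$, the measures $\mathscr{B}'_t(d\theta)$ obtained by differentiating in the inverse-temperature-like parameter the critical Gaussian multiplicative chaos associated to the field; i.e.\ (schematically) $\mathscr{B}'_t = \int (\sqrt{2\beta}\, t - \sqrt{2\beta}\,\mathrm{Re}\log X_{e^t} - \text{const})\, e^{\sqrt{2\beta}\,\mathrm{Re}\log X_{e^t} - \beta t}\, d\theta$ suitably normalised, and shows this is a (signed) martingale with respect to the Verblunsky filtration that converges a.s.\ to a nontrivial, positive, nondegenerate limit $\mathscr{B}_\infty(\beta)$. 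The martingale property is essentially exact from the recursion; the hard analytic content is the a.s.\ convergence and positivity of the limit, which requires the uniform integrability-type estimates (truncated second moment / many-to-two computations along the tree) that are already the technical heart of \cite{CMN}'s tightness argument. One also needs $\mathscr{B}_\infty(\beta)$ to depend only on the field, so that the Gumbel factor that appears below is genuinely independent of it.

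The third step is the convergence in distribution itself, via the Laplace-functional / Liggett-type argument: show that the extremal process of near-maxima of $\mathrm{Re}\log X_n$ (and separately of $\mathrm{Im}\log X_n$ and of $\log|X_n|$) converges to a (randomly shifted) decorated Poisson point process with intensity $e^{-\sqrt{2\beta}\,x}dx$ and random intensity constant proportional to $\mathscr{B}_\infty(\beta)$. Conditionally on the field up to a large but fixed scale $t_0$, the tip is built from roughly $e^{\beta t_0}$ nearly-independent sub-branches, each contributing an approximately Gumbel-distributed maximum; a standard extreme-value computation then gives that the overall maximum is $\sqrt{2/\beta}\,m_n + \tfrac{1}{\sqrt{2\beta}}\log\mathscr{B}_{t_0} + G + o(1)$ with $G$ Gumbel of scale $1/\sqrt{2\beta}$ and independent of the $\sigma$-field up to scale $t_0$; letting $t_0\to\infty$ and using $\mathscr{B}_{t_0}\to\mathscr{B}_\infty(\beta)$ yields \eqref{eq-convolution}, with $C_\beta$ collecting the (deterministic) normalisation constants from the second-moment asymptotics of the characteristic polynomial and the precise intensity constant of the limiting point process.

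The main obstacle I expect is controlling the non-Gaussianity and the \emph{dependence structure} of the C$\beta$E field with enough precision to run the barrier/first-moment-with-a-wall estimates uniformly: the field is only approximately a branching random walk, the increments are not exactly Gaussian (only for $\beta=2$ is there a clean CLT at each scale), and there are genuine correlations between the dyadic blocks that must be shown to be negligible. Making the ``modified second moment'' (restricting to paths that stay below the barrier $\sqrt{2/\beta}(\text{line from }0\text{ to }m_n)$) match the first moment up to constants — the step that pins down both tightness from above and the right point-process intensity — is where all the work goes, and it is precisely here that one must upgrade the estimates of \cite{CMN} from ``bounded'' to ``convergent.'' A secondary technical point is establishing the a.s.\ convergence of the derivative martingale (as opposed to mere $L^1$-boundedness, which fails at criticality): this needs a truncation at a slowly growing barrier and a Seneta--Heyde type normalisation argument transported into the OPUC recursion framework.
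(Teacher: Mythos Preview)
Your proposal is correct and follows essentially the same approach as the paper: the OPUC/Verblunsky recursion as a branching-type object, the derivative martingale $\mathscr{B}_\infty$ (constructed in Section~\ref{sec:mgle} via exactly the supermartingale-truncation argument you anticipate), and convergence of the extremal process to a decorated, randomly shifted Poisson process (Theorems~\ref{prop:bush} and~\ref{thm:process}), from which the Gumbel representation is then read off. The paper's implementation adds two technical ingredients your sketch does not name explicitly --- a diffusion (SDE) approximation for the decoration at the tip of the tree (Section~\ref{sec:diffusion}) and a multi-stage Poisson approximation via the Arratia--Goldstein--Gordon/Chen--Xia machinery with band-resampled Gaussians for decoupling --- but these fill in precisely the places you correctly flag as ``where all the work goes.''
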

\begin{remark}
We identify below, see Theorem \ref{thm:Bj}, $\mathscr{B}_\infty(\beta)$
as the total mass of a certain
\textit{derivative martingale}.  For a specific log-correlated field on the circle, \cite{Remy} computes the law of the total mass of the associated GMC and confirms the Fyodorov-Bouchaud prediction
\cite{FBo}
for it. It is possible (and even anticipated, 
especially in light of \cite{CN}) but not proved, that the distribution of $\mathscr{B}_\infty$ is also Gumbel. If true (even if only for $\beta=2$),
Theorem \ref{theo-main} would then yield a proof of the
random matrix side of the Fyodorov-Hiary-Keating conjecture \cite{FHK12}.
\end{remark}
%We also identify  $A_\beta(\psi)$ as a certain limit in \eqref{eq-Abeta}.
%\todo{check constants!}

Theorem \ref{theo-main} is a consequence of a more general result,
which gives the convergence of the distance between 
 certain marked point processes
built from a sequence of orthogonal polynomials, and a sequence of
($n$-independent) decorated
Poisson point process. This general result
also applies to the imaginary part of $\log X_n(z)$ (and thus, allows for  control on maximal fluctuation of eigenvalue count on intervals).
We postpone a discussion of
Theorem \ref{theo-main} and a historical context of our results
to after the introduction of the necessary preliminaries and
the statement of our more general results.

\subsection{OPUC preliminaries and formulation of main results}
A major advance in the study of $M_n$ was achieved in \cite{CMN}, who used
% in the random matrix theory portion of the Fyodorov--Hiary--Keating conjecture \cite{FHK12} is used
the Orthogonal Polynomials on the Unit Circle (OPUC) representation of the  C$\beta$E measure due to \cite{KillipNenciu}; we refer to \cite{SimonBook} for an encyclopedic account of the OPUC theory.
Let $\left\{ \gamma_k \right\}$ be independent, complex, rotationally invariant random variables for which $|\gamma_k|^2 = \Beta(1, \beta(k+1)/2),$ that is with density on $[0,1]$ proportional to $(1-x)^{\beta(k+1)/2-1}$.  %These are called the Verblunsky coefficients.
The \emph{Szeg\H{o}} recurrence is, for all $k \geq 0,$
\begin{equation}
  \begin{pmatrix}
    \Phi_{k+1}(z) \\
    \Phi^*_{k+1}(z)
  \end{pmatrix}
  \coloneqq
  \begin{pmatrix}
    z & -\overline{\gamma_k} \\
    -\gamma_k z & 1
  \end{pmatrix}
  \begin{pmatrix}
    \Phi_{k}(z) \\
    \Phi^*_{k}(z)
  \end{pmatrix},
  \quad
  \biggl\{
    \begin{aligned}
      &\Phi_0(z) \equiv 1, \\
      &\Phi_k^*(z) = z^k \overline{ \Phi_k(1/\overline{z})}.
  \end{aligned}
    \biggr\}
  \label{eq:szego}
\end{equation}
where $\Phi_{k}^{*}$ and $\Phi_{k}$ are polynomials of degree at most $k$.
% Note that $\Phi_k^*$ and $\Phi_k$ are related by being reversals of one another, in that their vector of coefficients is reversed and conjugated.
Define in terms of these coefficients the \emph{Pr\"ufer phases}
\begin{equation}\label{eq:Psi}
  \Psi_{k+1}(\theta)=\Psi_{k}(\theta) + \theta - 2\Im\left( \log(1-\gamma_ke^{i\Psi_k(\theta)})\right)
  ,\quad \Psi_0(\theta) = \theta,
\end{equation}
where here and below 
we take the principal branch of the logarithm with discontinuity along
the negative real line. Then,
$\Psi_k(\cdot)$
may be identified as a continuous version of the logarithm of $\theta \mapsto 
\tfrac{1}{i}\log( e^{i\theta} \tfrac{\Phi_k(e^{i\theta})}{\Phi_k^*(e^{i\theta})})$ (see \cite[Lemma 2.3]{CMN}).
These polynomials $\{\Phi_{k}^{*}\}$ can be used to give an effective representation for the characteristic polynomial $X_n$ by setting $\alpha$ to be a uniformly distributed element of the unit circle, independent of $\left\{ \gamma_k : k \geq 0 \right\},$ and setting for any $\theta \in \R,$
\begin{equation}\label{eq:charpoly}
  X_n(e^{i\theta})
  \coloneqq \Phi_{n-1}^*(e^{i\theta}) - \alpha e^{i\theta} \Phi_{n-1}(e^{i\theta})
  =\Phi_{n-1}^*(e^{i\theta})\bigl( 1-\alpha e^{i \Psi_{n-1}(\theta)}\bigr).
\end{equation}
See \cite[(2.2)]{CMN} for details of the demonstration, based on \cite[Proposition B.2]{KillipNenciu}, that this indeed has the law of the characteristic polynomial of C$\beta$E.

  The polynomials $\left\{ \Phi_k^* \right\}$ satisfy the recurrence
  \begin{equation}\label{eq:Phi}
    \log \Phi^*_{k+1}(e^{i\theta})=\log\Phi^*_{k}(e^{i\theta}) +\log(1-\gamma_ke^{i\Psi_k(\theta)})
    ,\quad \Phi_0^*(e^{i\theta}) = 1.
  \end{equation}
  We also recall the \emph{relative Pr\"ufer phase} \cite[Lemma 2.4]{CMN} given by the recurrence
  \begin{equation}\label{eq:prufer}
    \psi_{k+1}(\theta)=\psi_{k}(\theta) + \theta - 2\Im\left( \log(1-\gamma_ke^{i\psi_k(\theta)}) - \log(1-\gamma_k) \right)
    ,\quad \psi_0(\theta) = \theta.
  \end{equation}
In law $\{\psi_{k}(\theta) : k \in \N, \theta \in [0,2\pi]  \}$ is equal to $\{\Psi_{k}(\theta)-\Psi_{k}(0) : k \in \N,\theta \in [0,2\pi] \}.$

We will be interested in the extreme values of the fluctuations of real and imaginary parts of $\log\Phi^*,$ for which reason we will formulate our results in terms of
the recurrence
\begin{equation}\label{eq:upsilon}
  \varphi_{k+1}(\theta)=\varphi_{k}(\theta) + 2\Re \big\{ \sigma \left( \log(1-\gamma_ke^{i\Psi_k(\theta)}) \right) \big\}
  ,\quad \varphi_0(\theta) = 0,
\end{equation}
where $\sigma$ is one of $\left\{ 1, \pm i\right\}.$  Then, for $\sigma=1$,
$\varphi_k(\cdot)= 2 \Re \log \Phi_k^*(\cdot)$ while
for $\sigma=i$,  $\varphi_k(\cdot)=-2\Im\log \Phi_k^*(\cdot)$.
The entire analysis of $\varphi$ occurs for a fixed $\sigma,$ and so we shall not display the dependence on $\sigma.$

\subsection*{Formulation of main results}
We already stated our main result concerning the maximum of the real part of the logarithm of the characteristic polynomial. In this section, we describe the rest of our results.

We recall the following, with $m_n = \log n - \tfrac{3}{4}\log\log n$ as in \eqref{eq-mn}.
%The main theorem of \cite{CMN} shows:
\begin{theorem}[\cite{CMN}]
  For any $\sigma \in \left\{ 1, \pm i\right\},$ the centered maximum
  \(
  \max_{\theta \in [0,2\pi]} \varphi_n(\theta) - \sqrt{{8}/{\beta}}\, m_n
  \)
  is tight.  The same holds for the real and imaginary parts of the logarithm of the  characteristic polynomial.
  %, and with different centering, also for the imaginary part.
  \label{thm:tightness}
\end{theorem}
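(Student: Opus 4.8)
\emph{Overall strategy.} My plan is to show that the process $\theta\mapsto\varphi_n(\theta)$ of \eqref{eq:upsilon} behaves, on an overwhelmingly likely event, like a branching random walk run for time $T=\log n$, and then to import the classical sharp first- and second-moment estimates for the maximum of such a process. The heuristic that fixes the constants: since $|\gamma_k|^2=\Beta(1,\beta(k+1)/2)$ has mean $\sim 2/(\beta k)$ and $\gamma_k$ is rotationally invariant, the $k$-th increment in \eqref{eq:upsilon} has, conditionally on $\Psi_k(\theta)$, mean $o(1/k)$ and variance $\sim 4/(\beta k)$, so $\Var\varphi_n(\theta)\sim\frac4\beta\log n$; and two Pr\"ufer phases $\Psi_k(\theta),\Psi_k(\theta')$ (see \eqref{eq:Psi}) separate like $k(\theta-\theta')$, so their increments decorrelate once $k\gtrsim|\theta-\theta'|^{-1}$. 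Writing $t=\log k\in[0,\log n]$, there are therefore $\asymp e^{t}$ mutually decorrelated directions at ``time'' $t$, i.e.\ the structure of a branching random walk with unit branching rate and displacement variance rate $\mathfrak v=4/\beta$ over time $T=\log n$. For such a process the maximum is
\[
\sqrt{2\mathfrak v}\,T-\tfrac32\sqrt{\mathfrak v/2}\,\log T+O(1)=\sqrt{8/\beta}\,\bigl(\log n-\tfrac34\log\log n\bigr)+O(1)=\sqrt{8/\beta}\,m_n+O(1),
\]
with $m_n$ as in \eqref{eq-mn}; so it suffices to prove the matching upper and lower bounds in probability.

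\emph{Upper bound.} First I would discretize. Using that the increments in \eqref{eq:upsilon} are, conditionally on the phases, sub-Gaussian with the variances above, a chaining estimate shows that with overwhelming probability $\varphi_n$ varies by $O(1)$ across every arc of length $\asymp n^{-1}(\log n)^{-c}$; hence it is enough to control $\varphi_n$ on a net $\mathcal N$ of $\asymp n\,(\log n)^{c}$ points. A naive union bound loses the $\tfrac34\log\log n$ correction, so instead I would run a barrier (ballot-type) argument over the dyadic scales $k=e^{j}$, $1\le j\le\log n$: estimate, for each $\theta\in\mathcal N$, the probability that $\varphi_n(\theta)\ge\sqrt{8/\beta}\,m_n+C$ \emph{and} $j\mapsto\varphi_{e^{j}}(\theta)$ stays below the line interpolating $0$ at $j=0$ and $\sqrt{8/\beta}\,m_n$ at $j=\log n$ (with a logarithmically small slack). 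Since the number of effectively distinct directions at scale $e^{j}$ is $\asymp e^{j}$, the union bound over $\mathcal N$ then closes with total mass $\le g(C)$, where $g(C)\to0$ as $C\to\infty$ uniformly in $n$; the entropic cost of the barrier is exactly what turns the crude $+O(\log\log n)$ into $+O(1)$. The non-Gaussianity is absorbed by replacing ``Gaussian tail'' with the Chernoff bound from the conditional exponential moments of the increments, after first intersecting with a good event on which no $|1-\gamma_ke^{i\Psi_k(\theta)}|$ is atypically small (and the finitely many small-$k$ increments are $O(1)$).

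\emph{Lower bound.} Fix a large $C$ and a net $\mathcal N$, and apply a truncated second-moment method: let $Z$ be the number of $\theta\in\mathcal N$ with $\varphi_n(\theta)\ge\sqrt{8/\beta}\,m_n-C$ and for which $j\mapsto\varphi_{e^{j}}(\theta)$ stays within an $O(1)$-tube around the interpolating barrier for all $j\le\log n$. One shows $\E Z\gtrsim1$ and, crucially, $\E Z^2\lesssim(\E Z)^2$: the second moment is organized by the branch time $s\approx\log(1/|\theta-\theta'|)$ of a pair $\theta,\theta'$, the tube pins both trajectories near level $\tfrac{s}{\log n}\sqrt{8/\beta}\,m_n$ at time $s$, and the increments at $\theta$ and at $\theta'$ after time $s$ are nearly independent with the right variance, making the sum over $s$ convergent. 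Paley--Zygmund then gives $\Pr(\max_\theta\varphi_n(\theta)\ge\sqrt{8/\beta}\,m_n-C)\ge c>0$. To upgrade this to probability tending to $1$, I would exploit the near-independence of $\varphi_n$ on disjoint arcs: partition the circle into $K$ fixed arcs; the restrictions to distinct arcs become asymptotically independent (their phases decorrelate after $O(\log K)$ steps), the same argument applies on each arc, so $\Pr(\max_\theta\varphi_n(\theta)<\sqrt{8/\beta}\,m_n-C)\le(1-c)^{K}+o_n(1)$, which is $<\epsilon$ once $K$ is chosen large.

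\emph{Main obstacle.} The real work is that $\varphi_n$ is not Gaussian and its multi-point law is governed by the coupled nonlinear evolution of the Pr\"ufer phases in \eqref{eq:Psi}; the branching-random-walk picture above is purely heuristic until one proves the quantitative inputs it rests on. Concretely, the hard part will be to establish, with effective rates, that $\Psi_k(\theta)$ equidistributes on the circle, that for well-separated $\theta$'s the joint law of the relevant increments is close to a product once $k\gtrsim|\theta-\theta'|^{-1}$, and that on the good event $\varphi_n(\theta)$ obeys Gaussian-type upper and lower tail bounds with variance $\frac4\beta\log n+O(1)$, uniformly in $\theta$ and $n$. Controlling the rare steps where $\gamma_ke^{i\Psi_k(\theta)}$ approaches $1$ — excluding them without spoiling the variance — is a further technical point. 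Once these phase-equidistribution and decoupling estimates are in hand, the barrier union bound and the truncated second moment are standard log-correlated-field bookkeeping.
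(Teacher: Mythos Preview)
The paper does not prove this statement; it is quoted from \cite{CMN} (Chhaibi--Madaule--Najnudel) and used as input. Your sketch accurately reconstructs the \cite{CMN} strategy as it is reflected throughout the present paper: the Gaussian approximation of the OPUC recursion (their Proposition~3.1, here Proposition~\ref{prop:mgapproximation}), the upsloping barrier for the upper bound (see Section~\ref{sec:upslope} and Lemma~\ref{lem:barrier}), and the two-ray second-moment machinery for the lower bound (the estimates collected here in Appendix~\ref{section:lower_bound}). Your identification of the hard step---quantitative decorrelation of the Pr\"ufer phases after the branching time and uniform Gaussian-type tails on a good event---is exactly what \cite{CMN} spend most of their effort on, via the locally-linear / band-resampled coupling that the present paper revisits in Section~\ref{section:new_coupling} and Section~\ref{sec:diffusion}.

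One point worth flagging: your ``upgrade to probability $1$'' for the lower bound, by partitioning into $K$ arcs and invoking asymptotic independence, is more delicate than you make it sound. The processes on disjoint arcs share the same Verblunsky coefficients $\{\gamma_k\}$; what decorrelates is only the phase-rotated increments, not the underlying noise. The standard route (and the one \cite{CMN} take) is instead to condition on an initial segment $\filt_{k_2}$---this is the role of the parameter $k_2$ throughout the present paper---after which different arcs really do behave like nearly independent copies, and then send $k_2\to\infty$. Your arc-partition heuristic can be made to work, but it has to pass through this same conditioning rather than through literal independence.
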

%\noindent Note that $\varphi_n$ corresponds to the logarithm of the modulus squared of the polynomial $\Phi_n^*.$

We shall expand upon this result and show that indeed this maximum converges in distribution.  Moreover, we shall show that the process of almost maxima converges.
The following result, which complements Theorem \ref{theo-main}, yields the convergence in law of the centered maxima of $\varphi_n$.
\begin{theorem}\label{thm:max}
  For any $\sigma \in \left\{ 1, \pm i\right\},$ the centered maximum
  \(
  \max_{\theta \in [0,2\pi]} \varphi_n(\theta) - \sqrt{{8}/{\beta}}\, m_n
  \)
  converges in law to a randomly shifted Gumbel of parameter $\sqrt{2/\beta}$. In the notation of Theorem \ref{theo-main},
the limit is $C_\beta^\sigma+2G_\beta^\sigma+\sqrt{2/\beta} \log \mathscr{B}_\infty$, where $G_\beta^\sigma$ has the same law as $G_\beta$ and $C_\beta^\sigma$ is an (implicit)
constant.
% where $G_\beta$ is a Gumbel distributed random variable and $\mathscr{B}_\infty$ is independent of $G_\beta$.
%(add detail -- moreover the random shift is the log of the total mass, i.e. $c + \log\mathscr{B}_\infty$ (defined in Theorem \ref{thm:Bj}). %=\mathscr{D}_\infty([0,2\pi])$.).
%  The same holds for the characteristic polynomial, although with  different constants.
\end{theorem}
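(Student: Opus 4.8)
\emph{Strategy.}
Fix $\sigma\in\{1,i\}$ and write $\varphi_k=\varphi_k^{(\sigma)}$ for the recursion \eqref{eq:upsilon}. The plan is to upgrade the tightness of Theorem~\ref{thm:tightness} to convergence in law, and to identify the limit, by proving that the extremal point process $\Xi_n:=\sum_\theta\delta_{\varphi_n(\theta)-\sqrt{8/\beta}\,m_n}$ (the sum over local maxima $\theta$ of $\varphi_n$) converges to a decorated Poisson point process whose intensity is governed by a derivative martingale; the centered maximum is then the image of $\Xi_n$ under the continuous functional ``position of the largest atom,'' and the maximum of such a decorated Poisson process is a randomly shifted Gumbel. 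The structural fact driving everything is that the recursion in $k$ endows $k\mapsto\varphi_k(\cdot)$ with an approximate branching structure: the Pr\"ufer phase difference $\Psi_k(\theta)-\Psi_k(\theta')$ winds at rate $\asymp k$, so increments of $\varphi$ over an arc of length $\asymp 1/k$ decorrelate, and $\varphi_n$ behaves like a branching random walk with $\asymp\log n$ effective dyadic scales and per-scale increment variance $\asymp 4/\beta$ per unit of $\log k$. This already explains the centering $\sqrt{8/\beta}\,m_n$ and the critical parameter $\gamma_c=\sqrt{\beta/2}$, whose reciprocal is the Gumbel scale $\sqrt{2/\beta}$ in the statement.

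\emph{Step 1: the derivative martingale and the right tail.}
By rotational invariance of the $\gamma_k$, conditionally on the $\sigma$-algebra generated by $\gamma_0,\dots,\gamma_{k-1}$ one has $\gamma_ke^{i\Psi_k(\theta)}\overset{d}{=}\gamma_k$, so $W_k^{(\gamma)}:=e^{-\sum_{j<k}\lambda_j(\gamma)}\int_0^{2\pi}e^{\gamma\varphi_k(\theta)}\,\tfrac{d\theta}{2\pi}$, with $\lambda_j(\gamma)$ the deterministic conditional log-moment generating function of the $j$-th increment in \eqref{eq:upsilon}, is a nonnegative martingale for each $\gamma$. At $\gamma=\gamma_c$ it is critical, and its $\gamma$-derivative (up to sign) is the derivative martingale $\mathscr{B}_k=\int_0^{2\pi}\bigl(\sum_{j<k}\lambda_j'(\gamma_c)-\varphi_k(\theta)\bigr)e^{\gamma_c\varphi_k(\theta)-\sum_{j<k}\lambda_j(\gamma_c)}\,\tfrac{d\theta}{2\pi}$, whose almost sure convergence to a positive limit $\mathscr{B}_\infty$ I take from Theorem~\ref{thm:Bj}. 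The heart of the matter is then the right-tail identity $\Pr\bigl(\max_\theta\varphi_n(\theta)\le\sqrt{8/\beta}\,m_n+x\bigr)\longrightarrow\E\bigl[\exp\bigl(-C_\beta^\sigma\,\mathscr{B}_\infty\,e^{-\gamma_c x}\bigr)\bigr]$ for each $x\in\R$. Following the Aid\'ekon--Madaule (branching random walk) and Biskup--Louidor (log-correlated field) scheme, adapted to this non-Gaussian, only approximately tree-indexed setting, this splits into: (a) a first-moment upper bound in which one tilts by the critical exponential and restricts to those $\theta$ whose path $\{\varphi_k(\theta)\}_{k\le n}$ stays below a concave barrier, so that after the change of measure the expected number of high points equals $\mathscr{B}_n e^{-\gamma_c x}$ up to $o(1)$; (b) a truncated second-moment bound, conditioning on $\varphi_{n-\ell}$ with $1\ll\ell\ll\log n$, showing the truncated count of high points has variance comparable to the square of its mean; and (c) a Poissonization, exploiting the decorrelation of distinct $1/(n-\ell)$-arcs to conclude the number of arcs carrying a near-maximal value is asymptotically Poisson with random intensity proportional to $\mathscr{B}_\infty e^{-\gamma_c x}$.

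\emph{Step 2: the decorated process and identification of the limit.}
To obtain the full convergence of $\Xi_n$ and to read off the constants, split the recursion at time $n-\ell$: $\varphi_{n-\ell}$ supplies the macroscopic atom locations (the Poisson part, intensity $C_\beta^\sigma\gamma_c\,\mathscr{B}_\infty e^{-\gamma_c x}\,dx$), while near each near-maximal $\hat\theta$ the recentred increments $\{\varphi_{n-\ell+j}(\hat\theta+t/n)-\varphi_n(\hat\theta):0\le j\le\ell,\ t\in\R\}$ converge, by stationarity of the phase recursion \eqref{eq:prufer} and the local limit for $\Psi$, to an $n$-independent point process $\mathcal{D}$ --- the decoration, i.e.\ the ``landscape near extrema'' of the abstract. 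Hence $\Xi_n\Rightarrow\mathrm{DPPP}\bigl(C_\beta^\sigma\gamma_c\,\mathscr{B}_\infty e^{-\gamma_c x}\,dx,\ \mathcal{D}\bigr)$. Applying the functional ``largest atom'' (legitimate because Theorem~\ref{thm:tightness} rules out mass escaping to $+\infty$) yields convergence in law of $\max_\theta\varphi_n-\sqrt{8/\beta}\,m_n$, and the maximum of $\mathrm{DPPP}(Ze^{-\gamma_c x}dx,\mathcal{D})$ is, for any decorated Poisson process with $\E[\sum_i e^{\gamma_c d_i}]<\infty$, equal in law to $\tfrac1{\gamma_c}\log Z$ plus an independent Gumbel of scale $1/\gamma_c$ plus a deterministic constant; with $Z=C_\beta^\sigma\gamma_c\,\mathscr{B}_\infty$, $1/\gamma_c=\sqrt{2/\beta}$, and a Gumbel of scale $\sqrt{2/\beta}$ written as $2G_\beta^\sigma$ with $G_\beta^\sigma$ of scale $1/\sqrt{2\beta}$, this is precisely $C_\beta^\sigma+2G_\beta^\sigma+\sqrt{2/\beta}\log\mathscr{B}_\infty$ with $G_\beta^\sigma$ independent of $\mathscr{B}_\infty$, as claimed.

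\emph{Main obstacle.}
The technical core, and the step I expect to be hardest, is the barrier/ballot analysis behind Step~1(a)--(b): one needs control of the joint law of $(\varphi_k(\theta))_{k\le n}$ sharp enough to produce the correct polynomial prefactors in the ballot probabilities --- this is what turns the bare exponential $e^{-\gamma_c x}$ into the derivative-martingale-weighted limit and certifies that the $O(1)$ fluctuation converges rather than drifting --- and this must hold uniformly over $\theta$ and over the coarse randomness, for a recursion \eqref{eq:upsilon} whose increments are neither Gaussian nor exactly independent across scales, so that Gaussian comparison and genuine branching tools become available only after substantial approximation. Equivalently, the crux is the uniform integrability of the truncated critical martingale and the identification of its limit with $\mathscr{B}_\infty$. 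Subsidiary difficulties are the existence and non-degeneracy of the decoration $\mathcal{D}$, the passage from a fine mesh in $\theta$ to the continuous supremum, and carrying the case $\sigma=i$ --- where $\varphi$ is a winding number and \eqref{eq:upsilon} involves a branch of the logarithm --- through the same steps.
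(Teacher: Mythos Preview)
Your outline is correct and follows the paper's route: Theorem~\ref{thm:max} is obtained from convergence of the extremal point process to a decorated Poisson process with intensity $\mathscr{D}_\infty\otimes\widehat{\mathfrak{p}_{k_1}}$ (Theorems~\ref{prop:bush} and~\ref{thm:process}), after which one applies the ``largest atom'' functional exactly as in your Step~2; your identification of $\gamma_c=\sqrt{\beta/2}$, the Gumbel scale $\sqrt{2/\beta}$, and the shift $\sqrt{2/\beta}\log\mathscr{B}_\infty$ all match.

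One correction worth flagging: your split ``at $\varphi_{n-\ell}$ with $1\ll\ell\ll\log n$'' is mis-indexed. The branching structure lives in logarithmic time, so ``$\ell$ scales before the end'' corresponds to original time $n/k_1$ with $\log k_1\asymp\ell$, not to $n-\ell$. The paper freezes at $n_1^+=\lfloor n/k_1^+\rfloor$ (see \eqref{eq:k2k3} and \eqref{eq:leafheight}) with $k_1^+$ sent to infinity only after $n\to\infty$; your $\varphi_{n-\ell}$ with $\ell\ll\log n$ would sit at essentially the final scale and carry no decoration. Beyond this slip, the paper's concrete realization of your Step~2 is a coupled-SDE diffusion approximation (Proposition~\ref{prop:uberdecoupling} and \eqref{eq:LU}, \eqref{eq:dSDE}) which simultaneously identifies the decoration law $\mathfrak{p}_{k_1}$ and manufactures genuine $\filt_{n_1^+}$-conditional independence between well-separated arcs; and your Step~1(c) Poissonization is done in three passes (Propositions~\ref{prop:ppp1}, \ref{prop:milieu2}, \ref{prop:ppp3}), the last being the second-moment concentration of the random intensity to $\mathscr{D}_{k_2}\cdot\mathcal{H}$ (Lemma~\ref{zrh:2moment}), which is precisely the mechanism that replaces ``$\mathscr{B}_n e^{-\gamma_c x}$ up to $o(1)$'' by the limiting derivative martingale.
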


\begin{remark}
  By the distributional identity:
  \[
    (\log \Phi_n^*(\theta) : \theta \geq 0)
    \lawequals
    ( \overline{\log \Phi_n^*(-\theta)} : \theta \geq 0),
  \]
  which follows from the conjugation invariance of the law of $\gamma_k$ and the symmetry $-2\Im \log (1-z) = 2 \Im \log(1-\overline{z}),$
  the case of $\sigma=-i$ in Theorem \ref{thm:max} follows similarly to the case of $\sigma=i$.
\end{remark}
\noindent To describe the random shift we need to introduce some machinery.

\subsection{The derivative martingale}

We will need the so-called derivative martingale.
Define the random measure and its total mass
\begin{equation}\label{eq:Bk}
  \mathscr{D}_k(\theta) d\theta
  \coloneqq
  \tfrac{1}{2\pi}
  e^{ \sqrt{ \tfrac{\beta}{2}} \varphi_k(\theta) - \log k}
  \bigl(\sqrt{2}\log k - \sqrt{\tfrac{\beta}{4}}\varphi_k(\theta) \bigr)_+
  d\theta
  \quad
  \text{and}
  \quad
  \mathscr{B}_k \coloneqq
  \int_0^{2\pi}
  \mathscr{D}_k(\theta)
  d\theta.
\end{equation}
We equip the space of finite measures with the weak-* topology, and then we show that this measure converges almost surely.
We recall, see \eqref{eq:upsilon}, that $\varphi_k(\cdot)$ coincides with
either twice the real or imaginary parts of $\log \Phi_k^*$ (depending on $\sigma$).

\begin{theorem}\label{thm:Bj}
  For any $\sigma \in \left\{1, \pm i \right\}$ and any $\beta > 0,$
  there is an almost surely finite random variable $\mathscr{B}_\infty$ and an almost surely finite, nonatomic random measure $\mathscr{D}_\infty$ so that
  \[
  \mathscr{D}_{2^j} d\theta
    \Asto[j] \mathscr{D}_\infty
    \quad
    \text{and}
    \quad
    \mathscr{B}_{2^j}
    \Asto[j] \mathscr{B}_\infty.
  \]
  Furthermore for any $\epsilon > 0$ there is a compact $K \subset (0,\infty)$ so that with
  \[
    \chi(\theta) = \one[{ (\sqrt{2}\log k - \sqrt{\tfrac{\beta}{4}}\varphi_k(\theta))/\sqrt{\log k} \not\in K },]
  \]
  it holds that for any $k \in \N$,
  \begin{equation}\label{eq:Btight}
    \Pr
    \biggl(\int_0^{2\pi}
    e^{ \sqrt{ \tfrac{\beta}{2}} \varphi_k(\theta) - \log k}\bigl|\sqrt{2}\log k - \sqrt{\tfrac{\beta}{4}}\varphi_k(\theta) \bigr| \chi(\theta) d\theta > \epsilon
    \biggr)
    <\epsilon.
  \end{equation}
\end{theorem}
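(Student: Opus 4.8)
The plan is to run the classical derivative--martingale argument (Biggins, Kyprianou, A\"{\i}d\'ekon--Shi, Madaule; Duplantier--Rhodes--Sheffield--Vargas in the Gaussian case) in the perturbative regime in which $\{\varphi_{2^j}(\theta)\}_j$ plays the role of a branching random walk with binary branching at dyadic scales and $\sqrt{\beta/2}\,\varphi_k$ sits exactly at criticality. The first, purely one--dimensional, ingredient is the structure of the increments $Y_k(\theta)=2\Re\{\sigma\log(1-\gamma_ke^{i\Psi_k(\theta)})\}$: by rotational invariance of $\gamma_k$, conditionally on $\filt_k$ the variable $\gamma_ke^{i\Psi_k(\theta)}$ has the law of $\gamma_k$, so $\E[e^{\lambda Y_k(\theta)}\mid\filt_k]$ is a deterministic quantity $M_k(\lambda)$ depending only on the $\Beta(1,\beta(k+1)/2)$ law, all moments $\E[\gamma_k^m]$ ($m\ge1$) vanish, and one reads off $\E[Y_k]=0$, $\Var(Y_k)=\tfrac{4}{\beta k}(1+o(1))$ with summable corrections, and $\prod_{j<k}M_j(\sqrt{\beta/2})=c_\beta k\,(1+o(1))$. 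Hence for each fixed $\theta$ the process $W_k(\gamma):=e^{\gamma\varphi_k(\theta)}/\prod_{j<k}M_j(\gamma)$ is a martingale in $k$, $e^{\sqrt{\beta/2}\varphi_k(\theta)-\log k}$ is a martingale up to factors $1+O(1/k^2)$ with convergent product, and $\mathscr D_k$ equals, up to a positive constant and the $(\,\cdot\,)_+$--truncation, $-\partial_\gamma W_k(\gamma)\big|_{\gamma=\sqrt{\beta/2}}$: it is the associated derivative martingale. The companion ingredient is the approximate branching structure in $\theta$: on $[2^j,2^{j+1}]$ the increment of $\varphi$ is, conditionally on $\filt_{2^j}$, close to a family of increments that are independent across a partition of $[0,2\pi]$ into $\sim2^j$ arcs and of comparable variance, while $\varphi_{2^j}$ is nearly constant on each such arc; I would make this quantitative by a coupling/moment comparison with a Gaussian log--correlated field, using and extending the Pr\"ufer--phase estimates of \cite{CMN}.

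Next, following A\"{\i}d\'ekon--Shi and Madaule, I would introduce for a level $L$ the truncated total mass $\mathscr B_k^{L}$ obtained from $\mathscr B_k$ by keeping only those $\theta$ whose trajectory $j\mapsto\sqrt2\log2^j-\sqrt{\beta/4}\varphi_{2^j}(\theta)$ stays above a slowly growing lower barrier at all dyadic times $\le\log_2 k$, and by replacing $(\,\cdot\,)_+$ with the renewal function of that barrier--killed walk; the point is that $\mathscr B^{L}_{2^j}$, read along $k=2^j$, is then a genuine nonnegative supermartingale up to the $1+O(1/k^2)$ corrections, hence converges a.s., while a second--moment computation in the tilted (``spine'') measure --- this is where the tilting identity above and the approximate branching structure are used --- shows it is bounded in $L^1$ and that its limits increase, as $L\uparrow\infty$, to a finite limit. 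Running the same argument for the arc--masses $\mathscr D_{2^j}([\theta_1,\theta_2])$ produces $\mathscr D_\infty$; positivity $\mathscr B_\infty>0$ a.s.\ follows from a $0$--$1$ law for the tail $\sigma$--field of $\{\gamma_k\}$ together with the fact that the $L^1$--bounded truncated martingale cannot converge to $0$, and non--atomicity of $\mathscr D_\infty$ from a bound $\E[\mathscr D_{2^j}([\theta_0-\delta,\theta_0+\delta])]=o_\delta(1)$ uniform in $j$, proved by the same spine estimate on a short arc.

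It remains to remove the truncation and to establish \eqref{eq:Btight}, which are in fact the same issue. The discarded mass splits into a barrier--violating part, controlled by the tightness of $\max_\theta\varphi_j-\sqrt{8/\beta}m_j$ from Theorem~\ref{thm:tightness} at every scale $j\le k$ (this is why a slowly growing, rather than flat, barrier is used), and an above--the--barrier part whose final height lies outside a fixed compact window $K\subset(0,\infty)$. The key cancellation: the naive first moment of $\int e^{\sqrt{\beta/2}\varphi_k-\log k}\,|\sqrt2\log k-\sqrt{\beta/4}\varphi_k|\,d\theta$ grows like $\sqrt{\log k}$ --- indeed $\E[\mathscr B_k]\asymp\sqrt{\log k}$, the expectation being dominated by atypical field excursions --- but once the integral is restricted to the barrier event, the spine computation acquires a ballot factor of order $(\log k)^{-1/2}$ which exactly cancels this growth, leaving an $O(1)$ bound that becomes $<\epsilon^2$ as $K$ is enlarged; combined with a union bound over scales for the barrier violation, this gives \eqref{eq:Btight} for $k=2^j$, while general $k\in[2^j,2^{j+1})$ is handled by comparing $\varphi_k$ with $\varphi_{2^j}$ and $\varphi_{2^{j+1}}$, whose difference is small on the relevant event. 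Letting $L\uparrow\infty$ and combining with the previous step then gives $\mathscr D_{2^j}\to\mathscr D_\infty$ and $\mathscr B_{2^j}\to\mathscr B_\infty$ a.s.

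The main obstacle is precisely this mechanism behind \eqref{eq:Btight}: since the relevant first moments diverge, no unconditional moment estimate can work, and one is forced to work on the barrier event and extract the $(\log k)^{-1/2}$ gain from a sharp ballot theorem for the Pr\"ufer walk --- which is non--Gaussian and only approximately Markov in $\theta$, so the classical ballot estimates must be re--proved with the robustness built in the first step. A secondary, pervasive difficulty is that the branching structure being only approximate forbids quoting the classical derivative--martingale theorems verbatim: every estimate --- the (super)martingale property, the second moments, the ballot asymptotics, the barrier--crossing probabilities --- must tolerate $O(1/k)$ errors in the recursion and weak dependence across arcs.
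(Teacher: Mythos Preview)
Your overall strategy---the derivative martingale at criticality, a barrier restriction, and the ballot--theorem cancellation behind \eqref{eq:Btight}---matches the paper's. Two points of divergence are worth recording.

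First, for convergence the paper avoids the A\"{\i}d\'ekon--Shi truncation (lower barrier plus renewal function) entirely. It works with the \emph{untruncated} derivative martingale $\widehat{\mathscr B}_{2^\ell}=\int\widehat{\mathscr D}_{2^\ell}$, splits it into positive and negative parts $\widehat{\mathscr B}^\pm$, and shows $\sum_\ell\E[\widehat{\mathscr B}^-_{2^{\ell+1}}\mid\filt_{2^\ell}]<\infty$ a.s.\ using only the \emph{upper} barrier event $\mathcal E_\kappa$ and subgaussianity of the dyadic increments: on $\mathcal E_\kappa$ a negative contribution at scale $2^{\ell+1}$ forces an increment of size $\gtrsim\sqrt{8/\beta}(\kappa+\tfrac18\log\log 2^\ell)$, which has probability $e^{-c(\log\ell)^2}$. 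Both $\widehat{\mathscr B}^\pm-\sum Y$ are then supermartingales bounded below after localization, and converge. For \eqref{eq:Btight} the paper likewise uses only a first--moment bound on $\mathcal E_\kappa$ via the ballot estimate \eqref{eq:properballot}; no spine or second moments enter. Your route would work but is heavier, and in particular the approximate branching structure you worry about is not needed for either convergence or \eqref{eq:Btight}: the marginal one--point martingale structure suffices.

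Second---and this is a genuine gap---your non--atomicity argument cannot work as stated. A first--moment bound $\E[\mathscr D_{2^j}(I)]=o_\delta(1)$ (even $O(|I|)$) uniform in $j$ does \emph{not} imply non--atomicity: a unit Dirac mass at a uniformly random point satisfies $\E[\mu(I)]=|I|/(2\pi)$ yet is purely atomic. What is needed is control of $\Pr(\max_I\mathscr D_{2^j}(I)>\delta)$ over all intervals of length $\Delta$, which forces a second--moment input. The paper supplies this from the two--ray estimates in Appendix~\ref{section:lower_bound}, obtaining $\E[\mathscr D_{2^j}(I)^2\mid\filt_{2^{j_0}}]\le C\,\Delta\cdot o(\Delta)$ uniformly in $j$, whence $\Pr(\exists\,I:\mathscr D_{2^j}(I)>\delta)\le o(\Delta)/\delta^2$ by Markov and a union over $O(\Delta^{-1})$ intervals. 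This is the one place the branching/correlation structure is genuinely used.
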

\noindent This is shown in Section \ref{sec:mgle}.  We remark that $(\mathscr{B}_{2^j} : j \in \N)$ is not in fact a martingale, but it is easily compared to a process $(\widehat{\mathscr{B}_{2^j}} : j \in \N)$ which is a martingale (see Section \ref{sec:mgle} for details).
\begin{remark} 
  We do not claim that $\mathscr{B}_\infty$ is positive almost surely in Theorem \ref{thm:Bj}.  However, by combining Theorem \ref{thm:Bj} and \ref{thm:max} with tightness of the recentered maximum (Theorem \ref{thm:tightness}), we conclude that $\mathscr{B}_\infty$ must in fact be positive almost surely.
\end{remark}

\subsection{Sequential Poisson process approximation and extremal landscape}
\label{subsec-sequential}
We introduce parameters $\{k_p : p \in \N\}$ which will be chosen large but independent of $n.$  These parameters will be taken large after $n$ is sent to infinity.  Moreover, they will be ordered in a decreasing fashion, so that $k_j \gg k_{j+1}.$  We shall not attempt to find any quantitative dependence of how these parameters are sent to infinity.
All parameters will be assumed to be larger than $1.$

We formulate several  sequential extremal processes approximation for the processes of near  maxima, these extremal processes will be indexed by $k_1$.
% and sometimes also $k_2$, where.
Divide the unit circle into consecutive arcs $\{ \widehat{I_{j,n}}\}$ by the formula that for any $j,n \in \N,$
\begin{equation}\label{eq:hatij}
  \widehat{I_{j,n}} \coloneqq 2\pi\hohc{\tfrac{(j-1)k_1}{n}, \tfrac{j k_1}{n}}.
\end{equation}
To avoid cumbersome notation we  suppress the $n$ dependence in $\widehat{I_{j,n}}$, writing $\widehat{I_j}$ instead, and we continue to do so in the forthcoming
$D_{j,n},\theta_{j,n},\widehat{W}_{j,n}$.
%Call the \emph{final} interval the one which contains $2\pi.$  This interval, will overlap with the first interval $I_1$ taken mod $2\pi \Z.$
Let ${\mathcal{D}}_{n/k_1}$ denote the collection of indices $j=1,2,\dots,\lceil \tfrac{n}{k_1}\rceil.$  % up to and including the final one.
We let $\theta_j=\theta_{j,n}$ be the supremum of $\widehat{I_{j,n}}.$
Over each of these intervals, we define the process
%\todo{22-04-18: this is not compatible with the usual definition of decoration, and we in particular do not show the convergence of the processes around the max.}
\begin{equation}
  \begin{split}
  D_j &= D_{j,n} : [-2\pi k_1,0] \to \C,\\
  D_j(\theta) &\coloneqq
  \begin{cases}
    \bigl(\Phi_n^{*}\bigr)^2( \exp(i(\theta_j + \tfrac{\theta}{n})))\cdot\exp(-i(n+1)\theta_j {-\sqrt{\tfrac{8}{\beta}}m_n}), & \text{ if }\sigma = 1,\\
    \exp\bigl(\varphi_n( \theta_j + \tfrac{\theta}{n})-\sqrt{\tfrac{8}{\beta}}m_n\bigr), & \text{ if }\sigma = i.
  \end{cases}
\end{split}
  \label{eq:decorations}
\end{equation}
This will serve as the \emph{decoration process}, although we will not need to (and will not)  prove their convergence as $k_1\to\infty$.  
\begin{remark}
  The choice of $D_j$ in the case of $\sigma=1$ is motivated by the application to Theorem \ref{theo-main}, which concerns the characteristic polynomial.  While the characteristic polynomial $X_n$ is coarsely approximated by the OPUC $\Phi_{n-1}^*$, the coupling between them means the phase of $\Phi_{n-1}^*$ influences the modulus of $X_n$.  So, to prove Theorem \ref{theo-main}, it is insufficient to record only the modulus of $\Phi_{n-1}^*$, whereas to prove Theorem \ref{thm:max}, one could take the definition used in the case $\sigma=i$ for both.
\end{remark}
%the characteristic polynomial \eqref{eq:charpoly}, which is influenced by the phas
%\noindent Note that we have abused notation by suppressing the dependence on $n$.
%\todo[inline]{The decoration process must change.  We will only get convergence for the real part on the grid.}
%We topologize these decorations by letting $d_0=d_0^{(k_1)}$ be the distance on $\mathcal{C}([-2k_1,0]) \to \C$
%\begin{equation}\label{eq:d_0}
%  d_0(f,g) \coloneqq \min \{ \sup_{x \in [-2k_1,0]} |f(x)-g(x)|, 1\}.
%\end{equation}

%We will condition on the entirety of the process $\Phi_{k_2}^*$ for large $k_2$ for the majority of the argument, and we will essentially only prove things about the difference process $\varphi_n - \varphi_{k_2}.$
%, and we use the parameter $k_2$ to refer to denote the index of initial phases on which we condition.
We next define for all $j \in {\mathcal{D}}_{n/k_1}$ random variables
\begin{equation}
  \begin{aligned}
    \widehat{W_j}
    =\widehat{W}_{j,n}
    \coloneqq \max_{\theta \in \widehat{I_j}} \{\varphi_n(\theta)\}
    %- S_j
    - \sqrt{\tfrac{8}{\beta}}m_{n}
  \end{aligned}
  \label{eq:localmaxhat}
\end{equation}
which is a local maximum, appropriately centered.
%\todo[inline]{Rearrange intro to bring up $\Ext_n$ and kill $\Ex_n$}
We now define a random measure which we shall show is well--approximated by a Poisson process with random intensity.  Define a measure on
Borel subsets of 
\begin{equation}
  \label{eq-090924}
  \Gamma=[0,2\pi] \times \R \times \mathcal{C}( [-2\pi k_1,0], \C),
\end{equation}
by
\begin{equation}
\label{eq-Exn}
  \Ex_n
  =\Ex_n^{k_1}%, \dots, k_5}
  \coloneqq
  \sum_{j \in \mathcal{D}_{n/k_1}}
  \delta_{(\theta_j, \widehat{W_j}, D_j)}.
\end{equation}

A central technical challenge will be to show that $\varphi_k$ and $\Psi_k$ are essentially constant on the interval $\widehat{I_j}$ for $k \approx n/{k_1}$, and that hence it suffices to track both $\varphi_k$ and $\Psi_k$ only at the point $\theta_j \in \widehat{I_j}.$ In this direction, it will be helpful to further decompose the local maximum $\widehat{W_j}.$
We define two new parameters $k_1^+$ and $\widehat{k}_1,$ as functions of $k_1$ in such a way that $k_1^+/\widehat{k}_1, \widehat{k}_1/k_1 \to \infty,$
specifically:
\begin{equation}
  {k_1^+} = {k_1}\exp({(\log k_1)}^{(29/30)})
  \quad\text{and}\quad
  {\widehat{k}_1} = {k_1}\exp({(\log k_1)}^{(19/20)}).
  \label{eq:k2k3}
\end{equation}
We define
    %$n_1 = 2^{\lfloor \log_2 (n/k_1)\rfloor},$
$n_1 = \lfloor n/k_1\rfloor,$
and we define $\widehat{n}_1$ and $n_1^+$ analogously.
Define
\begin{equation}
  \begin{aligned}
    V_j
    \coloneqq
    \sqrt{2}m_{n_1^+}
    -
    \sqrt{\tfrac{\beta}{4}}
    \varphi_{n_1^+}(\theta_j)
    .
  \end{aligned}
  \label{eq:leafheight}
\end{equation}
Define for Borel subsets of $[0,2\pi] \times (-\infty,0] \times \mathcal{C}( [-2\pi k_1,0], \C),$
\begin{equation}\label{eq:Ext}
  \Ext_n
  =\Ext_n^{k_1}%, \dots, k_5}
  \coloneqq
  \sum_{j \in \mathcal{D}_{n/k_1}}
  \delta_{(\theta_j, V_j, D_je^{\sqrt{4/\beta}V_j})}.%\one[ \widehat{W}_j \in [-k_7,k_7]].
\end{equation}
Note that the processes $\Ex_n$ and $\Ext_n$ are closely related, see the proof of Theorem \ref{thm:process}.
Our goal will be to approximate the processes $\Ext_n$ (and $\Ex_n$) by Poisson processes with random intensity.

Toward this end, recall that an important strategy used throughout the analysis of extrema of branching processes is effectively conditioning on the initial portion of the process, wherein the extrema gain a nontrivial correlation.  We will do the same and condition on the first Verblunsky coefficients.  We use the parameter \label{k2} $k_2,$ which we assume is a power of $2$ (to apply Theorem \ref{thm:Bj}), to refer to how many Verblunsky coefficients on which we condition.  We also use $(\filt_{k}: k \in \N_0)$ to refer to the natural $\sigma$-algebra generated by the sequence of Verblunsky coefficients $( \gamma_k : k \in \N_0).$

We will compare $\Ext_n$  to the $\filt_{k_2}$--conditional Poisson random measures $\Pi^{k_1,k_2}$, $\Pi^{k_1,k_2,'}$, $\Pi^{k_1,k_2,''}$
with respective intensity measures
\begin{equation}\label{eq:trueintensity}
  \mathscr{D}_{k_2}(\theta)d\theta \times I(v)dv \times \mathfrak{p}_{k_1}(v,df), \quad   \mathscr{D}_{k_2}(\theta)d\theta \times I'(v)dv 
  \times \mathfrak{p}_{k_1}(v,df),
  \quad  \mathscr{D}_{k_2}(\theta)d\theta \times I''(v)dv \times \mathfrak{p}_{k_1}(v,df)
\end{equation}
where
\begin{align*}
  I(v)&= \sqrt{\tfrac{2}{\pi}}ve^{\sqrt{2}v}
  \one[{( \log k_1^+)^{1/10} \leq v \leq (\log k_1^+)^{9/10}}], \\
 I'(v)&= \sqrt{\tfrac{2}{\pi}}ve^{\sqrt{2}v}
  \one[{( 0.5 \log k_1^+)^{1/10} \leq v \leq (2\log k_1^+)^{9/10}}]\\
 I''(v)&= \sqrt{\tfrac{2}{\pi}}ve^{\sqrt{2}v}
  \one[{( 2 \log k_1^+)^{1/10} \leq v \leq (0.5 \log k_1^+)^{9/10}}].
\end{align*}
Here and in many places below, we slightly abuse notation by using
$\times$ to denote also products of transition kernels, i.e. semidirect products.
Thus $I'$ is defined over a slightly longer interval than $I$ and $I''$ is defined over a slightly shorter interval than $I$.
The law $\mathfrak{p}_{k_1}(v,\cdot)$ is that of a random function on $\theta \in (-2\pi k_1,0)$ which is related to  the exponential of the solution of a family of coupled diffusions $\mathfrak{U}^o_{T_+}(\theta)$
in an auxiliary time parameter (see \eqref{mtl:intensity} and \eqref{eq:dSDE}).

\begin{remark}
  This process $\mathfrak{U}^o_{T_+}(\theta)$ consists of
  the terminal values of a coupled family of diffusions $(t \mapsto \mathfrak{U}^o_t(\theta))$.  This coupled family of diffusions can be related to the complex stochastic sine equation \cite{ValkoVirag2} (see also the closely related stochastic sine equation \cite{ValkoVirag,KillipStoiciu}).  The extreme values of this diffusion in $\theta$ are then needed to describe $\mathfrak{p}_{k_1}$, at least for how it appears here.
\end{remark}

Similarly, the measure $\Ex_n$  will be approximated by a Poisson random measure with a random intensity on the same space.
This intensity on $[0,2\pi] \times \R \times \mathcal{C}( [-2\pi k_1,0], \C)$ will take the form of a product measure
\(
\mathscr{D}_\infty \times \widehat{\mathfrak{p}_{k_1}},
\)
where $(\widehat{\mathfrak{p}_{k_1}} :k_1 \in \N)$ is a deterministic
%non-random 
Radon measure on $\R \times \mathcal{C}( [-2\pi k_1,0], \C)$, which is constructed as follows. Let
\begin{equation}
\label{eq-iota}
\iota( v, f) \coloneqq \bigl(\max_{x \in [-2\pi k_1,0]} ( -\sqrt{4/\beta}v+ \log|f(x)|), f e^{-\sqrt{4/\beta}v} \bigr)
\end{equation}
 be a map of $\R\times \mathcal{C}( [-2\pi k_1,0], \C) $ to itself, and let
\begin{equation}
\label{eq-defpk1}
\widehat{\mathfrak{p}_{k_1}}(dv,df)\;  \mbox{\rm denote the push-forward of $I(v)dv\times \mathfrak{p}_{k_1}(v,df)$ by $\iota$}.
\end{equation}
We let $\Pi^{k_1}$ be a Poisson random measure on $\Gamma \coloneqq [0,2\pi] \times \R \times \mathcal{C}( [-2\pi k_1,0], \C)$ with intensity
\(
\mathscr{D}_\infty \times \widehat{\mathfrak{p}_{k_1}}.
\)
We may define similarly $\Pi^{k_1,'}$ and $\Pi^{k_1,''}$.
%We shall further restrict this space $\Gamma$ to $\Gamma_{k_7} \coloneqq [0,2\pi] \times [-k_7,k_7] \times \mathcal{C}( [-2\pi k_1,0], \C).$

To compare point processes on $\Gamma$,
% this point process $\Pi^{k_1}$ to $\Ex_n^{k_1},$
we endow the latter with the distance
\[
  \partial_0( (\theta_1, z_1, f_1), (\theta_2, z_2, f_2) ) \coloneqq  \bigl(d_{\T}(\theta_1,\theta_2) + |z_1-z_2| + \sup_{t \in [-2\pi k_1,0]} |f_1(t) - f_2(t)|\bigr) \wedge 1.
\]
In terms of this we define (compare with $d_1'$ from \cite{ChenXia}) a Wasserstein distance on point configurations $\xi_1 = \sum_{i=1}^m \delta_{y_i}$ and $\xi_2 = \sum_{i=1}^n \delta_{z_i}$
\[
  \partial_1(\xi_1, \xi_2) \coloneqq
  \begin{cases}
    0, & \text{if } m = n = 0, \\
    \min_{\pi} \max_{{i=1,\ldots,n}} \partial_0(y_i, z_{\pi(i)}), & \text{if } m = n > 0, \\
    1, & \text{if } m \neq n. \\
  \end{cases}
%  \partial_1(\xi_1, \xi_2) \coloneqq
%  \begin{cases}
%    0, & \text{if } m = n = 0, \\
%    \min_{\pi} \sum_{i=1}^n \partial_0(y_i, z_{\pi(i)}), & \text{if } m = n > 0, \\
%    %\min_{\pi} \frac{1}{n}\sum_{i=1}^n d_0(y_i, z_{\pi(i)}), & \text{if } m = n > 0, \\
%    |m-n|, & \text{if } m \neq n. \\
%    %1, & \text{if } m \neq n. \\
%  \end{cases}
\]
with the minimum being the distance over all permutations $\pi$ of $\{1,2,\dots,n\}.$
Finally, for two point processes $Q_1$ and $Q_2$ we define the pseudometric
\[
  \partial_2( Q_1, Q_2) \coloneqq \inf_{(\xi_1,\xi_2)} \Exp( \partial_1( \xi_1, \xi_2) ),
\]
with the infimum over couplings $(\xi_1, \xi_2)$ in which $\xi_1 \sim Q_1$ and $\xi_2 \sim Q_2.$  Note that this pseudometric only depends on the laws of the point processes.
These distances are somewhat unorthodox;  we develop Poisson approximations using these distances in Appendix \ref{sec:pointprocesses}, as well as some comparisons of these distances to other more standard metrics used in Poisson approximation.

To get a comparison between $\Ext_n$ and the point processes $\Pi^{k_1,k_2}$, it is necessary to restrict to the case that the maximum of the modulus of the  decoration $|D_je^{\sqrt{4/\beta}V_j}|$ is sufficiently large.  So, we set
  \begin{eqnarray}\nonumber
  \Gamma_{k_7}& \coloneqq &\bigl( (\theta,v, f) \in [0,2\pi] \times \R \times \mathcal{C}( [-2\pi k_1,0], \C) : \max_{x \in [-2\pi k_1,0]}
  |f(x)e^{-\sqrt{4/\beta}v}| \in [e^{-k_7},e^{k_7}] \bigr),\\
\label{eq:gk7}
\Gamma_{k_7}^+& \coloneqq &\bigl( (\theta,v, f) \in [0,2\pi] \times \R \times \mathcal{C}( [-2\pi k_1,0], \C) : \max_{x \in [-2\pi k_1,0]}
|f(x)e^{-\sqrt{4/\beta}v}| \geq e^{-k_7} \bigr).
\end{eqnarray}
In what follows, for any measure (or point process) $\mathbf{Q}$ on $\Gamma$ we write $\iota\# \mathbf{Q}$ for the push forward under the transformation that preserves the first coordinate
and applies $\iota$ of \eqref{eq-iota} to the last two. Note that under $\iota\#\mathbf{Q}$, the second coordinate of the point process can always be recovered from the third.
Our main result, which will imply Theorems \ref{theo-main} and \ref{thm:max}, is the following convergence of marked processes.
Here and in the sequel, for a point process $\Pi$ and a 
set $B\subset \Gamma$, we write $\Pi\cap B$ for the restriction of $\Pi$  to $B$, i.e. $\Pi\cap B(\cdot)=\Pi(\cdot\cap B)$. 
\begin{theorem}\label{prop:bush}
  The restrictions of $\Ext_n^{k_1}$ and $\Pi^{k_1,k_2}$ to $\Gamma_{k_7}$ satisfy
  \begin{equation}
\label{eq-Pik1k2comp}
    \limsup_{k_2,k_1,n \to \infty}
\partial_2(\iota\# (\Pi^{k_1,k_2} \cap \Gamma_{k_7}), \iota\#(\Ext_n^{k_1} \cap \Gamma_{k_7}))= 0.
  \end{equation}
The same holds with $\Pi^{k_1,k_2,'}$ or $\Pi^{k_1,k_2,''}$ replacing $\Pi^{k_1,k_2}$ in \eqref{eq-Pik1k2comp}.
\end{theorem}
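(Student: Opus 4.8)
The plan is to prove the Poisson approximation in several stages, moving the problem from the characteristic-polynomial process $\Ext_n^{k_1}$ to an idealized conditional Poisson process by a sequence of couplings, each controlled in the $\partial_2$ pseudometric. First I would set up the decomposition of the recurrence \eqref{eq:upsilon} at the three scales $k_1 \ll \widehat{k}_1 \ll k_1^+ \ll n$ and the conditioning scale $k_2$, which is a power of $2$. The key structural fact to establish is that, after conditioning on $\filt_{k_2}$, the increments of $\varphi_k$ and $\Psi_k$ over the windows indexed by $j \in \mathcal{D}_{n/k_1}$ become asymptotically independent across $j$, with the dependence on the initial data entering \emph{only} through the derivative-martingale weight $\mathscr{D}_{k_2}(\theta_j)$; this is where Theorem \ref{thm:Bj} is used, since $\mathscr{D}_{2^j}\to\mathscr{D}_\infty$ a.s.\ and the tightness estimate \eqref{eq:Btight} lets us discard the $\theta_j$ with atypical $V_j$. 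I would first prove, via the barrier/ballot-type estimates that are standard for log-correlated fields (and available in \cite{CMN}), that the contribution of $j$ with $V_j \notin [(\log k_1^+)^{1/10}, (\log k_1^+)^{9/10}]$ is negligible — this is exactly what makes the truncated intensities $I, I', I''$ in \eqref{eq:trueintensity} the right objects, and it reduces $\Ext_n^{k_1}$ to a sum of order $n/k_1$ terms each of which is individually rare.

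Second, with the window count reduced to a sparse set, I would invoke the Poisson approximation machinery of Appendix \ref{sec:pointprocesses} in the metric $\partial_2$: one needs (a) a \emph{first-moment} computation showing that, conditionally on $\filt_{k_2}$, the expected number of windows $j$ with $(\theta_j, V_j, \cdot) \in A$ converges to $(\mathscr{D}_{k_2}(\theta)d\theta \otimes I(v)dv \otimes \mathfrak{p}_{k_1})(A)$ for nice $A \subset \Gamma_{k_7}$; (b) a \emph{second-moment / pairwise} bound showing windows $j \neq j'$ are asymptotically decoupled, so the Chen–Stein-type error vanishes; and (c) identification of the \emph{limiting decoration law} $\mathfrak{p}_{k_1}$. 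Step (c) is where the diffusion description enters: on a window of width $2\pi k_1/n$, after rescaling $\theta \mapsto \theta_j + \theta/n$, the remaining Szegő increments from level $n_1^+$ to $n$ converge — by the coupling of the Prüfer phase recursion to the complex stochastic sine equation, cf.\ \cite{ValkoVirag2, ValkoVirag, KillipStoiciu} — to the terminal value $\mathfrak{U}^o_{T_+}$ of the coupled diffusion family of \eqref{eq:dSDE}, tensored with the uniform phase $\psi$ coming from the independent uniform $\alpha$ (when $\sigma=1$) or from the relative Prüfer phase at $0$. The push-forward $\iota\#$ and the restriction to $\Gamma_{k_7}$ are then exactly what is needed so that the max over the decoration (which controls $\widehat{W}_j$, hence the centering $m_n$ vs.\ $m_{n_1^+}$) is recovered measurably from the third coordinate, making the comparison \eqref{eq-Pik1k2comp} a statement purely about the coupled triples.

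Third, I would handle the three variants simultaneously: the intensities $I'$ and $I''$ differ from $I$ only by the replacement $\log k_1^+ \to 2\log k_1^+$ or $0.5\log k_1^+$ in the cutoff window, which corresponds to running the decomposition from $(n_1^+)^{1/2}$-type or $(n_1^+)^2$-type levels instead — the same estimates apply verbatim with adjusted constants, and this flexibility is what lets the later arguments (reconstructing $\Ex_n$ and then $M_n$) absorb the mismatch between the scales $n_1^+$ and $n$. The order of limits is crucial and must be respected throughout: send $n\to\infty$ first (giving convergence of the within-window diffusion and of $\mathscr{D}_{k_2}(\theta_j)d\theta$ to $\mathscr{D}_{k_2}(\theta)d\theta$), then $k_1\to\infty$ (so the window width shrinks and the barrier estimates become sharp), then $k_2\to\infty$ (so $\mathscr{D}_{k_2}\to\mathscr{D}_\infty$, though note that $\Pi^{k_1,k_2}$ retains $\mathscr{D}_{k_2}$, so this last limit is only needed for the downstream comparison to $\Pi^{k_1}$, not for \eqref{eq-Pik1k2comp} itself).

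The main obstacle I expect is step (b) together with the decoration identification (c): proving that the windows are genuinely asymptotically independent requires showing that the Prüfer phases $\Psi_k(\theta_j)$ for different $j$ decorrelate fast enough once $k \gtrsim \widehat{k}_1$, and simultaneously that within each window the field is \emph{constant} at scale $\widehat{I}_j$ up to level $n_1^+$ — the two-parameter control of $\varphi_k(\theta)$ and $\Psi_k(\theta)$ jointly in $k$ and $\theta$, uniformly over the $\sim n/k_1$ windows, is the technical heart. This is precisely the "central technical challenge" flagged in the text after \eqref{eq-Exn}, and it will require iterating the one-step estimates for \eqref{eq:Psi}, \eqref{eq:upsilon} with careful union bounds over windows, combined with the derivative-martingale a.s.\ convergence to control the rare windows that survive the barrier. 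The coupling to the stochastic sine equation for the decoration is delicate because one needs convergence of the \emph{whole path} $D_j(\cdot)$ on $[-2\pi k_1, 0]$ in the sup norm, not merely its maximum, so the diffusion approximation must be uniform on the window.
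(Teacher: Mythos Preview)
Your high-level plan is close to the paper's and identifies the right ingredients: barrier reductions confining $V_j$ to $[(\log k_1^+)^{1/10},(\log k_1^+)^{9/10}]$, a diffusion approximation for the decorations, a Chen--Stein-type Poisson approximation, and the derivative martingale. But there is one structural point you have wrong, and it matters for how the proof actually runs.

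You propose to do the Poisson approximation \emph{conditionally on $\filt_{k_2}$}, asserting the windows become asymptotically independent at that level. They do not. Conditionally on $\filt_{k_2}$, the increments $\{\varphi_{n_1^+}(\theta_j) - \varphi_{k_2}(\theta_j)\}_j$ remain strongly correlated through the shared Pr\"ufer phases, and the Chen--Stein error terms (your step (b)) would not close. The paper instead does the Poisson approximation \emph{conditionally on $\filt_{n_1^+}$}: after that conditioning, the decorations $D_j^o$ are driven by Brownian motions $\mathfrak{W}^j$ which are \emph{constructed} to be exactly independent for $d_{\T}(\theta_j,\theta_{j'}) \gtrsim n^{-1+\delta}$ (Proposition \ref{prop:uberdecoupling}, proved via a band-resampling of the Fourier modes of the driving Gaussians --- not a decorrelation estimate on the Pr\"ufer phases as you suggest). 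This yields a Poisson process with a \emph{random} $\filt_{n_1^+}$-measurable intensity $\mathfrak{m}$ (Proposition \ref{prop:ppp1}). Then there is a \emph{separate} second-moment concentration argument (Lemma \ref{zrh:2moment}, Proposition \ref{prop:ppp3}) showing that $\mathfrak{m}(f) \approx \mathcal{H} \cdot \mathscr{D}_{k_2}(f)$; only here does the $\filt_{k_2}$-conditioning enter, and only here are the two-ray estimates of Proposition \ref{prop:2ray} used. Your plan conflates these two stages into one.

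There is also a preparatory stage you underestimate: before any Poisson approximation, the paper replaces $\Ext_n$ first by $\Extr_n$ (Proposition \ref{prop:milieu}, which packages all the first-moment barrier/flatness/mesh reductions of Section \ref{sec:milieu}) and then by $\Extre_n$ (Proposition \ref{prop:milieu2}), whose decorations solve \eqref{eq:dSDE} with \emph{flat} initial conditions at $T_\dagger$. Showing that the small oscillations of $\mathfrak{L}_{T_-}$ on $\widehat I_j$ do not propagate to $T_+$ is not automatic and requires its own SDE comparison (Proposition \ref{prop:coic}); your ``flatness at level $n_1^+$'' is the right intuition, but transferring it to flat-IC decorations is a genuine step.

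Finally, your reading of $I', I''$ is off: they are not tied to alternative $n_1^+$-type levels but are simply slightly wider/narrower cutoffs on the same $v$-variable, used downstream (in the proof of Theorem \ref{theo-main}) to squeeze the shifted intensity $e^{\sqrt{2\beta}x}\mathcal{I}_x$ between $\mathcal{I}_0''$ and $\mathcal{I}_0'$; see \eqref{eq-punch1}.
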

\noindent The meaning of the $\limsup$ is that the parameters are taken to infinity in order, with $n$ followed by $k_1$ followed by $k_2.$

%\label{gammahatk7}
Similarly,
to make a comparison between $\Pi^{k_1}$ and $\Ex_n^{k_1},$ we will only make a comparison in which their second coordinate is in a compact set. Hence we shall further restrict the space ${\Gamma}$ from \eqref{eq-090924}
to
\begin{equation}
\label{eq-defhatGammak7}
\widehat{\Gamma}_{k_7} \coloneqq [0,2\pi] \times [-k_7,k_7] \times \mathcal{C}( [-2\pi k_1,0], \C).
\end{equation}
Theorems \ref{prop:bush} and \ref{thm:Bj} lead directly to 
\begin{theorem}\label{thm:process}
  For any $k_7 > 0,$ the restrictions of the point processes to $\widehat{\Gamma}_{k_7}$ satisfy
  \begin{equation}
\label{eq-Pik1}
  %  \lim_{k_1 \to \infty}
    \limsup_{k_1,n \to \infty}
    \partial_2\bigl(
    \Pi^{k_1} \cap \widehat{\Gamma}_{k_7},
    \Ex_n^{k_1} \cap \widehat{\Gamma}_{k_7}
    \bigr)
    =0.
  \end{equation}
The same holds with $\Pi^{k_1,'}$ or $\Pi^{k_1,''}$ replacing $\Pi^{k_1}$ in \eqref{eq-Pik1}
\end{theorem}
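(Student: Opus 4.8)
The plan is to deduce Theorem~\ref{thm:process} from Theorem~\ref{prop:bush} by means of an explicit deterministic change of variables relating $(\Ex_n^{k_1},\Pi^{k_1})$ to $(\Ext_n^{k_1},\Pi^{k_1,k_2})$, followed by sending $k_2\to\infty$. The first point is the exact identity $\iota\#(\Ext_n^{k_1}\cap\Gamma_{k_7}) = \Ex_n^{k_1}\cap\widehat\Gamma_{k_7}$. Indeed, a point of $\Ext_n^{k_1}$ has the form $(\theta_j, V_j, D_j e^{\sqrt{4/\beta}V_j})$, and applying $\iota$ of \eqref{eq-iota} to the last two coordinates restores $D_j$ in the third slot and replaces $V_j$ by $\max_{x\in[-2\pi k_1,0]}\log|D_j(x)|$; since $\log|D_j(x)| = \varphi_n(\theta_j+\tfrac xn)-\sqrt{8/\beta}m_n$ for both choices of $\sigma$ — using $\varphi_n = 2\Re\log\Phi_n^*$, which follows by comparing \eqref{eq:upsilon} and \eqref{eq:Phi} when $\sigma=1$ — and since $\theta_j+\tfrac1n[-2\pi k_1,0] = \widehat I_j$, this maximum is precisely $\widehat W_j$. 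The same computation shows the membership conditions match: a point of $\Ext_n^{k_1}$ lies in $\Gamma_{k_7}$ iff $\max_x|D_j(x)|\in[e^{-k_7},e^{k_7}]$ iff its $\iota$-image lies in $\widehat\Gamma_{k_7}$. By the Poisson mapping theorem and the definition \eqref{eq-defpk1} of $\widehat{\mathfrak{p}_{k_1}}$ as the push-forward of $I(v)\,dv\otimes\mathfrak{p}_{k_1}$ under $\iota$, the push-forward $\iota\#(\Pi^{k_1,k_2}\cap\Gamma_{k_7})$ is, conditionally on $\filt_{k_2}$, a Poisson random measure on $\widehat\Gamma_{k_7}$ with intensity $(\mathscr{D}_{k_2}\otimes\widehat{\mathfrak{p}_{k_1}})$ restricted to $\widehat\Gamma_{k_7}$, whereas $\Pi^{k_1}\cap\widehat\Gamma_{k_7}$ has intensity $(\mathscr{D}_\infty\otimes\widehat{\mathfrak{p}_{k_1}})$ restricted to $\widehat\Gamma_{k_7}$; the two differ only in $\mathscr{D}_{k_2}$ versus $\mathscr{D}_\infty$. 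The primed and doubly primed statements are handled identically with $I'$, $I''$ in place of $I$.

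The second step is to show that, for each fixed $k_1$ and $k_7$, $\partial_2\bigl(\Pi^{k_1}\cap\widehat\Gamma_{k_7},\, \iota\#(\Pi^{k_1,k_2}\cap\Gamma_{k_7})\bigr)\to0$ as $k_2\to\infty$, and that this convergence is uniform in $k_1$. Since the relevant intensities are finite, each of the two point processes is a Poisson number of i.i.d.\ marked points: a $\Poisson(\mathscr{B}_{k_2}\Lambda_{k_1,k_7})$ (resp.\ $\Poisson(\mathscr{B}_\infty\Lambda_{k_1,k_7})$) number of points with $\theta$-marks drawn from the normalized measure $\mathscr{D}_{k_2}/\mathscr{B}_{k_2}$ (resp.\ $\mathscr{D}_\infty/\mathscr{B}_\infty$) and with common $(z,f)$-marks drawn from $\widehat{\mathfrak{p}_{k_1}}|_{\widehat\Gamma_{k_7}}/\Lambda_{k_1,k_7}$, where $\Lambda_{k_1,k_7}\coloneqq\widehat{\mathfrak{p}_{k_1}}(\widehat\Gamma_{k_7})<\infty$. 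I would couple the two by using identical $(z,f)$-marks, an optimal coupling of the Poisson counts given $(\mathscr{B}_{k_2},\mathscr{B}_\infty)$, and a $W_1$-optimal coupling of matched $\theta$-marks. Theorem~\ref{thm:Bj} (recall $k_2$ is a power of $2$) gives $\mathscr{B}_{2^j}\to\mathscr{B}_\infty$ and $\mathscr{D}_{2^j}\to\mathscr{D}_\infty$ almost surely, so the counts coincide with probability tending to $1$, the normalized $\theta$-mark laws converge in $W_1$ on the circle, and a truncation on $\{\mathscr{B}_\infty\le R,\ \sup_j\mathscr{B}_{2^j}\le R\}$ bounds $\E\partial_1$; sending $k_2\to\infty$ then $R\to\infty$ gives the limit. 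The estimate is uniform in $k_1$ because the only $k_1$-dependence enters through the scalar $\Lambda_{k_1,k_7}$, and $\sup_{k_1}\Lambda_{k_1,k_7}<\infty$ — a stabilization property of the limiting decoration intensity, available from the analysis of $\mathfrak{p}_{k_1}$ together with \eqref{eq:Btight} (and consistent with the tightness of the near-extremal count in Theorem~\ref{thm:tightness}).

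To finish, combine Steps~1 and 2 with the triangle inequality for $\partial_2$: for all $k_1,k_2,n$,
\[
  \partial_2\bigl(\Pi^{k_1}\cap\widehat\Gamma_{k_7},\, \Ex_n^{k_1}\cap\widehat\Gamma_{k_7}\bigr)
  \le
  \partial_2\bigl(\Pi^{k_1}\cap\widehat\Gamma_{k_7},\, \iota\#(\Pi^{k_1,k_2}\cap\Gamma_{k_7})\bigr)
  + \partial_2\bigl(\iota\#(\Pi^{k_1,k_2}\cap\Gamma_{k_7}),\, \iota\#(\Ext_n^{k_1}\cap\Gamma_{k_7})\bigr).
\]
Given $\epsilon>0$, Step~2 supplies $k_2^\star$ with $\sup_{k_1}\partial_2\bigl(\Pi^{k_1}\cap\widehat\Gamma_{k_7},\iota\#(\Pi^{k_1,k_2^\star}\cap\Gamma_{k_7})\bigr)<\epsilon/2$; enlarging $k_2^\star$ if necessary, Theorem~\ref{prop:bush} gives $\limsup_{k_1}\limsup_n\partial_2\bigl(\iota\#(\Pi^{k_1,k_2^\star}\cap\Gamma_{k_7}),\iota\#(\Ext_n^{k_1}\cap\Gamma_{k_7})\bigr)<\epsilon/2$. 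Hence $\limsup_{k_1}\limsup_n\partial_2(\Pi^{k_1}\cap\widehat\Gamma_{k_7},\Ex_n^{k_1}\cap\widehat\Gamma_{k_7})\le\epsilon$, and letting $\epsilon\downarrow0$ proves \eqref{eq-Pik1}; the primed and doubly primed cases follow verbatim.

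I expect the crux to be the uniformity in $k_1$ asserted in Step~2. Theorem~\ref{prop:bush} sends $k_2\to\infty$ last whereas Theorem~\ref{thm:process} sends $k_1\to\infty$ last, so the $k_2\to\infty$ convergence of the Poisson intensities cannot simply be quoted pointwise in $k_1$: it must come with a modulus uniform over $k_1$, which amounts to the bound $\sup_{k_1}\widehat{\mathfrak{p}_{k_1}}(\widehat\Gamma_{k_7})<\infty$. The remaining ingredients — the change of variables, the marked-Poisson coupling, and the passage to the limit in the Poisson count and in the $\theta$-marks — are routine once Theorems~\ref{thm:Bj} and \ref{prop:bush} are in hand.
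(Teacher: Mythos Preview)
Your proof is correct and follows essentially the same route as the paper's: both reduce to Theorem~\ref{prop:bush} via the identity $\iota\#(\Ext_n^{k_1}\cap\Gamma_{k_7})=\Ex_n^{k_1}\cap\widehat\Gamma_{k_7}$, then compare the two Poisson processes with intensities $\mathscr{D}_{k_2}\otimes\widehat{\mathfrak p}_{k_1}$ and $\mathscr{D}_\infty\otimes\widehat{\mathfrak p}_{k_1}$, with the key uniformity coming from $\sup_{k_1}\widehat{\mathfrak p}_{k_1}([-k_7,k_7]\times\mathcal C)<\infty$. The paper packages Step~2 via the bounded--Lipschitz metric and Theorem~\ref{thm:PPcom} rather than your explicit marked--Poisson coupling, and it sources the uniformity bound to Lemma~\ref{zrh:2moment} (the constant $\mathcal H$) rather than \eqref{eq:Btight}; but these are differences of presentation, not of strategy.
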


Equipped with Theorem \ref{thm:process}, we can now complete the proof of Theorems \ref{theo-main} and \ref{thm:max}, assuming a technical estimate contained in Corollary \ref{cor:pregularity}.
\begin{proof}[Proof of Theorems \ref{theo-main} and \ref{thm:max}]
  We give first the details for Theorem \ref{theo-main}, since its 
  proof is more involved. Using \eqref{eq:charpoly},
we have the representation for all $z \in \C$
\[
 X_{n+1}(z) = \Phi_{n}^*(z)- \alpha z \Phi_{n}(z),
\]
where we recall $\alpha$ is uniformly distributed on the unit circle and independent of $\Phi_{n}.$
We begin by proving convergence.
From Theorem \ref{thm:tightness}, as $m_{n+1}-m_{n} \to 0$ as $n \to \infty,$ the random variables
  \[
    R_{n+1} \coloneqq \max_{|z|=1} \log|X_{n+1}(z)| - \sqrt{\tfrac{2}{\beta}} m_{n}
  \]
  are tight.  Let $R_*$ be any subsequential weak limit.

  We recall that $\Phi_{n}^*(z) = z^{n}\overline{\Phi_{n}(1/\overline{z})},$ and hence on the unit circle we have
  \[
    \Phi_{n}(z)
    = z^{n}\overline{\Phi^*_{n}(z)}
    \quad \implies
    \quad
    X_{n+1}(z) = \Phi_{n}^*(z)- \alpha z^{n+1} \overline{\Phi_{n}^*(z)},
    \quad
    \text{for}
  \quad |z|=1.
  \]
  Thus, for $|z|=1$,
  we have the representation of the log-modulus of the characteristic polynomial
  \begin{equation}\label{eq:det2}
    \log |X_{n+1}(z)| =
    \log\bigl|\Phi_{n}^*(z)-\alpha z^{n+1}{\overline{\Phi_{n}^*(z)}}\bigr|
    \leq
    \log\bigl|\Phi_{n}^*(z)\bigr|
  +\log 2.
  \end{equation}
  Then using \eqref{eq:decorations} we can represent $R_{n+1}$ by
  \begin{equation}
\label{eq-Rn+1}
    R_{n+1} =
    \max_{j \in \mathcal{D}_{n/k_1}}
      \max_{\theta \in [-2\pi k_1,0]}
    \biggl\{
      \tfrac{1}{2}
      \biggl(
      \log 2 +
      \log\biggl( |D_j(\theta)|- \Re\bigl( \overline{\alpha}e^{-i\theta(1+{1}/{n})}{{D_j(\theta)}}\bigr)\biggr)
      \biggr)
    \biggr\}.
  \end{equation}

  By \eqref{eq:det2}, 
  the log-modulus of the characteristic polynomial can only increase by a $\log 2$ over the reversed OPUC $\Phi^*_{n},$ and so for any $k_7$ sufficiently large
  \[
    \widehat{W}_j \leq -k_7
    \quad \implies
    \quad
    \max_{\theta \in [-2\pi k_1,0]}
    \biggl\{
      \log\biggl( |D_j(\theta)|- \Re\bigl( \overline{\alpha}e^{-i\theta(1+{1}/{n})}{{D_j(\theta)}}\bigr)\biggr)
    \biggr\}
    < -k_7 + \log 2.
  \]
  Let $\phi_{k_7}(x)=  \min\{ \max\{ x, -k_7/2 \}, k_7/2\}$ for all $x \in \R.$
  Then, on the event $\max_j \widehat{W}_j>-k_7$,
  \[
    \phi_{k_7}(R_{n+1})
    =
    \max_{
      \substack{j \in \mathcal{D}_{n/k_1} \\
      \widehat{W}_j > -k_7}
    }
    \max_{\theta \in [-2\pi k_1,0]}
    \phi_{k_7}\biggl(
    \tfrac12
    \biggl(
    \log 2 + 
    \log\biggl( |D_j(\theta)|- \Re\bigl( \overline{\alpha}e^{-i\theta(1+{1}/{n})}{{D_j(\theta)}}\bigr)
    \biggr)
    \biggr)
    \biggr)
    .
  \]
  We would like to apply Theorem \ref{thm:process} to establish the convergence of this statistic.  For this we need a Lipschitz bound on the mapping
  \begin{equation}\label{eq:Exnl}
    \Ex_{n}^{k_1} \cap \widehat{\Gamma}_{k_7} \mapsto \phi_{k_7}(R_{n+1})
  \end{equation}
  with respect to the $\partial_1$ metric on point configurations. 
  We note that there is $n$-dependence in the mapping \eqref{eq:Exnl} beyond the $n$ dependence in the process $\Ex_n$, which we would like to remove.  It follows from the definition of the $\partial_1$ metric that \eqref{eq:Exnl} is $C(k_7)$--Lipschitz.
  Moreover, the difference 
  \[
    \begin{aligned}
    \max_{\theta \in [-2\pi k_1,0]}
    &|\phi_{k_7}\bigl(
  \log\bigl( |D_j(\theta)|- \Re\bigl( \overline{\alpha}e^{-i\theta}{{D_j(\theta)}}\bigr)\bigr)
    \bigr)
    -
    \phi_{k_7}\bigl(
  \log\bigl( |D_j(\theta)|- \Re\bigl( \overline{\alpha}e^{-i\theta(1+{1}/{n})}{{D_j(\theta)}}\bigr)\bigr)
    \bigr)| \\
    &< \tfrac{1}{n}C(k_7)(C(k_7) + \max_{\theta \in [-2\pi k_1,0]} |D_j(\theta)|).
  \end{aligned}
  \]

Now the mapping $ D_j(\theta) \mapsto \phi_{k_7}\bigl(
  \log\bigl( |D_j(\theta)|- \Re\bigl( \overline{\alpha}e^{-i\theta}{{D_j(\theta)}}\bigr)\bigr)
    \bigr)$
  has a Lipschitz constant with respect to the sup-norm that is bounded solely in terms of $k_7.$ 
  Thus if we define for each $k_1$ a random variable 
  by
  \[
    2\mathcal{R}^{k_1} - \log 2
    \coloneqq
    \max\biggl\{ 
    \max_{\theta \in [-2\pi k_1,0]}
    \biggl(
      \log\bigl( |f(\theta)|- \Re\bigl( \overline{\alpha}e^{-i\theta}{{f(\theta)}}\bigr)\bigr)\biggr)
      : (\theta,v,f) \in \Pi^{k_1} \cap \widehat{\Gamma}_{k_7}
    \biggr\},
  \]
  then from Theorem \ref{thm:process} and the monotonicity of $\phi_{k_7}$
  \[
    \lim_{k_1 \to \infty}
    \limsup_{n \to \infty}
    \sup_{\vartheta}
    \Exp | \vartheta(\phi_{k_7}(R_{n+1})) - \vartheta(\phi_{k_7}(\mathcal{R}^{k_1}))|
    = 0,
  \]
  with the supremum over all $1$--Lipschitz real--valued functions $\vartheta$.
  It follows that
  \[
    \lim_{k_1 \to \infty}
    \sup_{\vartheta}
    \Exp | \vartheta(\phi_{k_7}(R_{*})) - \vartheta(\phi_{k_7}(\mathcal{R}^{k_1}))|
    = 0.
  \]
  Recall that $R_*$ is any subsequential limit point of $\{R_n\},$ and hence for any other subsequential limit $R_*',$
  \[
    \sup_{\vartheta}
    \Exp | \vartheta(\phi_{k_7}(R_{*})) - \vartheta(\phi_{k_7}(R'_{*}))|
    =0.
  \]
  As $k_7$ is arbitrary, it follows that $\{R_n\}$ has a unique weak-$*$ limit point, i.e.\ it converges in law.
  We also observe that since in Theorem \ref{thm:process} also applies to $\Pi^{k_1,'}$ or $\Pi^{k_1,''}$ in place of $\Pi^{k_1}$ the same argument above holds if in the definition of $\mathcal{R}^{k_1}$ we replace $\Pi^{k_1}$ by either of these.  Hence, if we define $\mathcal{R}^{k_1,'}$ and $\mathcal{R}^{k_1,''}$ by making that replacement, then for all $k_7$,
  \[
    \lim_{k_1 \to \infty}
    \sup_{\vartheta}
    \bigl(
    \Exp | \vartheta(\phi_{k_7}(\mathcal{R}^{k_1,'}) - \vartheta(\phi_{k_7}(\mathcal{R}^{k_1}))|
    +
    \Exp | \vartheta(\phi_{k_7}(\mathcal{R}^{k_1,''}) - \vartheta(\phi_{k_7}(\mathcal{R}^{k_1}))|
    \bigr)
    =0.
  \]

We next characterize the limit. For that, it is enough to evaluate for fixed $x\in \R$ the limit of the probability
$\Pr(\mathcal{R}^{k_1}\leq x)$ as $k_1\to\infty$. Recall \eqref{eq-Rn+1}.
For any $\alpha=e^{i\hat \psi}$ with $\hat \psi\in [0,2\pi]$, any $x \in \R$
and any $(\theta,v) \in [0,2\pi]\times (-\infty,0]$
introduce the Borel subset of $\mathcal{C}( [-2\pi k_1,0], \C)$
\[ 
  \mathcal{A}_{\hat \psi,\theta,v}^{k_1}(x)
  = \biggl\{ f : \max_{\eta \in [-2\pi k_1,0]} 
\log\Big( e^{-\sqrt{4/\beta} v}\bigl(|f(\eta)|-\Re(e^{-i(\hat \psi+\eta)} f(\eta)\bigr) \Big)\geq 2x-\log 2\biggr\}.
\]
%Then, using Theorem \ref{prop:bush}.
  From the definition of $\Pi^{k_1}$
\[
  \Pr(\mathcal{R}^{k_1}\leq x)=
  \Exp_{\hat \psi} 
  \Pr[ \mbox{\rm there are no points $(\theta,v,f)$ in $\Pi^{k_1}$ 
  such that $e^{\sqrt{4/\beta}v}f \in \mathcal{A}_{\hat\psi,\theta,v}$}
  ],
\]
and by approximation of the step function, we have for all $x$
\[
  \lim_{k_1 \to \infty}
  |
  \Pr(\mathcal{R}^{k_1}\leq x)-
  \Pr(\mathcal{R}^{k_1,'}\leq x)
  |
  +
  |
  \Pr(\mathcal{R}^{k_1}\leq x)-
  \Pr(\mathcal{R}^{k_1,''}\leq x)
  |=0.
\]
Using the fact that $\Pi^{k_1}$ is Poisson of random intensity, the last probability can be written as
$\Exp_{\hat\psi,\mathscr{B}_{\infty}} e^{-\mathcal{I}_{\hat\psi,x}\mathscr{B}_{\infty}}$ where
\[\mathcal{I}_{\hat\psi,x}= \int I(v)\mathfrak{p}_{k_1} (v,\mathcal{A}_{\hat\psi,\theta,v})dv.\]
Let
\[
  F_{\beta,\hat\psi}(v,y)=  \mathfrak{p}_{k_1}\Big(v,\max_{\eta\in [-2\pi k_1,0]} \log \Big(|f(\eta)|-\Re(e^{-i(\hat\psi+\eta)} f(\eta))\Big)\geq y\Big).
\]
From Corollary \ref{cor:pregularity}, we have that uniformly on compact sets of $\alpha$ and $y$ and uniformly in $|v| \leq (\log k_1)^{17/18}$
\[
  \begin{aligned}
  F_{\beta,\hat\psi}(v+\alpha, y-\sqrt{\tfrac{4}{\beta}}\alpha)
  &= \mathfrak{p}_{k_1}\Big(v+\alpha,\max_{\eta\in [-2\pi k_1,0]} \log \Big(|f(\eta)|-\Re(e^{-i(\hat\psi+\eta)} f(\eta))\Big)\geq y-\sqrt{\tfrac{4}{\beta}}\alpha\Big) \\
  &= e^{\sqrt{2}\alpha + o_{k_1}}\mathfrak{p}_{k_1}\Big(v,\max_{\eta\in [-2\pi k_1,0]} \log \Big(|f(\eta)|-\Re(e^{-i(\hat\psi+\eta)} f(\eta))\Big)\geq y\Big) \\  
  &= e^{\sqrt{2}\alpha + o_{k_1}} F_{\beta,\hat\psi}(v, y).    
  \end{aligned}
\]

Note that due to the random phase in the definition of $\mathfrak{p}_{k_1}$, the function $F_{\beta,\hat\psi}(v,y)$ is actually independent of $\hat\psi$, and we can write
$F_{\beta}(v,y)=F_{\beta,\hat\psi}(v,y)$.
Then, using the change of variables $v=w-\sqrt{\tfrac{\beta}{4}}x$ and setting $J=[(\log  k_1^+)^{1/10}, (\log  k_1^+)^{9/10}]$,
\begin{eqnarray}
\label{eq-punch}
\mathcal{I}_{\hat\psi,x}
&=&\int_J  ve^{\sqrt 2v} F_{\beta}(v,2x-\log 2+\sqrt{\tfrac{4}{\beta}}v) dv\nonumber\\
&=&
\int_{\sqrt{\beta/4}x+ J} (w-\sqrt{\tfrac{\beta}{4}}x) e^{\sqrt{2}w-\sqrt{\tfrac{\beta}{2}}x}F_{\beta}(w-\sqrt{\tfrac{\beta}{4}}x,x-\log 2+\sqrt{\tfrac{4}{\beta}}v) dw. \nonumber\\
&=&
\int_{\sqrt{\beta/4}x+ J} (w-\sqrt{\tfrac{\beta}{4}}x) e^{\sqrt{2}w-\sqrt{{2\beta}}x + o_{k_1}}F_{\beta}(w,-\log 2+\sqrt{\tfrac{4}{\beta}}v) dw. \nonumber
\end{eqnarray}
In particular, $\mathcal{I}_{\hat\psi,x}$ does not depend on  $\hat\psi$, and we can omit it from the notation. Since $x$ is fixed, we obtain for $w$ in the stated interval that $(w-\sqrt{\tfrac{\beta}{4}}x)/w=1+o_{k_1}(1)$.  Hence if we let $\mathcal{I}_{x}'$ and $\mathcal{I}_{x}''$ to denote $\mathcal{I}_{x}$ with $I',I''$ replacing $I$ respectively, we obtain
\begin{equation}
\label{eq-punch1}
\mathcal{I}_{0}''(1+o_{k_1}(1))\leq e^{x \sqrt{2\beta} } \mathcal{I}_{x}\leq \mathcal{I}_{0}'(1+o_{k_1}(1)).
\end{equation}
Using that all of the distributions functions of $\mathcal{R}^{k_1}, \mathcal{R}^{k_1,'}, \mathcal{R}^{k_1,''}$ converge together in the limit, we conclude
\[
  \lim_{k_1\to\infty} 
  \Pr(\mathcal{R}^{k_1}\leq x)
  =\Exp \frac1{2\pi} \int_0^{2\pi} e^{-\mathscr{B}_\infty e^{-x\sqrt{2\beta}} A_\beta} d\hat\psi
  =\Exp e^{-\mathscr{B}_\infty e^{-x\sqrt{2\beta}} A_\beta},
\]
where
\begin{equation}
\label{eq-Abeta}
A_\beta=\int_{J} w e^{\sqrt{2}w }F_{\beta}(w,-\log 2+\sqrt{\tfrac{4}{\beta}}v) dw.
\end{equation}
is a constant that does depend on $\hat\psi$, and which takes values in $(0,\infty)$ due to tightness.

The proof of Theorem \ref{thm:max} is identical, except that 
instead of working with $R_n$ as in \eqref{eq-Rn+1}, we can work directly with $\varphi_n$, and in the right hand side of \eqref{eq-Rn+1}  one replaces 
the expression by $\log D_j(\theta)$, resulting in a simplification of the proof. 
\end{proof}

Theorem \ref{thm:process} is a direct corollary of Theorem \ref{prop:bush} and some estimates from Section \ref{sec-arc} below.
%Before providing the proof, we
%need to define
 \begin{proof}[Proof of Theorem \ref{thm:process}]
 Note first that from the data
  \(
    (\theta_j, V_j, D_je^{\sqrt{4/\beta}V_j}),
  \)
  we can express the triple
  \(
    (\theta_j, \widehat{W}_j, D_j)
  \)
  by a continuous transformation of the second two coordinates using $\iota$, viz.
   $ (\widehat{W}_j, D_j)
    =\iota( V_j, D_je^{\sqrt{4/\beta}V_j}).$
  Moreover this maps $\Gamma_{k_7}$ to $\widehat{\Gamma}_{k_7}.$  Furthermore, the transformation $\iota$ is Lipschitz with some constant $L({k_7})$ when restricted to this set.
  Let $P = \Pi^{k_1,k_2} \cap \Gamma_{k_7}$ and $Q = \Ext_n^{k_1} \cap \Gamma_{k_7}.$
  Hence for any $k_2$
  \begin{equation}\label{eqo:twoerrors}
    \partial_2\bigl(
    \Pi^{k_1} \cap \widehat{\Gamma}_{k_7},
    \Ex_n^{k_1} \cap \widehat{\Gamma}_{k_7}
    \bigr)
    \leq
    L(k_7)
    \partial_2( Q, P)
    +
    \partial_2( \widehat{\iota}(P),\Pi^{k_1} \cap \widehat{\Gamma}_{k_7}),
  \end{equation}
  where $\widehat{\iota}(P)$ is a Poisson point process on $\widehat{\Gamma}_{k_7}$ with intensity
  \(
  \mathscr{D}_{k_2}(\theta)d\theta \times
  \widehat{\mathfrak{p}_{k_1}}
  \)
  and
  where $\widehat{\mathfrak{p}_{k_1}}$ is the pushforward of $I(t)dt 
  \times \mathfrak{p}_{k_1}(t,df)$ under $\iota$, which is a Radon measure.  
  The first term in \eqref{eqo:twoerrors} goes to $0$ from Theorem \ref{prop:bush}.

  We now turn to the second term. Define for any two finite Borel measures on $\Gamma,$
  \begin{equation*}
    d_{\operatorname{BL}}(\pi,\lambda) \coloneqq \sup_{ g } \biggl|\int_{\Gamma} g d(\pi -\lambda)\biggr|,
  \end{equation*}
  with the supremum over all $g$ with both $|g(x)-g(y)| \leq \partial_0(x,y)$ for all $x,y \in \Gamma$ and $|g(x)| \leq 1$ for all $x \in \Gamma.$
  We need the following bound:
    \begin{equation}\label{eq:ibound}
      \sup_{k_1}
      \widehat{\mathfrak{p}_{k_1}}\bigl(
      [-k_7,k_7] \times \mathcal{C}( [-2\pi k_1,0], \C)
      \bigr)
      \eqqcolon \max\widehat{\mathfrak{p}}
    <\infty \quad \As
    \end{equation}
    which is shown in Lemma \ref{zrh:2moment} (in the notation of that lemma, it is $\mathcal{H}$).
    If $f : \widehat{\Gamma}_{k_7} \to \R$ is a $1$-bounded, $1$-Lipschitz function with respect to $\partial_0$, we therefore have
    \[
      \biggl|\int_{\widehat{\Gamma}_{k_7}} f(x,y) \bigl(\mathscr{D}_{k_2}(dx) -\mathscr{D}_{\infty}(dx)\bigr)\widehat{\mathfrak{p}_{k_1}}(dy)
      \biggr|
      \leq \sup_g \biggl|\int_{[0,2\pi]} g(x)\bigl(\mathscr{D}_{k_2}(dx) -\mathscr{D}_{\infty}(dx)\bigr)\biggr| \max\widehat{\mathfrak{p}},
    \]
    where $g$ ranges over all the fibers $f( \cdot, y)$ over all $y\in[-k_7,k_7]\times \mathcal{C}([-2\pi k_1,0],\mathbb{C})$.  These are all $1$-bounded and $1$-Lipschitz with respect to the metric $d(x,y) = (|x-y| \wedge 1).$  It follows from Arzel\`a-Ascoli and Theorem \ref{thm:Bj} (which gives almost sure weak-* convergence of $\mathscr{D}_{k_2}$ to $\mathscr{D}_{\infty}$ and the almost sure finiteness of $\mathscr{D}_\infty$) that
    \[
      \limsup_{k_2,k_1 \to \infty}
      \biggl|\int_{\widehat{\Gamma}_{k_7}} f(x,y) \bigl(\mathscr{D}_{k_2}(dx) -\mathscr{D}_{\infty}(dx)\bigr)\widehat{\mathfrak{p}_{k_1}}(dy)
      \biggr|
      =0\quad \As
    \]
    Hence again by Arzel\`a-Ascoli, taking supremum over all such $f$ we conclude
  \[
    \limsup_{k_2,k_1 \to \infty}
    {d_{\operatorname{BL}}( \mathscr{D}_{k_2} \times \widehat{\mathfrak{p}_{k_1}}, \mathscr{D}_{\infty} \times \widehat{\mathfrak{p}_{k_1}} )
    }
    =0 \quad \As
  \]
  Hence from Theorem \ref{thm:PPcom}, $\partial_2( \widehat{\iota}(P),\Pi^{k_1} \cap \widehat{\Gamma}_{k_7} ) \to 0$ as $k_1\to \infty$ followed by $k_2 \to \infty,$ which completes the proof.

The proof for $\Pi^{k_1,'}$ and $\Pi^{k_1,''}$ is identical.
\end{proof}

\subsection{Imaginary part of the log-determinant}
\label{subsec-imaginarylog}
%A direct corollary of the cases $\sigma=i,-i$ of Theorem \ref{thm:max} is the following.
%A similar statement for the increasing version of the imaginary part of the logarithm of the characteristic polynomial.
The following corollary, which handles the imaginary part of the logarithm of the characteristic polynomial, follows from Theorems \ref{prop:bush} and \ref{thm:process} in
the same way that Theorem \ref{theo-main} followed from them; some adjustments are necessary because $\Im \log X_n(\theta)$ takes values on a (shifted) 
lattice, see
\eqref{eq:ImX} below.
\begin{corollary}\label{cor:icpoly}
There are deterministic constants $a_n\in [0,2\pi]$ and 
almost surely finite random variables $I_{\pm}$ so that
  \[
    \begin{aligned}
      &\max_{\theta \in [0,2\pi]} \{ 2\Im \log X_n(\theta) - n\theta \} - \sqrt{\tfrac{8}{\beta}} m_n-a_n
    \Wkto[n] I_+, \quad\text{and}\quad \\
    &\min_{\theta \in [0,2\pi]} \{ 2\Im \log X_n(\theta) - n\theta \} + \sqrt{\tfrac{8}{\beta}} m_n+a_n
  \Wkto[n] I_-.
    \end{aligned}
    %}
  \]
\end{corollary}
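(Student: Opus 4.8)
The plan is to follow verbatim the argument used to deduce Theorem \ref{theo-main} from Theorems \ref{prop:bush} and \ref{thm:process}, only now with $\sigma = i$ in the recurrence \eqref{eq:upsilon}, so that $\varphi_n$ tracks $2\Im \log \Phi_{n-1}^*$ rather than $2\Re \log \Phi_{n-1}^*$. First I would recall from \eqref{eq:charpoly} that $X_n(e^{i\theta}) = \Phi_{n-1}^*(e^{i\theta})(1-\alpha e^{i\Psi_{n-1}(\theta)})$, so that
\[
  2\Im \log X_n(e^{i\theta}) - n\theta
  = 2\Im \log \Phi_{n-1}^*(e^{i\theta}) - n\theta
    + 2\Im \log\bigl(1 - \alpha e^{i\Psi_{n-1}(\theta)}\bigr).
\]
The term $-n\theta$ together with the leading behaviour of $2\Im\log\Phi_{n-1}^*$ produces the deterministic part; the point is that, exactly as in \eqref{eq:det2}--\eqref{eq-Rn+1}, the contribution of the random factor $1-\alpha e^{i\Psi_{n-1}(\theta)}$ is a bounded perturbation (here $|2\Im\log(1-w)| \le \pi$ for $|w|<1$, replacing the $\log 2$ bound used in the real case), and so it only affects the decoration process $D_j$ and not the centering or the global structure of near-maxima.

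Next I would rewrite $\max_\theta\{2\Im\log X_n(\theta) - n\theta\} - \sqrt{8/\beta}\,m_n$ as a functional of the marked point process $\Ex_n^{k_1}$, in direct analogy with \eqref{eq-Rn+1}: over each arc $\widehat{I_j}$ one maximizes, over $\theta \in [-2\pi k_1,0]$, a fixed continuous (indeed Lipschitz once truncated by $\phi_{k_7}$) function of the decoration $D_j(\theta)$ and of the independent uniform phase $\alpha$, and then one maximizes over $j \in \mathcal{D}_{n/k_1}$. The truncation $\phi_{k_7}$ together with the observation that $\widehat{W}_j \le -k_7$ forces the local contribution below $-k_7 - \pi$ lets us replace the full process by its restriction to $\widehat{\Gamma}_{k_7}$, on which the functional is $C(k_7)$-Lipschitz with respect to $\partial_1$. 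Applying Theorem \ref{thm:process}, the distribution of this truncated maximum converges (as $n\to\infty$ then $k_1\to\infty$) to the law of the same functional evaluated on the Poisson process $\Pi^{k_1}$, and since $k_7$ is arbitrary the full (untruncated) statistic converges in law; this gives existence of $I_+$. The case of $\min_\theta$, hence of $I_-$, follows either by the same argument applied to $-\varphi_n$ (i.e.\ $\sigma = -i$, reduced to $\sigma = i$ by the remark following Theorem \ref{thm:max}) or directly by symmetry of the construction.

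The main obstacle, as in the real case, is not in this deduction but in verifying that the relevant statistic really is (up to a controlled error) a Lipschitz functional of $\Ex_n^{k_1} \cap \widehat\Gamma_{k_7}$: one must check that the global maximum of $2\Im\log X_n - n\theta$ is, with probability tending to one, attained inside an arc $\widehat{I_j}$ whose centered local maximum $\widehat{W}_j$ exceeds $-k_7$, so that arcs outside $\widehat\Gamma_{k_7}$ can be discarded. This is exactly the content of the tightness statement in Theorem \ref{thm:tightness} (which explicitly covers the imaginary part with its shifted centering) combined with the bound $|2\Im\log(1-\alpha e^{i\Psi_{n-1}(\theta)})|\le \pi$; given these inputs the argument is a line-by-line repetition of the proof of Theorem \ref{theo-main}, and the characterization of the limit in terms of $\mathscr{B}_\infty$, an independent Gumbel, and an implicit constant proceeds through the Poisson computation \eqref{eq-punch}--\eqref{eq-Abeta} with $F_\beta$ replaced by the corresponding distribution function built from $\mathfrak{p}_{k_1}$ and the imaginary-part functional.
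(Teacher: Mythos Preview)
Your proposal is correct and matches the paper's approach exactly: the paper states only that the corollary ``follows from Theorems \ref{prop:bush} and \ref{thm:process} in the same way that Theorem \ref{theo-main} followed from them,'' and your write-up is a faithful expansion of precisely that argument for $\sigma=i$, including the use of the remark after Theorem \ref{thm:max} to handle the minimum via $\sigma=-i$. One small cosmetic point: your bound $|2\Im\log(1-w)|\le\pi$ is stated for $|w|<1$, but here $|\alpha e^{i\Psi_{n-1}}|=1$; the bound still holds (indeed $2\Im\log(1-e^{i\phi})=\phi-\pi$ for $\phi\in(0,2\pi)$, as noted in the proof of Lemma \ref{lem:psikmonotone}), so this is only a matter of phrasing.
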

The adaptation of the proof needed to handle here the discreteness
is somewhat simpler than that needed in the 
case of lattice valued branching random walks, as described in 
\cite[Section 5]{BDZ1}; this is because the effects of discreteness here
are only felt in the ``decoration'' process.

The imaginary part of the logarithm of the characteristic polynomial is related to the eigenvalue counting process of the C$\beta$E.
Specifically we take an increasing version of the map
\begin{equation}\label{eq:ImX}
  \theta \mapsto n\theta - 2\Im( \log X_n(e^{i\theta}) - \log X_n(1)),
\end{equation}
(c.f.\ Lemma \ref{lem:psikmonotone} below)
which in fact counts $2\pi| \{ j : 1 \leq j \leq n, -\omega_j \in [0, \theta]\}|,$ at all those $\theta$ except $\left\{ -\omega_1,-\omega_2,\dots,-\omega_n \right\}.$  This has the same law (as a process in $\theta$) as
\[
  N_n(\theta) \coloneqq 2\pi| \{ j : 1 \leq j \leq n, \omega_j \in [0, \theta]\}|.
\]
As $\{\log X_n(1)/\sqrt{\log n}\}$ becomes a centered Gaussian (this follows from Proposition~\ref{prop:mgapproximation} with some minor adjustments) and 
\[
\max_{\theta \in [0,2\pi]} \{ 2\Im \log X_n(\theta) - n\theta \} - \sqrt{\tfrac{8}{\beta}} m_n
\]
is tight, it follows that
\[
  \biggl\{\max_{\theta \in [0,2\pi]} \{ N_n(\theta) - n\theta \} - \sqrt{\tfrac{8}{\beta}} m_n : n \in \N \biggr\}
\]
is not tight, and in fact when scaled down by $\sqrt{\log n}$ converges in law to a centered nondegenerate Gaussian.
%\todo{22-04-08: clean up (shorten, use as motiviation.  Relate to $\R^d$ BBM of Berestycki.}

%On the other hand using the identity \eqref{eq:charpoly}, we can represent \eqref{eq:ImX} as
%\[
%  \theta \mapsto
%  (\Psi_{n-1}(\theta)
%  -\Psi_{n-1}(0))-2\Im\bigl(
%  \log (1-\alpha e^{i\Psi_{n-1}(\theta)})
%  -\log (1-\alpha e^{i\Psi_{n-1}(0)})
%  \bigr),
%\]
%and hence give the representation, with $\alpha = e^{2\pi i \omega}$ and $\omega \sim \Unif([0,1]),$
%\[
%  | \{ j : 1 \leq j \leq n,-\theta_j \in 2\pi\Z + [0, \theta]\}|
%  =
%  | \{ \tau \in [0,\theta] : \Psi_{n-1}(\tau) \in 2\pi (\Z + \omega) \}|.
%\]
On the other hand, the maximum over all arcs of the centered counting function admits the representation
\[
  %\max_{\substack{\theta_j \in [0,2\pi] \\ \theta_1 < \theta_2}}
  \max_{\theta_j \in [0,2\pi]}
  \biggl\{
    N_n(\theta_2) - N_n(\theta_1) - n(\theta_2-\theta_1)
  \biggr\}
  \lawequals
  \max_{\theta \in [0,2\pi]} \{ 2\Im \log X_n(\theta) - n\theta \}
  -\min_{\theta \in [0,2\pi]} \{ 2\Im \log X_n(\theta) - n\theta \}.
\]
This motivates understanding the joint convergence of the maximum and the minimum of the imaginary part of the logarithm of the characteristic polynomial, which we do not pursue here.
\begin{conjecture}
\label{conj-joint}
  The convergence in Corollary \ref{cor:icpoly} holds jointly.
\end{conjecture}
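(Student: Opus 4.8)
The plan is to upgrade the proof of Corollary \ref{cor:icpoly} to a joint statement by running the point–process machinery of Theorems \ref{prop:bush} and \ref{thm:process} simultaneously for $\varphi_n$ (which governs the maximum) and for $-\varphi_n$ (which governs the minimum); throughout, $\sigma=i$. For each arc $\widehat I_j$, let $\widehat W_j^{\pm}$, $V_j^{\pm}$ and $D_j^{\pm}$ be the quantities \eqref{eq:localmaxhat}, \eqref{eq:leafheight}, \eqref{eq:decorations} formed from $\pm\varphi_n$, and let $\Ex_n^{\pm}$, $\Ext_n^{\pm}$ be the resulting marked point processes. The key structural observation is the deterministic identity $V_j^{+}+V_j^{-}=2\sqrt2\,m_{n_1^+}$: an arc carrying a non-negligible decoration for $\varphi_n$, i.e.\ with $V_j^{+}$ of size $(\log k_1^+)^{O(1)}$, automatically has $V_j^{-}\asymp\log n$ and so contributes nothing to $\Ext_n^{-}\cap\Gamma_{k_7}$; symmetrically for $\Ext_n^{+}$, and likewise for $\Ex_n^{\pm}$ on $\widehat\Gamma_{k_7}$ since $\varphi_n$ varies by $o(\log n)$ inside an arc. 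Hence $\Ext_n^{+}$ and $\Ext_n^{-}$ (resp.\ $\Ex_n^{\pm}$), on the relevant restricted spaces, are supported on asymptotically \emph{disjoint} sets of arcs.

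\emph{Step 1: joint Poisson approximation.} I would prove the joint analogue of Theorem \ref{prop:bush}: conditionally on $\filt_{k_2}$, the pair $(\Ext_n^{+,k_1}\cap\Gamma_{k_7},\ \Ext_n^{-,k_1}\cap\Gamma_{k_7})$ converges, in the product $\partial_2$–pseudometric, to a pair $(\Pi^{+},\Pi^{-})$ of Poisson random measures that are \emph{conditionally independent given} $\filt_{k_2}$, with $\Pi^{\pm}$ of intensity $\mathscr{D}_{k_2}^{\pm}(\theta)\,d\theta\otimes I(v)\,dv\otimes\mathfrak{p}_{k_1}$ (the $-$ objects having the same law as the $+$ ones by the reflection symmetry in the Remark after Theorem \ref{thm:max}). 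Since the estimates behind Theorem \ref{prop:bush} are local — they control one arc, or one pair of arcs, conditionally on $\filt_{k_2}$ — they port over verbatim to each of $\Ext_n^{\pm}$, so the only genuinely new input is a cross–term factorization: for $j\neq j'$,
\[
\Pr( j\in\operatorname{supp}\Ext_n^{+},\, j'\in\operatorname{supp}\Ext_n^{-}\mid\filt_{k_2})
=(1+o(1))\,\Pr(j\in\operatorname{supp}\Ext_n^{+}\mid\filt_{k_2})\,\Pr(j'\in\operatorname{supp}\Ext_n^{-}\mid\filt_{k_2}),
\]
uniformly enough as $n\to\infty$, then $k_1\to\infty$, then $k_2\to\infty$. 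This is a second–moment estimate of exactly the flavour already used to establish that a single $\Ext_n^{\sigma}$ is Poisson rather than clustered: a near‑maximum of $\varphi_{n_1^+}$ at $\theta_j$ and a near‑minimum at $\theta_{j'}$ force the Pr\"ufer evolution up one branch and down a different one, conditionally on $\filt_{k_2}$ these branches split off immediately and run independently, and the diagonal $j=j'$ is excluded by the disjointness above.

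\emph{Step 2: pass to the limit and read off the law.} Sending $k_1\to\infty$ and then $k_2\to\infty$, Theorem \ref{thm:Bj} applied to both $\varphi_n$ and $-\varphi_n$ gives $\mathscr{D}_{k_2}^{\pm}\to\mathscr{D}_\infty^{\pm}$ almost surely and jointly (they are built from the same Verblunsky coefficients), so the limit of $(\Ext_n^{+},\Ext_n^{-})$ is a pair of Poisson random measures which, \emph{given} $(\mathscr{D}_\infty^{+},\mathscr{D}_\infty^{-})$, are independent (as in the Remark following the proof of Theorem \ref{thm:process}). Transporting this through $\iota$ and the Lipschitz functionals used in the proofs of Theorem \ref{thm:process} and of Theorem \ref{theo-main}/Corollary \ref{cor:icpoly}, one gets joint convergence of $\bigl(\max_{\theta}\{2\Im\log X_n(\theta)-n\theta\}-\sqrt{8/\beta}\,m_n,\ \min_{\theta}\{2\Im\log X_n(\theta)-n\theta\}+\sqrt{8/\beta}\,m_n\bigr)$ to $(I_+,I_-)$, with
\[
\Pr(I_+\le x,\ I_-\ge -y)=\E\bigl[\exp\bigl(-A_\beta\,\mathscr{B}_\infty^{+}e^{-x\sqrt{2\beta}}\bigr)\exp\bigl(-A_\beta\,\mathscr{B}_\infty^{-}e^{-y\sqrt{2\beta}}\bigr)\bigr],
\]
the expectation over the \emph{joint} law of the two derivative‑martingale masses $(\mathscr{B}_\infty^{+},\mathscr{B}_\infty^{-})$ (which have equal marginal laws, and $A_\beta$ of \eqref{eq-Abeta} is the same on both sides, by the reflection symmetry).

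\emph{Main obstacle.} The real work is Step 1. One must recast the ballot/barrier estimates underlying Theorem \ref{prop:bush} so as to control upward and downward excursions of the Pr\"ufer evolution simultaneously, and then prove the cross‑term factorization quantitatively and uniformly in the order of limits $n\to\infty$, $k_1\to\infty$, $k_2\to\infty$. The identity $V_j^{+}+V_j^{-}=2\sqrt2\,m_{n_1^+}$ disposes of the most dangerous correlation (coincident arcs), but one still has to show that being high near $\theta_j$ conveys asymptotically no information about where $\varphi_{n_1^+}$ is low, given $\filt_{k_2}$; because the two events push the field in opposite directions, the log‑correlation enters with a favourable sign and should only cost a bounded factor at macroscopic separation, with the matching lower bound supplied by the independent‑subtree picture — but turning this into a uniform estimate is where the effort lies. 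I do not expect new phenomena, and the same method should decide whether $I_+$ and $I_-$ are independent, which by the display above is equivalent to independence of $\mathscr{B}_\infty^{+}$ and $\mathscr{B}_\infty^{-}$.
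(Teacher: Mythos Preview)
The statement you are attempting to prove is a \emph{conjecture}, not a theorem: the paper explicitly says, immediately before stating it, ``This motivates understanding the joint convergence of the maximum and the minimum of the imaginary part of the logarithm of the characteristic polynomial, which we do not pursue here,'' and immediately after it cites \cite{SBM} and \cite{BKLMZ} only for the analogous result in Branching Brownian Motion. There is therefore no proof in the paper to compare your proposal against.

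Your outline is a plausible line of attack on the open problem, and the structural observation that the arcs supporting $\Ext_n^{+}\cap\Gamma_{k_7}$ and those supporting $\Ext_n^{-}\cap\Gamma_{k_7}$ are asymptotically disjoint is correct and useful. You have also correctly located the crux: the cross-term second-moment estimate showing that a near-maximum at $\theta_j$ and a near-minimum at $\theta_{j'}$ decorrelate given $\filt_{k_2}$. However, this is genuinely new work, not a reorganization of existing estimates. The two-ray machinery in the paper (Proposition~\ref{prop:2ray} and Appendix~\ref{section:lower_bound}) is written for two rays that are \emph{both} high; the barrier events, the Girsanov tilts, and the entropic envelopes are all one-sided. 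A high/low pair requires tilting the two rays in opposite directions and controlling a walk that must stay below one barrier on one coordinate and above a reflected barrier on the other --- the relevant ballot-type estimates are not in the paper and would have to be developed. Your assertion that ``the log-correlation enters with a favourable sign'' is heuristically right for well-separated angles, but the delicate regime is intermediate separation (branching time between $k_2$ and $n_1^+$), where one needs the analogue of cases 2--3 of Proposition~\ref{prop:2ray} for mixed-sign endpoints, and that is precisely what the authors declined to carry out.
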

Note that in the model of Branching Brownian Motion, the analogue of Conjecture \ref{conj-joint} was recently proved in \cite{SBM}, see also \cite{BKLMZ}.

\subsection{Related literature}
The analysis in this paper falls within the topic of \textit{logarithmically correlated fields}.  Indeed, in the case of $\beta=2$, it was shown in \cite{HKO} that
the process $ Y_n(z)=\{\log |X_n(z)|\}_{|z|=1}$ is logarithmically correlated in the sense that for any smooth test function $\phi$ on $S^1$ with $\int \phi(\theta) d\theta=0$, the random variable
$Y_\phi \coloneqq (2\pi)^{-1} \int_0^{2\pi} Y_n(e^{i\theta}) \phi(\theta) d\theta$ converges in distribution to a centered Gaussian random variable with
variance  $ \sum_{k=-\infty}^\infty \frac{|\hat \phi_k|^2}{8|k|}$, where $\hat \phi_k$ is the $k$-th Fourier coefficient of $\phi$.  See also \cite{BF, KS, Wieand} for related results.
Note that the above variance expression corresponds to a generalized Gaussian field with correlation having singularity of the form $-\log  |\theta-\theta'|$.

For Gaussian logarithmically correlated fields, the study of the maximum has a long history, going back to the seminal work \cite{Bramson78,bramson83} concerning Branching Brownian motion (BBM). Bramson
introduced the truncated second moment and barrier methods that are the core tools in all subsequential analysis (see \cite{Roberts} for a modern perspective).
Extremal processes for the BBM were constructed in \cite{ABBS} and \cite{ABK}. Following some earlier work on tightness and rough structure of the extrema,
the extension of the convergence of the maximum to the discrete Gaussian free field in the critical dimension $2$
was obtained in \cite{BDZ} and (for the extremal process) in \cite{BL}. See \cite{biskup} for an updated account, and \cite{zeitouni} for an introduction.
A convergence result for general log-correlated Gaussian
field is presented in \cite{DRZ17}, see also \cite{Madaule}.

In the physics literature, the notion of \textit{freezing} in the context of extremal processes of logarithmically correlated fields arose in the seminal work
\cite{FBo}. The link between the freezing phenomenon and extremal processes of the decorated, randomly shifted Poisson type was
rigorously elucidated in \cite{SZ}.
The highly influential work \cite{FHK12} (see also \cite{FH}) applied the freezing paradigm to making predictions  for the maximum  of the logarithm of the modulus of the characteristic polynomial of CUE matrices, see \eqref{eq-R2}, and using a conjectured dictionary going back to \cite{KS}, made predictions concerning the maximum of the Riemann $\zeta$ function over
short intervals of the critical axis. This has stimulated much work, both on the random matrix side (which we will review shortly) and on the Riemann side, for which we refer
to \cite{ABBRS} and \cite{ABRI} for the latest progress on verifying the FHK conjectures for the Riemann $\zeta$ function.

On the circular ensembles side, the first verification of the leading order in the FHK prediction was obtained in \cite{ABB}, followed in short order by the verification of the second order term
\cite{PaquetteZeitouni02}; both works used explicit computations facilitated by dealing with $\beta=2$, and a decomposition of the log-determinant according to Fourier modes for the former or to
spatial approximations for the latter. It was \cite{CMN} who introduced the use of Verblunsky coefficients and the \emph{Szeg\H{o}} recursions in \eqref{eq:szego} to not only
handle arbitrary $\beta>0$, but also to obtain the tightness of $M_n-m_n$. Along the way, \cite{CMN} derived various Gaussian approximations and barrier estimates that are fundamental for the current paper.

The analysis for C$\beta$E has natural analogues for Hermitian ensembles of the G$\beta$E type. We mention in particular \cite{FLD} for an early application of the freezing scenario in that context. More recent work include \cite{LambertPaquette01, ABZ,  CFLW, BMP, BLZ}. At this time, the results in the Hermitian setup are much less sharp than for circular ensembles.

An important role in our analysis and results  is played by the derivative martingale measure $\mathscr{D}_\infty(d\theta)$ and its total mass $\mathscr{B}_\infty$, both related to the theory of Gaussian Multiplicative Chaos (GMC), see \cite{RV} for a review. This is not
surprising - the appearance of such martingales in the expression for the law of the maximum of the BBM was discovered already in  \cite{LS}. Subsequentially,
it appeared in the analogous studies for Branching Random Walks \cite{Aidekon}, in the context of the Gaussian free field \cite{DRSV,BL}, and for more general log-correlated fields
\cite{Madaule}. As noted above, for a specific log-correlated field on the circle, \cite{Remy} computes the law of the total mass of the associated GMC and confirms the Fyodorov-Bouchaud prediction
\cite{FBo}
for it. 

We also mention that for a related model, \cite{CN} provide a direct link with the relevant GMC.
Their work implies convergence of the random measure $|\Phi^*_n|^{\gamma}(e^{i\theta})d\theta$ when $\gamma=-2$ and $\beta \geq 2$.  After appropriate rescaling, this measure converges to the GMC of \cite{FBo,Remy,CN} \emph{rescaled} to be a probability measure.  Note that the exponent $\gamma=-2$ never quite appears in the problem we consider, and for any $\beta$ the relevant $\gamma$ is the positive critical point.  When $\beta=2$, the full Fyodorov-Hiary-Keating conjecture would follow from convergence of $\mathscr{D}_n(\theta)d\theta$ (which up to rescaling) to the same GMC; this is expected to have the same limit as $\sqrt{\log n}|\Phi^*_n|^{2}(e^{i\theta})d\theta$ (up to constants).

Very recently, \cite{LN24} gave a proof of convergence of the 
random measure $|\Phi^*_n|^{\gamma}(e^{i\theta})d\theta$ to the (subcritical)
GMC whose total mass was evaluated in \cite{Remy}, 
for all $\gamma<\sqrt{2\beta}$. Their techniques might be relevant to the
evaluation of the law of $\mathscr{D}_\infty$, which corresponds to the case of critical GMC.

\subsection{A high level description of the proof}
We now provide a high level description of the proof, that glosses over many important details. A  detailed description of the proof that includes 
precise statements is provided in Section  \ref{sec-arc}.

As in \cite{CMN}, the key observation is that the recursion \eqref{eq:upsilon} contains all information needed in order to evaluate the determinant, due to \eqref{eq:charpoly}.
For fixed $\theta$, the variable $\varphi_k(\theta), k=1,\ldots,n$ can be well approximated by a random walk, and further, for all $k$ large, the increments of the random walk
are essentially Gaussian.

We  explain the strategy toward the proof of Theorem \ref{thm:max}.
We fix large constants $k_1,k_2$ (with $k_1\gg k_2$). Instead of directly studying the field $\varphi_n(\theta), \theta\in[0,2\pi]$, we consider a sublattice $\mathcal{D}_{n/k_1}$
of angles $\theta_j$
of cardinality $\lceil n/k_1\rceil$, and associate to each an interval
  $\widehat{I_{j,n}}$ as in \eqref{eq:hatij}. We write
\[
  \varphi_n(\theta)
  =\varphi_{k_2}(\theta)+ (\varphi_{n/k_1}(\theta)-\varphi_{k_2}(\theta))
  +(\varphi_n(\theta)-\varphi_{n/k_1}(\theta))
  \eqqcolon 
  \varphi_{k_2}(\theta)+ \Delta_{k_2,n/k_1}(\theta)+\Delta_{n/k_1,n}(\theta),
\]
and 
\[\max_{\theta\in [0,2\pi]}  \varphi_n(\theta)= \max_{j} \max_{\theta\in \widehat{I_{j,n}}}\Big(\varphi_{k_2}(\theta)+ \Delta_{k_2,n/k_1}(\theta)+\Delta_{n/k_1,n}(\theta)\Big).\]
We claim that the last expression can be approximated  as
\begin{equation}
\label{eq-approx}
\max_j\Big( \varphi_{k_2}(\theta_j)+ \Delta_{k_2,n/k_1}(\theta_j)+\max_{\theta\in  \widehat{I_{j,n}}}\Delta_{n/k_1,n}(\theta)\Big).\end{equation}
(This is not quite right, and in reality we will need to consider an intermediate point $n/k_1^+$ with $k_1^+>k_1$, but we gloss over this detail at this high-level description.)

To analyze the maximum in \eqref{eq-approx}, we introduce  the field 
$f_{n,j}(\eta) \coloneqq \Delta_{n/k_1,n}(\theta_j+\eta /n)$,
with $\eta\in [-2\pi k_1,0]$ and $\Delta$ defined above \eqref{eq-approx},
and write  \eqref{eq-approx} as
\begin{equation}
\label{eq-approx1}
\max_j\Big( \varphi_{k_2}(\theta_j)+ \Delta_{k_2,n/k_1}(\theta_j)+\max_{\eta\in [-2\pi k_1,0]} f_{n,j}(\eta)\Big)=: \max_j\Big(
\varphi_{k_2}(\theta_j)+ \Delta_{k_2,n/k_1}(\theta_j)+\Delta'_{n/k_1,n}(j)\Big).\end{equation}

The main contribution to the maximum comes from $j$s with
$\Delta_{k_2,n/k_1}(\theta_j)$
large, of the order of $\sqrt{8/\beta}(m_n-\log(k_1k_2))$.
However, the $\Delta_{k_2,n/k_1}(j)$ are far from independent for different
$j$. In order to begin controlling this, we introduce two ``good events'':
a global good event $\mathscr{G}_n$, which allows us to replace the recursion
by one driven by Gaussian variables (called $\mathfrak{z}_t(\theta)$, and taken for convenience in continuous time) and also impose an \emph{a priori}
upper limit on the recursion, and a barrier event
$\widehat{\mathscr{R}}$, which ensures that the Gaussian-driven recursion
$\mathfrak{z}_t(\theta)$ stays within a certain entropic envelope. We will
also insist that  $\mathfrak{z}_{n/k_1}(\theta_j)$ stays within an appropriate
window.
These steps are similar to what is done in \cite{CMN}
and prepare the ground for the application of the second moment method.

We next claim that the fields $f_{n,j}(\eta)$
converge in
distribution to the solution of a system of coupled stochastic differential equations as in \eqref{eq:dSDE} (again, this is not literally the case, and requires some pre-processing in the form
of restriction to appropriate events and using $k_1^+$ as before). In particular, the law of those fields are determined by the Markov kernel $\mathfrak{p}_{k_1}$.
Further and crucially, the fields $f_{n,j}$ can be constructed so that for well separated $j$s, they are independent. This analysis is contained in Sections \ref{sec:diffusion} and
\ref{sec-init}.

As in many applications of the second moment method, to allow for some decoupling it is necessary to
condition on $\filt_{k_2}$. We need to find high points of the right
side of \eqref{eq-approx1}.
The basic estimate, for  a given $j$, is that with $w_j=\sqrt{8/\beta} \log k_2-\varphi_{k_2}(\theta_j)$,
\begin{equation}
  \label{eq-approx2}
  \Pr\Big(\Delta_{k_2,n/k_1}(\theta_j)\sim \sqrt{8/\beta}(m_n-\log (k_1k_2)-v)\mid \filt_{k_2}\Big) \sim C \frac{ve^{2v} w_j e^{-2w_j}}{n}.\end{equation}
This estimate (after some pre-processing) is taken from \cite{CMN}, see Appendix
\ref{section:lower_bound}.

If the variables $\{\Delta_{k_2,n/k_1}(\theta_j)+\Delta'_{n/k_1,n}(j) : j\}$ were an independent family, we would be at this point done,
for then we would have that
\begin{eqnarray}
  \label{eq-approx3}
 && \Pr \Big(\varphi_{k_2}(\theta_j)+\Delta_{k_2,n/k_1}(\theta_j)+\Delta'_{n/k_1,n}(j) > \sqrt{8/\beta}(m_n +x) ~\mid~ \filt_{k_2}\Big)
 \nonumber\\ &&
 \sim C \frac{w_j e^{-2w_j}}{n/k_1} \Exp_{\mathfrak{p}_{k_1}}\Big( (V_j-x) e^{2(V_j-x)}\Big)
 \sim C\frac{\mathscr{D}_{k_{2}}\left(\theta_{j}\right)}{n/k_1} e^{-2x}.
\end{eqnarray}
Hence, we have using independence over different $j$ that
\[
  \begin{aligned}
    \Pr( \max_{\theta\in [0,2\pi]}  \varphi_n(\theta) \le \sqrt{8/\beta}(m_n +x)~\mid~ \filt_{k_2})
    &\sim \prod_j  \Big( 1- C\frac{\mathscr{D}_{k_{2}}\left(\theta_{j}\right)}{n/k_1} e^{-2x}\Big) \\
    &\sim \exp\bigl( -C\mathscr{B}_{k_2} e^{-2x}\bigr),
  \end{aligned}
\]
which would then yield Theorem \ref{thm:max}.

Unfortunately, different $j$s are not independent. We handle that through several Poisson approximations. First, we condition on $\filt_{n/k_1}$ and use the ``two moments suffice'' method of \cite{ArratiaGoldsteinGordon} to show
that the process of near maxima (together with the shape $(\varphi_n(\theta)-\varphi_{n/k_1}(\theta), \theta\in \widehat{I_{j,n}})$)
can be well approximated, as $k_1\to\infty$,
by a Poisson point process of intensity $\mathfrak{m}$ which depends still on $k_1$, see \eqref{eq:2ndintensity} and Proposition \ref{prop:ppp1}.
In the proof, the independence for well separated $j$s and the second moment
computations play a crucial role. (As mentioned above, we need to
replace $k_1$ by $k_1^+\gg k_1$ to make the argument work; for this reason, we
introduce a second Poisson approximation, see Section \ref{sec:2ndapprox}.)

This 
%seems very much 
is close to the statement of Theorem  \ref{prop:bush}, except
that the Poisson process we obtained so far
has a random intensity, measurable on
$\filt_{n/k_1^+}$. Our final step is another standard use of the second moment method to show that this random intensity concentrates, see Section
\ref{sec-thirdppp}.
As proved earlier in Section \ref{subsec-sequential}, all 
our theorems, and in particular Theorem \ref{thm:max}, follow from Theorem \ref{prop:bush}.

\subsection{Glossary}
For convenience and easy reference, we record in Tables \ref{table1}-\ref{table3} a glossary of notation.
%\vspace{-0.2cm}
\begin{table}[htbp]   \begin{tabular}{ r p{1.5cm} p{11cm}}
    \toprule
    Symbol& Ref. & Meaning \\
    \midrule
    $k_1$ & \eqref{eq:hatij} & The width (and height) of the ``bushes'', i.e.\ the size of decorating processes, which are deep in the tree.  \\
    $k_1^+,\widehat{k}_1$ & \eqref{eq:k2k3} & Height parameters, which are larger than $k_1$ by a multiplicative factor which is subpolynomial in $k_1$.  $k_1^+ \gg \widehat{k}_1.$ \\
    $n_1,n_1^+,\widehat{n}_1$ & \eqref{eq:leafheight} & The floors of $n/k_1$, $n/k_1^+$ and $n/\widehat{k}_1.$  \\
    $T_-,T_{\dagger},T_+$
    & \eqref{eq:LU}, \eqref{eq:dSDE} & $\log k_1 - \log k_1^+$, $\log k_1 - \log \widehat{k}_1$, $\log k_1$.  These correspond to $n_1^+$, $\widehat{n}_1$ and $n$, in the timescale of the decoration. \\
    $k_2$ & p.\,\pageref{k2} & The height from the root of the tree, i.e.\ the count of initial Verblunsky coefficients, on which the process is conditioned.  Always taken as an integer power of $2$.  The derivative martingale is adapted to $(\filt_{k_2} : k_2).$\\
    $k_3$ & \eqref{eq:G3} & The error tolerance of the approximating Gaussian process ${Z}_{2^k}^{k_2}$. \\
    $k_4$ & \eqref{eq:Rhat} & The number of steps (in logarithmic time) that are not covered by the barrier event. \\
    $k_5$ & p.\,\pageref{k5} & The mesh spacing parameter. \\
    $k_6$ & \eqref{eq:g3} & Generic good--event parameter, controlling the global maximum of $\varphi_n(\theta)$, the quality of the Gaussian approximation, the definition of near--leader, and some downstream errors. \\
    $k_7$ & Thm.\,\ref{thm:process} & Window in which the extremal process approximation is done. \\
    $\beta_j$ & & Constant appearing in the C$\beta$E Verblunksy recurrence, $\beta_j = \sqrt{\frac{\beta}{2}(j+1)}$ \\
    \bottomrule
  \end{tabular}
  \vspace{0.2cm}
  \caption{Table of large constants.  For $p < q$, $k_p \gg k_q$}
\label{table1}
\end{table}
%\clearpage

\begin{table}[htbp]   \begin{tabular}{ r p{2.2cm} p{11.5cm}}
%
%\begin{table}[htbp] 
%\begin{tabular}{ r p{2.2cm} p{11cm}}
    \toprule
    Symbol& Ref. & Meaning \\
    \midrule
    $\sigma$ & \eqref{eq:upsilon} & Either $1, i$ according to which log--correlated field is considered.\\
    $\varphi$ & \eqref{eq:upsilon} & The log--correlated field whose extrema is under consideration.\\
    $\Phi^*$ & \eqref{eq:Phi} & The reversed OPUC. \\
    $\Psi$ & \eqref{eq:Psi} & The (absolute) Pr\"ufer phase. \\
    $\widehat{I}_j$ & \eqref{eq:hatij} & An arc in the continuum which represents the domain of a decoration. \\
$H_k,\Gamma_k^a,\beta_k$& \eqref{eq-Hk},\eqref{eq:betagamma} &Harmonic number, Gamma variable and its (square-root) mean\\
    ${I}_j$ & \eqref{eq:Ij} & A discretized arc representing the domain of a decoration. \\
    $\mathfrak{W}$ & \eqref{eq:W} & A complex Brownian motion with normalization $\Exp |\mathfrak{W}_t|^2 = 2t.$ \\
  $\mathfrak{Z}_t(\theta)^\C$,  $\mathfrak{Z}_t(\theta)$ &\eqref{eq:Z}, \eqref{eq:GkZt} & Standard real (and complex) Brownian motions so that $|\varphi_k(\theta) - \sqrt{\tfrac{4}{\beta}}\mathfrak{Z}_{H_k}| \leq k_6$ for the $k$-th harmonic number $H_k$ and for all $k_2 \leq k \leq n$ on the good event $\mathscr{G}_n^2$ from \eqref{eq:g3}. \\
    $G_k(\theta)$ & \eqref{eq:GkZt} & Shorthand for $\sqrt{\tfrac{4}{\beta}}\mathfrak{Z}_{H_k}(\theta)$. \\
        $\widehat{W}_j$ & \eqref{eq:localmaxhat}& The (shifted) maximum of the field $\varphi_n(\theta)$ over the arc $\widehat{I}_j$ \eqref{eq:hatij}, with $j$ ranging over $\mathcal{D}_{n/k_1},$ the natural numbers up to $\lceil \tfrac{n}{k_1} \rceil$. \\
    $V_j,V_j'$ &\eqref{eq:leafheight}, \eqref{eq:localmax1}& value of $\sqrt{2}m_{n_1^+}-\varphi_{n_1^+}(\theta_j)$ and its $\mathfrak{Z}$ approximation\\
${W}_j$ & \eqref{eq:localmax}& The (shifted) local maximum of the field $\varphi_n(\theta)$ over the discrete mesh ${I}_j$ -- see above \eqref{eq:localmax}. \\
  $W_j',W_j^o$ & \eqref{eq:localmax1}, \eqref{eq:decoration}& The local maximum of the field  $\mathfrak{U}^j_{T_+}(\theta)$, respectively $\mathfrak{U}^{o,j}_{T_+}$, over a discrete mesh corresponding to ${I}_j$ \\
    $D_j,D_j',D_j^o$ & \eqref{eq:decorations},\,\eqref{eq:decorationD_j}, \eqref{eq:decoration} & The decoration processes. The latter two are piecewise continuous. \\
    $A_t^{p,\pm}$ & \eqref{eq:barrier} & Upper and lower barrier functions that control that support the paths of near--leaders with width parameter $p$.\\
 $\mathcal{A}_t^{\pm}$&\eqref{eq:decorationbarrierA}& A barrier that is used in the ray events $\mathscr{P}_j'$, $\mathscr{P}_j'(\theta)$.\\
    $\mathfrak{L}^j_t, \mathfrak{U}^j_t, \mathfrak{W}^j_t$ & \eqref{eq:LU} & The diffusion approximation to $2\log \Phi^*$ on the arc $\widehat{I}_j$, the approximation to $\varphi$, and the driving complex Brownian motion on time interval $[T_-,T_+]$ (see above \eqref{eq:Wtheta}).  The superscript $j$ denoting the interval is suppressed when it will not cause confusion. \\
$\Ex_n$,  $
  \Ext_n^{k_1}%, \dots, k_5}
  $ & \eqref{eq-Exn}, \eqref{eq:Ext} & Extremal point processes which are the main object of study. \\
  $\Extr_n,\Extre_n$
   & \eqref{eq:Extr}, \eqref{eq:Extre} & Simplifications of $\Ext_n$. \\
$\Pi(\Lambda)$&\eqref{eq:2ndintensity}& Poisson process of intensity $\Lambda$\\
$\Pi^{k_1,k_2}$, $\Pi^{k_1}$&\eqref{eq:trueintensity}, \eqref{eq-defpk1}& Limiting Poisson processes.\\
  $\widehat{\Gamma}_{k_7}, \Gamma_{k_7}, \Gamma_{k_7}^+$ & \eqref{eq-defhatGammak7},
%p.\,\pageref{gammahatk7},
\eqref{eq:gk7} & The domains on which the points processes are considered.  \\
$\mathscr{D}_{k_2}(\theta)$, $\mathscr{B}_{k_2}$
%$\mathscr{D}_\infty(d\theta)$, $\mathscr{B}_\infty$
& \eqref{eq:Bk}
%, Theorem \ref{thm:Bj}
& Derivative martingales, with  limits $\mathscr{D}_\infty(d\theta)$, $\mathscr{B}_\infty$ from  Theorem \ref{thm:Bj}. \\
$\mathfrak{p}, \mathfrak{s}$
& 
\eqref{mtl:intensity},
\eqref{eq:decorationlaw}
&
Decoration laws, depending on $k_1$, $k_4$, $k_5.$  
The $\mathfrak{s}(h,\cdot)$ is the law of $D_j^o(h)$ (for any or all $j$). In the case of $\sigma=i$, $\mathfrak{p}$ and $\mathfrak{s}$ are the same.  In the case of $\sigma=1$, $\mathfrak{p}(h,\cdot)$ is the law of $D_j^o(h)$ multiplied by a uniform phase.\\
$ \mathfrak{m}_j, \mathfrak{m},  \overline{\mathfrak{m}}, \mathfrak{n}$&\eqref{eq:2ndintensity},\,\eqref{eq:2ndintensityprime}, \eqref{mtl:intensity}&Intensity of $\Extre_n$ and its approximation and limit.\\
    \bottomrule
\end{tabular}
\vspace{0.2cm}
\caption{Table of processes, other symbols}
\end{table}

%\clearpage
\begin{table}[htbp] 
\begin{tabular}{ r p{2.2cm} p{11.5cm}}
%\begin{table}[htbp] 
%\begin{tabular}{ r c p{12cm}}
      \toprule
    Symbol& Ref. & Meaning \\
    \midrule
        $\mathscr{T}_M$&\eqref{eq:good}& The event where truncation
    of variables is possible.\\
    $\mathscr{B}_{n,k_2,k_6}$ & \eqref{eq:allupperbarriers}& Event that the recursion $\varphi$ is below the entropic barrier at dyadic time points.\\
    $\mathscr{G}_n$ & Lemma \ref{lem:goodstuff1} & Generic global good event, on which the Gaussian approximation $|\varphi_k(\theta) - \sqrt{\tfrac{4}{\beta}}\mathfrak{Z}_{H_k}| \leq 1$
%k_6
 holds, the global maximum of $\varphi_n(\theta)$ is in control (moreover the whole process $k\mapsto \sup_{\theta} \varphi_{2^k}(\theta)$ stays below a concave barrier), and the driving random variables are truncated. \\
    $\mathscr{G}_n^1$ & \eqref{eq:g1} & Good event that relates $\{\varphi_k\}$ to the diffusion $\mathfrak{U}$ after time $n_1^+$. \\
    $\mathscr{L}(\theta)$ & \eqref{eq:Ltheta} & The event that $\varphi_n(\theta)$ is a near--leader, that is $\varphi_n(\theta)$ is at least $\sqrt{\tfrac{8}{\beta}}m_n - k_6.$ \\
    $\widehat{\mathscr{R}}(\theta)$ & \eqref{eq:Rhat} & The event that $\mathfrak{Z}_t(\theta)$ stays between two convex barrier functions (the entropic envelope of a near-leader), up to time $H_n - k_4$.  In addition, the process stays below a concave barrier afterwards.  \\
    $\mathscr{N}_j$ & Def.\,\ref{def:N} & The event that the interval $\widehat{I}_j$ has nearly maximal values of $\varphi_n(\theta)$ and in addition the profile of those near--maximal values are well--described by the mesh.  \\
    $\mathscr{O}_j^{\Psi},\mathscr{O}_j^+,\mathscr{O}_j^{\Psi}$
    &\eqref{eq:Oj}& The event that the profile of $\varphi_{\widehat{n}_1}$ (resp.\,$\varphi_{n_1^+}$, $\Psi_{n_1^+}$) is flat (has small oscillation) in the interval $\widehat{I}_j.$ \\
    $\widehat{\mathscr{O}}$ & \eqref{eq:oscO} & The event that $\Psi_k(\theta)$ has small oscillation in $\theta$ for all $k$ in $k_2 \leq k \leq n_1^+$. \\ 
    $\mathscr{U}(\theta)$ & \eqref{eq:UU} & The event that $\mathfrak{Z}_{H_{k_2}}(\theta)$ is in a small entropic window. \\
    $\mathscr{R}^p_{j}(m)$ & \eqref{eq:rayevent} & The event that $\mathfrak{Z}_t(\theta_j)$ (with $\theta_j$ the largest element of $\widehat{I}_j$) stays within the entropic envelope up to time $\log m$, with width parameter $p$. \\
    $\widehat{\mathscr{R}}_{j}$ & \eqref{eq:Rhatj} & The event that all $\theta \in I_j$ which are near-leaders (in that $\mathscr{L}(\theta)$ holds) stay within the entropic envelope. \\
    $\mathscr{P}_j',\mathscr{P}_j'(\theta)$  &\eqref{eq:decorationbarrier} & Ray events for the decoration $\mathfrak{U}^j_t$.  The latter event is that a single process $t \mapsto \mathfrak{U}^j_t(\theta)$ stays within the entropic envelope.  The former is that all $\theta$ in the decoration window which are near leaders stay in the envelope. \\
$   \mathscr{P}_j(\theta,h)$& \eqref{eq:decorationbarrier2}&Ray events similar to $\mathscr{P}_j'(\theta)$, except with offset $h$ that will correspond to height at time $T_\dagger$.\\
    $\mathscr{A}_{j},\mathscr{A}'_{j}$ & \eqref{eq:qj} & Shorthand: $\mathscr{A}_{j} = \mathscr{R}_{j}^2({n}_1^+)
    \cap \mathscr{O}_j
    \cap \mathscr{O}_j^{\Psi}$ and $\mathscr{A}'_{j} =
    \mathscr{N}_j
    \cap \mathscr{P}_j'
    \cap
    \mathscr{A}_{j}.$  The first is $(\filt_{n_1^+})$--adapted. \\
 $ \mathscr{Z}_{k_2}$&\eqref{eq:G3}& The event that ${Z}_{2^k}^{k_2}(\theta)$ is close to 
    $2(\log \Phi^*_{2^k}(e^{i\theta})-\log \Phi^*_{k_2}(e^{i\theta}))$ for $k$ in the bulk.\\
    \bottomrule
  \end{tabular}
  \vspace{0.2cm}
  \caption{Table of events}
\label{table3}
\end{table}

%\clearpage
\section{The arc of the proof}
\label{sec-arc}
In this (long) section, we will give a proof of the main theorem, in which we defer many of the major technical arguments to later sections.  Each subsection, save for the first which contains preliminaries, carries one of the major steps in the proof of Theorem \ref{prop:bush}.
\subsection{Soft properties of the Pr\"ufer phases}
\label{subsec-softprufer}

We make here some elementary and important observations about the Pr\"ufer phases.  The Pr\"ufer phases have that $\theta \mapsto \Psi_k(\theta)$ and $\theta \mapsto \psi_k(\theta)$ are increasing functions of $\theta \in \R$ (see the discussion in \cite[Section 2]{KillipStoiciu}).  Furthermore, for any $\theta \in \R$ and $k \in \N,$ $\Psi_k(\theta + 2\pi) =  \Psi_k(\theta) + (k+1)2\pi.$

It follows that the relative Pr\"ufer phase $\psi_k(\theta) \geq 0$ for all $\theta \geq 0$ and all $k \in \N.$
However more is true.
For any real number $x,$ let $\lfloor x \rfloor_{2\pi}$ be the largest element of $2\pi \Z$ which is less than or equal to $x.$ Also, let $\{ x \}_{2\pi} = x -\lfloor x \rfloor_{2\pi}.$ Then
\begin{lemma} \label{lem:psikmonotone}
  For $\theta \geq 0,$  $\lfloor \psi_k(\theta) \rfloor_{2\pi}$ is a nondecreasing sequence in $k.$ Also, for any $k \geq 0,$ $\{ \psi_k(\theta) \}_{2\pi} \geq \theta.$
\end{lemma}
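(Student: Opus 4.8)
The engine is the one-step form of the relative Pr\"ufer recursion \eqref{eq:prufer}, which I would rewrite as $\psi_{k+1}(\theta)=\widetilde T_k(\psi_k(\theta))+\theta$, where $\widetilde T_k(\phi):=\phi-2\Im\bigl(\log(1-\gamma_k e^{i\phi})-\log(1-\gamma_k)\bigr)$. The first step is to record three properties of $\widetilde T_k$: (i) it is strictly increasing, since $\frac{d}{d\phi}\bigl(\phi-2\Im\log(1-\gamma_k e^{i\phi})\bigr)=\frac{1-|\gamma_k|^2}{|1-\gamma_k e^{i\phi}|^2}>0$ (the Poisson kernel; here one uses that $|\gamma_k|<1$ almost surely); (ii) $\widetilde T_k(\phi+2\pi)=\widetilde T_k(\phi)+2\pi$, since $\phi\mapsto e^{i\phi}$ is $2\pi$-periodic; and (iii) $\widetilde T_k(0)=0$. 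Hence $\widetilde T_k$ is the lift of an orientation-preserving circle homeomorphism and, in particular, fixes every point of $2\pi\Z$. (Properties (i)--(ii) are just the monotonicity and periodicity already used for $\theta\mapsto\Psi_k(\theta)$ via \eqref{eq:Psi}.)

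For the first assertion, fix $\theta\ge0$; by induction on $k$, and as already noted in the text, $\psi_k(\theta)\ge0$. Put $2\pi m:=\lfloor\psi_k(\theta)\rfloor_{2\pi}$, so $\psi_k(\theta)\ge2\pi m$. Since $\widetilde T_k$ is nondecreasing and fixes $2\pi m$, one gets $\widetilde T_k(\psi_k(\theta))\ge2\pi m$, whence
\[
  \psi_{k+1}(\theta)=\widetilde T_k(\psi_k(\theta))+\theta\ge 2\pi m+\theta\ge 2\pi m ;
\]
applying $\lfloor\cdot\rfloor_{2\pi}$ gives $\lfloor\psi_{k+1}(\theta)\rfloor_{2\pi}\ge2\pi m=\lfloor\psi_k(\theta)\rfloor_{2\pi}$. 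Iterating in $k$ proves the monotonicity.

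For the second assertion I would prove the equivalent inequality $\psi_k(\theta)\ge\lfloor\psi_k(\theta)\rfloor_{2\pi}+\theta$ by induction on $k$, the base case $k=0$ being $\psi_0(\theta)=\theta$. In the inductive step, with $2\pi m=\lfloor\psi_k(\theta)\rfloor_{2\pi}$, the hypothesis reads $\psi_k(\theta)\ge2\pi m+\theta$; applying the increasing map $\widetilde T_k$ and using $\widetilde T_k(2\pi m+\theta)=2\pi m+\widetilde T_k(\theta)\ge2\pi m$ (as $\widetilde T_k(\theta)\ge\widetilde T_k(0)=0$), one obtains $\psi_{k+1}(\theta)\ge2\pi m+\theta$, which settles the case $\lfloor\psi_{k+1}(\theta)\rfloor_{2\pi}=2\pi m$. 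I expect the main obstacle to be the complementary case, where $\psi_{k+1}(\theta)$ has moved into the next $2\pi$-period relative to $\psi_k(\theta)$; since $\psi_{k+1}(\theta)<\widetilde T_k(2\pi(m+1))+\theta=2\pi(m+1)+\theta$, the winding number can advance by at most one per step, so it suffices to exclude $\psi_{k+1}(\theta)\in[2\pi(m+1),\,2\pi(m+1)+\theta)$ at such a step. To treat this I would use that $\widetilde T_k$ restricts to an increasing bijection of $[2\pi m,2\pi(m+1)]$ onto itself fixing the endpoints, together with the monotone-in-$\theta$ dependence of $\psi_k$, to show that the period boundary $2\pi(m+1)$ is crossed only from within the region where $\{\psi_k(\theta)\}_{2\pi}\ge\theta$; should this not close directly from the discrete recursion, the fallback is to transfer the statement via the distributional identity $\psi_k(\cdot)\lawequals\Psi_k(\cdot)-\Psi_k(0)$ recorded just below \eqref{eq:prufer} and to argue on the $\Psi$-side, where the increasing rearrangement of $\theta\mapsto n\theta-2\Im\log X_n(e^{i\theta})$ is literally $2\pi$ times an eigenvalue count and the inequality is geometrically transparent.
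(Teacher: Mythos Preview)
For the first assertion your argument is correct and coincides with the paper's: both rest on the fact that $\widetilde T_k$ is an increasing self-map of $\R$ fixing every point of $2\pi\Z$, whence $\widetilde T_k(\psi_k(\theta))\ge\lfloor\psi_k(\theta)\rfloor_{2\pi}$ and therefore $\psi_{k+1}(\theta)\ge\lfloor\psi_k(\theta)\rfloor_{2\pi}+\theta\ge\lfloor\psi_k(\theta)\rfloor_{2\pi}$. You obtain the key nonnegativity of $\widetilde T_k$ on $[0,2\pi)$ from monotonicity together with $\widetilde T_k(0)=0$; the paper instead notes that $z\mapsto\psi-2\Im(\log(1-ze^{i\psi})-\log(1-z))$ is harmonic on $\D$ with boundary values in $\{0,2\pi\}$ (via $-2\Im\log(1-e^{i\alpha})=\pi-\alpha$), hence nonnegative inside. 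The two routes are equivalent.

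For the second assertion you correctly isolate the problematic case in which $\lfloor\psi_{k+1}(\theta)\rfloor_{2\pi}$ advances past $\lfloor\psi_k(\theta)\rfloor_{2\pi}$, and your suggested workarounds do not close it. The paper's proof stops at precisely the same point: it derives only $\psi_{k+1}(\theta)\ge\lfloor\psi_k(\theta)\rfloor_{2\pi}+\theta$, which yields $\{\psi_{k+1}(\theta)\}_{2\pi}\ge\theta$ only when the floor stays put. When the floor jumps---which occurs whenever $\widetilde T_k(\{\psi_k(\theta)\}_{2\pi})\in[2\pi-\theta,2\pi)$---the strict upper bound $\widetilde T_k(\{\psi_k(\theta)\}_{2\pi})<2\pi$ forces $\psi_{k+1}(\theta)<\lfloor\psi_k(\theta)\rfloor_{2\pi}+2\pi+\theta$, i.e.\ $\{\psi_{k+1}(\theta)\}_{2\pi}<\theta$. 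Concretely, with $\theta=\pi/2$ and $\gamma_0$ close to $e^{i\beta}$ for some $\beta\in(3\pi/2,2\pi)$, the boundary analysis gives $\widetilde T_0(\pi/2)$ close to $2\pi$, so $\psi_1(\pi/2)\in(2\pi,5\pi/2)$ and $\{\psi_1(\pi/2)\}_{2\pi}<\pi/2$. Thus the second assertion as stated does not hold in general, and neither argument can establish it; this is harmless for the paper, which only invokes the first assertion and the weaker consequence $\psi_k(\theta)\ge0$.
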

\begin{proof}
  We claim that the function
  \[
    (\psi,z) \mapsto \psi - 2\Im\left( \log(1-ze^{i\psi}) - \log(1-z) \right)
  \]
  defined on $\hohc{0,2\pi} \times \mathbb{D}$ is non-negative.  
  Observe that the function is harmonic in $z$ for fixed $\psi$ and bounded below for all $|z| < 1$.  Hence by the Lindel\"of maximum principle \cite[Lemma 1.1]{GarnettMarshall} the infimum of this function over $|z| < 1$ is bounded below by its infimum over $\{|z|=1\} \setminus F$ for any finite set $F$.  On the other hand $-2\Im\log(1-e^{i\alpha})=\pi-\alpha$ for $\alpha \in (0,2\pi),$ and hence for $|z|=1,$ the function takes values in $\{0,2\pi\},$ except for when $z \in \{ 1, e^{-i\psi}\}.$

  Next we decompose, for any $k \geq 0$ and any $\theta \geq 0,$
  \begin{equation*}
    \begin{aligned}
      \psi_{k+1}(\theta)
      &=\psi_{k}(\theta) + \theta - 2\Im\left( \log(1-\gamma_ke^{i\psi_k(\theta)}) - \log(1-\gamma_k) \right) \\
      &= \lfloor \psi_k(\theta) \rfloor_{2\pi} + \theta + \{\psi_k(\theta)\}_{2\pi} - 2\Im\left( \log(1-\gamma_ke^{i\{\psi_k(\theta)\}_{2\pi}}) - \log(1-\gamma_k) \right) \\
      &\geq \lfloor \psi_k(\theta) \rfloor_{2\pi} + \theta
      \geq \lfloor \psi_k(\theta) \rfloor_{2\pi}.
    \end{aligned}
  \end{equation*}
  Hence the claim follows.
\end{proof}

\subsection{Marginally Gaussian approximations}

We follow \cite{CMN} in introducing a family of processes with Gaussian marginals that well approximate the OPUC recurrence.
We recall that the Pr\"ufer phases can be expressed in terms of $\Gam$ distributed random variables as
\begin{equation}
  \gamma_j = \sqrt{
    \frac{E_j}{E_j+\Gamma^a_j}
  } e^{i\Theta_j},
  \quad
  \text{where}
  \quad
  \begin{aligned}
    &E_j \sim \Exponential(1), \\
    &\Gamma^a_j \sim \Gam(\tfrac{\beta}{2}(j+1)), \\
    &\Theta_j \sim \Unif([0,2\pi])
  \end{aligned}
  \label{eq:betagamma}
\end{equation}
with all variables independent.  We define the variables
\begin{equation}
  X_j = \sqrt{E_j} \Re e^{i\Theta_j},
  \quad
  Y_j = \sqrt{E_j} \Im e^{i\Theta_j},
  \quad
  \text{and}
  \quad
  Z_j = X_j+iY_j.
  \label{eq:gaussians}
\end{equation}
Then $\left\{ \{X_j,Y_j\}\!:\! j \geq 0 \right\}$ are jointly iid $N(0,\tfrac 12)$ and independent of $\left\{ \Gamma^a_j\! :\! j \geq 0\right\}.$   Set $\beta_j \coloneqq  \sqrt{\frac{\beta}{2}(j+1)},$ noting that $\Exp \Gamma^a_j = \beta_j^2.$

We will also introduce a continuous time marginally Gaussian process $(\mathfrak{Z}_t^\C : t \geq 0),$ which we shall use for some comparisons.  As $\left\{ Z_k \right\}$ are independent standard complex Gaussians with $\Exp |Z_k|^2 = 1,$ we may enlarge the probability space to include a complex Brownian motion $(\mathfrak{W}_t : t \geq 0)$ such that
\begin{equation}\label{eq:W}
  [ \mathfrak{W}_t, \mathfrak{W}_t] = 2t
  \quad
  \text{for all }
  t \geq 0,
  \quad\text{and}\quad
  \sqrt{\tfrac{2}{k+1}}Z_k = \mathfrak{W}_{H_{k+1}}-\mathfrak{W}_{H_{k}}
  \quad
  \text{for all }
  k \geq 0,
\end{equation}
with 
\begin{equation}
\label{eq-Hk}
H_k = \sum_{j=1}^k \frac{1}{j}
\end{equation}
 the $k$-th harmonic number (and $H_0=0$).
In terms of $\mathfrak{W}$ we define $\mathfrak{Z}^\C(\theta)$ by
\begin{equation}\label{eq:Z}
  \mathfrak{Z}_t^\C(\theta) = -\int_{H_{k_2}}^t
  %\frac
  {e^{i\Psi_{k(s)}(\theta)}}%{\sqrt{k(s)+1}}
  d\mathfrak{W}_s
  +
  \sqrt{\beta}\log \Phi^*_{H_{k_2}}(e^{i\theta})
\end{equation}
where for each $k \geq 0,$ $k(s) = k$ on $s \in \hohc{H_k,H_{k+1}}.$
Then by construction we have the relationship
\begin{equation}\label{eq:ZkZt}
  -\sqrt{\frac{2}{k+1}} Z_ke^{i\Psi_k(\theta)} =
  \mathfrak{Z}_{H_{k+1}}^\C(\theta)
  -\mathfrak{Z}_{H_{k}}^\C(\theta),
  \quad
  \text{for all } k \geq k_2.
\end{equation}
We also define $\mathfrak{Z}_t = \Re \bigl(\sigma \mathfrak{Z}_t^{\C}\bigr)$ for all $t \geq H_{k_2},$ which therefore satisfies that $(\mathfrak{Z}_{t+H_{k_2}}-\mathfrak{Z}_{H_{k_2}} : t \geq 0)$ is a standard real Brownian motion.

In \cite[Proposition 3.1]{CMN} it is shown that we can approximate $\log \Phi^*_k$ by $\mathfrak{Z}_{H_k}^\C$.
\begin{proposition} \label{prop:mgapproximation}
  \[
    \begin{aligned}
      &\lim_{k_2 \to \infty} \sup_{n \geq k_2} \sup_{\theta \in [0,2\pi]} |\sqrt{\tfrac{4}{\beta}}\mathfrak{Z}_{H_n}^\C(\theta)
      %-\sqrt{\tfrac{4}{\beta}}\mathfrak{Z}_{H_k}^\C(\theta)
      - 2\log \Phi^*_n(e^{i\theta})
      %+2\log \Phi^*_k(\theta)
      | = 0 \quad \As \\
    \end{aligned}
  \]
\end{proposition}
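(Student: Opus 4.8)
The plan is to track the discrepancy $\mathcal{E}_n(\theta)\coloneqq\sqrt{4/\beta}\,\mathfrak{Z}^\C_{H_n}(\theta)-2\log\Phi^*_n(\theta)$, which vanishes at $n=k_2$ by the initialization in \eqref{eq:Z}, and to show it is small uniformly in $n\ge k_2$ and $\theta$ by recognizing it, for each fixed $\theta$, as an $L^2$-bounded martingale in $n$ whose increments have summable variance. Let $\mathcal{G}_k$ be the $\sigma$-algebra generated by $\{(E_j,\Gamma^a_j,\Theta_j):j<k\}$, so that $\Psi_k(\theta)$ and, for $k\ge k_2$, $\Phi^*_{k_2}(\theta)$ are $\mathcal{G}_k$-measurable while the pair $(\gamma_k,Z_k)$ is independent of $\mathcal{G}_k$, with a law invariant under multiplication by a common phase. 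Combining the recursion \eqref{eq:Phi} with \eqref{eq:ZkZt} and using $\sqrt{4/\beta}\,\sqrt{2/(k+1)}=2/\beta_k$, the increments
\[
  \Delta_k(\theta)\coloneqq\mathcal{E}_{k+1}(\theta)-\mathcal{E}_k(\theta)=-\tfrac{2}{\beta_k}Z_k e^{i\Psi_k(\theta)}-2\log\bigl(1-\gamma_k e^{i\Psi_k(\theta)}\bigr)
\]
satisfy $\E[\Delta_k(\theta)\mid\mathcal{G}_k]=0$: indeed $\E[Z_k]=0$, and for any fixed real $\psi$ the mean value property of the holomorphic function $z\mapsto\log(1-\gamma_k z)$ on the unit disk (legitimate since $|\gamma_k|<1$ almost surely), together with the uniformity of the phase of $\gamma_k$, gives $\E[\log(1-\gamma_k e^{i\psi})\mid\mathcal{G}_k]=0$. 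Hence $n\mapsto\mathcal{E}_n(\theta)$ is a complex $(\mathcal{G}_n)$-martingale started from $0$ at $n=k_2$.

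The core of the argument is the increment bound $\E[\,|\Delta_k(\theta)|^2\mid\mathcal{G}_k\,]=g_k\le C/k^2$ for all large $k$, where the conditional second moment is a \emph{deterministic} constant $g_k$ (independent of $\theta$) by the rotation invariance of $(\gamma_k,Z_k)$. To prove it I would expand $-2\log(1-\gamma_k w)=2\gamma_k w+2(\gamma_k w)^2 R(\gamma_k w)$ with $R(z)\coloneqq\sum_{p\ge2}z^{p-2}/p$ bounded on $\{|z|\le\tfrac12\}$, and use $\gamma_k=Z_k/\sqrt{E_k+\Gamma^a_k}$ and $\beta_k^2=\E\Gamma^a_k$ from \eqref{eq:betagamma}--\eqref{eq:gaussians} to write, with $w=e^{i\Psi_k(\theta)}$,
\[
  \Delta_k(\theta)=2wZ_k\cdot\frac{\beta_k^2-(E_k+\Gamma^a_k)}{\beta_k\sqrt{E_k+\Gamma^a_k}\,(\beta_k+\sqrt{E_k+\Gamma^a_k})}+2(\gamma_k w)^2 R(\gamma_k w).
\]
On the event $\{|E_k+\Gamma^a_k-\beta_k^2|\le\beta_k^2/2\}\cap\{E_k\le\beta_k^2/8\}$ the denominator above is at least $c\beta_k^3$ and $|\gamma_k|\le\tfrac12$, so, using that $|Z_k|^2=E_k$ has bounded moments while $\E[(\Gamma^a_k-\beta_k^2)^2]=\beta_k^2$, the contribution of this event to $\E|\Delta_k(\theta)|^2$ is $O(\beta_k^{-4})=O(k^{-2})$; its complement has probability $\le e^{-ck}$ and contributes $o(k^{-2})$, controlling the Gaussian part by $\tfrac{4}{\beta_k^2}\E\bigl[E_k\,\one[E_k>\beta_k^2/8]\bigr]$ and the logarithmic part by Cauchy--Schwarz together with the bound $\E_{\Theta_k}|\log(1-\gamma_k w)|^4\le C$, uniform in $|\gamma_k|<1$ (as $\int_0^{2\pi}\bigl|\log|1-re^{i\phi}|\bigr|^4\,d\phi$ is bounded for $r<1$). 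This estimate is the quantitative Gaussian approximation of the Verblunsky coefficient $\gamma_k$ by its surrogate $Z_k/\beta_k$; the gain over the naive rate $k^{-1}$ (which is merely $\E|\gamma_k|^2$) is exactly what renders the martingale $L^2$-bounded, and it hinges on the exact cancellations $\E[\gamma_k^p e^{ip\psi}\mid\mathcal{G}_k]=0$ for $p\ge1$ and on the $\beta_k^{-2}$ concentration of $\Gamma^a_k$ about its mean. I expect this step to be the main obstacle.

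Granting $g_k\le C/k^2$, Doob's $L^p$ maximal inequality and the Burkholder--Davis--Gundy inequality give, for every $p\ge2$ and \emph{uniformly in $\theta$}, $\E[\sup_{n\ge k_2}|\mathcal{E}_n(\theta)|^p]\le C_p\bigl(\sum_{k\ge k_2}g_k\bigr)^{p/2}\le C_p\,k_2^{-p/2}$. To remove the restriction to a fixed $\theta$ I would discretize $\theta$ along a net refined dyadically in $n$: on a block $n\in[2^j,2^{j+1})$, take a net of mesh $2^{-Dj}$ and bound the oscillation of $\mathcal{E}_n$ between adjacent net points by $O(2^{-(D-C_0)j})$, using that $\theta\mapsto\Psi_k(\theta)$ is increasing with $\Psi_k(\theta+2\pi)-\Psi_k(\theta)=2\pi(k+1)$ (cf.\ Lemma~\ref{lem:psikmonotone} and the discussion preceding it) and crude polynomial-in-$n$ lower bounds on $\min_\theta|\Phi^*_n(e^{i\theta})|$ valid with overwhelming probability; a union bound over the $O(2^{Dj})$ net points, together with the tail estimate $\sum_{k\ge2^j}g_k\le C2^{-j}$ and the martingale maximal inequality, then bounds the block's contribution to $\sup_{n,\theta}|\mathcal{E}_n(\theta)|$ by $O(2^{Dj/p-j/2}+2^{-(D-C_0)j})$ in $L^p$. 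Choosing $p>2D$ and $D$ large makes this summable in $j$, so $\E[(\sup_{n\ge k_2,\,\theta}|\mathcal{E}_n(\theta)|)^p]\le C k_2^{-\epsilon p}\to0$. Finally, since $\mathcal{E}_n(\theta)=\mathcal{M}_n(\theta)-\mathcal{M}_{k_2}(\theta)$ for the $k_2$-independent martingale $\mathcal{M}_n=\sum_{k<n}\Delta_k$, the estimate above shows that the partial sums $\mathcal{M}_n(\theta)$ are almost surely uniformly Cauchy in $\theta$, which delivers the asserted almost sure convergence as $k_2\to\infty$. This last part is routine; the only non-elementary input beyond the increment bound is the crude polynomial modulus of continuity of $\mathcal{E}_n$, which follows from standard OPUC estimates.
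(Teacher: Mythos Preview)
The paper does not provide its own proof of this proposition; it simply refers to \cite[Proposition 3.1]{CMN} and remarks that a careful reading of that argument yields the present stronger statement. Your sketch---recognizing the discrepancy as an $(\mathcal{G}_n)$-martingale in $n$ with conditional increment variance $O(k^{-2})$ obtained from the cancellation between $Z_k/\beta_k$ and the linearization of $\log(1-\gamma_k e^{i\psi})$, then passing to uniformity in $\theta$ via a dyadic net combined with polynomial-in-$n$ continuity---is precisely the structure of the CMN proof, so your approach coincides with what the paper defers to.
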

\noindent This is proven in \cite{CMN} though not explicitly stated as such.  A careful reading of \cite[Proposition 3.1]{CMN} (in which $\left\{ \mathfrak{Z}_{H_k}^\C(\theta) \right\}$ is denoted $\left\{ Z_k(\theta) \right\}$) shows that
their proof gives the statement in Proposition \ref{prop:mgapproximation}, which is stronger than what is claimed in \cite[Proposition 3.1]{CMN}.
We shall notate, where convenient, the coupled Gaussian random walks
\begin{equation}\label{eq:GkZt}
  G_k(\theta) = \sqrt{\tfrac{4}{\beta}} \mathfrak{Z}_{H_k}(\theta)
  \quad
  \text{for all } k \geq k_2.
\end{equation}

\subsection*{Truncation}
We will frequently need to truncate these variables.
We let $M \in \N$ be a fixed large parameter, and define an event $\mathscr{T}_M$ on which
\begin{equation}
  \max_{t \in [H_j,H_{j+1}]} | \mathfrak{W}_t - \mathfrak{W}_{H_j}|^2 \leq \frac{4\log (j)}{j}
  \quad\text{and}
  \quad
  |\sqrt{\Gamma^a_j} - \beta_j| \leq 4\sqrt{\log (j)}
  \quad \text{for all } M < j \leq \infty.
  \label{eq:good}
\end{equation}
By the construction of $\mathfrak{W},$ this event controls the magnitude of $E_j = |X_j+i Y_j|^2.$
By adjusting the cutoffs for $\Gamma^a_j$, we may assume that on $\mathscr{T}_M,$ $\Exp[(\Gamma^a_j - \beta_j^2)\one[\mathscr{T}_M]] = 0.$ We may further do this truncation in such a way that $\Pr(\mathscr{T}_M) \to 1$ as $M \to \infty.$
%\clearpage

\subsection{First moment simplifications}\label{sec:milieu}
In this section, we will show that near-maxima typically arise with many simplifying features.  All the estimates in this section will use first-moment type estimates, which is to say that we will control the expected number of $j \in \mathcal{D}_{n/k_1}$ such that $W_j$ is large without some other simplifying properties taking place.

\subsubsection{The upsloping barriers}
\label{sec:upslope}

We will use the same barrier functions employed in \cite{CMN}, so as to reuse as much of the machinery developed there.
We begin by introducing a high barrier $A_k^{\ll}$, which can be used to give \emph{a priori} bounds on the growth of the processes $n \mapsto \varphi_n(\theta)$:
\begin{equation}
  A_k^{\ll}
  =A_k^{\ll,(n)}
  \coloneqq
  H_k +
  \begin{cases}
    (H_k)^{1/100} & \text{ if } H_k \leq \frac{1}{2}\log(n) \\
    (H_n - H_k)^{1/100} - \frac{3}{4}\log \log n& \text{ if }  \frac{1}{2}\log(n) < H_k \leq H_n,
  \end{cases}
  \label{eq:upperbarrier}
\end{equation}
where we recall 
that $H_k = \sum_{j=1}^k \frac{1}{j}$ is the $k$-th harmonic number,
see \eqref{eq-Hk}.

We let $\mathscr{B}_{n,k_2,k_6}$ be the event that the process is below this barrier at \emph{all} $\theta \in [0,2\pi]$ at dyadic time points and at the final time, i.e.\ for some $k_6$ and with $\log_2$ denoting the logarithm on 
basis $2$,
\begin{equation}
  \begin{aligned}
    \mathscr{B}_{n,k_2,k_6}
    \coloneqq
    &\bigl\{
      \forall~ \log_2 k_2 \leq k \leq \log_2 n, \theta \in [0,2\pi]
      :
      \sup_{ \theta \in [0,2\pi]}
                %\max\{\varphi_{2^k}{(\theta)},G_{2^k}{(\theta)}\}
      \varphi_{2^k}{(\theta)}
      \leq \sqrt{\tfrac{8}{\beta}}A_{2^k}^{\ll} + k_6
    \bigr\} \\
    &\cap
    \{ \sup_{\theta \in [0,2\pi]}
              %\max\{\varphi_{n}{(\theta)},G_n(\theta)\}
    \varphi_{n}{(\theta)}
    \leq \sqrt{\tfrac{8}{\beta}} A_n^{\ll} + k_6 \}.
  \end{aligned}
  \label{eq:allupperbarriers}
\end{equation}
In the definition, we use only integer $k,$ and we recall that $k_2 \in 2^{\N}.$
Then the event that the barrier is exceeded satisfies:
\begin{lemma}
  \[
    \lim_{k_6 \to \infty}
    \liminf_{k_2 \to \infty}
    \inf_{n \geq 1} \Pr\bigl( \mathscr{B}_{n,k_2,k_6} \bigr) = 1.
  \]
  \label{lem:barrier}
\end{lemma}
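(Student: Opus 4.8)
\textbf{Proof plan for Lemma \ref{lem:barrier}.}
The plan is to control, for each dyadic time $2^k$ and for the final time $n$, the probability that $\sup_{\theta}\varphi_{2^k}(\theta)$ exceeds the upsloping barrier $\sqrt{8/\beta}A_{2^k}^{\ll}+k_6$, and then sum these probabilities over $\log_2 k_2\leq k\leq \log_2 n$ (plus the single extra term at time $n$) to get a bound that vanishes as $k_2\to\infty$ uniformly in $n$, after first sending $k_6\to\infty$. The two regimes of the barrier \eqref{eq:upperbarrier} — the upsloping piece $H_k+(H_k)^{1/100}$ for $H_k\le\frac12\log n$ and the down-then-up piece for $H_k>\frac12\log n$ — are handled slightly differently, but in both cases the key is a one-point tail estimate together with a union bound over a sufficiently fine mesh in $\theta$, using that $\theta\mapsto\varphi_k(\theta)$ has controlled oscillation at scale $1/2^k$ (so a mesh of $O(2^k)$ points suffices, and the polynomial loss from the union bound is absorbed by the exponential tail).

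First I would establish the one-point bound: for fixed $\theta$, $\varphi_k(\theta)$ is well-approximated (up to the truncation event $\mathscr{T}_M$ and the Gaussian coupling of Proposition \ref{prop:mgapproximation}) by $\sqrt{4/\beta}\,\mathfrak{Z}_{H_k}(\theta)$, a time-changed Brownian motion with $\Exp\mathfrak{Z}_{H_k}(\theta)^2=H_k$. Hence for the upsloping regime, $\Pr(\varphi_{2^k}(\theta)\ge \sqrt{8/\beta}(H_{2^k}+(H_{2^k})^{1/100})+k_6)$ is, by the Gaussian tail, at most $C\exp(-2H_{2^k}-2(H_{2^k})^{1/100}-\sqrt{\beta/2}\,k_6/2+o(\cdot))\le C e^{-\sqrt{\beta/8}\,k_6}\cdot 2^{-2k}\exp(-c(\log 2^k)^{1/100})$; after the union bound over $O(2^k)$ mesh points the factor $2^{-2k}$ leaves a summable $2^{-k}\exp(-c k^{1/100})$, and summing over $k\ge \log_2 k_2$ gives a quantity $\le C e^{-\sqrt{\beta/8}\,k_6}\exp(-c(\log k_2)^{1/100})$, uniformly in $n$. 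For the regime $H_k>\frac12\log n$ the barrier is $H_k+(H_n-H_k)^{1/100}-\tfrac34\log n$; here one instead uses that $\mathfrak{Z}_{H_k}$ must reach a level which, measured against its variance $H_k$, costs $\exp(-\tfrac{(\text{level})^2}{2H_k})$, and a short computation shows this is again summable in $k$ with the required uniformity (this is exactly the barrier bookkeeping carried out in \cite{CMN}, which I would cite rather than redo). One must also verify that restricting to $\mathscr{T}_M$ and using the a.s.\ Gaussian approximation costs only an error tending to $0$ as $k_2\to\infty$ (taking $M$ a function of $k_2$, or noting $\Pr(\mathscr{T}_M^c)\to 0$), and handle the extra non-dyadic endpoint $n$ by the same one-point-plus-mesh estimate.

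The main obstacle is the oscillation control: to pass from a one-point tail to a bound on $\sup_{\theta}\varphi_{2^k}(\theta)$ I need that $\varphi_{2^k}$ does not fluctuate by more than $O(1)$ (or $O(k_6)$) between adjacent mesh points spaced $\sim 2\pi/2^k$ apart, with probability $1-o(1)$ uniformly in $n$ and in $k\le\log_2 n$. This is a modulus-of-continuity statement for the Pr\"ufer recursion; it is available in \cite{CMN} (it underlies their tightness proof) and I would invoke it, but it is the nontrivial input — naively the $2^k$ mesh points and the modulus estimate must be balanced so that the polynomial-in-$2^k$ losses are killed by the $2^{-2k}$ from the Gaussian tail, which is exactly why the barrier is placed at height $\sqrt{8/\beta}H_k$ (not lower). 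Once the one-point estimate, the mesh/oscillation bound, and the barrier summation are in place, the lemma follows by first choosing $k_6$ large to make the $e^{-\sqrt{\beta/8}\,k_6}$ prefactor small and then $k_2$ large to make $\exp(-c(\log k_2)^{1/100})$ small, with all bounds uniform in $n\ge 1$.
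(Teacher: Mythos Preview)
Your plan correctly reduces to the Gaussian walk $G$ via Proposition~\ref{prop:mgapproximation}; the paper then simply cites \cite[(4.4)]{CMN}, whereas you try to sketch the underlying argument. In the first regime $H_{2^k}\le\tfrac12\log n$ the approach works, though your arithmetic is off by a factor of two: the one-point tail is $\exp(-H_{2^k}-2H_{2^k}^{1/100}-ck_6)$, not $\exp(-2H_{2^k}-\cdots)$, so after the $O(2^k)\asymp e^{H_{2^k}}$ union bound nothing is left except the $e^{-2H_{2^k}^{1/100}}$ correction. There is no residual $2^{-k}$; summability over $k$ comes entirely from that sub-polynomial term --- but the conclusion survives.

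The genuine gap is in the second regime $H_{2^k}>\tfrac12\log n$ (read $\tfrac34\log\log n$ for the evident typo $\tfrac34\log n$ in \eqref{eq:upperbarrier}, consistent with \eqref{eq:barrier}). Here the barrier sits \emph{below} $H_{2^k}$, so the one-point tail is $\asymp e^{-H_{2^k}}(\log n)^{3/2}e^{-2(H_n-H_{2^k})^{1/100}}$; after the $e^{H_{2^k}}$ union bound you are left with $(\log n)^{3/2}e^{-2(H_n-H_{2^k})^{1/100}}$, and summing over $k$ gives a bound of order $(\log n)^{3/2}$, which diverges in $n$. Even the single final time $n$ alone gives $(\log n)^{3/2}e^{-ck_6}$, useless uniformly in $n$ for fixed $k_6$. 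Your ``short computation'' does not close, and this is not what \cite{CMN} does: their upper-tail bound for $\sup_\theta G_{2^k}(\theta)$ near level $\sqrt{8/\beta}\,m_{2^k}$ is a first-moment estimate over the mesh \emph{combined with a ballot-theorem barrier constraint along the walk} $j\mapsto G_{2^j}(\theta)$, and it is precisely this barrier input that supplies the missing $(\log n)^{-3/2}$. The time structure of the walk is essential in the second regime; a pure one-point Gaussian tail plus union bound cannot recover it.
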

\noindent Note that this barrier curves \emph{above} the straight barrier (and further the function $H_k \mapsto A_{k}^{\ll}$ is piecewise concave).  We will also use barriers that curve \emph{below}, but the previous statement is not true for such barriers.
\begin{proof}
  By Proposition \ref{prop:mgapproximation} it is enough to prove the statement with $\varphi_{j}{(\theta)},$ replaced by $\{G_j(\theta)\}$.
  For the $\{G_j(\theta)\}$ process, this is \cite[(4.4)]{CMN}.
\end{proof}

Besides using \emph{a priori} upper bound on the growth of the process $\varphi_n,$ we will also only work on the event that the processes $G_k(\theta)$ and $\varphi_k(\theta)$ are close, and so we define
\begin{equation}\label{eq:g3}
  \begin{aligned}
%    \mathscr{G}^2_{n,k_2,k_6}
    \mathscr{G}^2_{n,k_2}
    &=\left\{
      \sup_{\theta \in [0,2\pi]}
      \sup_{ {k_2} \leq k  \leq n}
      | G_{k}(\theta)
      -\varphi_{k}(\theta)
      | \leq 1
    \right\}.
  \end{aligned}
\end{equation}
We shall want to work on the event that for some $k_2,k_6 \in \N,$
\[
  \mathscr{G}_{n}
  =
  \mathscr{G}_{n,k_2,k_6}
  =
  \mathscr{G}^2_{n,k_2}
  \cap
  \mathscr{B}_{n,k_2,k_6}
  \cap
  \mathscr{T}_{k_2}
  ,
\]
where we recall that the truncation event $\mathscr{T}_{k_2}$ is defined in \eqref{eq:good}
and emphasize that this is indeed a typical event due to Proposition \ref{prop:mgapproximation}:
\begin{lemma}
  If we take $n$ large followed by $k_2$ and $k_6,$
  \[
    \liminf_{n \to \infty} \Pr ( \mathscr{G}_{n} ~\vert~\filt_{k_2}) \Prto[k_6,k_2] 1.
  \]
  \label{lem:goodstuff1}
\end{lemma}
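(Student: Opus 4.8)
The plan is a union bound over the three constituent events, each piece being supplied by a result already in hand. Writing $\mathscr{G}_n = \mathscr{G}^2_{n,k_2,k_6}\cap\mathscr{B}_{n,k_2,k_6}\cap\mathscr{T}_{k_2}$, I would start from
\[
  \Pr(\mathscr{G}_n^c\mid\filt_{k_2})
  \le \Pr\bigl((\mathscr{G}^2_{n,k_2,k_6})^c\mid\filt_{k_2}\bigr)
  + \Pr\bigl(\mathscr{B}_{n,k_2,k_6}^c\mid\filt_{k_2}\bigr)
  + \Pr\bigl(\mathscr{T}_{k_2}^c\mid\filt_{k_2}\bigr),
\]
and show that each term on the right, after taking $\limsup_{n\to\infty}$, tends to $0$ in probability as $k_2\to\infty$ followed by $k_6\to\infty$. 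The three terms are controlled respectively by Proposition \ref{prop:mgapproximation}, by Lemma \ref{lem:barrier}, and by the construction of $\mathscr{T}_{k_2}$; crucially, the first two come with uniformity in $n$, which is what makes the $\liminf_n$ in the statement harmless.

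The truncation term is immediate: $\mathscr{T}_{k_2}$ constrains only the increments of $\mathfrak{W}$ after time $H_{k_2}$ and the variables $\Gamma^a_j$ with $j>k_2$, all of which are independent of $\filt_{k_2}$. Hence $\Pr(\mathscr{T}_{k_2}^c\mid\filt_{k_2}) = \Pr(\mathscr{T}_{k_2}^c)$, which tends to $0$ as $k_2\to\infty$ by construction and carries no dependence on $n$. For the Gaussian–comparison term I would first absorb it into an $n$–independent event. Since $\varphi$ and $G$ are, up to the $\sigma$–dependent real projection, $2\log\Phi^*$ and $\sqrt{4/\beta}\,\mathfrak{Z}^\C_{H_k}$ (compare \eqref{eq:upsilon}, \eqref{eq:Phi}, \eqref{eq:GkZt}), Proposition \ref{prop:mgapproximation} applied at the times $n\ge k_2$ and at $k_2$ and then subtracted shows that the $n$–independent event $\mathscr{A}_{k_2} \coloneqq \{\sup_{\theta}\sup_{k_2\le k<\infty}|(G_k(\theta)-G_{k_2}(\theta))-(\varphi_k(\theta)-\varphi_{k_2}(\theta))|\le 1\}$ has $\Pr(\mathscr{A}_{k_2})\to1$ as $k_2\to\infty$, and $\mathscr{A}_{k_2}$ implies the first half of \eqref{eq:g3} for every $n$. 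Likewise the random variable $Y_\infty \coloneqq \sup_\theta\sup_{k\ge0}|G_k(\theta)-\varphi_k(\theta)|$ is a.s.\ finite (by Proposition \ref{prop:mgapproximation} for the tail in $k$, and continuity in $\theta$ on the compact circle for the finitely many small $k$), and $\{Y_\infty< k_6\}$ implies the second half of \eqref{eq:g3} for every $n$. Thus $\mathscr{G}_n \supseteq \mathscr{A}_{k_2}\cap\{Y_\infty<k_6\}\cap\mathscr{T}_{k_2}\cap\mathscr{B}_{n,k_2,k_6}$, and since $\Exp[\Pr(\,\cdot\mid\filt_{k_2})]=\Pr(\,\cdot\,)$, the conditional probabilities of $\mathscr{A}_{k_2}^c$ and of $\{Y_\infty\ge k_6\}$ tend to $0$ in $L^1$, hence in probability, in the stated order of limits, uniformly in $n$.

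This leaves the barrier term, where the one genuinely delicate point arises: the exchange of $\liminf_{n}$ with the conditional expectation. Lemma \ref{lem:barrier} gives $\delta_{k_2,k_6} \coloneqq \sup_{n\ge1}\Pr(\mathscr{B}_{n,k_2,k_6}^c) \to 0$ as $k_2$ and then $k_6$ tend to infinity; the $\inf_n$ in that lemma is exactly the uniformity we need. To pass to the conditional statement I would apply reverse Fatou (for sets) to the regular conditional probability $\Pr(\cdot\mid\filt_{k_2})$, getting a.s.\ that $\limsup_{n}\Pr(\mathscr{B}_{n,k_2,k_6}^c\mid\filt_{k_2}) \le \Pr(\limsup_{n}\mathscr{B}_{n,k_2,k_6}^c\mid\filt_{k_2})$, whose expectation is $\Pr(\limsup_n\mathscr{B}_{n,k_2,k_6}^c)\le\delta_{k_2,k_6}$ (reverse Fatou once more, now for $\Pr$); hence the left side tends to $0$ in $L^1$ and in probability, uniformly in $n$.

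Combining the three bounds yields $\liminf_{n\to\infty}\Pr(\mathscr{G}_n\mid\filt_{k_2}) \Prto[k_6,k_2] 1$. In short, the proof is bookkeeping assembled on top of Proposition \ref{prop:mgapproximation} and Lemma \ref{lem:barrier}: the truncation event is independent of $\filt_{k_2}$, the Gaussian–comparison event is dominated from below by an $n$–independent typical event, and the barrier event is handled by its unconditional (uniform in $n$) smallness pushed through the conditioning. The only real subtlety is the interchange of $\liminf_n$ with the conditioning, and it goes through precisely because both of the cited inputs are uniform in $n$.
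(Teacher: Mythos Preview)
Your decomposition and overall plan match what the paper (implicitly) intends—the lemma is stated without proof, as an immediate consequence of Proposition~\ref{prop:mgapproximation}, Lemma~\ref{lem:barrier}, and the construction of $\mathscr{T}_{k_2}$. Your handling of the truncation event and of $\mathscr{G}^2_{n,k_2,k_6}$ is correct: both are dominated from the inside by $n$--free typical events, and that passes cleanly through the conditioning and the $\liminf_n$.

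The barrier piece, however, contains a genuine error. Conditional reverse Fatou does give
\(
  \limsup_{n}\Pr(\mathscr{B}_{n}^{\,c}\mid\filt_{k_2})
  \le
  \Pr(\limsup_{n}\mathscr{B}_{n}^{\,c}\mid\filt_{k_2})
\)
a.s., but your next step, ``$\Pr(\limsup_n\mathscr{B}_n^c)\le \delta_{k_2,k_6}$ (reverse Fatou once more)'', is wrong: reverse Fatou gives the \emph{opposite} inequality $\Pr(\limsup_n A_n)\ge\limsup_n\Pr(A_n)$. Nor is the conclusion salvageable from Lemma~\ref{lem:barrier} alone. The endpoint constraint in $\mathscr{B}_{n,k_2,k_6}$ is essentially $\{\sup_\theta\varphi_n(\theta)\le\sqrt{8/\beta}\,m_n+k_6\}$, and since the centered maximum has a nondegenerate limit law (Theorem~\ref{thm:max}), there is no reason for $\Pr(\limsup_n\mathscr{B}_n^c)$ to be small for fixed $k_6$; plausibly it equals $1$. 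So uniform-in-$n$ smallness of $\Pr(\mathscr{B}_n^c)$ is genuinely insufficient to control $\limsup_n\Pr(\mathscr{B}_n^c\mid\filt_{k_2})$.

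The fix is not to bound the $\limsup$ of the \emph{events} but to obtain a conditional version of Lemma~\ref{lem:barrier}: rerun the argument behind \cite[(4.4)]{CMN} conditionally on $\filt_{k_2}$. That proof is a union bound over dyadic times and a $\theta$--mesh using Gaussian tail bounds for the increments $G_{2^k}-G_{k_2}$, which for each fixed $\theta$ are independent of $\filt_{k_2}$; the resulting bound depends on the past only through a.s.\ finite quantities such as $\sup_\theta G_{k_2}(\theta)$. This yields an $\filt_{k_2}$--measurable $Y_{k_2,k_6}$ with $\sup_n\Pr(\mathscr{B}_n^c\mid\filt_{k_2})\le Y_{k_2,k_6}\Prto[k_6,k_2]0$, which is exactly what your union bound needs.
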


\subsubsection{Downsloping barriers and the banana}
We shall use that extremal statistics are well approximated by restricting the $\{\varphi_n(\theta)\}$ to a fine mesh in $\theta$. \label{k5} We let $k_5 \in \N$ be a parameter we use to control the mesh size, which will be $\tfrac{2\pi}{4 k_5 n}\Z$.
We introduce the near--leader event $\mathscr{L}(\theta),$ which is simply that $\varphi_n(\theta)$ is large
\begin{equation}\label{eq:Ltheta}
  \begin{aligned}
    \mathscr{L}(\theta)
    &=
    \mathscr{L}_{k_6}(\theta)
    =
    \{
                %\varphi_n^{(\theta)}
      \varphi_n{(\theta)}
      \in \sqrt{\tfrac{8}{\beta}}m_n
      +
      [-k_6, \infty)
      %[-2k_7, 2k_7]
%      -k_6 - 2\pi
    \}. \\
  \end{aligned}
\end{equation}
We also introduce further barrier functions $t \mapsto A_t^{p,\pm}$ for any $p \in \N$,
\begin{equation}
  A_t^{p,\pm}
  =
  A_t^{p,\pm, (n)}
  \coloneqq
  t
  +
  \begin{cases}
    - t ^{1/2 \mp {p}/({2p+1})} & \text{ if } t \leq \frac{1}{2}\log(n), \\
    -(\log n - t)^{1/2 \mp {p}/({2p+1})} - \frac{3}{4}\log\log n& \text{ if }  \frac{1}{2}\log(n)  < t \leq \log(n).
  \end{cases}
  \label{eq:barrier}
\end{equation}
A random walk conditioned to lie below the barrier $A_{(\cdot)}^{\ll}$ and conditioned to end near the barrier will tend to stay in the banana--like envelope $A_{(\cdot)}^{1,\pm}$ (and hence also $A_{(\cdot)}^{p,\pm}$ for any $p \geq 1$).

We introduce the barriers as it will be convenient over the course of the argument to change between barrier functions (all of which functionally play the same role).  To aid in this changing of barriers, it is convenient if we further restrict the process at the entrance time $H_{k_2}$ to be in an even more restrained window.  Recall that $s_{k_2}^{\pm} = \sqrt{\tfrac{8}{\beta}}( \log k_2 - (\log k_2)^{0.5\mp 0.01}),$ and define
\begin{equation}\label{eq:UU}
  \mathscr{U}(\theta)
  =
  \{
    \varphi_{k_2}(\theta) \in [s_{k_2}^{-}, s_{k_2}^+]
  \}
  =
  \{
    \sqrt{\tfrac{4}{\beta}}\mathfrak{Z}_{H_{k_2}}(\theta) \in [s_{k_2}^{-}, s_{k_2}^+]
  \}.
\end{equation}

For fixed $\theta,$ $(\mathfrak{Z}_t(\theta) : t)$ has the law of a standard Brownian motion.
This motivates the introduction of the following event.
\begin{align}\label{eq:Rhat}
 % \begin{aligned}
    \widehat{\mathscr{R}}(\theta)
    &=
    \mathscr{U}(\theta)
    \cap
    \left\{
      \forall~t \in [H_{k_2},H_n-k_4]
      :
      \sqrt{\tfrac{8}{\beta}}A_{t}^{1,-}
      \leq
      \sqrt{\tfrac{4}{\beta}}\mathfrak{Z}_t(\theta)
      \leq \sqrt{\tfrac{8}{\beta}}A_{t}^{1,+}
    \right\} \\
    &\quad \bigcap
     \left\{
      \forall t \in [H_n-k_4,H_n]
   \!   :
     \! \sqrt{\tfrac{4}{\beta}}\mathfrak{Z}_t(\theta)
      \leq \sqrt{\tfrac{8}{\beta}}\bigl(t-\tfrac{3}{4}\log\log n +
      \tfrac{1}{\sqrt{2}}\bigl( \tfrac{t (H_n - t + (\log k_5)^{50})}{H_n}\bigr)^{1/50}\bigr)
    \right\}.\nonumber
  \end{align}
We end the barrier $k_4$ time steps early, which is relatively early in the sense that $k_4 \gg k_5.$  In some instances, we need barrier information which continues all the way to the end, for which reason we include the second part. We note that this is essentially provided to us by the good event $\mathscr{B}_{n,k_2,k_6}$ in \eqref{eq:allupperbarriers}, and it is a small argument to simply include the continuous part.

We now show by a first moment argument that we may restrict attention to those angles for which this event occurs.
\begin{proposition}  For all $k_6$
  \[
    \limsup_{n \to \infty}
    \sum_{j = 1}^{4k_5 n}
    \Exp[
      \one\{\mathscr{L}(\tfrac{\pi j}{2k_5 n})\}
      \one\{
        \widehat{\mathscr{R}}(\tfrac{\pi j}{2k_5 n})^c
      \}
      \one\{\mathscr{G}_{n}\}
      ~\vert~
      \filt_{k_2}
    ]
    \Prto[k_5,k_4,k_2]
    0.
  \]
  \label{prop:nearmaximarays}
\end{proposition}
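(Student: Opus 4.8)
The plan is to bound the expectation by a union bound over the ways the event $\widehat{\mathscr{R}}(\theta)^c$ can occur, and to estimate each piece by a first-moment (one-point) calculation against the Gaussian random walk $G_k(\theta) = \sqrt{4/\beta}\,\mathfrak{Z}_{H_k}(\theta)$. First I would use the good event $\mathscr{G}_n$ to pass from $\varphi_n$ to the Gaussian-driven process: on $\mathscr{G}_n$ we have $|\varphi_n(\theta) - G_n(\theta)| < k_6$, so $\mathscr{L}(\theta)$ forces $G_n(\theta) \geq \sqrt{8/\beta}\,m_n - 2k_6$, and the barrier-violation event $\widehat{\mathscr{R}}(\theta)^c$ is (up to an $O(k_6)$ adjustment of the barrier constants, harmless since $k_6$ is fixed while $k_4,k_5 \to \infty$ afterward) an event about the Brownian motion $t \mapsto \mathfrak{Z}_t(\theta)$. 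Since for each fixed $\theta$ the process $(\mathfrak{Z}_t(\theta) : t \geq H_{k_2})$ is a standard Brownian motion started from $2\log\Phi^*_{H_{k_2}}(\theta)$, and since the number of mesh points is $4k_5 n$, it suffices to show that for a single standard Brownian bridge-like path conditioned to travel from height $\approx \sqrt{8/\beta}\log k_2$ at time $H_{k_2}$ to height $\approx \sqrt{8/\beta}\,m_n$ at time $H_n$, the probability of exiting the banana envelope $A^{1,\pm}_{(\cdot)}$ somewhere before time $H_n - k_4$ (while staying under the a priori barrier $A^{\ll}_{(\cdot)}$ imposed by $\mathscr{G}_n$) is $o(1/(k_5 n))$ as $n$, then $k_5$, then $k_4$, then $k_2$ go to infinity in that order.

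The core of the argument is therefore a ballot/barrier estimate for Brownian motion, which is exactly the type of computation that underlies \cite{CMN}: I would split $[H_{k_2}, H_n - k_4]$ into the ``early'' regime $t \leq \tfrac12\log n$ and the ``late'' regime $t > \tfrac12\log n$, and in each regime bound the probability that the bridge first exits the envelope at time $t$ by the product of (i) the probability the endpoint is near $\sqrt{8/\beta}\,m_n$ (which carries the factor $\tfrac{1}{n}\cdot(\text{polylog})$, essentially $\tfrac{m_n e^{-\text{const}}}{n}$ coming from the Gaussian density at the far-away value $\sqrt{8/\beta}\,m_n$, together with the entrance-window restriction $\mathscr{U}(\theta)$ producing a further $\log k_2 \cdot e^{-2\log k_2} = o_{k_2}(1)$ type gain), and (ii) the gain from requiring the path to leave the envelope, which after integrating over the exit time and exit height produces a factor that is $o(1)$ in $k_4$ (for exits near the right end) and summably small using the $t^{1/2 - p/(2p+1)}$ curvature of $A^{1,\pm}$ for exits in the bulk — this curvature is exactly what makes the banana envelope ``entropically repulsive'' so that staying inside it costs only a constant while leaving it costs a diverging amount. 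The contribution near the very right endpoint, $t \in [H_n - k_4, H_n]$, is handled separately by the second (concave-barrier) clause in the definition of $\widehat{\mathscr{R}}(\theta)$, which as the text notes is essentially already supplied by $\mathscr{B}_{n,k_2,k_6}$, so that part only requires noting that the concave barrier there is violated with probability $o(1/(k_5 n))$ after the endpoint constraint, and sending $k_4 \to \infty$ afterwards.

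Concretely the steps in order are: (1) reduce to the Gaussian process via Proposition \ref{prop:mgapproximation} and the definition of $\mathscr{G}_n$, absorbing the $k_6$ shifts into the barrier; (2) condition on $\filt_{k_2}$ and on the entrance value $\mathfrak{Z}_{H_{k_2}}(\theta)$, using $\mathscr{U}(\theta)$ to restrict it to the window $[s^-_{k_2}, s^+_{k_2}]$; (3) for each mesh point, bound $\Pr(\mathscr{L}(\theta), \widehat{\mathscr{R}}(\theta)^c, \mathscr{G}_n \mid \filt_{k_2})$ by a Brownian barrier estimate, decomposing over the first exit time from the $A^{1,\pm}$ envelope; (4) evaluate that estimate as a product of a Gaussian endpoint density $\asymp m_n/n$ and an exit-penalty factor, summing/integrating over exit times to see the penalty is $o(1)$ as $k_4 \to \infty$ (and the entrance window contributes $o(1)$ as $k_2 \to \infty$); (5) multiply by $4k_5 n$ mesh points and take limits in the order $n \to \infty$, $k_5 \to \infty$, $k_4 \to \infty$, $k_2 \to \infty$. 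The main obstacle is Step (4): carrying out the barrier estimate with sufficient precision that, after the unavoidable factor $k_5 n$ from the number of mesh points and a further $k_5$-dependent loss from discretization errors (comparing the mesh to the continuum), the remaining bound still tends to $0$ — this requires that the barrier curvature exponent $1/2 - p/(2p+1)$ gives a penalty that beats any fixed power of $k_5$, which is why the $k_5 \to \infty$ limit is taken before $k_4 \to \infty$. All of these are standard Gaussian computations of the kind developed in \cite{CMN}, and I would cite their barrier lemmas (around \cite[\S4]{CMN}) rather than redo them, adapting only the bookkeeping to the present envelope functions $A^{1,\pm}$ and the entrance window $\mathscr{U}$.
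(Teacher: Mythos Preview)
Your high-level strategy --- first-moment bound, reduce to the Gaussian process via $\mathscr{G}_n$, use that $\mathscr{L}(\theta)$ pins the endpoint $\mathfrak{Z}_{H_n}(\theta)$ near $\sqrt{2}m_n$, and exploit the a~priori barrier $\mathscr{B}_{n,k_2,k_6}$ to get ballot-theorem control --- matches the paper. The technical route is different, however: the paper does \emph{not} decompose over the first exit time from the banana. Instead it conditions on \emph{both} $\mathfrak{Z}_{H_{k_2}}(\theta)$ and $\mathfrak{Z}_{H_n}(\theta)$, so that $t\mapsto\mathfrak{Z}_t(\theta)$ becomes a Brownian bridge, and then uses the dyadic a~priori barrier $\mathscr{B}_{n,k_2,k_6}\subset\mathscr{G}_n$ (filled in to a continuous one with a $(\log k_5)^{50}$ buffer) to further condition the bridge to stay below a line; after a Girsanov straightening this yields a Bessel-$3$ bridge. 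This is packaged as Lemma~\ref{lem:allhailbanana}, which delivers
\[
  \Pr\bigl(\widehat{\mathscr{R}}(\theta)^c\cap\mathscr{B}_{n,k_2,k_6}\,\big\vert\,\mathfrak{Z}_{H_{k_2}},\mathfrak{Z}_{H_n}\bigr)
  \leq \frac{(\sqrt{2}\log k_2-\mathfrak{Z}_{H_{k_2}})\,e^{-c(\log k_5)^2}}{\log n}.
\]
Integrating the endpoint over its Gaussian density and summing over the $4k_5 n$ mesh points yields $k_5\,e^{-c(\log k_5)^2}$ times (essentially) $\mathscr{B}_{k_2}$. So the factor that beats the mesh count $k_5$ is the $e^{-c(\log k_5)^2}$ coming from the $(\log k_5)^{50}$ buffer in the second clause of $\widehat{\mathscr{R}}$, not the banana-curvature exponent you invoke.

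Two gaps in your outline. First, you do not treat the piece of $\widehat{\mathscr{R}}(\theta)^c$ coming from $\mathscr{U}(\theta)^c$, i.e.\ $\varphi_{k_2}(\theta)\notin[s_{k_2}^-,s_{k_2}^+]$: the paper handles this separately by bounding $\Pr(\mathscr{L}(\theta)\cap\mathscr{G}_n\mid\filt_{k_2})$ directly via the ballot theorem \eqref{eq:nmr0}, so the resulting sum is $k_5$ times the tail mass of the derivative martingale, which tends to $0$ in probability by the tightness statement~\eqref{eq:Btight} of Theorem~\ref{thm:Bj}. Second, your claim that ``the $k_5\to\infty$ limit is taken before $k_4\to\infty$'' reverses the paper's convention: in $\Prto[k_5,k_4,k_2]$ the rightmost parameter goes to infinity first (cf.\ the remark after Theorem~\ref{prop:bush} and Lemma~\ref{lem:goodstuff1}), so the order is $k_2\to\infty$, then $k_4$, then $k_5$, consistent with $k_2\gg k_4\gg k_5$. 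With the correct ordering a penalty $o_{k_4}(1)$ \emph{would} beat the $k_5$ multiplier, so your exit-time scheme is salvageable in principle --- but the paper's Bessel-bridge route is what is actually carried out, and the relevant barrier lemma (Lemma~\ref{lem:allhailbanana}) is proved in this paper, not in~\cite{CMN}.
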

\begin{proof}[Proof of Proposition \ref{prop:nearmaximarays}]
  On the event $\mathscr{G}_{n}$ (see \eqref{eq:allupperbarriers} and  \eqref{eq:g3}) we have that for any $\theta \in [0,2\pi]$
  \begin{equation}\label{eq:nma1}
    \sqrt{\tfrac{4}{\beta}}\mathfrak{Z}_{H_{2^k}}(\theta)
    =
    G_{2^k}(\theta)
    %\leq \varphi_{2^k}{(\theta)}
    \leq 2k_6 + \sqrt{\tfrac{8}{\beta}}A_{2^k}^{\ll},
    \quad
    \text{for all}
    \quad
    \log_2 k_2 \leq k \leq \log_2 n,
  \end{equation}
  for integer $k$ and for $k= \log_2 n.$
  On the other hand, on the event $\mathscr{L}(\theta) \cap \mathscr{G}_{n},$ we have
  \begin{equation}
    \sqrt{\tfrac{4}{\beta}}\mathfrak{Z}_{H_{n}}(\theta)
    \geq \varphi_n(\theta)-k_6
%    \geq \sqrt{\tfrac{8}{\beta}}m_n - 2k_7 - k_6
    \geq \sqrt{\tfrac{8}{\beta}}m_n - 2k_6.
    \label{eq:nma2}
  \end{equation}
   We recall that
  $s_{k_2}^{\pm} = \sqrt{\tfrac{8}{\beta}}( \log k_2 - (\log k_2)^{0.5\mp 0.01}).$
It is a classical estimate on Gaussian random walk, that when \eqref{eq:nma1}
  and 
  \eqref{eq:nma2} and $\mathscr{U}(\theta)$ occur, the entropic envelope condition $\widehat{\mathscr{R}}(\theta)$ is typical. 
  Indeed,  for $\varphi_{k_2}(\theta) \in [s_{k_2}^{-},s_{k_2}^{+}],$
  i.e. when $\mathscr{U}(\theta)$ occurs,
  we show in Lemma \ref{lem:allhailbanana} that there are constants $C,c$ so that for all $n \gg k_2 \gg k_4$ sufficiently large,
  \begin{equation}\label{eq:nma4}
    \Pr(\widehat{\mathscr{R}}(\theta)^c \cap \eqref{eq:nma2}
    \cap \eqref{eq:nma1} \vert \mathfrak{Z}_{H_n}(\theta), \filt_{k_2})
    \leq C
    \frac{
      (\sqrt{2}\log k_2 - \mathfrak{Z}_{H_{k_2}}(\theta))
      e^{-(\log k_5)^{2}}
%      k_4^{-c}
    }{\log n}.
  \end{equation}
  Now as we still have that the increment
  \[
    \mathfrak{Z}_{H_n}(\theta)-\mathfrak{Z}_{H_{k_2}}(\theta)
    \in \sqrt{2}m_n-\mathfrak{Z}_{H_{k_2}}(\theta) + [-2k_6, 2k_6],
  \]
  it follows from integrating the Gaussian density that
  \begin{equation}\label{eq:nma4a}
    \begin{aligned}
      &\Pr(\widehat{\mathscr{R}}(\theta)^c \cap \eqref{eq:nma2}
      \cap \eqref{eq:nma1}\cap \mathscr{G}_n
       \vert \filt_{k_2}) \\
      &\leq C
      \frac{
        (\sqrt{2}\log k_2 - \mathfrak{Z}_{H_{k_2}}(\theta))
e^{-(\log k_5)^{2}}
%        k_4^{-c}
      k_6}{\log n (H_n - H_{k_2})^{1/2}}\exp\left(
      -\frac{(\sqrt{2}m_n-\mathfrak{Z}_{H_{k_2}}(\theta) - 2k_6)^2}{2(H_n - H_{k_2})}
      \right). \\
      &\leq C
      \frac{
        (\sqrt{2}\log k_2 - \mathfrak{Z}_{H_{k_2}}(\theta))
e^{-(\log k_5)^{2}}
%        k_4^{-c}
      k_6}{ n}\exp\left(
      2\sqrt{2}k_6
      +
      \sqrt{2}\mathfrak{Z}_{H_{k_2}}(\theta)- \log k_2
      \right). \\
    \end{aligned}
  \end{equation}
  We conclude that on the event 
  $\varphi_{k_2}(\theta) \in [s_{k_2}^{-},s_{k_2}^{+}],$
  \begin{equation}\label{eq:nma4b}
    \begin{aligned}
      \Pr(
      \mathscr{L}(\theta) \cap \widehat{\mathscr{R}}(\theta)^c
      \cap \mathscr{G}_n
      \vert \filt_{k_2})
      &\leq C_\beta
      \frac{
	(\sqrt{2}\log k_2 -\mathfrak{Z}_{H_{k_2}}(\theta))
	 k_6
e^{-(\log k_5)^{2}}
      }{ n}
      e^{6k_6
        +
        \sqrt{\tfrac{\beta}{2}}\varphi_{k_2}(\theta)- \log k_2
      }.
    \end{aligned}
  \end{equation}

  We must also give a relatively sharp bound when $\varphi_{k_2}(\theta)$ is outside this good range.  These estimates are already given in \cite{CMN}, but are essentially standard Gaussian random walk estimates going back to \cite{bramson83}.
  As we work on the event $\mathscr{G}_n,$ \eqref{eq:nma1} is given to us.  The process $k \mapsto G_{2^k}(\theta) - G_{k_2}(\theta)$ is a Gaussian random walk whose increments have (nearly) variance ${\frac{4}{\beta}}\log 2$ started from $0.$  From \eqref{eq:nma1}, this process stays below the barrier $k\mapsto \sqrt{\frac{8 \log 2}{\beta}}(k +g(k))$ with $g$ controlled by $({k \wedge (\log_2 n-k)})^{1/100}.$ The probability of \eqref{eq:nma2} happening is thus uniformly bounded from the appropriate ballot theorem (see e.g. \cite[Lemma 2.1]{BRZ})
%  \todo[inline]{Add reference or use the lemmas from Section 3.}
  \begin{align*}
    & \Pr(\eqref{eq:nma2} \cap \mathscr{G}_n
\vert \eqref{eq:nma1}, \filt_{k_2})\\
 &   \leq
    C_\beta
    \exp\left(-\frac{(\sqrt{\tfrac{8}{\beta}}m_n - \varphi_{k_2}(\theta) - 
    3k_6)^2}{\frac{8}{\beta}\log(n/k_2)}\right)
    \frac{ (\sqrt{2}\log k_2 - \sqrt{\tfrac{\beta}{4}} \varphi_{k_2}(\theta)+
    2k_6) k_6}{(\log(n/k_2))^{3/2}},
  \end{align*}
  for some constant $C_\beta.$ On sending $n \to \infty,$ we therefore have 
  (after increasing $C_\beta$) that the left side of the last display is bounded above by the bound 
  \begin{align}\label{eq:nmr0}
 &   \Pr(\eqref{eq:nma2}\cap \mathscr{G}_n
 \vert \eqref{eq:nma1}, \filt_{k_2})\\
   & \nonumber \leq
    C_\beta
    k_6^2
    \exp\left(
    -\log( n k_2) + \sqrt{\tfrac{\beta}{2}}(  \varphi_{k_2}(\theta) + 
    c_\beta 
    k_6)(1+o_n(1))
    \right)
    \bigl(\sqrt{2}\log k_2 - \sqrt{\tfrac{\beta}{4}} \varphi_{k_2}(\theta)+
    2k_6\bigr)
    .
  \end{align}

  We can now return to the summation we wish to bound.
  \begin{equation}\label{eq:nmr1}
    \begin{aligned}
    &  \limsup_{n \to \infty}
      \sum_{j = 1}^{4k_5 n}
      \Exp[
        \one\{\mathscr{L}(\tfrac{\pi j}{2k_5 n})\}
        \one\{
          \widehat{\mathscr{R}}(\tfrac{\pi j}{2k_5 n})^c
        \}
        \one\{\mathscr{G}_{n}\}
        ~\vert~
        \filt_{k_2}
      ] \\
      &\leq
      \limsup_{n \to \infty}
      \sum_{j = 1}^{4k_5 n}
      \one[{ \varphi_{k_2}(\tfrac{\pi j}{2k_5 n}) \in [s_{k_2}^{-},s_{k_2}^+] }]
      \Pr(
      \widehat{\mathscr{R}}(\tfrac{\pi j}{2k_5 n})^c\cap \mathscr{G}_{n}
      \vert \eqref{eq:nma2}\eqref{eq:nma1}, \filt_{k_2})
      \Pr(\eqref{eq:nma2}\mathscr{G}_{n} \vert \eqref{eq:nma1}, \filt_{k_2}) \\
      &\qquad +
      \limsup_{n \to \infty}
      \sum_{j = 1}^{4k_5 n}
      \one[{ \varphi_{k_2}(\tfrac{\pi j}{2k_5 n}) \not\in [s_{k_2}^{-},s_{k_2}^+] }]
      \Pr(\eqref{eq:nma2} \cap \mathscr{G}_{n}\vert \eqref{eq:nma1}, \filt_{k_2}).
      \\
    \end{aligned}
  \end{equation}
  We emphasize that in the events $\eqref{eq:nma2}$ and $\eqref{eq:nma1}$ we have taken $\theta \to \tfrac{\pi j}{2k_5 n}.$

  From the almost sure continuity of $\varphi_{k_2}(\theta)$, we get convergence of these sums to integrals, and using \eqref{eq:nmr0} and \eqref{eq:nma4b}
  \begin{equation}\label{eq:nmr3}
    \begin{aligned}
      \limsup_{n \to \infty}
      &\sum_{j = 1}^{4k_5 n}
      \Exp[
        \one\{\mathscr{L}(\tfrac{\pi j}{2k_5 n})\}
        \one\{
          \widehat{\mathscr{R}}(\tfrac{\pi j}{2k_5 n})^c
        \}
        \one\{\mathscr{G}_{n}\}
        ~\vert~
        \filt_{k_2}
      ] \\
      &\leq
      C_{\beta}
      k_5
e^{-(\log k_5)^{2}}
%      k_4^{-c}
      \int_0^{2\pi}
      e^{\sqrt{\tfrac{\beta}{2}}( 6k_6 + \varphi_{k_2}(\theta)) -\log(k_2)}
      \bigl(C_\beta k_6+\sqrt{2}\log k_2 - \sqrt{\tfrac{\beta}{4}} \varphi_{k_2}(\theta)\bigr)
      d\theta \\
      &+
      k_5\int_0^{2\pi}
      e^{\sqrt{\tfrac{\beta}{2}}(  \varphi_{k_2}(\theta) + k_6) -\log(k_2)}
      \one[{ \varphi_{k_2}(\theta) \not\in [s_{k_2}^{-},s_{k_2}^+] }]
      \bigl|\sqrt{2}\log k_2 - \sqrt{\tfrac{\beta}{4}} \varphi_{k_2}(\theta)
      +c_\beta k_6\bigr|
      d\theta.
    \end{aligned}
  \end{equation}
  Hence by Theorem \ref{thm:Bj} the convergence follows.
\end{proof}

\subsubsection{Meshing}

We define for each $j \in {\mathcal{D}}_{n/k_1}$ the set $I_j$ of $\tfrac{2\pi}{4 k_5n}\mathbb{Z}$ which is at least distance $\tfrac{2\pi \log k_1}{n}$ from the complement of $\widehat{I_j}$, i.e.
\begin{equation}\label{eq:Ij}
  {I_j} = \tfrac{2\pi}{4 k_5n}\mathbb{Z} \cap 2\pi[ \tfrac{(j-1)k_1}{n} + \tfrac{\log k_1}{n},\tfrac{jk_1}{n} - \tfrac{\log k_1}{n}  ].
\end{equation}
For any $j\in\Z,$ define
\begin{equation}
  \begin{aligned}
    &{W_j} \coloneqq \max_{\theta \in {I_j}} \{\varphi_n(\theta)\} - \sqrt{\tfrac{8}{\beta}}m_{n}.
  \end{aligned}
  \label{eq:localmax}
\end{equation}
Deterministically, it is possible to bound the error of $\widehat{W}_j$ (recall \eqref{eq:localmaxhat}) and the maximum of $\varphi_n$ over $\tfrac{2\pi}{4 k_5n}\mathbb{Z}$ (for example as in \cite[Proposition 4.5]{CMN}).  We shall expand upon this by showing that in fact we can guarantee $\widehat{W}_j$ is large implies $W_j$ is large, except with a negligible probability, as a consequence of interpolation.

\begin{proposition}  Any interval in which $\widehat{W}_j$ is large must also have $W_j$ is large in the sense that
for any $k_7$
  \[
    \limsup_{k_1,n \to \infty}
    \sum_{j \in \mathcal{D}_{n/k_1}}
    \Exp[
      \one\{ \widehat{W}_j \in [-k_7,k_7]\}
      \one\{ W_j \not{\in} [-k_6,k_6]\}
      \one\{\mathscr{G}_{n}\}
      ~\vert~
      \filt_{k_2}
    ]
    \Prto[k_6,k_5,k_2]
    0.
  \]
  \label{prop:interpolation0}
\end{proposition}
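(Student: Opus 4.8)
Since $I_j\subset\widehat{I_j}$ we always have $W_j\le\widehat{W}_j$, so on $\{\widehat{W}_j\in[-k_7,k_7]\}$ the event $\{W_j\notin[-k_6,k_6]\}$ reduces (once $k_6>k_7$, which holds in the limit $k_6\to\infty$) to $\{W_j<-k_6\}$, and it suffices to bound the sum with this last event. The plan has two parts: a \emph{deterministic interpolation} showing that on $\mathscr{G}_n$ the discrepancy $\{\widehat{W}_j\ge -k_7\}\cap\{W_j<-k_6\}$ can occur only when the near-maximum of $\varphi_n$ on $\widehat{I_j}$ is attained near an endpoint of the arc, and a \emph{first-moment estimate} showing that such endpoint-localized near-maxima are too sparse to contribute. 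For the interpolation, let $\theta^\ast\in\overline{\widehat{I_j}}$ satisfy $\varphi_n(\theta^\ast)=\widehat{W}_j+\sqrt{8/\beta}m_n$ and let $\theta_j^\dagger$ be a point of $\tfrac{2\pi}{4k_5n}\Z$ nearest to $\theta^\ast$ (for $\sigma=i$, the nearest one \emph{above} $\theta^\ast$), so $|\theta_j^\dagger-\theta^\ast|\le\tfrac{\pi}{2k_5n}$. When $\sigma=1$ we have $\varphi_n=2\log|\Phi_n^*|$, and Bernstein's inequality gives $\|(\Phi_n^*)'\|_{\infty,|z|=1}\le n\|\Phi_n^*\|_{\infty,|z|=1}$; since $\mathscr{B}_{n,k_2,k_6}\subset\mathscr{G}_n$ forces $\log\|\Phi_n^*\|_{\infty,|z|=1}=\tfrac12\sup_\theta\varphi_n(\theta)\le\sqrt{2/\beta}A_n^{\ll}+\tfrac{k_6}{2}$, the ratio $\|\Phi_n^*\|_{\infty}/|\Phi_n^*(e^{i\theta^\ast})|=e^{\frac12(\sup_\theta\varphi_n(\theta)-\widehat{W}_j-\sqrt{8/\beta}m_n)}$ is at most a constant $\Lambda(\beta,k_6,k_7)$ (using that $A_n^{\ll}-m_n$ is bounded above uniformly in $n$ and that $\widehat{W}_j\ge-k_7$), whence $|\Phi_n^*(e^{i\theta_j^\dagger})|\ge|\Phi_n^*(e^{i\theta^\ast})|\bigl(1-\tfrac{\pi}{2k_5}\Lambda(\beta,k_6,k_7)\bigr)$ and thus $\varphi_n(\theta_j^\dagger)\ge\varphi_n(\theta^\ast)-1$ once $k_5$ is large (which it is, since $k_5\gg k_6\gg k_7$). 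When $\sigma=i$ one uses instead the identity $\varphi_n(\theta)=\Psi_n(\theta)-(n+1)\theta$ together with the monotonicity of $\Psi_n$ from Lemma \ref{lem:psikmonotone}: $\varphi_n(\theta_j^\dagger)-\varphi_n(\theta^\ast)=\bigl(\Psi_n(\theta_j^\dagger)-\Psi_n(\theta^\ast)\bigr)-(n+1)(\theta_j^\dagger-\theta^\ast)\ge-\tfrac{\pi}{k_5}\ge-1$. (This is in the spirit of \cite[Proposition 4.5]{CMN}.) In either case $\varphi_n(\theta_j^\dagger)\ge\sqrt{8/\beta}m_n-k_7-1$, so if $\theta_j^\dagger\in I_j$ then $W_j\ge-k_7-1\ge-k_6$, a contradiction. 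Hence on $\mathscr{G}_n\cap\{\widehat{W}_j\in[-k_7,k_7]\}\cap\{W_j<-k_6\}$ the mesh point $\theta_j^\dagger$ lies in the set $\mathcal{M}_j$ of points of $\tfrac{2\pi}{4k_5n}\Z\cap\widehat{I_j}$ excluded by the margin in \eqref{eq:Ij}, i.e.\ within $\tfrac{2\pi\log k_1}{n}+\tfrac{\pi}{2k_5n}$ of an endpoint of $\widehat{I_j}$, while still $\varphi_n(\theta_j^\dagger)\ge\sqrt{8/\beta}m_n-k_7-1$.

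Bounding the indicator by a union bound over $\mathcal{M}_j$ (a set of $O(k_5\log k_1)$ mesh points), the sum is at most
\[
\sum_{j\in\mathcal{D}_{n/k_1}}\ \sum_{\theta\in\frac{2\pi}{4k_5n}\Z\cap\mathcal{M}_j}\Exp\!\big[\one\{\mathscr{G}_n\}\,\one\{\varphi_n(\theta)\ge\sqrt{8/\beta}m_n-k_7-1\}\,\big|\,\filt_{k_2}\big].
\]
Each summand is controlled exactly as in the proof of Proposition \ref{prop:nearmaximarays}: on $\mathscr{G}_n$ the Gaussian walk $k\mapsto G_{2^k}(\theta)$ obeys the a priori barrier \eqref{eq:nma1}, and the ballot/first-moment estimate of \cite{CMN} (cf.\ \eqref{eq:nmr0}, going back to \cite{bramson83}) gives, uniformly in $\theta$,
\[
\Exp\!\big[\one\{\mathscr{G}_n\}\,\one\{\varphi_n(\theta)\ge\sqrt{8/\beta}m_n-k_7-1\}\,\big|\,\filt_{k_2}\big]\le\frac{C_\beta(k_6,k_7)}{n}\,e^{\sqrt{\beta/2}\,\varphi_{k_2}(\theta)-\log k_2}\bigl(1+|\sqrt2\log k_2-\sqrt{\beta/4}\,\varphi_{k_2}(\theta)|\bigr)(1+o_n(1)).
\]
Now $\mathcal{M}_j$ has Lebesgue measure $\asymp\tfrac{\log k_1}{n}$ and there are $\lceil n/k_1\rceil$ of them, so $\bigcup_j\mathcal{M}_j$ has measure $\asymp\tfrac{\log k_1}{k_1}$ and, being asymptotically periodic with vanishing period, equidistributes; by the almost sure continuity of $\varphi_{k_2}$ the Riemann sums converge to integrals as $n\to\infty$ (as in passing from \eqref{eq:nmr0} to \eqref{eq:nmr3}), and one obtains
\[
\limsup_{n\to\infty}\ \sum_{j\in\mathcal{D}_{n/k_1}}\Exp[\,\cdots\,|\,\filt_{k_2}]\le C_\beta(k_6,k_7)\,k_5\,\frac{\log k_1}{k_1}\int_0^{2\pi}e^{\sqrt{\beta/2}\,\varphi_{k_2}(\theta)-\log k_2}\bigl(1+|\sqrt2\log k_2-\sqrt{\beta/4}\,\varphi_{k_2}(\theta)|\bigr)\,d\theta.
\]
The integral is an almost surely finite, $\filt_{k_2}$-measurable random variable (it is of the type appearing in Theorem \ref{thm:Bj}), so letting $k_1\to\infty$ the right-hand side tends to $0$ almost surely. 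Thus $\limsup_{k_1,n\to\infty}$ of the sum vanishes as soon as $k_6>k_7+1$ and $k_5$ is large enough — both of which hold in the limit $k_6,k_5,k_2\to\infty$ — which gives the asserted convergence in probability.

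The main obstacle is the deterministic interpolation: because $\varphi_n$ can oscillate by an amount of order one on scale $1/n$, the mesh must be fine relative to the slack $e^{k_6}$ in the good event, so one genuinely needs $k_5$ much larger than $k_6$; Bernstein's inequality (for $\sigma=1$, fed by the a priori upper barrier $A^{\ll}$) and the monotonicity of the Pr\"ufer phase (for $\sigma=i$) are precisely what turn this into a usable bound. Once the discrepancy is localized to the endpoint margins, the rest is a routine reuse of the first-moment machinery of Section \ref{sec:milieu}, the only new gain being the factor $\tfrac{\log k_1}{k_1}\to0$ coming from the measure of $\bigcup_j\mathcal{M}_j$.
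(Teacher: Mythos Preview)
Your proof is correct and follows essentially the same two-step strategy as the paper: a deterministic interpolation step to pass from the continuum near-maximum to a nearby mesh point, followed by a first-moment estimate showing that mesh near-leaders in the endpoint margins $\mathcal{M}_j$ are negligible (the gain being the fraction $\tfrac{\log k_1}{k_1}$). The margin estimate you carry out is exactly the content of the paper's Lemma~\ref{lem:inter1}.

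The only real difference is in the interpolation tool. You use Bernstein's inequality (Theorem~\ref{thm:Bernstein}) directly to compare $|\Phi_n^*|$ at $\theta^*$ and at the single nearest mesh point $\theta_j^\dagger$; this is perfectly adequate here, since the ratio $\|\Phi_n^*\|_\infty / |\Phi_n^*(e^{i\theta^*})|$ is bounded by a constant depending on $k_6,k_7$ on $\mathscr{G}_n$, and $k_5\to\infty$ after $k_6$. The paper instead invokes Lemma~\ref{lem:interpolation}, which is built on the sharper Fej\'er-kernel interpolation Theorem~\ref{thm:interpolation} and produces a point $\theta'\in J(\theta)$. For Proposition~\ref{prop:interpolation0} your simpler route suffices and is arguably cleaner; the paper's more refined interpolation is genuinely needed later (Proposition~\ref{prop:interpolation}), where one must control the whole local profile near a leader and not just find a single high mesh point.
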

\noindent We delay the proof to the end of Section \ref{sec:meshing} until after introducing the relevant interpolation tools.

\subsubsection{A canonical trunk with small oscillations}
Tightness of 
$\max_j \widehat{W}_j$ and Proposition \ref{prop:interpolation0} guarantee that we can find choices of $\theta$ in the sufficiently fine mesh that are close to maximum.  Combining this with Proposition \ref{prop:nearmaximarays}, we conclude that all leading $\theta$ behave like leading particles in a branching random walk in the sense that they are confined to the banana-like region.  We would like to be able to say that for each interval $I_j,$ for which $W_j \in [-k_6,k_6],$ all the random walks $(\varphi_{k}(\theta) : k)$ for all $\theta \in I_j$ behave like 
a single random walk, at least for $k \leq {n}_1^+$ (which is long before the effective branching time).

Recall that $\theta_j$ is the largest point of $\widehat{I}_j.$  We proceed to define the events for $j \in \mathcal{D}_{n/k_1}$
\begin{equation}
  \begin{aligned}
   &\mathscr{O}_j =
  \bigl\{
    \sup_{\theta \in \widehat{I}_j}
    \max_{k_2 \leq k \leq \widehat{n}_1}
    |\varphi_{k}{(\theta)}-\varphi_{k}{(\theta_j)}|
    \leq \sqrt{\tfrac{{k_1}}{\widehat{k}_1 }} % \tfrac{1}{(\log k_1)^{100}}
  \bigr\}, \\
  &\mathscr{O}_j^+ =
  \bigl\{
    \sup_{\theta \in \widehat{I}_j}
    \max_{k_2 \leq k \leq {n}^+_1}
    |\varphi_{k}{(\theta)}-\varphi_{k}{(\theta_j)}|
    \leq \sqrt{\tfrac{{k_1}}{\widehat{k}_1} } % \tfrac{1}{(\log k_1)^{100}}
  \bigr\},
  \quad
  \text{and}  \\
  \quad
  &\mathscr{O}_j^{\Psi} =
  \bigl\{
    \sup_{\theta \in \widehat{I}_j} |\Psi_{n_1^+}{(\theta)}-\Psi_{n_1^+}{(\theta_j)}| \leq
    \tfrac{k_1(\log k_1)^{50}}{k_1^+ }
  \bigr\}
  .
  \label{eq:Oj}
  \end{aligned}
\end{equation}
In the imaginary case $\sigma = i$, we only use the event 
$\mathscr{O}_j^{\Psi}$; in the real case we use the event $\mathscr{O}_j$, 
which of course implies 
the event $\mathscr{O}_j^+$.

These events control the oscillation of the respective functions on an interval of $\theta$ which is much smaller than the natural smoothness scale of $\varphi_{n_1^+}(\theta)$ is small.  We are interested in controlling this oscillation when the random walk $(\varphi_{k}{(\theta_j)} : k)$ is a near-leader.
Indeed, to that end, for $j \in \mathcal{D}_{n/k_1},$ define the \emph{good ray events}
\begin{equation}
  \begin{aligned}
    &\mathscr{R}^p_{j}(m)
    =
    \mathscr{U}(\theta_j)
    \cap
    \left\{
      \forall~ H_{k_2}  \leq t \leq H_m
      :
      \sqrt{\tfrac{8}{\beta}}A_{t}^{p,-}
      \leq
      \sqrt{\tfrac{4}{\beta}} \mathfrak{Z}_t(\theta_j)
      \leq \sqrt{\tfrac{8}{\beta}}A_{t}^{p,+}
    \right\},
  \end{aligned}
  \label{eq:rayevent}
\end{equation}
This differs from the previous \eqref{eq:Rhat} in that we specialize to the angle $\theta_j$ and that it only bounds the behavior of the walk up to time $\log m.$
We have also slightly increased the barrier sizes from \eqref{eq:Rhat}.

Our main strategy for this estimate in the $\sigma=1$ case is to use the \emph{a priori} bounds which ultimately follow from $\Phi^*_k$ being a polynomial of degree $k.$  This is contained in the following:
\begin{proposition}  (Real case $\sigma=1$) For any $k_5,k_6$ and 
  all $n \gg k_1$ sufficiently large,
  on the event $\mathscr{G}_n$,  for any $j$ for which there is a
  $\theta' \in I_j$ satisfying 
  \[
    \sqrt{\tfrac{\beta}{8}}
    \varphi_{k}(\theta') \geq
    A_{k}^{4,-} \quad \text{for all}\quad k_2 \leq k \leq \widehat{n}_1,
  \]
  also $\mathscr{O}_j$ holds. 
  \label{prop:trunkray1}
\end{proposition}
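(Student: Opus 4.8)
The plan is to derive $\mathscr{O}_j$ from a uniform-in-$k$ bound on the $\theta$-derivative of $\varphi_k$, using that the arc $\widehat{I}_j$ has length only $2\pi k_1/n$, which $\to 0$. For $\sigma=1$, comparing \eqref{eq:upsilon} with \eqref{eq:Phi} gives $\varphi_k(\theta)=2\log|\Phi^*_k(e^{i\theta})|$, so $|\Phi^*_k(e^{i\theta})|^2=e^{\varphi_k(\theta)}$ and all relevant quantities are read off from a polynomial of degree $\le k$. I will show: on $\mathscr{G}_n$, under the hypothesis that some $\theta'\in I_j$ stays above the barrier through time $\widehat{n}_1$, there are $C_m$ (for $k_2\le m\le\widehat{n}_1$) with $C_{k_2}=O_{\beta,k_2,k_6}(1)$ and $C_m=C_{m-1}+\delta_m$ for $m>k_2$, where $\delta_m\le C_\beta m^{-1/2-\sqrt{8/\beta}+o(1)}$ on $\mathscr{T}_{k_2}$, such that for every $\theta\in\widehat{I}_j$ and every $k_2\le m\le\widehat{n}_1$,
\[
  \varphi_m(\theta)\ge\sqrt{\tfrac{8}{\beta}}A_m^{4,-}-\tfrac12
  \qquad\text{and}\qquad
  |\varphi_m'(\theta)|\le C_m .
\]
Accepting this, $|\varphi_k(\theta)-\varphi_k(\theta_j)|\le 2C_k|\widehat{I}_j|=2C_k\cdot\tfrac{2\pi k_1}{n}$ for all $\theta\in\widehat{I}_j$ and $k\le\widehat{n}_1$; since $C_{\widehat{n}_1}=o(\widehat{n}_1)$ for every $\beta>0$ and $\widehat{n}_1\le n/\widehat{k}_1$, the right-hand side tends to $0$ as $n\to\infty$, hence is $<\sqrt{k_1/\widehat{k}_1}$ for $n$ large relative to $k_1$, which is $\mathscr{O}_j$.

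The two displayed bounds are proved together by induction on $m$, which simultaneously propagates the barrier lower bound from the single angle $\theta'$ to all of $\widehat{I}_j$. Base case $m=k_2$: by the maximum principle and Bernstein's inequality, $|\varphi_{k_2}'(\theta)|\le 2k_2\|\Phi^*_{k_2}\|_\infty/|\Phi^*_{k_2}(e^{i\theta})|$, and on $\mathscr{G}_n$ the upper-barrier event bounds $\|\Phi^*_{k_2}\|_\infty=e^{\frac12\sup_\theta\varphi_{k_2}}\le e^{\sqrt{2/\beta}A_{k_2}^{\ll}+k_6/2}$, so wherever $\varphi_{k_2}(\theta)\ge\sqrt{8/\beta}A_{k_2}^{4,-}-1$ one gets $|\varphi_{k_2}'(\theta)|\le C_{k_2}=O_{\beta,k_2,k_6}(1)$; a first-exit argument along the interval $\widehat{I}_j$ starting from $\theta'$ then shows (using $C_{k_2}|\widehat{I}_j|<\tfrac12$ for $n$ large) that $\varphi_{k_2}$ never drops below $\sqrt{8/\beta}A_{k_2}^{4,-}-1$ on $\widehat{I}_j$, giving both bounds at level $k_2$. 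Inductive step: differentiating \eqref{eq:upsilon} and \eqref{eq:Psi},
\[
  \varphi_{m+1}'(\theta)-\varphi_m'(\theta)=-2\,\Psi_m'(\theta)\,\Re\frac{i\gamma_m e^{i\Psi_m(\theta)}}{1-\gamma_m e^{i\Psi_m(\theta)}},
  \qquad
  \Psi_m'(\theta)=\frac{\prod_{i<m}(1-|\gamma_i|^2)}{|\Phi^*_m(e^{i\theta})|^2}\le e^{-\varphi_m(\theta)};
\]
on $\mathscr{T}_{k_2}$ one has $|\gamma_m|\le C_\beta\sqrt{\log m/m}<\tfrac12$ for $m>k_2$, so the increment is at most $4|\gamma_m|e^{-\varphi_m(\theta)}$, and inserting the induction's barrier bound at level $m$ gives $4|\gamma_m|e^{-\varphi_m(\theta)}\le C_\beta|\gamma_m|e^{-\sqrt{8/\beta}A_m^{4,-}}\le\delta_{m+1}$; hence $|\varphi_{m+1}'(\theta)|\le C_m+\delta_{m+1}=:C_{m+1}$ on all of $\widehat{I}_j$, and integrating over $\widehat{I}_j$ gives $|\varphi_{m+1}(\theta)-\varphi_{m+1}(\theta')|\le C_{m+1}|\widehat{I}_j|<\tfrac12$ for $n$ large, which is the barrier bound at level $m+1$ on $\widehat{I}_j$. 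Throughout, $H_{\widehat{n}_1}<\tfrac12\log n$ for $n$ large, so the barriers stay in their first branch.

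The main obstacle is the balancing of scales in the derivative estimate. Bernstein's inequality alone would give, on the barrier, $|\varphi_k'(\theta)|\lesssim k\,e^{C_\beta(\log k)^{1/18}}$; at $k\approx\widehat{n}_1\approx n/\widehat{k}_1$ this multiplied by $|\widehat{I}_j|\sim k_1/n$ is $\approx\tfrac{k_1}{\widehat{k}_1}e^{C_\beta(\log(n/\widehat{k}_1))^{1/18}}$, which does not vanish as $n\to\infty$ because the subpolynomial factor diverges while $k_1/\widehat{k}_1$ is fixed. The telescoping identity repairs this because at a point where $\varphi_m$ is near the top --- equivalently $\Psi_m'$ near its minimum --- each increment $\varphi_{m+1}'-\varphi_m'$ is only of size $m^{-1/2-\sqrt{8/\beta}}$ up to subpolynomial corrections, so the total derivative grows like a power of $n$ strictly smaller than one (indeed $O(1)$ for small $\beta$), which is then beaten by $|\widehat{I}_j|\sim k_1/n$. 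Once this is in place, what remains is routine bookkeeping: checking that all "$n$ large" thresholds depend only on $\beta,k_1,k_2,k_6$, not on $j$, and that the inputs $|\gamma_m|\le C_\beta\sqrt{\log m/m}$ and the bound on $\sup_\theta\varphi_{k_2}$ are precisely those furnished by $\mathscr{T}_{k_2}$ and $\mathscr{B}_{n,k_2,k_6}\subset\mathscr{G}_n$.
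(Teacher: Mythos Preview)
Your argument has a genuine error in the derivative formula for the Pr\"ufer phase. Differentiating \eqref{eq:Psi} gives
\[
  \Psi_{m+1}'(\theta)=\frac{1-|\gamma_m|^2}{|1-\gamma_m e^{i\Psi_m(\theta)}|^2}\,\Psi_m'(\theta)\;+\;1,
\]
not the closed form $\Psi_m'=\prod_{i<m}(1-|\gamma_i|^2)/|\Phi_m^*|^2$ you quote. Unrolling the correct recursion yields $\Psi_m'(\theta)=|\Phi_m^*(e^{i\theta})|^{-2}\sum_{j=0}^m(P_m/P_j)|\Phi_j^*(e^{i\theta})|^2$ with $P_j=\prod_{i<j}(1-|\gamma_i|^2)$; in particular $\Psi_m'\ge 1$ everywhere and $\int_0^{2\pi}\Psi_m'=(m+1)2\pi$. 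So the bound $\Psi_m'\le e^{-\varphi_m}$ is false precisely at high points, where $e^{-\varphi_m}\approx m^{-\sqrt{8/\beta}}\to 0$. Heuristically, along a ray hugging the barrier one has $|\Phi_j^*|^2/P_j\sim j^{\sqrt{8/\beta}+2/\beta}$, so the sum is $\sim m^{1+\sqrt{8/\beta}+2/\beta}$ and $\Psi_m'(\theta)\sim m$; your increment becomes $|\varphi_{m+1}'-\varphi_m'|\lesssim|\gamma_m|\Psi_m'\sim\sqrt m$, which telescopes to $C_{\widehat n_1}\sim\widehat n_1^{3/2}$, far too large against $|\widehat I_j|\sim k_1/n$.

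Your diagnosis of the Bernstein approach is also off, and this is exactly what the paper exploits. You compute the barrier gap $A_k^{\ll}-A_k^{4,-}\sim(\log k)^{17/18}$, but that is the \emph{first}-branch formula, valid only for $H_k\le\tfrac12\log n$, i.e.\ $k\lesssim\sqrt n$; in that range the prefactor $kk_1/n\le k_1 n^{-1/2}$ already dominates any subpolynomial correction. For $\sqrt n<k\le\widehat n_1$ both barriers are in their second branch and the gap is $\sim(\log(n/k))^{17/18}\lesssim(\log\widehat k_1)^{17/18}\approx(\log k_1)^{17/18}$. The paper's bound then reads
\[
  \frac{|\Phi_k^*(e^{i\theta})-\Phi_k^*(e^{i\theta'})|}{|\Phi_k^*(e^{i\theta'})|}
  \;\le\;\frac{Ckk_1}{n}\exp\!\bigl(C_\beta(\log(n/k))^{17/18}\bigr)
  \;\le\;C\exp\!\bigl(-(\log k_1)^{19/20}+C_\beta(\log k_1)^{17/18}\bigr),
\]
using that $x\mapsto -\log x+C_\beta(\log x)^{17/18}$ is decreasing for large $x$ so the worst case is $n/k=\widehat k_1$. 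Since $17/18<19/20$ this is $\le\sqrt{k_1/\widehat k_1}$ for $k_1$ large. No telescoping is needed.
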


\begin{proof}
  Recall that in the real case $(\sigma = 1)$,
  $\varphi_k(\theta) = 2\log | \Phi_k^*(e^{i\theta})|.$
    Here we will show that almost surely
    on the events in question,
    there is a deterministic error $\varepsilon_{k_1,n}$
    so that for all $k_1$ sufficiently large,
  \[
    \max_{\theta \in I_j}
    \max_{k_2 \leq k \leq \widehat{n}_1} |\varphi_k(\theta) - \varphi_k(\theta')| \leq \varepsilon_{k_1,n}
    \quad
    \text{where}
    \quad
    \limsup_{n\to \infty} \varepsilon_{k_1,n} \leq
    \sqrt{\tfrac{{k_1}}{\widehat{k}_1 }}.
  \]
  This will show that $\mathscr{O}_j$ holds for all $n$ and $k_1$ large.

  On $\mathscr{G}_{n},$ we have that for any $k_2\leq  k \leq \widehat{n}_1$ and $|z| = 1$
  \[
    |\Phi_k^*(z)|^2 \leq
    \exp(
    \sqrt{\tfrac{8}{\beta}}A_{k}^{\ll}
    +k_6
    )
    \leq
    \exp(
    \sqrt{\tfrac{8}{\beta}}
    (
    A_{H_k}^{n,-}
    + 2\log(\tfrac n k)^{17/18}
    )
    +k_6
    )
  \]
  (A-priori, we only have this bound on dyadic integers; however, the
  event $\mathscr{T}_{k_2}$, which forms part of $\mathscr{G}_{k_2}$, yields
  together with the latter that one can interpolate the bound to all integers in the indicated range.)
  From Bernstein's inequality, see Theorem \ref{thm:Bernstein}, for any $k \leq \widehat{n}_1$ and $|z| = 1,$
  \[
    |\tfrac{d}{dz}\Phi_k^*(z)| \leq k
    \exp(
    \sqrt{\tfrac{2}{\beta}}A_{k}^{\ll}
    + \tfrac{k_6}{2}
    ).
  \]
  By construction $|\theta - \theta'| \leq \frac{2\pi k_1 }{n}$ for all $\theta \in I_j.$
  Hence for any $\theta \in I_j,$
  using the lower bound assumption on $\varphi_{k}(\theta')$,
  \[
    \begin{aligned}
      \frac{ |\Phi_k^*(e^{i\theta}) - \Phi_k^*(e^{i\theta'})|}
      {|\Phi_k^*(e^{i\theta'})|}
      &\leq
      \frac{7k k_1}{n}
      \exp(
      \tfrac{k_6}{2}+
%	      \sqrt{\tfrac{2}{\beta}}(A_{k}^{n,0,-} - A_{k}^{n,k_3,-})
      \sqrt{\tfrac{32}{\beta}}
      \log(\tfrac n k)^{17/18}
      )
      \\ &=
      7 
      \exp(
      -\log \tfrac{n}{k}
      +\log(k_1)
      +\tfrac{k_6}{2}+
      \sqrt{\tfrac{32}{\beta}}
      \log(\tfrac n k)^{17/18}
      ).
    \end{aligned}
  \]
  The mapping $x \mapsto -\log x  + C(\log x)^{17/18}$ is decreasing for all $x$ bigger than some $x_0$ depending only on $C,$ and therefore
  as $k \leq \widehat{n}_1$, we have that for all $\widehat{k}_1$ sufficiently large
  \[
    |\varphi_k(\theta) - \varphi_k(\theta')|
    =
    2 \biggl| \log\biggl(
    1+
    \frac{ |\Phi_k^*(e^{i\theta})| - |\Phi_k^*(e^{i\theta'})|}
    {|\Phi_k^*(e^{i\theta'})|}
    \biggr)
    \biggr|
    \leq
    14 
    \exp(
    \log(\tfrac{k_1}{\widehat{k}_1})
    +\tfrac{k_6}{2}+
    \sqrt{\tfrac{32}{\beta}}
    \log(\widehat{k}_1)^{17/18}
    ),
  \]
  which
  tends to $0$ for any $\{k_p : p \geq 2\}.$
  Moreover, recalling that from how $\widehat{k}_1$ is chosen
  $\log(\tfrac{k_1}{\widehat{k}_1}) = \log(\widehat{k}_1)^{19/20}(1+o_{k_1})$.
  Hence we conclude that
  for all $k_1$ sufficiently large
  \[
    |\varphi_k(\theta) - \varphi_k(\theta')|
    =
    2  \biggl| \log\biggl(
    1+
    \frac{ |\Phi_k^*(e^{i\theta})| - |\Phi_k^*(e^{i\theta'})|}
    {|\Phi_k^*(e^{i\theta'})|}
    \biggr)
    \biggr|
    \leq
    2\exp(
    \tfrac12\log(\tfrac{k_1}{\widehat{k}_1})
    ).
  \]
  This implies that $\mathscr{O}_j$ occurs.
 \end{proof}

\begin{corollary}  (Real case $\sigma=1$) For any $k_5,k_6$
  \[
    \limsup_{k_1,n \to \infty}
    \sum_{j \in \mathcal{D}_{n/k_1}}
    \Exp[
      \one\{W_j \in [-k_6,k_6]\}
      (\one\{{\bigl(\mathscr{R}_{j}^2(\widehat{n}_1)\bigr)^c}\} + \one\{\mathscr{O}_j^c\})
      \one\{\mathscr{G}_{n}\}
      ~\vert~
      \filt_{k_2}
    ]
    \Prto[k_4,k_2]
    0.
   % =0 \quad \As
  \]
  \label{cor:trunkray1}
\end{corollary}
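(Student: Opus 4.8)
The plan is to treat the two indicators separately. Using $\one\{(\mathscr{R}_j^2(\widehat n_1))^c\}+\one\{\mathscr{O}_j^c\}\le \one\{(\mathscr{R}_j^2(\widehat n_1))^c\cap\mathscr{O}_j\}+2\,\one\{\mathscr{O}_j^c\}$, it suffices to show that both
\[
\Sigma_1:=\sum_{j}\Exp\big[\one\{W_j\in[-k_6,k_6]\}\,\one\{\mathscr{O}_j^c\}\,\one\{\mathscr{G}_n\}\,\big|\,\filt_{k_2}\big]
\]
and
\[
\Sigma_2:=\sum_{j}\Exp\big[\one\{W_j\in[-k_6,k_6]\}\,\one\{(\mathscr{R}_j^2(\widehat n_1))^c\cap\mathscr{O}_j\}\,\one\{\mathscr{G}_n\}\,\big|\,\filt_{k_2}\big]
\]
tend to $0$ in probability as $n\to\infty$, then $k_1\to\infty$, then $k_4,k_2\to\infty$. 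Both will be reduced to first--moment estimates over the full mesh $\tfrac{2\pi}{4k_5 n}\Z\cap[0,2\pi]$, of exactly the type in the proof of Proposition \ref{prop:nearmaximarays}: on $\{W_j\in[-k_6,k_6]\}$ the maximum defining $W_j$ is attained at some mesh point $\theta^\star=\theta^\star_j\in I_j\subset\widehat I_j$, whence $\varphi_n(\theta^\star)\ge\sqrt{8/\beta}\,m_n-k_6$, i.e.\ the near--leader event $\mathscr{L}_{k_6}(\theta^\star)$ of \eqref{eq:Ltheta} holds, and (on $\mathscr{O}_j$) the oscillation of $\varphi_k$ on $\widehat I_j\supseteq I_j$ is $\le\sqrt{k_1/\widehat k_1}$ for $k_2\le k\le \widehat n_1$.

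For $\Sigma_1$ we invoke Proposition \ref{prop:trunkray1}: on $\mathscr{G}_n\cap\mathscr{O}_j^c$ its contrapositive states that no $\theta'\in I_j$ satisfies $\sqrt{\beta/8}\,\varphi_k(\theta')\ge A_k^{4,-}$ for all $k_2\le k\le\widehat n_1$, so $\theta^\star$ dips below that barrier, and the bound $|\varphi_k-\sqrt{4/\beta}\,\mathfrak{Z}_{H_k}|<k_6$ on $\mathscr{G}_n$ forces $\mathfrak{Z}_{H_k}(\theta^\star)\le\sqrt2\,A_{H_k}^{4,-}+\tfrac{\sqrt\beta}{2}k_6$ for some $k\le\widehat n_1$. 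Summing the resulting indicator bound over $j$ and over $\theta^\star\in I_j$,
\[
\Sigma_1\ \le\ \sum_{\theta}\Exp\big[\one\{\mathscr{L}_{k_6}(\theta)\}\,\one\{\exists\,k_2\le k\le\widehat n_1:\ \mathfrak{Z}_{H_k}(\theta)\le\sqrt2\,A_{H_k}^{4,-}+\tfrac{\sqrt\beta}{2}k_6\}\,\one\{\mathscr{G}_n\}\,\big|\,\filt_{k_2}\big].
\]
Each summand is estimated by the classical ballot bound (Lemma \ref{lem:allhailbanana}, as in \eqref{eq:nma4}--\eqref{eq:nmr0}): conditionally on the near--leader event, $\mathfrak{Z}(\theta)$ is a Brownian bridge pinned near $\sqrt2\,m_n$ at time $H_n$, and for $\varphi_{k_2}(\theta)\in[s_{k_2}^-,s_{k_2}^+]$ the probability that it drops below the very slack curve $A^{4,-}_{(\cdot)}$ (which lies $\asymp t^{17/18}$ below the diagonal) before $H_{\widehat n_1}$ is $\lesssim (\sqrt2\log k_2-\sqrt{\beta/4}\,\varphi_{k_2}(\theta))\,e^{-(\log k_2)^{c}}/\log n$ for some $c>0$, while for $\varphi_{k_2}(\theta)$ outside that window one uses the coarser ballot bound \eqref{eq:nmr0}. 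The point making this uniform in $k_5$ is that the exit must occur before $H_{\widehat n_1}=H_n-\log\widehat k_1+o(1)$, far from the terminal time, so no factor $e^{-(\log k_5)^2}$ is produced and the smallness comes only from the $k_2$--end of the barrier. Passing the mesh sum to an integral as in \eqref{eq:nmr3} bounds $\Sigma_1$ by $C_\beta(k_6) k_5\,e^{-(\log k_2)^{c}}$ times an integral of the form $\int_0^{2\pi}e^{\sqrt{\beta/2}\varphi_{k_2}(\theta)-\log k_2}\,(1+\sqrt2\log k_2-\sqrt{\beta/4}\varphi_{k_2}(\theta))\,d\theta$, which is tight as $k_2\to\infty$ by Theorem \ref{thm:Bj} (in particular \eqref{eq:Btight}); hence $\Sigma_1\Prto[k_2]0$ for every fixed $k_5,k_4,k_6$.

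For $\Sigma_2$ the mechanism is a ``banana nesting'' transfer. Let $\mathscr{R}^{1}_{\theta^\star}(\widehat n_1)$ be the ray event \eqref{eq:rayevent} centred at $\theta^\star$ (with the entrance window $\mathscr{U}$ slightly shrunk, which is still a typical event). On $\mathscr{O}_j\cap\mathscr{G}_n$ one has $|\varphi_k(\theta_j)-\varphi_k(\theta^\star)|\le\sqrt{k_1/\widehat k_1}$ and $|\varphi_k-\sqrt{4/\beta}\,\mathfrak{Z}_{H_k}|<k_6$ for $k_2\le k\le\widehat n_1$; since the bananas $A^{2,\pm}$ and $A^{1,\pm}$ differ by a polynomial amount ($A_t^{2,+}-A_t^{1,+}=t^{1/6}-t^{1/10}$ and $A_t^{1,-}-A_t^{2,-}=t^{9/10}-t^{5/6}$ for $t\le\tfrac12\log n$, with the analogous statement near the far end), which eventually dominates both the fixed $O(k_6)$ slack and the vanishing $\sqrt{k_1/\widehat k_1}$, one checks that $\mathscr{R}^1_{\theta^\star}(\widehat n_1)\cap\mathscr{O}_j\cap\mathscr{G}_n\subseteq\mathscr{R}_j^2(\widehat n_1)$ (the window condition $\mathscr{U}(\theta_j)$ of $\mathscr{R}_j^2$ follows because $[s_{k_2}^-,s_{k_2}^+]$ has width $\asymp(\log k_2)^{1/2}\gg\sqrt{k_1/\widehat k_1}$). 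Taking contrapositives,
\[
\{W_j\in[-k_6,k_6]\}\cap(\mathscr{R}_j^2(\widehat n_1))^c\cap\mathscr{O}_j\cap\mathscr{G}_n\ \subseteq\ \bigcup_{\theta\in I_j}\big(\mathscr{L}_{k_6}(\theta)\cap(\mathscr{R}^1_\theta(\widehat n_1))^c\cap\mathscr{G}_n\big),
\]
and summing over $j$ and $\theta\in I_j$ bounds $\Sigma_2$ by $\sum_{\theta}\Exp[\one\{\mathscr{L}_{k_6}(\theta)\}\,\one\{(\mathscr{R}^1_\theta(\widehat n_1))^c\}\,\one\{\mathscr{G}_n\}\,|\,\filt_{k_2}]$, which the same ballot estimate as above controls (again the exit before $H_{\widehat n_1}$ costs only $e^{-(\log k_2)^{c}}$); Theorem \ref{thm:Bj} then gives $\Sigma_2\Prto[k_2]0$ at fixed $k_5$. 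Combining the two, $\Sigma_1+\Sigma_2\Prto[k_4,k_2]0$, which is the assertion.

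The main obstacle is the $\Sigma_2$ reduction: one must verify that the passage from ``a near--leader in $I_j$ stays in the narrow banana $A^{1,\pm}$ up to $\widehat n_1$'' to ``the trunk $(\varphi_k(\theta_j))_{k\le\widehat n_1}$ stays in the wider banana $A^{2,\pm}$ and starts in the $\mathscr{U}$--window'' uses only the $o(1)$ oscillation bound $\mathscr{O}_j$ and the $O(k_6)$ Gaussian--approximation slack — both absorbed by the polynomial gap between the bananas — and, probabilistically, that the resulting first--moment estimate is uniform in $k_5$; this last point is precisely why the ray event $\mathscr{R}_j^2$ is stopped at $\widehat n_1$, well before the terminal time, rather than at $H_n-k_4$ as in $\widehat{\mathscr{R}}$.
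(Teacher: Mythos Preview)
Your argument is correct and rests on the same two ingredients as the paper---Proposition~\ref{prop:trunkray1} and the banana-nesting transfer via $\mathscr{O}_j$---but the paper organizes them more economically by avoiding the $\Sigma_1/\Sigma_2$ split altogether. The paper's route is: insert the single event $\widehat{\mathscr{R}}_j=\bigcap_{\theta\in I_j}(\mathscr{L}(\theta)^c\cup\widehat{\mathscr{R}}(\theta))$ at the cost of the Proposition~\ref{prop:nearmaximarays} sum, and then argue that on $\{W_j\in[-k_6,k_6]\}\cap\widehat{\mathscr{R}}_j\cap\mathscr{G}_n$ the entire indicator $\one\{(\mathscr{R}_j^2(\widehat n_1))^c\}+\one\{\mathscr{O}_j^c\}$ vanishes \emph{deterministically}. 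Indeed, picking a near-leader $\theta'\in I_j$ with $\widehat{\mathscr{R}}(\theta')$, one has $\varphi_k(\theta')\ge\sqrt{8/\beta}\,A_k^{1,-}-k_6\ge\sqrt{8/\beta}\,A_k^{4,-}$ for $k_2\le k\le\widehat n_1$, so Proposition~\ref{prop:trunkray1} forces $\mathscr{O}_j$; and with $\mathscr{O}_j$ now in hand, your own banana-nesting transfer (your $\Sigma_2$ mechanism, exactly) forces $\mathscr{R}_j^2(\widehat n_1)$. Thus one application of Proposition~\ref{prop:nearmaximarays} suffices, with no further first-moment work. Your $\Sigma_1$ is the contrapositive of the first implication and your $\Sigma_2$ is the second, each followed by its own ballot estimate; since both of the ray-failure events you produce are contained in $\widehat{\mathscr{R}}(\theta)^c$, you are in effect re-deriving Proposition~\ref{prop:nearmaximarays} twice.

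One minor imprecision: the claimed stretched-exponential gain $e^{-(\log k_2)^c}$ is correct for $\Sigma_1$ (exit below the very slack $A^{4,-}$), but for $\Sigma_2$ an exit from the $A^{1,\pm}$ envelope through its \emph{upper} edge only carries the polynomial gain $(\log k_2)^{-c}+k_4^{-c}$ visible in \eqref{eq:aha2}--\eqref{eq:ahb8}, not a stretched exponential. This does not affect the conclusion---a polynomial gain in $\log k_2,k_4$ still dominates the fixed factor $k_5$ coming from the mesh---but the sentence ``the smallness comes only from the $k_2$-end'' overstates what the ballot argument actually yields.
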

\begin{proof}
  Recall the ray event $\widehat{\mathscr{R}}(\theta)$, c.f.\ \eqref{eq:Rhat}. 
  We further define the event $\widehat{\mathscr{R}}_j$ as the event that all almost--leaders in $I_j$ satisfy the ray event $\widehat{\mathscr{R}}(\theta)$:
  \begin{equation}\label{eq:Rhatj}
    \widehat{\mathscr{R}}_j =
    \bigcap_{ \theta \in I_j}
    (\mathscr{L}(\theta)^c
    \cup
    \widehat{\mathscr{R}}(\theta)).
  \end{equation}
  We note that using Proposition \ref{prop:nearmaximarays}, with high probability at any mesh point $\theta$ at which $\mathscr{L}(\theta)$ holds, the ray event $\widehat{\mathscr{R}}(\theta)$ holds as well.
  Hence it suffices to show
  \[
    \limsup_{k_1,n \to \infty}
    \sum_{j \in \mathcal{D}_{n/k_1}}
    \Exp[
      \one\{W_{j} \in [-k_6,k_6]\}
      \one\{\widehat{\mathscr{R}}_j\}
      (\one\{\bigl(\mathscr{R}_{j}^2(\widehat{n}_1)\bigr)^c\} + \one\{\mathscr{O}_j^c\})
      \one\{\mathscr{G}_{n}\}
      ~\vert~
      \filt_{k_2}
    ]
    \Prto[k_4,k_2]
    0.
  \]
  Note that on the event $\{W_{j} \in [-k_6,k_6]\} \cap \widehat{\mathscr{R}}_j$ (from the definition \eqref{eq:Rhatj}) we have that there is a $\theta' \in I_j$ for which $\mathscr{L}(\theta') \cap \widehat{ \mathscr{R}}(\theta')$ holds.  On $\mathscr{G}_n$, we have the uniform bound
   for all $\theta \in I_j$
    \[
    |\varphi_k(\theta)
    -\sqrt{\tfrac{4}{\beta}}
    (
    \mathfrak{Z}_{H_k}(\theta)
    )
    |
    \leq k_6,
  \]
  and hence at $\theta'$, for all $k_2 \leq k \leq \widehat{n}_1$
  \[
    \varphi_{k}(\theta')
    \geq
    \sqrt{\tfrac{4}{\beta}}
    \mathfrak{Z}_{H_{k}}(\theta)
    -k_6
    \geq
    \sqrt{\tfrac{8}{\beta}}
    A^{1,-}_{k}
    -k_6.
  \]
  Thus from Proposition \ref{prop:trunkray1}, $\mathscr{O}_j$ holds for all $k_1 \gg k_2$ sufficiently large.

  We must still show that $\mathscr{R}_j^2(\widehat{n}_1)$ occurs.
  As the event $\widehat{\mathscr{R}}(\theta')$ holds and on $\mathscr{O}_j$ we can bound $|\varphi_{\widehat{n}_1}(\theta')-\varphi_{\widehat{n}_1}(\theta_j)|=o_{k_2}(1)$,
  then when $t=H_k$ for integer $k$ with $\log_2 k_2 \leq k \leq \log_2 \widehat{n}_1$
  \[
    \sqrt{\tfrac{8}{\beta}}A_{t}^{1,-}-2k_6
    \leq
    \sqrt{\tfrac{4}{\beta}} \mathfrak{Z}_t(\theta_j)
    \leq \sqrt{\tfrac{8}{\beta}}A_{t}^{1,+}+2k_6.
  \]
  Now the oscillations of $\mathfrak{Z}_t(\theta')$ for $t \in [H_{k},H_{k+1}]$ are controlled, on $\mathscr{G}_n$ (see \eqref{eq:good}), by something which is $o_{k_2}(1).$  Thus the conclusion holds with $p = 2$, which absorbs the extra $k_6$ term.
\end{proof}

This does not complete the analysis of the real case.  We will also need that the Pr\"ufer phases do not oscillate much.  This we reduce to a conditional first moment estimate on $\Psi$.
\begin{proposition}  (Real case $\sigma=1$) For any $k_2,k_4,k_5,k_6$
  and for any $p \leq 4$,
  \[
    \limsup_{k_1,n \to \infty}
    \frac{
    (\log k_1)^{25}
    k_1}{\widehat{k}_1}
    \times
    \!\!
    \sum_{j \in \mathcal{D}_{n/k_1}}\!\!
    \Exp[
      e^{\sqrt{\tfrac{\beta}{2}}\varphi_{\widehat{n}_1}(\theta_j) - 2m_{\widehat{n}_1}}
      \one\{{\mathscr{R}}^p_{j}(\widehat{n}_1)\}
      \one\{\mathscr{O}_j\}
      (1-\one\{\mathscr{O}_j^\Psi\})
      \one\{\mathscr{G}_{n}\}
      ~\vert~
      \filt_{k_2}
    ]
    =0, \As
  \]
  \label{prop:trunkray2}
\end{proposition}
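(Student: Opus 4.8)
The plan is to turn the factor $1-\one\{\mathscr O_j^\Psi\}$ into the Pr\"ufer oscillation it measures, reduce to a first--moment bound, and then control the correlation between that oscillation and the extremal weight by a change of measure.

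Since $\theta\mapsto\Psi_{n_1^+}(\theta)$ is nondecreasing (Lemma~\ref{lem:psikmonotone}) and a.s.\ continuous, and $\theta_j=\sup\widehat I_j$, $\theta_{j-1}=\inf\widehat I_j$, we have $\sup_{\theta\in\widehat I_j}|\Psi_{n_1^+}(\theta)-\Psi_{n_1^+}(\theta_j)|=\Psi_{n_1^+}(\theta_j)-\Psi_{n_1^+}(\theta_{j-1})=:\Delta_j\ge0$, so by Markov's inequality $1-\one\{\mathscr O_j^\Psi\}\le\frac{k_1^+}{k_1(\log k_1)^{50}}\Delta_j$. Writing $W_j:=e^{\sqrt{\beta/2}\varphi_{\widehat n_1}(\theta_j)-2m_{\widehat n_1}}\one\{\mathscr R^p_j(\widehat n_1)\cap\mathscr O_j\cap\mathscr G_n\}$, it then suffices to show
\[
\limsup_{k_1,n\to\infty}\ \frac{k_1^+}{\widehat k_1(\log k_1)^{25}}\sum_{j\in\mathcal D_{n/k_1}}\Exp[W_j\Delta_j\mid\filt_{k_2}]=0\qquad\text{a.s.}
\]
The naive pointwise bound $\Delta_j\le Ck_1/k_1^+$ is useless here, since $\sum_j\Delta_j=2\pi(n_1^++1)$; the rare event carried by $W_j$ must be exploited.

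Decompose $\Delta_j=\Exp[\Delta_j\mid\filt_{k_2}]+N_j$. By the rotational invariance of $\gamma_k$ one has $\Exp[\Psi_{k+1}(\theta)-\Psi_k(\theta)\mid\filt_k]=\theta$, hence $\Exp[\Delta_j\mid\filt_{k_2}]=\big(\Psi_{k_2}(\theta_j)-\Psi_{k_2}(\theta_{j-1})\big)+(n_1^+-k_2)\tfrac{2\pi k_1}{n}\le Ck_1/k_1^+$ uniformly in $j$ (the $\Psi_{k_2}$--term is $O(k_1/n)$ by the a.s.\ Lipschitz continuity of $\Psi_{k_2}$, negligible once $n\to\infty$), while $N_j=\sum_{k=k_2}^{n_1^+-1}\mu_k(j)$ with $\mu_k(j):=-2\Im\big(\log(1-\gamma_ke^{i\Psi_k(\theta_j)})-\log(1-\gamma_ke^{i\Psi_k(\theta_{j-1})})\big)$ a martingale difference in $k$ given $\filt_{k_2}$. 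For the deterministic part, pulling out $Ck_1/k_1^+$ leaves $\sum_j\Exp[W_j\mid\filt_{k_2}]$; on $\mathscr R^p_j(\widehat n_1)\cap\mathscr G_n$ the exponential factor in $W_j$ is $O(1)$, and the Gaussian/ballot estimates of \cite{CMN} (cf.\ \eqref{eq-approx2} and Appendix~\ref{section:lower_bound}) together with Theorem~\ref{thm:Bj} bound this by $C\mathcal H_{k_2}(\log k_1)^{c_p}\,\widehat k_1/k_1$, where $c_p=1+\tfrac{2p}{2p+1}<2$ comes from integrating over the banana width $\asymp(\log\widehat k_1)^{1/2+p/(2p+1)}$ and $\mathcal H_{k_2}$ is an a.s.\ finite $\filt_{k_2}$--measurable constant produced by the derivative martingale; the resulting contribution is $\le C\mathcal H_{k_2}(\log k_1)^{c_p-25}\to0$.

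The main obstacle is the martingale part $\sum_j\Exp[W_jN_j\mid\filt_{k_2}]$: there is no pointwise control of $|N_j|$, and Cauchy--Schwarz is fatal (it loses a factor $\asymp\sqrt{\widehat n_1}$). I would estimate it through the change of measure attached to $W_j$. With $q_k:=\Exp[W_j\mid\filt_k]$, a nonnegative $(\filt_k)$--martingale, and using that $\mu_k(j)$ is $\filt_{k+1}$--measurable with $\Exp[\mu_k(j)\mid\filt_k]=0$, one has $\Exp[W_j\mu_k(j)\mid\filt_k]=\Cov(q_{k+1},\mu_k(j)\mid\filt_k)$. Since (for $\sigma=1$, and on $\mathscr G_n$, where the Gaussian approximation localizes the $\gamma_k$--dependence of each $\varphi_m(\theta)$ to the single step--$k$ increment) the $\gamma_k$--dependence of $W_j$ enters at order $O(|\gamma_k|)$, the continuity of the ballot estimates in the starting point shows that the $\gamma_k$--varying part of $q_{k+1}$ has size $\le C|\gamma_k|q_k$ on the event carried by $W_j$; combined with $\Exp[\mu_k(j)\mid\filt_k]=0$ and the $L^2$ bounds $\Exp[|\gamma_k|^2\mid\filt_k]\asymp k^{-1}$, $\Exp[\mu_k(j)^2\mid\filt_k]\le Ck^{-1}\Delta_k(j)^2$ (with $\Delta_k(j):=\Psi_k(\theta_j)-\Psi_k(\theta_{j-1})$), this yields $|\Exp[W_j\mu_k(j)\mid\filt_k]|\le Cq_k\Delta_k(j)/k$. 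Passing to the $q_k$--tilted measure $\widetilde\Pr_j$, the remaining point is that $\widetilde\Exp_j[\Delta_\ell(j)\mid\filt_{k_2}]\le C\ell k_1/n$ for $\ell\le n_1^+$; this follows once one verifies that under this tilt the increment $\mu_\ell(j)$ has drift $\widetilde\Exp_j[\mu_\ell(j)\mid\filt_\ell]\le o(\ell^{-1})\Delta_\ell(j)$ — the uniform phase of $\gamma_\ell$ kills the naive $O(|\gamma_\ell|)$ contribution, leaving an $O(|\gamma_\ell|^2)=O(\ell^{-1})$ term whose sign does not amplify $\Delta_\ell$ — so that $\widetilde\Exp_j[\Delta_\ell(j)\mid\filt_{k_2}]\le\Delta_{k_2}(j)+(\ell-k_2)\tfrac{2\pi k_1}{n}$. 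Then $\sum_{k\le n_1^+}\Exp[q_k\Delta_k(j)/k\mid\filt_{k_2}]=\sum_{k\le n_1^+}q_{k_2}\widetilde\Exp_j[\Delta_k(j)\mid\filt_{k_2}]/k\le Cq_{k_2}k_1/k_1^+$, and summing over $j$ reproduces the bound of the deterministic part (with an extra factor $k_1/k_1^+$), completing the proof. The substantive inputs are the continuity estimate for $q_{k+1}$ just described and the sign of the $\widetilde\Pr_j$--drift of $\mu_\ell(j)$.
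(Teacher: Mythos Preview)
Your strategy — replace the indicator by the oscillation $\Delta_j$ via Markov, split $\Delta_j$ into its $\filt_{k_2}$–conditional mean and a martingale remainder, and control the latter by a change of measure — is reasonable, and the deterministic part goes through essentially as you say. The difficulty is entirely in the martingale part, and there your argument has a genuine gap.

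You claim that under the tilt $\widetilde\Pr_j$ (density $W_j/q_{k_2}$) the increment $\mu_\ell(j)$ has drift $o(\ell^{-1})\Delta_\ell(j)$ because ``the uniform phase of $\gamma_\ell$ kills the naive $O(|\gamma_\ell|)$ contribution''. This is false: the tilt $W_j$ depends on $\gamma_\ell$ through $\varphi_{\ell+1}(\theta_j)-\varphi_\ell(\theta_j)\approx -2\Re(\gamma_\ell e^{i\Psi_\ell(\theta_j)})$, so the phase of $\gamma_\ell$ is \emph{not} uniform under $\widetilde\Pr_j$. The $O(|\gamma_\ell|)$ term survives and contributes a drift of order $\Delta_\ell(j)/\ell$ to $\mu_\ell(j)$. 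For $\sigma=1$ this drift happens to be \emph{contracting} (it comes with a favourable sign, via the $-\sin(\psi)\cdot\Exp[\Delta_{k+1}]$ term), which is exactly what the paper establishes and exploits; but your stated mechanism is wrong, and you have not supplied the actual computation. Likewise your assertion that the $O(|\gamma_\ell|^2)$ remainder ``does not amplify $\Delta_\ell$'' is unsubstantiated: a positive $C\Delta_\ell/\ell$ drift would give polynomial rather than linear growth of $\widetilde\Exp_j[\Delta_\ell(j)]$, destroying the bound. Finally, the Lipschitz–type estimate $\Var(q_{k+1}\mid\filt_k)\le Cq_k^2/k$ that feeds your Cauchy--Schwarz step is itself a nontrivial regularity statement about barrier probabilities that you have not justified.

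The paper takes a different and cleaner route: it first proves, as a standalone result (Corollary~\ref{cor:osc}, via Proposition~\ref{prop:osc1st}), that \emph{conditionally} on $\filt_{k_2}$ and on the endpoint $\mathfrak{Z}_{H_{\widehat n_1}}(\theta_j)$, the probability of $(\mathscr O_j^\Psi)^c\cap\mathscr R_j^p(\widehat n_1)\cap\mathscr G_n$ is at most $C_\beta(\log k_1)^{-50}$ times a standard ballot factor. The proof works under the Bessel–bridge law induced by the barrier, computes the drift of the Brownian increment there (Lemma~\ref{lem:Gtau_k}), and runs a supermartingale/Gronwall argument on the relative Pr\"ufer phase — precisely the analysis your tilted–drift claim would require. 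Once that conditional bound is in hand, Proposition~\ref{prop:trunkray2} follows by a short Girsanov/first–moment computation, yielding a $(\log k_1)^{-49}$ factor that defeats the $(\log k_1)^{25}$ prefactor. In short: the step you left as ``substantive input'' is the entire content of the proposition, and the paper isolates it as a separate conditional oscillation estimate rather than burying it inside a covariance bound.
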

We note that the factor $(\log k_1)^{25}$ should be considered as a rate in this statement: the remainder of the constants would correctly balance the sum if the $\mathscr{O}^\Psi_j$ term were removed.  Indeed our first application of this will be to show:
\begin{corollary}  (Real case $\sigma=1$) For any $k_2,k_4,k_5,k_6$
  and for any $p \leq 4$,
  \[
    \limsup_{k_1,n \to \infty}
    \sum_{j \in \mathcal{D}_{n/k_1}}
    \Exp[
      \one\{W_j \in [-k_6,k_6]\}
      \one\{{\mathscr{R}}^p_{j}(\widehat{n}_1)\}
      \one\{\mathscr{O}_j\}
      (1-\one\{\mathscr{O}_j^\Psi\})
      \one\{\mathscr{G}_{n}\}
      ~\vert~
      \filt_{k_2}
    ]
    =0 \quad \As
  \]
  \label{cor:trunkray2}
\end{corollary}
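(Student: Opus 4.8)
The plan is to deduce Corollary \ref{cor:trunkray2} from Proposition \ref{prop:trunkray2}: the two sums differ only in that one carries $\one\{W_j\in[-k_6,k_6]\}$ and the other the weight $e^{\sqrt{\beta/2}\varphi_{\widehat{n}_1}(\theta_j)-2m_{\widehat{n}_1}}$, and a ballot estimate on the block of indices $[\widehat{n}_1,n]$ turns the former into a constant multiple of $\tfrac{k_1}{\widehat{k}_1}$ times the latter. Two preliminary reductions. First, the entropic envelopes $[A^{p,-}_{(\cdot)},A^{p,+}_{(\cdot)}]$ are nested in $p$, so $\one\{\mathscr{R}^p_j(\widehat{n}_1)\}\le\one\{\mathscr{R}^4_j(\widehat{n}_1)\}$ for $p\le4$ and it suffices to treat $p=4$. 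Second, since $n_1^+\le\widehat{n}_1$, the events $\mathscr{R}^4_j(\widehat{n}_1)$, $\mathscr{O}_j$, $\mathscr{O}_j^\Psi$, the variable $\varphi_{\widehat{n}_1}(\theta_j)$, and the restriction $\mathscr{G}_n^{-}$ of $\mathscr{G}_n$ to indices $k\le\widehat{n}_1$ are all measurable for the $\sigma$-algebra $\mathscr{H}$ generated by $(\gamma_k:k<\widehat{n}_1)$ and $(\mathfrak{W}_t:t\le H_{\widehat{n}_1})$; moreover, relative to $\mathscr{H}$, the increment $(\mathfrak{Z}_t(\theta)-\mathfrak{Z}_{H_{\widehat{n}_1}}(\theta))_{t\ge H_{\widehat{n}_1}}$ is a Brownian motion. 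The remaining constraints in $\mathscr{G}_n$ concern $k>\widehat{n}_1$ and enter below only through the barrier $A^{\ll}_{(\cdot)}$.

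The key estimate is this: on $\mathscr{R}^4_j(\widehat{n}_1)\cap\mathscr{O}_j$, for all $k_1\gg k_2$ and all $n$ large,
\[
  \Exp\big[\one\{W_j\ge-k_6\}\,\one\{\mathscr{G}_n\}\;\big|\;\mathscr{H}\big]
  \;\le\; C(k_5,k_6,\beta)\,\frac{k_1}{\widehat{k}_1}\,e^{\sqrt{\beta/2}\varphi_{\widehat{n}_1}(\theta_j)-2m_{\widehat{n}_1}}\;\one\{\mathscr{G}_n^{-}\}.
\]
For the proof, $\{W_j\ge-k_6\}$ forces $\mathscr{L}(\theta)$ for some mesh point $\theta\in I_j$, and $|I_j|\le4k_5 k_1$, so after a union bound one is left, for each fixed $\theta\in I_j$, with a per-$\theta$ bound $\Pr(\mathscr{L}(\theta)\cap\mathscr{G}_n\mid\mathscr{H})\le C(k_6,\beta)\,\widehat{k}_1^{-1}\,e^{\sqrt{\beta/2}\varphi_{\widehat{n}_1}(\theta)-2m_{\widehat{n}_1}}\one\{\mathscr{G}_n^-\}$, after which $\mathscr{O}_j$ replaces $\varphi_{\widehat{n}_1}(\theta)$ by $\varphi_{\widehat{n}_1}(\theta_j)$ at the price of a factor $e^{\pm\sqrt{\beta/2}\sqrt{k_1/\widehat{k}_1}}=1+o_{k_1}(1)$. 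The per-$\theta$ bound is the Gaussian-random-walk-below-a-barrier estimate of \cite{CMN} used in the proof of Proposition \ref{prop:nearmaximarays} (the displays culminating in \eqref{eq:nmr0}), now started at index $\widehat{n}_1$: on $\mathscr{G}_n$, $\sqrt{\tfrac{4}{\beta}}\mathfrak{Z}_{H_k}(\theta)\le\sqrt{\tfrac{8}{\beta}}A^{\ll}_k+2k_6$ at dyadic $k\in[\widehat{n}_1,n]$ while $\mathscr{L}(\theta)$ forces $\sqrt{\tfrac{4}{\beta}}\mathfrak{Z}_{H_n}(\theta)\ge\sqrt{\tfrac{8}{\beta}}m_n-2k_6$, and on $\mathscr{R}^4_j(\widehat{n}_1)\cap\mathscr{O}_j$ the starting value $\sqrt{\tfrac{4}{\beta}}\mathfrak{Z}_{H_{\widehat{n}_1}}(\theta)$ lies within $O(k_6)$ of the envelope $\sqrt{\tfrac{8}{\beta}}A^{4,\pm}_{H_{\widehat{n}_1}}$. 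Since $A^{4,\pm}_{H_{\widehat{n}_1}}$ and $m_n$ share the term $-\tfrac34\log\log n$, it cancels, and the displacement the walk must realize over its remaining $H_n-H_{\widehat{n}_1}=\log\widehat{k}_1+o_n(1)$ steps is $\sqrt2\,\log\widehat{k}_1+O\big((\log\widehat{k}_1)^{17/18}\big)$; the ballot theorem then gives a bound of the stated form whose polynomial prefactor is $O\big((\log\widehat{k}_1)^{17/18-3/2}\big)=o_{k_1}(1)$, so no positive power of $\log k_1$ is lost.

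Granting this, pull the $\mathscr{H}$-measurable factors out of the expectation and apply the tower property (note $\filt_{k_2}\subseteq\mathscr{H}$):
\begin{align*}
  &\sum_{j}\Exp\big[\one\{W_j\in[-k_6,k_6]\}\,\one\{\mathscr{R}^4_j(\widehat{n}_1)\cap\mathscr{O}_j\}\,(1-\one\{\mathscr{O}_j^\Psi\})\,\one\{\mathscr{G}_n\}\,\big|\,\filt_{k_2}\big]\\
  &\qquad\le\;C(k_5,k_6,\beta)\,\frac{k_1}{\widehat{k}_1}\sum_{j}\Exp\big[e^{\sqrt{\beta/2}\varphi_{\widehat{n}_1}(\theta_j)-2m_{\widehat{n}_1}}\,\one\{\mathscr{R}^4_j(\widehat{n}_1)\cap\mathscr{O}_j\}\,(1-\one\{\mathscr{O}_j^\Psi\})\,\one\{\mathscr{G}_n^{-}\}\,\big|\,\filt_{k_2}\big].
\end{align*}
The proof of Proposition \ref{prop:trunkray2} is a conditional first-moment computation in which $\mathscr{G}_n$ enters only through indices $k\le\widehat{n}_1$, so it remains valid with $\mathscr{G}_n$ replaced by $\mathscr{G}_n^{-}$; hence the right-hand side — even with the factor $(\log k_1)^{25}$ deleted — tends to $0$ almost surely as $k_1,n\to\infty$. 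Combined with $\one\{\mathscr{R}^p_j(\widehat{n}_1)\}\le\one\{\mathscr{R}^4_j(\widehat{n}_1)\}$ for $p\le4$, this is the corollary.

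The main obstacle is the per-$\theta$ ballot estimate at the intermediate scale $\widehat{n}_1$. In Proposition \ref{prop:nearmaximarays} the walk runs over $\asymp\log n$ steps, so any polynomial prefactor is harmless; here only $\log\widehat{k}_1$ steps remain — a quantity kept fixed as $n\to\infty$ — so the estimate must be sharp, and one must be careful that the $-\tfrac34\log\log n$ shift of the entropic barriers cancels against that of the final target (which is why the walk's starting point must be pinned to the envelope by $\mathscr{R}^4_j(\widehat{n}_1)$ rather than merely lying below $A^{\ll}_{(\cdot)}$), and that the coupling indeed places the increment of $\mathfrak{Z}_{(\cdot)}(\theta)$ past $\widehat{n}_1$ in the conditionally Gaussian regime. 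These are the points where the sub-polynomial separation $\widehat{k}_1/k_1=\exp((\log k_1)^{19/20})$ and the restriction $p\le4$ (so that $(\log\widehat{k}_1)^{17/18}\ll(\log\widehat{k}_1)^{3/2}$) are used.
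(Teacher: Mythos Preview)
Your proposal is correct and follows the same skeleton as the paper: condition on the information up to time $\widehat{n}_1$, bound the conditional probability of $\{W_j\ge -k_6\}$ by a union bound over the $O(k_5k_1)$ mesh points in $I_j$, obtain a per-ray factor $\widehat{k}_1^{-1}e^{\sqrt{\beta/2}\varphi_{\widehat n_1}(\theta_j)-2m_{\widehat n_1}}$, and then feed the resulting sum into Proposition~\ref{prop:trunkray2} (whose spare factor $(\log k_1)^{25}$ swallows the constant $C(k_5,k_6,\beta)$). Your preliminary reduction to $p=4$ by nesting of the envelopes is a clean touch the paper leaves implicit.

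Where you diverge is in the justification of the per-ray bound, and here you make your life harder than necessary. The paper does \emph{not} use a ballot theorem over the window $[\widehat n_1,n]$; it uses only the crude one-line Gaussian tail bound for the increment $\mathfrak{Z}_{H_n}(\theta)-\mathfrak{Z}_{H_{\widehat n_1}}(\theta)$ (this is \eqref{eqo:Ginc}--\eqref{eq:tr22}). Expanding the square, the leading term already gives $e^{-\log\widehat k_1}=\widehat{k}_1^{-1}$, the cross term gives $e^{-\sqrt{2}h}=e^{\sqrt{\beta/2}\varphi_{\widehat n_1}(\theta_j)-2m_{\widehat n_1}}$, and the remaining $-h^2/(2\log\widehat k_1)$ is favourable and may simply be dropped. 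No barrier information on $[\widehat n_1,n]$ is used, so there is no issue of ``sharpness'' and no need for the $-\tfrac34\log\log n$ cancellation you worry about in your final paragraph (it happens automatically since $m_{\widehat n_1}$ and $\log\widehat n_1-\tfrac34\log\log n$ differ by $o_n(1)$). In particular your closing discussion of the ``main obstacle'' and the role of $p\le 4$ via $(\log\widehat k_1)^{17/18}\ll(\log\widehat k_1)^{3/2}$ is superfluous: the tail bound works for any $p$, and the only place the entropic envelope $\mathscr R_j^p(\widehat n_1)$ is used downstream is inside Proposition~\ref{prop:trunkray2} itself.
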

\begin{proof}[Proof of Corollary \ref{cor:trunkray2}]
  On the event $\mathscr{O}_j \cap \mathscr{G}_n,$ for all $n$ sufficiently large, the event $W_j \in [-k_6,k_6]$ is contained in the event that for one of the $C k_5 k_1$ elements $\theta \in I_j$ and a constant $C(\beta,k_6)$ sufficiently large
  \begin{equation}\label{eqo:Ginc}
%    G_{n}(\theta) - G_{\widehat{n}_1}(\theta)
    \mathfrak{Z}_{H_n}(\theta)
    -\mathfrak{Z}_{H_{\widehat{n}_1}}(\theta)
    \geq
    \sqrt{2}
    \log(n/\widehat{n}_1)
    +
    (
   % \sqrt{\tfrac{8}{\beta}}
    \sqrt{2}
    (\log(\widehat{n}_1)-\tfrac34 \log\log n)
    -
    \sqrt{\tfrac{\beta}{4}}
    \varphi_{\widehat{n}_1}(\theta_j)
    )
    -C(\beta,k_6).
  \end{equation}
  Thus conditioning on $\filt_{\widehat{n}_1}$
  and
  using the standard Gaussian tail bound,
  \begin{equation}\label{eq:tr22}
    \begin{aligned}
      \Exp[
        &\one\{W_j \in [-k_6,k_6]\}
	      \one\{{\mathscr{R}}^p_{j}(\widehat{n}_1)\}
        \one\{\mathscr{O}_j\}
        (1-\one\{\mathscr{O}_j^\Psi\})
        \one\{\mathscr{G}_{n}\}
        ~\vert~
        \filt_{k_2}
      ] \\
      &\leq
      C_\beta
      \frac{k_5k_1 \widehat{n}_1e^{C(\beta,k_6)}}{n}
      \Exp\biggl[
        e^{ \sqrt{\tfrac{\beta}{2}}
	%G_{\widehat{n}_1}(\theta_j)
	\varphi_{\widehat{n}_1}(\theta_j)
	-2 m_{\widehat{n}_1}}
	%+o_{k_1}}
    %\one\{\mathscr{O}_j\}
        \one\{{\mathscr{R}}_{j}^p(\widehat{n}_1)\}(1-\one\{\widehat{\mathscr{O}_j}\})
        \one\{\mathscr{G}_{n}\}
        ~\vert~
        \filt_{k_2}
      \biggr].
    \end{aligned}
  \end{equation}
  %where the $o_{k_1}$ arises from (in expanding the quadratic in the Gaussian tail bound) the square of the difference of $m_{\widehat{n}_1}-\sqrt{\tfrac{\beta}{4}}\varphi_{\widehat{n}_1}(\theta_j)$ divided by $\log(n/\widehat{n}_1).$ 
  By how $\widehat{n}_1$ is defined (see \eqref{eq:k2k3}) and how the barrier is defined \eqref{eq:barrier}, this tends to $0$ with $k_1 \ll n$ (deterministically).
  Thus the result follows from Proposition \ref{prop:trunkray2}.
\end{proof}
\begin{proof}[Proof of Proposition \ref{prop:trunkray2}]
  Using the bound on the difference of $\sqrt{\tfrac{\beta}{2}}\varphi_{\widehat{n}_1}(\theta_j)$ and $\sqrt{2}\mathfrak{Z}_{H_{\widehat{n}_1}}$,
  it suffices to show
  \begin{equation}
    \label{eq-241223}
    \begin{aligned}
    &\limsup_{k_1,n \to \infty}
    \frac{
    (\log k_1)^{25}
    k_1}{\widehat{k}_1} \\
    &\times\!\!
    \sum_{j \in \mathcal{D}_{n/k_1}}
    \Exp[
      \exp\bigl(\sqrt{2}\mathfrak{Z}_{H_{\widehat{n}_1}}(\theta_j) - 2m_{\widehat{n}_1}\bigr)
      \one\{{\mathscr{R}}_{j}^p(\widehat{n}_1)\}
      \one\{\mathscr{O}_j\}
      (1-\one\{\mathscr{O}_j^\Psi\})
      \one\{\mathscr{G}_{n}\}
      ~\vert~
      \filt_{k_2}
    ]
    =0 \quad \As
  \end{aligned}
\end{equation}

    Let $\theta_{j}^{\mp}$ be the largest and smallest element of $\widehat{I}_j$ respectively.
  Introduce the event
  \begin{equation}\label{eq:tr21}
    \begin{aligned}
      &\widehat{\mathscr{O}}_j
      =
      \biggl\{
        \max_{k_2 \leq k \leq n_1^+}
        |\Psi_{k}(\theta^{-}_j) - \Psi_{k}(\theta^{+}_j)| \leq
        \tfrac{k_1(\log k_1)^{50}}{2k_1^+ }
      \biggr\}.
    \end{aligned}
  \end{equation}
  We emphasize the ray event $\mathscr{R}^p_j(\widehat{n}_1)$ runs to a much later time (when $k_1$ is large) than the oscillation event $\widehat{\mathscr{O}}_j.$

  Our main task will be to show that conditionally on
  $\mathfrak{Z}_{H_{\widehat{n}_1}}(\theta_j),$
  $\filt_{k_2},$
%  $G_{k_2}(\theta_j),$
  and $\mathscr{R}^p_j(\widehat{n}_1),$ the probability of the event $\bigl(\widehat{\mathscr{O}_j}\bigr)^c$ is
  controlled.
  We show in Corollary \ref{cor:osc} (recall \eqref{eq:GkZt} for the change in notation)
  that
  there is a constant $C_\beta > 0$ sufficiently large that
  for all $n,k_1$ large, uniformly in $\theta_j$
  \begin{align}\label{eq:oscillation}
 &  \Pr[
      \widehat{ \mathscr{O}_j}^c
      \cap \mathscr{R}_j^p(\widehat{n}_1)
      \cap \mathscr{G}_n
    ~\vert~ \filt_{k_2}, \mathfrak{Z}_{H_{\widehat{n}_1}}(\theta_j)]\nonumber\\
 & \qquad  \leq
    C_\beta
    \frac{
      \left( \sqrt{2}\log k_2 - \sqrt{\tfrac{\beta}{4}} \varphi_{k_2}(\theta_j)+k_6 \right)_+
      \left( \sqrt{2}m_n - \mathfrak{Z}_{H_{\widehat{n}_1}}(\theta_j) \right)_+
    }{(\log k_1)^{50}\log(\widehat{n}_1/k_2)}.
  \end{align}

  We can now return to 
  \eqref{eq-241223}, and substitute in the left hand side the last estimate.
  We can treat the exponential of
  $\sqrt{2}(\mathfrak{Z}_{H_{\widehat{n}_1}} - \mathfrak{Z}_{H_{k_2}}) -  \log(\widehat{n}_1/k_2)$
  as a change of measure.  Under this tilted measure $\Q$ the increment
  $\mathfrak{Z}_{H_{\widehat{n}_1}} - \mathfrak{Z}_{H_{k_2}}$
  has mean $\sqrt{2}\log(\widehat{n}_1/k_2).$
Then recalling the definition of $\widehat{n}_1$ (see \eqref{eq:k2k3})
  for all $n$ sufficiently large
  \[
    \Q\left(
    \one[{\mathfrak{Z}_{H_{\widehat{n}_1}}  \in
      [A_{\widehat{n}_1}^{p,-},A_{\widehat{n}_1}^{p,+}] }]
    \left( \sqrt{2}m_n - \mathfrak{Z}_{H_{\widehat{n}_1}}\right)_+
    ~\vert~ \filt_{k_2}
    \right)
    \leq C_\beta\frac{(\log k_1)^{\tfrac{19}{20}\tfrac{4p+1}{4p+2}}}{\sqrt{\log(\widehat{n}_1/k_2)} }.
  \]
  Thus we
  conclude
  for all $n$ sufficiently large 
  using the Brownian Bridge ballot theorem (\cite[(3.40)]{KaratzasShreve}),
  \begin{equation}\label{eq:tr23}
    \begin{aligned}
      \frac{k_1}{\widehat{k}_1}
      \Exp[
      &\exp\bigl(\sqrt{2}\mathfrak{Z}_{H_{\widehat{n}_1}}(\theta_j) - 2m_{\widehat{n}_1}\bigr)
      \one\{{\mathscr{R}}_{j}^p(\widehat{n}_1)\}
      \one\{\mathscr{O}_j\}
      (1-\one\{\mathscr{O}_j^\Psi\})
      \one\{\mathscr{G}_{n}\}
      ~\vert~
      \filt_{k_2}
      ] \\
      &\leq
      C(\beta,k_6)
      \frac{k_1 (\log n)^{3/2}}
      {n(\log \widehat{n}_1/k_2)^{3/2}(\log k_1)^{49}}
      e^{ \sqrt{\tfrac{\beta}{2}}\varphi_{k_2}(\theta_j)- \log(k_2)}
      \left( \sqrt{2}\log k_2 - \sqrt{\tfrac{\beta}{4}} \varphi_{k_2}(\theta_j)+k_6 \right)_+.
    \end{aligned}
  \end{equation}
  Hence summing over $j \in \mathcal{D}_{n/k_1}$ and
  sending $n\to \infty,$
  \begin{equation}\label{eq:tr25}
    \begin{aligned}
      \limsup_{n \to \infty}
      &\sum_{j \in \mathcal{D}_{n/k_1}}
      \frac{k_1}{\widehat{k}_1}
    \Exp[
      \exp\bigl(\sqrt{2}\mathfrak{Z}_{H_{\widehat{n}_1}}(\theta_j) - 2m_{\widehat{n}_1}\bigr)
      \one\{{\mathscr{R}}_{j}^p(\widehat{n}_1)\}
      \one\{\mathscr{O}_j\}
      (1-\one\{\mathscr{O}_j^\Psi\})
      \one\{\mathscr{G}_{n}\}
      ~\vert~
      \filt_{k_2}
    ] \\
      &\leq
      \frac{C_\beta
      }
      {2\pi(\log k_1)^{49}}
      \int_0^{2\pi}
      e^{\sqrt{\tfrac{\beta}{2}}\varphi_{k_2}(\theta) -\log(k_2)}
      \bigl(\sqrt{2}\log k_2 - \sqrt{\tfrac{\beta}{4}} \varphi_{k_2}(\theta)+k_6\bigr)_{+}
      d\theta
      \quad \As
    \end{aligned}
  \end{equation}
  Hence this tends to $0$ on multiplying by $(\log k_1)^{25}$ and sending $k_1 \to \infty.$
\end{proof}

\begin{proposition}  (Imaginary case $\sigma=i$) For any $k_5,k_6$ and $p = 2$
  \[
    \limsup_{k_1,n \to \infty}
    \frac{
      (\log k_1)^{25}
    k_1}{\widehat{k}_1}
    \times
    \!\!
    \sum_{j \in \mathcal{D}_{n/k_1}}
    \Exp[
      e^{\sqrt{\tfrac{\beta}{2}}\varphi_{\widehat{n}_1}(\theta_j) - 2m_{\widehat{n}_1}}
      \one\{{\mathscr{R}}^p_{j}(\widehat{n}_1)\}
      (1-\one\{\mathscr{O}_j^\Psi\})
      \one\{\mathscr{G}_{n}\}
      ~\vert~
      \filt_{k_2}
    ]
    =0 \quad \As
  \]
  Furthermore, for all $k_2$ sufficiently large and $p = 2$
  \[
    \limsup_{k_1,n \to \infty}
    \sum_{j \in \mathcal{D}_{n/k_1}}
    \Exp[
      \one\{W_j \in [-k_6,k_6]\}
      \bigl(
      (1-\one\{{\mathscr{R}}^p_{j}(\widehat{n}_1)\})
      +
      (1-\one\{\mathscr{O}_j^\Psi\})
      \bigr)
      \one\{\mathscr{G}_{n}\}
      ~\vert~
      \filt_{k_2}
    ]
    =0 \quad \As
  \]
  \label{prop:trunkray3}
\end{proposition}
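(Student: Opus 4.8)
The two displays are the imaginary ($\sigma=i$) counterparts of Proposition~\ref{prop:trunkray2} with the indicator $\one\{\mathscr{O}_j\}$ removed, and of Corollaries~\ref{cor:trunkray1} and~\ref{cor:trunkray2} taken together; the plan is to rerun those arguments, replacing their only genuinely $\sigma$--dependent step. For $\sigma=1$ that step is Proposition~\ref{prop:trunkray1}, which bounds the oscillation of $\varphi_k=2\log|\Phi_k^*|$ over $\widehat{I}_j$ using Bernstein's inequality (Theorem~\ref{thm:Bernstein}) and an a priori lower bound on $|\Phi_k^*|$ --- unavailable for $\sigma=i$, where $\varphi_k=-2\Im\log\Phi_k^*$ is an argument. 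Instead one uses that comparing~\eqref{eq:upsilon} with~\eqref{eq:Psi} gives, for $\sigma=i$, the exact identity $\varphi_{k+1}(\theta)-\varphi_k(\theta)=(\Psi_{k+1}(\theta)-\Psi_k(\theta))-\theta$, hence $\varphi_k(\theta)=\Psi_k(\theta)-(k+1)\theta$. Thus the oscillation of $\varphi_k$ over the arc $\widehat{I}_j$ (of width $2\pi k_1/n$) differs from that of the Pr\"ufer phase $\Psi_k$ only by the deterministic term $(k+1)|\theta-\theta_j|\le (k+1)\,2\pi k_1/n=O(k_1/\widehat{k}_1)$ for $k\le\widehat{n}_1$; every trunk--flatness statement needed for $\sigma=i$ is thereby reduced to control of the $\sigma$--free Pr\"ufer oscillation, which is why Proposition~\ref{prop:trunkray3} carries only $\mathscr{O}_j^\Psi$ and not the trio $\mathscr{O}_j,\mathscr{O}_j^+,\mathscr{O}_j^\Psi$ of the real case.

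For the first display I would follow the proof of Proposition~\ref{prop:trunkray2} verbatim. Its workhorse, the conditional oscillation bound~\eqref{eq:oscillation} of Corollary~\ref{cor:osc}, involves only $\Psi$ and the Gaussian walk $\mathfrak{Z}$, so it is unaffected by $\sigma$; and the factor $\one\{\mathscr{O}_j\}\le1$ in the summand there is never actually used, so discarding it is harmless. Concretely: monotonicity of $\Psi_{n_1^+}$ in $\theta$ together with $\theta_j$ being an endpoint of $\widehat{I}_j$ gives $(\mathscr{O}_j^\Psi)^c\subseteq\widehat{\mathscr{O}}_j^{\,c}$; \eqref{eq:oscillation} bounds the $(\filt_{k_2},\mathfrak{Z}_{H_{\widehat{n}_1}}(\theta_j))$--conditional probability of $\widehat{\mathscr{O}}_j^{\,c}\cap\mathscr{R}^p_j(\widehat{n}_1)\cap\mathscr{G}_n$; tilting by $\exp(\sqrt{2}(\mathfrak{Z}_{H_{\widehat{n}_1}}-\mathfrak{Z}_{H_{k_2}})-\log(\widehat{n}_1/k_2))$ and using the barrier bound on $\Q(\one\{\mathfrak{Z}_{H_{\widehat{n}_1}}\in[A^{p,-}_{\widehat{n}_1},A^{p,+}_{\widehat{n}_1}]\}(\sqrt{2}m_n-\mathfrak{Z}_{H_{\widehat{n}_1}})_+\mid\filt_{k_2})$ yields the analogue of~\eqref{eq:tr23}; summing over $j\in\mathcal{D}_{n/k_1}$ and sending $n\to\infty$ gives the analogue of~\eqref{eq:tr25}, which carries a factor $(\log k_1)^{-49}$, so after multiplying by $(\log k_1)^{25}$ and letting $k_1\to\infty$ it vanishes, the limiting $\theta$--integral being a.s.\ finite by Theorem~\ref{thm:Bj}.

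For the second display, fix $j$ and use $(1-\one\{\mathscr{R}^2_j(\widehat{n}_1)\})+(1-\one\{\mathscr{O}_j^\Psi\})\le 2(1-\one\{\mathscr{R}^2_j(\widehat{n}_1)\})+\one\{\mathscr{R}^2_j(\widehat{n}_1)\}(1-\one\{\mathscr{O}_j^\Psi\})$. When $W_j\in[-k_6,k_6]$ holds there is a mesh point $\theta^\ast\in I_j$ with $\mathscr{L}(\theta^\ast)$, i.e.\ $\varphi_n(\theta^\ast)\ge\sqrt{8/\beta}\,m_n-k_6$. As in the proof of Corollary~\ref{cor:trunkray1}, Proposition~\ref{prop:nearmaximarays} shows that except for a contribution vanishing in the stated limit $\widehat{\mathscr{R}}(\theta^\ast)$ holds; and as in the proof of Corollary~\ref{cor:trunkray2}, on $\mathscr{G}_n$ the event $\mathscr{L}(\theta^\ast)$ forces the increment $\mathfrak{Z}_{H_n}(\theta^\ast)-\mathfrak{Z}_{H_{\widehat{n}_1}}(\theta^\ast)$ above the threshold of~\eqref{eqo:Ginc}, so conditioning on $\filt_{\widehat{n}_1}$ and using the Gaussian tail bound turns $\one\{W_j\in[-k_6,k_6]\}$ into a multiple of $e^{\sqrt{\beta/2}\,\varphi_{\widehat{n}_1}(\theta^\ast)-2m_{\widehat{n}_1}+o_{k_1}(1)}$. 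Both reductions require passing between $\theta^\ast$ and $\theta_j$ inside $\widehat{I}_j$: by $\varphi_k(\theta)=\Psi_k(\theta)-(k+1)\theta$, the discrepancy $\varphi_k(\theta^\ast)-\varphi_k(\theta_j)$ for $k\le\widehat{n}_1$ is, up to the deterministic $O(k_1/\widehat{k}_1)$ of the linear term, controlled by the oscillation of $\Psi_k$ over $\widehat{I}_j$, which by Corollary~\ref{cor:osc} is $o_{k_1}(1)$ off an event negligible in the first--moment sum --- far below the clearance $\gg (\log n)^{1/6}$ between the width--$1$ and width--$2$ bananas that permits upgrading $\widehat{\mathscr{R}}(\theta^\ast)$ to $\mathscr{R}^2_j(\widehat{n}_1)$. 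After this transfer, the part carrying $\one\{\mathscr{R}^2_j(\widehat{n}_1)\}(1-\one\{\mathscr{O}_j^\Psi\})$ becomes the summand of the first display with $(\log k_1)^{25}$ as slack, and the part carrying $(1-\one\{\mathscr{R}^2_j(\widehat{n}_1)\})$ is handled by Proposition~\ref{prop:nearmaximarays} together with the transfer just described; the sum over the $\lceil n/k_1\rceil$ intervals costs nothing since $\sum_j\Pr(W_j\in[-k_6,k_6]\mid\filt_{k_2})$ is bounded by a first--moment estimate.

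The one nonroutine ingredient, and the main obstacle, is the Pr\"ufer--phase oscillation control invoked above: that the oscillation of $\Psi_k$ over the arc $\widehat{I}_j$, for all $k_2\le k\le\widehat{n}_1$, is $o_{k_1}(1)$ outside events negligible in the first--moment sums over $j\in\mathcal{D}_{n/k_1}$, together with the sharper estimate~\eqref{eq:oscillation} at time $n_1^+$. This is Corollary~\ref{cor:osc}, proved in Section~\ref{sec:meshing} by combining the monotonicity of $\theta\mapsto\Psi_k(\theta)$ with ballot--type estimates conditional on $\filt_{k_2}$; the delicate point is to keep it uniform in $j$ and quantitatively strong enough that letting first $n\to\infty$ absorbs the banana clearances and the entrance window, and then $k_1\to\infty$ absorbs the factor $(\log k_1)^{25}$. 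Granted this, Proposition~\ref{prop:trunkray3} is a transcription of the real--case arguments with the Bernstein step of Proposition~\ref{prop:trunkray1} replaced throughout by the identity $\varphi_k=\Psi_k-(k+1)\theta$.
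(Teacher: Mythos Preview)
Your first-display argument is correct and matches the paper: both say the proof of Proposition~\ref{prop:trunkray2} goes through verbatim, the factor $\one\{\mathscr{O}_j\}$ being harmless since it is only ever bounded above by $1$ in that proof, and Corollary~\ref{cor:osc} is $\sigma$-independent.

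For the second display there is a circularity in your approach. You transfer $\widehat{\mathscr{R}}(\theta^*)$ to $\mathscr{R}^2_j(\widehat{n}_1)$ via two-sided oscillation control of $\Psi_k$ over $\widehat{I}_j$, invoking Corollary~\ref{cor:osc}; but that corollary (cf.\ \eqref{eq:oscR} and its statement) requires the ray event at the anchor angle $\theta_0=\theta_j$, which is precisely what you are trying to establish. Moreover, as stated Corollary~\ref{cor:osc} only controls $\Psi_{n_1^+}$, not $\Psi_k$ for all $k\le\widehat{n}_1$. The paper instead uses only the \emph{one-sided} monotonicity you already identified: from $\Psi_k(\theta^*)\le\Psi_k(\theta_j)$ and $\varphi_k=\Psi_k-(k+1)\theta$ one gets $\varphi_k(\theta_j)\ge\varphi_k(\theta^*)-(k+1)(\theta_j-\theta^*)=\varphi_k(\theta^*)-o_{k_1}(1)$ for $k\le\widehat{n}_1$, so the lower banana bound of $\widehat{\mathscr{R}}(\theta^*)$ transfers directly to $\theta_j$ with no oscillation estimate needed; the paper then asserts $\mathscr{R}^p_j(\widehat{n}_1)$. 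The same one-sided inequality, read as $\varphi_{\widehat{n}_1}(\theta^*)\le\varphi_{\widehat{n}_1}(\theta_j)+o_{k_1}(1)$, gives the analogue of~\eqref{eqo:Ginc}, after which the remainder follows Corollary~\ref{cor:trunkray2} and the first display. Your route can be repaired by anchoring the oscillation estimate at $\theta_0=\theta^*$ (so the ray hypothesis is supplied by $\widehat{\mathscr{R}}(\theta^*)$, which you do have) and noting that the \emph{proof} of Proposition~\ref{prop:osc1st} in fact controls $\max_{k_2\le k\le n_1^+}\psi_k^\tau$ rather than just the terminal value; but the paper's direct use of monotonicity is both shorter and avoids the circularity.
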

\begin{proof}
   When $\sigma = i,$  $\varphi_k(\theta) = \Psi_k(\theta) - (k+1)\theta$ for all $k,\theta.$  The proof of the first claim is identical to the real case Proposition \ref{prop:trunkray2}.

    We turn to the second claim.  From Proposition \ref{prop:nearmaximarays}, we may assume that whenever $\mathscr{L}(\tfrac{\pi j}{2k_5 n})$ occurs for some $j$ then so does $\widehat{\mathscr{R}}(\tfrac{\pi j}{2k_5 n})$.  We will further show now that whenever $\mathscr{L}(\tfrac{\pi j}{2k_5 n}) \cap \widehat{\mathscr{R}}(\tfrac{\pi j}{2k_5 n})$ occurs, so does the event 
   \[
      \widehat{\mathscr{O}}(\theta)
      \coloneqq
      \biggl\{
        \max_{k_2 \leq k \leq n_1^+}
        |\Psi_{k}(\theta  - \tfrac{2\pi k_1}{n}) - \Psi_{k}(\theta  + \tfrac{2\pi k_1}{n})| \leq
        \tfrac{k_1(\log k_1)^{50}}{2k_1^+ }
      \biggr\}.
  \]
  In particular, in a manner similar to Proposition \ref{prop:trunkray2}, we can derive 
  \begin{equation}\label{eq:2024_1}
    \limsup_{k_1,n \to \infty}
      \sum_{j = 1}^{4k_5 n}
      \Exp[
        \one\{\mathscr{L}(\tfrac{\pi j}{2k_5 n})\}
        \one\{
          \widehat{\mathscr{R}}(\tfrac{\pi j}{2k_5 n})
        \}
        \one\{\widehat{\mathscr{O}}(\tfrac{\pi j}{2k_5 n})^c\}
        \one\{\mathscr{G}_{n}\}
        ~\vert~
        \filt_{k_2}
      ]
      \Prto[k_5,k_4,k_2]
      0.      
  \end{equation}

We comment briefly on the proof of \eqref{eq:2024_1} at the end, and we turn to deriving the second claim of this proposition.  The event $\widehat{\mathscr{O}}(\theta)$, for any $\theta \in I_j$, directly implies $\mathscr{O}_j^\Psi$ (recalling \eqref{eq:Oj}), from the monotonicity of the Pr\"ufer phases.  On the other hand, on the event $W_j \in [-k_6,k_6]$, we have a $\theta \in I_j$ so that $\widehat{\mathscr{R}}(\theta) \cap \widehat{\mathscr{O}}(\theta)$ holds.  Using monotonicity of the Pr\"ufer phases and $\widehat{\mathscr{O}}(\theta)$,  for any $k$ with $k_2 \leq k \leq n_1^+$
  \[
    \Psi_k(\theta_j) 
    \leq \Psi_k(\theta + \tfrac{2\pi k_1}{n}) 
    \leq \Psi_k(\theta - \tfrac{2\pi k_1}{n}) + \tfrac{k_1(\log k_1)^{50}}{2k_1^+}
    \leq \Psi_k(\theta) + \tfrac{k_1(\log k_1)^{50}}{2k_1^+}.
  \]
  Furthermore, trivially by monotonicity, $\Psi_k(\theta_j) \geq \Psi_k(\theta)$, and so $\max_{k_2\leq k \leq n_1^+} |\Psi_k(\theta_j) - \Psi_k(\theta)| = o_{k_1}(1)$.  Subtracting the mean behavior from $\Psi_k$ for $\theta$ in this range also is vanishing, i.e:  $\max_{k_2\leq k \leq n_1^+} |\varphi_k(\theta_j) - \varphi_k(\theta)| = o_{k_1}(1)$.  As the oscillations of $\mathfrak{Z}_t$ for $t \in [H_k,H_{k+1}]$ are $o_{k_2}(1)$ on $\mathscr{G}_n$ and $|\sqrt{\tfrac{4}{\beta}}\mathfrak{Z}_{H_k}-\varphi_k|\leq k_6$  we conclude for all $H_{k_2} \leq t \leq H_{n_1^+}$
  \[
  \begin{aligned}
  &\sqrt{\tfrac{4}{\beta}}\mathfrak{Z}_{t}(\theta_j)
  \leq 
  \sqrt{\tfrac{4}{\beta}}\mathfrak{Z}_{t}(\theta) + o_{k_2}(1)  + 2k_6
  \leq
  \sqrt{\tfrac{8}{\beta}}A_t^{1,+} + o_{k_2}(1)+2k_6, \\
  &\sqrt{\tfrac{4}{\beta}}\mathfrak{Z}_{t}(\theta_j)
  \geq 
  \sqrt{\tfrac{4}{\beta}}\mathfrak{Z}_{t}(\theta) - o_{k_2}(1) - 2k_6
  \geq
  \sqrt{\tfrac{8}{\beta}}A_t^{1,-} - o_{k_2}(1) - 2k_6,
\end{aligned}
  \]
  which hence implies the event $\mathscr{R}^{1.1}_{j}({n}_1^+)$ for all $k_2$ sufficiently large.  
  Thus we have reduced the second claim to showing
  \begin{equation}\label{eq:2024_2}
    \limsup_{k_1,n \to \infty}
      \sum_{j \in \mathcal{D}_{n/k_1}}
      \Exp\biggl[
        \one\{{\mathscr{R}}^{1.1}_{j}({n}^+_1)\}
        \one\{{\mathscr{R}}^2_{j}(\widehat{n}_1)^c\}
        \one\{\mathscr{O}_j^\Psi \cap \mathscr{G}_{n}\}
        \sum_{\theta \in I_j}
        \one\{\mathscr{L}(\theta)\}
        \one\{
          \widehat{\mathscr{R}}(\theta)
        \}
        ~\vert~
        \filt_{k_2}
      \biggr]
      \Prto[k_5,k_4,k_2]
      0.      
  \end{equation}
  
  We separately consider the events that $\mathscr{R}^2_{j}(\widehat{n}_1)$ fails due to 
  \(
      \sqrt{\tfrac{4}{\beta}} \mathfrak{Z}_t(\theta_j)
      \geq \sqrt{\tfrac{8}{\beta}}A_{t}^{p,+}
  \)
  or due to 
  \(
        \sqrt{\tfrac{8}{\beta}}A_{t}^{p,-}
        \geq
        \sqrt{\tfrac{4}{\beta}} \mathfrak{Z}_t(\theta_j)
  \)
  for some $t$ in $[H_{n_1^+}, H_{\widehat{n}_1}],$ which we call $\operatorname{REsc}_j^+$ and $\operatorname{REsc}_j^-$ respectively. 
  We start with the failure event $\operatorname{REsc}_j^+$.
  For any $\theta \in I_j$, on the event $\mathscr{L}(\theta)$, we must have
  \[
    \mathfrak{Z}_{H_n}(\theta)
    -\mathfrak{Z}_{H_{{n}_1}}(\theta)
    \geq 
    \sqrt{2}
    \log(n/{n}_1)
    +
    (
    \sqrt{2}
    (\log({n}_1)-\tfrac34 \log\log n)
    -
    \mathfrak{Z}_{H_{{n}_1}}(\theta)
    ).
  \]
  Thus conditioning on $\filt_{{n}_1}$
  and
  using the standard Gaussian tail bound,
  \begin{equation}\label{eq:2024_4}
    \begin{aligned}
      &\Exp\biggl[
        \one\{{\mathscr{R}}^{1.1}_{j}({n}^+_1)\}
        \one\{\operatorname{REsc}_j^+\}
        \one\{\mathscr{O}_j^\Psi \cap \mathscr{G}_{n}\}
        \sum_{\theta \in I_j}
        \one\{\mathscr{L}(\theta)\}
        \one\{
          \widehat{\mathscr{R}}(\theta)
        \}
        ~\vert~
        \filt_{k_2}
      \biggr] \\
      &\leq
      \frac{{n}_1}{n}
      \Exp\biggl[
        \sum_{\theta \in I_j}
        e^{ \sqrt{2}\mathfrak{Z}_{H_{{n}_1}}(\theta)
	      -2 m_{{n}_1}}
        \one\{\operatorname{REsc}_j^+\}
	      \one\{{\mathscr{R}}_{j}^{1.1}({n}_1^+)\}
        \one\{\mathscr{G}_{n}\}
        ~\vert~
        \filt_{k_2}
      \biggr].
    \end{aligned}
  \end{equation}
  From monotonicity of the Pr\"ufer phases, 
  for $\theta \in I_j$
  \(\varphi_{\widehat{n}_1}(\theta) \leq \varphi_{\widehat{n}_1}(\theta_j) + o_{k_1}(1)\),
  and hence on $\mathscr{G}_n$ we have the same estimate for $\mathfrak{Z}$ up to a constant depending on $k_6$. 
  Thus
  \[
    \begin{aligned}
      \limsup_{n \to \infty}
      &\sum_{j \in \mathcal{D}_{n/k_1}}
      \Exp\biggl[
        \one\{{\mathscr{R}}^{1.1}_{j}({n}^+_1)\}
        \one\{\operatorname{REsc}_j^+\}
        \one\{\mathscr{O}_j^\Psi \cap \mathscr{G}_{n}\}
        \sum_{\theta \in I_j}
        \one\{\mathscr{L}(\theta)\}
        \one\{
          \widehat{\mathscr{R}}(\theta)
        \}
        ~\vert~
        \filt_{k_2}
      \biggr]
      \\
      \leq
      \limsup_{n \to \infty}
      &\sum_{j \in \mathcal{D}_{n/k_1}}
      \frac{C k_5k_1{n}_1e^{C(k_6)}}{n}
      \Exp\biggl[
        e^{ \sqrt{2}\mathfrak{Z}_{H_{{n}_1}}(\theta_j)
	      -2 m_{{n}_1}}
        \one\{\operatorname{REsc}_j^+\}
	      \one\{{\mathscr{R}}_{j}^{1.1}({n}_1^+)\}
        \one\{\mathscr{G}_{n}\}
        ~\vert~
        \filt_{k_2}
      \biggr].
    \end{aligned}
  \]
  
  Hence it suffices to estimate above the $\filt_{n_1^+}$--conditional probability of $\operatorname{REsc}_j^+$, which requires that the Brownian motion $\mathfrak{Z}_t$ for $t \in [H_{n_1^+}, H_{\widehat{n}_1}]$ exceeds a barrier.  This barrier can be bounded below by the linear barrier
  \begin{equation}\label{eq:2025_1}
    \mathfrak{Z}_t \leq \sqrt{2}(t-\tfrac 34 \log\log n) - (\log k_1)^{\tfrac{19}{20}\tfrac{1}{4p+2}},
  \quad t \in [H_{n_1^+}, H_{\widehat{n}_1}]
  \end{equation}
  with $p=2.$  We also have that $\mathfrak{Z}_t$ cannot exceed the barrier
  \begin{equation}\label{eq:2025_2}
    \mathfrak{Z}_t \leq \sqrt{2}(t-\tfrac 34 \log\log n) + 2(\log k_1)^{1/100},  
    \quad t \in [H_{n_1^+}, H_{\widehat{n}_1}]
  \end{equation}
  which is implied by the event $\mathscr{G}_n$ for all $k_1$ sufficiently large.

  The $\filt_{n_1^+}\vee \sigma(\mathfrak{Z}_{H_{n_1}})$--conditional probability that $\mathfrak{Z}_{H_{\widehat{n}_1}} > \sqrt{2} A_{H_{\widehat{n}_1}}^{1,+}$ is vanishingly small as the variance of the Brownian bridge is at least $\tfrac12(\log k_1)^{19/20}$ at this time. 
  Moreover for a standard Brownian bridge, the probability that it stays below a straight line with intercepts $a,b > 0$ is $1-e^{-2ab}$ (\cite[(3.40)]{KaratzasShreve}).  It follows that
  \[
    \Pr( \eqref{eq:2025_1} \mid \filt_{n_1^+}\vee \sigma(\mathfrak{Z}_{H_{\widehat{n}_1}}))
    = 1-\exp\biggl(-2 \tfrac{(\sqrt{2}m_{n_1^+}-\mathfrak{Z}_{H_{n_1^+}}-(\log k_1)^{\tfrac{19}{20}\tfrac{1}{10}})(\sqrt{2}m_{\widehat{n}_1}-\mathfrak{Z}_{H_{\widehat{n}_1}}-(\log k_1)^{\tfrac{19}{20}\tfrac{1}{10}})}{H_{\widehat{n}_1}-H_{{n}_1^+}}\biggr).
  \]
  Uniformly over allowed $\mathfrak{Z}_{H_{n_1}^+}$ (which are restricted by the event $\mathscr{R}^{(1.1)}_j(n_1^+)$) and uniformly over $\mathfrak{Z}_{H_{\widehat{n}_1}} \in [\sqrt{2} A_{H_{\widehat{n}_1}}^{1,-},\sqrt{2} A_{H_{\widehat{n}_1}}^{1,+}]$
  \[
    \Pr( \eqref{eq:2025_1} \mid \filt_{n_1^+}\vee \sigma(\mathfrak{Z}_{H_{\widehat{n}_1}}))
    = 1-\exp\biggl(-2 \tfrac{(\sqrt{2}m_{n_1^+}-\mathfrak{Z}_{H_{n_1^+}})(\sqrt{2}m_{\widehat{n}_1}-\mathfrak{Z}_{H_{\widehat{n}_1}})}{H_{\widehat{n}_1}-H_{{n}_1^+}}(1+o_{k_1}(1))\biggr).
  \]
  The same holds for 
  $\Pr( \eqref{eq:2025_2} \mid \filt_{n_1^+}\vee \sigma(\mathfrak{Z}_{H_{\widehat{n}_1}}))$, and so we conclude that
  \[
    \Pr( \eqref{eq:2025_2} \setminus \eqref{eq:2025_1}  \mid \filt_{n_1^+}\vee \sigma(\mathfrak{Z}_{H_{\widehat{n}_1}}))
    =o_{k_1}(1),
  \]
  as either both probabilities are close to $1$ (and so no cancellation within the exponentials is needed), or they cancel.  We conclude that
  \[
    \begin{aligned}
      \limsup_{n\to\infty}
    &\Exp\biggl[
    e^{ \sqrt{2}\mathfrak{Z}_{H_{{n}_1}}(\theta_j)
    -2 m_{{n}_1}}
    \one\{\operatorname{REsc}_j^+\}
    \one\{{\mathscr{R}}_{j}^{1.1}({n}_1^+)\}
    \one\{\mathscr{G}_{n}\}
    ~\vert~
    \filt_{n_1^+}\vee \sigma(\mathfrak{Z}_{H_{\widehat{n}_1}})
    \biggr] \\
    &\leq
    o_{k_1}(1)\cdot 
    \limsup_{n\to\infty}
    \frac{\widehat{n}_1}{n_1}
    \Exp\biggl[
      e^{ \sqrt{2}\mathfrak{Z}_{H_{\widehat{n}_1}}(\theta_j)
      -2 m_{\widehat{n}_1}}
      \one\{{\mathscr{R}}_{j}^{1.1}({n}_1^+)\}
      ~\vert~
      \filt_{n_1^+}\vee \sigma(\mathfrak{Z}_{H_{\widehat{n}_1}})
      \biggr],
    \end{aligned}
\]
so that we have shown
\[
    \begin{aligned}
      \limsup_{n \to \infty}
      &\sum_{j \in \mathcal{D}_{n/k_1}}
      \Exp\biggl[
        \one\{{\mathscr{R}}^{1.1}_{j}({n}^+_1)\}
        \one\{\operatorname{REsc}_j^+\}
        %\one\{\operatorname{REsc}_j^+ \cap \operatorname{RL}_j \}
        \one\{\mathscr{O}_j^\Psi \cap \mathscr{G}_{n}\}
        \sum_{\theta \in I_j}
        \one\{\mathscr{L}(\theta)\}
        \one\{
          \widehat{\mathscr{R}}(\theta)
        \}
        ~\vert~
        \filt_{k_2}
      \biggr]
      \\
      \leq
      o_{k_1}(1)\cdot 
      \limsup_{n \to \infty}
      &\sum_{j \in \mathcal{D}_{n/k_1}}
      \frac{C k_5k_1{n}_1^+e^{C(k_6)}}{n}
      \Exp\biggl[
        \exp\bigl(\sqrt{2}\mathfrak{Z}_{H_{{n}_1^+}}(\theta_j)
	      -2 m_{{n}_1^+}\bigr)
	      \one\{{\mathscr{R}}_{j}^{1.1}({n}_1^+)\}
        ~\vert~
        \filt_{k_2}
      \biggr].
    \end{aligned}
  \]
  This conditional expectation we then evaluate, giving 
  \[
    \begin{aligned}
      \limsup_{n \to \infty}
      &\sum_{j \in \mathcal{D}_{n/k_1}}
      \Exp\biggl[
        \one\{{\mathscr{R}}^{1.1}_{j}({n}^+_1)\}
        \one\{\operatorname{REsc}_j^+\}
        %\one\{\operatorname{REsc}_j^+ \cap \operatorname{RL}_j \}
        \one\{\mathscr{O}_j^\Psi \cap \mathscr{G}_{n}\}
        \sum_{\theta \in I_j}
        \one\{\mathscr{L}(\theta)\}
        \one\{
          \widehat{\mathscr{R}}(\theta)
        \}
        ~\vert~
        \filt_{k_2}
      \biggr]
      \\
      \leq
      o_{k_1}(1)\cdot 
      \limsup_{n \to \infty}
      &\sum_{j \in \mathcal{D}_{n/k_1}}
      \frac{C k_5k_1e^{C(k_6)}}{n}
        \exp\bigl(\sqrt{\beta/2}\varphi_{k_2}(\theta_j)
	      -\log k_2\bigr)
        \bigl(\sqrt{2}\log k_2 - \sqrt{\tfrac{\beta}{4}}\varphi_{k_2}(\theta_j) + k_6 \bigr)_+.
      \end{aligned}
  \]
  Hence on sending $k_1 \to \infty$ and using Theorem \ref{thm:Bj},  this tends to $0$.

  Recalling the definition below \eqref{eq:2024_2}, we turn to the case of $\operatorname{REsc}_j^-$. On this event, for some $t \in [H_{n_1^+}, H_{\widehat{n}_1}]$ we have $\mathfrak{Z}_t(\theta_j) \leq \sqrt{2}(t - \tfrac34 \log\log n - (\log n - t)^{\tfrac{9}{10}}).$
  From monotonicity, it follows that for all $\theta \in  I_j$ and some $t \in [H_{n_1^+}, H_{\widehat{n}_1}]$
  \[
    \mathfrak{Z}_t(\theta) \leq \sqrt{2}(t - \tfrac34 \log\log n - (\log n - t)^{\tfrac{9}{10}}) + C(\beta,k_6).
  \]
  On the other hand, on the event $\widehat{R}(\theta),$ we have
  \[
    \mathfrak{Z}_t(\theta) \geq \sqrt{2}(t - \tfrac34 \log\log n - (\log n - t)^{\tfrac{5}{6}}).
  \]
  Hence these are incompatible and so we have that trivially
  \begin{equation}\label{eq:2024_6}
    \limsup_{k_1,n \to \infty}
      \sum_{j \in \mathcal{D}_{n/k_1}}
      \Exp\biggl[
        \one\{{\mathscr{R}}^{1.1}_{j}({n}^+_1)\}
        \one\{\operatorname{REsc}_j^-\}
        \one\{\mathscr{O}_j^\Psi \cap \mathscr{G}_{n}\}
        \sum_{\theta \in I_j}
        \one\{\mathscr{L}(\theta)\}
        \one\{
          \widehat{\mathscr{R}}(\theta)
        \}
        ~\vert~
        \filt_{k_2}
      \biggr]
      \Prto[k_5,k_4,k_2]
      0.      
  \end{equation}

  We finish by briefly commenting on the proof of \eqref{eq:2024_1}.  We begin by taking a conditional probability of the event $\mathscr{L}(\tfrac{\pi j}{2k_5 n}) \cap \widehat{\mathscr{R}}(\tfrac{\pi j}{2k_5 n}) \cap \widehat{\mathscr{O}}(\tfrac{\pi j}{2k_5 n})^c$.  From Corollary \ref{cor:osc}, 
  \begin{align*}
    &  \Pr[
      \widehat{\mathscr{R}}(\tfrac{\pi j}{2k_5 n}) 
      \cap \widehat{\mathscr{O}}(\tfrac{\pi j}{2k_5 n})^c
         \cap \mathscr{G}_n
       ~\vert~ \filt_{k_2}, \mathfrak{Z}_{H_{\widehat{n}_1}}(\theta_j)]\nonumber\\
    & \qquad  \leq
       C_\beta
       \frac{
         \left( \sqrt{2}\log k_2 - \sqrt{\tfrac{\beta}{4}} \varphi_{k_2}(\theta_j)+k_6 \right)_+
         \left( \sqrt{2}m_n - \mathfrak{Z}_{H_{\widehat{n}_1}}(\theta_j) \right)_+
       }{(\log k_1)^{50}\log(\widehat{n}_1/k_2)}.
  \end{align*}
  This is the same control that we had in the real part (compare to \eqref{eq:oscillation}), and so the proof can be completed in a similar way to Proposition \ref{prop:trunkray2}.
\end{proof}

\subsection{Decoupling and a diffusion approximation}
\label{sec:decoupling}

Our main tool for showing a Poisson approximation will be to modify the process $\varphi_k$ for $k \geq n_1^+$ which shows both that in a sense $\varphi_k(\theta)-\varphi_{n_1^+}(\theta)$ stabilizes as $n\to \infty$ and simultaneously gives actual $(\filt_{n_1^+})$--conditional independence of these processes for sufficiently separated $\theta.$  Set $T_{+} = \log k_1$ and $T_- = \log(k_1/k_1^+).$ We shall construct a family of standard complex Brownian motions
\begin{equation}
  \{ \mathfrak{W}_{t}^j : T_- \leq t \leq T_+, j \in \mathcal{D}_{n/k_1}\},
  \label{eq:Wtheta}
\end{equation}
which will have the property that they are independent from one another when the relevant arcs are separated by a small power of $n.$
Now with respect to these Brownian motions we define the complex diffusions $(\mathfrak{L}^j_t : t \in [T_-,T_+], \theta \in \R,j \in \mathcal{D}_{n/k_1})$ as the (strong) solution of the stochastic differential equation
\begin{equation}\label{eq:LU}
  \begin{aligned}
    d \mathfrak{L}^{j}_t(\theta)
    =
    d \mathfrak{L}_t(\theta)
    &=
    {i\theta e^t}{k_1^{-1}} dt
    +\sqrt{\tfrac{4}{\beta}} e^{i \Im \mathfrak{L}_t(\theta)} d \mathfrak{W}_t^j, \text{ and }\\
    \mathfrak{U}^j_t(\theta)
    &=
    -\Re\bigl( \sigma \bigl(\mathfrak{L}^j_t(\theta) - i\theta k_1^{-1} e^t
    %-e^{T_-})
    \bigr)\bigr)
    -\sqrt{\tfrac{8}{\beta}}m_n.\\
    \mathfrak{L}^j_{T_-}(\theta)&=
    - 2\log \Phi_{n_1^+}^*(\exp(i(\theta_j + \tfrac{\theta}{n} )))+\tfrac{i\theta}{k_1}e^{T_-},
    \quad
    \text{for}
    \quad
    \theta \in \R. 
  \end{aligned}
\end{equation}
The diffusion $\mathfrak{L}^j(\theta)$ will serve as a proxy for the evolution of 
$-2\log \Phi^*_{k(t)}(\theta_j+\theta/n)+i\theta e^t/k_1$
where $k(t) \approx n_1 e^t$ up to rounding errors, and in particular its 
imaginary part will mimick the evolution of the Pr\"{u}fer phases.
When $\sigma=1$, the diffusion $\mathfrak{U}^j_t(\theta)$ 
is designed to be a proxy for $2\log |\Phi^*_{k(t)}(\theta_j+\theta/n)|$. 

Later, we shall consider a different
initial conditions for the SDE in \eqref{eq:LU}, which will be enforced at $t=T_-$; Namely, we shall enforce
constant initial conditions (the latter process will be denoted $\mathfrak{L}_t^{o,j}$ or $\mathfrak{L}_t^o$, see e.g. \eqref{eq:dSDE} below).
We note that if the initial conditions do not depend on $n$ then the law of the process in \eqref{eq:LU} does not depend on $n$; moreover if the initial conditions are constant in $\theta$, then the law of this diffusion is just a translation of a $0$--initial condition process by the initialization.

\begin{proposition}\label{prop:uberdecoupling}
  Fix 
  $C>0$. Then, 
  there exists $\delta_0>0$ so that for any $\delta\in (0,\delta_0)$ there
  exists  a family of Brownian motions $\{ \mathfrak{W}_{t}^j : T_- \leq t \leq T_+, j \in \mathcal{D}_{n/k_1}\}$ so that the following holds.
  Let $j \in \mathcal{D}_{n/k_1}.$
  Let $(\mathfrak{L}^j_t : t \in [T_-,T_+])$ solve \eqref{eq:LU}.
For all $n$ sufficiently large (with respect to $k_1$),
  \[
  \sup_{|\theta| \leq n^{8\delta}}\!
    \Pr
    \biggl[
      \mathscr{T}_{n_1^+}
      \cap \biggl\{
	\sup_{t \in [T_-,T_+]}
	|\mathfrak{L}^j_{t}(\theta)\!- i\tfrac{\theta}{k_1} e^t\!+\!2\log \Phi_{k(t)}^*(\exp(i(\theta_j + \tfrac{\theta}{n} )))
	|\! >\! n^{-\delta}
      \biggr\}\!
      ~\bigg\vert~ \!\filt_{n_1^+}
    \biggr] \leq n^{-C} \As,
  \]
  where $\log k(t)= \log k_n(t)$ is a linear function with $k(T_-) = n_1^+$ and $k(T_+) = n.$
  Moreover for any $j \in \mathcal{D}_{n/k_1}$,
  \[
    \sigma\left\{  \mathfrak{W}_{t}^i : t \in [T_-,T_+], d_\T(\theta_i,\theta_j) \leq n^{-1+8\delta} \right\}
    \quad
    \text{and}
    \quad
    \sigma\left\{ \mathfrak{W}_{t}^i : t \in [T_-,T_+], d_\T(\theta_i,\theta_j) \geq 4n^{-1+8\delta} \right\}
  \]
  are conditionally independent given $\filt_{n_1^+}.$
\end{proposition}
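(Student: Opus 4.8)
The plan is to prove Proposition~\ref{prop:uberdecoupling} by realizing the SDE \eqref{eq:LU} as the diffusive scaling limit of the Szeg\H{o} recursion \eqref{eq:Phi} run over the deep block of indices $k\in[n_1^+,n]$, under the logarithmic time change $\log k(t)=\log(n/k_1)+t$ (so $k(T_-)=n_1^+$, $k(T_+)=n$, and $dt$ plays the role of the harmonic increment $dH_{k}$), and then to build the driving complex Brownian motions $\mathfrak{W}^j$ out of the Verblunsky increments so that well-separated arcs become conditionally independent. Throughout one works on the truncation event $\mathscr{T}_{n_1^+}$ of \eqref{eq:good}, which gives $|\gamma_k|\le C\sqrt{(\log k)/k}$ for all $k>n_1^+$ and lets one replace $\gamma_k$ by $\sqrt{2/(\beta(k+1))}\,Z_k$ (see \eqref{eq:betagamma}, \eqref{eq:gaussians}) and embed the $Z_k$ into a complex Brownian motion as in \eqref{eq:W}, all with negligible error.

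First I would record the martingale structure. With $\theta'=\theta_j+\theta/n$, the one-step increment of $2\log\Phi^*_k(\theta')-i(k+1)\theta_j$ equals $2\log(1-\gamma_k e^{i\Psi_k(\theta')})-i\theta_j$; by rotational invariance of $\gamma_k$ and the mean value property of $z\mapsto\log(1-z)$ on $\mathbb{D}$ one has $\Exp[\log(1-\gamma_k e^{i\Psi_k(\theta')})\mid\filt_{k-1}]=0$, so after removing deterministic drifts the increment is a martingale difference, and Taylor-expanding $2\log(1-w_k)$ with $w_k=\gamma_k e^{i\Psi_k(\theta')}$ it equals $-2w_k$ plus (i) a martingale $-\sum_k w_k^2$ of block $L^2$-norm $O((n_1^+)^{-1/2})$ which, being truncated, has a sub-Gaussian tail at scale $n^{-\delta/2}$ with superpolynomially small exceptional probability, and (ii) a remainder that is $O((\log n)^{3/2}(n_1^+)^{-1/2})$ deterministically. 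Matching the deterministic pieces — the $\theta/n$-per-step part of the Pr\"ufer recursion \eqref{eq:Psi} resums, under the time change, to the drift $i\theta e^t k_1^{-1}\,dt$ of \eqref{eq:LU}, while the $(k+1)\theta_j$ part is carried by the $-i(k+1)\theta_j$ shift built into $\mathfrak{L}^j$ and by the drift removal defining $\mathfrak{U}^j$ — and fixing conjugation conventions so that the diffusion coefficient is $e^{i\Im\mathfrak{L}^j_t}$, one sees that the linearized recursion is a Riemann-sum discretization of \eqref{eq:LU} with $\Im\mathfrak{L}^j$ playing the role of the (shifted) $\Psi$. A Gr\"onwall-type comparison of \eqref{eq:LU} with this discretization, fed the above errors, gives the pathwise estimate, and a union bound over a polynomially fine $(\theta,t)$-grid together with continuity of both sides upgrades it to the stated $\sup_{|\theta|\le n^\delta}\sup_{t}(\cdot)>n^{-\delta/2}$ bound with probability $\le n^{-C}$ on $\mathscr{T}_{n_1^+}$. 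This part is the windowed refinement of Proposition~\ref{prop:mgapproximation} carried out in Section~\ref{sec:diffusion}.

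The decoupling is the main obstacle. The natural noise driving the arc near $\theta_j$ is a time change of the martingale $\sum_{k\ge n_1^+}\gamma_k e^{2i\Psi_k(\theta_j)}$ (the combination that makes $e^{i\Im\mathfrak{L}^j_t}\,d\mathfrak{W}^j_t$ reproduce $-2\gamma_k e^{i\Psi_k(\theta')}$ at the window centre), which after truncation is within poly-small total variation of a conditionally Gaussian process. Its conditional covariance structure is essentially block diagonal across separations $\asymp n^{-1+\delta}$: the diagonal predictable bracket is $2\sum_{k=n_1^+}^{n}\tfrac1{k+1}=2(T_+-T_-)(1+o(1))$, while for $d_\T(\theta_i,\theta_{i'})\ge 3n^{-1+\delta}$ the cross bracket is the oscillatory sum $2\sum_{k=n_1^+}^{n}\tfrac1{k+1}e^{2i(\Psi_k(\theta_i)-\Psi_k(\theta_{i'}))}$, and since on $\mathscr{T}_{n_1^+}$ the phase difference $k\mapsto\Psi_k(\theta_i)-\Psi_k(\theta_{i'})$ is monotone up to a controlled fluctuation with per-step increment $\ge|\theta_i-\theta_{i'}|\ge 3n^{-1+\delta}$, summation by parts bounds it by $O(k_1^+ n^{-\delta})\to 0$. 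A quantitative conditional martingale central limit theorem in total variation then lets one replace, given $\filt_{n_1^+}$, the joint law of these noises by that of a Gaussian field with exactly block-diagonal near/far covariance — in which the near-$\theta_j$ family is independent of the far-$\theta_j$ family — and transport back through the total-variation bounds, on an event where the Step~2 approximation is preserved, to obtain $\{\mathfrak{W}^j\}$ with the asserted conditional independence at total cost $O(n^{-C})$, the buffer $(n^{-1+\delta},3n^{-1+\delta})$ providing the room for this replacement. The delicate point is making this martingale CLT quantitative and uniform over the $\lceil n/k_1\rceil$ arcs and the $(\theta,t)$-continuum while keeping it compatible with the diffusion approximation; this is the $k_1$-dependent coupling machinery developed across Sections~\ref{sec:diffusion}--\ref{sec-init} and Appendix~\ref{sec:pointprocesses}.
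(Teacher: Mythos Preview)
Your linearization step has the right flavor, but the decoupling mechanism you propose is not the one in the paper and, as written, has a genuine gap.

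The paper does \emph{not} obtain the conditional independence via a quantitative martingale CLT in total variation. Instead it introduces an intermediate \emph{locally linear} process $\lambda_k(\theta)$ (Section~\ref{sec:llp}, \eqref{eqd:llp}): the time axis $[n_1^+,n]$ is cut into blocks of length $\varkappa=n^{1-\epsilon}$, and within each block the Pr\"ufer phase is frozen to evolve \emph{exactly} linearly, $\Im\lambda^*_k(\theta)=\Im\lambda_{\text{block start}}(\theta)+(k-\text{block start})\theta$. After this replacement, the noise driving a single block is a trigonometric polynomial $\sum_j Z_j e^{ij\theta}$, and the paper takes its \emph{discrete Fourier transform} \eqref{eqd:bandresampled}. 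The Brownian motion $\mathfrak{W}^j$ is then built from a \emph{band-resampled} family $Z^{(j)}$ in which the Fourier modes $\hat Z_p$ with $|e(p/\varkappa)-e(\theta_j/2\pi)|>\omega/\varkappa$ (bandwidth $\omega=2n^\epsilon$) are replaced by fresh independent copies $\check Z_p^{(r,j)}$. This gives \emph{exact} conditional independence of the $\sigma$-fields in the proposition by construction: arcs at distance $\ge 3n^{-1+\delta}$ draw on disjoint bands and disjoint resampled variables. The approximation error enters only through Lemmas~\ref{lemd:delta} (local linearization), \ref{lemd:linearization} (band-resampling), and \ref{lemd:final} (discrete-to-SDE), each of which is a tail bound of the form $\Pr(\cdot>n^{-\delta/2})\le n^{-C}$ proved via the logarithmic-ladder supermartingale argument of Proposition~\ref{prop:logladder}.

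Your route instead estimates the cross predictable bracket $\sum_k (k+1)^{-1}e^{2i(\Psi_k(\theta_i)-\Psi_k(\theta_{i'}))}$ by summation by parts and then invokes a total-variation martingale CLT to replace the joint law by a block-diagonal Gaussian. Two problems: first, the phase difference $\Psi_k(\theta_i)-\Psi_k(\theta_{i'})$ has random increments, so the summation-by-parts bound requires exactly the kind of phase-freezing the paper does via local linearization; without it the oscillatory cancellation is not clean. Second, and more seriously, a total-variation CLT gives at best an \emph{approximate} joint Gaussianity, whereas the proposition asserts \emph{exact} conditional independence of the $\sigma$-fields generated by the $\mathfrak{W}^i$. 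You cannot ``transport back'' from a block-diagonal Gaussian to actual Brownian motions that are both exactly conditionally independent \emph{and} still satisfy the pathwise approximation to $2\log\Phi^*$ with probability $1-n^{-C}$; the TV coupling would have to be done jointly across all $j$ and all $(\theta,t)$, and quantitative multivariate martingale CLTs in total variation at rate $n^{-C}$ for arbitrary $C$ are not available off the shelf. The band-resampling bypasses this entirely: independence is structural, and only the \emph{size} of the resampling error (Lemma~\ref{lemd:bandvariance}) needs to be controlled. Finally, Appendix~\ref{sec:pointprocesses} concerns Poisson process comparison metrics and is not where this coupling lives; the construction is entirely in Section~\ref{sec:diffusion}.
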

\noindent This allows the comparison of $2\log \Phi_{k(t)}^*(\exp(i(\theta_j + \tfrac{\theta}{n} )))$ to different $\mathfrak{L}^{j'}$ (for $j'\neq j$) 
provided that $|\theta_j-\theta_{j'}|$ is small enough.
We give the proof in Section \ref{sec:diffusion}.

\subsubsection{Simplifying the decoration}
Now, since  the probability is so high that $|\mathfrak{U}^j_t(\theta) - \varphi_{k(t)}(\theta_j + \tfrac{\theta}{n}) +\sqrt{\tfrac{8}{\beta}}m_n| < n^{-\delta}$ for all $t \in [T_-,T_+]$,
we may just work on the event
\begin{equation}\label{eq:g1}
  \mathscr{G}^1_{n} =
  \biggl\{
    \sup_{j \in \mathcal{D}_{n/k_1}}
    \,\,
    \sup_{\substack{\theta \in \tfrac{2\pi}{4k_5}\Z  \\ |\theta| \leq n^{8\delta} }}
    \,\,
    \sup_{t \in [T_-,T_+]}
  |\mathfrak{U}^j_t(\theta) - \varphi_{k(t)}(\theta_j + \tfrac{\theta}{n})
   +\sqrt{\tfrac{8}{\beta}}m_n| < n^{-\delta}
\biggr\}.
\end{equation}
We will also introduce another barrier event, now specific to $\mathfrak{U}^j$ and only concerning $k \geq n_1^+.$
We define a decoration--ray event with a (once more) enlarged barrier function.
These are given by for $t \in [T_-,T_+]$
\begin{equation}
  \mathcal{A}_t^{\pm}
      \coloneqq
      (t-T_{+}) - (T_+-t)^{1/2 \mp 3/7}.
  \label{eq:decorationbarrierA}
\end{equation}
Note the exponents coincide with those of $A_{t}^{3,\pm}$.
So we introduce a decoration ray event
\begin{equation}
    \begin{aligned}
    \mathscr{P}_j'(\theta)
    &\coloneqq
    \left\{
      \forall~t \in [T_-,T_+-k_4]
      :
      \sqrt{\tfrac{8}{\beta}}\mathcal{A}_{t}^{-}
      %-(T_+-t)^{10/11}
      \leq
      \mathfrak{U}^j_t(\theta)
      \leq \sqrt{\tfrac{8}{\beta}}\mathcal{A}_{t}^{+}
      %-(T_+-t)^{1/11}
    \right\} \\
     &\cap
     \left\{
      \forall~t \in [T_+-k_4,T_+]
      :
      \mathfrak{U}^j_t(\theta)
      \leq \sqrt{\tfrac{8}{\beta}}\bigl(t-T_+ +
      \bigl( {T_+ - t + (\log k_5)^{50}}\bigr)^{1/50}\bigr)
    \right\}, \\
    \mathscr{P}_j'
    &\coloneqq
    \bigcap_{\theta \in I_j}
    \bigl(
    \left\{
      \mathfrak{U}_{T_+}(\theta)
      <  -k_6 \right\} \cup\mathscr{P}_j'(\theta)\bigr),\\
  \end{aligned}
  \label{eq:decorationbarrier}
\end{equation}
the latter of which states that all lattice--point near-leaders ($\theta \in I_j$ for which $\mathfrak{U}^j_{T_+}(\theta)$ is large) reached that height by staying within (an enlarged) banana--like tube.  On the event $\mathscr{G}^1_n \cap \mathscr{G}_n,$ if the event $\widehat{\mathscr{R}_j}$ holds (see \eqref{eq:Rhatj} and \eqref{eq:Rhat}) then deterministically $\mathscr{P}_j'$ holds as well (once $k_1$ and $k_4$ are sufficiently large).

Instead of looking at the local maximum of $\varphi_n$
we may look at the local maximum of $\mathfrak{U}^j_{T^+}.$
Moreover, we define the analogues of $W_j$ and $V_j$ (see \eqref{eq:localmax} and  \eqref{eq:leafheight}) for this new process
\begin{equation}
%  \begin{aligned}
    W_j' \coloneqq \max_{\substack{
      \theta \in [-2\pi k_1,0]\cap \tfrac{2\pi}{4k_5}\Z \\
      \one{\mathscr{P}_j'(\theta)}=1
    }} \mathfrak{U}^j_{T_+}( \theta)
    \quad
    \text{and}\quad
    %V_j' \coloneqq \varphi_{n_1^+}{(\theta_j)} - \sqrt{\tfrac{8}{\beta}} m_{n_1^+}.
    V_j' \coloneqq
%    \sqrt{\tfrac{4}{\beta}}
%\sqrt{\tfrac{8}{\beta}} m_{n_1^+}.
    \sqrt{2}m_{n_1^+}
    -\mathfrak{Z}_{H_{n_1^+}}{(\theta_j)}.
%  \end{aligned}
  \label{eq:localmax1}
\end{equation}
We also define a new decoration process $D_j'$ by first defining it on a mesh by the formula
\begin{equation}\label{eq:decorationD_j}
  D_j'\coloneqq
  \left\{
  \begin{aligned}
    &\biggl( e^{ -\mathfrak{L}^j_{T_+}\!\!(\theta)-\sqrt{\tfrac{8}{\beta}}m_{n}
  +i(\theta_j+\tfrac{\theta}{n})(n+1)}\times \one{\mathscr{P}_j'(\theta)} : \theta \in [-2\pi k_1,0] \cap \tfrac{2\pi}{4k_5}\Z\biggr),
  \quad\text{if $\sigma=1$ or if not,}\quad \\
  &\biggl( e^{ \mathfrak{U}^j_{T_+}\!\!(\theta)}\times \one{\mathscr{P}_j'(\theta)} : \theta \in [-2\pi k_1,0] \cap \tfrac{2\pi}{4k_5}\Z \biggr).
\end{aligned}
  \right.
\end{equation}
We extend $D_j'$ to be piecewise linear between the mesh points.

\subsubsection{Continuity of the decoration processes around near maxima}

We need to show that near local maxima, the decoration process is well--behaved, and moreover that the meshing accurately captures the continuum process.  Define the good event:
\begin{definition}\label{def:N}
  We let ${\mathscr{N}}_j$ be the event that the following holds.
  %We let ${\mathscr{N}}_j'$ be the event that just the $4$-th point holds:
  \begin{enumerate}
    \item The maximum over the grid, $W_j,$ is in $[-k_6,k_6].$
    \item For some $\epsilon_\beta$ (to be specified), in the neighborhood of a near-maximal grid point $\theta \in I_j$ the field $\varphi_n(\theta)$ does not vary too much.  Specifically: at any point $\theta \in I_j$ at which $\varphi_n(\theta) \in \sqrt{\tfrac{8}{\beta}}m_{n} + [-k_6,\infty)$ (i.e.~$\mathscr{L}(\theta)$ holds), all points $\theta'$ with $|\theta-\theta'| \leq \frac{2\pi}{4k_5 n}$ satisfy $|\varphi_n(\theta)-\varphi_n(\theta')| \leq k_5^{-\epsilon_\beta}.$
    \item For any point $\theta \in \widehat{I}_j$ at which $\varphi_n(\theta) \geq \sqrt{\tfrac{8}{\beta}}m_{n} - k_6$, there is a point $\theta' \in I_j$ with $|\theta-\theta'| \leq \frac{2\pi}{4k_5 n}$ at which $|\varphi_n(\theta)-\varphi_n(\theta')| \leq k_5^{-\epsilon_\beta}.$
  \end{enumerate}
\end{definition}

By a combination of a first moment argument and interpolation theorems for polynomials, we show that in any arc where $\widehat{W}_j$ is large, we also have $\mathscr{N}_j.$
\begin{proposition}  Any interval in which $\widehat{W}_j$ is large must also have $\mathscr{N}_j$ occur in that
  \begin{align*}
   & \limsup_{k_1,n \to \infty}
    \sum_{j \in \mathcal{D}_{n/k_1}}
    \Exp[
      \one\{ \widehat{W}_j \in [-k_7,k_7]\}
      \one\{\mathscr{N}_j^c\}
      \one\{\mathscr{R}_{j}^2({n}_1^+)\}
      \one\{\mathscr{P}_j'\}
      \one\{\mathscr{O}_j^\Psi\}
      \one\{\mathscr{O}_j\}
      \one\{\mathscr{G}_{n}\}
      ~\vert~
      \filt_{k_2}
    ]\\
    &
    \qquad \qquad\Prto[k_6,k_5,k_2]
    0.
  \end{align*}
  \label{prop:interpolation}
\end{proposition}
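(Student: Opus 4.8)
The plan is to verify each of the three clauses defining $\mathscr{N}_j$ separately for typical $j$, writing $\mathscr{N}_j^c=\mathscr{N}_j^{(1),c}\cup\mathscr{N}_j^{(2),c}\cup\mathscr{N}_j^{(3),c}$ according to which clause fails and bounding the contribution of each summand in $j$. The failure of clause (1), i.e.\ of $\{W_j\notin[-k_6,k_6]\}$, needs no new argument: on $\mathscr{G}_n$ the event $\{\widehat W_j\in[-k_7,k_7]\}\cap\{W_j\notin[-k_6,k_6]\}$ is contained in the event already controlled in Proposition \ref{prop:interpolation0} (the extra indicators $\one\{\mathscr{R}_j(n_1^+)\}\one\{\mathscr{P}_j'\}\one\{\mathscr{O}_j^\Psi\}\one\{\mathscr{O}_j\}$ only shrink the set), so this part of the sum vanishes in the stated iterated limit. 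It remains to treat clauses (2) and (3).

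In the real case $\sigma=1$ we have $\varphi_n(\theta)=2\log|\Phi^*_n(e^{i\theta})|$, and the key tool is interpolation. On $\mathscr{G}_n$ the barrier event $\mathscr{B}_{n,k_2,k_6}$ forces $\sup_\theta\varphi_n(\theta)\le\sqrt{8/\beta}A_n^{\ll}+k_6$, and from the definition of $A^{\ll}$ one has $A_n^{\ll}-m_n=O(1)$; hence $\|\Phi^*_n\|_{|z|=1}\le e^{\frac12\sqrt{8/\beta}A_n^{\ll}+k_6/2}$ and, by Bernstein's inequality (Theorem \ref{thm:Bernstein}), $\|\tfrac{d}{dz}\Phi^*_n\|_{|z|=1}\le n\,e^{\frac12\sqrt{8/\beta}A_n^{\ll}+k_6/2}$. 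Thus at any $\theta$ with $\varphi_n(\theta)\ge\sqrt{8/\beta}m_n-k_6$ (so $|\Phi^*_n(e^{i\theta})|\ge e^{\frac12\sqrt{8/\beta}(m_n-k_6)}$) and any $\theta'$ in the same arc $\widehat I_j$ with $|\theta-\theta'|\le\tfrac{2\pi}{4k_5n}$, one gets $\bigl||\Phi^*_n(e^{i\theta})|-|\Phi^*_n(e^{i\theta'})|\bigr|\le\tfrac{C(\beta,k_6)}{k_5}|\Phi^*_n(e^{i\theta})|$, hence $|\varphi_n(\theta)-\varphi_n(\theta')|\le k_5^{-\epsilon_\beta}$ once $\epsilon_\beta\in(0,1)$ is fixed and $k_5$ is large. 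This makes clause (2) a deterministic consequence of $\mathscr{G}_n$, and likewise clause (3) for every near-leader $\theta\in\widehat I_j$ farther than $\tfrac{2\pi\log k_1}{n}$ from $\partial\widehat I_j$ (for such $\theta$ a point $\theta'\in I_j$ within $\tfrac{2\pi}{4k_5n}$ exists). The only remaining contribution to clause (3) is from arcs containing a near-leader inside one of the two boundary strips of width $\tfrac{2\pi\log k_1}{n}$, and this we bound by a first moment: the ballot estimate \eqref{eq:nmr0} gives $\Pr(\mathscr{L}(\theta)\mid\filt_{k_2})\le\tfrac{C_\beta k_6^2}{n}e^{\sqrt{\beta/2}(\varphi_{k_2}(\theta)+k_6)-\log k_2}\bigl(\sqrt2\log k_2-\sqrt{\beta/4}\varphi_{k_2}(\theta)\bigr)_+$ per mesh point, and summing over the $O\!\bigl(k_5 n\tfrac{\log k_1}{k_1}\bigr)$ mesh points in boundary strips yields, via the almost sure convergence in Theorem \ref{thm:Bj}, a bound $O_{k_2,k_5,k_6}\!\bigl(\tfrac{\log k_1}{k_1}\bigr)$ that tends to $0$ as $k_1\to\infty$.

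In the imaginary case $\sigma=i$ we have $\varphi_n(\theta)=\Psi_n(\theta)-(n+1)\theta$ with $\theta\mapsto\Psi_n(\theta)$ nondecreasing (Lemma \ref{lem:psikmonotone}). Monotonicity already gives, for $\theta$ between consecutive mesh points $\theta_m<\theta_{m+1}$, the sandwich $\varphi_n(\theta_m)-\tfrac{\pi}{2k_5}(1+o(1))\le\varphi_n(\theta)\le\varphi_n(\theta_{m+1})+\tfrac{\pi}{2k_5}(1+o(1))$; this yields clause (1) up to an additive $\tfrac{\pi}{2k_5}$ (so the $\sigma=i$ case of Proposition \ref{prop:interpolation0} is elementary), the interior part of clause (3) (pick whichever of $\theta_m,\theta_{m+1}$ lies on the same side as $\theta$ of any eigenangle in the cell), and the downward half of the oscillation bound in clause (2); the boundary strips of clause (3) are handled by the same first moment as above. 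The only genuine failure mode left is an upward jump of $\varphi_n$ across a cell adjacent to a near-leader mesh point $\theta$, i.e.\ $\Psi_n(\theta')-\Psi_n(\theta)>k_5^{-\epsilon_\beta}+\tfrac{\pi}{2k_5}$ for some $\theta'$ with $|\theta-\theta'|\le\tfrac{2\pi}{4k_5n}$; this forces a second near-leader at $\theta'$ with $\varphi_n(\theta')\ge\varphi_n(\theta)+k_5^{-\epsilon_\beta}$, and on $\{\widehat W_j\in[-k_7,k_7]\}$ also $\varphi_n(\theta')\le\sqrt{8/\beta}m_n+k_7$. We bound the contribution by a first moment over \emph{pairs} of mesh near-leaders at distance $\le\tfrac{2\pi}{4k_5n}$ with value gap $\ge k_5^{-\epsilon_\beta}$: since $\Psi_n(\theta+w)-\Psi_n(\theta)\lesssim\delta w/x^2$ when $\theta$ lies at distance $x\gg w\sim(k_5n)^{-1}$ from the nearest eigenangle (of microscopic scale $\delta\sim n^{-1}$), the required increment forces $\theta$ to lie within $O\!\bigl(k_5^{(\epsilon_\beta-1)/2}n^{-1}\bigr)$ of an eigenangle, and combining this with the $\filt_{k_2}$-conditional ballot bound for $\theta$ being a near-leader with the relevant excess, together with the exponential decay of the near-leader intensity in the excess, the contribution is $o(1)$ in $k_5$ provided $\epsilon_\beta\in(0,1)$ is chosen small enough (depending only on $\beta$); the iterated limit then kills it.

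The principal obstacle is exactly this imaginary-case bookkeeping: Bernstein's inequality has no analogue for $\Psi_n$, so the modulus-of-continuity control near near-leaders must be extracted from the monotonicity of $\Psi_n$ together with the confinement events $\mathscr{R}_j(n_1^+),\mathscr{O}_j^\Psi$ and the ballot estimates, and one must count pairs of near-leaders (or near-leaders weighted by their excess height) rather than all near-leaders — the latter count being of order $k_5$ and hence producing only a bounded, not vanishing, first moment. Arranging the exponents so that this first moment tends to $0$ in the order $n\to\infty$, then $k_1\to\infty$, then $k_2,k_5,k_6\to\infty$, and choosing $\epsilon_\beta$ accordingly, is where the real work lies; the real case is by comparison an essentially deterministic consequence of the a priori barrier and Bernstein together with the one routine first-moment estimate for the boundary strips.
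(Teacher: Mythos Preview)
Your treatment of the real case $\sigma=1$ is correct and, for clauses (2) and (3), is in fact cleaner than the route the paper takes. The paper does not use Bernstein directly on a single mesh cell; instead it passes through the larger window $J(\theta)$ of roughly $k_5^{\delta_\beta}$ mesh points, invokes the refined interpolation Theorem~\ref{thm:interpolation} to transfer from the continuum to $J(\theta)$, and then feeds in the SDE-based fine-oscillation estimate Proposition~\ref{prop:lsmcontinuity} (via Lemma~\ref{lem:inter2}) to bound the oscillation of $\varphi_n$ over $J(\theta)$. Your observation that the crude bound $|\varphi_n(\theta)-\varphi_n(\theta')|\le C e^{O(k_6)}/k_5$ from Bernstein alone already beats $k_5^{-\epsilon_\beta}$ for any fixed $\epsilon_\beta<1$ once $k_5\gg k_6$ is valid, and the boundary-strip first moment is the paper's Lemma~\ref{lem:inter1}. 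What your approach buys is simplicity; what the paper's machinery buys is the simultaneous phase control $\Im(\mathfrak{L}^j_{T_+}(\theta_0)-\mathfrak{L}^j_{T_+}(0))\le\theta_0^{\delta_1}$, which is not needed for $\mathscr{N}_j$ as stated but is relevant to the complex decoration comparisons downstream and to the H\"older statement in Remark~\ref{rem:continuity}.

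Your $\sigma=i$ argument, however, has a genuine gap. The monotonicity sandwich gives you, at a near-leader mesh point $\theta_m$, the bound $\varphi_n(\theta')\ge\varphi_n(\theta_m)-\tfrac{\pi}{2k_5}$ only for $\theta'>\theta_m$; for $\theta'\in(\theta_{m-1},\theta_m)$ the sandwich yields merely $\varphi_n(\theta')\ge\varphi_n(\theta_{m-1})-\tfrac{\pi}{2k_5}$, which says nothing about $\varphi_n(\theta_m)-\varphi_n(\theta')$ unless you already control $\varphi_n(\theta_m)-\varphi_n(\theta_{m-1})$. So the failure mode ``$\theta_m$ is a near-leader and $\varphi_n(\theta_m)-\varphi_n(\theta_{m-1})>k_5^{-\epsilon_\beta}$'' is \emph{not} captured by your ``second near-leader at $\theta'$'' mechanism: here $\theta_{m-1}$ lies below $\theta_m$ and need not be a near-leader at all, so there is no pair of near-leaders to count. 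Your eigenangle-proximity heuristic ($\Psi_n(\theta+w)-\Psi_n(\theta)\lesssim \delta w/x^2$) is also not something you can invoke: $\Psi_n$ is the OPUC Pr\"ufer phase, not the eigenvalue counting function of the final matrix, and no such local regularity is established anywhere in the paper. The paper's own handling of $\sigma=i$ is admittedly terse --- it asserts the case is ``substantially simpler'' owing to monotonicity but does not write out the argument --- yet the intended mechanism is still the diffusion continuity: the phase-increment part of Proposition~\ref{prop:lsmcontinuity} (Step~5 of its proof) controls $\Im(\mathfrak{L}^j_{T_+}(\theta_0)-\mathfrak{L}^j_{T_+}(0))$, which for $\sigma=i$ \emph{is} essentially $\mathfrak{U}^j_{T_+}(\theta_0)-\mathfrak{U}^j_{T_+}(0)$, and the conditioning on a high endpoint has to be redone with the roles of real and imaginary parts of $\mathfrak{L}$ swapped. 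Your final paragraph has this exactly inverted: the paper expends its substantial probabilistic input (Proposition~\ref{prop:lsmcontinuity}) on $\sigma=1$, not $\sigma=i$, because in the real case it wants more than Bernstein gives (joint real-and-phase continuity), whereas your Bernstein shortcut shows that for the statement of Proposition~\ref{prop:interpolation} alone that extra work was not strictly necessary when $\sigma=1$.
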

\noindent We give the proof in Section \ref{sec:meshing}.

\subsection{Summary of first--moment reductions}
\label{sec:summary}

The reductions made in Section \ref{sec:milieu} will now be summarized.
Define for $j \in \mathcal{D}_{n/k_1}$
\begin{equation}
  \begin{aligned}
    \mathscr{A}_{j} =
    \mathscr{R}_{j}^2({n}_1^+)
    \cap \mathscr{O}_j^+
    \cap \mathscr{O}_j^{\Psi}\quad\text{and}\quad
    \mathscr{A}'_{j} =
    \mathscr{N}_j
    \cap \mathscr{P}_j'
    \cap
    \mathscr{A}_{j}
  \end{aligned}
  \label{eq:qj}
\end{equation}
Using these events,
define for Borel subsets of $[0,2\pi] \times \R \times \mathcal{C}( [-2\pi k_1,0], \C),$
the following approximation
\begin{equation}\label{eq:Extr}
  \Extr_n
  =\Extr_n^{k_1,k_2,k_4,k_5,k_6}%, \dots, k_5}
  \coloneqq
  \sum_{j \in \mathcal{D}_{n/k_1}}
  \delta_{(\theta_j, V_j', D_j'e^{\sqrt{4/\beta}V_j'})}
  \one[{\mathscr{A}_j}]
\end{equation}
\begin{proposition}\label{prop:milieu}
  For any $k_7$ the restrictions of
  $\Extr_n$ and $\Ext_n$ to $\Gamma_{k_7}$ satisfy
  \[
    \limsup_{k_1,n \to \infty}
    \partial_2(
      \Extr_n \cap \Gamma_{k_7}
      ~\vert~ \filt_{k_2}
      ,
      \Ext_n \cap \Gamma_{k_7}
      ~\vert~ \filt_{k_2}
    )
    \Prto[k_6,k_5,k_4,k_2]
    0.
  \]
\end{proposition}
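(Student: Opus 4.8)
The plan is to couple $\Ext_n\cap\Gamma_{k_7}$ and $\Extr_n\cap\Gamma_{k_7}$ through the index-to-index matching --- the atom over $\theta_j$ of one matched to the atom over $\theta_j$ of the other --- and to estimate $\partial_1$ of the coupled configurations on the $\filt_{k_2}$-typical event on which all the first-moment reductions of Section~\ref{sec:milieu} succeed simultaneously. By Lemmas~\ref{lem:barrier}, \ref{lem:goodstuff1} and Proposition~\ref{prop:uberdecoupling}, the event $\mathscr{G}_n\cap\mathscr{G}_n^1$ is $\filt_{k_2}$-typical in the iterated limit (parameters sent to infinity in the order $n$, then $k_1$, then $k_6,k_5,k_4$, then $k_2$); by Propositions~\ref{prop:nearmaximarays}, \ref{prop:interpolation0}, \ref{prop:interpolation} together with Corollaries~\ref{cor:trunkray1}, \ref{cor:trunkray2} (real case) or Proposition~\ref{prop:trunkray3} (imaginary case), the $\filt_{k_2}$-conditional expected number of arcs $j\in\mathcal{D}_{n/k_1}$ with $\widehat{W}_j\in[-k_7,k_7]$ at which some event entering $\mathscr{A}_j'=\mathscr{N}_j\cap\mathscr{P}_j'\cap\mathscr{R}_j(n_1^+)\cap\mathscr{O}_j^+\cap\mathscr{O}_j^{\Psi}$ fails tends to $0$ in the iterated limit. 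Hence there is a $\filt_{k_2}$-measurable event $\mathscr{E}$ of conditional probability tending to $1$ on which $\mathscr{G}_n\cap\mathscr{G}_n^1$ holds and every arc with $\widehat{W}_j\in[-k_7,k_7]$ also satisfies $\mathscr{A}_j'$ (and in particular $\mathscr{A}_j$).

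Next I would match the two point counts on $\mathscr{E}$. The atoms of $\Ext_n\cap\Gamma_{k_7}$ are precisely the arcs with $\widehat{W}_j\in[-k_7,k_7]$, while those of $\Extr_n\cap\Gamma_{k_7}$ are the arcs with $\mathscr{A}_j$ and $W_j':=\max_\theta\log|D_j'(\theta)|\in[-k_7,k_7]$. On $\mathscr{G}_n^1$ one has deterministically $\mathfrak{U}_{T_+}^j(\theta)=\varphi_n(\theta_j+\tfrac{\theta}{n})-\sqrt{8/\beta}m_n+O(n^{-\delta/4})$ at the mesh points, so $W_j'\le\widehat{W}_j+O(n^{-\delta/4})$; on $\mathscr{E}$, using $\mathscr{N}_j$ to pass from the mesh maximum to the continuum maximum within $k_5^{-\epsilon_\beta}$ and $\mathscr{P}_j'$ to see that $\one[\mathscr{P}_j'(\theta)]=1$ at every near-leading mesh point, one also has $W_j'\ge\widehat{W}_j-k_5^{-\epsilon_\beta}-O(n^{-\delta/4})$. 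So, apart from arcs whose local maximum lies within $\varepsilon_n:=k_5^{-\epsilon_\beta}+O(n^{-\delta/4})$ of $\pm k_7$, the two restricted processes carry exactly the same atoms; and the conditional expected number of those boundary arcs is bounded by $\varepsilon_n$ times a $\filt_{k_2}$-measurable quantity controlled by $\mathscr{B}_{k_2}$ and $k_6,k_7$, via the first-moment density estimate of the type~\eqref{eq-approx2} proven in Appendix~\ref{section:lower_bound}, which therefore vanishes in the iterated limit. Hence with conditional probability tending to $1$ the coupled configurations have equal cardinality and the index-to-index matching is admissible in $\partial_1$.

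It then remains to bound, on $\mathscr{E}$, the quantity $\partial_0\bigl((\theta_j,V_j,D_je^{\sqrt{4/\beta}V_j}),(\theta_j,V_j',D_j'e^{\sqrt{4/\beta}V_j'})\bigr)$ for a matched pair. The first coordinates coincide; for the second, $|V_j-V_j'|=\sqrt{\beta/4}\,|\varphi_{n_1^+}(\theta_j)-G_{n_1^+}(\theta_j)|$, which is negligible in the iterated limit because the Gaussian coupling is exact at time $k_2$ and refined by $\mathscr{G}_n^2$; for the third, the enlarged-window form of Proposition~\ref{prop:uberdecoupling} gives on $\mathscr{G}_n^1$ that $e^{\mathfrak{L}_{T_+}^j(\theta)}$ reproduces $\Phi_n^{2*}(\theta_j+\tfrac{\theta}{n})e^{-i(n+1)\theta_j}$ when $\sigma=1$ (resp.\ $e^{\mathfrak{U}_{T_+}^j(\theta)}$ reproduces $e^{\varphi_n(\theta_j+\theta/n)-\sqrt{8/\beta}m_n}$ when $\sigma=i$) up to a factor $1+O(n^{-\delta/4})$, while $\mathscr{P}_j'$ forces $\one[\mathscr{P}_j'(\theta)]$ to kill only mesh points where $|D_j(\theta)|<e^{-k_6}$, and $\mathscr{N}_j$ controls the piecewise-linear interpolation off the mesh; feeding in the estimate on $|V_j-V_j'|$ together with the matching of sup-norms on $\Gamma_{k_7}$ yields $\sup_\theta|D_j'e^{\sqrt{4/\beta}V_j'}(\theta)-D_je^{\sqrt{4/\beta}V_j}(\theta)|=o(1)$ in the iterated limit. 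Therefore $\partial_1(\Extr_n\cap\Gamma_{k_7},\Ext_n\cap\Gamma_{k_7})\,\one[\mathscr{E}]=o(1)$ in the iterated limit, and off $\mathscr{E}$ this distance is at most $1$; taking $\filt_{k_2}$-conditional expectations and sending the parameters to infinity in the prescribed order gives $\partial_2(\Extr_n\cap\Gamma_{k_7}\mid\filt_{k_2},\Ext_n\cap\Gamma_{k_7}\mid\filt_{k_2})\to0$ in probability. The case $\sigma=i$ runs the same way, with Corollaries~\ref{cor:trunkray1}, \ref{cor:trunkray2} replaced by Proposition~\ref{prop:trunkray3}.

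The hard part is the second step: promoting the first-moment \emph{inequalities} of Section~\ref{sec:milieu}, which bound only expectations of indicator sums over arcs, to a genuine coupling of the two \emph{point processes} restricted to $\Gamma_{k_7}$. Two points are delicate. First, the arcs whose local maximum sits within an $n$-dependent $o(1)$ of the threshold $\pm k_7$ --- a boundary effect at $\partial\Gamma_{k_7}$ --- can only be handled through a quantitative first-moment \emph{density} bound for $\widehat{W}_j$. Second, the decoration functions $D_je^{\sqrt{4/\beta}V_j}$ carried by the atoms can be large (their sup-norm is of order $e^{\sqrt{4/\beta}V_j}$ with $V_j$ itself growing with $k_1$), so they must be reproduced to within the truncation scale of $\partial_0$; this is what dictates the order of the limits, $n\to\infty$ before $k_1\to\infty$ so that the diffusion-approximation error $n^{-\delta/4}$ dominates everything built out of $k_1$, and $k_2$ last so that the residual bias between $\varphi$ and its Gaussian surrogate is washed out.
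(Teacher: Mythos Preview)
Your overall strategy --- restrict to a good event on which all the first-moment reductions of Section~\ref{sec:milieu} succeed, then match index-to-index --- is close in spirit to the paper's proof, but the paper organizes it differently: it introduces thinnings $\Ext_n \geq \Ext_n' \geq \Ext_n''$ (with $\one[\mathscr{A}_j']$ and further events) and $\Extr_n \geq \Extr_n'$ (with $\one[\mathscr{A}_j']$), shows by first moments that each thinning loses no points of $\Gamma_{k_7}$, compares the thinned processes $\Ext_n''$ and $\Extr_n'$ directly on $\mathscr{G}_n\cap\mathscr{G}_n^1$ (both now carry $\one[\mathscr{A}_j']$, so $\mathscr{N}_j$ and $\mathscr{P}_j'$ are available at every surviving arc), and finally undoes the thinning on $\Extr_n$ via the inequality $(\Extr_n-\Extr_n')(\Gamma_{k_7}) \leq (\Ext_n-\Ext_n'')(\Gamma_{2k_7})$, invoking the first-moment bounds with $2k_7$ in place of $k_7$.

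Your direct-matching argument has a gap precisely at the point the paper's $\Gamma_{2k_7}$ trick addresses. Your event $\mathscr{E}$ guarantees $\mathscr{A}_j'$ only for arcs with $\widehat{W}_j\in[-k_7,k_7]$. But an atom of $\Extr_n\cap\Gamma_{k_7}$ is an arc with $\mathscr{A}_j$ (not $\mathscr{A}_j'$) and $W_j'\in[-k_7,k_7]$. On $\mathscr{G}_n^1$ you correctly get the one-sided bound $W_j'\le\widehat{W}_j+O(n^{-\delta/4})$, forcing $\widehat{W}_j\ge -k_7-o(1)$; but the reverse inequality $W_j'\ge\widehat{W}_j-\varepsilon_n$ genuinely needs $\mathscr{N}_j$ (to pass from the continuum maximum to the mesh) and $\mathscr{P}_j'$ (so that the indicator in $D_j'$ does not kill the near-maximizing mesh point), and these are not furnished by $\mathscr{E}$ once $\widehat{W}_j>k_7$. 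So an arc with $\widehat{W}_j\in(k_7,k_6]$ and $W_j'\in[-k_7,k_7]$ lies in $\Extr_n\cap\Gamma_{k_7}$, not in $\Ext_n\cap\Gamma_{k_7}$, and is \emph{not} a ``boundary arc'' in your sense. The fix is to enlarge the window in the definition of $\mathscr{E}$ --- require $\mathscr{A}_j'$ for every arc with $\widehat{W}_j\ge -k_7-1$, say, noting that the first-moment propositions of Section~\ref{sec:milieu} only use the lower bound on $\widehat{W}_j$ (the upper side is supplied by $\mathscr{G}_n$). With that adjustment your matching works and the separate ``density bound at $\pm k_7$'' becomes unnecessary, as in the paper. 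A minor point: your justification of $|V_j-V_j'|\to 0$ should cite Proposition~\ref{prop:mgapproximation} directly (which gives $o_{k_2}(1)$ uniformly), not $\mathscr{G}_n^2$ (which only gives a bound of order~$1$).
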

\begin{proof}
  By definition of the metric $\partial_2$ we may restrict the point processes to good events and conclude
  \[
    \partial_2(
      \Extr_n \cap \Gamma_{k_7}
      ~\vert~ \filt_{k_2}
      ,
      \{\Extr_n \cap \Gamma_{k_7} \one[{\mathscr{G}^1_n \cap \mathscr{G}_n}]\}
      ~\vert~ \filt_{k_2}
      )
      \leq \Pr (( \mathscr{G}^1_n \cap \mathscr{G}_n)^c
      ~\vert~ \filt_{k_2}),
  \]
  and hence from Lemma
  \ref{lem:goodstuff1}
  and Proposition \ref{prop:uberdecoupling},
  this tends to $0$ in that
  \[
    \limsup_{k_1,n\to\infty}
      \partial_2(
      \Extr_n \cap \Gamma_{k_7}
      ~\vert~ \filt_{k_2}
      ,
      \{\Extr_n \cap \Gamma_{k_7} \one[{\mathscr{G}^1_n \cap \mathscr{G}_n}]\}
      ~\vert~ \filt_{k_2}
      )
      \Prto[k_2] 0.
   \]
   The same holds for $\Extr_n$ replaced by $\Ext_n,$ and so we may as well restrict to the event ${\mathscr{G}^1_n \cap \mathscr{G}_n}.$

   The point process $\Ext_n$ we then further thin by defining
   \[
     \Ext_n'
     \coloneqq
     \sum_{j \in \mathcal{D}_{n/k_1}}
     \delta_{(\theta_j, V_j, D_je^{\sqrt{4/\beta}V_j})}
     \one[{\mathscr{A}'_j}]\]
and
\[
     \Ext_n''
     \coloneqq
     \sum_{j \in \mathcal{D}_{n/k_1}}
     \delta_{(\theta_j, V_j, D_je^{\sqrt{4/\beta}V_j})}
     \one[{\mathscr{A}'_j}]
     \one[    \mathscr{R}_{j}^2(\widehat{n}_1)
     \cap \mathscr{O}_j].
   \]
  We  have that as point processes
   $
     \Ext_n
     \geq
     \Ext_n'
     \geq
     \Ext_n''.
   $
   Hence from the definition of the $\partial_2$-distance it suffices to show that $\Ext_n(\Gamma_{k_7})-\Ext_n''(\Gamma_{k_7})\Prto 0$ as it then follows that all of these point processes are close on restriction to $\Gamma_{k_7}$.

  Recall that the events
   $\mathscr{R}_{j}^2(\widehat{n}_1) \subseteq \mathscr{R}_{j}^2({n}_1^+)$
   and
   $\mathscr{O}_j \subseteq \mathscr{O}_j^+.$ We claim that the combination of
   Propositions
   \ref{prop:interpolation0},
   \ref{prop:trunkray1},
   \ref{prop:trunkray3},
   and \ref{prop:interpolation} together with Corollary \ref{cor:trunkray2}
   show that on the good event ${\mathscr{G}^1_n \cap \mathscr{G}_n}$ the expected number of points lost by performing this thinning tends to $0,$ i.e.
   \begin{equation}\label{eq:goodthins}
     \limsup_{k_1,n\to\infty}
     \Exp\bigl[
       \bigl(
     \Ext_n(\Gamma_{k_7})
     -\Ext_n''(\Gamma_{k_7})
     \bigr)
     \one[{\mathscr{G}^1_n \cap \mathscr{G}_n}]
     ~\vert~ \filt_{k_2}
   \bigr]
   \end{equation}
   Proposition \ref{prop:interpolation0} implies that for any triple $(\theta_j,V_j,D_je^{\sqrt{4/\beta}V_j}) \in \Gamma_{k_7}$,
   we may include the good event that ${W_j} \in [-k_6,k_6]$.
Proposition \ref{prop:trunkray1}
 implies for $\sigma=1$ (the real case) that for any ${W_j} \in [-k_6,k_6]$ we may include the good events both $\mathscr{R}_{j}^2(\widehat{n}_1)$ and $\mathscr{O}_j$.
  Corollary \ref{cor:trunkray2} implies for $\sigma=1$ (the real case) that we may further add the good event that $\mathscr{O}_j^{\Psi}$ occurs.
   Proposition \ref{prop:trunkray3} implies the analogue of the previous two statements in the imaginary case.
   Proposition \ref{prop:interpolation} finally adds the $\mathscr{N}_j$ event.

   We also introduce a thinning
   \[
     \Extr_n'
     \coloneqq
     \sum_{j \in \mathcal{D}_{n/k_1}}
     \delta_{(\theta_j, V_j', D_j'e^{\sqrt{4/\beta}V_j'})}
     \one[{\mathscr{A}'_j}]
     \leq
     \Extr_n.
   \]
   On the events $\mathscr{G}^1_n \cap \mathscr{G}_n$ we have
   \[
     \max_{j \in \mathcal{D}_{n/k_1}} |V_j'-V_j| \leq n^{-\delta}
     \quad
     \text{and}
     \quad
     \max_{j \in \mathcal{D}_{n/k_1}} |D_j'-D_j| \leq e^{k_6+n^{-\delta}}n^{-\delta}.
   \]
   In particular,
   \[
     \limsup_{k_1,n\to\infty}
     \partial_2(
     \Ext_n'' \cap \Gamma_{k_7} \one[{\mathscr{G}^1_n \cap \mathscr{G}_n}]
     ~\vert~ \filt_{k_2}
     ,
     \{\Extr_n' \cap \Gamma_{k_7} \one[{\mathscr{G}^1_n \cap \mathscr{G}_n}]\}
     ~\vert~ \filt_{k_2}
     )
     \Prto[k_2] 0.
   \]
   Finally we undo the thinning on $\Extr_n.$   We just note that
   since on the event $\mathscr{G}^1_n \cap \mathscr{G}_n$, the processes $(V_j',V_j)$ and $(D_j', D_j)$ are sufficiently close, we have that
   \[
     (\Extr_n-\Extr_n')(\Gamma_{k_7})
     \leq(\Ext_n-\Ext_n'')(\Gamma_{2k_7}).
   \]
   As \eqref{eq:goodthins} holds for any $k_7$, the result follows.
\end{proof}

\subsection{Second approximation: removing the oscillations at level $n_1^+$}
\label{sec:2ndapprox}

%At this point, we now consider a Poisson processs with intensity $\Pi(\mathfrak{q})$ where $\mathfrak{q}$
%is given by \eqref{eq:i1}, repeated below for convenience
%\begin{equation*}
%  \mathfrak{q}_j \coloneqq
%  \Exp[\delta_{(\theta_j, V_j', D_j'e^{\sqrt{4/\beta}V_j'})}
%    %\times \one[ \mathscr{N}_j' \cap \mathscr{P}_j']
%  ~\vert~ \filt_{n_1^+}]\one[{\mathscr{A}_j}]%\filt_{\widehat{n}_1}]
%  \quad
%  \text{and}
%  \quad
%  \mathfrak{q}
%  \coloneqq
%  \sum_{j \in {\mathcal{D}}_{n/k_1}}
%  \mathfrak{q}_j.
%\end{equation*}
%This intensity has somewhat complicated dependence on the initialization $\filt_{n_1^+},$ which we reduce to a dependence solely on the value of $V_j'.$  %Those oscillations are determined

The first--moment reductions allow us to assume that the profile at level $n_1^+$ in the neighborhoods of high decorations are flat.  However, they still carry $n$--dependence, and we still need to show that these small oscillations at level $n_1^+$ propagate to small oscillations at the level $n.$
So, we introduce a new diffusion which solves \eqref{eq:LU} after an intermediate time $T_\dagger = \log (k_1/\widehat{k}_1)$, has $0$ initial conditions at time $T_{-}$, and between $[T_-,T_\dagger]$ it is a single Brownian motion.
%So we introduce a diffusion \eqref{eq:LU}, although now we solve it with idealized initial conditions, i.e.\ we define the solution of the diffusion for any $\theta \in \R$
\begin{equation}\label{eq:dSDE}
  \begin{aligned}
    d \mathfrak{L}^{o,j}_t(\theta)
    =
    d \mathfrak{L}^{o}_t(\theta)
    &=
    {i\theta e^t}{k_1^{-1}}
    \one[t \geq T_\dagger] dt
    +\sqrt{\tfrac{4}{\beta}} e^{i \Im \mathfrak{L}^{o,j}_t(\theta)}
    e^{i \Im \mathfrak{L}^{j}_{T_-}(0)}
    d \mathfrak{W}_t^j, \\
    \mathfrak{U}^{o,j}_t(\theta)
    =
    \mathfrak{U}^{o}_t(\theta)
    &=
    -\Re\bigl( \sigma \bigl(\mathfrak{L}^{o}_t(\theta) - i\theta k_1^{-1} (e^t-1)\bigr)\bigr) -\sqrt{\tfrac{8}{\beta}} \log k_1^+,\text{ and } \\
    \mathfrak{L}^{o}_{T_-}(\theta)
    &= 0.
  \end{aligned}
\end{equation}
These initial conditions are flat, in that they do not vary over the interval $\theta \in [-2\pi k_1,0]$ (note that the value of  the initial condition, when flat, does not change the law of the increments).  We note that there is a fixed phase $\Im \mathfrak{L}^{j}_{T_-}(0)$ that appears in the diffusion, which does not affect the law of the process as it rotates the complex white noise.  We shall show that $\mathfrak{L}^{o,j}_t(\theta) + \mathfrak{L}^{j}_{T_-}(0)$ is a good approximation for $\mathfrak{L}^j_{t}(\theta)$, and so we need to carry this $\filt_{T_-}$--measurable constant $\mathfrak{L}^{j}_{T_-}(0)$ whenever we make a comparison.

We also change the barrier event by dropping the barrier on $[T_-,T_\dagger]$; we also include a shift $h$, which measures the additional (positive) amount the process $\mathfrak{U}_t^{o,j}$ will need to climb (recall \eqref{eq:decorationbarrierA}):
 \begin{equation} 
  \begin{aligned}
    \mathscr{P}_j(\theta,h)
    &\coloneqq
    \left\{
      \forall~t \in [T_\dagger,T_+-k_4]
      :
      \sqrt{\tfrac{8}{\beta}}\mathcal{A}_{t}^{-}
      \leq
      \mathfrak{U}^{o,j}_t(\theta)-\sqrt{\tfrac{4}{\beta}}h
      \leq \sqrt{\tfrac{8}{\beta}}\mathcal{A}_{t}^{+}
    \right\}, \\
     &\cap
     \left\{
      \forall~t \in [T_+-k_4,T_+]
      :
      \mathfrak{U}^{o,j}_t(\theta)-\sqrt{\tfrac{4}{\beta}}h
      \leq \sqrt{\tfrac{8}{\beta}}\bigl(t-T_+ +
      \bigl( {T_+ - t + (\log k_5)^{50}}\bigr)^{1/50}\bigr)
    \right\}. \\
  \end{aligned}
  \label{eq:decorationbarrier2}
\end{equation}
In analogy with \eqref{eq:decorationD_j}, we define a decoration process in which we replace the $\mathfrak{L}^{j}_t$ by $\mathfrak{L}^{o,j}_t$:
\begin{equation}\label{eq:decoration}
  D_j^{o}(h) \coloneqq
  \left\{
  \begin{aligned}
    &\biggl( e^{- \mathfrak{L}^{o,j}_{T_+}\!\!(\theta)-\sqrt{\tfrac{8}{\beta}}\log k_1^+ - i\Im \mathfrak{L}_{T_-}^j(0)
    +i\theta_j(n+1)}\times \one{\mathscr{P}_j(\theta, h)} : \theta \in [-2\pi k_1,0] \cap \tfrac{2\pi}{4k_5}\Z\biggr),
  \quad\text{if $\sigma=1$,}\quad \\
  &\biggl( e^{ \mathfrak{U}^{o,j}_{T_+}\!\!(\theta)}\times \one{\mathscr{P}_j(\theta,h)} : \theta \in [-2\pi k_1,0] \cap \tfrac{2\pi}{4k_5}\Z \biggr),\quad \text{if $\sigma\neq 1$}
\end{aligned}
  \right.
\end{equation}
and where we linearly interpolate between these $\theta$ to give continuous functions on $[-2\pi k_1,0].$ We introduce $W_j^o$ as
\[
W_j^o = \max_\theta \log | D_j^o(\theta)|  - \sqrt{\frac{4}{\beta}}V_j',
\] 
which plays the same role as $W_j'$.
We also define the Markov kernel $\mathfrak{s}(\cdot,\cdot)=\mathfrak{s}_{k_1,k_5}(\cdot,\cdot)$
\begin{equation}\label{eq:decorationlaw}
  \mathfrak{s}(h,\cdot) \coloneqq
  \begin{cases}
    \Pr(  \exp( i\Im \mathfrak{L}_{T_-}^j(0))D_j^{o}(h) \in \cdot ),& \quad\text{if $\sigma=1$}, \\
    \Pr(  D_j^{o}(h) \in \cdot ),& \quad\text{otherwise},
  \end{cases}
\end{equation}
which is deterministic and depends neither on $j$ nor on $n$.

\begin{remark}
  By construction, the decoration takes as input at time $T_\dagger$ the process $\mathfrak{L}^{o,j}_{T_\dagger}$, which is flat. The time 
$T_\dagger$
is negative by a sublogarithmic time in $k_1$.  This we show to be interchangeable with a decoration that uses  a process that solves \eqref{eq:LU} with
 flat initial condition at time $T_-.$  Indeed, we could as well start the initial condition at $-\infty$, but we do not pursue this.
\end{remark}
\begin{remark}
 The effect of restricting the processes to a grid is 
 for technical convenience.
 In fact, we could now remove the linear interpolation at this stage of the proof if we so desire: see Remark \ref{rem:continuity}.  However, this is not necessary to complete the main results, and so we do not pursue it.
\end{remark}
We introduce a new point process in which the decoration has been replaced by this one, and unnecessary events have been dropped.
\begin{equation}\label{eq:Extre}
  \Extre_n
  =\Extre_n^{k_1,k_2,k_4,k_5,k_6}%, \dots, k_5}
  \coloneqq
  \sum_{j \in \mathcal{D}_{n/k_1}}
  \delta_{(\theta_j, V_j', D_j^o)}
  \one[{\mathscr{R}^2_j(n_1^+)}].
\end{equation}
Here we use the shorthand $D_j^o = D_j^o(V_j').$

\begin{proposition}\label{prop:milieu2}
  For any $k_7$ the restrictions of
  $\Extr_n$ and $\Extre_n$ to $\Gamma_{k_7}$ satisfy
  \[
    \limsup_{k_1,n \to \infty}
    \partial_2(
      \Extr_n \cap \Gamma_{k_7}
      ~\vert~ \filt_{k_2}
      ,
      \Extre_n \cap \Gamma_{k_7}
      ~\vert~ \filt_{k_2}
    )
    \Prto[k_6,k_5,k_4,k_2]
    0.
  \]
\end{proposition}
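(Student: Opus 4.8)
The plan is to realise $\Extr_n$ and $\Extre_n$ on the same probability space with matched index set $\mathcal{D}_{n/k_1}$ and matched first two coordinates $\theta_j, V_j'$ (these two coordinates are literally identical in the two point processes), so that the whole problem reduces to coupling, for each $j$, the decoration $D_j'e^{\sqrt{4/\beta}V_j'}$ of $\Extr_n$ to the decoration $D_j^o$ of $\Extre_n$. Since $\partial_2$ depends only on laws, I would first restrict everything to the good events $\mathscr{G}_n\cap\mathscr{G}^1_n$, which by Lemma \ref{lem:goodstuff1} and Proposition \ref{prop:uberdecoupling} have $\filt_{k_2}$-conditional probability tending to one in the stated order, and on which $\mathfrak{U}^j_t(\theta)=\varphi_{k(t)}(\theta_j+\tfrac\theta n)-\sqrt{8/\beta}m_n$ up to $n^{-\delta/4}$. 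The two point processes carry different indicator events, $\one[\mathscr{A}_j]$ with $\mathscr{A}_j=\mathscr{R}_j(n_1^+)\cap\mathscr{O}_j^+\cap\mathscr{O}_j^{\Psi}$ for $\Extr_n$ and $\one[\mathscr{R}^2_j(n_1^+)]$ for $\Extre_n$; since $\mathscr{A}_j\subseteq\mathscr{R}^2_j(n_1^+)$ by the nesting of the barriers $A^{p,\pm}$ in $p$, we have $\Extre_n\ge\Extr_n$ in point count on the good events, and the combined first--moment reductions of Propositions \ref{prop:interpolation0}, \ref{prop:trunkray1}--\ref{prop:trunkray3} and \ref{prop:interpolation} bound the expected number of $j$ for which $\Extre_n$ has a point in $\Gamma_{k_7}$ but one of $\mathscr{O}_j^+,\mathscr{O}_j^{\Psi},\mathscr{P}_j,\mathscr{N}_j$ fails by a quantity tending to $0$; hence the indicator mismatch costs nothing for the $\partial_2$-comparison on $\Gamma_{k_7}$, and we may freely assume $\mathscr{A}'_j$-type events when coupling the decorations.

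The substance is the decoration coupling. On $\mathscr{O}_j^+\cap\mathscr{O}_j^{\Psi}$ the profile $\theta\mapsto 2\log\Phi^*_{n_1^+}(\theta_j+\tfrac\theta n)-i(n_1^++1)\theta_j$ initialising $\mathfrak{L}^j$ at time $T_-$ oscillates over $\widehat I_j$ by at most $\sqrt{k_1/\widehat k_1}$ in real part and $k_1(\log k_1)^{50}/k_1^+$ in the Pr\"ufer phase, both subpolynomially small in $k_1$. I would then write the comparison in three pieces. (a) Replace this profile by the constant profile equal to its value at $\theta_j$; this is where one must propagate a tiny perturbation through the coupled diffusions \eqref{eq:LU}. (b) Using that the drift $i\theta e^t k_1^{-1}$ has size at most $2\pi k_1/\widehat k_1$ on the window $[T_-,T_\dagger]$, whose length $\log(k_1^+/\widehat k_1)\approx(\log k_1)^{29/30}$ is sublogarithmic in $k_1$, the constant-initialised diffusion \eqref{eq:LU} run from $T_-$ is interchangeable, up to $o_{k_1}(1)$, with the diffusion \eqref{eq:dSDE}, which keeps its coefficients constant in $\theta$ until $T_\dagger$ (this is the ``flat initial condition at $T_\dagger$'' interchange noted after \eqref{eq:decoration}), and correspondingly $\mathscr{P}_j'$ and $\mathscr{P}_j$ agree because on $[T_-,T_\dagger]$ the (nearly) flat process lies well inside the enlarged banana $\mathcal{A}^{\pm}_t$. (c) The recentering $e^{\sqrt{4/\beta}V_j'}$ moves the base level $\varphi_{n_1^+}(\theta_j)-\sqrt{8/\beta}m_n$ of $D_j'$ onto the base level $-\sqrt{8/\beta}\log k_1^+$ built into $\mathfrak{U}^{o,j}$, up to an error that vanishes as $n\to\infty$ (using $m_n-m_{n_1^+}\to\log k_1^+$) and then as $k_2\to\infty$ (the residual gap between $\varphi_{n_1^+}(\theta_j)$ and its Gaussian proxy $\mathfrak{Z}_{H_{n_1^+}}(\theta_j)$, already controlled in the proof of Proposition \ref{prop:milieu}). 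Assembling (a)--(c) one obtains, uniformly in $j$, a coupling of $D_j'e^{\sqrt{4/\beta}V_j'}$ with a copy of $D_j^o$ within sup--norm error tending to $0$; since $\Gamma_{k_7}$ contains only boundedly many points, $\partial_1$ and hence $\partial_2$ then tend to $0$ in the stated order, exactly as in Proposition \ref{prop:milieu}.

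The main obstacle is step (a): a naive pathwise Gr\"onwall estimate for \eqref{eq:LU} over the full decoration window $[T_-,T_+]$, whose length $\log k_1^+$ is logarithmic in $k_1$, amplifies the initial oscillation by a factor polynomial in $k_1$ and so swamps its $\exp(-\tfrac12(\log k_1)^{19/20})$-size. The resolution must exploit that the coefficients of \eqref{eq:LU} are $\theta$-independent on the sublogarithmic window $[T_-,T_\dagger]$, so the profile remains exactly flat there with no amplification, and that for $t>T_\dagger$ the phases $\Im\mathfrak{L}^j_t(\theta)$ at distinct $\theta$ separate at rate $\theta e^t/k_1$, so that after restriction to the barrier tube $\mathscr{P}_j$ the influence of the initial perturbation on the near--maximal part of the terminal profile is damped, not amplified. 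Quantifying this --- ideally as a bound, uniform in $n$ and vanishing as $k_1\to\infty$, on the $\filt_{n_1^+}$-conditional sup--norm Wasserstein (or total--variation) distance between the law of $D_j'e^{\sqrt{4/\beta}V_j'}$ and the law $\mathfrak{s}$ of $D_j^o$ --- is where I expect the bulk of the technical effort to go.
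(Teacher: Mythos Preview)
Your overall framework matches the paper's: restrict to $\mathscr{G}_n\cap\mathscr{G}_n^1$, handle the indicator mismatch between $\mathscr{A}_j$ and $\mathscr{R}^2_j(n_1^+)$ via thinnings controlled by Propositions \ref{prop:trunkray1}--\ref{prop:trunkray3}, and couple the decorations $D_j'e^{\sqrt{4/\beta}V_j'}$ and $D_j^o$ pathwise. You are also right that the Gr\"onwall obstacle in step (a) is the crux, and your intuition that ``restriction to the barrier tube'' should produce damping is exactly on target.

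The mechanism you offer for that damping, however, is wrong. The comparison is not between phases $\Im\mathfrak{L}^j_t(\theta)$ at \emph{distinct} $\theta$; you are comparing $\mathfrak{L}^j_t(\theta)$ and $\mathfrak{L}^{o,j}_t(\theta)$ at the \emph{same} $\theta$, differing only in their initial data and in the drift on $[T_-,T_\dagger]$. The paper resolves this via Proposition \ref{prop:coic}: the phase difference $\Delta_t=\Im(\mathfrak{L}^j_t(\theta)-\mathfrak{L}^{o,j}_t(\theta))-\Im\mathfrak{L}^j_{T_-}(\theta_j)$ satisfies, for $t\ge T_\dagger$ and $\sigma=1$,
\[
d\Delta_t = (1-\cos\Delta_t)\,d\mathfrak{B}_t - \sin\Delta_t\,d\mathfrak{U}_t,
\]
and under the Girsanov tilt that gives $\mathfrak{U}_t$ drift $\sqrt{8/\beta}$ (which is precisely what ``restricting to the barrier'' means probabilistically), the last term contributes drift $-\sqrt{8/\beta}\sin\Delta_t\approx-\sqrt{8/\beta}\,\Delta_t$, contracting $\Delta_t$ exponentially on $[T_\dagger,T_+]$. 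This contraction beats the initial perturbation of size $k_1(\log k_1)^{50}/k_1^+$ and simultaneously absorbs the $O(k_1/\widehat{k}_1)$ drift discrepancy from your step (b). In other words, your steps (a) and (b) cannot be treated separately --- both face the same amplification issue you identified for (a), and both are handled at once by the self-stabilisation in Proposition \ref{prop:coic}. A minor ordering point: the paper's first thinning (adding $\mathscr{R}^4_j(\widehat n_1)\cap\mathscr{O}_j$) already uses the decoration closeness $E_j^c$ to transfer $\mathscr{P}_j(\theta)$ to $\mathscr{P}_j'(\theta')$, so the coupling argument must logically precede the indicator reductions rather than follow them.
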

\begin{proof}
 Recall \eqref{eq:decorationD_j} and let $E_j$ be the event that
  \[
    \sup_{\theta \in [-2\pi k_1,0]} | D_j^oe^{-\sqrt{4/\beta}V_j'} - D_j'|
    \geq 3e^{-(\log k_1)^{1/100}}.
  \]
  This event is typical in that for $\theta$ for which both $D_j^oe^{-\sqrt{4/\beta}V_j'}$ and $D_j'$ are below $e^{-(\log k_1)^{1/100}}$ it holds trivially, whereas if a single one is above this level, we do control the difference (and moreover the difference is much smaller). By Proposition \ref{prop:coic}, by a simple union bound, on the event $\mathscr{O}_j^{+} \cap \mathscr{O}_j^{\Psi}$ 
  (recall \eqref{eq:upsilon}), for all $k_1$ sufficiently large
  \[
    \begin{aligned}
    &\Pr( E_j \cap \bigl\{\delta_{(\theta_j,V_j',D_j'e^{\sqrt{4/\beta}V_j'})}(\Gamma_{k_7}^+) > 0\bigr\}
    ~|\filt_{n_1^+})
    \leq Ck_5(k_1/k_1^+) e^{-\delta(\log k_1)^{19/20}}, \\
    &\Pr( E_j \cap \bigl\{\delta_{(\theta_j,V_j',D_j^o)}(\Gamma_{k_7}^+) > 0\bigr\}
    ~|\filt_{n_1^+})
    \leq Ck_5(k_1/k_1^+) e^{-\delta(\log k_1)^{19/20}}. \\
    \end{aligned}
  \]
  The probability that $E_j^c$ is sufficiently high that we can essentially assume it always holds. In other words, if we define the process
  \begin{equation}
    \Extre_n^*
      \coloneqq
      \sum_{j \in \mathcal{D}_{n/k_1}}
      \biggl(
      \delta_{(\theta_j, V_j', D_j^o)}
      +
      \delta_{(\theta_j, V_j', D_j'\exp({\sqrt{4/\beta}V_j'}))}
      \biggr)
      \one[{\mathscr{R}_j^2(n_1^+) \cap E_j}],
    \label{eq:Extrestar}
  \end{equation}
  then
  \[
    \Extre_n^*(\Gamma_{k_7}^+)
    \Prto[k_1,n]
    0.
  \]
  To see this just note using the estimate
  Lemma \ref{lem:bananadensity}
  and the control on $\varphi_{n_1^+}$ given by
  $\mathscr{R}_j^2(n_1^+)$,
  \[
    \begin{aligned}
    \Exp [\Extre_n^*(\Gamma_{k_7}^+) ~|~ \filt_{k_2}]
    \leq
    \frac{c(k_5)}{n_1}
    \sum_{j \in \mathcal{D}_{n/k_1}}
    &e^{\sqrt{\tfrac{\beta}{2}}(\varphi_{k_2}(\theta_j)) -\log(k_2)}
    \bigl(\sqrt{2}\log k_2 - \sqrt{\tfrac{\beta}{4}} \varphi_{k_2}(\theta)\bigr) \\
    &\times
    \exp\bigl(O( (\log k_1)^{\tfrac{9}{10}})-\delta (\log k_1)^{19/20}
    \bigr).
  \end{aligned}
  \]
  This tends to $0$ almost surely on taking $n \to \infty$ followed by $k_1 \to \infty.$

  As in the proof of Proposition \ref{prop:milieu}, we introduce comparison point processes,
  \begin{equation}\label{eq:Extrep}
    \begin{aligned}
      &\Extre_n'
      \coloneqq
      \sum_{j \in \mathcal{D}_{n/k_1}}
      \delta_{(\theta_j, V_j', D_j^o)}
      \one[{\mathscr{R}^4_j(\widehat{n}_1) \cap \mathscr{O}_j}]  \\
      &\Extre_n''
      \coloneqq
      \sum_{j \in \mathcal{D}_{n/k_1}}
      \delta_{(\theta_j, V_j', D_j^o)}
      \one[{\mathscr{R}^4_j(\widehat{n}_1) \cap \mathscr{O}_j} \cap \mathscr{O}_j^{\Psi}]  \\
    \end{aligned}
\end{equation}
We note that all of these processes are thinnings of one another in that
\[
  \Extre_n
  \geq
  \Extre_n'
  \geq
  \Extre_n''
\]
and so if we show that the number of points in $\Gamma_{k_7}$ that are thinned tends to $0$ in probability between each of these.
We justify each of these thinnings below.
\paragraph{The first thinning.}
Note that since $\Extre_n^*(\Gamma_{k_7})$ converges to $0$, we may assume that $E_j^c$ whenever $\delta_{(\theta_j, V_j', D_j^o)}(\Gamma_{k_7}) = 1$.
On the event $\mathscr{G}_n \cap \mathscr{G}_n^1 \cap E_j^c$,
if $\delta_{(\theta_j, V_j', D_j^o)}(\Gamma_{k_7}) = 1,$ then it follows that for some $\theta \in I_j$, $\mathscr{P}_j(\theta,V_j')$ held (because, if not, by definition $D_j^o$ is identically $0$, contradicting the $\Gamma_{k_7}$ condition). As $E_j^c$ holds, we also have that
$\mathscr{P}_j'(\theta)$ holds.
By Proposition \ref{prop:trunkray1} and the fact that $\mathscr{R}^2_j(n_1^+)$ holds, we therefore have that deterministically
$\mathscr{R}_j^4(\widehat{n}_1) \cap \mathscr{O}_j$ holds.

\paragraph{The second thinning.}
We have that the difference
\[
  \begin{aligned}
    &\Exp [(\Extre_n'-\Extre_n'')(\Gamma_{k_7})\one[{\mathscr{G}_n \cap \mathscr{G}_n^1}]~|~\filt_{\widehat{n}_1}] \\
  &\leq
  \sum_{j \in \mathcal{D}_{n/k_1}}
    \Exp[
      \one[{(\theta_j, V_j', D_j^o) \in \Gamma_{k_7}}]
      \one\{{\mathscr{R}}^4_{j}(\widehat{n}_1)\}
      \one\{\mathscr{O}_j\}
      (1-\one\{\mathscr{O}_j^\Psi\})
      \one\{\mathscr{G}_{n}\}
      ~\vert~
      \filt_{\widehat{n}_1}
    ].
  \end{aligned}
\]
We can dominate the event
\[
  \one[{(\theta_j, V_j', D_j^o) \in \Gamma_{k_7}}]
  \leq
  \sum_{\theta \in I_j}
  \one[{\mathscr{P}_j(\theta,V_j')}].
\]
As we work on ${\mathscr{G}_n \cap \mathscr{G}_n^1}$,
for one of these rays to succeed, we must have that a Gaussian of variance $\log(n/\widehat{n}_1)$ climbs distance
\[
  -\sqrt{\tfrac{\beta}{4}}\varphi_{\widehat{n}_1}(\theta_j) + \sqrt{2}m_{\widehat{n}_1}
  +\sqrt{2}\log(n/\widehat{n}_1)+O_{k_4}(1).
\]
Using that $\log (n/\widehat{n}_1)=\log \widehat{k}_1\gg \sqrt{\tfrac{\beta}{8}}\varphi_{\widehat{n}_1}(\theta_j) - m_{\widehat{n}_1}+O_{k_4}(1)$ on our event, the probability of the Gaussian
exceedance  is bounded above by 
\[ \exp\bigl({-\bigl( -\sqrt{\tfrac{\beta}{4}}\varphi_{\widehat{n}_1}(\theta_j)\! +\!\sqrt{2}m_{\widehat{n}_1}
 \! +\!\sqrt{2}\log(n/\widehat{n}_1)\!+\!O_{k_4}(1)\bigr)^2/2\log \widehat{ k}_1}\bigr)\!\leq \!\frac{C(k_4)}{\widehat{k}_1}\exp\bigl(
  \sqrt{\tfrac{\beta}{2}}\varphi_{\widehat{n}_1}(\theta_j) - 2m_{\widehat{n}_1}
  \bigr).
\]
There are $O_{k_5}(k_1)$ of these Gaussians, and so by a union bound
\[
  \Pr({(\theta_j, V_j', D_j^o) \in (\Gamma_{k_7})}~|~\filt_{\widehat{n}_1})
  \leq
  C(k_4)
  \frac{
	%(\log k_1)^{25}
  k_1}{\widehat{k}_1}
  \times
  \exp\bigl(
  \sqrt{\tfrac{\beta}{2}}\varphi_{\widehat{n}_1}(\theta_j) - 2m_{\widehat{n}_1}
  \bigr).
\]
The claim now follows from Proposition \ref{prop:trunkray2} (or \ref{prop:trunkray3} in the imaginary case).

\paragraph{The third thinning.}
If we introduce now
\[
  \Extre_n^\dagger
  \coloneqq
  \sum_{j \in \mathcal{D}_{n/k_1}}
  \delta_{(\theta_j, V_j', D_j^o)}
  \one[{\mathscr{A}_j}],
\]
then we have that on $\Gamma_{k_7}$
\[
  \Extre_n
  \geq
  \Extre_n^\dagger
  \geq
  \Extre_n'',
\]
and so
\[
  \limsup_{k_1,n \to \infty}
  \partial_2(
  \Extre_n \cap \Gamma_{k_7}
  ~\vert~ \filt_{k_2}
  ,
  \Extre_n^\dagger \cap \Gamma_{k_7}
  ~\vert~ \filt_{k_2}
  )
  \Prto[k_6,k_5,k_4,k_2]
  0.
\]

Finally by direct computation on the event $\Extre_n^*(\Gamma_{k_7})=0$
\[
  \partial_2(
  \Extre_n^\dagger
  \cap \Gamma_{k_7},
  \Extr_n
  \cap \Gamma_{k_7}
  )
  \leq
  3e^{-(\log k_1)^{1/100}},
\]
as this holds with probability tending to $1,$ the proof is complete.
\end{proof}

\subsection{Initial Poisson approximation}
\label{sec:poisson}

\noindent We are now in a position to introduce the first Poisson process approximation that we make.
Define the $(\filt_{n_1^+})$--measurable random measures on $(\T, \R, \mathcal{C}([-2\pi k_1,0], \mathbb{C}))$
\begin{equation}\label{eq:2ndintensity}
  \begin{aligned}
    \mathfrak{m}_j(d\theta,dv, df)
    &\coloneqq
    \begin{cases}
      \one[{\mathscr{R}^2_j({n}_1^+)}]
      \delta_{\theta_j, V_j'}(d\theta,dv)\mathfrak{s}_{k_1,k_5}\bigl( V_j',
      \exp\bigl({i\Psi_{n_1^+}(\theta_j)-i(n_1^++1)\theta_j}\bigr)
      df \bigr)%( V_j', \cdot )
      & \text{if $\sigma=1$}, \\
      \one[{\mathscr{R}^2_j({n}_1^+)}]
      \delta_{\theta_j, V_j'}(d\theta,dv)\mathfrak{s}_{k_1,k_5}\bigl( V_j', 
      df \bigr)%( V_j', \cdot )
      & \text{if $\sigma=i$}, \\
    \end{cases} \\
    \mathfrak{m}
    &\coloneqq
    \sum_{j \in {\mathcal{D}}_{n/k_1}}
    \mathfrak{m}_j.
  \end{aligned}
\end{equation}
This is the intensity of $\Extre_n$ conditioned on $\filt_{n_1^+}$.
We show that $\Extre_n$ can be compared to a Poisson process of the same intensity. Here and throughout, we write $\Pi(\Lambda)$ for a Poisson process of intensity $\Lambda$.

To execute the proof, we will need to use some second moment machinery for events depending on the behavior of pairs of rays between times $k_2$ and $n_1^+$.  To do this, we leverage an important technical tool from \cite{CMN}.
Specifically \cite{CMN} introduces another, auxiliary Gaussian process $k\mapsto {Z}_{2^k}^{k_2}(\theta)$, defined for each $k_2$, which is shown to be close to $\varphi$. The process ${Z}_{2^k}^{k_2}(\theta)$ is similar to the Gaussian random walk $G_k$, albeit with some changes to make the process simpler on short blocks.  We will not need the exact form of the process; it is given by \cite[(5.2)]{CMN}.
Define the event
\begin{equation}\label{eq:G3}
  \mathscr{Z}_{k_2}%_{{n}^+_1,n}
  =\left\{
    \forall~ \log_2 k_2 \leq k \leq \log_2 {n}^+_1, \theta \in [0,2\pi]
    :
    |{Z}_{2^k}^{k_2}(\theta) -
    2(\log \Phi^*_{2^k}(e^{i\theta})-\log \Phi^*_{k_2}(e^{i\theta}))|
    < \frac{1}{k_3}
  \right\}.
\end{equation}
Using results of \cite{CMN}, this event is typical:
\begin{lemma}
  For any $k_3$
  \[
    \liminf_{k_2,k_1, n \to \infty} \Pr ( \mathscr{Z}_{k_2} ~\vert~\filt_{k_2}) = 1.
  \]
  \label{lem:goodstuff}
\end{lemma}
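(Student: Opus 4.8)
The plan is to transfer the statement directly from the corresponding estimate in \cite{CMN}. The process $k \mapsto Z_{2^k}^{k_2}(\theta)$ is precisely the auxiliary Gaussian process constructed in \cite[(5.2)]{CMN} (with their starting index being $k_2$), and the content of Lemma \ref{lem:goodstuff} is that, with probability tending to one as the parameters are sent to infinity in the prescribed order, this Gaussian process uniformly approximates the increments $2(\log \Phi^*_{2^k}(\theta) - \log \Phi^*_{k_2}(\theta))$ of the reversed OPUC at dyadic times up to $\log_2 n_1^+$, to within tolerance $1/k_3$. First I would recall that in \cite{CMN} the construction of $Z_{2^k}^{k_2}$ comes with exactly such a comparison estimate, proved there by a chaining/block argument that controls the difference between the genuine Verblunsky-driven recursion and its Gaussianization on dyadic blocks; the relevant statement is \cite[Proposition 5.2]{CMN} (or the estimate immediately following the definition of $Z^{k_2}_{2^k}$), which asserts that the probability of the complement of an event of the form $\mathscr{Z}_{k_2}$ is $o(1)$ as $k_2 \to \infty$, uniformly in $n$.

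The only genuine adaptation is bookkeeping. In \cite{CMN} the comparison is carried all the way to time $\log_2 n$; here we only need it up to $\log_2 n_1^+$, and since $n_1^+ = \lfloor n/k_1^+ \rfloor \leq n$, the event $\mathscr{Z}_{k_2}$ in \eqref{eq:G3} is \emph{implied} by the corresponding full-length event of \cite{CMN} (restricting a uniform-in-$k$ bound to a subrange of $k$ only makes the event larger). Hence $\Pr(\mathscr{Z}_{k_2} \mid \filt_{k_2}) \geq \Pr(\widetilde{\mathscr{Z}}_{k_2} \mid \filt_{k_2})$ where $\widetilde{\mathscr{Z}}_{k_2}$ is the event of \cite{CMN}. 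One should double-check that the error tolerance $1/k_3$ used here is no smaller than what \cite{CMN} delivers for a fixed target precision; since their bound gives, for any fixed $\varepsilon>0$, probability tending to one of staying within $\varepsilon$, taking $\varepsilon = 1/k_3$ and then letting $k_2,k_1,n \to \infty$ in order (as in the statement) yields $\liminf \Pr(\widetilde{\mathscr{Z}}_{k_2} \mid \filt_{k_2}) = 1$, and the same for $\mathscr{Z}_{k_2}$.

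There is no real obstacle here beyond citing the right statement from \cite{CMN} with the right indexing; the main point to be careful about is that the conditioning is on $\filt_{k_2}$ and that the convergence in \cite{CMN} is in fact a conditional-on-$\filt_{k_2}$ statement (or can be upgraded to one, since the construction of $Z^{k_2}_{2^k}$ is adapted to the filtration generated by the Verblunsky coefficients after step $k_2$ together with $\filt_{k_2}$), and that the order of limits ($k_2$, then $k_1$, then $n$) matches. With these matched, the lemma follows immediately.

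\begin{proof}[Proof of Lemma \ref{lem:goodstuff}]
  This is a restatement, in the present notation, of the Gaussian approximation constructed in \cite[Section 5]{CMN}. The process $k \mapsto Z_{2^k}^{k_2}(\theta)$ is defined in \cite[(5.2)]{CMN}, and \cite{CMN} shows (see the estimate accompanying that definition) that for every fixed $\varepsilon > 0$,
  \[
    \liminf_{k_2,k_1,n \to \infty}
    \Pr\Bigl(
      \forall~ \log_2 k_2 \leq k \leq \log_2 n_1^+,\ \theta \in [0,2\pi]:
      \bigl| Z_{2^k}^{k_2}(\theta) - 2\bigl(\log \Phi^*_{2^k}(\theta) - \log \Phi^*_{k_2}(\theta)\bigr)\bigr| < \varepsilon
      ~\Big\vert~ \filt_{k_2}
    \Bigr) = 1,
  \]
  where we have used that $n_1^+ \leq n$ to restrict the range of $k$ in the event of \cite{CMN} (which only enlarges the event). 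Applying this with $\varepsilon = 1/k_3$ gives precisely $\liminf_{k_2,k_1,n\to\infty}\Pr(\mathscr{Z}_{k_2} \mid \filt_{k_2}) = 1$.
\end{proof}
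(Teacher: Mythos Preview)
Your proposal is correct and takes essentially the same approach as the paper: both simply invoke the Gaussian approximation from \cite[Proposition 5.2]{CMN}, which gives the uniform almost-sure bound on $|Z_{2^k}^{k_2}(\theta) - 2(\log\Phi^*_{2^k}(\theta) - \log\Phi^*_{k_2}(\theta))|$ as $k_2 \to \infty$, and then specialize to tolerance $\varepsilon = 1/k_3$. Your observation that restricting the range of $k$ to $\log_2 n_1^+ \leq \log_2 n$ only enlarges the event is a nice explicit point that the paper leaves implicit.
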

\begin{proof}
  See \cite[Proposition 5.2]{CMN} (note that the notation differs in that their $\left\{ Z_k \right\}$ is our $\left\{ G_k \right\}$ and their process $\{Z^{(2^r,\Delta)}_k\}$ is our ${Z}_{2^k}^r(\theta)$)
\[
  \lim_{k_2 \to \infty} \sup_{2^n \geq k_2} \sup_{\theta \in [0,2\pi]} |{Z}_{2^n}^{k_2}(\theta) - \log \Phi^*_{2^n}(e^{i\theta})-\log \Phi^*_{k_2}(e^{i\theta})| = 0 \quad \As
\]
\end{proof}

Using this process \cite{CMN} are able to get good two-ray estimates that mimic branching random walk behavior.  We need slightly different estimates, but at its heart they are small (albeit not easily verified) modifications to the estimates of \cite{CMN}.  We summarize the estimates we need in the following.  Define a function, with $x_j = \sqrt{2}H_{k_2}-\mathfrak{Z}_{H_{k_2}}(\theta_j)$,
\begin{equation}
  \begin{aligned}
  &\mathscr{Q}(\theta_j,x_j,z_j;\theta_\ell,x_\ell,z_\ell)
  =\mathscr{Q}^{(k_2,k_3,n_1^+)}(\theta_j,x_j,z_j;\theta_\ell,x_\ell,z_\ell) \\
  &=
  \Pr(
  \mathscr{Z}_{k_2}
  \cap {\mathscr{R}^2_j({n}_1^+)}
  \cap {\mathscr{R}^2_\ell({n}_1^+)}
  \cap \operatorname{EP}_j \cap \operatorname{EP}_l
  ~\vert~ \filt_{k_2})
  \end{aligned}
\end{equation}
where we define the event
\[
  \operatorname{EP}_j \coloneqq \{\sqrt{2}m_{n_1^+} -
  \mathfrak{Z}_{H_{n_1^+}}(\theta_j)
  \in [z_j,z_j+\tfrac{1}{\sqrt{k_3}}]\}.
\]
The second moment estimates we import in the following.  We show how these can be derived from modifications of \cite{CMN} in  Appendix \ref{section:lower_bound}.
\def\k{\mathtt k}
\begin{proposition}
  \label{prop:2ray}
  The two ray estimate satisfies the three following upper bounds.  Let $\k$ be the time of branching between $\theta_j$ and $\theta_\ell,$ which we can take to be $\k \coloneqq \lfloor-\log_2 |e^{i\theta_j}-e^{i\theta_\ell}|\rfloor.$  Let $\k_+=\k_+(k_2,n)$ be defined by
  \[
    \k_+ \coloneqq
    \begin{cases}
      \k + 3( e^{\sqrt{\log\log_2 k_2}} + 100(\log^2 \k))
      &\text{if } \k \leq (\log_2 n)/2, \\
      \k + 3(\log k_2/100) + 100(\log^2 (\log_2 n-\k))
      &\text{if } \k > (\log_2 n)/2.
    \end{cases}
  \]
  The following second moment estimates hold
  \begin{enumerate}
    \item (Time of branching $\k \leq (\log_2 k_2)/2$)  For any $k_3$ and all $k_2$ large enough if $\k \leq (\log_2 k_2)/2,$
      \[
	\mathscr{Q}(\theta_j,x_j,z_j;\theta_\ell,x_\ell,z_\ell)
	\leq
	(1+\eta_{k_2,k_3})
	\frac{2}{\pi}
	\frac{z_jx_j k_2}{n_1^+}
	\frac{z_\ell x_\ell k_2}{n_1^+}
        \exp\bigl(\sqrt{2}(z_j-x_j + z_\ell-x_\ell)\bigr)
	%%+\sqrt{2}\mathfrak{Z}_{H_{k_2}} - \log k_2 + o_{k_2}\bigr)
      \]
      where $\eta_{k_2,k_3} \to 0$ as $k_2 \to \infty$ followed by $k_3$.
      See Appendix Lemma \ref{lemma:ktoutDebut}.
    \item (Time of branching $\k \leq (\log_2 n)/2$) For all $k_2$ sufficiently large and all $n \gg k_2$,
      \begin{align*}
	&\mathscr{Q}(\theta_j,x_j,z_j;\theta_\ell,x_\ell,z_\ell)\\
	&\leq
	c(k_3)
	\begin{cases}
	\frac{z_jx_j k_2}{n_1^+}
	\frac{z_\ell x_\ell k_2}{n_1^+}
        \exp\bigl(\sqrt{2}(z_j-x_j + z_\ell-x_\ell)\bigr)
	&\text{if } \k_+ \leq \log_2 k_2, \\
	\frac{z_j z_\ell x_j k_2}{n_1^+}
	\frac{2^{\k}}{n_1^+}
        \exp\bigl(\sqrt{2}(z_j-x_j + z_\ell)\bigr)
	e^{-c(\k_+)^{1/10}}
	&\text{if } \k_+ \geq \log_2 k_2.
	\end{cases}
      \end{align*}
      See Appendix Lemma \ref{lemma:kDebut}.
    \item (Time of branching $(\log_2 n)/2 \leq \k \leq \log_2 n_1^+$) For all $k_2$ sufficiently large and all $n \gg k_2$,
      \[
	\mathscr{Q}(\theta_j,x_j,z_j;\theta_\ell,x_\ell,z_\ell)
	\leq c(k_3)
	\frac{ x_jz_j k_2}{n_1^+}
	\frac{2^{\k}}{n_1^+}
	e^{\sqrt{2}(z_j-x_j+z_\ell)}
	e^{-c(\log_2 n - \k_+)^{1/10}}
      \]
      See Appendix Lemma \ref{lemma:kMil}.
    \item (Time of branching $\k \geq \log_2 n_1^+$)
      While useful for the range of $\k$ described, this holds for all $\k$:
      \[
	\mathscr{Q}(\theta_j,x_j,z_j;\theta_\ell,x_\ell,z_\ell)
	\leq
	c(k_3)
	\frac{z_jx_j k_2}{n_1^+}
        \exp\bigl(\sqrt{2}(z_j-x_j)\bigr).
      \]
      This is a triviality which follows from Lemma \ref{lem:bananadensity} and bounding above the two-ray event by a one-ray event.
  \end{enumerate}
\end{proposition}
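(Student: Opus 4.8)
The plan is to reduce each of the four bounds to the corresponding two-ray estimates already established in \cite{CMN}, via the Gaussian comparison process $Z_{2^k}^{k_2}$ introduced there. The key structural observation is that on the event $\mathscr{Z}_{k_2}$ the increments $2(\log\Phi^*_{2^k}(\theta)-\log\Phi^*_{k_2}(\theta))$ are within $1/k_3$ of the Gaussian process $Z_{2^k}^{k_2}(\theta)$ uniformly in $k$ and $\theta$, so the ray events $\mathscr{R}^2_j(n_1^+)$, $\mathscr{R}^2_\ell(n_1^+)$, $\operatorname{EP}_j$, $\operatorname{EP}_\ell$ can be sandwiched between the same events defined for the Gaussian process with barriers shifted by $O(1/k_3)$, plus a small widening of the endpoint windows $[z_\bullet, z_\bullet + 1/\sqrt{k_3}]$. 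This converts each $\mathscr{Q}$ into a purely Gaussian two-point probability for a branching-random-walk-like field whose branching time is $\mathtt{k}=\lfloor -\log_2|e^{i\theta_j}-e^{i\theta_\ell}|\rfloor$ (the $\log$-correlation structure of $\log\Phi^*$ gives precisely this decorrelation scale; this is where $\mathtt{k}$ and its regularized version $\mathtt{k}_+$ enter, the extra buffer accounting for the fact that decorrelation is not exact but holds up to subleading corrections, cf.\ \cite[Section 5]{CMN}).

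I would then treat the four regimes separately. For (1), $\mathtt{k}\le (\log_2 k_2)/2$: the two rays are essentially independent after the root portion $[H_{k_2}, \ldots]$ down which they travel together, so the probability factorizes into two one-ray ballot-type estimates; each single ray contributes a factor $\tfrac{2}{\pi}\frac{z_\bullet x_\bullet k_2}{n_1^+}e^{\sqrt{2}(z_\bullet - x_\bullet)}$ by the sharp ballot theorem (the constant $2/\pi$ being the standard Brownian ballot constant), with the multiplicative $(1+\eta_{k_2,k_3})$ absorbing the $\mathscr{Z}_{k_2}$-approximation error and the window widening; this is exactly Appendix Lemma \ref{lemma:ktoutDebut}. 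For (2), $\mathtt{k}\le(\log_2 n)/2$: one splits at level $\mathtt{k}_+$. If $\mathtt{k}_+\le\log_2 k_2$ the rays are still decorrelated within the conditioned portion and one is back to a (nearly) independent product, giving the first alternative; if $\mathtt{k}_+\ge\log_2 k_2$ the rays share a common trajectory up to level $\mathtt{k}$, so one ray carries the full ballot cost $\frac{z_j x_j k_2}{n_1^+}e^{\sqrt2(z_j-x_j)}$ down to $n_1^+$, the shared descent contributes $\frac{2^{\mathtt k}}{n_1^+}e^{\sqrt2 z_\ell}$, and the entropic-repulsion cost of keeping both rays under the barrier between times $\log_2 k_2$ and $\mathtt{k}$ yields the $e^{-c(\mathtt{k}_+)^{1/10}}$ gain — the $1/10$ exponent coming from the barrier functions $A_t^{p,\pm}$ with their $t^{1/2\mp p/(2p+1)}$ corrections; this is Appendix Lemma \ref{lemma:kDebut}. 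Regime (3), $(\log_2 n)/2\le\mathtt{k}\le\log_2 n_1^+$, is symmetric: now the branching happens in the ``downsloping'' half of the barrier, and the gain factor becomes $e^{-c(\log_2 n-\mathtt{k}_+)^{1/10}}$ measuring the distance from the top; this is Appendix Lemma \ref{lemma:kMil}. Finally (4), $\mathtt{k}\ge\log_2 n_1^+$: here the two rays are indistinguishable at the resolution of the process up to time $n_1^+$, so $\mathscr{Q}$ is bounded above simply by the one-ray event $\mathscr{R}^2_j(n_1^+)\cap\operatorname{EP}_j$, whose probability is $O(\frac{z_j x_j k_2}{n_1^+}e^{\sqrt2(z_j-x_j)})$ by the density estimate Lemma \ref{lem:bananadensity}; this requires no second-moment work.

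The main obstacle is regime (2)–(3): establishing the sharp entropic-repulsion gain $e^{-c(\mathtt{k}_+)^{1/10}}$ (resp.\ $e^{-c(\log_2 n-\mathtt k_+)^{1/10}}$) for the \emph{pair} of rays constrained to the banana-shaped envelope $A_t^{2,\pm}$. The difficulty is that this is not a statement about a single random walk under a barrier but about two correlated walks that agree up to level $\mathtt k$ and then split; one must carefully control the cost of the constrained common trajectory and then, conditionally on its endpoint, the cost of the two independent continuations, and show that the naive ballot estimate at the split point loses the claimed power of $\mathtt k_+$ (the regularized branching time is used precisely so that there is enough room between $\mathtt k$ and $\mathtt k_+$ to run a clean ballot argument with polylogarithmic slack). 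This is where the bulk of the work in Appendix \ref{section:lower_bound} lies; the translation from $\log\Phi^*$ to the Gaussian surrogate via $\mathscr{Z}_{k_2}$ and Lemma \ref{lem:goodstuff} is routine by comparison, as is regime (1), which is a direct import of the \cite{CMN} machinery with cosmetic changes to the endpoint windows.
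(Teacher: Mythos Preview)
Your proposal is correct and follows essentially the same approach as the paper, which simply defers the four cases to Appendix Lemmas \ref{lemma:ktoutDebut}, \ref{lemma:kDebut}, \ref{lemma:kMil} (themselves adaptations of \cite{CMN}) and to Lemma \ref{lem:bananadensity} for the trivial case (4). One small caveat: regime (3) is not quite ``symmetric'' to (2) at the implementation level --- the paper needs an additional field modification (replacing the $\Delta^{(l)}$-block structure by a finer $\kappa^{(\mathtt k)}$-coupling, with attendant $\blacklozenge$ error terms) to force decorrelation on the shorter remaining timescale $\log_2 n - \mathtt k$ --- but you correctly flag (2)--(3) as where the real work in Appendix \ref{section:lower_bound} lies.
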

\begin{remark}\label{rk:holybarrier}
  All these upper bounds also hold if in $\mathscr{Q}$ we replace the ray event $\mathscr{R}_j^p(n_1^+)$ by one which only holds at times $H_{2^k}$ (instead of a continuous barrier), see \eqref{eq:RD}.  Likewise, if we further replace the process 
$\mathfrak{Z}_{H_{2^k}}$ by $Z_{2^k}^{k_2} + \mathfrak{Z}_{H_{k_2}}$, in the discrete barrier event,
 the bound holds.  In this case, we no longer need to work on the good event $\mathscr{Z}_{k_2}.$
\end{remark}

Using these second moment estimates, we turn to the first Poisson approximation.
\begin{proposition}\label{prop:ppp1}
  For any $k_4,\dots,k_7$ the restrictions of $\Extre_n$ and $\Pi(\mathfrak{m})$ to $\Gamma_{k_7}$ satisfy
  \[
    \limsup_{k_1,n \to \infty}
    \Exp
    [
      \partial_2(
      \Extre_n \cap \Gamma_{k_7}
      ~\vert~ \filt_{{n}_1^+}
      ,
      \Pi(\mathfrak{m}) \cap \Gamma_{k_7}
      ~\vert~ \filt_{{n}_1^+}
      )
      \one[{\mathscr{Z}_{k_2}}]
      ~\vert~
      \filt_{k_2}
      %\filt_{n_1^+}
    ]
    \Prto[k_2]
    0.
  \]
\end{proposition}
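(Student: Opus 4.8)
The plan is to run a conditional Chen--Stein (``two moments suffice'') Poisson approximation, carried out conditionally on $\filt_{n_1^+}$. Under that conditioning the positions $\theta_j$, the heights $V_j'$, and the barrier indicators $\one[{\mathscr{R}^2_j(n_1^+)}]$ are all deterministic, so that $\Extre_n\cap\Gamma_{k_7}=\sum_{j\in\mathcal{D}_{n/k_1}}X_j\,\delta_{(\theta_j,V_j',D_j^o)}$ with $X_j:=\one[{\mathscr{R}^2_j(n_1^+)}]\one[{(\theta_j,V_j',D_j^o)\in\Gamma_{k_7}}]$, and the only remaining randomness is carried by the decorations $D_j^o$. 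By the construction of the driving Brownian motions in Proposition \ref{prop:uberdecoupling} together with the SDE \eqref{eq:dSDE}, given $\filt_{n_1^+}$ each $D_j^o$ is a measurable functional of $\mathfrak{W}^j$ alone, and $\{\mathfrak{W}^i:d_\T(\theta_i,\theta_j)<3n^{-1+\delta}\}$ is conditionally independent of $\{\mathfrak{W}^i:d_\T(\theta_i,\theta_j)\ge 3n^{-1+\delta}\}$. Since moreover $\mathfrak{m}$ is by \eqref{eq:2ndintensity} precisely the $\filt_{n_1^+}$--conditional intensity of $\Extre_n$, no matching of intensities is required: applying the point--process Poisson approximation of Appendix \ref{sec:pointprocesses} (the Chen--Stein bound in the $\partial_2$ metric, in its self--marked form) with the dependency neighbourhoods $B_j:=\{\ell:d_\T(\theta_j,\theta_\ell)<3n^{-1+\delta}\}$ yields, conditionally on $\filt_{n_1^+}$,
\[
 \partial_2\!\bigl(\Extre_n\cap\Gamma_{k_7}\mid\filt_{n_1^+},\ \Pi(\mathfrak{m})\cap\Gamma_{k_7}\mid\filt_{n_1^+}\bigr)\ \le\ C\,(b_1+b_2),
\]
where, writing $p_j:=\Exp[X_j\mid\filt_{n_1^+}]$, one has $b_1:=\sum_j\sum_{\ell\in B_j}p_jp_\ell$ and $b_2:=\sum_j\sum_{\ell\in B_j\setminus\{j\}}\Exp[X_jX_\ell\mid\filt_{n_1^+}]$; the long--range dependence term vanishes by the conditional independence just recalled. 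It then remains to prove $\Exp[(b_1+b_2)\one[{\mathscr{Z}_{k_2}}]\mid\filt_{k_2}]\Prto[k_2]0$ in the iterated limit $n\to\infty$, then $k_1\to\infty$, then $k_2\to\infty$.

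For $b_1$ the argument is soft. On $\mathscr{R}^2_j(n_1^+)$, the ballot/banana density bound of Lemma \ref{lem:bananadensity}, together with the requirement that the decoration climb into $\Gamma_{k_7}$, furnishes a deterministic estimate $p_j\le C(k_4,k_7)\,\mathscr{D}_{k_2}(\theta_j)/n_1$ uniformly in $j$, so that $\Exp[\sum_j p_j\one[{\mathscr{Z}_{k_2}}]\mid\filt_{k_2}]$ is bounded by a constant multiple of $\int_0^{2\pi}\mathscr{D}_{k_2}(\theta)\,d\theta=\mathscr{B}_{k_2}$, which is a.s.\ finite. Since $|B_j|=O(n^{\delta}/k_1)$ and each $p_\ell=O(n^{-1}\operatorname{polylog}(k_1))$ uniformly, we obtain $\sup_j\sum_{\ell\in B_j}p_\ell=O(n^{-1+\delta}\operatorname{polylog}(k_1))\to 0$ deterministically as $n\to\infty$, whence $b_1\le(\sum_j p_j)\cdot\sup_j\sum_{\ell\in B_j}p_\ell$ and $\Exp[b_1\one[{\mathscr{Z}_{k_2}}]\mid\filt_{k_2}]\to 0$.

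The term $b_2$ is the heart of the matter, and the main obstacle is the bookkeeping: to dominate the decorated two--point quantity $\Exp[X_jX_\ell\mid\filt_{n_1^+}]$ (and, after taking $\Exp[\,\cdot\,\one[{\mathscr{Z}_{k_2}}]\mid\filt_{k_2}]$, its $\filt_{k_2}$--average) by the imported trunk two--ray estimates of Proposition \ref{prop:2ray} multiplied by a decoration ballot factor, uniformly over branching times. For $\ell\in B_j\setminus\{j\}$ the branching time $\mathtt{k}=\lfloor-\log_2|e^{i\theta_j}-e^{i\theta_\ell}|\rfloor$ obeys $\mathtt{k}\ge(1-\delta)\log_2 n-O(1)\ge(\log_2 n)/2$, so only cases (3) and (4) of Proposition \ref{prop:2ray} occur, according to whether $\mathtt{k}\le\log_2 n_1^+$ (the trunks to level $n_1^+$ are genuinely distinct and the decorations are conditionally independent given $\filt_{n_1^+}$) or $\mathtt{k}>\log_2 n_1^+$ (the trunks essentially coincide, while the two decorations share their driving path up to a decoration--time $t^\ast(j,\ell)$ and run independently afterwards). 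In either case I would dominate $\one[{(\theta_j,V_j',D_j^o)\in\Gamma_{k_7}}]\le\sum_{\theta\in I_j}\one[{\mathscr{P}_j(\theta)}]\one[{\text{the $\theta$--ray of $D_j^o$ together with the deficit reaches level $-k_7$}}]$, a one--decoration ray--to--the--barrier event (and likewise for $\ell$), then discretize $\mathfrak{Z}_{H_{k_2}}(\theta_j)$ and $\mathfrak{Z}_{H_{n_1^+}}(\theta_j)-\mathfrak{Z}_{H_{k_2}}(\theta_j)$ into the $k_3^{-1/2}$--bins of $\operatorname{EP}_j$ (and the same for $\ell$): the trunk part of each summand is then bounded by $\mathscr{Q}(\theta_j,x_j,z_j;\theta_\ell,x_\ell,z_\ell)$ and the decoration part by a ballot factor whose decay is governed by the decoration time remaining after $t^\ast(j,\ell)$. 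Inserting the bounds of Proposition \ref{prop:2ray}(3)--(4) — using for the ``trunks coincide'' regime the decoration barriers $\mathscr{P}_j$ and the discrete--barrier version of the two--ray estimate allowed by Remark \ref{rk:holybarrier} — the height sums $\sum_{z}z\,e^{\sqrt 2 z}\cdot(\text{Gaussian tail})=O(1)$ exactly as in Section \ref{sec:milieu}, while the remaining sum over $\mathtt{k}$ (where the count of $\ell$ at branching time $\mathtt{k}$ is $\asymp n/(k_1 2^{\mathtt{k}})$ and $\mathscr{Q}$ supplies a compensating factor $2^{\mathtt{k}}/n_1^+$) is summable thanks to the stretched--exponential gains $e^{-c(\log_2 n-\mathtt{k}_+)^{1/10}}$ of the two--ray bounds and of the decoration ballot factor. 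This yields $\Exp[b_2\one[{\mathscr{Z}_{k_2}}]\mid\filt_{k_2}]\le \varepsilon(k_1)\cdot C(k_3,k_4)\,\mathscr{B}_{k_2}$ with $\varepsilon(k_1)\to 0$; hence the bound vanishes in the stated iterated limit, using the a.s.\ finiteness of $\mathscr{B}_{k_2}$ from Theorem \ref{thm:Bj}. The hardest verification in this step is that the decoration factor attached to the second ray $\ell$ (for pairs whose trunks coincide and whose decorations separate only late) contributes enough decay to overcome both the neighbourhood size and the subpolynomial growth of the height windows; this is where the precise form of the decoration barriers $\mathscr{P}_j$ and the continuity estimates of Proposition \ref{prop:coic} are used.
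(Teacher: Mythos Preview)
Your overall architecture—conditional Chen--Stein on $\filt_{n_1^+}$ with dependency neighbourhoods $B_j$ coming from Proposition \ref{prop:uberdecoupling}—is exactly the paper's. But there is a genuine gap and a structural divergence.

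\textbf{The gap: $b_1$ is not soft.} You claim $p_\ell=O(n^{-1}\operatorname{polylog}(k_1))$ ``deterministically'', but $p_\ell=\Exp[X_\ell\mid\filt_{n_1^+}]$ is $\filt_{n_1^+}$--measurable and carries no $n$--dependence: by Corollary \ref{cor:qbnd}, on $\mathscr{R}^2_\ell(n_1^+)$ it is a function of $V_\ell'$ and $k_1$ only. It is the $\filt_{k_2}$--\emph{expectation} of $p_\ell$ that is $O(n_1^{-1})$, and that is where the trunk ballot factor from Lemma \ref{lem:bananadensity} enters—you have conflated the two. Since $|B_j|\asymp n^\delta/k_1\to\infty$ with $n$, your factorization $b_1\le(\sum_j p_j)\sup_j\sum_{\ell\in B_j}p_\ell$ does not give decay. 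The correct object is $\Exp[b_1\one[\mathscr{Z}_{k_2}]\mid\filt_{k_2}]=\sum_j\sum_{\ell\in B_j}\Exp[p_jp_\ell\one[\mathscr{Z}_{k_2}]\mid\filt_{k_2}]$, and since $p_j,p_\ell$ both carry the trunk barrier indicators $\one[\mathscr{R}^2_j(n_1^+)]$, $\one[\mathscr{R}^2_\ell(n_1^+)]$, each summand is a genuine two--ray quantity governed by Proposition \ref{prop:2ray}. The paper treats precisely this as its ``term (i)'' in \eqref{eq:ppp1a}, via the same two--ray machinery you reserved for $b_2$.

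\textbf{The divergence: which Poisson bound.} The inequality $\partial_2\le C(b_1+b_2)$ you invoke is not what Appendix \ref{sec:pointprocesses} supplies. Theorem \ref{thm:PP} (from Chen--Xia) gives instead
\[
\partial_2\le C\bigl(\Var(\Extre_n(\Gamma_{k_7}^+))+3\mathfrak m(\Gamma_{k_7}^+)\bigr)^{3/2}\sum_j\frac{\Exp(P_j)\Exp(S_j)+\Exp(S_jP_j)+(\Exp P_j)^2}{L_j^2},
\]
and controlling the denominators $L_j$ requires a \emph{lower} bound on the intensity $\mathfrak m$ on arcs away from $\theta_j$. The paper obtains this via the nonatomicity of $\mathscr D_\infty$ (Step~1) together with a second--moment concentration argument (Lemma \ref{zrh:2moment}), a substantial step you have not addressed. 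If you instead mean to use a total--variation bound $\partial_2\le d_{TV}\le b_1+b_2$ (legitimate since $\partial_1\le1$), you must cite or prove a marked--Bernoulli--to--Poisson result of that form; that \emph{would} be a genuine simplification over the paper's route, but it does not rescue the $b_1$ error above.
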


To give the proof, we need to develop some first and second moment estimates for the process near the end of the ray.
\subsection{Decoration Process estimates}
We turn to giving some conditional first and second moment estimates for rays.
\begin{lemma}[Coarse intensity bound]\label{cor:qbnd}
  The intensity measure $\mathfrak{m}_j$ satisfies
  \[
    \mathfrak{m}_j(\Gamma_{k_7})
    \leq
    \mathfrak{m}_j(\Gamma_{k_7}^+)
    =
    c(k_4)(1+o_{k_4})
    e^{T_-}
    \frac{V_j'}{(T_+-T_-)^{3/2}}
    e^{-\sqrt{2}V_j' -(V_j')^2/2(T_+-T_-)}.
  \]
 \end{lemma}
\begin{proof}
  To estimate $\mathfrak{m}_j$ we first observe that (recalling $W_j^o$ from \eqref{eq:decoration})
  \[
    \mathfrak{m}_j(\Gamma_{k_7}^+)
    =
    \Pr(  W_j^o \in [-k_7,\infty) ~|~ \filt_{n_1^+})
    \one[\mathscr{R}_j^2(n_1^+)].
  \]
  Define the first moment
  \[
    m_1
    =
    \Exp \biggl[
    \sum_{\substack{
      \theta \in [-2\pi k_1,0]\cap \tfrac{2\pi}{4k_5}\Z
    }}
    \one[{\bigl\{ \mathfrak{U}^{o}_{T_+}(\theta) - \sqrt{\tfrac{4}{\beta}}V_j' \in
    [-k_7,\infty) \bigr\} \cap \mathscr{P}_j(\theta)}]
    ~\big\vert~\filt_{n_1^+} \biggr].
  \]
  Hence we can bound by first moment on the event $\mathscr{R}_j^2(n_1^+)$,
  \[
    \mathfrak{m}_j(\Gamma_{k_7}^+)
    \leq
    m_1
    =
    \sum_{\substack{
      \theta \in [-2\pi k_1,0]\cap \tfrac{2\pi}{4k_5}\Z
    }}
    \Pr\biggl( \bigl\{ \mathfrak{U}^{o}_{T_+}(\theta) - \sqrt{\tfrac{4}{\beta}}V_j' \in
    [-k_7,\infty) \bigr\} \cap \mathscr{P}_j(\theta) ~\big\vert~\filt_{n_1^+} \biggr).
  \]
  We then apply Lemma \ref{lem:SDE1st2nd} to each of these summands.
  Hence we arrive at
    \[
      m_1=(1+o_{k_4})
    k_1k_5
    e^{-(T_+-T_-)}
    \frac{c(k_7,\infty) V_j' \sqrt{k_4}}{(T_+-T_-)^{3/2}}
    e^{-\sqrt{2}V_j' -(V_j')^2/2(T_+-T_-)},
  \]
  which simplifies to the claimed result.
\end{proof}

\begin{proof}[Proof of Proposition \ref{prop:ppp1}]
  We shall use the Poisson process machinery stated in Theorem \ref{thm:PP}, which we shall translate into this context.  Due to the nature of the Poisson approximation, we shall use the nonatomicity of the derivative martingale proved in Theorem \ref{thm:Bj}.  We let $\eta > 0$ be a parameter which shall be taken to $0$ after $k_2$ to establish the convergence in probability.
  \paragraph{Step 1: restricting the measures.}
  As the measure $\mathscr{D}_\infty$ is finite and nonatomic, we have that there is $m \in \N$ sufficiently large that
  \begin{equation}
\label{eq-totalmassininterval}
    \Pr( \max_{j=1,\dots,m} \mathscr{D}_\infty([ \tfrac{2\pi (j-1)}{m},\tfrac{2\pi (j+1)}{m}]) > \eta) \leq \eta,
  \end{equation}
  where for the case $j=m$, we consider the arc as part of the torus.
  Hence for all $k_2$ sufficiently large, we have
  \[
    \Pr( \max_{j=1,\dots,m} \mathscr{D}_{k_2}([ \tfrac{2\pi (j-1)}{m},\tfrac{2\pi (j+1)}{m}]) > \eta) \leq 2\eta.
  \]
 Let  $\mathcal{E}$  denote the event  that there exists a $j=1,\dots,m$ such that
  \[
    \mathscr{D}_{\infty}([ \tfrac{2\pi (j-1)}{m},\tfrac{2\pi (j+1)}{m}]^c)
    \leq \eta.
  \]
 From \eqref{eq-totalmassininterval} we have that
  \[
    \Pr(\mathcal{E} \cap \{ \mathscr{D}_{\infty}([0,2\pi])> 2\eta \})
    \leq \eta.
  \]
  As we can relate $\mathscr{D}_{\infty}([0,2\pi])$ to intensity of both point processes, this in effect is saying that neither point process has any points.
  Indeed, for either process $\Xi= \Extre_n$ or $\Pi(\mathfrak{m})$
  \[
    \partial_2( \Xi \cap \Gamma_{k_7}, 0)
    \leq
    \Pr( \Xi(\Gamma_{k_7}) \geq 1 ~|~ \filt_{n_1^+})
    \leq
    \mathfrak{m}(\Gamma_{k_7}^+).
  \]
  Using Lemmas \ref{lem:bananadensity} and
  \ref{cor:qbnd},
  \[
    \Exp (\mathfrak{m}(\Gamma_{k_7}^+~|~\filt_{k_2}))
    \leq
    \frac{c(k_4)}{n_1}
    \sum_{j \in \mathcal{D}_{n/k_1}}
    e^{\sqrt{\tfrac{\beta}{2}}(\varphi_{k_2}(\theta_j)) -\log(k_2)}
    \bigl(\sqrt{2}\log k_2 - \sqrt{\tfrac{\beta}{4}} \varphi_{k_2}(\theta_j)\bigr)_+
  \]
  On taking $n \to \infty$, we conclude
  \[
    \limsup_{n \to \infty}
    \Exp (\mathfrak{m}(\Gamma_{k_7}^+~|~\filt_{k_2}))
    \leq c(k_4) \mathscr{D}_{k_2}([0,2\pi]).
  \]
  And hence 
we conclude
  \[
    \limsup_{k_2,k_1,n \to \infty}
    \Pr( \mathcal{E} \cap  \{\partial_2( \Extre_n \cap \Gamma_{k_7}, \Pi(\mathfrak{m})) > 4c(k_4)\eta\} )
    \leq \eta.
  \]
  Hence it suffices to work on the event $\mathcal{E}^c$.

  \paragraph{Step 2: setting up the Poisson approximation.}

  We let $\delta > 0$ be the same constant as in \eqref{eq:g1} (so that Proposition \ref{prop:uberdecoupling} applies), and define for $j \in \mathcal{D}_{n/k_1}$
\[
  \mathcal{B}_j = \left\{ k  \in \mathcal{D}_{n/k_1} : d_\T(\theta_j,\theta_k) \leq n^{-1+8\delta} \right\},
  \quad{\text{and}}\quad
  B_j = \left\{ k  \in \mathcal{D}_{n/k_1} : d_\T(\theta_j,\theta_k) \leq 4n^{-1+8\delta} \right\},
\]
with $d_\T$ the distance in the quotient space $\R/(2\pi \Z).$  Recall \eqref{eq:localmax1} and \eqref{eq:decoration}.
By construction, for all $j \in \mathcal{D}_{n/k_1}$,
\[
  \{(V_k',D_k^o) : k \in \mathcal{B}_j\} \quad \text{is $\bigl(\filt_{{n}_1^+}\bigr)$--conditional independent of}\quad 
\{(V_k',D_k^o) : k {\not\in} B_j\}.
\]
Define for any $j \in \mathcal{D}_{n/k_1},$
\[
  \Xi_j \coloneqq
  \delta_{(\theta_j,V_j', D_j^o)}
  \times
  \one[ \mathscr{R}_j^2(n_1^+) ],
  \,
  P_j \coloneqq \Xi_j(\Gamma_{k_7}^+),
  \,
  S_j \coloneqq \sum_{i \in \mathcal{B}_j \setminus \{j\}} P_i,
  \,
  \text{and}
  \quad
  L_j \coloneqq  \sum_{i \in \mathcal{D}_{n/k_1}\setminus B_j}
  \mathfrak{m}_i(\Gamma_{k_7}^+).
\]
We note that $\Exp( P_i ~|~ \filt_{n_1^+}) = \mathfrak{m}_i(\Gamma_{k_7}^+).$
Theorem \ref{thm:PP} shows that there is a numerical constant $C>0$ so that with $\Pi=\Pi(\mathfrak{q}),$
\begin{equation}\label{eq:ppp1}
  \begin{aligned}
&  \partial_2(
      \Extr_n \cap \Gamma_{k_7}
      ~\vert~ \filt_{{n}_1^+},
      \Pi(\mathfrak{m}) \cap \Gamma_{k_7}
      ~\vert~ \filt_{{n}_1^+}
      )
  \\
    \leq & \partial_2(
      \Extr_n \cap \Gamma_{k_7}^+
      ~\vert~ \filt_{{n}_1^+},
      \Pi(\mathfrak{m}) \cap \Gamma_{k_7}^+
      ~\vert~ \filt_{{n}_1^+}
      )\\
 \leq& C
  [\Var(\Extre_n ( \Gamma_{k_7}^+)
%\Xi(\Gamma_{k_7})
~|~ \filt_{n_1^+}) + 3\mathfrak{m}(\Gamma_{k_7}^+)]^{3/2} \\
  &\times \sum_{j \in \mathcal{D}_{n/k_1}}
  \biggl( \frac{ \Exp(P_j ~|~ \filt_{n_1^+})\Exp(S_j ~|~ \filt_{n_1^+}) + \Exp( S_j P_j ~|~ \filt_{n_1^+}) + (\mathfrak{m}_j(\Gamma_{k_7}^+))^2}{ L_j^2 }\biggr).
\end{aligned}
\end{equation}
We also note that the left hand side of \eqref{eq:ppp1} is bounded by $1,$ and hence it suffices to bound the right hand side of \eqref{eq:ppp1} on any $\bigl(\filt_{{n}_1^+}\bigr)$--measurable event with probability tending to $1$.  Nonetheless, as input, we need estimates for the $\bigl(\filt_{n_1^+}\bigr)$--conditional expectation of $P_j$ and for pairs $P_j$ and $P_i$ where $i \in \mathcal{B}_j.$  We note that
$\Exp[ P_j ~|~\filt_{n_1^+}] = \mathfrak{m}_j( \Gamma_{k_7}^+).$

To complete the proof of the proposition, we claim that there are events $\mathcal{F}=\mathcal{F}(n,(k_j))$ so that on $\mathcal{F} \cap \mathcal{E}^c$ the following hold
\begin{enumerate}
  \item $\mathfrak{m}(\Gamma_{k_7}^+) = O_{k_2}(1)$,
  \item $\Var(\Extr_n ( \Gamma_{k_7}^+)~|~ \filt_{n_1^+}) = O_{k_2}(1)$,
  \item $L_j^2 \geq c(k_4)\eta^2$,
\end{enumerate}
and so that $\one[\mathcal{F}] \Prto[k_2,k_1,n] 1$.

\paragraph{Step 3: completing the proof on the good event $\mathcal{F}$.}
Assuming the claim holds, we have reduced the problem to showing that
on the event $\mathcal{F}$,
\begin{equation}\label{eq:ppp1a}
  \begin{aligned}
  \sum_{j \in \mathcal{D}_{n/k_1}}
  \biggl(
  \underbracket{\mathfrak{m}_j(\Gamma_{k_7}^+)\Exp(S_j ~|~ \filt_{n_1^+})}_{(i)}
  +
  \underbracket{\Exp( S_j P_j ~|~ \filt_{n_1^+})}_{(ii)}
  +\underbracket{\mathfrak{m}_j(\Gamma_{k_7}^+)^2}_{(iii)}
  \biggr)
  \Prto[k_2,k_1,n] 0.
\end{aligned}
\end{equation}
For the term $(iii),$ we note that $\mathfrak{m}_j(\Gamma_{k_7}^+),$ which goes to $0$ uniformly in $j$ faster than any power of $\log k_1$ (from Lemma \ref{cor:qbnd}).  Hence by the boundededness of $\mathfrak{m}(\Gamma_{k_7}^+)$ on $\mathcal{F}$, this term tends to $0.$

For term $(i),$ we will use  the second moment machinery. To prepare for it, we use Lemma \ref{cor:qbnd} and, bounding the sum on $z_j,z_\ell$ by an integral, which holds up to a constant depending on $k_3$, arrive at
\begin{equation}\label{zrh:3}
  \Exp[~ (i) ~|~ \filt_{k_2}]
  \leq
  %\frac{c(k_3)}{n_1}
  c(k_3)
  \bigl(\frac{k_1}{k_1^+}\bigr)^2
  \sum_{\ell \in \mathcal{B}_j}
  \int_{z_j,z_\ell}
  \mathscr{Q}(\theta_j,x_j,z_j;\theta_\ell,x_\ell,z_\ell)
  \frac{z_j z_\ell}{(\log k_1)^3}
  e^{-\sqrt{2}(z_j+z_\ell)}% - (z_j^2+z_\ell^2)/2(\log k_1)}
  dz_jdz_\ell.
\end{equation}
We divide the angles $\theta_\ell$ according to whether or not
$(\log_2 n_1^+-\k_+) \geq q$ or $(\log_2 n_1^+-\k_+) \leq q$ for a $q$ chosen below as $(\log k_1)^{1/100}$.
For the former case we use the third case of Proposition \ref{prop:2ray}.
For the latter case, we just bound $\mathscr{Q}(\theta_j,x_j,z_j;\theta_\ell,x_\ell,z_\ell)$ by the probability of the $\theta_j$--ray event.
We also note that the number of $\theta_j$ in this latter case is $(k_1^+/k_1)e^{q + 100(\log\log k_1)^2}$.
Applying the bound from this case, we arrive at
\begin{equation}\label{eq:ppp_i}
  \begin{aligned}
\Exp[~ (i) ~|~ \filt_{k_2}]
  &\leq
  \frac{c(k_2)}{n_1}
  e^{\sqrt{\tfrac{\beta}{2}}(\varphi_{k_2}(\theta_j)) -\log(k_2)}
  \bigl(\sqrt{2}\log k_2 - \sqrt{\tfrac{\beta}{4}} \varphi_{k_2}(\theta)\bigr) \\
  &\times
  (\log k_1)^3
  \biggl\{
  \sum_{\varkappa = q}^{\delta \log n}
  \bigl\{
  \exp\bigl(-c \varkappa^{1/10}\bigr)
\bigr\}
  +
  \exp\bigl(q+ C(\log\log k_1)^2-c (\log k_1)^{1/10}\bigr)
  \biggr\}.
\end{aligned}
\end{equation}
Note that the stretched exponential gain in the second term is simply from the entropic envelope.
As the event we consider restricts the location of $\varphi_{k_2}(\theta_j)$ to be positive, we may use that
\[
  \sum_{j \in \mathcal{D}_{n/k_1}}
  \frac{1}{n_1}
  e^{\sqrt{\tfrac{\beta}{2}}(\varphi_{k_2}(\theta_j)) -\log(k_2)}
  \bigl(\sqrt{2}\log k_2 - \sqrt{\tfrac{\beta}{4}} \varphi_{k_2}(\theta)\bigr)_+
\]
converges on taking $n \to \infty$.  Taking $q = (\log k_1)^{1/100}$, the sum is on the order of $e^{-\Omega( (\log k_1)^{1/1000})}$.
Hence on taking $k_1\to\infty$,
\[
\sum_{j \in \mathcal{D}_{n/k_1}}
\Exp[~ (i) ~|~ \filt_{k_2}]
\Prto[k_1,n] 0.
\]

The analysis for
 $\Exp[~ (ii) ~|~ \filt_{k_2}]$ is similar, but with one important modification.
 For ``bushes''
 $\theta_j$ and $\theta_\ell$ that branch at or before $n_1^+$ (i.e. $\k \leq \log_2 n_1^+$) we claim that the bound in \eqref{zrh:3} holds as well:
 we bound each indicator $\one[{W_j^o \in [-k_7,\infty)}]$ above by a sum of indicators of ray event and then use Lemma \ref{lem:SDE1st2nd}.
 For $\theta_\ell$ which are close to $\theta_j$, we use the same strategy, although we instead lose the precise dependence on $z_\ell$ and instead have just the factor $e^{-\Omega( \log k_1)^{1/10}}.$  For $\ell$ so that $\k \geq \log_2 n_1^+$, this produces the bound:
 \[
  \sum_{j \in \mathcal{D}_{n/k_1}}
   c(k_3)
  \bigl(\frac{k_1}{k_1^+}\bigr)^2
  \sum_{\ell}
  \mathscr{Q}(\theta_j,x_j,z_j;\theta_\ell,x_\ell,z_\ell)
  \frac{z_j}
  {(\log k_1)^{3/2}}
  e^{-\sqrt{2}(z_j) -c(\log k_1)^{1/10}}.
 \]
Using the final case of Proposition \ref{prop:2ray} gives an estimate which is $e^{-\Omega( (\log k_1)^{1/10})}$.
\paragraph{Step 4: Proof of claim.}

\emph{Point 1.} We have that
\[
  \mathfrak{m}(\Gamma_{k_7}^+)
  =
  \sum_{j \in \mathcal{D}_{n/k_1}}
  \mathfrak{m}_j(\Gamma_{k_7}^+),
\]
and so using Lemmas \ref{cor:qbnd}
and  \ref{lem:bananadensity},
\[
  \Exp[\mathfrak{m}(\Gamma_{k_7}^+)~|~\filt_{k_2}]
  \leq
  \frac{c(k_4)}{n_1}
  \sum_{j \in \mathcal{D}_{n/k_1}}
  e^{\sqrt{\tfrac{\beta}{2}}(\varphi_{k_2}(\theta_j)) -\log(k_2)}
  \bigl(\sqrt{2}\log k_2 - \sqrt{\tfrac{\beta}{4}} \varphi_{k_2}(\theta)\bigr)_+.
\]
This converges on taking $n\to \infty$ almost surely to
\begin{equation}\label{eq:mupper}
  \limsup_{n \to \infty}
  \Exp[\mathfrak{m}(\Gamma_{k_7}^+)~|~\filt_{k_2}]
  \leq
  c(k_4) \int_0^{2\pi} \mathscr{D}_{k_2}(\theta)\,d\theta,
\end{equation}
and so remains bounded almost surely by Theorem \ref{thm:Bj}.

\emph{Point 2.} It suffices to bound the conditional second moment, which is to say
\[
  \Var(\Extre_n ( \Gamma_{k_7}^+)~|~ \filt_{n_1^+})
  \leq
  \Exp[
    \Extre_n^2 ( \Gamma_{k_7}^+)~|~ \filt_{n_1^+}
  ].
\]
We then have
\[
  \Exp[
    \Extre_n^2 ( \Gamma_{k_7}^+)~|~ \filt_{n_1^+}
  ]
  \leq
  \sum_{j \in \mathcal{D}_{n/k_1}}
  \mathfrak{m}_j( \Gamma_{k_7}^+)
  +
  \sum_{j \in \mathcal{D}_{n/k_1}}
  \Exp[ P_jS_j ~|~ \filt_{n_1^+}]
  +
  \sum_{j \in \mathcal{D}_{n/k_1}}
  \sum_{\ell \not\in \mathcal{B}_j}
  \mathfrak{m}_j( \Gamma_{k_7}^+)
  \mathfrak{m}_\ell( \Gamma_{k_7}^+)
\]
The first term is nothing but $\mathfrak{m}(\Gamma_{k_7}^+)$ which we have already controlled. The second term we controlled earlier in part $(ii)$ above.
 The third term we estimate in the same fashion as $(i)$ in \eqref{eq:ppp_i}.

\emph{Point 3.} We let $\text{Arc}_{i}$ for $i=1,\dots,m$ be
a Lipschitz function of the torus $\R / 2\pi\Z$ which is $1$ on the complement of $[\tfrac{i-1}{m}2\pi,\tfrac{i+1}{m}2\pi]^c$ and $0$ on $[\tfrac{i-0.5}{m}2\pi,\tfrac{i+0.5}{m}2\pi]$.
Extend $\text{Arc}_{i}$ to a function of $\Gamma_{k_7}^+$ by setting $\text{Arc}_{i}(x,y,z)=\text{Arc}_{i}(x).$
%\[
%  \text{Arc}_{i} = \bigl([\tfrac{i-1}{m}2\pi,\tfrac{i+1}{m}2\pi]^c \times \R_+ \times \mathcal{C}([-2k_1,0],\C)\bigr) \cap \Gamma_{k_7}^+,%\times\bigr),
%\]
%where for $i=m$ we again represent the interval as though it were part of the torus.
Then for each $j \in \mathcal{D}_{n/k_1}$, for all $n$ sufficiently large there is an $i\in 1,\dots,m$ so that
\(
  L_j \geq \mathfrak{m}( \text{Arc}_{i} ).
\)
Hence it suffices to show that each of these $\mathfrak{m}( \text{Arc}_{i} )$ satisfies the claimed bound.

Now we claim that in fact
\(
  \mathfrak{m}( \text{Arc}_{i} )
\)
concentrates around its $\filt_{k_2}$--conditional mean and that its conditional mean has the claimed lower bound.  In fact, we shall need this concentration argument at a later point as well, and so we formulate a general statement here.
\begin{lemma}
  Let $f$ be a non-negative, Lipschitz function from the torus $\R / (2\pi \Z) \to \R$ bounded above by $1$.  Extend it to a function of $\R / (2\pi \Z) \times \R_+ \times \mathcal{C}([-2\pi k_1,0],\C) \cap \Gamma_{k_7}^+$ by setting $f(x,y,z) = f(x)$.
  Then there is positive constant $\mathcal{H}=\mathcal{H}(k_1,k_4,k_5,k_7)$ which  is bounded   (above and away from $0$, for $k_7$ large), uniformly
  in $k_1$,  so that
  \[
    \mathfrak{m}(f) - \mathscr{D}_{k_2}(f)\times \mathcal{H} \Prto[{k_3,k_2,k_1,n}]
    0.
  \]
  The constant $\mathcal{H}$ is given in \eqref{zrh:H}.
  \label{zrh:2moment}
\end{lemma}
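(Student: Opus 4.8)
The plan is to establish the convergence via a conditional second‑moment argument, working on the good events already set up in the preceding sections. First I would fix the ingredients: the intensity $\mathfrak{m} = \sum_{j \in \mathcal{D}_{n/k_1}} \mathfrak{m}_j$ is $(\filt_{n_1^+})$‑measurable, and each $\mathfrak{m}_j$ is, on the event $\mathscr{R}^2_j(n_1^+)$, given explicitly by Corollary \ref{cor:qbnd} as $c(k_4) e^{T_-} \frac{V_j'}{(T_+-T_-)^{3/2}} e^{-\sqrt{2}V_j' -(V_j')^2/2(T_+-T_-)}$, where $V_j' = \sqrt{2}m_{n_1^+} - \mathfrak{Z}_{H_{n_1^+}}(\theta_j)$. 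Summing the $(\filt_{k_2})$‑conditional expectation over $j$ and passing $n\to\infty$, exactly as in Point 1 of the proof of Proposition \ref{prop:ppp1}, the Gaussian density integrates against $f$ to produce $\Exp[\mathfrak{m}(f)\mid\filt_{k_2}] \to \mathcal{H}\cdot \mathscr{D}_{k_2}(f)$, where $\mathcal{H}$ is the $k_1$‑dependent constant arising from the banana‑barrier normalization in Lemma \ref{lem:bananadensity}; this is the quantity recorded in \eqref{zrh:H}, and its boundedness in $k_1$ follows from tightness (as in Theorem \ref{thm:Bj} and the control $\eqref{eq:Btight}$). Thus the conditional mean has the claimed form.

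Second, I would control the conditional variance. Write $\mathfrak{m}(f) = \sum_j X_j$ with $X_j = \mathfrak{m}_j(f)$, and compute $\Var(\mathfrak{m}(f)\mid\filt_{k_2}) \leq \sum_{j} \Exp[X_j^2\mid\filt_{k_2}] + \sum_{j\neq\ell} \bigl(\Exp[X_jX_\ell\mid\filt_{k_2}] - \Exp[X_j\mid\filt_{k_2}]\Exp[X_\ell\mid\filt_{k_2}]\bigr)$. The diagonal terms $X_j^2$ are each of order $e^{-\Omega((\log k_1)^{1/10})}$ smaller than $X_j$ (since $\mathfrak{m}_j(\Gamma_{k_7}^+)$ is superpolynomially small in $k_1$, by Corollary \ref{cor:qbnd}), so $\sum_j \Exp[X_j^2\mid\filt_{k_2}] \to 0$. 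For the off‑diagonal terms, I would split according to the branching time $\k$ between $\theta_j$ and $\theta_\ell$ and invoke the two‑ray estimates of Proposition \ref{prop:2ray}: when $\k$ is small, the pair‑intensity factorizes up to $(1+\eta_{k_2,k_3})$, so these contributions cancel against the product of means up to an error driven to $0$ by $k_3\to\infty$; when $\k$ is large, the stretched‑exponential gains $e^{-c(\k_+)^{1/10}}$ or $e^{-c(\log_2 n_1^+ - \k_+)^{1/10}}$ from the barrier/envelope, together with the $e^{-\sqrt{2}V'_\ell}$ decay, make the sum over $\ell$ summable and vanishing as $k_1\to\infty$ — this is the same bookkeeping already performed for term $(i)$ in \eqref{eq:ppp_i} and for Point 2 of the proof of Proposition \ref{prop:ppp1}. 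Combining, $\Var(\mathfrak{m}(f)\mid\filt_{k_2}) \Prto[k_3,k_1,n] 0$, and with the mean computation, Chebyshev gives $\mathfrak{m}(f) - \mathcal{H}\,\mathscr{D}_{k_2}(f) \Prto[k_3,k_2,k_1,n] 0$, where the final $k_2\to\infty$ limit is harmless since $\mathcal{H}$ does not depend on $k_2$ and $\mathscr{D}_{k_2}$ converges a.s.\ by Theorem \ref{thm:Bj}.

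The main obstacle is the off‑diagonal covariance estimate, specifically extracting genuine cancellation (not just a bound) for the near‑diagonal pairs with $\k$ moderate: one needs the factorization in Proposition \ref{prop:2ray}(1)–(2) to be sharp enough that $\Exp[X_jX_\ell\mid\filt_{k_2}]$ matches $\Exp[X_j\mid\filt_{k_2}]\Exp[X_\ell\mid\filt_{k_2}]$ to leading order, with the discrepancy controlled by $\eta_{k_2,k_3}\to 0$; the remaining pairs must then be handled purely by the entropic‑envelope gains. A secondary technical point is verifying that all these estimates hold on a $(\filt_{n_1^+})$‑measurable event $\mathcal{F}$ of probability $\to 1$ (so that the boundedness of $\mathcal{H}$ and of $\mathfrak{m}(\Gamma_{k_7}^+)$ can be used), which is already provided by the good events $\mathscr{G}_n, \mathscr{G}_n^1, \mathscr{Z}_{k_2}$ and Lemmas \ref{lem:goodstuff1}, \ref{lem:goodstuff} together with Proposition \ref{prop:uberdecoupling}.
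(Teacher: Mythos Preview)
Your overall strategy---conditional second-moment method, splitting pairs by branching time $\k$ and invoking Proposition \ref{prop:2ray}---is the same as the paper's. However, you gloss over two concrete technical ingredients that the paper needs to make the argument close.

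First, the sharp first moment does not fall out of Lemma \ref{lem:bananadensity} alone. The two-ray estimates in Proposition \ref{prop:2ray} are stated for \emph{binned} endpoints (intervals of length $1/\sqrt{k_3}$), so to match the FR second moment against the square of the first moment you must convert $\max_{u\leq 1/\sqrt{k_3}} p(z+u)$ to the integral $\int p(z)\,dz$. The paper does this via the regularity estimate Lemma \ref{lem:kregular}, $p(z+x)=(e^{\sqrt{2}x}+o_{k_1})p(z)$, and in parallel introduces auxiliary measures $\mathfrak{m}',\mathfrak{m}''$ (with discretized barrier events, see \eqref{eq:RD} and Remark \ref{rk:holybarrier}) so that the Appendix one-ray lower bound Lemma \ref{lem-onerayLB} applies. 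The restriction to $\mathscr{Z}_{k_3}$, which is needed for the two-ray bounds, is then removed from the first moment by Cauchy--Schwarz against the bounded second moment of $\mathfrak{m}''$.

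Second, your treatment of the near pairs is incomplete. For pairs with $\k_+\leq \log_2 k_2$ but $\k>(\log_2 k_2)/2$ (the paper's class $\mathrm{NR}_1$), Proposition \ref{prop:2ray} gives only the factorized bound up to a constant $c(k_3)$, with \emph{no} stretched-exponential gain. These do not vanish by envelope estimates; the paper controls them by observing that such $\theta_\ell$ lie within $O(1/\sqrt{k_2})$ of $\theta_j$, so the contribution is bounded by $c(k_3)\,\mathcal{H}\,\mathscr{D}_{k_2}(f)\cdot\max_\theta \mathscr{D}_{k_2}([\theta,\theta+1/\sqrt{k_2}])$, which tends to $0$ by the nonatomicity of $\mathscr{D}_\infty$ (Theorem \ref{thm:Bj}). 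Finally, the boundedness of $\mathcal{H}$ in $k_1$ is not a tightness statement from Theorem \ref{thm:Bj}; it is computed directly in \eqref{zrh:Hupper} from Corollary \ref{cor:qbnd}.
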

This completes the proof of the proposition.
\end{proof}

\begin{proof}[Proof of Lemma \ref{zrh:2moment}]

If $\mathscr{D}_\infty(f) = 0$ then it suffices to show $\mathfrak{m}(f)$ tends to $0$.
This follows directly from \eqref{eq:mupper}, and so it suffices to consider the case that $\mathscr{D}_\infty(f) > 0.$

The proof follows from a second moment method, restricted to the event $\mathscr{Z}_{k_2}$,
which complicates the first moment estimate.
We first compute the first moment without the restriction to $\mathscr{Z}_{k_2}$.
The conditional first moment of $\mathfrak{m}(f)$ is given by
\[
  \Exp[ \mathfrak{m}(f) ~|~ \filt_{k_2}]
  =\sum_{j \in \mathcal{D}_{n/k_1}}
  f(\theta_j)
  \Exp[
    ~
    \mathscr{R}_{j}^2(n_1^+)
    p_{j}
  ~|~ \filt_{k_2}]
\]
where $p_j = p(V_j') = \Pr( W_j^o \in [-k_7,\infty)~|~\filt_{n_1^+}).$
The function $p$ is inexplicit and depends on $k_4,k_5,k_7$.  Using Lemma \ref{lem:bananadensity},
\[
  \Exp[
    ~
    \mathscr{R}_{j}^2(n_1^+)
    p_{j}
  ~|~ \filt_{k_2}]
  \leq
  \frac{1}{n_1}
  \bigl(\sqrt{2}H_{k_2}-\sqrt{\tfrac{\beta}{4}}\varphi_{k_2}(\theta_j)\bigr)
  e^{\sqrt{\beta/2}\varphi_{k_2}(\theta_j) - \log k_2 + o_{k_2}}
  \sqrt{\frac{2}{\pi}}
  \int_J
  \frac{k_1^+}{k_1}
  e^{\sqrt{2}z}p(z)
  z\,dz
\]
The interval $J$ is $[(\log k_1^+)^{1/10}, (\log k_1^+)^{9/10}]$.  The constant $\mathcal{H}$ is given by
\begin{equation}
  \mathcal{H}
  \coloneqq
  \int_J
  \biggl(
  \sqrt{\frac{2}{\pi}}
  z
  e^{\sqrt{2}z}
  \biggr)
  \times
  \biggl(
  \frac{k_1^+}{k_1}
  p(z)
  \biggr)
  \,dz.
  \label{zrh:H}
\end{equation}
Note that the density that appears is
bounded by Lemma \ref{cor:qbnd} (using $T_-=\log(k_1/k_1^+)$ and $T_+=\log k_1$),  and we obtain
\begin{equation}
  \mathcal{H}
=
  c(k_4)(1+o_{k_4})
  \int_J
  e^{-z^2/(2\log k_1^+)} \frac{z^2 dz}{(\log k_1^+)^{3/2}}
  \leq c'(k_4).
\label{zrh:Hupper}
\end{equation}
We return to this bound in a moment, but note that by combining the above, we have the upper bound for the conditional first moment:
\begin{equation}
  \limsup_{n\to\infty}
  \Exp[ \mathfrak{m}(f) ~|~ \filt_{k_2}]
  \leq
  \int_0^{2\pi}
  f(\theta)
  \mathscr{D}_{k_2}(\theta)
  d\theta
  \times
  \mathcal{H}.
  \label{zrh:m1u}
\end{equation}
Before proceeding we 
show that $\mathcal{H}$ is bounded below uniformly in $n,k_1$, at least for $k_7$ large enough. Assume not. Take $f=1$.
Then necessarily,  
$\liminf_{k_1\to \infty} \limsup_{n\to\infty}
  \mathfrak{m}(f)=0$, in probability. 
  Using Proposition  \ref{prop:milieu}, we conclude then
  that, at least for $k_7$ large,  
  \[
    \liminf_{k_1\to \infty} \limsup_{n\to\infty}
    \Ext_n \cap \Gamma_{k_7}=0, \; \textrm{in probability}.
  \]
  But this contradicts the tightness in Theorem \ref{thm:tightness}.

To produce a lower bound for the first moment, we introduce two comparison measures $\mathfrak{m}_j'$ and $\mathfrak{m}_j''$, which serve as comparisons to $\mathfrak{m}_j$, and which are modified by slightly adjusting the ray event $\mathscr{R}_j^2(n_1^+)$.  We introduce two ray events, which will only be used for this argument,
\begin{equation}\label{eq:RD}
  \begin{aligned}
    &\mathscr{V}_j'
    = \mathscr{U}(\theta_j) \bigcap \{ \forall~t \in [H_{k_2},H_{n_1^+}] \cap \{H_{2^k} : k \in \N\},
    ~:~
    \mathfrak{Z}_t(\theta_j)
    \in \sqrt{2}[A_{t}^{5/2,-},A_{t}^{5/2,+}]
  \} \\
  &\qquad\qquad\bigcap \{ -V_j' \in [ (\log k_1)^{0.49},(\log k_1)^{0.51} \},\quad \text{ and }\\
    &\mathscr{V}_j''
    = \mathscr{U}(\theta_j) \bigcap \{ \forall~t \in [H_{k_2},H_{n_1^+}] \cap \{H_{2^k} : k \in \N\},
    ~:~
    Z^{k_2}_{2^n}(\theta_j)
    \in \sqrt{2}[A_{t}^{2,-},A_{t}^{2,+}]
  \} \\
  &\qquad\qquad\bigcap \{ -V_j' \in [ (\log k_1)^{0.49},(\log k_1)^{0.51} \}.
  \end{aligned}
\end{equation}
Let $\mathfrak{m}'$ and $\mathfrak{m}''$ be the sum of all the $\mathfrak{m}_j'$ and $\mathfrak{m}_j''$ respectively (see \eqref{eq:2ndintensity}).
From Lemma \ref{lem:bananadensity} and a direct first moment estimate
%the same computations as in \eqref{eq:mupper},
\[
  \liminf_{n \to \infty}
  \Exp[
  |\mathfrak{m}'( f )
  -
  \mathfrak{m}( f )|~|~\filt_{k_2}
  ]
  \leq
  o_{k_2}
  \mathscr{D}_{k_2}(f),
\quad \As
\]
and hence on taking $k_2 \to \infty,$ this tends to $0$ almost surely.
Moreover, on the event $\mathscr{Z}_{k_2}$, we have that $\mathfrak{m}' \geq \mathfrak{m}''$, and hence combining this with the display above
\[
  \liminf_{n \to \infty}
  \Exp[\mathfrak{m}(f)~|~\filt_{k_2}]
  \geq
  \liminf_{n \to \infty}
  \Exp[\mathfrak{m}''(f) \one[\mathscr{Z}_{k_2}]~|~\filt_{k_2}].
\]
To evaluate this expectation, we lower bound the expectation without the restriction to the event $\mathscr{Z}_{k_2}$ and then argue the event can be removed.

Using Lemma \ref{lem-onerayLB} and the Girsanov transformation,
we also can give a sharp estimate for the ray probability.
However, this is not given as a density estimate, but rather with a restriction of the endpoint $V_j'$ to land in an interval of length $1/\sqrt{k_3}$ (as in Proposition \ref{prop:2ray}).  Hence we partition the interval $J$ into a grid $\bar J$ of separation $1/\sqrt{k_3}$, whose elements are parametrized by $z$:
\[
  \begin{aligned}
  \Exp[\mathfrak{m}''(f)~|~\filt_{k_2}]
  =
  &\sum_{j \in \mathcal{D}_{n/k_1}}
  \Exp
  \bigl(
  \one[{
  \mathscr{V}_{j}''
  }]
  p_j
  ~
  \big\vert
  ~\filt_{k_2}
  \bigr) \\
  \geq
  &\sum_{j \in \mathcal{D}_{n/k_1}}
  \frac{1}{n_1}
  \bigl(\sqrt{2}H_{k_2}-\sqrt{\tfrac{\beta}{4}}\varphi_{k_2}(\theta_j)\bigr)
  e^{\sqrt{\beta/2}\varphi_{k_2}(\theta_j) - \log k_2 + o_{k_2}} \\
  &\hspace{3em}\times
  (1+\eta_{k_2,k_3})
  \frac{k_1^+}{k_1}
  \sum_{z\in \bar J}
  \sqrt{\frac{2}{\pi}}
  ze^{\sqrt{2}z}
  \times
  \min_{x \in [0,1/\sqrt{k_3}]}\frac{p(z+x)}{\sqrt{k_3}},
\end{aligned}
\]
where the factor $\eta_{k_2,k_3} \to 0$ as $k_2 \to \infty$ followed by $k_3\to\infty$.
In Lemma \ref{lem:kregular} we show that the function $p(z)$ satisfies the estimate
\begin{equation}
  \label{eq-Add1}
  p(w+x) = (e^{\sqrt{2} x}+o_{k_1})p(w)\quad \text{uniformly over $|x| \leq 1$ and $w\in J$. }
\end{equation}
 Hence, we can uniformly approximate the sum over $z$ by an integral and arrive at
\begin{align*}
&  \Exp[\mathfrak{m}''(f)~|~\filt_{k_2}]\\
&  \geq
  (1+o_{k_3})
  \sum_{j \in \mathcal{D}_{n/k_1}}
  \frac{1}{n_1}
  \bigl(\sqrt{2}H_{k_2}-\sqrt{\tfrac{\beta}{4}}\varphi_{k_2}(\theta_j)\bigr)
  e^{\sqrt{\beta/2}\varphi_{k_2}(\theta_j) - \log k_2 }
  \frac{k_1^+}{k_1}
  \int_J
  \sqrt{\frac{2}{\pi}}
  ze^{\sqrt{2}z}
  p(z)\,dz.
\end{align*}
This is exactly $\mathcal{H}$, and so we have
\begin{equation}\label{zrh:mpp}
  \liminf_{n \to \infty}
  \Exp[\mathfrak{m}''(f)~|~\filt_{k_2}]
  \geq
  (1+o_{k_3})
  \int_0^{2\pi}
  \mathscr{D}_{k_2}(\theta)
  \,d\theta
  \times
  \mathcal{H}.
\end{equation}

To complete the first moment analysis, it suffices to show that
\[
  \limsup_{k_2 \to \infty}
  \limsup_{k_1,n \to \infty}
  \Exp[
  \mathfrak{m}''( f )
  \one[{\mathscr{Z}_{k_2}^c}]
  ~|~\filt_{k_2}
  ]
  \Prto[k_3] 0,
\]
as then from \eqref{zrh:mpp} and \eqref{zrh:m1u},
\begin{equation}\label{zrh:1stultimo}
  \Exp[
    \mathfrak{m}( f )
    \one[{\mathscr{Z}_{k_2}}]
    ~|~\filt_{k_2}
  ]
  -
  \int_0^{2\pi}
  \mathscr{D}_{k_2}(\theta)
  \,d\theta
  \times
  \mathcal{H}
  \Prto[{k_3,k_2,k_1,n}]
  0.
\end{equation}

We shall show using the second moment machinery in Proposition \ref{prop:2ray} (and see Remark \ref{rk:holybarrier}), that
\begin{equation}\label{zrh:5}
  \limsup_{k_2 \to \infty}
  \limsup_{k_1,n \to \infty}
  \Exp [\bigl(\mathfrak{m}''( \text{Arc}_{i} )\bigr)^2~|~\filt_{k_2} ]
  <
  \infty
  \quad \As
\end{equation}
And hence it follows by Cauchy--Schwarz and Lemma \ref{lem:goodstuff}
\[
  \limsup_{k_2 \to \infty}
  \limsup_{k_1,n \to \infty}
  \Exp[
  \mathfrak{m}''( \text{Arc}_{i} )
  \one[{\mathscr{Z}_{k_2}^c}]
  ~|~\filt_{k_2}
  ]
  \Prto[k_3] 0.
\]

We turn to the conditional second moment, and note that this argument will give the same upper bound as was needed for the second moment of $\mathfrak{m}''$ in \eqref{zrh:5}
(see Remark \ref{rk:holybarrier}):
\begin{equation}\label{zrh:4}
  \Exp[
  \mathfrak{m}( f )^2
  \one[ \mathscr{Z}_{k_2}]
  ~|~\filt_{k_2}]
  \leq \sum_{j,\ell}
  f(\theta_{j})
  f(\theta_{\ell})
  \Exp
  \biggl(
  \one[ \mathscr{Z}_{k_2}]
  \one[{
  \mathscr{R}_{j}^2(n_1^+)
  \cap
  \mathscr{R}_{\ell}^2(n_1^+)
  }]
  p_{j}p_{\ell}
  ~
  \bigg\vert
  ~\filt_{k_2}
  \biggr),
\end{equation}
where $p_j = p(V_j') = \Pr( W_j^o \in [-k_7,\infty)~|~\filt_{n_1^+})$ and the sum is over all $j,\ell \in \mathcal{D}_{n/k_1}.$
We divide the angle pairs $(\theta_j,\theta_\ell)$ into two classes $\operatorname{NR}$ and $\operatorname{FR}$, where the latter means $\theta_{j}$ and $\theta_{\ell}$ have branchpoint $\k$ (in the notation of Proposition \ref{prop:2ray}) less than $\tfrac12 \log_2 k_2$.  We let $\operatorname{NR}$ be all other pairs.
For the well--separated angles $\operatorname{FR}$, we can afford only multiplicative errors of the form $1+o_{k_3}$.  Now Proposition \ref{prop:2ray} gives such an estimate, albeit only after asking the endpoint to be in a bin of size $(1/\sqrt{k_3})$.

Thus by partitioning the two-ray expectation in \eqref{zrh:4} according to the values of $V_{j}'$ and $V_{\ell}'$ (using bins of size $(1/\sqrt{k_3})$ so to apply Proposition \ref{prop:2ray}), we have
that  for $(\theta_j,\theta_\ell) \in \operatorname{FR}$ the contribution to \eqref{zrh:4} is bounded above by
\[
  \sum_{z_j,z_\ell}
  \mathscr{Q}(\theta_j,x_j,z_j; \theta_\ell,x_\ell,z_{\ell})
  \times \max_{u_j,u_{\ell} \leq 1/\sqrt{k_3}}
  %p(\sqrt{\tfrac{4}{\beta}}(z_j+u_j))p(\sqrt{\tfrac{4}{\beta}}(z_\ell+u_\ell)).
  p(z_j+u_j)p(z_\ell+u_\ell).
\]
By \eqref{eq-Add1},
%In Lemma \ref{lem:kregular} we show that the function $p(z)$ satisfies the estimate
%\[
%  p(z_j+x) = (e^{\sqrt{2} x}+o_{k_1})p(z_j)
%\]
%uniformly over $|x| \leq 1$ and $z_j$ in $J$.
%Thus 
the maximum value of $p$ over a short interval of length $1/\sqrt{k_3}$ is its average value over the same interval up to an error $o_{k_3}$.
Using the first case of Proposition \ref{prop:2ray}, we conclude with that notation
\[
  \begin{aligned}
    &\sum_{\theta_j,\theta_\ell \in \operatorname{FR}}
    \Exp
    \biggl(
    \one[ \mathscr{Z}_{k_2}]
    \one[{
      \mathscr{R}_{j_1}^2(n_1^+)
      \cap
      \mathscr{R}_{j_2}^2(n_1^+)
    }]
    p_{j_1}p_{j_2}
    ~
    \bigg\vert
    ~\filt_{k_2}
    \biggr) \\
    &\leq
    (1+\eta_{k_2,k_3}+o_{k_3})
    \sum_{\theta_j,\theta_\ell}
    \int_J
    \int_J
    \frac{2}{\pi}
    \frac{z_jx_j k_2}{n_1^+}
    \frac{z_\ell x_\ell k_2}{n_1^+}
    \exp\bigl(\sqrt{2}(z_j-x_j + z_\ell-x_\ell)\bigr)
    p(z_1)p(z_2)
    dz_1dz_2,
  \end{aligned}
\]
where the sum is over all $\theta_j,\theta_\ell$ in the arc and $J$ is as before.
Thus on taking $n\to\infty,$
\begin{equation}
  \begin{aligned}
    \limsup_{n \to \infty}
    \sum_{\theta_j,\theta_\ell \in \operatorname{FR}}
    &\Exp
    \biggl(
    \one[ \mathscr{Z}_{k_2}]
    \one[{
      \mathscr{R}_{j_1}^2(n_1^+)
      \cap
      \mathscr{R}_{j_2}^2(n_1^+)
    }]
    p_{j_1}p_{j_2}
    ~
    \bigg\vert
    ~\filt_{k_2}
    \biggr) \\
    &\leq
    (1+o_{k_3})
    \biggl(
    \int_0^{2\pi}
    f(\theta)
    \mathscr{D}_{k_2}(\theta)
    d\theta
    \times
    \mathcal{H}
    \biggr)^2.
  \end{aligned}
  \label{zrh:2ndfar}
\end{equation}

For the near terms, we need to use all the cases of the 2-ray bound Proposition \ref{prop:2ray}:
\[
  \begin{aligned}
    \sum_{\theta_j,\theta_\ell \in \operatorname{NR}}
    &\Exp
    \biggl(
    \one[ \mathscr{Z}_{k_2}]
    \one[{
      \mathscr{R}_{j_1}^2(n_1^+)
      \cap
      \mathscr{R}_{j_2}^2(n_1^+)
    }]
    p_{j_1}p_{j_2}
    ~
    \bigg\vert
    ~\filt_{k_2}
    \biggr) \\
    &\leq
    c(k_3)
    \sum_{\theta_j,\theta_\ell \in \operatorname{NR}}
    \int_J\int_J
    \mathscr{Q}(\theta_j,x_j,z_j;\theta_\ell,x_\ell,z_\ell)
    p(z_j)p(z_\ell)
    dz_j dz_\ell.
    \end{aligned}
\]
We then partition these near terms into which case of Proposition \ref{prop:2ray} the pairs land.  That is we define the sets:
\begin{enumerate}
  \item $\operatorname{NR}_1(\theta_j)$ are all those $\ell$ so that $\k \geq (\log_2 k_2)/2$ but $\k_+ \leq \log_2 k_2$.
  \item $\operatorname{NR}_2(\theta_j)$ are all those $\ell$ not in $\operatorname{NR}_1(\theta_j)$ but so that $\k \leq (\log_2 n)/2$.
  \item $\operatorname{NR}_3(\theta_j)$ are all those $\ell$ not in $\operatorname{NR}_i(\theta_j)$ for $i\in\{1,2\}$ but so that $\k \leq \log_2 n_1^+$.
  \item $\operatorname{NR}_4(\theta_j)$ are all those $\ell$ not in $\operatorname{NR}_i(\theta_j)$ for $i\in\{1,2,3\}$.
\end{enumerate}
In all cases, we have sharp dependence on $x_j$ and $z_j$, and so we can integrate over $z_j$ to give exactly $\mathcal{H}$ up to an error depending on $k_3$. For the terms in $\operatorname{NR}_1(\theta_j)$, we also have sharp dependence on $x_\ell$ and $z_\ell$ up to a multiplicative error, and hence we have
\[
  \begin{aligned}
    \limsup_{n\to\infty}
    &\sum_{j\in\mathcal{D}_{n/k_1}}
    \sum_{\ell \in \operatorname{NR}_1(\theta_j)}
    \Exp
    \biggl(
    \one[ \mathscr{Z}_{k_2}]
    \one[{
      \mathscr{R}_{j_1}^2(n_1^+)
      \cap
      \mathscr{R}_{j_2}^2(n_1^+)
    }]
    p_{j_1}p_{j_2}
    ~
    \bigg\vert
    ~\filt_{k_2}
    \biggr) \\
    &\leq
    c(k_3)
    \biggl(
    \int_0^{2\pi}
    \mathscr{D}_{k_2}(\theta)
    \,d\theta
    \times
    \mathcal{H}
    \biggr)
    \times
    \max_{\theta}
    \mathscr{D}_{k_2}([\theta,\theta+1/\sqrt{k_2}]).
  \end{aligned}
\]
By nonatomicity of $\mathscr{D}_\infty$,
\[
\max_{\theta} \mathscr{D}_{k_2}([\theta,\theta+1/\sqrt{k_2}])
\Asto[k_2] 0.
\]

For the terms of the second type, the bound in Proposition \ref{prop:2ray} loses the dependence in $x_\ell,$ but we gain a factor due to the entropic barrier, which is summable
\[
  \begin{aligned}
    \limsup_{n\to\infty}
    &\sum_{j\in\mathcal{D}_{n/k_1}}
    \sum_{\ell \in \operatorname{NR}_2(\theta_j)}
    \Exp
    \biggl(
    \one[ \mathscr{Z}_{k_2}]
    \one[{
      \mathscr{R}_{j_1}^2(n_1^+)
      \cap
      \mathscr{R}_{j_2}^2(n_1^+)
    }]
    p_{j_1}p_{j_2}
    ~
    \bigg\vert
    ~\filt_{k_2}
    \biggr) \\
    &\leq
    c(k_3)
    \biggl(
    \int_0^{2\pi}
    \mathscr{D}_{k_2}(\theta)
    \,d\theta
    \times
    \mathcal{H}
    \biggr)
    \times
    e^{-c(\log k_2)^{1/10}}.
  \end{aligned}
\]
The same bound holds for terms of the third type, using the entropic barrier gain, but now gaining a factor $e^{-c(\log k_1)^{1/10}}$.  Finally for the terms of fourth type, we use Lemma \ref{cor:qbnd}, due to which we gain the same $e^{-c(\log k_1)^{1/10}}$.  In all, we have that the sum over all $\operatorname{NR}$ pairs satisfies
\[
  \begin{aligned}
    \sum_{\theta_j,\theta_\ell \in \operatorname{NR}}
    &\Exp
    \biggl(
    \one[ \mathscr{Z}_{k_2}]
    \one[{
      \mathscr{R}_{j_1}^2(n_1^+)
      \cap
      \mathscr{R}_{j_2}^2(n_1^+)
    }]
    p_{j_1}p_{j_2}
    ~
    \bigg\vert
    ~\filt_{k_2}
    \biggr)
    \leq
    o_{k_2}
    \biggl(
    \int_0^{2\pi}
    \mathscr{D}_{k_2}(\theta)
    \,d\theta
    \times
    \mathcal{H}
    \biggr).
    \end{aligned}
\]
Combining this with \eqref{zrh:2ndfar}, we have that
\[
  \limsup_{k_1,n\to\infty}
  \Var( \mathfrak{m}(f)
  \one[ \mathscr{Z}_{k_2}]
  ~|~\filt_{k_2})
  \Prto[k_3,k_2] 0.
\]
This together with \eqref{zrh:1stultimo} proves the lemma.
\end{proof}

\subsection{Third Poisson approximation: concentration of the intensity}
\label{sec-thirdppp}

The final Poisson approximation replaces the intensity $\mathfrak{m}$ by (essentially) its $\filt_{k_2}$--conditional expectation.  This is done by a first and second moment computation.
The measure to which we compare $\mathfrak{m}$ is the one given in the introduction \eqref{eq:trueintensity}, namely with $I(v)$ as in \eqref{eq:trueintensity}
and $\mathfrak{s}$ as in \eqref{eq:decorationlaw}:
\begin{equation}\label{mtl:intensity}
  \mathfrak{n}(\theta,v,f) \coloneqq
  \mathscr{D}_{k_2}(\theta)d\theta 
  \times I(v)dv 
  \times \mathfrak{p}(v,df)
  \quad\text{where}\quad
  \mathfrak{p}(v,f)
  \coloneqq
  \begin{cases}
    \int_0^{2\pi} \mathfrak{s}_{k_1,k_5}(v,e^{i\xi} df)\tfrac{d\xi}{2\pi},
   &
   \text{ if $\sigma=1$,} \\
   \mathfrak{s}_{k_1,k_5}(v,df),
   &\text{ otherwise. }
  \end{cases}
\end{equation}
\begin{proposition}\label{prop:ppp3}
  For any $k_7$ the restrictions of $\Pi(\mathfrak{m})$ and $\Pi(\mathfrak{n})$ to $\Gamma_{k_7}$ satisfy
  \[
      \partial_2(
      \Pi(\mathfrak{m}) \cap \Gamma_{k_7}
      ~\vert~ \filt_{k_2}
      ,
      \Pi(\mathfrak{n}) \cap \Gamma_{k_7}
      ~\vert~ \filt_{k_2}
      )
    \Prto[k_3,k_2,k_1,n]
    0.
  \]
\end{proposition}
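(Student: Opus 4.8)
The plan is to deduce Proposition \ref{prop:ppp3} from a concentration estimate showing that the random intensity $\mathfrak m$ (which is $\filt_{n_1^+}$-measurable) is, conditionally on $\filt_{k_2}$, close in a bounded-Lipschitz sense to the $\filt_{k_2}$-measurable intensity $\mathfrak n$, and then to feed this into the Poisson comparison machinery of Appendix \ref{sec:pointprocesses}. First I would record the reduction: conditionally on $\filt_{k_2}$ the law of $\Pi(\mathfrak m)$ is a Cox process, and applying Theorem \ref{thm:PPcom} conditionally on $\filt_{n_1^+}$ and then averaging over $\mathfrak m$ shows that, on the event $\{\mathfrak m(\Gamma_{k_7}^+)\vee\mathfrak n(\Gamma_{k_7}^+)\le R\}$,
\[
  \partial_2\bigl(\Pi(\mathfrak m)\cap\Gamma_{k_7}\mid\filt_{k_2},\ \Pi(\mathfrak n)\cap\Gamma_{k_7}\mid\filt_{k_2}\bigr)
  \le \E\bigl[\,\bigl(F_R(d_{\operatorname{BL}}(\mathfrak m\cap\Gamma_{k_7},\mathfrak n\cap\Gamma_{k_7}))\wedge 1\bigr)\,\big|\,\filt_{k_2}\bigr]
\]
for a modulus $F_R$ with $F_R(0+)=0$, with $d_{\operatorname{BL}}$ as in the proof of Theorem \ref{thm:process}. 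Since the right-hand side lies in $[0,1]$, it then suffices to show, in the order $n\to k_1\to k_2\to k_3\to\infty$, that $\mathfrak m(\Gamma_{k_7}^+)+\mathfrak n(\Gamma_{k_7}^+)$ stays bounded on an event of probability $\to1$ and that $d_{\operatorname{BL}}(\mathfrak m\cap\Gamma_{k_7},\mathfrak n\cap\Gamma_{k_7})\to0$ in probability. The first of these is immediate: the bound on $\mathfrak m(\Gamma_{k_7}^+)$ is exactly Point 1 of the proof of Proposition \ref{prop:ppp1} (Corollary \ref{cor:qbnd} together with Lemma \ref{lem:bananadensity}, finiteness of $\mathscr D_{k_2}([0,2\pi])$ coming from Theorem \ref{thm:Bj}), and $\mathfrak n(\Gamma_{k_7}^+)=\mathscr D_{k_2}([0,2\pi])\cdot\int I(v)\,\mathfrak p(\{f:\max|fe^{-\sqrt{4/\beta}v}|\ge e^{-k_7}\})\,dv$ is finite by an estimate of the same form as \eqref{zrh:Hupper}.

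For the bounded-Lipschitz convergence I would first reduce to a fixed test function via tightness. Using finiteness and nonatomicity of $\mathscr D_{k_2}$ (Theorem \ref{thm:Bj}, including \eqref{eq:Btight}) for the angular coordinate, the compact support of $I$ together with the Gaussian tail in Corollary \ref{cor:qbnd} for the $v$-coordinate, and the facts that $\mathfrak s_{k_1,k_5}$ does not depend on $n$ and that the decorations solve the SDE \eqref{eq:dSDE} (hence have a deterministic modulus of continuity and a tight family of laws on $\mathcal C([-2\pi k_1,0],\C)$), both $\mathfrak m$ and $\mathfrak n$ place all but $\epsilon$ of their mass on a single compact set $\mathcal K\subset\Gamma_{k_7}$, uniformly in $n$. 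As the $1$-Lipschitz functions bounded by $1$ on $\mathcal K$ are totally bounded in the uniform norm, a finite $\epsilon$-net reduces the task to showing, for each fixed $1$-Lipschitz $g:\Gamma_{k_7}\to[-1,1]$, that $\mathfrak m(g\,\one_{\Gamma_{k_7}})-\mathfrak n(g\,\one_{\Gamma_{k_7}})\to0$ in probability in the stated order.

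This last statement is a decoration-dependent strengthening of Lemma \ref{zrh:2moment}, and I would prove it by the same first- and second-moment argument. Integrating out the decoration gives $\mathfrak m(g\,\one_{\Gamma_{k_7}})=\sum_{j\in\mathcal D_{n/k_1}}\one[\mathscr R^2_j(n_1^+)]\,\bar g_j(\theta_j,V_j')$ with $\bar g_j(\theta,v)=\int g(\theta,v,f)\,\one_{\Gamma_{k_7}}(\theta,v,f)\,\mathfrak s_{k_1,k_5}(e^{i\xi_j}\,df)$ and $\xi_j=\Psi_{n_1^+}(\theta_j)-(n_1^++1)\theta_j$ (no phase $\xi_j$ when $\sigma=i$). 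The first moment is then controlled via Lemma \ref{lem:bananadensity} and the ray-probability regularity \eqref{eq-Add1}, and the second moment via the two-ray estimates of Proposition \ref{prop:2ray} split into the near and far regimes, exactly as in the proof of Lemma \ref{zrh:2moment}; the bounded weight $\bar g_j\le1$ only improves the bounds. When $\sigma=i$ this produces the limit $\mathfrak n(g\,\one_{\Gamma_{k_7}})$ and finishes the proof. When $\sigma=1$ the decoration law in $\mathfrak m_j$ is $\mathfrak s_{k_1,k_5}$ rotated by $\xi_j$, whereas $\mathfrak n$ uses the phase-average $\mathfrak p$; here I would additionally invoke the equidistribution modulo $2\pi$ of the phases $\xi_j$ — equivalently of the relative Pr\"ufer phases $\psi_{n_1^+}(\theta_j)$ — and their asymptotic independence, as $n\to\infty$, of the trunk data $(\varphi_{n_1^+}(\theta_j))_j$, a standard consequence of the Verblunsky recursions \eqref{eq:prufer}--\eqref{eq:upsilon} in which the relative phase accumulates Gaussian fluctuations of variance $\asymp\log n_1^+\to\infty$ asymptotically orthogonal to the real increments. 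Incorporating this extra averaging over $\xi_j$ into the moment computations replaces $\mathfrak s_{k_1,k_5}(e^{i\xi_j}df)$ by $\mathfrak p(df)$, so the limit is again $\mathfrak n(g\,\one_{\Gamma_{k_7}})$. A union bound over the finite net then yields $d_{\operatorname{BL}}(\mathfrak m\cap\Gamma_{k_7},\mathfrak n\cap\Gamma_{k_7})\to0$, closing the argument.

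I expect the main obstacle to be the decoration-weighted strengthening of Lemma \ref{zrh:2moment}: one must verify that the two-ray estimates of Proposition \ref{prop:2ray} retain their constants (in particular the $1+o_{k_3}$ multiplicative errors in the far-pair regime) when the leaf probabilities carry the bounded weights $\bar g_j$, and, in the real case, that this can be combined cleanly with the Pr\"ufer-phase equidistribution. By comparison, the reduction from $d_{\operatorname{BL}}$ to a finite family of test functions via tightness, and the Cox-process averaging in the opening step, are routine.
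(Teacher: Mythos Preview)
Your overall architecture matches the paper's: reduce the Poisson--Poisson comparison to a bounded-Lipschitz comparison of intensities via Theorem~\ref{thm:PPcom}, then reduce to a finite family of test functions, and finally verify concentration for each via a first/second moment argument of Lemma~\ref{zrh:2moment} type.  The substantive divergence is in how you handle the phase averaging in the case $\sigma=1$.

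You propose to show that the Pr\"ufer phases $\xi_j=\Psi_{n_1^+}(\theta_j)-(n_1^+{+}1)\theta_j$ equidistribute modulo $2\pi$ and are asymptotically independent of the trunk data $(\varphi_{n_1^+}(\theta_j))_j$, and to fold this into the moment computation.  This is plausible but not established anywhere in the paper, and doing it \emph{conditionally} on the ray events $\mathscr R_j^2(n_1^+)$ (which constrain $\varphi$ along the whole trajectory) is delicate: the phase and the real part are driven by the same complex noise, and your ``orthogonality'' heuristic would have to be upgraded to a statement that survives the barrier conditioning and feeds into the two-ray machinery.  The paper sidesteps this entirely.  It first replaces $\mathfrak m$ by the intermediate intensity $\overline{\mathfrak m}$ of \eqref{eq:2ndintensityprime}, in which the decoration law $\mathfrak s(e^{i\xi_j}\,df)$ is replaced by its rotation-average $\mathfrak p(df)$, and shows $d_{\operatorname{BL}}(\mathfrak m,\overline{\mathfrak m})\to 0$ using Corollary~\ref{cor:dtv}: the decoration $D_j^o$ itself factors as $e^{iW}\widecheck D$ with $W$ a Gaussian of variance $\tfrac{4}{\beta}(T_\dagger-T_-)\asymp (\log k_1)^{29/30}$ independent of $\widecheck D$, so by Lemma~\ref{lem:dtv} the law $\mathfrak s(e^{i\alpha}\,df)$ is already within $e^{-c(\log k_1)^{29/30}}$ in total variation of $\mathfrak p(df)$, uniformly in $\alpha$.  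No information about $\xi_j$ is needed.  After this first step, both intensities share the decoration law $\mathfrak p$, the paper pushes forward by $\iota$, invokes Corollary~\ref{cor:PPcom} to obtain the finite list of test functions, and then Lemma~\ref{zrh:2moment} (as stated, for functions of $\theta$ only, since the decoration marginal now agrees) closes the argument.  Your identification of the ``decoration-weighted strengthening of Lemma~\ref{zrh:2moment}'' as the main obstacle is accurate for your route; the paper's route avoids this obstacle at the cost of the extra step through $\overline{\mathfrak m}$, which is cheap.
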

\begin{proof}
  In the first step, we replace the measure $\mathfrak{m}$ by one in which the decoration is averaged, that is, we compare to
  \begin{equation}\label{eq:2ndintensityprime}
    \overline{\mathfrak{m}}(\theta,v, f) \coloneqq
    \sum_{j \in {\mathcal{D}}_{n/k_1}}
    \one[{\mathscr{R}^2_j({n}_1^+)}]
    \delta_{\theta_j, V_j'}(d\theta,dv)\mathfrak{p}\bigl(v,
    df \bigr).
  \end{equation}
Note that this is identical to $\mathfrak{m}$ in the case $\sigma = i$.
The intensities $\mathfrak{m}(\Gamma_{k_7})$ and $\overline{\mathfrak{m}}(\Gamma_{k_7})$ are tight in all parameters (see \eqref{eq:mupper}) $\{n,k_1,\dots,k_6\}$.
Using Theorem \ref{thm:PPcom}, it thus suffices to show $d_{\operatorname{BL}}(\mathfrak{m},\overline{\mathfrak{m}}) \to 0$ to conclude that
\[
 \partial_2(
      \Pi(\mathfrak{m}) \cap \Gamma_{k_7}
      ~\vert~ \filt_{k_2}
      ,
      \Pi(\overline{\mathfrak{m}}) \cap \Gamma_{k_7}
      ~\vert~ \filt_{k_2}
      )
    \Prto[k_3,k_2,k_1,n]
    0.
\]
Letting $F$ be the functions in $\mathcal{C}( (-2\pi k_1,0), \C)$ with max modulus in $[e^{-k_7},e^{k_7}]$, we can bound
\[
  \begin{aligned}
    d_{\operatorname{BL}}(\mathfrak{m},\overline{\mathfrak{m}})
    &\leq
    (\mathfrak{m}(\Gamma_{k_7}) + \overline{\mathfrak{m}}(\Gamma_{k_7})) \\
    &\times\sum_{j \in {\mathcal{D}}_{n/k_1}}
    \one[{\mathscr{R}^2_j({n}_1^+)}]
    d_{\operatorname{BL}}\biggl(
    \int_0^{2\pi} \mathfrak{s}_{k_1,k_5}(V_j',e^{i\xi-\sqrt{4/\beta}V_j'} df)\tfrac{d\xi}{2\pi}
    ,
    \mathfrak{s}_{k_1,k_5}(V_j',
    e^{i\alpha_j-\sqrt{4/\beta}V_j'} df)
    \biggr),
  \end{aligned}
\]
where $\alpha_j \coloneqq \Psi_{n_1^+}(\theta_j)-(n_1^++1)\theta_j$ and the bounded Lipschitz norm is restricted to $F$.
We show in Corollary \ref{cor:dtv} that this distance is bounded by $\mathcal{O}(e^{-(\log k_1)^{19/20}})p(V_j')$ where $p(V_j') = \Pr( W_j^o \in [-k_7,\infty)~|~\filt_{n_1^+}).$  Thus like in \eqref{eq:mupper}
\[
    d_{\operatorname{BL}}(\mathfrak{m},\overline{\mathfrak{m}})
    \Prto[k_1,n] 0.
\]

For the final step, we show
\[
 \partial_2(
      \iota\#(\Pi(\mathfrak{n}) \cap \Gamma_{k_7})
      ~\vert~ \filt_{k_2}
      ,
    \iota\#( \Pi(\overline{\mathfrak{m}}) \cap \Gamma_{k_7})
      ~\vert~ \filt_{k_2}
      )
    \Prto[k_3,k_2,k_1,n]
    0.
\]
We again use Theorem \ref{thm:PPcom} to reduce this to controlling the bounded Lipschitz norm on $\Gamma_{k_7}$. Let $\bar\iota\#\mathfrak{n}$ and $\bar \iota\#\overline{\mathfrak{m}}$  be the restriction of $\iota\#(\mathfrak{n}\cap \Gamma_{k_7})$ and $ \iota\# (\overline{\mathfrak{m}}\cap \Gamma_{k_7})$  to $[0,2\pi] \times \mathcal{C}( [-2\pi k_1,0], \C)$,
 and note that the restriction to $\mathcal{C}( [-2\pi k_1,0], \C)$ 
is identical for both processes except for  for a additive random variable, taking value in a compact ($k_7$-dependent) set. 
%Note that $\iota\# \mathfrak{n}$ and 
%$\iota\# \overline{\mathfrak{m}}$ after the push-forward by $\iota$, the measures 
From Corollary \ref{cor:PPcom} and the fact that after the push forward by $\iota$, the second coordinate in $\Gamma$ is continuously determined by the third, there is a finite list of non-negative Lipschitz functions $\{f_j\}$, depending on $k_7$, for which it suffices to show that
\begin{equation}
\label{eq-210822}
 \bar \iota\# \mathfrak{n}(f_j) - \bar \iota\#\overline{\mathfrak{m}}(f_j)
  \Prto[k_3,k_2,k_1,n]
  0.
\end{equation}
(We note that the use of $\iota$ was precisely to reduce the collections of functions $\{f_j\}$ to a collection that does not depend on $k_1$.)
Now, \eqref{eq-210822}  follows from Lemma \ref{zrh:2moment}.
%\todo[inline]{This is still not quite OK: the location of the maximum in the interval $[-2\pi k_1,0]$ still depends on $k_1$. Modify topology?}
%\todo[inline]{This is a slight lie.  Technically, what follows is the comparison for $\Ex_n$.}
\end{proof}

\section{Interpolation--based regularity arguments}
\label{sec:meshing}

In this section we give a proofs of Proposition \ref{prop:interpolation0} and Proposition \ref{prop:interpolation}; both rely on certain \emph{a priori} regularity properties of $\varphi_n,$ but the second one additionally uses a substantial probabilistic input, Proposition \ref{prop:lsmcontinuity} in the case $\sigma=1$. The $\sigma=i$ case is substantially simpler; in effect the regularity is much better for imaginary $\sigma$ owing to the monotonicity of the Pr\"ufer phases.  We introduce the following notation for working with the case of real $\sigma.$  For any $\theta \in \R,$ define (with $\delta_\beta$ given by half of $\delta$ from Proposition \ref{prop:lsmcontinuity})
\begin{equation}
  J(\theta)
  \coloneqq
  \left\{
    \theta' \in \frac{2\pi}{4k_5 n}\Z
    :
    |\theta' - \theta| \leq \frac{\pi}{n k_5^{1-\delta_\beta}}
  \right\}.
\end{equation}

The deterministic result we need is the following:
\begin{lemma}\label{lem:interpolation}
  For $\sigma = i,$
  and any $\theta' \leq \theta$
  \[
    \varphi_n(\theta') \leq
    \varphi_n(\theta) + (n+1)(\theta-\theta').
  \]
  For $\sigma = 1$ if $\sup_{\theta} \varphi_n(\theta) \leq \sqrt{\tfrac{8}{\beta}}m_n+k_6,$ then there is an absolute constant $C$ so that for all $n,k_5$ sufficiently large with respect to $k_6,$ and all $\theta \in [0,2\pi]$
  \[
%    \begin{aligned}
%      &\max_{|\theta'-\theta| <  \tfrac{2\pi}{4nk_5^{1/3}} }
%      \biggl\{e^{\varphi_n(\theta')-\sqrt{\tfrac{8}{\beta}}m_n} \biggr\}
%      \leq \biggl(\frac{2k_5}{2k_5-1}\biggr)\cdot\max_{\substack{|\theta'-\theta| \leq\tfrac{\pi}{nk_5^{1/3}} \\ \theta' \in \tfrac{2\pi}{4k_5 n}\Z}}
%      \biggl\{e^{\varphi_n(\theta')-\sqrt{\tfrac{8}{\beta}}m_n} \biggr\}
%      +\frac{Ce^{\sqrt{\tfrac{8}{\beta}}k_6} }{k_5^{2/3}}
%      \quad\text{and}\\
%    &\min_{|\theta'-\theta| <  \tfrac{2\pi}{4nk_5^{1/3}} }
%      \biggl\{e^{\varphi_n(\theta')-\sqrt{\tfrac{8}{\beta}}m_n} \biggr\}
%      \geq \biggl(\frac{2k_5}{2k_5-1}\biggr)\cdot\min_{\substack{|\theta'-\theta| \leq\tfrac{\pi}{nk_5^{1/3}} \\ \theta' \in \tfrac{2\pi}{4k_5 n}\Z}}
%      \biggl\{e^{\varphi_n(\theta')-\sqrt{\tfrac{8}{\beta}}m_n} \biggr\}
%      -\frac{Ce^{\sqrt{\tfrac{8}{\beta}}k_6} }{k_5^{2/3}}.
%  \end{aligned}
      \begin{aligned}
	&
	\max_{|\theta'-\theta| <  \frac{2\pi}{4n k_5^{1-\delta_\beta}}}
      \biggl\{e^{\varphi_n(\theta')-\sqrt{\tfrac{8}{\beta}}m_n} \biggr\}
      \leq \biggl(\frac{2k_5}{2k_5-1}\biggr)\cdot\max_{\theta' \in J(\theta)}
      \biggl\{e^{\varphi_n(\theta')-\sqrt{\tfrac{8}{\beta}}m_n} \biggr\}
      +\frac{Ce^{k_6} }{k_5^{\delta_\beta}}
      \quad\text{and}\\
	&\min_{|\theta'-\theta| <  \frac{2\pi}{4n k_5^{1-\delta_\beta}}}
      \biggl\{e^{\varphi_n(\theta')-\sqrt{\tfrac{8}{\beta}}m_n} \biggr\}
      \geq \biggl(\frac{2k_5}{2k_5-1}\biggr)\cdot\min_{\theta' \in J(\theta)}
      \biggl\{e^{\varphi_n(\theta')-\sqrt{\tfrac{8}{\beta}}m_n} \biggr\}
      -\frac{Ce^{k_6} }{k_5^{\delta_\beta}}.
  \end{aligned}
  \]
\end{lemma}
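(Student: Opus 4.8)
The plan is to treat the two cases separately since they are structurally very different.

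\textbf{The case $\sigma = i$.} Here $\varphi_n(\theta) = \Psi_n(\theta) - (n+1)\theta$, and by Lemma~\ref{lem:psikmonotone} the absolute Pr\"ufer phase $\theta \mapsto \Psi_n(\theta)$ is nondecreasing. So for $\theta' \le \theta$ we have $\Psi_n(\theta') \le \Psi_n(\theta)$, hence
\[
  \varphi_n(\theta') = \Psi_n(\theta') - (n+1)\theta' \le \Psi_n(\theta) - (n+1)\theta' = \varphi_n(\theta) + (n+1)(\theta - \theta'),
\]
which is exactly the claimed inequality. This is immediate and requires no further input.

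\textbf{The case $\sigma = 1$.} Here $\varphi_n(\theta) = 2\log|\Phi_n^*(e^{i\theta})|$, so $e^{\varphi_n(\theta') - \sqrt{8/\beta}m_n} = |\Phi_n^*(e^{i\theta'})|^2 e^{-\sqrt{8/\beta}m_n}$ is the squared modulus of a degree-$n$ polynomial evaluated on the unit circle, rescaled. The key classical tool is an interpolation estimate: the values of $|\Phi_n^*|^2$ on a dense enough equispaced grid on the arc control the values in between, with an error governed by the sup-norm of $\Phi_n^*$ on all of $[0,2\pi]$. Concretely, I would invoke the Marcinkiewicz--Zygmund / Bernstein-type interpolation inequality (the same circle of ideas behind \cite[Proposition 4.5]{CMN}, and the probabilistic refinement Proposition~\ref{prop:lsmcontinuity}): for a trigonometric polynomial of degree $n$, the maximum over an arc of length $O(1/(nk_5^{1-\delta_\beta}))$ is, up to the factor $2k_5/(2k_5-1)$ coming from how many grid points of spacing $\tfrac{2\pi}{4k_5 n}$ fall in $J(\theta)$, at most the maximum over $J(\theta)$ plus an error term proportional to $k_5^{-\delta_\beta}$ times the global sup-norm. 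Under the hypothesis $\sup_\theta \varphi_n(\theta) \le \sqrt{8/\beta}m_n + k_6$, that global sup-norm of $e^{\varphi_n(\cdot) - \sqrt{8/\beta}m_n}$ is bounded by $e^{k_6}$, which produces the $Ce^{k_6}/k_5^{\delta_\beta}$ error in the statement. The minimum estimate follows by the same argument applied with reversed inequalities (or by noting $\min = -\max(-\cdot)$ together with the same interpolation control on oscillation). I would organize this as: (i) pass from $\Phi_n^*$ to a real trigonometric polynomial (e.g. $|\Phi_n^*|^2$ as a polynomial of degree $\le 2n$ in $e^{i\theta}$ and $e^{-i\theta}$); (ii) apply Bernstein's inequality (Theorem~\ref{thm:Bernstein}, as already used in the proof of Proposition~\ref{prop:trunkray1}) to bound the derivative by $n \cdot (\text{sup norm})$; (iii) for $\theta'$ in the short arc, pick the nearest grid point $\theta'' \in J(\theta)$, which is within $\tfrac{2\pi}{4k_5 n}$, and use the mean value theorem together with the derivative bound to get $|e^{\varphi_n(\theta')-\sqrt{8/\beta}m_n} - e^{\varphi_n(\theta'')-\sqrt{8/\beta}m_n}| \lesssim \tfrac{1}{k_5} e^{k_6}$, then track the combinatorial factor $2k_5/(2k_5-1)$ from counting grid points in $J(\theta)$ relative to the shorter arc $|\theta'-\theta| < \tfrac{2\pi}{4nk_5^{1-\delta_\beta}}$.

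\textbf{Main obstacle.} The delicate point is getting the error term to scale like $k_5^{-\delta_\beta}$ rather than merely $k_5^{-1}$ or $O(1)$; a naive Bernstein/mean-value argument over the full arc $J(\theta)$, which has length $\asymp \tfrac{1}{nk_5^{1-\delta_\beta}}$, would give an error of order $k_5^{-\delta_\beta} e^{k_6}$ only if one is careful to compare $\theta'$ to a grid point at the \emph{finer} spacing $\tfrac{1}{k_5 n}$ inside $J(\theta)$ and not across the whole of $J(\theta)$. In other words, the two scales — the grid spacing $\tfrac{2\pi}{4k_5 n}$ and the arc radius $\tfrac{2\pi}{4nk_5^{1-\delta_\beta}}$ — must be played off against each other correctly: the interpolation over the coarser arc gives the multiplicative factor $\tfrac{2k_5}{2k_5-1} \to 1$ while leaving an additive error controlled by (sup norm)$\times$(derivative bound)$\times$(grid spacing), and one must verify this additive error is indeed $O(e^{k_6} k_5^{-\delta_\beta})$ and not larger. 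The choice $\delta_\beta = \delta/2$ with $\delta$ from Proposition~\ref{prop:lsmcontinuity} is presumably exactly what makes this balance work, so the real content is bookkeeping of exponents rather than any deep new idea.
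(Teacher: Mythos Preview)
Your $\sigma=i$ argument is correct and identical to the paper's.

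For $\sigma=1$ you take a genuinely different route. The paper does not use Bernstein here; it applies Theorem~\ref{thm:interpolation}, a Fej\'er-kernel interpolation inequality, to $Q=\Phi_n^*$ with $m=2k_5$ and $b\asymp k_5^{1-\delta_\beta}$. That theorem directly produces the multiplicative factor $\tfrac{m}{m-1}=\tfrac{2k_5}{2k_5-1}$ and an additive error $\tfrac{C}{b(m-1)}\max_{|z|=1}|Q|^2\asymp \tfrac{C}{k_5^{\delta_\beta}}e^{k_6}$, matching the statement on the nose. Your Bernstein--mean-value approach also works, and is in fact more elementary: any $\theta'$ in the short arc has a nearest grid point $\theta''\in J(\theta)$ at distance $\le \tfrac{2\pi}{4k_5 n}$, and since $\bigl|\tfrac{d}{d\theta}|\Phi_n^*(e^{i\theta})|^2\bigr|\le 2n(\max|\Phi_n^*|)^2$, one gets $\bigl|e^{\varphi_n(\theta')-\sqrt{8/\beta}m_n}-e^{\varphi_n(\theta'')-\sqrt{8/\beta}m_n}\bigr|\le \tfrac{\pi e^{k_6}}{k_5}$. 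This gives both inequalities with multiplier $1$ and error $O(e^{k_6}/k_5)$, which is \emph{stronger} than the stated form; the factor $\tfrac{2k_5}{2k_5-1}$ can be inserted afterward at the cost of an extra $\tfrac{1}{2k_5-1}\max\le \tfrac{e^{k_6}}{2k_5-1}$, and $1/k_5\le 1/k_5^{\delta_\beta}$ trivially.

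Two points of confusion in your write-up: first, your ``main obstacle'' has the inequality backwards---$k_5^{-1}$ is \emph{smaller} than $k_5^{-\delta_\beta}$, so an error of order $k_5^{-1}$ immediately gives the claimed $k_5^{-\delta_\beta}$. Second, your explanation of $\tfrac{2k_5}{2k_5-1}$ as a ``combinatorial factor from counting grid points'' is not right; in the paper it arises from the Fej\'er-kernel identity in Theorem~\ref{thm:interpolation}, and in your Bernstein route it never appears at all. So your approach is valid but simpler than you present it.
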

\begin{proof}
  For the first claim, we use that the Pr\"ufer phases $\theta \mapsto \Psi_k(\theta)$ are monotone increasing in $\theta$, see the discussion in the beginning of Section
\ref{subsec-softprufer}, and recall that $\varphi_n(\theta)=\Psi_n(\theta)-(n+1)\theta$.
 For the second claim, we apply Theorem \ref{thm:interpolation} to the polynomial $Q$ such that $|Q(e^{i\theta})|^2 = e^{\varphi_n(\theta)}$ for all real $\theta$.  This has degree $n$ and the theorem applies with $m=2k_5$ and $b\asymp k_5^{1-\delta_\beta}.$
\end{proof}

To take advantage of this deterministic result, we then use some first and second moment estimates which when combined with Lemma \ref{lem:interpolation} imply Proposition \ref{prop:interpolation}. The first of these is essentially a triviality that shows that we can disregard near maxima that occur near the boundary of an interval $\widehat{I}_j,$ more specifically those mesh points that are not contained in $K = \cup_{j \in \mathcal{D}_{n/k_1}} I_j$ (recall the definition of $I_j$ in \eqref{eq:Ij}).
\begin{lemma}\label{lem:inter1}
For all $k_2,k_4,k_5,k_6$
  \[
    \limsup_{n \to \infty}
    \sum_{j = 1}^{4k_5 n}
    \one\{ \tfrac{\pi j}{2k_5 n} \not\in K\}
    \Exp[
      \one\{\mathscr{L}(\tfrac{\pi j}{2k_5 n})\}
      \one\{\widehat{\mathscr{R}}(\tfrac{\pi j}{2k_5 n})\}
      \one\{\mathscr{G}_{n}\}
      ~\vert~
      \filt_{k_2}
    ]
    \Prto[k_1]
    0.
  \]
\end{lemma}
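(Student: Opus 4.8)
The plan is to run a direct first-moment estimate, very much in the spirit of Proposition \ref{prop:nearmaximarays}, but exploiting the extra constraint that $\tfrac{\pi j}{2k_5 n}\notin K$. Recall that $K=\cup_{j\in\mathcal{D}_{n/k_1}}I_j$, so by \eqref{eq:Ij} a mesh point $\theta=\tfrac{\pi j}{2k_5 n}$ fails to lie in $K$ precisely when $\theta$ is within angular distance $\tfrac{2\pi\log k_1}{n}$ of one of the $\lceil n/k_1\rceil$ endpoints of the arcs $\widehat{I}_{\ell}$. Hence the number of mesh points $j\in\{1,\dots,4k_5 n\}$ with $\tfrac{\pi j}{2k_5 n}\notin K$ is at most $C k_5(\log k_1)\cdot\tfrac{n}{k_1}$, i.e. a factor $\tfrac{\log k_1}{k_1}$ smaller than the full count $4k_5 n$.

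First I would bound the summand uniformly. On the event $\mathscr{L}(\theta)\cap\widehat{\mathscr{R}}(\theta)\cap\mathscr{G}_n$ (note the lemma writes $\mathscr{R}$ for what is $\widehat{\mathscr{R}}$ in \eqref{eq:Rhat}), the Gaussian-walk estimates already assembled in the proof of Proposition \ref{prop:nearmaximarays} give, for each mesh point $\theta$,
\[
  \Exp\bigl[\one\{\mathscr{L}(\theta)\}\one\{\widehat{\mathscr{R}}(\theta)\}\one\{\mathscr{G}_n\}~\vert~\filt_{k_2}\bigr]
  \leq
  \frac{C_\beta}{n}\,
  e^{\sqrt{\beta/2}\,\varphi_{k_2}(\theta)-\log k_2}
  \bigl(C_\beta k_6+\sqrt 2\log k_2-\sqrt{\tfrac\beta4}\varphi_{k_2}(\theta)\bigr)_+,
\]
which is exactly the bound behind \eqref{eq:nma4b} and \eqref{eq:nmr0} (the ray event only makes the probability smaller, so one may simply drop it and keep the $\mathscr{L}\cap\mathscr{G}_n$ bound). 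I would then sum this over only those $j$ with $\tfrac{\pi j}{2k_5 n}\notin K$. Since $\varphi_{k_2}$ is a.s. continuous, the restricted sum converges, as $n\to\infty$, to an integral over the $\theta$-neighbourhoods of the $n/k_1$ arc-endpoints; but those neighbourhoods have total Lebesgue measure $O(\tfrac{\log k_1}{k_1})$, so the limiting quantity is at most
\[
  C_\beta k_5\,\frac{\log k_1}{k_1}
  \int_0^{2\pi} e^{\sqrt{\beta/2}\,\varphi_{k_2}(\theta)-\log k_2}
  \bigl(C_\beta k_6+\sqrt2\log k_2-\sqrt{\tfrac\beta4}\varphi_{k_2}(\theta)\bigr)_+\,d\theta.
\]
By Theorem \ref{thm:Bj} the integral is a.s. finite, and the prefactor $\tfrac{\log k_1}{k_1}\to 0$ as $k_1\to\infty$, which gives the claim.

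The only mild subtlety — and hence the main point to get right — is the conversion of the discrete sum over the ``near-endpoint'' mesh points into the Lebesgue integral over a set of measure $O(\tfrac{\log k_1}{k_1})$: one must check that for fixed $k_1,k_5$ the mesh $\tfrac{2\pi}{4k_5 n}\Z$ equidistributes in each such $\tfrac{2\pi\log k_1}{n}$-neighbourhood as $n\to\infty$ (it does, with $\asymp 2k_5\log k_1$ mesh points per neighbourhood), and that the Riemann-sum convergence is uniform enough, which follows from a.s. uniform continuity of $\theta\mapsto\varphi_{k_2}(\theta)$ on the compact $[0,2\pi]$. No new probabilistic input beyond what was already used for Proposition \ref{prop:nearmaximarays} and Theorem \ref{thm:Bj} is needed; this lemma is genuinely a ``boundary-effects are negligible'' statement, the gain coming entirely from the factor $\log k_1/k_1$ produced by the thin collar $K^c$.
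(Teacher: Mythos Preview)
Your proof is correct and follows essentially the same approach as the paper's own proof, which is extremely terse: the paper simply notes that the fraction of mesh angles falling outside $K$ vanishes like $\tfrac{\log k_1}{k_1}$ and refers the reader back to the first-moment computation in Proposition~\ref{prop:nearmaximarays}. You have made explicit what the paper leaves implicit --- counting the collar points, dropping the ray indicator (which only helps), invoking the one-ray bound \eqref{eq:nmr0}, passing to the Riemann integral via continuity of $\varphi_{k_2}$, and closing with Theorem~\ref{thm:Bj} --- all of which is exactly the intended argument.
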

\begin{proof}
  For any $I_j$ the fraction of angles $\theta \in I_j$ so that $\theta \not\in K$ vanishes like $\tfrac{\log k_1}{k_1}$ as $k_1 \to \infty.$  In particular, the left-hand-side of the display tends to $0$ like $O_{k_p, p \geq 2}( (\log k_1)/k_1)$  (compare with the proof of Proposition \ref{prop:nearmaximarays}).
\end{proof}

From here we can also give a quick proof of Proposition \ref{prop:interpolation0}:
\begin{proof}[Proof of Proposition \ref{prop:interpolation0}] We focus on the harder case $\sigma=1$, since the proof for $\sigma=i$ is immediate.
  We first observe that on the event $\mathscr{G}_{n},$ globally $\theta \mapsto \varphi_n(\theta)$ is bounded by $k_6$, and hence we always have $W_j \leq k_6$ for any $j \in \mathcal{D}_{n/k_1}.$

  Suppose that $\widehat{W}_j \geq -k_7$, and hence there is a $\theta$ in a neighborhood of $\widehat{I}_j$ at which $\varphi_n(\theta) - \sqrt{\tfrac{8}{\beta}}m_n \geq -k_7.$
  Using the first conclusion of Lemma \ref{lem:interpolation}, there must be a $\theta' \in J(\theta)$ at which
  \[
    e^{\varphi_n(\theta') - \sqrt{\tfrac{8}{\beta}}m_n} \geq
    \biggl(\frac{2k_5-1}{2k_5}\biggr)
    \biggl(
    e^{-k_7} - \frac{Ce^{k_6} }{k_5^{\delta_\beta}}
    \biggr).
  \]
  By virtue of Lemma \ref{lem:inter1}, we may assume that $\theta' \in I_j.$  Making $k_5$ large, we conclude
  \[
    \varphi_n(\theta') - \sqrt{\tfrac{8}{\beta}}m_n
    \geq -k_7 - o_{k_5}(1).
  \]
  As $k_6$ is much larger that $k_7,$ this concludes the proof.
\end{proof}

The second probabilistic input we need in order to prove Proposition \ref{prop:interpolation} is that on the finite sets $J(\theta)$ for $\theta \in I_j$ at which $\mathscr{L}(\theta)$ occurs, the process $\varphi$ can be taken nearly constant (or more specifically its oscillation is no more than $k_5^{-\delta_\beta}$) simultaneously for all $\theta$ for which $\mathscr{L}(\theta)$ holds.  Define the event
\begin{equation}\label{eq:J}
  \mathscr{J}(\theta)
  \coloneqq
  \biggl\{
    \bigl(
    \max_{\theta' \in J(\theta)}
    \{\varphi_n(\theta')\}
    -\min_{\theta' \in J(\theta)} \{\varphi_n(\theta')\}
    \bigr)\leq k_5^{-\delta_\beta}
  \biggr\}.
\end{equation}
Thus we show:
%This we do by performing a first moment over all $\theta,$ and a conditional second moment over oscillations of $\varphi_n(\theta')-\varphi(\theta)$ for $\theta' \in J(\theta).$
\begin{lemma}\label{lem:inter2}
For all $k_6$
  \begin{align*}
&    \limsup_{n \to \infty}
    \sum_{j = 1}^{4k_5 n}
    \Exp[
      \one\{\mathscr{L}(\tfrac{\pi j}{2k_5 n})
      \cap \mathscr{R}_{j}^2({n}_1^+)
      \cap \mathscr{P}_j'
	    \cap \mathscr{O}(\tfrac{\pi j}{2k_5 n})
	    \cap \mathscr{O}^{\Psi}(\tfrac{\pi j}{2k_5 n})
      \}
      \one\{\mathscr{J}^c(\tfrac{\pi j}{2k_5 n})\}
      \one\{\mathscr{G}_{n}\}
      ~\vert~
      \filt_{k_2}
    ]\\
&\qquad \qquad\qquad\qquad
    \Prto[k_5,k_4,k_2,k_1]
    0.
  \end{align*}
\end{lemma}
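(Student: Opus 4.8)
The plan is to bound the sum by a conditional first moment, converting the indicator of $\mathscr{J}^c(\theta)$ on $J(\theta)$ into a two-point (or two-ray) estimate for the oscillation of $\varphi_n$ over the small window $J(\theta)$, which has width $\asymp \tfrac{1}{n k_5^{1-\delta_\beta}}$. First I would discard, via Lemma \ref{lem:inter1}, those $j$ with $\tfrac{\pi j}{2k_5 n} \notin K$, so that we may restrict to mesh points lying inside some $I_\ell$; since $\mathscr{O}(\theta)$, $\mathscr{O}^\Psi(\theta)$ and the ray event $\mathscr{R}_\ell(n_1^+)$ are already in force, the profile of $\varphi_k(\cdot)$ is essentially constant on $\widehat I_\ell$ up to level $n_1^+$, so all the correlation between $\varphi_n(\theta)$ and $\varphi_n(\theta')$ for $\theta'\in J(\theta)$ is accumulated after time $n_1^+$. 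Thus, conditioning on $\filt_{n_1^+}$ and on the value $\varphi_{n_1^+}(\theta_\ell)$ (equivalently $V_\ell'$), the pair $(\varphi_n(\theta),\varphi_n(\theta'))$ is driven by the coupled diffusions $\mathfrak{U}^\ell$ from \eqref{eq:LU}, and the event $\mathscr{J}^c(\theta)$ requires two of these diffusions, whose driving Brownian motions decorrelate only at logarithmic scale in $n$ (branch time $\k$ very close to $\log_2 n$), to end at distance $>\tfrac12 k_5^{-\delta_\beta}$ apart while both being near-leaders.

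The key quantitative step is the following estimate, analogous to the barrier-difference bounds already used (e.g.\ \eqref{eq:oscillation} in Proposition \ref{prop:trunkray2}, and Corollary \ref{cor:osc}): conditionally on $\filt_{n_1^+}$ and $\mathscr{R}_\ell^2(n_1^+)$, and on $\varphi_n(\theta)\in \sqrt{\tfrac{8}{\beta}}m_n+[-k_6,\infty)$, the probability that some $\theta'\in J(\theta)$ has $|\varphi_n(\theta)-\varphi_n(\theta')| > \tfrac12 k_5^{-\delta_\beta}$ is at most
\[
  C_\beta\, k_5\, k_5^{\delta_\beta} e^{-c k_5^{2\delta_\beta}}\,\frac{(\sqrt{2}m_n-\mathfrak{Z}_{H_{n_1^+}}(\theta_\ell))_+}{\log n},
\]
where the gain $e^{-c k_5^{2\delta_\beta}}$ comes from a Gaussian-increment (modulus of continuity) bound over a window whose log-width is $\log k_5$ — the two paths agree up to time $\approx H_n-\log k_5$ and then run for only $\log k_5$ further units of time, so a displacement exceeding $k_5^{-\delta_\beta}$ costs a Gaussian tail of the stated size, while the factor $k_5^{1+\delta_\beta}$ is a union bound over $|J(\theta)|$ points. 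This is exactly the regime handled by Proposition \ref{prop:lsmcontinuity} (the $\delta$ of which defines $\delta_\beta$) in the case $\sigma=1$, and it is immediate from monotonicity of the Pr\"ufer phases when $\sigma=i$ since then $0\le \varphi_n(\theta)-\varphi_n(\theta') \le (n+1)|\theta-\theta'| \lesssim k_5^{-(1-\delta_\beta)}$, which is $\le k_5^{-\delta_\beta}$ for $k_5$ large. Multiplying this conditional oscillation probability by the one-ray probability $\Pr(\mathscr{L}(\theta)\cap \mathscr{R}(\theta)\mid \filt_{k_2})$ — which by the now-standard ballot estimate (cf.\ \eqref{eq:nma4a}, Lemma \ref{lem:bananadensity}) contributes $\tfrac{1}{n}e^{\sqrt{\beta/2}\varphi_{k_2}(\theta)-\log k_2}(\sqrt 2\log k_2 - \sqrt{\beta/4}\varphi_{k_2}(\theta))_+$ after summing the endpoint — and summing the $4k_5 n$ terms, the $n$-sum converges to the integral $\int_0^{2\pi}\mathscr{D}_{k_2}(\theta)\,d\theta$ (finite by Theorem \ref{thm:Bj}), so the total is bounded by
\[
  C_\beta\, k_5^{2+\delta_\beta} e^{-c k_5^{2\delta_\beta}} \int_0^{2\pi}\mathscr{D}_{k_2}(\theta)\,d\theta,
\]
which tends to $0$ as $k_1$ (hence eventually $k_5$) $\to\infty$, as required by the order of limits $\Prto[k_5,k_4,k_2,k_1]$.

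The main obstacle is making the "all the correlation is accumulated after $n_1^+$" reduction rigorous together with the coupled-diffusion oscillation bound: one must carefully invoke Proposition \ref{prop:uberdecoupling} / $\mathscr{G}^1_n$ to replace $\varphi_k(\theta_\ell+\tfrac{\eta}{n})$ by $\mathfrak{U}^\ell_{k(t)}(\eta)$ for all $\eta$ with $|\eta|\le 5n^\delta$ (which covers $J(\theta)$ since $|J(\theta)|$ sits in an $O(k_5^{\delta_\beta-1})$-neighborhood on the $\eta$-scale), and then deduce the $k_5^{-\delta_\beta}$-oscillation bound for the diffusion pair from the modulus-of-continuity estimate for the driving Brownian motion on the truncation event $\mathscr{T}_{n_1^+}$, combined with the barrier control $\mathscr{R}_\ell^2(n_1^+)$ and $\mathscr{P}'_\ell$ to keep the drift term $i\theta e^t k_1^{-1}$ and the phase $e^{i\Im\mathfrak L}$ under control over the window. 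For $\sigma=i$ this obstacle evaporates; for $\sigma=1$ it is precisely the content of the cited interpolation/continuity input Proposition \ref{prop:lsmcontinuity}, so the proof here is bookkeeping: apply that proposition on the event in question, union-bound over $J(\theta)$, and feed the result into the first-moment sum exactly as in the proof of Proposition \ref{prop:nearmaximarays}.
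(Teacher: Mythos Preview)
Your structural plan is correct and matches the paper's proof: work on $\mathscr{G}_n^1$ to pass to the diffusion $\mathfrak{U}^j$, condition on $\filt_{n_1^+}$, invoke Proposition~\ref{prop:lsmcontinuity} for the fine oscillation bound, then take the $\filt_{k_2}$-expectation via the one-ray estimate and sum, closing with Theorem~\ref{thm:Bj}.

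However, your stated quantitative gain is wrong. Proposition~\ref{prop:lsmcontinuity} does \emph{not} yield an exponential gain $e^{-c k_5^{2\delta_\beta}}$; it gives only a \emph{polynomial} gain $\theta_0^{1+\delta_2}$ in the window width $\theta_0\asymp k_5^{-(1-\delta_\beta)}$. Your heuristic --- that the two rays agree up to time $H_n-\log k_5$ and then a Gaussian tail controls the displacement --- is flawed on two counts. First, the difference $\mathfrak{U}^j_t(\theta)-\mathfrak{U}^j_t(0)$ is not Gaussian: it solves a nonlinear SDE in which the phase $e^{i\Im\mathfrak{L}_t}$ couples the two rays. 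Second, you are conditioning on $\mathfrak{U}^j_{T_+}(0)$ being a near-leader, and this conditioning (handled in the proposition via a Bessel-bridge change of measure) alters the law of the difference. The actual bottleneck is Step~4 of the proof of Proposition~\ref{prop:lsmcontinuity}, where the event $E_2$ fails with probability $\theta_0^{\eta_\beta}$ for some $\eta_\beta>1$; this is the origin of the merely polynomial rate.

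The polynomial gain is still enough, but the arithmetic is different from what you wrote. The proposition gives, jointly with the near-leader cost and conditional on $\filt_{n_1^+}$, a bound of order
\[
  k_5^{-(1+2\delta_\beta)(1-\delta_\beta)}\,
  \frac{\bigl(\sqrt{\tfrac{8}{\beta}}m_{n_1^+}-\varphi_{n_1^+}(\theta)\bigr)\,
        \exp\bigl(\sqrt{\tfrac{\beta}{2}}\varphi_{n_1^+}(\theta)-2m_{n_1^+}\bigr)}
       {k_1^+(\log k_1)^{3/2}}\,
  \one\{\mathscr{R}_j(n_1^+)\}.
\]
After taking the $\filt_{k_2}$-expectation through the ray event and summing the $4k_5 n$ mesh points, the surviving power of $k_5$ is $k_5^{1-(1+2\delta_\beta)(1-\delta_\beta)}=k_5^{-\delta_\beta+2\delta_\beta^2}$, which tends to $0$ since $\delta_\beta<\tfrac12$. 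Your displayed final bound $k_5^{2+\delta_\beta}e^{-ck_5^{2\delta_\beta}}$ should be replaced by $k_5^{-\delta_\beta+2\delta_\beta^2}\int_0^{2\pi}\mathscr{D}_{k_2}(\theta)\,d\theta$.

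A smaller point on $\sigma=i$: monotonicity of $\Psi_n$ only gives the one-sided bound $\varphi_n(\theta)-\varphi_n(\theta')\ge -(n+1)(\theta-\theta')$ for $\theta>\theta'$; it does not bound $\varphi_n(\theta)-\varphi_n(\theta')$ from above, so your claim that this case is ``immediate'' is incomplete as stated.
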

\noindent Proposition \ref{prop:interpolation} follows immediately from Lemmas \ref{lem:inter2} and \ref{lem:interpolation} in the case $\sigma =1$.
\begin{proof}
  We may additionally work on the event $\mathscr{G}_n^1,$ using which we may replace $\varphi_n(\theta_\ell + \tfrac{\theta}{n})$ by $\mathfrak{U}_{T_+}(\theta)$.
  The main technical work is contained in Proposition \ref{prop:lsmcontinuity}.
  Under this proposition, we have an estimate
  \[
    \begin{aligned}
     &\Exp[
      \one\{\mathscr{L}(\tfrac{\pi j}{2k_5 n})
      \cap \mathscr{R}_{j}^2({n}_1^+)
      \cap \mathscr{P}_j'
	    \cap \mathscr{O}(\tfrac{\pi j}{2k_5 n})
	    \cap \mathscr{O}^{\Psi}(\tfrac{\pi j}{2k_5 n})
      \}
      \one\{\mathscr{J}^c(\tfrac{\pi j}{2k_5 n})\}
      \one\{\mathscr{G}_{n}\}
      ~\vert~
      \filt_{n_1^+}
    ] \\
    &\leq
    C(k_6)
    k_5^{-(1+2\delta_\beta)(1-\delta_\beta)}
      \frac{
	\bigl(
	\sqrt{\tfrac{8}{\beta}}m_{n_1^+}
	-
	%\mathfrak{U}_{T_-}^j(\tfrac{\pi j}{2k_5 n})
	\varphi_{n_1^+}(\tfrac{\pi j}{2k_5 n})
	\bigr)
	\exp\bigl(\sqrt{\tfrac{\beta}{2}}
	\varphi_{n_1^+}(\tfrac{\pi j}{2k_5 n})
	-2
	m_{n_1^+}
	\bigr)
      }{k_1^+(\log k_1)^{3/2}}
        \Exp[
          \one\{\mathscr{R}_{j}^2({n}_1^+)\}
      ~\vert~
      \filt_{n_1^+}
    ].
  \end{aligned}
  \]
  We have used here that $\varphi_{n_1^+}(\tfrac{\pi j}{2k_5 n}) = \mathfrak{U}_{T_-}^j(\tfrac{\pi j}{2k_5}-n\theta_j)$, the initial conditions from \eqref{eq:LU}.
  Bounding above the probability, which proceeds in the same fashion as \eqref{eq:nma4a}--\eqref{eq:nmr3}, we have
    \begin{equation*}
    \begin{aligned}
      \limsup_{k_1,n \to \infty}
      &\sum_{j = 1}^{4k_5 n}
      \Exp[
      \one\{\mathscr{L}(\tfrac{\pi j}{2k_5 n})
      \cap \mathscr{R}_{j}^2({n}_1^+)
      \cap \mathscr{P}_j'
	    \cap \mathscr{O}(\tfrac{\pi j}{2k_5 n})
	    \cap \mathscr{O}^{\Psi}(\tfrac{\pi j}{2k_5 n})
      \}
      \one\{\mathscr{J}^c(\tfrac{\pi j}{2k_5 n})\}
      \one\{\mathscr{G}_{n}\}
      ~\vert~
      \filt_{k_2}
      ] \\
      &\leq
      C(k_6)
      k_5^{-\delta_\beta + 2\delta_\beta^2}
      \int_0^{2\pi}
      e^{\sqrt{\tfrac{\beta}{2}}( 6k_6 + \varphi_{k_2}(\theta)) -\log(k_2)}
      \bigl(C_\beta k_6+\sqrt{2}\log k_2 - \sqrt{\tfrac{\beta}{4}} \varphi_{k_2}(\theta)\bigr)
      d\theta \\
      &+
      k_5^{-\delta_\beta + 2\delta_\beta^2}
      \int_0^{2\pi}
      e^{\sqrt{\tfrac{\beta}{2}}(  \varphi_{k_2}(\theta) + k_6) -\log(k_2)}
      \one[{ \varphi_{k_2}(\theta) \not\in [s_{k_2}^{-},s_{k_2}^+] }]
      \bigl|c_\beta k_6+\sqrt{2}\log k_2 - \sqrt{\tfrac{\beta}{4}} \varphi_{k_2}(\theta)\bigr|
      d\theta.
    \end{aligned}
  \end{equation*}
  Using Theorem \ref{thm:Bj} we have on sending $k_2 \to \infty$ that this upper bound converges almost surely.
  On afterward sending $k_5$ to infinity, the result follows.
\end{proof}

\section{Bessel bridges}
\label{sec:bessels}
In what follows $\theta_0$ will be a fixed angle in $[0,2\pi].$
Recall from \eqref{eq:GkZt} that for $\sigma \in \left\{ 1, i \right\},$
\begin{equation*}%\label{eq:Gk}
  G_{k+1}(\theta_0)=G_{k}(\theta_0) - 2\Re \{ \sigma \bigl(\tfrac{Z_k}{\beta_k}\bigr)e^{i\Psi_k(\theta_0)}\}
  ,\quad G_0(\theta_0) = \theta_0.
\end{equation*}
Recall further, see \eqref{eq:Z}, \eqref{eq:GkZt},
the complex Brownian motion $\mathfrak{Z}^\C_t(\theta_0)$ with the 
normalization $[\mathfrak{Z}_t^\C(\theta_0),\overline{\mathfrak{Z}_t^\C(\theta_0)}] = 2t,$ and the standard real Brownian motion $\mathfrak{Z}_t=\Re(\sigma \mathfrak{Z}_t^\C(\theta_0))$ for any $t \geq 0,$ so that $G_k(\theta_0) = \sqrt{\tfrac{4}{\beta}}\mathfrak{Z}_{H_k}(\theta_0)$ for any $k \geq 0.$

We shall work conditionally on the endpoints $\mathfrak{Z}_t$ of various intervals $[t_0,t_1]$, after which the process is a standard Brownian bridge.  Further conditioning this bridge to lie below the line $t\mapsto \alpha t,$ on the interval $[t_0, t_1]$ the process $\alpha t - \mathfrak{Z}_t$ has the law of a \emph{$3$--dimensional Bessel process bridge}.   It remains a semimartingale after this conditioning (with respect to the appropriate augmented filtration), and moreover it is a strong solution to an SDE.  We record these facts and some distributional facts about this Bessel bridge in the following lemma.
\begin{lemma}\label{lem:besselbridge}
  Let $(B_t : t \geq 0)$ be a standard real Brownian motion.
  For any $\alpha, c_0,c_1 > 0$ and any $ 0 < t_0 < t_1,$
  let $(\mathfrak{X}_t : t \in [t_0,t_1])$ be the strong solution of
  \[
    \mathfrak{X}_t
    -
    \mathfrak{X}_{t_0}
    =
    \alpha (t-t_0) +  (B_t - B_{t_0}) + \int_{t_0}^t \left( \frac{1}{\mathfrak{X}_s-\alpha s} - \frac{\mathfrak{X}_s-(\alpha s-c_1)}{t_1-s} \right)\,ds,
    \quad
    \mathfrak{X}_{t_0}
    = \alpha t_0 - c_0.
  \]
  Then $(\mathfrak{Z}_t : t \in [t_0,t_1])$ has the same law
  as $(\mathfrak{X}_t : t \in [t_0,t_1])$
  when conditioning on
  \[
    \mathfrak{Z}_{t_0} = \alpha t_0 -c_0,
    \quad
    \mathfrak{Z}_{t_1} = \alpha t_1 -c_1,
    \quad{\text{and}}\quad
    \mathfrak{Z}_t \leq \alpha t \quad \text{for all} \quad t \in [t_0,t_1]
    .
  \]
  For any $t \in (t_0,t_1),$ the density of 
  $y_t = \alpha t-\mathfrak{X}_t$
  is given by
  \[
    f(u)
    =
    Z(t_1-t_0,c_0,c_1)
    \sinh\biggl(
    \frac{c_0 u}{t-t_0}
    \biggr)
    \sinh\biggl(
    \frac{u c_1}{t_1-t}
    \biggr)
    \exp\biggl(
    -\frac{u^2}{2(t-t_0)}
    -\frac{u^2}{2(t_1-t)}
    \biggr),
    \quad u \geq 0,
  \]
  where $Z(\cdot,\cdot,\cdot)$ is a normalizing constant (given explicitly in the proof text).
  Provided $(c_0^2+c_1^2) \leq (t_1 - t_0)$ we therefore have the estimates, with $s=\min\{t-t_0,t_1-t\}$, for some absolute constant $C$
  \[
    \Pr(\alpha t - \mathfrak{X}_t \leq x) \leq \frac{C x^3}{s^{3/2}}
    \quad \text{for all}
    \quad x \leq \sqrt{s}.
  \]
  Furthermore, with $i \in \{0,1\}$ depending on 
  whichever achieves the minimum of $\min\{|t-t_i|\}$ in the definition of $s$,
  \begin{equation}
    \label{eq-290324}
    \Pr(\alpha t - \mathfrak{X}_t \in du) \leq
    C
    e^{-(u-c_i)^2/(2s)}\frac{u^2 du}{s^{3/2}}
    \quad \text{for all}
    \quad u \geq 0.
  \end{equation}
\end{lemma}
\begin{proof}
  See \cite[Chapter XI]{RevuzYor} for a good exposition.
  Setting $X_t = \alpha t - \mathfrak{Z}_t,$ the SDE reduces to a standard result on a Brownian motion conditioned to stay positive (c.f.\ \cite[Exercise XI.3.11.2]{RevuzYor}, \cite{Roberts}).
  The density $f$
  is given in \cite[Chapter XI.3]{RevuzYor}. The normalizing constant $Z$ is given by
  \[
    Z
    =
    \sqrt{\frac{2}{\pi}}
    \sqrt{\frac{t_1-t_0}{(t-t_0)(t_1-t)}}
    \exp\biggl(
    -\frac{c_0^2}{2(t-t_0)}
    +\frac{c_0^2}{2(t_1-t_0)}
    -\frac{c_1^2}{2(t_1-t)}
    +\frac{c_1^2}{2(t_1-t_0)}
    \biggr)\!\operatorname{csch}\biggl(\frac{c_0c_1}{t_1-t_0}\biggr).
  \]
  Under the assumption on $c_0^2+c_1^2$ and using the numerical inequality $x \leq \sinh(x)$ for all $x \geq 0$ we may bound above the normalizing constant $Z$
  \[
    Z \leq C\biggl(
      \frac{t_1-t_0}{c_0c_1}\biggr)
      \biggl(\frac{t_1-t_0}{(t-t_0)(t_1-t)}\biggr)^{1/2}
      \exp\biggl(
      -\frac{c_0^2}{2(t-t_0)}
      -\frac{c_1^2}{2(t_1-t)}
      \biggr)
 .
  \]
  Using $\sinh(x) \leq xe^x$ for all $x \geq 0$, we arrive at the density bound for another absolute constant $C>0$
  \[
  f(u) \leq 
  Cu^2\biggl(\frac{t_1-t_0}{(t-t_0)(t_1-t)}\biggr)^{3/2}
  \exp\biggl(
    -\frac{(u-c_0)^2}{2(t-t_0)}
    -\frac{(u-c_1)^2}{2(t_1-t)}
    \biggr).
  \]
  Hence with the same constant $C$, we have 
  \begin{equation}
    \label{eq-290324a}
    f(u) \leq \frac{2^{3/2}Cu^2}{s^{3/2}}
    \quad\text{for all}\quad u \leq \sqrt{s}.
  \end{equation}
  The final conclusion follows similarly.
\end{proof}

We also note that a Bessel bridge has good oscillation properties on short time windows, provided its endpoints are not brought close to the barrier:
\begin{lemma}
  Suppose that $(\mathfrak{X}_t : t \in [0,1])$ has the law of a $3$-dimensional Bessel bridge with $\mathfrak{X}_0 = x_0$ and $\mathfrak{X}_1 = x_1$.  Then if $x_1,x_0 \geq 1$, there is an absolute constant $c > 0$ so that
  \[
    \Pr\biggl(
    \sup_{t \in [0,1]}
    |\mathfrak{X}_t - (x_1-x_0)t - x_0| \geq s+c
    \biggr)
    \leq \exp(-s^2/c).
  \]
  \label{lem:bbridgeosc}
\end{lemma}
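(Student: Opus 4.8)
The plan is to reduce the oscillation estimate to a Gaussian tail bound by comparing the Bessel bridge to a standard Brownian bridge. First I would use the representation of a $3$-dimensional Bessel bridge from $x_0$ to $x_1$ on $[0,1]$ as the modulus of a $3$-dimensional Brownian bridge: writing $\mathbf{B}_t=(B_t^{(1)},B_t^{(2)},B_t^{(3)})$ for a standard Brownian bridge in $\R^3$ with $\mathbf{B}_0=(x_0,0,0)$ and $\mathbf{B}_1=(x_1,0,0)$, the process $\mathfrak{X}_t=|\mathbf{B}_t|$ has the law of the Bessel bridge in question. This realizes $\mathfrak{X}_t$ on the same space as three independent one-dimensional bridges, which makes the fluctuations explicit.

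Next I would bound the deviation $|\mathfrak{X}_t-(x_1-x_0)t-x_0|$ by the Euclidean deviation of $\mathbf{B}_t$ from the line segment joining its endpoints. The key inequality is that for vectors $a,b$ with $|a|\ge 1$ and $|a-b|\le \tfrac12$ one has $\bigl||b|-|a|\bigr|\le |b-a|$ always, and more to the point, since the straight-line interpolant $\ell_t=(x_1-x_0)t+x_0$ of $\mathfrak{X}$ differs from $|\ell_t e_1|$ by $0$, we get
\[
  |\mathfrak{X}_t-\ell_t| = \bigl| |\mathbf{B}_t| - |\ell_t e_1| \bigr| \le |\mathbf{B}_t - \ell_t e_1|
  = \Bigl( (B_t^{(1)}-\ell_t)^2 + (B_t^{(2)})^2 + (B_t^{(3)})^2\Bigr)^{1/2}.
\]
Each of $B^{(1)}_t-\ell_t$, $B^{(2)}_t$, $B^{(3)}_t$ is a centered Brownian bridge on $[0,1]$ pinned at $0$ at both ends, so its supremum $S^{(i)}=\sup_{t\in[0,1]}|B^{(i)}_t|$ has a Gaussian-type tail: $\Pr(S^{(i)}\ge s)\le 2e^{-2s^2}$ by the reflection principle. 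Hence $\sup_{t\in[0,1]}|\mathfrak{X}_t-\ell_t|\le \bigl((S^{(1)})^2+(S^{(2)})^2+(S^{(3)})^2\bigr)^{1/2}$, and a union bound plus the Gaussian tails gives $\Pr\bigl(\sup_t|\mathfrak{X}_t-\ell_t|\ge r\bigr)\le 6e^{-2r^2/3}$ for all $r\ge 0$. Taking $r=t+c$ with $c$ a large enough absolute constant (to absorb the constant factor $6$ into an exponential and to make $2(t+c)^2/3\ge t^2/c$) yields the claimed bound $\Pr\bigl(\sup_{t\in[0,1]}|\mathfrak{X}_t-\ell_t|\ge t+c\bigr)\le \exp(-t^2/c)$.

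The only mild subtlety — and where I would be careful — is the use of the hypothesis $x_0,x_1\ge 1$: it is not literally needed for the crude inequality $||\mathbf B_t|-|\ell_t e_1||\le|\mathbf B_t-\ell_t e_1|$, which holds unconditionally, so in fact the estimate above does not even require the lower bound on the endpoints. However, if one instead wanted the sharper pathwise statement comparing $\mathfrak{X}$ directly to the $3$-Bessel SDE drift, the condition $x_0,x_1\ge 1$ keeps the interpolant $\ell_t$ bounded away from $0$ and prevents the bridge from being forced near the origin, so that the Bessel drift $1/\mathfrak{X}_s$ stays controlled; I would mention this only if a route through the SDE (Lemma \ref{lem:besselbridge}) is preferred. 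Either way, the main — and essentially only — obstacle is packaging the three independent Brownian-bridge supremum bounds into the single exponential with the stated constant, which is routine.
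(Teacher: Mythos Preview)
Your argument rests on the claim that the $3$-dimensional Bessel bridge from $x_0$ to $x_1$ can be realized as $|\mathbf{B}_t|$ for a $3$-dimensional Brownian bridge $\mathbf{B}$ from $(x_0,0,0)$ to $(x_1,0,0)$. This is false when both endpoints are nonzero. The identity ``$|\mathbf{W}_t|$ is $\operatorname{BES}(3)$'' holds for free Brownian motion, but it does not survive conditioning on the endpoint vector: the angular and radial parts of $\mathbf{W}$ are coupled through the clock in the skew-product decomposition, so conditioning on $\mathbf{W}_1$ feeds back information about the whole radial path, not just $|\mathbf{W}_1|$. Concretely, for $t\in(0,1)$ the one-time density of $|\mathbf{B}_t|$ at $y$ is proportional to
\(
y\,\sinh\!\bigl(y(\tfrac{x_0}{t}+\tfrac{x_1}{1-t})\bigr)\,e^{-y^2/(2t(1-t))},
\)
whereas the Bessel-bridge density recorded in Lemma~\ref{lem:besselbridge} is proportional to
\(
\sinh(\tfrac{x_0 y}{t})\sinh(\tfrac{x_1 y}{1-t})\,e^{-y^2/(2t(1-t))};
\)
these disagree. (Equivalently, $|\mathbf{B}_t|$ is not even Markov in its own filtration.) So although the inequality $\bigl|\,|\mathbf{B}_t|-\ell_t\,\bigr|\le|\mathbf{B}_t-\ell_t e_1|$ and the ensuing subgaussian bound are correct, they control the wrong process.

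The paper instead uses the one-dimensional realization: the $\operatorname{BES}(3)$ bridge is a one-dimensional Brownian bridge $\mathfrak{Z}$ from $x_0$ to $x_1$ conditioned on the event $\mathcal{H}=\{\mathfrak{Z}_t>0\ \text{for all}\ t\in[0,1]\}$. The unconditioned bridge already satisfies the standard Gaussian oscillation bound, and the hypothesis $x_0,x_1\ge1$ enters exactly here, to lower-bound $\Pr(\mathcal{H})$ by a universal constant $p>0$ (by monotonicity in the endpoints one may take $x_0=x_1=1$). Dividing by $p$ and absorbing it into $c$ gives the lemma. If you want to repair your route, you could replace the $3$-d bridge representation by this conditioning argument; the rest of your write-up (the Gaussian tail for the bridge supremum) then plugs in directly.
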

\begin{proof}
  We can realize the Bessel bridge by taking a Brownian Bridge $\mathfrak{Z}_t$ with the same endpoints and conditioning it to remain positive.  Let $\mathcal{H}$ be the event that
  this Brownian bridge is positive.  It then suffices to prove that
  \[
    \Pr\biggl(
    \biggl\{
    \sup_{t \in [0,1]}
    |\mathfrak{Z}_t - (x_1-x_0)t - x_0| \geq s+c
  \biggr\}
  ~\vert~
  \mathcal{H}
    \biggr)
    \leq \exp(-s^2/c).
  \]
  This is a standard fact for $\mathfrak{Z}_t$ without the conditioning, i.e.\ there is a $c>0$ sufficiently large that
    \[
    \Pr\biggl(
    \biggl\{
    \sup_{t \in [0,1]}
    |\mathfrak{Z}_t - (x_1-x_0)t - x_0| \geq s+c
  \biggr\}
    \biggr)
    \leq \exp(-s^2/c).
  \]
  The event $\mathcal{H}$ has probability bounded below by some absolute constant (indeed it is possible to compute it), but note that by monotonicity it is bounded below by the same probability where we take $x_1=x_0=1,$ which in any case is some number $p > 0$, and so
    \[
    \Pr\biggl(
    \biggl\{
    \sup_{t \in [0,1]}
    |\mathfrak{Z}_t - (x_1-x_0)t - x_0| \geq s+c
  \biggr\}
  ~\vert~
  \mathcal{H}
    \biggr)
    \leq \exp(-s^2/c)/p.
  \]
  Then increasing $c$, the claimed bound follows.
\end{proof}

As our first application, we show that conditionally on the process lying slightly below a concave barrier, the process in fact tends to stay in the entropic envelope. Recall the event 
$\mathscr{G}_n$ from Lemma \ref{lem:goodstuff1},
 and the event $ \widehat{\mathscr{R}}(\theta)$ from \eqref{eq:Rhat}.

\begin{lemma}\label{lem:allhailbanana}
  On the event that
  $\mathfrak{Z}_{H_{k_2}} \in \sqrt{2}(\log k_2 - [(\log k_2)^{0.49}, (\log k_2)^{0.51}])$
  and
  that
  $\mathfrak{Z}_{H_n} \in \sqrt{2}m_n + [-k_6,k_6],$
  there is a constant $c > 0$ sufficiently small
  so that for all $n \gg k_2 \gg k_4 \gg k_5$ sufficiently large
  \[
    \Pr(
    \widehat{\mathscr{R}}(\theta)^c
    \cap \mathscr{G}_n
    ~\vert~ \mathfrak{Z}_{H_{k_2}},\mathfrak{Z}_{H_{n}}
    )
    \leq
    \frac{
      (\sqrt{2}\log k_2 - \mathfrak{Z}_{H_{k_2}})
      e^{-c(\log k_5)^{2}}
      %k_4^{-c}
    }{\log n}
    .
  \]
\end{lemma}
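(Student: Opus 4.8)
The plan is to reduce the estimate to a sequence of Brownian-motion/Bessel-bridge computations by conditioning on the endpoints $\mathfrak{Z}_{H_{k_2}}$ and $\mathfrak{Z}_{H_n}$ and splitting the time interval $[H_{k_2},H_n]$ into three regimes. Writing $c_0 = \sqrt{2}\log k_2 - \mathfrak{Z}_{H_{k_2}}$ (which is of order $(\log k_2)^{1/2}$ on the conditioning event) and $c_1 = \sqrt{2}m_n - \mathfrak{Z}_{H_n} \in [-k_6,k_6]+O(1)$, the first step is to observe that on the event $\mathscr{B}_{n,k_2,k_6}$ the path $t \mapsto \sqrt{\tfrac{4}{\beta}}\mathfrak{Z}_t$ already lies below the concave barrier $\sqrt{\tfrac{8}{\beta}}A_t^{\ll}$ at dyadic times, and by the oscillation control coming from $\mathscr{T}_{k_2}$ (see \eqref{eq:good}) this upgrades to a pathwise bound $\mathfrak{Z}_t \leq \sqrt{2}(t + (t\wedge(H_n-t))^{1/100}) + O(1)$ for all $t$. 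Thus on this event $\mathfrak{Z}_t$ is a Brownian bridge (between the conditioned endpoints) conditioned to stay below a barrier curving slightly above the straight line, and $\alpha t - \mathfrak{Z}_t$ for a suitable $\alpha \approx \sqrt{2}$ is (after a further conditioning, which only costs a bounded factor as in Lemma \ref{lem:besselbridge}) comparable to a $3$-dimensional Bessel bridge started at height $c_0$ and ending at height $\approx c_1$.

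Next I would estimate the three pieces of the complement $\widehat{\mathscr{R}}(\theta)^c$. The dominant term is the event that the Bessel bridge exits the ``banana'' envelope $[\sqrt{\tfrac{8}{\beta}}A_t^{1,-},\sqrt{\tfrac{8}{\beta}}A_t^{1,+}]$ on $[H_{k_2},H_n-k_4]$: the probability that a Bessel bridge with endpoint heights $c_0$ and $O(1)$ dips to within $O(1)$ of $0$ (i.e. $\mathfrak{Z}$ touches the straight barrier) at an interior time is, by the exact density formula in Lemma \ref{lem:besselbridge}, bounded by $C c_0 c_1 / \log n$ up to the entropic correction; summing the probability of exiting in the $k$-th dyadic block against the envelope width and using a union bound over $\log n$ blocks yields the factor $c_0/\log n$ with a gain $e^{-c(\log k_5)^2}$ that comes from the fact that near the endpoints $H_{k_2}$ and $H_n$ the envelope is much wider than $1$, so the first real chance to exit is after time $\sim (\log k_5)^2$ away — this is exactly the mechanism behind the stretched-exponential gain, since the barrier exponents $1/2 \mp 1/3$ create a window of width $\gg 1$ for a macroscopic time. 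For the second part, the event that on $[H_n-k_4,H_n]$ the walk exceeds $\sqrt{\tfrac{8}{\beta}}(t-\tfrac34\log\log n + \tfrac{1}{\sqrt2}(\cdots)^{1/50})$, one uses the Brownian-bridge estimate near the right endpoint together with the pathwise upper bound from $\mathscr{B}_{n,k_2,k_6}$ (which already puts the path below a concave barrier), so that this contributes an even smaller quantity absorbed into the main term. The $\mathscr{U}(\theta)$ component of $\widehat{\mathscr{R}}(\theta)$ is automatic from the conditioning on $\mathfrak{Z}_{H_{k_2}}$.

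The main obstacle is getting the \emph{gain} $e^{-c(\log k_5)^2}$ with the correct normalization simultaneously with the $(\sqrt{2}\log k_2 - \mathfrak{Z}_{H_{k_2}})/\log n$ prefactor: the naive ballot-type bound gives $c_0 c_1/\log n$ without any $k_5$-gain, and one has to track carefully that an exit from the $A_t^{1,\pm}$ envelope — as opposed to merely touching the concave a priori barrier $A_t^{\ll}$ — requires the path to deviate by a polynomially-in-$(\log k_5)$ amount over a time interval of length $\sim (\log k_5)^2$, whose probability is $\exp(-c(\log k_5)^2)$ by Gaussian small-ball / reflection estimates. Concretely I would split the banana-exit event according to the first dyadic time $H_{2^k}$ of exit, bound the conditional probability of exit in block $k$ using Lemma \ref{lem:besselbridge} (the $x^3/s^{3/2}$ small-value estimate at the lower envelope and the $e^{-(u-c_i)^2/2|t-t_i|} u/|t-t_i|$ tail at the upper), and note that for the lower envelope the relevant $x$ is the envelope gap $(2^k \wedge (H_n - 2^k))^{1/2 - 1/3}$ which near the endpoints (within $(\log k_5)^2$ steps) is $\gg 1$, forcing $x/\sqrt s$ small and producing the gain; the blocks in the ``bulk'' contribute the $c_0 c_1/\log n$ piece and their sum telescopes against the envelope. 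Assembling these, integrating out $\mathfrak{Z}_{H_n}$ over $[-k_6,k_6]$ (which is already conditioned here, so this integration is deferred to the application), and using $c_1 \leq C k_6 = O(1)$, gives the claimed bound.
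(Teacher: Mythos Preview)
Your overall architecture (condition on endpoints, compare to a Bessel bridge under the a-priori barrier, estimate envelope exits) matches the paper's shape, but two key points are missing or incorrect.

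First, you say the conditioned path is ``comparable to a $3$-dimensional Bessel bridge'' and plan to invoke Lemma~\ref{lem:besselbridge} directly, but the a-priori barrier from $\mathscr{B}_{n,k_2,k_6}$ is the \emph{curved} barrier $A_t^{\ll}$, not a straight line, so conditioning on it does not give an exact Bessel bridge and the lemma does not apply as stated. The paper handles this with an explicit Girsanov change of measure $d\Q/d\Pr = \exp\bigl(\int g'\,d\mathfrak{Z} - \tfrac12\int (g')^2\bigr)$ that straightens the curved barrier $g$; under $\Q$ the process conditioned to stay below the (now straight) barrier is an exact Bessel bridge. Everything then reduces to controlling $\|d\Pr/d\Q\|_{L^2(\Q)}$ via the subgaussian bounds of Lemma~\ref{lem:besselbridge} together with $|g''(t)| \ll (t\wedge(H_n-t)+(\log k_5)^{50})^{-99/50}$, followed by Cauchy--Schwarz. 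Your ``comparable to'' hides exactly this step.

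Second, your account of the $e^{-c(\log k_5)^2}$ gain is wrong: the banana envelope $A_t^{1,\pm}$ in $\widehat{\mathscr{R}}(\theta)$ does not involve $k_5$ at all, so block-by-block envelope-exit estimates cannot produce a $k_5$-dependent gain. In the paper the gain enters at a different place. One first upgrades the dyadic-time event $\mathscr{B}_{n,k_2,k_6}$ to a \emph{continuous}-time barrier $\mathfrak{Z}_t \leq \sqrt{2}(1-\tfrac34\tfrac{\log\log n}{\log n})t + g(t)$, with $g$ chosen to carry the same $(\log k_5)^{50}$ regularization that appears in the second clause of $\widehat{\mathscr{R}}(\theta)$. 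The dyadic-to-continuous fill-in costs $e^{-c(\log k_5)^2}$ by a union bound over intervals $[H_{2^{k-1}},H_{2^k}]$ (this is \eqref{eq:aha0a}), and that is the sole source of the gain; the continuous barrier then automatically implies the second clause of $\widehat{\mathscr{R}}(\theta)$. The subsequent Bessel-bridge estimate on $\widehat{\mathscr{R}}^c$ under the continuous barrier contributes only $(\log k_2)^{-1/5}+k_4^{-1/5}$, which is absorbed using $k_2\gg k_4\gg k_5$. Your proposed mechanism---extracting the $k_5$ gain from Bessel-bridge tail probabilities near the endpoints---does not work because nothing in that part of the problem knows about $k_5$.
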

\begin{proof}
  We will work conditionally on $\mathfrak{Z}_{H_{k_2}}$ and $\mathfrak{Z}_{H_{n}}$ throughout the proof, and we will work on the $\sigma(\mathfrak{Z}_{H_{k_2}},\mathfrak{Z}_{H_{n}})$--measurable event in the statement of the lemma.

  We recall that on the event $\mathscr{G}_n$
 we have (see \eqref{eq:upperbarrier})
  \begin{equation}\label{eq:aha-1}
    \mathfrak{Z}_{H_{2^k}}
    \leq \sqrt{\beta} k_6 + \sqrt{2}A_{2^k}^{\ll}
    \quad
    \text{for all }
    \log_2 k_2 \leq k \leq \log_2 n.
  \end{equation}
  From \cite[Lemma A.5]{CMN}, the probability of this event is bounded above by
  \begin{equation}\label{eq:aha-1a}
    \Pr ( \eqref{eq:aha-1} | \mathfrak{Z}_{H_{k_2}}, \mathfrak{Z}_{H_{n}})
    \leq C_\beta \frac{
      (\sqrt{2}\log k_2 - \mathfrak{Z}_{H_{k_2}})_+
      k_6
    }{\log(n/k_2)}.
  \end{equation}
%  \todo[inline]{where? give proof?}

  We can fill in this barrier and slightly increase its height
  and define the event
  \begin{equation}\label{eq:aha0}
    \mathfrak{Z}_{t} \leq
    \sqrt{2}(1-\tfrac{3}{4}\tfrac{\log \log n}{\log n})t
    +g(t),
    \quad
    g(t)\coloneqq
    \biggl( \frac{t (H_n - t + (\log k_5)^{50})}{H_n}\biggr)^{1/50}
    \quad
    \text{for all }
    H_{k_2} \leq t \leq H_n.
  \end{equation}
  By a simple union bound estimate over each interval $[H_{2^{k-1}},H_{2^k}]$
  for all $k_5$ sufficiently large
  \begin{equation}\label{eq:aha0a}
    \Pr ( \eqref{eq:aha0}^c | \eqref{eq:aha-1}, \mathfrak{Z}_{H_{k_2}}, \mathfrak{Z}_{H_{n}})
    \leq e^{-c(k_6)(\log k_5)^{2}}.
  \end{equation}
  Note that we therefore have $\Pr ( \eqref{eq:aha0}^c \cap \eqref{eq:aha-1} \vert \mathfrak{Z}_{H_{k_2}}, \mathfrak{Z}_{H_{n}})$ is the same order as the bound claimed in the lemma, and it suffices to, going forward, bound $\Pr (\widehat{\mathscr{R}}(\theta)^c \cap \eqref{eq:aha0} \vert \mathfrak{Z}_{H_{k_2}}, \mathfrak{Z}_{H_{n}})$ (which will be lower order for $k_4 \gg k_5$).

  %\emph{conditionally on \eqref{eq:aha-1}} except with a probability that can be bounded by $e^{-c_{k_6}(k_4)^{1/25}}$ for all $n,k_1,k_2,k_4$ sufficiently large.
%  Likewise, conditionally on \eqref{eq:aha-1}, the probability that $\widehat{R}_{-}(\theta)$ fails can be bounded above by a simple union bound,

  Recall that under $\Pr,$ conditional on $\mathfrak{Z}_{H_{k_2}},\mathfrak{Z}_{H_{n}},$ the process
  \[
    \mathfrak{B}_t = \mathfrak{Z}_t - \mathfrak{Z}_{H_{k_2}} - \frac{\mathfrak{Z}_{H_{n}}- \mathfrak{Z}_{H_{k_2}}} {H_n - H_{k_2}}(t-H_{k_2})
  \]
  is a bridge starting and ending an $0.$
  Define a change of measure by
  \begin{equation}\label{eq:aha1}
    \begin{aligned}
      \frac{d\Q}{d\Pr}
      &\coloneqq \exp\left(
      \int_{H_{k_2}}^{H_{n}} g'(t) d\mathfrak{Z}_t -
      \frac{1}{2}\int_{H_{k_2}}^{H_{n}} (g'(t))^2 dt
      \right) \\
      &= \exp\left(
      F( \mathfrak{Z}_{H_{n}}, \mathfrak{Z}_{H_{k_2}})
      -\int_{H_{k_2}}^{H_{n}}
      \bigl\{g''(t) \mathfrak{B}_t
      +\tfrac{(g'(t))^2}{2}\bigr\} dt
      \right),
    \end{aligned}
  \end{equation}
  for some (linear) functional $F.$
  Then, without conditioning on its endpoints, under $\Q,$ the process
  \(
  \mathfrak{Z}_t
%  -\sqrt{2}(1-\tfrac{3}{4}\tfrac{\log \log n}{\log n})t
  - g(t)
  \)
  is a standard Brownian motion for $t \in [H_{k_2},H_n],$ and after conditioning this process is a Brownian bridge.  Moreover, we see that the conditional Radon--Nikodym derivative is just given by \eqref{eq:aha1} with $F=0.$

  Estimating the probability $\Q( \widehat{\mathscr{R}}^c(\theta) ~|~ \eqref{eq:aha0})$ is straightforward, given that there is an explicit probabilistic description for the conditional process.  So, our goal is to show that $\Pr( \widehat{\mathscr{R}}^c(\theta) \cap \eqref{eq:aha0})$ is not much larger.
  \begin{equation}\label{eq:ahb0}
    \begin{aligned}
      \Pr( \widehat{\mathscr{R}}^c(\theta) \cap \eqref{eq:aha0}
      ~|~\vert
      \mathfrak{Z}_{H_{k_2}}, \mathfrak{Z}_{H_{n}}
      )
      &=
      \Q\bigl[
        \tfrac{d\Pr}{d\Q}
        \one[{
          \widehat{\mathscr{R}}^c(\theta) \cap \eqref{eq:aha0}
        }]
        ~\vert~
        \mathfrak{Z}_{H_{k_2}}, \mathfrak{Z}_{H_{n}}
      \bigr]
      \\
      &=
      \Q\bigl[
        \tfrac{d\Pr}{d\Q}
        \one[{
          \widehat{\mathscr{R}}^c(\theta)
        }]
        ~\vert~
        \eqref{eq:aha0},
        \mathfrak{Z}_{H_{k_2}}, \mathfrak{Z}_{H_{n}}
      \bigr]
      \Q\bigl[
        \eqref{eq:aha0}
        ~\vert~
        \mathfrak{Z}_{H_{k_2}}, \mathfrak{Z}_{H_{n}}
      \bigr].
      \\
    \end{aligned}
  \end{equation}
  Conditioning on \eqref{eq:aha0}, $\mathfrak{Z}_{H_{k_2}},$ and $\mathfrak{Z}_{H_{n}},$ $\mathfrak{Z}_t-g(t)$ has the law of a Bessel bridge under $\Q$.
  Thus under $\Q$ we can use the estimates in Lemma \ref{lem:besselbridge},
  to estimate the probability of $\widehat{\mathscr{R}}(\theta)$ under $\Q$.

  We first establish that under the conditional measure $\Q(\cdot~|~\eqref{eq:aha0}, \mathfrak{Z}_{H_{k_2}}, \mathfrak{Z}_{H_{n}})$, the process $\mathfrak{Z}_t$ is not outside the entropic barrier of half the size at \emph{integer times} up to $H_n-k_4+1$, (c.f.\ \eqref{eq:barrier})
  \begin{equation}\label{eq:tlv1}
    \tilde{A}_t^{n,\pm} \coloneqq
    t
    +
    \begin{cases}
      - 0.5 t ^{1/2 \mp 2/5} & \text{ if } t \leq \frac{1}{2}\log(n), \\
      -0.5 (\log n - t)^{1/2 \mp 2/5} - \frac{3}{4}\log\log n& \text{ if }  \frac{1}{2}\log(n)  < t \leq \log(n).
    \end{cases}
  \end{equation}
  For bounding the event of exceeding the entropic barrier we use the summability of the powers in Lemma \ref{lem:besselbridge}, to conclude
  \begin{align*}
   & \Q( \mathfrak{Z}_t \notin
    [\tilde{A}_t^{n,-},\tilde{A}_t^{n,+}]
    \text{ for some integer $t \leq H_n-k_4+1$}~|~\eqref{eq:aha0}, \mathfrak{Z}_{H_{k_2}}, \mathfrak{Z}_{H_{n}})\\
  &  \leq C\bigl( (\log k_2)^{-1/5} + k_4^{-1/5}\bigr).
  \end{align*}
  %We must also do the same for times after $H_n-k_4$, although recall from the definition \eqref{eq:barrier} of the event that in this region we rather ask $\mathfrak{Z}_t \leq \sqrt{2}t - \tfrac34\log\log n + (\log k_5)^3.$
  %\[
  %  \Q( \mathfrak{Z}_t \geq \sqrt{2}(t-m_n) + (\log k_5)^3 \text{ fails for some integer $t \geq H_n-k_4$}~|~\eqref{eq:aha0}, \mathfrak{Z}_{H_{k_2}}, \mathfrak{Z}_{H_{n}})
  %\]
  We may then fill in the gaps by again using union bound together with the oscillation estimate Lemma \ref{lem:bbridgeosc} over each interval to conclude for some larger constant $C>0$
  \begin{equation}\label{eq:aha2}
    \Q( \widehat{\mathscr{R}}(\theta) | \eqref{eq:aha0}, \mathfrak{Z}_{H_{k_2}}, \mathfrak{Z}_{H_{n}}) \geq e^{-C(\log k_2)^{-1/5} - Ck_4^{-1/5}}
  \end{equation}
  for all $k_2, k_4$ sufficiently large.
  We will show that we can reduce to this case by bounding the expression 
  \(
  \Q\bigl[
    \bigl(\tfrac{d\Pr}{d\Q}\bigr)^2
    ~\vert~
    \eqref{eq:aha0},
    \mathfrak{Z}_{H_{k_2}}, \mathfrak{Z}_{H_{n}}
  \bigr].
  \)
  Let $\alpha = \sqrt{2}(1-\tfrac{3}{4}\tfrac{\log\log n}{\log n}) - \frac{\mathfrak{Z}_{H_{n}}- \mathfrak{Z}_{H_{k_2}}} {H_n - H_{k_2}}=O(\tfrac{(\log k_2)^{0.51} + k_6} {\log n}),$ and which is positive for all $k_2$ and $n$ sufficiently large.
  Under $\Q$ and conditioned on $\mathfrak{Z}_{H_{k_2}}, \mathfrak{Z}_{H_{n}}$, the process
  \[
    \begin{aligned}
      \mathfrak{X}_t
      &= -\mathfrak{B}_t + g(t) + \alpha (t-H_{k_2})
      +\sqrt{2}(1-\tfrac{3}{4}\tfrac{\log\log n}{\log n})H_{k_2}
      -\mathfrak{Z}_{H_{k_2}}
      \\
      &= -\mathfrak{Z}_t + g(t) + \sqrt{2}(1-\tfrac{3}{4}\tfrac{\log\log n}{\log n})t
    \end{aligned}
  \]
  is a Brownian bridge
      %with
%      \[
%	\mathfrak{X}_{H_{k_2}} = g(H_{k_2})
%	\quad
%	\text{and}
%	\quad
%	\mathfrak{X}_{H_{n}} = g(H_{n}) +
%	\sqrt{2}m_n
%	-(\mathfrak{Z}_{H_n}- \mathfrak{Z}_{H_{k_2}})
%	+O(\tfrac{\log k_2}{\log n}),
%      \]
  and the event \eqref{eq:aha0} becomes
  \[
    \mathfrak{X}_t \geq 0
    \quad
    \text{for all }
    \quad
    t \in [H_{k_2},H_n].
  \]
      %The event \eqref{eq:aha0} is just that $\mathfrak{X}_t \geq 0$ for all $t \in [H_{k_2},H_n].$
  Thus, conditionally on \eqref{eq:aha0}, $(\mathfrak{X}_t : t \in [H_{k_2},H_n])$ has the law of a Bessel bridge (c.f.\ Lemma \ref{lem:besselbridge}).

  The conditional Radon--Nikodym derivative can be expressed in terms of $\mathfrak{X}$ by
  \begin{equation}\label{eq:ahb2}
    \frac{d\Pr}{d\Q}
    =
    \exp\left(
    \int_{H_{k_2}}^{H_{n}}
    \bigl\{-g''(t) \mathfrak{X}_t
    +g''(t)(\alpha(t- H_{k_2})+\sqrt{2}(1-\tfrac{3}{4}\tfrac{\log\log n}{\log n})H_{k_2}
    -\mathfrak{Z}_{H_{k_2}})
    -\tfrac{(g'(t))^2}{2}\bigr\} dt
    \right).
  \end{equation}
  We can estimate the function $g$ for a sufficiently large absolute constant $L>0,$
  \begin{equation}\label{eq:ahag}
%    |g'(t)| \leq L(t^{-49/50} \wedge (H_n-t+\log k_5)^{-49/50})
%    \quad
%    \text{and}
%    \quad
    |g''(t)| \leq L(t^{-99/50} \wedge (H_n-t+\log k_5)^{-99/50}),
  \end{equation}
  and we also observe that $g$ is concave, recall \eqref{eq:aha0}.  
  %Set $\gamma = \int_{H_{k_2}}^{H_{n}}(-g''(t))dt$.
  From the 
  concavity of $g$ and the convexity of the exponential, we can therefore bound
  \begin{equation}\label{eq:ahb3}
    \biggl(\frac{d\Pr}{d\Q}\biggr)^2
    \leq
    \exp\left(
    \int_{H_{k_2}}^{H_{n}}
    -2 g''(t)\mathfrak{X}_t
    dt
    \right).
  \end{equation}
  From Lemma \ref{lem:besselbridge}, we have a subgaussian estimate on $\mathfrak{X}_t$.
  Recall the subgaussian norm
  \[
    \| X \|_{\psi_2} = \inf\{ t \geq 0 : \Exp[ e^{X^2/t^2}~|~\eqref{eq:aha0}, \mathfrak{Z}_{H_{k_2}}, \mathfrak{Z}_{H_{n}}] \leq 2 \},
  \]
  such that
  \[
    \| \mathfrak{X}_t \|_{\psi_2}
    \leq
    \begin{cases}
      (\log k_2)^{0.51} + \sqrt{t} & t \leq \tfrac12 H_n \\
      k_6 + \sqrt{H_n - t} & t \leq H_n - k_4 \\
    \end{cases}
  \]
  Hence  using Jensen's inequality,
  \[
    \biggl\|
    \int_{H_{k_2}}^{H_{n}}
    -2 g''(t)\mathfrak{X}_t
    dt
    \biggr\|_{\psi_2}
    \leq
    \int_{H_{k_2}}^{H_{n}}
    2g''(t)
    \|\mathfrak{X}_t\|_{\psi_2} dt.
  \]
  Splitting the integral and using the bound on $g''$ and $\|\mathfrak{X}_t\|_{\psi_2}$,
  we have
  \[
    \int_{H_{k_2}}^{H_{n}}
    2g''(t)
    \|\mathfrak{X}_t\|_{\psi_2} dt
    \leq C(k_6,L)\biggl( (\log k_2)^{-47/100} + k_4^{-48/100}\biggr).
  \]
  In summary to control of the Radon--Nikodym derivative we conclude for all $k_4$ and $k_2$ sufficiently large there is an absolute constant $C>0$ so that
  \begin{equation}\label{eq:ahb7}
    \Q\biggl(\biggl(\frac{d\Pr}{d\Q}\biggr)^2~\vert~\eqref{eq:aha0}, \mathfrak{Z}_{H_{k_2}}, \mathfrak{Z}_{H_{n}}
    \biggr)
    \leq
    e^{C k_4^{-24/25} + C(\log k_2)^{-47/50} }.
  \end{equation}

  Hence 
  using Cauchy-Schwarz, we get that for all $k_2$ and $k_4$ sufficiently large
  \begin{equation}\label{eq:ahb8}
    \Q\bigl[
      \tfrac{d\Pr}{d\Q}
      \one[{
        \widehat{\mathscr{R}}^c(\theta)
      }]
      ~\vert~
      \eqref{eq:aha0},
      \mathfrak{Z}_{H_{k_2}}, \mathfrak{Z}_{H_{n}}
    \bigr]
    \leq
    (\log k_2)^{-1/5} + k_4^{-1/5}.
  \end{equation}
  Finally, the $\Q$ probability of \eqref{eq:aha0}, conditionally on $\mathfrak{Z}_{H_{k_2}},\mathfrak{Z}_{H_{n}},$ is the probability that the Brownian bridge $\mathfrak{X}_t$ stays positive for all time.
  This probability is in fact explicit \cite[(3.40)]{KaratzasShreve}:
  \begin{equation}\label{eq:aha3}
    \begin{aligned}
      \Q( \eqref{eq:aha0} | \mathfrak{Z}_{H_{k_2}}, \mathfrak{Z}_{H_{n}})
      &=
      1-\exp\left(-\frac{2\mathfrak{X}_{H_{k_2}}\mathfrak{X}_{H_{n}}}{H_{n} - H_{k_2}}
      \right) \\
      &\leq C \frac{(\sqrt{2}\log k_2- \mathfrak{Z}_{H_{k_2}})\log k_5}{\log n },
    \end{aligned}
  \end{equation}
  for all $n,k_2$ sufficiently large and some $C>0$.
  For $n$ large, 
  this contribution is negligible in comparison to \eqref{eq:aha0a}.
  Hence by \eqref{eq:ahb0} and \eqref{eq:ahb8}, the proof is complete. 
\end{proof}

By a similar argument, we can actually estimate the density of the Bessel process endpoint killed when it exits an entropic envelope.

\begin{lemma}\label{lem:bananadensity}
  For notational convenience, let $s = H_{n_1^+}$.
  Fix $p \geq 2$.
%  Note that the ray event $\mathscr{R}^p_{j}(n_1^+)$ is contained in
  On the event that
  $\mathfrak{Z}_{H_{k_2}} \in \sqrt{2}(\log k_2 - [(\log k_2)^{0.49}, (\log k_2)^{0.51}])$
  and
  $y \in \sqrt{2}(s-\frac{3}{4}\log\log n -
  [A_{s}^{p-1,-}, A_{s}^{p-1,+}]),$
  \begin{equation}
    \label{eq-300324star}
    \begin{aligned}
    &\Pr(
    \mathscr{R}^p_{j}(n_1^+)
    \cap
    \{\sqrt{2}m_{n_1^+} - \mathfrak{Z}_{s} \in [y,y+dy]\}
    ~\vert~ \mathfrak{Z}_{H_{k_2}}
    ) \\
    &=\sqrt{\frac{2}{\pi}}\frac{y}{n_1^+}
    \bigl(\sqrt{2}H_{k_2}-\mathfrak{Z}_{H_{k_2}}\bigr)
    \exp\bigl(\sqrt{2}y
    +\sqrt{2}\mathfrak{Z}_{H_{k_2}} - \log k_2 + o_{k_2}\bigr)dy
  \end{aligned}.
\end{equation}
  Furthermore for all $y \in \sqrt{2}(s-\frac{3}{4}\log\log n-[A_{s}^{p,-}, A_{s}^{p,+}])$, the density is upper bounded by the right hand side.
  With $y \in \sqrt{2}(s-\frac{3}{4}\log\log n-[A_{s}^{p-1,-}, A_{s}^{p-1,+}])$,
  if we introduce $\mathscr{V}_j'$ (see \eqref{eq:RD}) as the alteration of \eqref{eq:rayevent}, where we only put the entropic envelope restriction at $t$ in the set $\{H_{2^k} : k \in \N\}$,
  we furthermore have
  \begin{equation}
    \label{eq-300324dstar}
    \begin{aligned}
    &\Pr(
    \bigl(\mathscr{R}^2_{j}(n_1^+)\bigr)^c
    \cap
    \mathscr{V}_j'
    \cap
    \{\sqrt{2}m_{n_1^+} - \mathfrak{Z}_{s} \in [y,y+dy]\}
    ~\vert~ \mathfrak{Z}_{H_{k_2}}
    ) \\
    &\leq
    o_{k_2}
    \times
    \sqrt{\frac{2}{\pi}}\frac{y}{n_1^+}
    \bigl(\sqrt{2}H_{k_2}-\mathfrak{Z}_{H_{k_2}}\bigr)
    \exp\bigl(\sqrt{2}y
    +\sqrt{2}\mathfrak{Z}_{H_{k_2}} - \log k_2\bigr)dy.
    \end{aligned}
  \end{equation}
\end{lemma}
In the lemma and also below, we write $P(dy)\leq fdy$ for a (sub)-probability measure $P$ when we mean that
the density of $P$ with respect to Lebesgue measure is bounded by $f$.
\begin{proof}
Define the event $\mathscr{E}$ that
  \begin{equation}\label{zrh:1}
    \mathfrak{X}_t \coloneqq \sqrt{2}\bigl(1-\tfrac{3}{4}\tfrac{\log s}{s}\bigr)t -  \mathfrak{Z}_t \geq 0
    \quad \text{for all}
    \quad t \in [H_{k_2},s],
  \end{equation}
  and note that $\mathscr{R}^p_{j}(n_1^+) \subset \mathscr{E}$.

Define the change of measure
\[
  \frac{d\Q}{d\Pr} =
  \exp\bigl(
  \sqrt{2}\bigl(1-\tfrac{3}{4}\tfrac{\log s}{s}\bigr)(\mathfrak{Z}_{s}-\mathfrak{Z}_{H_{k_2}})
  -\bigl(1-\tfrac{3}{4}\tfrac{\log s}{s}\bigr)^2(s-H_{k_2})
  \bigr).
\]
Under $\Q$, $\mathfrak{X}_t$ is a standard Brownian motion. Therefore,
 \begin{equation}\label{zrh:2}
  \begin{aligned}
    &\Pr(
    \mathscr{R}^p_{j}(n_1^+)
    \cap
    \{\sqrt{2}m_{n_1^+} - \mathfrak{Z}_{s} \in [y,y+dy]\}
    ~\vert~ \mathfrak{Z}_{H_{k_2}}
    ) \\
    &=
    \Q(
    \mathscr{R}^p_{j}(n_1^+)
    \cap \{\mathfrak{X}_s \in [y,y+dy]\}
    ~\vert~ \mathfrak{Z}_{H_{k_2}}%, \mathscr{E}
    ) \\
    &\times
    \exp\bigl(-s + \tfrac32 \log s+\sqrt{2}y + \sqrt{2}\mathfrak{Z}_{H_{k_2}} - H_{k_2} + o_n\bigr).
% \exp\bigl(
% -\sqrt{2}\bigl(1-\tfrac{3}{4}\tfrac{\log s}{s}\bigr)(\sqrt{2}s-\tfrac34\log s - y -\mathfrak{Z}_{H_{k_2}}
%  +\bigl(1-\tfrac{3}{4}\tfrac{\log s}{s}\bigr)^2(s-H_{k_2})
  \end{aligned}
%    \Q(\mathscr{E}~\vert~ \mathfrak{Z}_{H_{k_2}})
\end{equation}

Define for convenience
 $t = s-H_{k_2}$
 and
 $x = \sqrt{2}\bigl(1-\tfrac{3}{4}\tfrac{\log s}{s}\bigr)H_{k_2}-\mathfrak{Z}_{H_{k_2}}.$
Under $\Q((\cdot)~\vert~\mathfrak{Z}_{H_{k_2}})$, $\mathfrak{X}_t$ has a gaussian density of mean $\mathfrak{X}_{H_{k_2}}$ and variance $t$.  However by the restriction on $y$, the gaussian trivializes, and so
\[
  \Q(
  \mathscr{R}^p_{j}(n_1^+)
  \cap \{\mathfrak{X}_s \in [y,y+dy]\}
  ~\vert~ \mathfrak{Z}_{H_{k_2}}%, \mathscr{E}
  )
  =
  \Q(
  \mathscr{R}^p_{j}(n_1^+)
  ~\vert~ \mathfrak{Z}_{H_{k_2}},
  \mathfrak{X}_s = y)
  \sqrt{\frac{1}{2\pi t}}
  (1+o_n).
\]
To produce the upper bound on the density in \eqref{eq-300324star}, 
we use the inclusion $\mathscr{R}^p_{j}(n_1^+)
\subset \mathscr{E}$ and then bound, using \eqref{eq:aha3},
\begin{equation}
  \label{eq-120924c}
  \Q(
  \mathscr{R}^p_{j}(n_1^+)
  ~\vert~ \mathfrak{Z}_{H_{k_2}},
  \mathfrak{X}_s = y)
  \leq
  \Q(
  \mathscr{E}
  ~\vert~ \mathfrak{Z}_{H_{k_2}},
  \mathfrak{X}_s = y)
  =1-\exp(-2xy/t).
\end{equation}
Hence we conclude, from combining everything with \eqref{zrh:2},
 \[
   \begin{aligned}
   &\Pr(
    \mathscr{R}^p_{j}(n_1^+)
    \cap
    \{\sqrt{2}m_{n_1^+} - \mathfrak{Z}_{s} \in [y,y+dy]\}
    ~\vert~ \mathfrak{Z}_{H_{k_2}}
    ) \\
    &\leq
    \sqrt{\frac{2}{\pi}}\frac{y x}{t^{3/2}}
    \exp\bigl(-s + \tfrac32 \log s+\sqrt{2}y
    -(x^2 + y^2)/2t
    +\sqrt{2}\mathfrak{Z}_{H_{k_2}} - H_{k_2} + o_n\bigr) dy \\
    &\leq
    \sqrt{\frac{2}{\pi}}\frac{yx}{n_1^+}
    \exp\bigl(\sqrt{2}y
    +\sqrt{2}\mathfrak{Z}_{H_{k_2}} - \log k_2 + o_{k_2}\bigr) dy.
  \end{aligned}
 \]
 For the lower bound in \eqref{eq-300324star}, we condition on $\mathscr{E}$ and express
\[
  \Q(
  \mathscr{R}^p_{j}(n_1^+)
  ~\vert~ \mathfrak{Z}_{H_{k_2}},
  \mathfrak{X}_s = y)
  =
  \Q(
  \mathscr{E}
  ~\vert~ \mathfrak{Z}_{H_{k_2}},
  \mathfrak{X}_s = y)
  \Q(
  \mathscr{R}^p_{j}(n_1^+)
  ~\vert~ \mathfrak{Z}_{H_{k_2}},
  \mathfrak{X}_s = y, \mathscr{E}).
\]
Thus it suffices to give a vanishing upper bound on
\begin{equation}
  \label{eq-120924a}
  \Q\bigl(
  \bigl(\mathscr{R}^p_{j}(n_1^+)\bigr)^c
  ~\vert~ \mathfrak{Z}_{H_{k_2}},
  \mathfrak{X}_s = y, \mathscr{E}\bigr) = o_{k_2}.
\end{equation}
Conditionally on $\mathscr{E}$, $\mathfrak{Z}_{H_{k_2}}$ and
 $ \mathfrak{X}_s = y$,
$\mathfrak{X}_t$ is a Bessel-3 bridge on $[H_{k_2},s]$.  
As this determines the Radon--Nikodym derivative, and using
the extra assumption from the statement of the lemma that $y$ is far from the 
edge of the 
entropic window, the argument
is now similar to what is shown in Lemma \ref{lem:allhailbanana}:
one starts by bounding the probability that the Bessel bridge escapes the entropic envelope at times in $H_{2^k}$ with $k \in \N$, and then between two integer times use a gaussian tail bound for the oscillation from Lemma \ref{lem:bbridgeosc}. We omit further details.

To see \eqref{eq-300324dstar}, note that
the same oscillation lemma shows that with $p=2$,
\begin{equation}
  \label{eq-120924b}
  \Q\bigl(
  \mathscr{E}^c
  ~\vert~ \mathfrak{Z}_{H_{k_2}},
  \mathfrak{X}_s = y,
  \mathscr{V}_j'
  \bigr) =
  o_{k_2},\;
  \Q\bigl(
  \mathscr{E}^c\cap\mathscr{V}_j'
  ~\vert~ \mathfrak{Z}_{H_{k_2}},
  \mathfrak{X}_s = y
  \bigr) =
  o_{k_2}\cdot\Q\bigl(\mathscr{E}~\vert~ \mathfrak{Z}_{H_{k_2}},
  \mathfrak{X}_s = y\bigr) 
\end{equation}
which implies the  claim \eqref{eq-300324dstar}
since
\[
  \begin{aligned}
    &\Q\bigl(
    \bigl(\mathscr{R}^{2}_{j}(n_1^+)\bigr)^c
    \cap  \mathscr{V}_j'
    ~\vert~ \mathfrak{Z}_{H_{k_2}},
    \mathfrak{X}_s = y
    \bigr) \\
    &= 
    \Q\bigl(
    \bigl(\mathscr{R}^{2}_{j}(n_1^+)\bigr)^c
    \cap   \mathscr{V}_j'
    \cap \mathscr{E}
    ~\vert~ \mathfrak{Z}_{H_{k_2}},
    \mathfrak{X}_s = y
    \bigr)
    +\Q\bigl(
    \bigl(\mathscr{R}^{2}_{j}(n_1^+)\bigr)^c
    \cap   \mathscr{V}_j'
    \cap \mathscr{E}^c
    ~\vert~ \mathfrak{Z}_{H_{k_2}},
    \mathfrak{X}_s = y
    \bigr)\\
    &\leq 
    \Q\bigl(\mathscr{E} ~\vert~ \mathfrak{Z}_{H_{k_2}},
    \mathfrak{X}_s = y
    \bigr)\cdot
    \Q\bigl(
    \bigl(\mathscr{R}^{2}_{j}(n_1^+)\bigr)^c
    ~\vert~ \mathfrak{Z}_{H_{k_2}},
    \mathfrak{X}_s = y,
     \mathscr{E}
    \bigr)
    +\Q\bigl(
     \mathscr{V}_j'
    \cap \mathscr{E}^c
    ~\vert~ \mathfrak{Z}_{H_{k_2}},
    \mathfrak{X}_s = y
    \bigr)\\
    &\leq o_{k_2} \Q\bigl(\mathscr{E} ~\vert~ \mathfrak{Z}_{H_{k_2}},
    \mathfrak{X}_s = y
    \bigr),
  \end{aligned}
\]
where we used \eqref{eq-120924a} and \eqref{eq-120924b} in the last inequality.
The conclusion follow from the explicit expression for the right hand side,
see e.g. \eqref{eq-120924c}.
\iffalse
\[
  \begin{aligned}
    &\Q\bigl(
    \bigl(\mathscr{R}^{2}_{j}(n_1^+)\bigr)^c
    \cap  \mathscr{V}_j'
    ~\vert~ \mathfrak{Z}_{H_{k_2}},
    \mathfrak{X}_s = y
    \bigr) \\
    &=(1+o_{k_2})
    \Q\bigl(
    \bigl(\mathscr{R}^{2}_{j}(n_1^+)\bigr)^c
    \cap   \mathscr{V}_j'
    \cap \mathscr{E}
    ~\vert~ \mathfrak{Z}_{H_{k_2}},
    \mathfrak{X}_s = y
    \bigr)
  \end{aligned}
\]
and therefore
\[
    \Q\bigl(
    \bigl(\mathscr{R}^{2}_{j}(n_1^+)\bigr)^c
    \cap  \mathscr{V}_j'
    ~\vert~ \mathfrak{Z}_{H_{k_2}},
    \mathfrak{X}_s = y
    ,\mathscr{E}\bigr)
    \leq
    \Q\bigl(
    \bigl(\mathscr{R}^{2}_{j}(n_1^+)\bigr)^c
    ~\vert~ \mathfrak{Z}_{H_{k_2}},
    \mathfrak{X}_s = y
    ,\mathscr{E}\bigr).
\]
\fi
\end{proof}

%\begin{lemma}\label{lem}:

\subsection{The coarse oscillation bound}

In this section, we consider a single $j \in \mathcal{D}_{n/k_1}$, and hence we
write simply the barrier event
\begin{equation}\label{eq:oscR}
  \begin{aligned}
    {\mathscr{R}}
    &=
    \left\{
      \forall~ \log k_2  \leq t \leq \log \widehat{n}_1
      :
      \sqrt{\tfrac{8}{\beta}}A_{t}^{4,-}
      \leq
      \sqrt{\tfrac{4}{\beta}} \mathfrak{Z}_t(\theta_0)
      \leq \sqrt{\tfrac{8}{\beta}}A_{t}^{4,+}
    \right\}.
  \end{aligned}
\end{equation}
In this section, our goal is to estimate the probability of the oscillation event

\begin{equation}\label{eq:oscO}
  \begin{aligned}
    &\widehat{\mathscr{O}}
    \coloneqq
    \left\{
      \max_{k_2 \leq k \leq n_1^+}
    |\Psi_{k}(\theta^{-}) - \Psi_{k}(\theta^{+})| 
    \leq \tfrac{k_1(\log k_1)^{50}}{k_1^+}
    \right\},
  \end{aligned}
\end{equation}
with $\theta^{\pm} = \theta_0+\frac{\pm 2\pi k_1}{n},$ when we condition on $\mathscr{R}.$
Let $(\Gfilt_t : t \geq 0)$ be the join of the filtrations
(recalling \eqref{eq:gaussians})
\[
  \bigl( (\mathfrak{Z}^\C_s : 0 \leq s \leq t) :  t \geq 0 \bigr)
  \quad\text{and}\quad
  \bigl( ({X}_k,{Y_k},\Gamma_k^a: \forall~k~, H_k \leq t) : t \geq 0\bigr),
\]
where we further augment by $\mathfrak{Z}_{\log{\widehat{n}_1}}.$
Note that $G_k$ and $\Psi_k$ are adapted to $\Gfilt_{H_k}$ for any $k \in \N.$
Let $\tau$ be any $\Gfilt$ stopping time such that for all $\log k_2 \leq t \leq \tau,$  $\mathfrak{Z}_t$ satisfies
\begin{equation}\label{eq:continuousescape}
  \sqrt{\tfrac{4}{{\beta}}}
  \mathfrak{Z}_t
  \in
  \sqrt{\tfrac{8}{{\beta}}}
  [
    {A}_t^{4,-},
    {A}_t^{4,+}
  ].
\end{equation}
Then (see\ \eqref{eq:oscR}), if $\tau$ is just the first time \eqref{eq:continuousescape} fails, then on $\mathscr{R}$ it follows $\tau > \log \widehat{n}_1.$

We will let $\mathcal{B}$ be the event that $\mathfrak{Z}_t \leq \sqrt{2}t$ for all $t \geq H_{k_2}$ and $\mathfrak{Z}_{\log{\widehat{n}_1}} \in [
  \mathcal{A}_{\log \widehat{n_1}}^{4,-},
  \mathcal{A}_{\log \widehat{n}_1}^{4,+}
].$
Then conditionally on $\mathcal{B},$ we can use Lemma \ref{lem:besselbridge} to compute the behavior of increments of $\mathfrak{Z}.$
%Let $G^{\tau}_k$ be the process $\sqrt{\tfrac{4}{\beta}}\mathfrak{Z}_{\tau \wedge H_k}.$
%From Lemma \ref{lem:besselbridge}, we conclude the following estimates on $G^{\tau}_k.$
\begin{lemma}
  Set
  \(
  \Delta_{k+1}
  \coloneqq
    %G^\tau_{k+1} - G^\tau_k
    %=
    %\sqrt{\tfrac{4}{\beta}}(
  \mathfrak{Z}_{\tau \wedge H_{k+1}}
  -
  \mathfrak{Z}_{\tau \wedge H_k}
    %)
  .
  \)
  For $k_2 \leq k \leq \log n_1^+,$ there is a constant $C>0$ so that for all $k_2$ and $k_1$ sufficiently large,
  \[
    \begin{aligned}
      &\Exp[ \Delta_{k+1} \vert \Gfilt_{H_k}, \mathcal{B}]
      =
      (\sqrt{2} \pm c(k_1,k_2))\Exp[  (\tau \wedge H_{k+1} - \tau \wedge H_{k}) \vert \Gfilt_{H_k}, \mathcal{B}], \\
      &
        \left|\Exp[ \Delta_{k+1}^2 \vert \Gfilt_{H_k},\mathcal{B} ]
      -
      \Exp[  (\tau \wedge H_{k+1} - \tau \wedge H_{k}) \vert \Gfilt_{H_k},\mathcal{B} ]
        \right|
        \leq 
      C(\Exp[  (\tau \wedge H_{k+1} - \tau \wedge H_{k})^2 \vert \Gfilt_{H_k},\mathcal{B} ],
    \end{aligned}
  \]
  where the meaning of the first line is that the LHS is bounded above and below by the RHS, choosing signs appropriately, and where
  \[
    c(k_1,k_2) \leq C((\log k_2)^{-1/18} + (\log k_1)^{-1/100}).
  \]
  for some constant $C>0.$
  \label{lem:Gtau_k}
\end{lemma}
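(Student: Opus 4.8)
The statement is a two-sided control on the first two conditional moments of the increments $\Delta_{k+1}$ of the stopped Brownian motion $\mathfrak{Z}_{\tau\wedge t}$, conditioned on the event $\mathcal{B}$ that $\mathfrak{Z}$ stays below the straight barrier $t\mapsto\sqrt2 t$ and has a prescribed endpoint at time $\log\widehat n_1$. The plan is to realize this conditioning explicitly as a $3$-dimensional Bessel bridge and then read off increment estimates from Lemma \ref{lem:besselbridge}. Concretely: conditionally on $\mathfrak{Z}_{H_{k_2}}$ and $\mathfrak{Z}_{\log\widehat n_1}$, the bridge $\mathfrak{Z}_t - (\text{linear interpolation})$ is a standard Brownian bridge, and further conditioning it to lie below $\sqrt2 t$ makes $\sqrt2 t-\mathfrak{Z}_t$ a Bessel-$3$ bridge on $[H_{k_2},\log\widehat n_1]$, which by that lemma is the strong solution of an explicit SDE
\[
  d\mathfrak{X}_t
  =
  \sqrt2\,dt + dB_t + \Bigl(\frac{1}{\mathfrak{X}_t} - \frac{\mathfrak{X}_t-c_1}{t_1-t}\Bigr)dt,
\]
with $\mathfrak{X}_t=\sqrt2 t-\mathfrak{Z}_t$ (up to the harmless $\frac34\tfrac{\log s}{s}$ shift in the slope, which only perturbs constants). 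From this SDE representation one computes, over a time step $[\tau\wedge H_k,\tau\wedge H_{k+1}]$ (length $\le 1/k$), that $\Exp[\Delta_{k+1}\mid\Gfilt_{H_k},\mathcal{B}]$ equals $\sqrt2$ times the (conditional expected) time elapsed, plus a drift contribution from the two ratio terms $\int(1/\mathfrak{X}_s - (\mathfrak{X}_s-c_1)/(t_1-s))ds$.

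The key point is that on the relevant range $k_2\le k\le\log n_1^+$ and on the event $\mathscr{R}$ (equivalently, while $\tau$ has not occurred), $\mathfrak{X}_s$ stays inside the entropic envelope $\sqrt{8/\beta}\,A_s^{4,\pm}$, which forces $\mathfrak{X}_s$ to be of polynomial size in $(\log)$-scale — bounded below by something like $(s\wedge(\log\widehat n_1-s))^{1/2-4/9}$ and above by $s+(s\wedge\cdots)^{1/2+4/9}$ — and also keeps $t_1-s$ bounded below by $\log\widehat n_1 - \log n_1^+ \gg 1$. Hence both $1/\mathfrak{X}_s$ and $(\mathfrak{X}_s-c_1)/(t_1-s)$ are $o(1)$ uniformly in $s$ across the step, with the $1/\mathfrak{X}_s$ term bounded by a negative power of $\log k_2$ near the left end and the endpoint-pull term bounded by a negative power of $\log k_1$ (this is exactly where the claimed rate $c(k_1,k_2)\le C((\log k_2)^{-1/18}+(\log k_1)^{-1/100})$ comes from: the exponent $1/18$ tracks the envelope power $\tfrac{p}{2p+1}$ at $p=4$ times a loss, and $1/100$ tracks the upper barrier slack in $A_k^{\ll}$). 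Multiplying these $o(1)$ drift bounds by the step length $\Exp[\tau\wedge H_{k+1}-\tau\wedge H_k\mid\Gfilt_{H_k},\mathcal{B}]$ gives the stated two-sided bound on the first moment. For the second moment, $\Delta_{k+1}$ is the increment of a continuous semimartingale whose martingale part has quadratic variation exactly $\tau\wedge H_{k+1}-\tau\wedge H_k$ and whose finite-variation part is $O(1)\cdot$(step length) as just estimated; squaring and using $(a+b)^2\le 2a^2+2b^2$, together with the Burkholder--Davis--Gundy / Itô isometry bound $\Exp[(\text{mart. incr.})^2\mid\Gfilt_{H_k}]=\Exp[\text{QV incr.}\mid\Gfilt_{H_k}]$, yields $\Exp[\Delta_{k+1}^2\mid\Gfilt_{H_k},\mathcal{B}]\le\Exp[\text{step}\mid\cdots]+C\,\Exp[\text{step}^2\mid\cdots]$, which is the second displayed inequality.

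I would organize the argument as: (i) set up the conditioning and invoke Lemma \ref{lem:besselbridge} to get the SDE for $\mathfrak{X}_t=\sqrt2 t-\mathfrak{Z}_t$ on the interval, noting that $\tau$ only stops it earlier and the optional-stopping/strong-Markov structure lets us work step by step; (ii) on the event $\mathcal{B}$ (and using $\tau\le\log\widehat n_1$ implies we are inside the envelope up to $\tau$) derive uniform bounds $\mathfrak{X}_s\gtrsim (\log k_2)^{1/2-4/9}$ and $t_1-s\gtrsim(\log k_1)^{19/20}$-type lower bounds, hence pointwise drift bounds; (iii) integrate over the step to get the first-moment estimate with the explicit rate; (iv) decompose $\Delta_{k+1}$ into martingale plus drift part and apply Itô isometry to get the second-moment estimate. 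The main obstacle I anticipate is bookkeeping the dependence of the drift bound on where in $[H_{k_2},\log\widehat n_1]$ the step $[H_k,H_{k+1}]$ sits — near the left endpoint the $1/\mathfrak{X}_s$ term is largest (controlled by $(\log k_2)$-powers via $\mathscr{U}(\theta_0)$ and the barrier), near the right endpoint the bridge-pull term $(\mathfrak{X}_s-c_1)/(t_1-s)$ is largest (controlled by the $(\log k_1)$-scale gap $\log\widehat n_1-\log n_1^+$) — and verifying that in both regimes the resulting bound is dominated by $C((\log k_2)^{-1/18}+(\log k_1)^{-1/100})$; this is a matching-of-exponents computation rather than a conceptual difficulty, and it uses precisely the definitions \eqref{eq:k2k3} of $\widehat k_1$, \eqref{eq:barrier} of $A_t^{p,\pm}$ at $p=4$, and \eqref{eq:upperbarrier} of $A_k^{\ll}$.
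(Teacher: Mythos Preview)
Your proposal is correct and follows essentially the same approach as the paper: invoke Lemma~\ref{lem:besselbridge} to write $\Delta_{k+1}$ as $\sqrt{2}\cdot(\text{step}) + (\text{stopped BM increment}) + \Delta_{k+1}'$ with $\Delta_{k+1}'$ the integral of the Bessel-bridge drift, bound $|\Delta_{k+1}'|$ pointwise on $[\tau\wedge H_k,\tau\wedge H_{k+1}]$ using the envelope enforced by $\tau$, then read off the two moments (the martingale part contributes zero to the first moment and exactly the expected step length to the second). One small correction on the bookkeeping: the exponent $1/100$ does not come from $A_k^{\ll}$ --- the event $\mathcal{B}$ uses the straight barrier $\sqrt{2}t$, and the bridge-pull bound actually gives something like $(\log k_1)^{17/18-29/30}=(\log k_1)^{-1/45}$ from the $A^{4,\pm}$ envelope and the gap $\log(k_1^+/\widehat k_1)\asymp(\log k_1)^{29/30}$; the paper simply weakens this to $(\log k_1)^{-1/100}$ for consistency with exponents used elsewhere.
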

\begin{proof}
  We can express the increment $\Delta_{k+1},$ using Lemma \ref{lem:besselbridge}, as
  \begin{equation}\label{eq:Gtau0}
    \begin{aligned}
      \Delta_{k+1}
      &=
      \sqrt{2}(\tau \wedge H_{k+1} - \tau \wedge H_{k})
      +B_{\tau \wedge H_{k+1}} - B_{\tau \wedge H_{k}} +\Delta_{k+1}',\\
      &\mbox{\rm where\  }  \Delta_{k+1}'\eqqcolon\int_{\tau \wedge H_{k}}^{\tau \wedge H_{k+1}}
      \left( \frac{1}{\mathfrak{Z}_s-\sqrt{2} s} - \frac{\mathfrak{Z}_s-\mathfrak{Z}_{\log \widehat{n}_1 } - \sqrt{2}(s-\log \widehat{n}_1 )}{\log \widehat{n}_1-s} \right)ds
    \end{aligned}
  \end{equation}
  and $B_{(\cdot)}$ is a $\Gfilt$--adapted standard Brownian motion.
  The increment $\Delta_{k+1}'$ we can control using the barriers.  By construction, for $s \leq \tau$ for all $k_1$ and $n$ sufficiently large
  \[
    -\sqrt{2} (\log k_2)^{17/18}
    \leq \mathfrak{Z}_s - \sqrt{2}s
    \leq -\sqrt{2}(\log k_2)^{1/18}.
  \]
  For $\tau \geq s \geq \frac{1}{2} \log n,$ we can also bound
  \[
    -\sqrt{2} (\log n - s)^{17/18}
    \leq \mathfrak{Z}_s - \sqrt{2}(s-\tfrac 34 \log\log n)
    \leq -\sqrt{2}(\log n - s)^{1/18}.
  \]
  In particular, for $k \leq \log n_1^+$ and all $n$ sufficiently large
  \begin{equation}\label{eq:Gtau1}
    |\Delta_{k+1}'|
    \leq (\tau \wedge H_{k+1} - \tau \wedge H_{k})
    \biggl(  (\log k_2)^{-1/18} + 2(\log k_1)^{-1/100}\biggr).
  \end{equation}

  As $B_{\tau \wedge (\cdot)}$ is a martingale,
  \[
    \Exp[ \Delta_{k+1} \vert \Gfilt_{H_k},\mathcal{B}  ]
    =
    \sqrt{\tfrac{8}{\beta}}\Exp[  (\tau \wedge H_{k+1} - \tau \wedge H_{k}) \vert \Gfilt_{H_k},\mathcal{B}  ]
    +\Exp[ \Delta_{k+1}' \vert \Gfilt_{H_k},\mathcal{B}  ],
  \]
  and using the bound \eqref{eq:Gtau1}, the claim concerning the first moment follows.
  For the second moment, we use
  \begin{equation}\label{eq:Gtau2}
    \left|
    \sqrt{\Exp[ \Delta_{k+1}^2 \vert \Gfilt_{H_k},\mathcal{B}  ]}
    -
    \sqrt{\Exp[ (\Delta_{k+1}-\Delta_{k+1}')^2 \vert \Gfilt_{H_k},\mathcal{B}]}
    \right|
    \leq
    \sqrt{\Exp[ (\Delta_{k+1}')^2 \vert \Gfilt_{H_k},\mathcal{B}  ]}.
  \end{equation}
  Meanwhile, we have the exact formula
  \[
    \Exp[ (\Delta_{k+1}-\Delta_{k+1}')^2 \vert \Gfilt_{H_k},\mathcal{B}  ]
    =
    2\Exp[  (\tau \wedge H_{k+1} - \tau \wedge H_{k})^2 \vert \Gfilt_{H_k},\mathcal{B}  ]
    +
    \Exp[ \tau \wedge H_{k+1} - \tau \wedge H_{k} \vert \Gfilt_{H_k},\mathcal{B}  ].
  \]
  Hence using \eqref{eq:Gtau1} and \eqref{eq:Gtau2}, the bound for the second moment follows.
\end{proof}

\begin{proposition}\label{prop:osc1st}
  There is a deterministic constant
  $C_\beta$ so that for
  all $k_1$ sufficiently large, depending on $k_2,$
  and
  all $n$ sufficiently large, depending on $k_1$,
  it holds uniformly in $\theta_0$ that
  \[
    \Pr[ \widehat{ \mathscr{O}}^c
      \cap \mathscr{R}
    \cap \mathscr{G}_n ~\vert~ \Gfilt_{H_{k_2}},\mathcal{B}]
    \leq
    C_\beta (\log k_1)^{-50}%\bigl(\Psi_{k_2}(\theta_+) - \Psi_{k_2}(\theta_-) +
                %1 \bigr)
    \quad
    \As
  \]
\end{proposition}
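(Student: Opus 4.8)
The plan is to show that on the barrier event $\mathscr{R}$, the Prüfer phase difference $\Psi_{n_1^+}(\theta^+)-\Psi_{n_1^+}(\theta^-)$ is, up to controlled errors, a sum over steps $k$ of increments whose typical size is governed by $|\gamma_k|^2$ and whose total is dominated by an $L^1$-type estimate. First I would recall from \eqref{eq:Psi} that
\[
  \Psi_{k+1}(\theta)-\Psi_k(\theta) = \theta - 2\Im\log(1-\gamma_k e^{i\Psi_k(\theta)}),
\]
so that the difference across the arc telescopes:
\[
  \Psi_{n_1^+}(\theta^+)-\Psi_{n_1^+}(\theta^-)
  = (n_1^++1)(\theta^+-\theta^-) - 2\sum_{k<n_1^+}\Im\big(\log(1-\gamma_k e^{i\Psi_k(\theta^+)})-\log(1-\gamma_k e^{i\Psi_k(\theta^-)})\big).
\]
The linear term is exactly $2(n_1^++1)(2\pi k_1/n)=O(k_1^+/n_1^+)\cdot\text{something}$; since $n_1^+=\lfloor n/k_1^+\rfloor$ this is $O(k_1 k_1^+/n)$ which is $o((\log k_1)^{50}k_1/k_1^+)$ for $n$ large. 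The core of the argument is to bound the sum. Using $|\log(1-z)-\log(1-w)|\le C|z-w|$ when $|z|,|w|$ are bounded away from $1$ (which holds off a small-probability set controlled by the truncation event $\mathscr{T}_{k_2}\subset\mathscr{G}_n$), each summand is $O(|\gamma_k|\,|\Psi_k(\theta^+)-\Psi_k(\theta^-)|)$, and then a discrete Grönwall argument propagates the initial oscillation $|\Psi_{k_2}(\theta^+)-\Psi_{k_2}(\theta^-)|=O(k_1/n)$ forward, multiplied by $\prod_{k_2\le k<n_1^+}(1+C|\gamma_k|)$. Since $\E|\gamma_k|^2 \asymp 1/(\beta k)$, the exponent $\sum_k C|\gamma_k|$ is of order $\sqrt{\log(n_1^+/k_2)}\le C\sqrt{\log n}$, giving a bound of the form $(k_1/n)e^{C\sqrt{\log n}}$, which is far too weak on its own. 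So the plan is rather to run a first-moment/Markov argument directly on the oscillation, not a pathwise bound.

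Concretely, I would compute $\E[|\Psi_{n_1^+}(\theta^+)-\Psi_{n_1^+}(\theta^-)|\;\mathbf 1_{\mathscr{R}\cap\mathscr{G}_n}\mid\Gfilt_{H_{k_2}},\mathcal B]$ and apply Markov's inequality with threshold $k_1(\log k_1)^{50}/k_1^+$. The key input is that on $\mathscr{R}$, which is an entropic-envelope event for $\mathfrak Z_t(\theta_0)$ with the $4$-barrier, one has good control on the increments $\Delta_{k+1}$ via Lemma \ref{lem:Gtau_k}: in particular the conditional first and second moments of the Prüfer-phase increments are comparable to the natural time increments $\tau\wedge H_{k+1}-\tau\wedge H_k\approx 1/k$. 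Because $\Im\log(1-\gamma_k e^{i\psi})$ depends on $\gamma_k$ only through its real and imaginary parts (the Gaussian-like variables $X_k,Y_k$ from \eqref{eq:gaussians}), and the difference between the $\theta^+$ and $\theta^-$ increments is linear in $(\Psi_k(\theta^+)-\Psi_k(\theta^-))$ to leading order with a multiplier of size $|\gamma_k|^2=O(1/k)$, I expect an inequality of the shape
\[
  \E[O_{k+1}\mid\Gfilt_{H_k}] \le (1+C/k)\,O_k + (\text{fresh-noise term of size } C O_k^{1/2}/k^{1/2}),
\]
where $O_k:=|\Psi_k(\theta^+)-\Psi_k(\theta^-)|$; iterating, $\E[O_{n_1^+}]\le \big(\prod(1+C/k)\big)O_{k_2} = O\big((n_1^+/k_2)\cdot (k_1/n)\big) = O(k_1/k_1^+)\cdot\text{polylog}$, using $n_1^+ \asymp n/k_1^+$ and $O_{k_2} = O(k_1/n)$. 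After multiplying by $(k_1^+/k_1)(\log k_1)^{-50}$ this goes to zero, which is exactly what Markov needs. One has to be slightly careful: the naive multiplier is $1+C|\gamma_k|^2$ whose product over $k$ is $O(1)$ in expectation, but conditionally on $\mathscr{R}$ the relevant bound uses that $e^{i\Psi_k(\theta^\pm)}$ is genuinely oscillating, so the average drift of $O_k$ is $(1+O(1/k))$ rather than exponential; this is where Lemma \ref{lem:Gtau_k}'s precise first-moment statement $\E[\Delta_{k+1}\mid\Gfilt_{H_k},\mathcal B]=(\sqrt2\pm c(k_1,k_2))\E[\tau\wedge H_{k+1}-\tau\wedge H_k\mid\cdots]$ is used — it says the $\theta_0$-walk has almost the right drift, and the differencing argument then transfers this to $O_k$.

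The main obstacle I anticipate is handling the conditioning on $\mathscr{R}$ correctly while keeping the recursion for $\E[O_k]$ closed. Conditioning on the $\theta_0$-barrier event introduces a Radon--Nikodym factor (as in the proof of Lemma \ref{lem:allhailbanana}), and one must check that this factor does not blow up the oscillation moments; the right move is to work under the tilted measure $\Q$ where $\mathfrak Z_t - g(t)$ is Brownian (so $\Psi_k$ increments have an explicit Bessel-bridge description via Lemma \ref{lem:besselbridge}) and absorb the change of measure via Cauchy--Schwarz, using the second-moment bound $\Q[(d\Pr/d\Q)^2]\le e^{o_{k_2}(1)}$ already established there. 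A second, more technical obstacle is that the step increment $\Im\log(1-\gamma_k e^{i\Psi_k})$ is not globally Lipschitz in $\Psi_k$ near $\gamma_k e^{i\Psi_k}=1$; this is dealt with on the truncation event $\mathscr{T}_{k_2}$ (which bounds $|\gamma_k|$ away from $1$ for $k>k_2$), at the cost of an $O(1/k)$-type error that is summable and contributes only to the $+$ fresh-noise term in the recursion. Finally, I would note that the uniformity in $\theta_0$ is automatic since every estimate is uniform in the starting phase, so the displayed bound $C_\beta(\log k_1)^{-50}$ a.s.\ (meaning: holds for all $\theta_0$ on an event of full probability, deterministically bounded given $\Gfilt_{H_{k_2}},\mathcal B$) follows by taking $k_1$ large after $k_2$ and then $n$ large after $k_1$, exactly the order in which the constants degrade in the recursion.
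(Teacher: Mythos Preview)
Your plan---control the phase oscillation by a first-moment recursion and apply Markov---has the right shape, and the paper does exactly this (a supermartingale comparison $\psi_k^\tau-P_k$ followed by a Doob-type maximal inequality; see \eqref{eq:o14}). But the execution has a real gap. The recursion $\E[O_{k+1}\mid\Gfilt_{H_k}]\le(1+C/k)\,O_k$ and the assertion $\prod_{k_2}^{n_1^+}(1+C/k)=O(n_1^+/k_2)$ are both wrong: the product is $(n_1^+/k_2)^{C_\beta}$, so the resulting bound diverges with $n$ once $C_\beta>1$. (Your side remark that $\prod(1+C|\gamma_k|^2)=O(1)$ in expectation is likewise false, since $\E|\gamma_k|^2\asymp 2/(\beta k)$.) The ingredient you miss is the \emph{sign} of the drift created by conditioning on $\mathcal B$. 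After the calculus expansion of Corollary~\ref{cor:calculus} (see \eqref{eq:o10}--\eqref{eq:o11}), the leading stochastic term in the real case $\sigma=1$ is $-c_\beta\sin(\psi_k^\tau)\,(\mathfrak Z_{\tau\wedge H_{k+1}}-\mathfrak Z_{\tau\wedge H_k})$, and Lemma~\ref{lem:Gtau_k} says that under $\Pr(\cdot\mid\mathcal B)$ this $\mathfrak Z$-increment has \emph{positive} mean $\approx\sqrt{2}/(k{+}1)$. Since $\sin\psi_k^\tau\ge 0$ on $[0,\pi)$, the resulting drift is nonpositive (stabilizing) and can be discarded; the $(\cos\psi_k^\tau-1)$ term multiplies the orthogonal Brownian increment, which is unaffected by the conditioning and keeps mean zero. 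That is what removes the $O(1/k)$ multiplicative growth and allows the deterministic majorant $P_k$ of \eqref{eq:o13} to satisfy $P_{n_1^+}\asymp (n_1^++1)\theta\asymp k_1/k_1^+$, after which Doob against the threshold $k_1(\log k_1)^{50}/k_1^+$ yields $(\log k_1)^{-50}$. You also need a stopping time $\tau$ (first time $\psi_k\ge\pi$ or the truncation \eqref{eq:o1-1} fails) so that the linearization is valid; your outline omits it.

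A second gap is that $\sigma=1$ and $\sigma=i$ are genuinely different and cannot be treated uniformly. For $\sigma=i$ the sine and cosine roles swap (compare \eqref{eq:o11} with \eqref{eq:o16}): the positive-mean $\mathfrak Z$-increment now multiplies $(1-\cos\psi_k^\tau)$, producing a \emph{destabilizing} quadratic drift $+C_\beta(\psi_k^\tau)^2/(k{+}1)$ in \eqref{eq:o17}. The paper handles this via a bootstrap: a second stopping time $\vartheta$ at which $\psi_k^\tau$ first exceeds the slowly growing envelope $\eta((k{+}1)/n_1)(k{+}1)\theta$, and a dyadic summation (around \eqref{eq:o20}) showing $\Pr(\vartheta\le n_1^+)\ll(\log k_1)^{-50}$. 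Your proposal does not anticipate this. Finally, the change-of-measure you sketch (Cauchy--Schwarz against $d\Pr/d\Q$ as in Lemma~\ref{lem:allhailbanana}) is unnecessary here: Lemma~\ref{lem:Gtau_k} already delivers the increment moments under $\Pr(\cdot\mid\mathcal B)$ directly from the Bessel-bridge SDE of Lemma~\ref{lem:besselbridge}.
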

As the conditional probability of $\mathcal{B}$ is explicit, we conclude:
\begin{corollary}\label{cor:osc}
  There is a constant $C_\beta$ so that for all $n,k_1$ sufficiently large 
  it holds uniformly in $\theta_0$ that
  \[
    \Pr[ \widehat{ \mathscr{O}}^c
      \cap \mathscr{R}
    \cap \mathscr{G}_n ~\vert~ \Gfilt_{H_{k_2}}]
    \leq
    C_\beta
    \frac{
      \left( \sqrt{2}\log k_2 - \sqrt{\tfrac{\beta}{4}} \varphi_{k_2}(\theta_0)+k_6 \right)_+
      \left( \sqrt{\tfrac{8}{\beta}}m_n
      - G_{\widehat{n}_1}(\theta_0)
      \right)_+
      }{(\log k_1)^{50}\log(\widehat{n}_1/k_2)}, \quad a.s.
  \]
\end{corollary}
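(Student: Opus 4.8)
The plan is to deduce Corollary \ref{cor:osc} from Proposition \ref{prop:osc1st} by integrating out the conditioning on the event $\mathcal{B}$. Abbreviate $x_0 := \bigl(\sqrt 2\log k_2 - \sqrt{\tfrac{\beta}{4}}\varphi_{k_2}(\theta_0) + k_6\bigr)_+$ and $z := \bigl(\sqrt{\tfrac{8}{\beta}}m_n - G_{\widehat{n}_1}(\theta_0)\bigr)_+ = \sqrt{\tfrac4\beta}\bigl(\sqrt 2 m_n - \mathfrak{Z}_{H_{\widehat{n}_1}}(\theta_0)\bigr)_+$, the two factors appearing in the statement. On $\mathscr{G}_n$ the Gaussian comparison $|\varphi_{k_2}(\theta_0) - \sqrt{\tfrac4\beta}\mathfrak{Z}_{H_{k_2}}(\theta_0)|<k_6$ together with $H_{k_2}=\log k_2+O(1)$ lets the Brownian gap $\sqrt 2 H_{k_2}-\mathfrak{Z}_{H_{k_2}}(\theta_0)$ be replaced by $x_0$, the $O(k_6)$ discrepancies being absorbed into the constant $C_\beta$ and the $+k_6$; on $\mathscr{U}(\theta_0)\subset\mathscr{R}$ this gap is positive.

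I would then split
\begin{align*}
  \Pr[\widehat{\mathscr{O}}^c\cap\mathscr{R}\cap\mathscr{G}_n\mid\Gfilt_{H_{k_2}}]
  &\le
  \Pr[\widehat{\mathscr{O}}^c\cap\mathscr{R}\cap\mathscr{G}_n\cap\mathcal{B}\mid\Gfilt_{H_{k_2}}] \\
  &\quad+ \Pr[(\mathscr{R}\cap\mathscr{G}_n)\setminus\mathcal{B}\mid\Gfilt_{H_{k_2}}].
\end{align*}
On $\mathscr{R}$ the path $\mathfrak{Z}_t$ lies below the barriers $A_t^{4,\pm}$ of \eqref{eq:barrier}, hence below $t\mapsto\sqrt2t$, on $[H_{k_2},\log\widehat{n}_1]$, so the only way to leave $\mathcal{B}$ is a crossing of the line $\sqrt2t$ at some $t>\log\widehat{n}_1$; since $\mathscr{R}$ pins $\mathfrak{Z}_{\log\widehat{n}_1}$ near $\sqrt2 m_n$, which is below $\sqrt2\log\widehat{n}_1$ by a divergent amount, the positive drift of $\sqrt2t-\mathfrak{Z}_t$ makes the conditional probability of this $o_n(1)$, and combined with the a-priori ballot bound $\Pr[\mathscr{R}\mid\Gfilt_{H_{k_2}}]\lesssim x_0 z/\log(\widehat{n}_1/k_2)$ the second term above is of lower order. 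For the first term, $\Pr[\widehat{\mathscr{O}}^c\cap\mathscr{R}\cap\mathscr{G}_n\cap\mathcal{B}\mid\Gfilt_{H_{k_2}}]=\Pr[\mathcal{B}\mid\Gfilt_{H_{k_2}}]\cdot\Pr[\widehat{\mathscr{O}}^c\cap\mathscr{R}\cap\mathscr{G}_n\mid\Gfilt_{H_{k_2}},\mathcal{B}]\le C_\beta(\log k_1)^{-50}\,\Pr[\mathcal{B}\mid\Gfilt_{H_{k_2}}]$ by Proposition \ref{prop:osc1st}.

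Thus everything reduces to the ``explicit'' estimate $\Pr[\mathcal{B}\mid\Gfilt_{H_{k_2}}]\lesssim x_0 z/\log(\widehat{n}_1/k_2)$. Because $\Gfilt_{H_{k_2}}$ contains $\mathfrak{Z}_{\log\widehat{n}_1}$, conditionally on $\Gfilt_{H_{k_2}}$ the path on $[H_{k_2},\log\widehat{n}_1]$ is a Brownian bridge with known endpoints, and $\mathcal{B}$ asks that it (and its continuation) stay below the relevant barrier. Bounding $\mathcal{B}$ by ``the bridge stays below $\sqrt2t$'' alone would produce the endpoint gap $\sqrt2\log\widehat{n}_1-\mathfrak{Z}_{\log\widehat{n}_1}$, which on the $A^{4,\pm}$-window has size $\asymp\log\log n$ and is too large; instead I would carry out the Girsanov tilt giving $\mathfrak{Z}$ the drift $\sqrt2\bigl(1-\tfrac34\tfrac{\log\log n}{\log n}\bigr)$, exactly as in the proofs of Lemmas \ref{lem:allhailbanana} and \ref{lem:bananadensity}. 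Under the tilted measure $\mathfrak{Z}$ naturally terminates near $\sqrt2 m_n$ at time $\log n$, the entropic curve $A_t^{4,\pm}$ is its natural envelope, and the Bessel-bridge ballot estimates of Lemma \ref{lem:besselbridge} give that the tilted bridge stays in that envelope with probability at most a constant times $x_0\cdot(\text{gap of }\mathfrak{Z}_{\log\widehat{n}_1}\text{ below the tilted line})/\log(\widehat{n}_1/k_2)$; on the $A^{4,\pm}$-window this last gap is at most a constant times $\sqrt2 m_n-\mathfrak{Z}_{H_{\widehat{n}_1}}(\theta_0)$, i.e.\ $\asymp z$, the difference $m_{\widehat{n}_1}$ versus $m_n$ amounting to the constant $\log\widehat{k}_1$, which is itself comparable to $z$ on this window. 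This is just the analog of Lemma \ref{lem:bananadensity} with $\widehat{n}_1$ in place of $n_1^+$. Assembling the three estimates gives the corollary.

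The main obstacle is precisely this reconciliation of scales: the event $\mathcal{B}$ is defined with the slope-$\sqrt2$ line, whereas the corollary's second factor is the distance to $m_n$, which is smaller by $\Theta(\log\log n)$; one must run the ballot/Bessel-bridge estimate relative to the entropic barrier $A_t^{4,\pm}$ — equivalently, after the drift tilt — so that the endpoint gap entering the computation is measured against $\sqrt2 m_n$ and not against $\sqrt2\log\widehat{n}_1$. The remaining ingredients (negligibility of a late line-crossing, and the dictionary between $\mathfrak{Z}_{H_{k_2}}$ and $\varphi_{k_2}(\theta_0)$ valid on $\mathscr{G}_n$) are routine.
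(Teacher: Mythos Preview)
Your approach matches the paper's one-sentence proof: since $\mathscr{R}\subset\mathcal{B}$ (the barrier $\sqrt{2}A_t^{4,+}<\sqrt{2}t$, and the ``$t\ge H_{k_2}$'' in the definition of $\mathcal{B}$ is effectively only $t\in[H_{k_2},\log\widehat{n}_1]$, where the Bessel bridge of Lemma~\ref{lem:Gtau_k} lives), the split into $\cap\,\mathcal{B}$ and $\setminus\mathcal{B}$ is unnecessary, and one simply multiplies Proposition~\ref{prop:osc1st} by $\Pr[\mathcal{B}\mid\Gfilt_{H_{k_2}}]$. You are right to flag the scale issue the paper elides: the reflection formula for a bridge below the line $\sqrt{2}t$ gives the endpoint factor $b=\sqrt{2}\log\widehat{n}_1-\mathfrak{Z}_{H_{\widehat{n}_1}}$, which on the $A^{4,\pm}$-window is $\asymp\log\log n$, not the $\sqrt{2}m_n-\mathfrak{Z}_{H_{\widehat{n}_1}}\asymp\log k_1$ stated in the corollary; since $n\to\infty$ before $k_1$, this matters for the downstream Proposition~\ref{prop:trunkray2}.

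However, your resolution has a gap. You reduce to the ``explicit estimate'' $\Pr[\mathcal{B}\mid\Gfilt_{H_{k_2}}]\lesssim x_0 z/\log(\widehat{n}_1/k_2)$ and propose a Girsanov tilt to obtain it --- but given $\Gfilt_{H_{k_2}}$ both endpoints of the path on $[H_{k_2},\log\widehat{n}_1]$ are fixed, and the bridge law is invariant under drift changes, so $\Pr[\mathcal{B}\mid\Gfilt_{H_{k_2}}]$ is \emph{exactly} $1-\exp(-2ab/T)$ with $b$ the gap to $\sqrt{2}t$; no measure change can replace $b$ by the smaller $z$. The fix is to change the \emph{event} rather than the measure: observe that $\sqrt{2}A_t^{4,+}$ also lies below the line $\ell(t):=\sqrt{2}t\bigl(1-\tfrac{3}{4}\tfrac{\log\log n}{\log n}\bigr)$ on $[H_{k_2},\log\widehat{n}_1]$ for $n$ large, so $\mathscr{R}\subset\mathcal{B}'$ where $\mathcal{B}'$ uses $\ell$ in place of $\sqrt{2}t$. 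The proof of Proposition~\ref{prop:osc1st} (via Lemmas~\ref{lem:Gtau_k} and~\ref{lem:besselbridge}) works verbatim for any straight line, giving the same $(\log k_1)^{-50}$ bound conditionally on $\mathcal{B}'$; multiplying by $\Pr[\mathcal{B}'\mid\Gfilt_{H_{k_2}}]$ then yields the endpoint gap $\ell(\log\widehat{n}_1)-\mathfrak{Z}_{H_{\widehat{n}_1}}$, which is $\le C\bigl(\sqrt{2}m_n-\mathfrak{Z}_{H_{\widehat{n}_1}}\bigr)_+$ on the window, and the corollary follows.
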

\begin{proof}
  We multiply the result of Proposition \ref{prop:osc1st} by \(
  \Pr[  \mathcal{B} ~\vert~ \Gfilt_{H_{k_2}}],
  \)
  which is the probability that a Brownian bridge stays below a straight barrier (see \eqref{eq:aha3}).
\end{proof}

\begin{proof}[Proof of Proposition \ref{prop:osc1st}]
  With some abuse of notation, let
  $\psi_{k}(\theta-\theta_0) \coloneqq \Psi_k(\theta) - \Psi_k(\theta_0)$ for any $k\in \N$ for any $\theta \geq \theta_0.$ 
  We will show that for all $k_1$ and $n$ sufficiently large (depending on $k_2$) there is a constant $C_\beta$ so that
  \begin{equation}\label{eq:o1-2}
    \Pr[ \{
      \max_{k_2 \leq k \leq n_1^+}
      \psi_{k}(\theta_+-\theta_0)
      >
      \tfrac{k_1(\log k_1)^{50}}{4k_1^+ }
    \}
  \cap \mathscr{R} \cap \mathscr{G}_n ~\vert~ \Gfilt_{H_{k_2}},\mathcal{B}]
  \leq
  C_\beta (\log k_1)^{-50}
  \quad
  \As
\end{equation}
The analogous bound for
\(
|\psi_{n_1^+}(\theta_--\theta_0)|
=-\psi_{n_1^+}(\theta_--\theta_0)
\)
holds by the same argument after making appropriate sign changes.  We just show \eqref{eq:o1-2}. 
The proof will use tail estimates and computations
contained in Section \ref{sec-prufest}.

We will just write $\theta$ for $\theta_+ - \theta_0.$
Further, since our estimates will not depend on $\theta_0$, in the rest of the proof we take $\theta_0=0$.
Let $\tau$ be the first time $t \geq \log k_2$ that either \eqref{eq:continuousescape} fails, or for some $H_k \leq t$ where $k_2 \leq k$
either $\psi_{H_k}(\theta) \geq \pi$
or
\begin{equation}\label{eq:o1-1}
  \max_{t \in [H_k,H_{k+1}]}
  |\mathfrak{Z}_{t}^\C
  -\mathfrak{Z}_{H_{k}}^\C|^2
  >
  \frac{32 \log k}{k+1}
  \quad\text{or}
  \quad
  |\sqrt{\Gamma^a_{k}} - \beta_{k}|> 4\sqrt{\log H_k},
\end{equation}
recall \eqref{eq:betagamma}.
Define
\[
  Z_k^\tau \coloneqq
  -e^{-i\Psi_k(\theta)} \sqrt{\frac{k+1}{2}}
  \bigl(\mathfrak{Z}_{\tau \wedge H_{k+1}}^\C
  -\mathfrak{Z}_{\tau \wedge H_{k}}^\C\bigr)
  \quad \text{and $\gamma_k^\tau = \frac{Z_k^\tau}{\sqrt{ |Z_k^\tau|^2 + \Gamma_k^a}}.$}
\]
For any $\theta > 0,$ let $\psi_k^\tau(\theta)$ solve (c.f.\ \eqref{eq:prufer})
\[
  \psi_{k+1}^\tau(\theta)=\psi_{k}^\tau(\theta) +
  \begin{cases}
    \theta - 2\Im\left( \log(1-\gamma_k^\tau e^{i\psi_k^\tau(\theta)}) - \log(1-\gamma_k^\tau) \right) & \text{if } H_k < \tau, \\
    0 & \text{else},
  \end{cases}
\]
and $\psi_0^\tau(\theta) = \theta.$
Then on the event $\mathscr{R} \cap \mathscr{G}_n$ (compare with
\eqref{eq:good}) if $\{\max_{k_2 \leq k \leq n_1^+} \psi^\tau_k(\theta) < \pi\}$ we must actually have $\tau > H_{ n_1^+}.$  Thus for any $t > 0$
\begin{equation}\label{eq:o1-1a}
  \Pr[ \{
    \max_{k_2 \leq k \leq n_1^+}
    \psi_{k}(\theta)
    > t\}
    \cap \mathscr{R}
    \cap \mathscr{G}_n
  ~\vert~ \Gfilt_{H_{k_2}}, \mathcal{B}]
  \leq
  \Pr[
    \{\max_{k_2 \leq k \leq n_1^+} \psi^\tau_k(\theta) >t \}
    ~\vert~ \Gfilt_{H_{k_2}}, \mathcal{B}
  ]
  \quad
  \As
\end{equation}
On the event $\tau > H_{n_1^+},$ $\psi_{n_1^+}^\tau(\theta) = \psi_{n_1^+}(\theta).$
By construction, $\psi^\tau_k(\theta) \geq 0$ almost surely (see Lemma \ref{lem:psikmonotone}).

From Lemma \ref{lem:calculus2} we have that for $k_2$ sufficiently large,
the relative Pr\"ufer phases (recall \eqref{eq:prufer}) satisfy for some absolute constant $C$ and any $H_k < \tau$
\begin{equation}\label{eq:o10}
  \begin{aligned}
    \psi_{k+1}^\tau(\theta)
    -\psi_k^\tau(\theta)
    &\leq \theta
    +2\Im\biggl(\frac{ (e^{i\psi_k^\tau(\theta)}-1)Z_k^\tau}{\sqrt{\Gamma_k^a}} 
    +
    \frac{ (e^{2i\psi_k^\tau(\theta)}-1)(Z_k^\tau)^2}{2{\Gamma_k^a}}
    \biggr)
    +
    \frac{C\psi_k^\tau(\theta)|Z_k^\tau|^3}{{(\Gamma_k^a)^{3/2}}}. 
  \end{aligned}
\end{equation}

\noindent \emph{The case $\sigma=1$:}
We will estimate the conditional expectation of \eqref{eq:o10} given $\Gfilt_{H_k}$
using Lemma \ref{lem:Gtau_k}.
We note using Lemma \ref{lem:Gtau_k}, we have for $H_k < \tau$
\[
  \Exp[
    Z^\tau_k 
~\vert~\Gfilt_{H_k}, \mathcal{B}]
  = \frac{(-\sqrt{2} + O( (\log k_2)^{-1/18} + (\log k_1)^{-1/100}))}{\sqrt{k+1}},
\]
and moreover the imaginary part of the expectation is $0$.  In the same way
\[
  \Exp[
    (Z^\tau_k)^2
~\vert~\Gfilt_{H_k}, \mathcal{B}]
  = 
  \Exp[
    \Re \left((Z^\tau_k)^2\right)
~\vert~\Gfilt_{H_k}, \mathcal{B}]
  \leq 
  \frac{C}{k+1},
\]
Hence using that $\psi_k^\tau(\theta) \in [0, \pi)$ for some $C_\beta>0$, which implies that the $O(1/k)$ terms have a negative sign,
  \begin{equation}\label{eq:o12}
    \begin{aligned}
      \Exp[\psi_{k+1}^\tau(\theta)
        -\psi_k^\tau(\theta)
      ~\vert~\Gfilt_{H_k}, \mathcal{B}]
      \leq \theta
      +\frac{C_\beta\psi_k^\tau(\theta)(\log k)^3}{(k+1)^{3/2}}
      \quad
      \text{for}
      \quad k_2 \leq  k \leq n_1^+.
    \end{aligned}
  \end{equation}
  The remainder of the real case will be covered by the argument for the imaginary cases, but the argument for the real case is simpler and given below.
  If we define the increasing function $k \mapsto P_k$ by the recurrence
  \begin{equation}\label{eq:o13}
    P_{k+1} \coloneqq P_k + \theta
    +\frac{C_\beta(\log k)^3 P_k}{(k+1)^{3/2}},
    \quad
    \text{for all $k \geq 0$ and}
    \quad
    P_0 \coloneqq \theta,
  \end{equation}
  then $\psi_k^\tau(\theta) - P_k$ is a supermartingale started at $0.$
  Hence for all $t \geq 0,$
  \begin{equation}\label{eq:o14}
    \begin{aligned}
      \Pr[ \max_{k_2 \leq k \leq n_1^+} \psi_k^\tau(\theta) > t + P_{n_1^+}
        ~\vert~
      \Gfilt_{H_{k_2}}, \mathcal{B}]
      &\leq
      \Pr[ 
        \max_{k_2 \leq k \leq n_1^+} (\psi_k^\tau(\theta) - P_k) > t
        ~\vert~
      \Gfilt_{H_{k_2}}, \mathcal{B}] \\
      &\leq \frac{\psi_{k_2}^\tau(\theta) + P_{n_1^+}}{t},
    \end{aligned}
  \end{equation}
  with the final inequality following from the same argument 
  as Doob's inequality applied to the super-martingale $\psi_k^\tau-P_k$.
  The recurrence for $P_k$ \eqref{eq:o13} is easily solved, and it can be checked that
  \begin{equation}\label{eq:o15}
    P_k \leq  C_\beta (k+1)\theta
    \qquad
    \text{for any}
    \quad
    k \leq n_1^+.
  \end{equation}
  for some other sufficiently large $C_\beta.$  Hence as $\theta \leq Cn_1^{-1},$ we have $P_{n_1^+} \leq C_\beta e^{-(\log k_1)^{(29/30)}}.$  Thus \eqref{eq:o14} and \eqref{eq:o1-1a} imply \eqref{eq:o1-2}.

  \noindent \emph{The imaginary case $\sigma=  \pm  i$:}
  In this case, to estimate \eqref{eq:o10},
  we first use that for $H_k < \tau$
  We note using Lemma \ref{lem:Gtau_k}, we have for $H_k < \tau$
  \[
    \Exp[
      Z^\tau_k 
  ~\vert~\Gfilt_{H_k}, \mathcal{B}]
    = \pm i \frac{(\sqrt{2} + O( (\log k_2)^{-1/18} + (\log k_1)^{-1/100}))}{\sqrt{k+1}},
  \]
  and moreover the \emph{real} part of the expectation is $0$; the $\pm$ depends on the sign of $\sigma.$  In the same way
  \[
    \Exp[
      (Z^\tau_k)^2
  ~\vert~\Gfilt_{H_k}, \mathcal{B}]
    = 
    \Exp[
      \Re \left((Z^\tau_k)^2\right)
  ~\vert~\Gfilt_{H_k}, \mathcal{B}]
  \quad \text{and} \quad 
  \left|\Exp[
    \Re \left((Z^\tau_k)^2\right)
~\vert~\Gfilt_{H_k}, \mathcal{B}]\right|
    \leq 
    \frac{C}{k+1},
  \]We note that there will also be a sign change in the $(1-\cos)$ term.  For either of $\sigma = \pm i$ it will be necessary to consider the (less advantageous) case considered on account of needing to consider the case $\theta_{-}$ (in which $\theta < 0$).

  Applying Lemma \ref{lem:Gtau_k} and bounding the cosine,
  \begin{equation}\label{eq:o17}
    \begin{aligned}
      \Exp[
        \psi_{k+1}^\tau(\theta)
        -\psi_k^\tau(\theta)
      ~\vert~\Gfilt_{H_k}, \mathcal{B}]
      \leq \theta
      +\frac{C_\beta(\psi_k^\tau(\theta))^2}{(k+1)}
      +\frac{C_\beta\psi_k^\tau(\theta)(\log k)^3}{(k+1)^{3/2}}
      \quad
      \text{for}
      \quad k_2 \leq  k \leq n_1^+.
    \end{aligned}
  \end{equation}
  Let $\eta(x) = (\log \tfrac{1}{x})^{-100}e^{(\log \tfrac{1}{x})^{29/30}}$ for $x \in (0,1)$
  and define a stopping time $\vartheta$ as the first $k \geq k_2$ such that
  \[
    \psi_k^\tau(\theta) >
    \eta( \tfrac{k+1}{n_1})
    (k+1)\theta.
  \]
  Then the stopped process $\psi_{k\wedge \vartheta}^\tau$ satisfies
   for
      $k_2 \leq  k \leq n_1^+$,

  \begin{equation}\label{eq:o18}
      \Exp[
        \psi_{\vartheta \wedge (k+1)}^\tau(\theta)
        -\psi_{\vartheta \wedge k}^\tau(\theta)
      ~\vert~\Gfilt_{H_k}, \mathcal{B}]
      \leq \theta
      +C_\beta
      \eta^2( \tfrac{k+1}{n_1})
      (k+1) \theta^2 
      +\frac{C_\beta\psi_{\vartheta \wedge k}^\tau(\theta) (\log k)^{3}}{(k+1)^{3/2}}.
  \end{equation}
  If we define the increasing function $k \mapsto P_k$ by the recurrence
  \begin{equation}\label{eq:o19}
    P_{k+1} \coloneqq P_k + \theta
    +C_\beta
    \eta^2( \tfrac{k+1}{n_1})
    (k+1) \theta^2
    +
    \frac{C_\beta P_k (\log k)^{3}}{(k+1)^{3/2}},
    \quad
    \text{for all $k \geq 0$ and}
    \quad
    P_0 \coloneqq \theta,
  \end{equation}
  then $\psi_{k\wedge \vartheta}^\tau(\theta) - P_{k\wedge \vartheta}$ is a supermartingale started at $0.$
  The recurrence for $P_k$ is easily solved explicitly, and in particular there is a constant $C_\beta$ sufficiently large that for any $0 \leq k \leq n_1^+,$
  \begin{equation}\label{eq:pbnd}
    k \theta
    \leq
    P_{k}-P_0 \leq C_\beta \sum_{\ell=0}^{k-1}
    \bigl\{ \theta + \eta^2( \tfrac{\ell+1}{n_1}) (\ell+1) \theta^2 \bigr\}
  \end{equation}
  for all $k_1$ sufficiently large.
  If we take $c$ as the maximum of $x \eta^2(x)$ on $[0,1],$ then we can further bound
  \begin{equation}\label{eq:pbnd1}
    P_{k}-P_0 \leq (1+2c)C_\beta k\theta.
  \end{equation}
  Moreover, for all $t \geq 0$ and any $m \leq n_1^+,$ by the same argument as in Doob's inequality
  \begin{equation}\label{eq:o20}
    \begin{aligned}
      \Pr[ \max_{k_2 \leq k \leq m} \psi_{k \wedge \vartheta}^\tau(\theta) >  t + P_m
        ~\vert~
      \Gfilt_{H_{k_2}}, \mathcal{B}]
      &\leq
      \Pr[ \max_{k_2 \leq k \leq m} (\psi_{k \wedge \vartheta}^\tau(\theta) - P_{k\wedge \vartheta}) > t
        ~\vert~
      \Gfilt_{H_{k_2}}, \mathcal{B}]
      \\
      &\leq \frac{(\psi_{k_2}^\tau(\theta) - P_{k_2}) + P_{m}}{t}.
    \end{aligned}
  \end{equation}

  We can use this bound to control the probability that $\vartheta \in [2^{\ell-1},2^{\ell}].$  For this to happen for some $\ell \geq \log_2 k_2$ we must have for some $c_\beta$ sufficiently small
  \[
    \max_{k_2 \leq k \leq 2^\ell} \psi_{k \wedge \vartheta}^\tau(\theta)
    >P_{2^{\ell}}(1+c_\beta \eta(\tfrac{2^{\ell-1}}{n_1})).
  \]
  Hence summing over $\log_2 k_2 \leq \ell \leq \log_2 n_1^+,$ and using \eqref{eq:pbnd1} and \eqref{eq:o20}
  and increasing $C_\beta$ as needed between the inequalities,
  \[
    \Pr( \vartheta \leq n_1^+)
    \leq C_\beta\sum_{\ell=\log_2 k_2 }^{ \log_2(n_1^+)}
    \frac{\psi_{k_2}^\tau(\theta) + 2^\ell \theta}{2^\ell \theta\eta(\tfrac{2^{\ell-1}}{n_1}) }
    \leq C_\beta\biggl(
    \frac{\psi_{k_2}^\tau(\theta)}{k_2\theta \eta(\tfrac{k_2}{n_1})}
    +\frac{(\log \tfrac{n_1^+}{n_1})^{\tfrac{1}{30}} }{\eta(\tfrac{n_1^+}{n_1})}
    \biggr).
  \]
  On the event $\{\vartheta > n_1^+\} \cap \mathscr{G}_n,$ we thus have that $\tau > n_1^+$ and
  \[
    \psi_{n_1^+}(\theta)
    =\psi_{n_1^+}^\tau(\theta)
    \leq \eta (\tfrac{n_1^++1}{n_1})(n_1^++1)\theta.
  \]
  Hence if we apply \eqref{eq:o20} again with $t = (\log k_1)^{50}P_{n_1^+},$ we
  conclude that	
  for all $k_1$ sufficiently large
  \[
    \Pr[
      \max_{k_2 \leq k \leq n_1^+}
      \psi_{k }^\tau(\theta) >  (\log k_1)^{50}P_{n_1^+}
      ~\vert~
      \Gfilt_{H_{k_2}}, \mathcal{B}
    ]
    \leq
    C_\beta\biggl(
    \frac{\psi_{k_2}^\tau(\theta)}{k_2\theta \eta(\tfrac{k_2}{n_1})}
    +\frac{(\log \tfrac{n_1^+}{n_1})^{\tfrac{1}{30}}}{\eta(\tfrac{n_1^+}{n_1})}
    \biggr)
    +
    2\frac{\psi_{k_2}^\tau(\theta) + P_{n_1^+}}{(\log k_1)^{50}P_{n_1^+}}.
  \]

  To conclude the proof, we observe that $\theta \mapsto \psi_{k_2}^\tau(\theta)$ is a continuously differentiable function, and
  %.  Hence, we can bound
  \[
    |\psi_{k_2}^\tau(\theta)|
    \leq
    \bigl\{\sup_{\theta_0 \in [0,2\pi]} |(\psi_{k_2}^\tau)'(\theta_0)|\bigr\}
    \theta.
  \]
  Hence on taking $n\to \infty$ the $\psi_{k_2}^\tau$ terms tend to $0$ almost surely, and we conclude
  \[
    \limsup_{n \to \infty}
    \Pr[
      \max_{k_2 \leq k \leq n_1^+}
      \psi_{k }^\tau(\theta) >  (\log k_1)^{50}P_{n_1^+}
      ~\vert~
      \Gfilt_{H_{k_2}}, \mathcal{B}
    ]
    \leq
    C_\beta\biggl(
    \frac{(\log \tfrac{n_1^+}{n_1})^{\tfrac{1}{30}}}{\eta(\tfrac{n_1^+}{n_1})}
    \biggr)
    +2(\log k_1)^{-50},
  \]
  which completes the proof by how $\eta$ was chosen.
\end{proof}

\subsection{The fine oscillation bound}

In this section, we develop an estimate of continuity for the real part of the field, where we consider a high value of the field $\mathfrak{U}^j_{T_+}(\theta')$ and then give a continuity estimate for $\mathfrak{U}^j_{T_+}(\theta)$ for $\theta \in [\theta',\theta'+\theta_0]$ for small $\theta_0.$ Without loss of generality we will take $\theta'=0.$

\begin{proposition}\label{prop:lsmcontinuity}
  ($\sigma=1$).
  We suppose that for $\alpha>4,\theta_0 > 0$ are given and satisfy
  \begin{equation}\label{eq:lsmtheta}
    (\log k_1)^{-\alpha/2} \leq \theta_0 \leq \varepsilon
  \end{equation}
  where the constant $\varepsilon$ is a small positive constant,
  to be determined,  that depends on $\beta > 0$.
  We will condition on
  $(\mathfrak{L}_{T_-}(\theta): \theta)$.
  We assume that the oscillation of the initial conditions are small in that
  \begin{equation}\label{eq:lsmalpha}
    \max_{ |\theta| \leq \theta_0}
    |
    \mathfrak{L}^j_{T_-}(\theta) -
    \mathfrak{L}^j_{T_-}(0)
    |
    \leq (\log k_1)^{-\alpha}.
  \end{equation}
    We consider large endpoints in the sense that
  \begin{equation}\label{eq7:lb}
    - k_6 \leq \mathfrak{U}^j_{T_+}(0) \leq 
    k_6.
  \end{equation}
  Then there is $\delta=\delta(\beta) >0$
  and a constant $C_\beta$ sufficiently large
  so that for any fixed set $S$ in $[0,\theta_0]$ of cardinality at most $e^{\theta_0^{-\delta}}$,
  the event
  \begin{equation*}
    \mathscr{O}_{*}
    \coloneqq
    \{
    \sup_{\theta \in S}
      |\mathfrak{U}^j_{T_+}(\theta) - \mathfrak{U}^j_{T_+}(0)|
      \leq \theta_0^{\delta}
    \quad\text{and}\quad
    \Im\bigl(\mathfrak{L}^j_{T_+}(\theta_0) - \mathfrak{L}^j_{T_+}(0)\bigr)
    \leq \theta_0^{\delta}
    \}
  \end{equation*}
  satisfies,
  on the event \eqref{eq:lsmalpha},
  \[
    \begin{aligned}
      &\Pr\bigl( \mathscr{O}_{*}^c \cap  \mathscr{P}_j'(0)
      \cap \eqref{eq7:lb} ~\vert~ (\mathfrak{L}^j_{T_-}(\theta): \theta)\bigr)
      \\
      &\qquad\leq C_\beta(k_6)
      \frac{
      \theta_0^{1+\delta}
	(\log k_5)\bigl(
    -\sqrt{\tfrac{8}{\beta}}(T_+-T_-)
	-
	\mathfrak{U}_{T_-}^j(0)
	\bigr)
	\exp\bigl(\sqrt{\tfrac{\beta}{2}}
	\mathfrak{U}_{T_-}^j(0)
	+2(T_+-T_-)
	\bigr)
      }{k_1^+(\log k_1)^{3/2}}.
    \end{aligned}
  \]
\end{proposition}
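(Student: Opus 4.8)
The plan is to condition on the initial data $(\mathfrak{L}^j_{T_-}(\theta):\theta)$ and, via the diffusion SDE \eqref{eq:LU}, track the difference processes
$\mathfrak{D}_t(\theta) \coloneqq \mathfrak{L}^j_t(\theta) - \mathfrak{L}^j_t(0)$ for $\theta\in[0,\theta_0]$.  Since the two diffusions are driven by the \emph{same} Brownian motion $\mathfrak{W}^j_t$, the increment $\mathfrak{D}_t$ satisfies a linear-type SDE whose drift involves the factor $e^{i\Im\mathfrak{L}^j_t(\theta)} - e^{i\Im\mathfrak{L}^j_t(0)}$, which is $O(|\Im\mathfrak{D}_t|)$, plus a deterministic drift of size $O(\theta e^t k_1^{-1})$.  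First I would set up a Gr\"onwall/stochastic-integration argument: writing $\mathfrak{D}_t = e^{i\Im\mathfrak{L}_t(0)}\int (\ldots)$ and using that on the relevant event the phase derivative $\partial_\theta\Im\mathfrak{L}_t$ stays bounded (this is where the $\mathscr{O}_j^\Psi$-type control and the a priori Pr\"ufer estimates of Section \ref{sec-prufest} enter), one gets $\sup_{t\le T_+}|\mathfrak{D}_t(\theta)|\le C(\theta_0 + (\log k_1)^{-\alpha})\,\exp(\text{martingale})$; the exponent is a stochastic integral of bounded predictable integrand against $\mathfrak{W}^j$, hence subgaussian with variance $O(T_+-T_-)=O(\log k_1)$.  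On the barrier event $\mathscr{P}_j'(0)$ the whole trajectory $\mathfrak{U}^j_t(0)$ stays in the entropic envelope, so this variance proxy is actually controlled much better near the terminal time.

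Second, I would split the cost: $\Pr(\mathscr{O}_*^c\cap\mathscr{P}_j'(0)\cap\eqref{eq7:lb})$ is bounded by $\Pr(\mathscr{P}_j'(0)\cap\eqref{eq7:lb})$ — which is exactly a one-ray probability, estimated by Lemma \ref{lem:bananadensity} (the $\mathfrak{U}^j$ version, giving precisely the density $\propto \theta_0$-free factor $\tfrac{(\cdots)e^{\sqrt{\beta/2}\mathfrak{U}_{T_-}(0)-2m}}{k_1^+(\log k_1)^{3/2}}$ times $(\log k_5)$) — times the conditional probability $\Pr(\mathscr{O}_*^c\mid \mathscr{P}_j'(0),\eqref{eq7:lb})$.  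For the latter I would use the subgaussian bound on $\sup_t|\mathfrak{D}_t|$ together with a union bound over the at-most-$e^{\theta_0^{-\delta_1}}$ points of $S$ and over a time-mesh: $\Pr(\sup_{\theta\in S}|\mathfrak{U}^j_{T_+}(\theta)-\mathfrak{U}^j_{T_+}(0)|>\theta_0^{\delta_1})$.  Choosing $\delta_1$ small enough that $\theta_0^{\delta_1}$ dominates $\theta_0^{1/2-\epsilon}(\log k_1)^{O(1)}$ (using \eqref{eq:lsmtheta}, which forces $\theta_0\ge(\log k_1)^{-\alpha/2}$ so that powers of $\log k_1$ are absorbed into small powers of $\theta_0$), the union-bound factor $e^{\theta_0^{-\delta_1}}$ is beaten by a Gaussian tail $\exp(-c\theta_0^{2\delta_1}/\text{var})$ provided we first condition so the variance proxy near $T_+$ is $O(\theta_0^{3\delta_1})$ or better — equivalently, provided the extra smallness $\theta_0^{\delta_2}$ we need can be extracted from the Bessel-bridge endpoint density.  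Concretely, I would run the argument on the sub-event where the terminal Bessel-bridge value $-\mathfrak{U}^j_{T_+}(0)$ is not too small, where the conditional oscillation is quantitatively tiny, and absorb the remaining (rare) range into the $\theta_0^{1+\delta_2}$ factor using Lemma \ref{lem:bananadensity}'s density bound. The imaginary-part statement $\Im(\mathfrak{L}^j_{T_+}(\theta_0)-\mathfrak{L}^j_{T_+}(0))\le\theta_0^{\delta_1}$ is handled identically since $\Im\mathfrak{D}_{T_+}$ is controlled by the same Gr\"onwall bound.

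The main obstacle I anticipate is the interplay between the \emph{number} of exceptional points, $e^{\theta_0^{-\delta_1}}$, and the quality of the continuity estimate: a naive Gr\"onwall bound gives oscillation $O(\theta_0)$ with a multiplicative $e^{O(\sqrt{\log k_1}\cdot\xi)}$ factor for a standard Gaussian $\xi$, whose exponential moments are \emph{not} finite at the scale $e^{\theta_0^{-\delta_1}}$ unless one is careful.  The resolution must use that on $\mathscr{P}_j'(0)$ the driving martingale's quadratic variation accumulated \emph{after} the effective branching time is only $O(k_4)$, not $O(\log k_1)$, so the relevant subgaussian scale is bounded independently of $k_1$; combined with the polynomial a priori bounds (Bernstein's inequality, as in Proposition \ref{prop:trunkray1}) one controls the early-time contribution deterministically.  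Getting the two exponents $\delta_1,\delta_2$ to be genuinely positive and compatible — i.e.\ $\theta_0^{\delta_1}$ large enough to be a useful oscillation bound yet $\theta_0^{1+\delta_2}$ small enough to close the first-moment sum in Lemma \ref{lem:inter2} — is the delicate bookkeeping step, and I would do it last, after all the estimates are in hand, by choosing $\delta_1$ first (from the continuity/union-bound tradeoff) and then $\delta_2$ (from what is left over in the Bessel-bridge density).
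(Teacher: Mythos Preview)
Your overall factorization---ray probability times conditional oscillation bound---matches the paper's Step~1, and you correctly identify the obstacle: a naive subgaussian bound on the Gr\"onwall exponent has variance $O(\log k_1)$, which cannot produce any power of $\theta_0$. But your proposed resolutions do not work. There is no ``effective branching time'' after which quadratic variation is $O(k_4)$: since $|\theta|\le\theta_0<1$, the drift $\theta e^t k_1^{-1}$ stays below $\theta_0$ for all $t\le T_+$, so the two rays never separate in that sense. Bernstein's inequality (Proposition~\ref{prop:trunkray1}) concerns the polynomial $\Phi_k^*$ and gives nothing for the diffusion $\mathfrak{L}^j_t$. And you never say how to access the conditional law on the rare event $\mathscr{P}_j'(0)\cap\eqref{eq7:lb}$; a Girsanov change of measure is essential and is where the argument actually lives.

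The extra power $\theta_0^{\delta_2}$ comes from a $\beta$-dependent line-crossing probability, not from any variance being small. After Girsanov to a measure $\mathbb{Q}$ under which $\mathfrak{D}_t\coloneqq\ell(t)+\Re\mathfrak{L}^j_t(0)$ is (essentially) a speed-$\tfrac4\beta$ Bessel bridge, the linearized SDE for $\Delta_t=\Im(\mathfrak{L}_t(\theta_0)-\mathfrak{L}_t(0))$ reads $d\Delta_t\approx\theta_0 e^t k_1^{-1}\,dt+\Delta_t\,d\mathfrak{X}_t$ with $\mathfrak{X}_t=\Re\mathfrak{L}^j_t(0)$, solved via the integrating factor $M_t=\exp(\mathfrak{X}_t-\tfrac2\beta t)$. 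The resulting integral is $\lesssim\theta_0^{1-\delta_\beta}e^{-(1-\widehat\delta)(T_+-t)}$ on the event
\[
E_2=\Bigl\{\mathfrak{X}_t-\mathfrak{X}_s\le\bigl(1+\tfrac2\beta-3\widehat\delta\bigr)(t-s)+\delta_\beta\log\tfrac1{\theta_0}+\log_+(T_+-t)\ \ \forall\,s\le t\Bigr\}.
\]
Since $\mathfrak{X}_t-\mathfrak{X}_s\approx(\mathfrak{D}_t-\mathfrak{D}_s)-\sqrt{8/\beta}\,(t-s)$ and $\mathfrak{D}$ behaves like speed-$\tfrac4\beta$ Brownian motion on $\mathscr{P}_j'(0)$, the complement $E_2^c$ asks this Brownian motion to cross a line of slope $1+\tfrac2\beta+\sqrt{8/\beta}$ at height $\sim\delta_\beta|\log\theta_0|$, which has probability $\theta_0^{\eta_\beta}$ with
\[
\eta_\beta=\tfrac\beta2\bigl(1+\tfrac2\beta+\sqrt{8/\beta}\bigr)\delta_\beta=\bigl(\tfrac\beta2+1+\sqrt{2\beta}\bigr)\delta_\beta.
\]
Because $\tfrac\beta2+1+\sqrt{2\beta}>1$ for every $\beta>0$, one may choose $\delta_\beta\in(0,1)$ with $\eta_\beta>1$; this algebraic fact is the sole source of $\delta_2$. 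Only \emph{after} $\Delta_t$ is pinned to $\lesssim\theta_0^{1-\delta_\beta}$ does one get a genuine subgaussian tail (now with variance $\lesssim\theta_0^{2(1-\delta_\beta)}$, not $\log k_1$) on the real parts $\mathfrak{U}^j_{T_+}(\theta)-\mathfrak{U}^j_{T_+}(0)$, for which the union bound over $|S|\le e^{\theta_0^{-\delta_1}}$ is trivially absorbed.
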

\begin{remark}\label{rem:continuity} An extension of the argument shows that in the case that $\alpha=\infty$ (which is to say the initial conditions are $0$) we may in fact bound the $\delta$--H\"older exponent on the interval by a constant with the same probability (up to constants).  This is by applying a chaining argument, and effectively applying this proposition repeatedly to control the process on intervals of size $\theta_0 {2^{-k}}$ for $k=0,1,2,\dots$.  Step $1$ would remain the same (albeit with $\mathscr{O}_*$ being the H\"older continuity event).  Steps 2-5 (control on the difference of the imaginary part) should be generalized from the difference of the imaginary part over the interval $[0,\theta_0]$ to the interval $[\theta_1,\theta_2]$, but essentially no details change.  Step $6$ would change similarly.  Finally, the chaining would be done.
\end{remark}
\begin{proof}
  We let $d\mathfrak{X}_t = \sqrt{\tfrac{4}{\beta}}\Re (e^{i \Im \mathfrak{L}^{j}_t(0)} d \mathfrak{W}_t^j)$ and let $d\mathfrak{B}_t = \sqrt{\tfrac{4}{\beta}}\Im (e^{i \Im \mathfrak{L}_t^{j}(0)} d \mathfrak{W}_t^j),$ for $t \in [T_-,T_+]$.  We also set the initial conditions $\mathfrak{B}_{T_-} =0$ and $\mathfrak{X}_{T_-}=\mathfrak{-U}_{T_-}^{j}(0)$. Hence the process
  $\mathfrak{X}$ equals $-\mathfrak{U}^j$.

  We let $\Gfilt$ be the filtration generated by $\mathfrak{X}$ and $\mathfrak{B}$ with $\mathfrak{X}_{T_+}$ and $(\mathfrak{L}^j_{T_-}{(\theta)} : \theta)$ adjoined.
  For a large integer absolute constant $k_*$ we set $T_* = T_+-k_*$ and we condition on the event that we remain below the concave barrier
  \begin{equation}\label{eq7:ray}
    \mathfrak{U}^j_{t}(0) \leq
    \sqrt{\tfrac{8}{\beta}}\bigl(t-T_{+}
    +2\bigl( {T_+ - t + (\log k_5)^{50}}\bigr)^{1/50}
    \bigr)
    \eqqcolon \ell(t)
    \quad
    \text{for all }
    t \in [T_-,T_*].
  \end{equation}
  This barrier is above the concave barrier in $\mathscr{P}_j'(0).$  

  As in the definitions and manipulations between \eqref{eq:aha1}--\eqref{eq:ahb3} from Lemma \ref{lem:allhailbanana}, we define a measure $\mathbb{Q}$ which is mutually absolutely continuous with respect to $\mathbb{P}$ and which flattens the curvature in \eqref{eq7:ray}.  For this measure, there is a $(\mathbb{Q}, \Gfilt)$--Brownian motion $(X_t:t \in [T_-,T_+])$
  such that
  \begin{equation}\label{eq:tlv3}
    d\mathfrak{X}_t
    =
     d X_t
    +
    \biggl(
    -\ell'(t)
    +
    \frac{\one[t \leq T_*]}{\ell(t) + \mathfrak{X}_t}
    + \frac{\mathfrak{X}_{T_+}-\mathfrak{X}_t + \ell(T_*) - \ell(t)}{T_+-t} \biggr) dt,
    \quad
    \mathfrak{X}_{T_-}
    =-\mathfrak{U}^{j}_{T_-}(0).
  \end{equation}
  In the above, we have extended the definition of $\ell$ for $t \geq T_*$ by taking it constant and equal to $\ell(T_*)$.
  Under $\mathbb{Q}$, $\mathfrak{B}$ remains a $(\mathbb{Q}, \Gfilt)$--Brownian motion.
  Moreover from \eqref{eq:ahb3}, we have a bound on the Radon--Nikodym derivative
  \begin{equation}\label{eq:tlv2}
    \frac{d \mathbb{P}}{d \mathbb{Q}}
    \leq
    \exp
    \biggl(
    \int_{T_-}^{T_*}
    \!\!\!\!\!\!
    -\ell''(t)(\ell(t) + \mathfrak{X}_t) dt
    \biggr).
  \end{equation}
  We also introduce $\mathfrak{D}_t = \ell(t \wedge T_*)+\mathfrak{X}_t$, which under $\mathbb{Q}$ is an $\Gfilt$--adapted Bessel bridge with endpoints
  \[
    \mathfrak{D}_{T_-} \leq  2\sqrt{\tfrac{8}{\beta}}\exp\bigl( \tfrac{9}{10}(\log k_1)^{29/30}\bigr)
    \quad
    \text{and}
    \quad
    \mathfrak{D}_{T_+} \leq \sqrt{\tfrac{8}{\beta}}\bigl( ((\log k_5)^{50} + k_*)^{1/50} + k_6\bigr).
  \]
  The process $\mathfrak{D}$ describes the distance of $\mathfrak{X}$ from
  the barrier.
  \paragraph{Step 1: reduction to a conditional probability.}

  We will show in this section that the problem can be reduced to
  showing
  \begin{equation}\label{eq:tlvreduce1}
    \Q\bigl( \mathscr{O}_{*}^c \cap  \mathscr{P}_j'(0) ~\vert~ \mathfrak{U}^j_{T_+}(0),(\mathfrak{L}^j_{T_-}(\theta): \theta), \eqref{eq7:ray}\bigr)
    \leq\theta_0^{1+\delta}
  \end{equation}
  for some
 $\delta> 0$ (note that $\delta$  also hides in the definition of $\mathscr{O}_{*}$) depending on $\beta$ and all $k_4,k_5$ bigger than some constant also
depending only $\beta$.
  Suppose we have established \eqref{eq:tlvreduce1}.
  Using the change of measure, we have
  \begin{align*}
   & \Pr\bigl( \mathscr{O}_{*}^c \cap  \mathscr{P}_j'(0) ~\vert~ \mathfrak{U}^j_{T_+}(0),(\mathfrak{L}^j_{T_-}(\theta): \theta), \eqref{eq7:ray}\bigr)\\
  &  =
    \Q\left[
    \frac{d \mathbb{P}}{d \mathbb{Q}}
    \one[
    \mathscr{O}_{*}^c \cap  \mathscr{P}_j'(0)]
  ~\middle\vert~ \mathfrak{U}^j_{T_+}(0),(\mathfrak{L}^j_{T_-}(\theta): \theta), \eqref{eq7:ray}\right].
  \end{align*}
  Applying H\"older's inequality, there is $\lambda > 1$ sufficiently large that
  \[
    \Pr\bigl( \mathscr{O}_{*}^c \cap  \mathscr{P}_j'(0) ~\vert~ \mathfrak{U}^j_{T_+}(0),(\mathfrak{L}^j_{T_-}(\theta): \theta), \eqref{eq7:ray}\bigr)
    \leq
    \Q\left(
    \exp
    \biggl(
    \int_{T_-}^{T_*}
    \!\!\!\!\!\!
    -\lambda \ell''(t)\mathfrak{D}_t dt
    \biggr)
    \right)^{1/\lambda}
    \theta_0^{1+\delta/2}.
  \]
  Controlling the Radon--Nikodym derivative is the same argument as the argument between \eqref{eq:ahb3} and \eqref{eq:ahb7}.
  We have for some absolute constant $L>0$
  \[
    -\ell''(t)
    \leq L(T_+ - t + (\log k_5)^{50})^{-99/50}
  \]
  and from Lemma \ref{lem:besselbridge} for some constant $C_\beta$
  \[
    \|\mathfrak{D}_t\|_{\psi_2} \leq C_\beta ((\log k_5) + \sqrt{T_+-t}),
  \]
  where $\|\cdot\|_{\psi_2}$ is the subgaussian norm with respect to the conditional probability measure given as $\mathbb{Q}( \cdot ~|~\mathfrak{U}^j_{T_+}(0),(\mathfrak{L}^j_{T_-}(\theta): \theta), \eqref{eq7:ray})$.
  Hence by convexity of the norm
  \[
    \biggl\|
    \int_{T_-}^{T_*}
    \!\!\!\!\!\!
    -\ell''(t)\mathfrak{D}_t dt
    \biggr\|_{\psi_2}
    \leq
    \int_{T_-}^{T_*}
    \!\!\!\!\!\!
    \bigl\|
    -\ell''(t)\mathfrak{D}_t dt
    \bigr\|_{\psi_2}
    \leq
    C_\beta (\log k_5)^{-24}.
  \]
  Thus we have after increasing $C_\beta$ for all $\lambda \geq 1$
  \[
     \Q\left(
    \exp
    \biggl(
    \int_{T_-}^{T_*}
    \!\!\!\!\!\!
    -\lambda \ell''(t)\mathfrak{D}_t dt
    \biggr)
    \right)^{1/\lambda}
    \leq
    \exp\bigl(
    \lambda C_\beta (\log k_5)^{-24}
    \bigr),
  \]
  which is bounded by $2$ for all $k_5$ small (depending on $\lambda$, which in turn depends only on $\beta$).

  Thus we have shown
  \begin{equation}\label{eq:tlvreduce2}
    \Pr\bigl( \mathscr{O}_{*}^c \cap  \mathscr{P}_j'(0) ~\vert~ \mathfrak{U}^j_{T_+}(0),(\mathfrak{L}^j_{T_-}(\theta): \theta), \eqref{eq7:ray}\bigr)
    \leq\theta_0^{1+\delta}
  \end{equation}
  for some constant $\delta > 0$ depending only on $\beta$ and all $k_4,k_5$ bigger than some constant depending only on $\beta$.
  The event \eqref{eq7:ray} contains $\mathscr{P}_j'(0)$, and so
  \[
     \Pr\bigl( \mathscr{O}_{*}^c \cap  \mathscr{P}_j'(0) ~\vert~ \mathfrak{U}^j_{T_+}(0),(\mathfrak{L}^j_{T_-}(\theta): \theta)\bigr)
    \leq\theta_0^{1+\delta}
    \Pr\bigl(
    \eqref{eq7:ray}
     ~\vert~ \mathfrak{U}^j_{T_+}(0),(\mathfrak{L}^j_{T_-}(\theta): \theta)
    \bigr).
  \]
  We then take expectation on both sides of the equation over $\mathfrak{U}^j_{T_+}(0)$ satisfying \eqref{eq7:lb}.
  Let $\mathbb{M}$ be the change of measure that flattens the linear part of $\ell$ (with respect to the conditional measure $\Pr( \cdot ~|~(\mathfrak{L}^j_{T_-}(\theta): \theta))$)
  \[
    \frac{d\mathbb{M}}{d\Pr}
    =
    \exp
    \biggl(
    \sqrt{\tfrac{\beta}{2}}
    %\sqrt{2}
    (\mathfrak{U}_{T_+}^j(0)
    -
    \mathfrak{U}_{T_-}^j(0))
    -
    %\tfrac{4}{\beta}
    (T_+-T_-)
    \biggr),
  \]
  under which
  \[
    t\mapsto
    \mathfrak{U}_{t}^j(0)
    -
    \mathfrak{U}_{T_-}^j(0)
    -\sqrt{\tfrac{8}{\beta}}(t - T_-)
  \]
  is a speed--$(\tfrac{4}{\beta})$ Brownian motion.
  Note that on \eqref{eq7:lb} the change of measure $\frac{d\mathbb{M}}{d\Pr}$ is controlled up to constants by
  \[
    \frac{d\mathbb{M}}{d\Pr}
    \geq
    C(k_6)
    \exp
    \biggl(
    -\sqrt{\tfrac{\beta}{2}}
    \mathfrak{U}_{T_-}^j(0)
    -(T_+-T_-)
    \biggr).
  \]

  In particular, we have (using $T_+ - T_- = \log k_1^+$)
  \begin{align*}
 &   \Pr\bigl(
    \eqref{eq7:ray}
    \cap
    \eqref{eq7:lb} ~\vert~(\mathfrak{L}^j_{T_-}(\theta): \theta)\bigr)\\
&     \leq
     \frac{
     C(k_6)}
     {k_1^+
     }
     \exp\biggl(\sqrt{\tfrac{\beta}{2}}
   \mathfrak{U}_{T_-}^j(0)
   +2(T_+-T_-)
 \biggr)
     \mathbb{M}\bigl( \eqref{eq7:ray}
    \cap
    \eqref{eq7:lb} ~\vert~ (\mathfrak{L}^j_{T_-}(\theta): \theta)\bigr).
  \end{align*}
  Using barrier estimates as in \cite[Corollary A.6]{CMN}
  we can bound the $\mathbb{M}$--probability
  for values of $\mathfrak{U}^j_{T_-}(0)$ given by $\mathscr{P}_j'(0)$
  \[
    \mathbb{M}\bigl( \eqref{eq7:ray}
    \cap
    \eqref{eq7:lb} ~\vert~ (\mathfrak{L}^j_{T_-}(\theta): \theta)\bigr)
    \leq
    \frac{C(k_6)(\log k_5)
    \bigl(
      -\mathfrak{U}_{T_-}^j(0)
      -\sqrt{\tfrac{8}{\beta}}(T_+-T_-)
    \bigr)
    }{(\log k_1)^{3/2}}.
  \]
  This concludes the reduction of the lemma to \eqref{eq:tlvreduce1}.

  \paragraph{Step 2: Finding a good event on which the slope of the Bessel bridge is tame.}
   We introduce a random variable to control the slopes of the Bessel bridge:
  \[
    \mathfrak{u}
    \coloneqq
    \max_{k \geq k_* : T_+-k \geq T_-}
    k^{0.49}
    \times
    \biggl|
    \frac{
    \mathfrak{D}_{T_+} -
    \mathfrak{D}_{T_+-k}
    }
    {k}
    \biggr|.
  \]
  Conditioning on $\mathfrak{D}_{T_*}$, by the Gaussian tail bound for the Brownian bridge increment $\mathfrak{D}_{T_+} - \mathfrak{D}_{T_*}$ and the Gaussian tail bound for the Bessel bridge increment (c.f.\ Lemma \ref{lem:besselbridge}) $\mathfrak{D}_{T_*} - \mathfrak{D}_{T_+-k}$, there is a Gaussian tail bound for $\mathfrak{u}$ of the form
  \[
    \mathbb{Q}
    \left(
    \biggl|
    \frac{
    \mathfrak{D}_{T_+} -
    \mathfrak{D}_{T_+-k}
    }
    {k}
  \biggr|
   > C\sqrt{\tfrac{4}{\beta}}(1+x)
   ~\middle\vert~ \Gfilt_{T_-}
   \right) \leq Ce^{-k x^2},
  \]
  for some $C$ sufficiently large and all $k \geq k_*.$
  Thus, summing in $k$ we may assume that the event
  \begin{equation}\label{eq:lhslope}
    E_0
    \coloneqq
    \{
      \mathfrak{u}
      \leq
      \tfrac{\widehat{\delta}}{3}
      \bigl(
      \log(1/\theta_0)
      \bigr)^{0.51}
    \}
  \end{equation}
  occurs with $(\mathbb{Q}~|~\mathscr{H}_{T_-})$--conditional probability at least $1-C_{\beta} e^{-c_\beta\widehat{\delta}^2(\log(1/\theta_0))^{1.01}}$ for all $\widehat{\delta}, \theta_0> 0$ sufficiently small.

  \paragraph{Step 3: Finding a good event with small oscillations.}

  The oscillations of $\mathfrak{X}_t$ must be controlled, especially near the endpoint $t=T_+,$
  to control its contribution to the diffusions $d\mathfrak{X}_t
  +i
  d\mathfrak{B}_t.$
  These oscillations are ultimately controlled by those of the underlying Brownian motion ${X}_t$.  %The process $H_t \coloneqq \mathfrak{X}_t + \ell(t)$ has a slope very nearly $0$ on $[T_-,T_+]$.
  Define the event
  $E_{1}$, for $\widehat{\delta}$ to be determined as a function of $\beta$,
  \[
    \max \biggl\{
      \frac{|X_u-X_s|}{|u-s|^{1/4}}
      ,
      \frac{|\mathfrak{X}_{u}-\mathfrak{X}_s|}{|u-s|^{1/4}}
    \biggr\}
    \leq %\sqrt{\frac{16}{\beta}}+
              %|\mathfrak{u}|
              %+
    \tfrac{\widehat{\delta}}{3}
    \bigl(
    \log(1/\theta_0)^{0.51}
    (T_+-u+1)^{0.49} \bigr),
    \forall~
    T_+ \geq u \geq s \geq T_-,
    |u-s| \leq 1.
  \]
  We may bound this probability by reducing the statement to a union bound over sets
  \[
    \operatorname{CP}_k \coloneqq \left\{ (u,s) : |T_+ - u - k| \leq 2, |T_+ - s - k| \leq 2  \right\}
  \]
  for integer $k.$

  First we will need a basic input: from oscillation theory of Brownian motion, there is an absolute constant $C>0$ so that for all $x>0,$
  \begin{equation}\label{eq:lh0}
    \mathbb{Q}\biggl(
    \sup_{ (u,s) \in \operatorname{CP}_k}
    \frac{|X_u-X_s|}{|u-s|^{1/4}}
    > \sqrt{\tfrac{4}{\beta}}C(1+x)
    ~\bigg\vert~
    \mathscr{H}_{T_-}
    \biggr)
    \leq e^{-x^2}.
  \end{equation}
This follows e.g. from the Borel--Tsirelson-Ibragimov-Sudakov inequality, together with \cite[Theorem I.2.1]{RevuzYor}.
  Taking a union bound over $k$,
  we conclude that with $(\mathbb{Q}~|~\mathscr{H}_{T_-})$--conditional probability $1-C_{\beta} e^{-c_\beta \widehat{\delta}^2 (\log(1/\theta_0))^2}$ for all $\widehat{\delta}, \theta_0> 0$ sufficiently small,
  \begin{equation}\label{eq:lh1}
      \frac{|X_u-X_s|}{|u-s|^{1/4}}
    \leq
    \tfrac{\widehat{\delta}}{3}
     \bigl(
    \log(1/\theta_0)^{0.51}
    (T_+-u+1)^{0.49} \bigr),
    \quad
    \forall~
    T_+ \geq u \geq s \geq T_-,
    |u-s| \leq 1.
  \end{equation}

  To control the oscillations of $\mathfrak{X},$ it suffices to control the oscillations of $\mathfrak{D}$ in its place, as for any $T_+ \geq u \geq s \geq T_-$ with $|u-s| \leq 1$
  \[
    |\mathfrak{X}_{u}-\mathfrak{X}_s|
    \leq
    |\mathfrak{D}_{u}-\mathfrak{D}_s|
    +
    \bigl(\max_{T_- \leq t \leq T_+} |\ell'(t)|\bigr) |u-s|,
  \]
  and $\ell'$ remains bounded.

  For the control on $\mathfrak{D}$ we consider separately the cases of $k \leq k_*-2$ and $k \geq k_*-2$.  In the former case, if we condition on the value of $\mathfrak{X}_{T_*}$ then the process $\bigl(\mathfrak{X}_{t} : t \in [T_*,T_+]\bigr)$ is a speed--$({\tfrac{4}{\beta}})$ Brownian bridge.  Its slope can be controlled by $\mathfrak{u}$, and after removing its slope, the same bound \eqref{eq:lh0} holds, with possibly different constants, and so
  taking a union bound over $k$, we control the probability of $E_1$ failing for these $k$.

  For $k \geq k_*-2$, integrating the differential representation for $\mathfrak{D}_t$, we have that for $(u,s) \in \operatorname{CP}_k$
  \[
    \begin{aligned}
      \mathfrak{D}_u
      -\mathfrak{D}_s
      =
      X_u-X_s
      +\int_s^u
      \biggl( \frac{\one[t \leq T_*]}{\mathfrak{D}_t} \biggr) dt
      +\int_s^u
      \biggl( \frac{\mathfrak{D}_{T_+}-\mathfrak{D}_t}{T_+-t} \biggr) dt.
    \end{aligned}
  \]
  On the event $\mathscr{P}_j'(\theta)$, we may bound $\mathfrak{D}_t \geq \sqrt{\tfrac{8}{\beta}}(\log k_5)$ (which is due to the concave barrier in $\mathscr{P}_j'(\theta)$ being shifted by a constant factor from the conditioning \eqref{eq7:ray}), for all $t \in (s,u)$ for which $t \leq T_*$.  To the second 
integral, we add and subtract $\mathfrak{D}_{T_+-k+2}$ in the following way
  \[
\int_s^u
      \biggl( \frac{\mathfrak{D}_{T_+}-\mathfrak{D}_t}{T_+-t} \biggr) dt
      =
\int_s^u
\biggl( \frac{\mathfrak{D}_{T_+}-\mathfrak{D}_{T_+-k+2}}{T_+-t} \biggr) dt
+
\int_s^u
      \biggl( \frac{\mathfrak{D}_{T_+-k+2}-\mathfrak{D}_t}{T_+-t} \biggr) dt.
  \]
  The first part we control in the same way as above.  As for the second, we may bound it above to create the following implicit bound
    \begin{align*}
    &  \sup_{ (u,s) \in \operatorname{CP}_k}
      \frac{|\mathfrak{D}_u
      -\mathfrak{D}_s|}{|u-s|^{1/4}}\\
  &   \quad \leq
      \sup_{ (u,s) \in \operatorname{CP}_k}
      \frac{|X_u-X_s|}{|u-s|^{1/4}}
      +
      (\mathfrak{u}+C_\beta)
      +
      \frac{C}{k_*-4}
      \biggl(
      \max_{\kappa=k_*-2, \dots, k_*+2}
      \sup_{ (u,s) \in \operatorname{CP}_\kappa}
      \frac{|\mathfrak{D}_u
      -\mathfrak{D}_s|}{|u-s|^{1/4}}
      \biggr).
    \end{align*}
  Here $C_\beta$ controls the Bessel generator term and $C$ is an absolute constant.  Hence, on taking maxima over both sides, we conclude for all $k_*$ larger than an absolute constant
  \[
          \sup_{ (u,s) \in \operatorname{CP}_k}
      \frac{|\mathfrak{D}_u
      -\mathfrak{D}_s|}{|u-s|^{1/4}}
      \leq 2
      \biggl(
      \max_{\kappa=k_*-2, \dots, k_*+2}
      \sup_{ (u,s) \in \operatorname{CP}_\kappa}
      \frac{|{X}_u
      -{X}_s|}{|u-s|^{1/4}}
      +\mathfrak{u}+C_\beta
      \biggr).
  \]
  Using \eqref{eq:lh1} and \eqref{eq:lhslope}, we conclude that there is some $C_\beta, c_\beta > 0$ such that for all sufficiently small $\theta_0$ and $\widehat{\delta}$,
  \begin{equation}\label{eq:lh2}
        \Q( E_1^c \cap E_0 \cap \mathscr{P}_j'(0) ~\vert~
	\Gfilt_{T_-})
	%\mathfrak{U}_{T_+}(0),(\mathfrak{L}_{T_-}(\theta): \theta), \eqref{eq7:lb}
    \leq
    C_\beta e^{-c_\beta \widehat{\delta}^2 (\log(1/\theta_0))^{1.01}}.
  \end{equation}

  \paragraph{Step 4: Finding a good event on which the imaginary part can not explode.}
  This will turn out to be the most probabilistically expensive part.
  Let $E_{2}$ be the event, for $C_\beta, \delta_\beta$ to be determined,
  \begin{equation}\label{eq:wzb}
    \mathfrak{X}_t
    -\mathfrak{X}_s
    \leq
    \left( 1+\frac{2}{\beta} - 3\widehat{\delta}\right)
    (t-s) -
    \delta_\beta
    \bigl(
    \log(\theta_0)
    \bigr)
    +
    \log_+(T_+ - t)
    %\biggl(\frac{2}{\beta} + \widehat{\delta}\biggr)\log_+(T_+ - t)
    ,
    \quad
    \forall\,
    T_+ \geq t \geq s \geq T_-.
  \end{equation}
  As we will work on the event $E_1$ it suffices to control the above on integer--valued points, hence it instead suffices to bound the probability
    \[
      \begin{aligned}
    \mathfrak{X}_t
    -\mathfrak{X}_s
    &\leq
    \left( 1+\frac{2}{\beta}
    - 3\widehat{\delta}\right)
    (t-s) -
    (\delta_\beta-2\widehat{\delta})
    \bigl(
    \log(\theta_0)
    \bigr)
    +
    (1-2\widehat{\delta})\log_+(T_+ - t), \\
    %\biggl(\frac{2}{\beta} + \widehat{\delta}\biggr)\log_+(T_+ - t)
    &\forall\
    T_+ \geq t \geq s \geq T_-
    \quad\text{for which}
    \quad t,s \in \Z.
  \end{aligned}
  \]
  We can furthermore formulate a sufficient bound in terms of $\mathfrak{D}$ using that
  $|\ell'(t) - \sqrt{\tfrac{8}{\beta}}| \leq \widehat{\delta}$ (which holds provided $k_5$ is chosen sufficiently large with respect to $\widehat{\delta}$), so that  \begin{equation}\label{eq:tlv4}
      \begin{aligned}
    \mathfrak{D}_t
    -\mathfrak{D}_s
    &\leq
    \left( 1
    +\frac{2}{\beta}
    +\sqrt{\frac{8}{\beta}}
    - 4\widehat{\delta}\right)
    (t-s) -
    (\delta_\beta-2\widehat{\delta})
    \bigl(
    \log(\theta_0)
    \bigr)
    +
    (1-2\widehat{\delta})\log_+(T_+ - t), \\
    %\biggl(\frac{2}{\beta} + \widehat{\delta}\biggr)\log_+(T_+ - t)
    &\forall\
    T_+ \geq t \geq s \geq T_-
    \quad\text{for which}
    \quad t,s \in \Z.
  \end{aligned}
\end{equation}

  If $s\geq T_*$, then as in the lead-up to \eqref{eq:lhslope}, conditioning on the value of $\mathfrak{D}_{T_*}$ the process
  $
   (\mathfrak{D}_t
   -\mathfrak{D}_{T_*} : t \in [T_*,T_+])
  $
  is a speed--$({\tfrac{4}{\beta}})$ Brownian bridge under $\Q$ with slope at most $\mathfrak{u}$.
  Hence we have a tail bound that for any $t \geq s$ with $s \geq T_*$
  \begin{equation}\label{eq:wza}
    \begin{aligned}
      &\Q\left(
      \bigl\{
	\mathfrak{D}_t
	-
	\mathfrak{D}_s
	>
	\delta
	+\tfrac{\widehat{\delta}}{3}(t-s)
	      \bigr\} \cap E_0
      ~\middle\vert~
      \Gfilt_{T_-}
      \right)
      \leq
      \exp\left(
      -
      \frac{\beta \widehat{\delta}^2}{8(t-s)}
      \right).
    \end{aligned}
  \end{equation}
  Using this, we have that the portion of \eqref{eq:tlv4}
  for which $t,s$ with $T_* \leq s \leq t \leq T_+$ can be controlled by $\widehat{\delta}\log(1/\theta_0)$, which is less than the bound stated in \eqref{eq:tlv4}, with
  probability
   $1-C_{\beta} e^{-c_\beta \widehat{\delta}^2 (\log(1/\theta_0))^2}$ for all $\widehat{\delta}, \theta_0> 0$ sufficiently small.
   Hence, we may reduce the problem to showing
    \begin{equation}\label{eq:tlv5}
      \begin{aligned}
    \mathfrak{D}_t
    -\mathfrak{D}_s
    &\leq
    \left( 1
    +\frac{2}{\beta}
    +\sqrt{\frac{8}{\beta}}
    - 4\widehat{\delta}\right)
    (t-s) -
    (\delta_\beta-3\widehat{\delta})
    \bigl(
    \log(\theta_0)
    \bigr)
    +
    (1-2\widehat{\delta})\log_+(T_+ - t), \\
    %\biggl(\frac{2}{\beta} + \widehat{\delta}\biggr)\log_+(T_+ - t)
    &\forall\
    T_* \geq t \geq s \geq T_-
    \quad\text{for which}
    \quad t,s \in \Z.
  \end{aligned}
\end{equation}

  For all $s \leq t \leq T_*$, we begin by recalling that the Bessel bridge SDE has strong solutions:
    \[
    d\mathfrak{D}_t
    =
     d X_t
    +
    \biggl(
    \frac{\one[t \leq T_*]}{\mathfrak{D}_t} + \frac{\mathfrak{D}_{T_+}-\mathfrak{D}_t}{T_+-t} \biggr) dt,
    \quad
    \mathfrak{D}_{T_-} + \ell(T_-)
    =-\mathfrak{U}^{j}_{T_-}(0).
  \]
  For all $k_5$ sufficiently large (with respect to $\widehat{\delta}$) on $\mathscr{P}_j'(0)$, we may bound above the Bessel generator term by $\widehat{\delta}$.
  The slope we bound on $E_0 \cap E_1 \cap \mathscr{P}_j'(0)$ by comparing $t$ to its (integer) ceiling
  \begin{equation}\label{eq:tlv6}
    \biggl|
    \frac{\mathfrak{D}_{T_+}-\mathfrak{D}_t}{T_+-t}
    \biggr|
    \leq
    \frac{2\widehat{\delta}}{3}
    \frac{ \log(1/\theta_0)^{0.51}}
    {(T_+-t)^{0.51}}.
  \end{equation}
  When $T_+-t \geq \log(1/\theta_0)$ we can just as well bound the above by $\frac{2\widehat{\delta}}{3}$.  We conclude that we have the bound
  on
  $E_0 \cap E_1 \cap \mathscr{P}_j'(0)$
  \begin{equation}\label{eq:tlv7}
    \mathfrak{D}_t
    -
    \mathfrak{D}_s
    \leq
    X_t - X_s
    + \frac{5\widehat{\delta}}{3}
    (t-s)
    + \frac{5\widehat{\delta}}{3} \log(1/\theta_0)
    \quad
    \text{for all}
    \quad
     T_- \leq s \leq t \leq T_*.
  \end{equation}

  Thus for any $t  \leq T_*$ and $w>0$ we have the bound, using the probability that Brownian motion hits a line,
  \[
    \begin{aligned}
      &
      \Q
      \left( \exists~s \in [T_-,t]:
       \mathfrak{D}_t
    	-
\mathfrak{D}_s
      >
      w
      + \biggl(1 + \frac{2}{\beta}+\sqrt{\frac{8}{\beta}} - 4\widehat{\delta}
      \biggr)(t-s)
      ~\bigg\vert~
      \Gfilt_{T_{-}}
      \right) \\
      &
      \leq
      \exp\left(
      -
      \frac{\beta}{2}
      \biggl(1 + \frac{2}{\beta}
      +\sqrt{\frac{8}{\beta}} - \frac{17\widehat{\delta}}{3}\biggr)
      \biggl(w -
      \frac{5\widehat{\delta}}{3}\log(1/\theta_0)\biggr)
      \right).
    \end{aligned}
  \]
  We pick a $\delta_\beta \in (0,1)$ and an $\eta_\beta$ so that
  \begin{equation}\label{eq:lsmetabeta}
   \frac{\beta}{2}\biggl(1 + \frac{2}{\beta} + \sqrt{\frac{8}{\beta}}\biggr)\delta_\beta
   > \eta_\beta
    >1.
  \end{equation}
  We apply the tail bound just above with $w =(\delta_\beta - 3\widehat{\delta})
    \bigl(
    \log(1/(\theta_0))
    \bigr)
    +
    \bigl(1 - 2\widehat{\delta}\bigr)\log_+(T_+ - t),$ from which it follows that for all $\widehat{\delta}$ sufficiently small (as a function of $\beta$),
 \[
    \begin{aligned}
      &
      \Q
      \left( \biggl\{\exists~s \in [T_-,t]:
      \text{ \eqref{eq:tlv5} fails }
    \biggr\}
    \cap E_0 \cap E_1 \cap \mathscr{P}_j'(0)
      ~\bigg\vert~
      \Gfilt_{T_{-}}
      \right)
      \leq
      (T_+ - t)^{-(1/2)\log(1/\theta_0)}
      \theta_0^{\eta_\beta + \widehat{\delta}}
      \end{aligned}
  \]
  Summing in $t$, it is seen that for $\widehat{\delta}$ sufficiently small and $\theta_0$ sufficiently small to absorb the constants,
  \begin{equation}\label{eq:lhE2}
    \Q( (E_2)^c \cap E_0 \cap E_1 \cap \mathscr{P}_j'(0) ~|~ \Gfilt_{T_-})
    \leq \theta_0^{\eta_\beta + \widehat{\delta}/2}.
  \end{equation}

  \paragraph{Step 5: A tail bound for the change in the imaginary part.}

  We estimate the imaginary part of
  \[
    \Delta_t
    =\Delta_t(\theta_0)
    = \Im(\mathfrak{L}_{t}(\theta_0)-\mathfrak{L}_t(0)).
  \]
  This satisfies the SDE
  \[
    d\Delta_t
    =
    \theta_0 e^t k_1^{-1} dt
    +
    \sqrt{\tfrac{4}{\beta}} \Im \biggl((e^{i \Delta_t}-1)e^{i \Im \mathfrak{L}_t(0)} d \mathfrak{W}_t^j\biggr),
  \]
  which has almost surely non-negative solutions.
            %Let $d\mathfrak{X}_t = \sqrt{\tfrac{4}{\beta}}\Re (e^{i \Im \mathfrak{L}_t(0)} d \mathfrak{W}_t^j)$ and let $d\mathfrak{B}_t = \sqrt{\tfrac{4}{\beta}}\Im (e^{i \Im \mathfrak{L}_t(0)} d \mathfrak{W}_t^j),$ with $0$ initial conditions at time $T_-.$
  Then we express
  \[
    d\Delta_t
    =
    \theta_0 e^tk_1^{-1} dt
    + \Delta_t d\mathfrak{X}_t
    + \xi_1(\Delta_t) d\mathfrak{X}_t
    + \xi_2(\Delta_t) d\mathfrak{B}_t
    ,
  \]
  where $\xi_1(x)$ and $\xi_2(x)$ are bounded by $C_\beta x^2.$

  Let $M_t = \exp\left( \mathfrak{X}_t - \tfrac{2}{\beta}t \right).$
  Then we have from It\^o's Lemma
  \begin{equation}\label{eq:lsm0}
    d\biggl( \frac{
      \Delta_t
    }
    {M_t}
    \biggr)
    =
    \frac{1}{M_t}
    \biggl(
    \theta_0
    e^tk_1^{-1} dt
    + \xi_1(\Delta_t) (d\mathfrak{X}_t-\tfrac{4}{\beta}dt)
    + \xi_2(\Delta_t) d\mathfrak{B}_t\biggr).
  \end{equation}
  This we can integrate to conclude for $t \in [T_-,T_+],$
  \begin{equation}\label{eq:lsm1}
    \Delta_{t} =
    \frac{M_t}{M_{T_-}}
    \Delta_{T_-}
    +
    \int_{T_{-}}^{t}
    \frac{M_t}{M_s}
    \bigl(\theta_0 e^s k_1^{-1}ds
    + \xi_1(\Delta_s) (d\mathfrak{X}_s-\tfrac{4}{\beta} ds)
    + \xi_2(\Delta_s) d\mathfrak{B}_s\bigr).
  \end{equation}
  Now by definition we have
  \begin{equation}\label{eq:lsmMM}
    \frac{M_t}{M_s}
    =
    \exp\left(
    \mathfrak{X}_t
    -\mathfrak{X}_s
    -\frac{2}{\beta}(t-s)
    \right).
  \end{equation}
  Recalling \eqref{eq:wzb}, on the event $E_0 \cap E_1 \cap E_2 \cap \mathscr{P}_j'(0)$,
  \begin{equation}\label{eq:lsmMM3}
    \frac{M_t}{M_s}
    \leq
    \exp\left(
    \bigl( 1- 3\widehat{\delta}\bigr)
    (t-s) -
    \delta_\beta
    \bigl(
    \log(\theta_0)
    \bigr)
    +
    \log_+(T_+ - t)
    \right).
  \end{equation}
  %Note that we may bound this uniformly over all $T_- \leq s \leq t \leq T_+$ by $C_\beta \theta_0^{-\delta_\beta}(\log k_1)^{\frac 2 \beta + \widehat{\delta}}.$

  Let
  \[
    L_t =
    \int_{T_-}^{t}
    \frac{
    M_t
    }{M_s}
    \theta_0 e^s k_1^{-1} ds,
    \quad
    \text{for any}
    \quad
    t\in [T_-,T_+].
  \]
  On the event
  $E_0 \cap E_1 \cap E_2 \cap \mathscr{P}_j'(0),$
  we have from \eqref{eq:lsmMM3} the bound %that for some sufficiently large $C_\beta$
  \begin{equation}\label{eq:lsmbound}
    |L_t| \leq
    \widehat{\delta}^{-1}(\theta_0)^{1-\delta_\beta}
    e^{-(T_+-t) +\log_+(T_+ - t)
    }
    \eqqcolon
    (\theta_0)^{1-\delta_\beta}\mathcal{W}(t),
  \end{equation}
  for $t \in [T_-,T_+].$

  Let $\tau$ be the first time $s$ greater than $T_-$ that
  \[
  |\Delta_s-L_s
  -
  M_{s}
  M_{T_-}^{-1}\Delta_{T_-}
  | \geq
  (\theta_0)^{1-\delta_\beta}\bigl( \mathcal{W}(t) + (\log k_1)^{-\alpha/2}\bigr),
  \]
  and note that from \eqref{eq:lsmbound}, for all $t \in [T_-,T_+],$
  and on the event
  $E_0 \cap E_1 \cap E_2 \cap \mathscr{P}_j'(0)$
  and the conditions on $\alpha, \theta_0$ (\eqref{eq:lsmalpha} and \eqref{eq:lsmtheta}),
  \begin{equation}\label{eq:lsm1a}
    \begin{aligned}
    \Delta_{t\wedge \tau}
    =
    |\Delta_{t\wedge \tau}|
    &\leq |\Delta_{t\wedge \tau}-L_{t\wedge \tau}
    - M_{t\wedge \tau}
    M_{T_-}^{-1}\Delta_{T_-}|
    +|M_{t\wedge \tau}
    M_{T_-}^{-1}\Delta_{T_-}|
    +|L_{t\wedge \tau}| \\
    &\leq 2
    (\theta_0)^{1-\delta_\beta}
    \bigl(
    \mathcal{W}(t \wedge \tau)
    + (\log k_1)^{-\alpha/2}
    \bigr).
    \end{aligned}
  \end{equation}

  Returning to the SDE for $\Delta,$ we now write
  \[
    d\Delta_t
    =
    \theta_0 e^tk_1^{-1} dt
    +\Delta_t dU_t
    \quad
    \text{where}
    \quad
    dU_t
    =
    d\mathfrak{X}_t
    + \tfrac{\xi_1(\Delta_t)}{\Delta_t} d\mathfrak{X}_t
    + \tfrac{\xi_2(\Delta_t)}{\Delta_t} d\mathfrak{B}_t
    ,
  \]
  Letting $N_t = \exp\bigl(U_t - \tfrac12 \langle U_t \rangle\bigr)$, we therefore have
  \begin{equation}\label{eq:lsm2}
    \Delta_t
    -L_t
    -
    \frac{M_{t}}
    {M_{T_-}}
    \Delta_{T_-}
    =
    \biggl(
    \frac{N_t}{N_{T_-}}
    -\frac{M_t}{M_{T_-}}
    \biggr)
    \Delta_{T_-}
    +
    \int_{T_{-}}^{t}
    \biggl(
    \frac{N_t}{N_s}
    -\frac{M_t}{M_s}
    \biggr)
    \bigl(\theta_0 e^s k_1^{-1}ds\bigr).
  \end{equation}

  This essentially reduces the problem to an estimate on the ratios of integrating factors that holds up to the stopping time.  First we observe that we have the representation
  \[
    \log\biggl(
    \frac{N_t}{N_{T_-}}
    \frac{M_{T_-}}{M_t}
    \biggr)
    =
    \int_{T_-}^t
    \bigl(\tfrac{\xi_1(\Delta_u)}{\Delta_u} d\mathfrak{X}_u
    + \tfrac{\xi_2(\Delta_u)}{\Delta_u} d\mathfrak{B}_u\bigr)
    -
    \frac2\beta
    \int_{T_-}^t
    \bigl(
    2\tfrac{\xi_1(\Delta_u)}{\Delta_u}
    +
    \bigl(\tfrac{\xi_1(\Delta_u)}{\Delta_u}\bigr)^2
    +
    \bigl(\tfrac{\xi_2(\Delta_u)}{\Delta_u}\bigr)^2
    \bigr)
    \,du.
  \]
  We bound the right hand side uniformly over $T_{-} \leq t \leq \tau$.  There are three types of terms to control: the finite variation terms in the second integral $(i)$, the martingale terms in the first integral $(ii)$, and the finite variation terms in the first integral $(iii)$.
  Then for the first terms, using the bound on $\Delta$ in \eqref{eq:lsm1a}:
  \[
    (i)
    \coloneqq
    \int_{T_-}^{T_+ \wedge \tau}
    \bigl|
    2\tfrac{\xi_1(\Delta_u)}{\Delta_u}
    +
    \bigl(\tfrac{\xi_1(\Delta_u)}{\Delta_u}\bigr)^2
    +
    \bigl(\tfrac{\xi_2(\Delta_u)}{\Delta_u}\bigr)^2
    \bigr|
    du
    \leq
    C(\beta)
    (\theta_0)^{1-\delta_\beta}.
  \]
  For the second terms, we need a stochastic control, and we have by bounding the quadratic variation for some constant
  \[
    \begin{aligned}
    &\Q
    \left(
    \biggl\{
    \max_{s \leq T_+ \wedge \tau}
    \biggl|
    \int_{T_-}^{s}
    \tfrac{\xi_1(\Delta_u)}{\Delta_u} d{X}_u
    + \tfrac{\xi_2(\Delta_u)}{\Delta_u} d\mathfrak{B}_u
    \biggr|
    > x
    \biggr\}
    \cap
    E_0 \cap E_1 \cap E_2 \cap \mathscr{P}_j'(0)
    ~\middle\vert~ \Gfilt_{T_-}
    \right) \\
    &\leq
    \exp\biggl(-\frac{x^2}
    {C(\beta, \widehat{\delta}) (\theta_0)^{4(1-\delta_\beta)}}
    \biggr).
  \end{aligned}
  \]
  In particular, we may assume with probability $1-e^{O( \theta_0^{-2(1-\delta_\beta)})},$ that
  \begin{equation}\label{eq:lsm3a}
    (ii)\coloneqq
    \max_{s \leq t \wedge \tau}
    \biggl|
    \int_{T_-}^{s}
    \tfrac{\xi_1(\Delta_u)}{\Delta_u} d{X}_u
    + \tfrac{\xi_2(\Delta_u)}{\Delta_u} d\mathfrak{B}_u
    \biggr|
    \leq \theta_0^{1-\delta_\beta}.
  \end{equation}
  Finally for the third terms,
  \[
    (iii)\coloneqq
    \int_{T_-}^{T_+ \wedge \tau}
    \Delta_s
    \biggl(
    \biggl| \frac{\one[s \leq T_*]}{\ell(s) + \mathfrak{X}_s} \biggr|
    +
    \biggl| \frac{\mathfrak{X}_{T_+}-\mathfrak{X}_s}{T_+ - s}\biggr|
    \biggr)
    ds.
  \]
   On the event $\mathscr{P}_j'(0),$
  \[
    \biggl| \frac{\one[s \leq T_*]}{\ell(s) + \mathfrak{X}_s} \biggr|
    \leq 2.
  \]
   On the event $E_0 \cap E_1 \cap \mathscr{P}_j'(0),$ using \eqref{eq:tlv6} and the control for $\ell'$, we have
  \begin{equation}\label{eq:lsmptwise}
    \biggl|
    \frac{\mathfrak{X}_{T_+}-\mathfrak{X}_s}{T_+ - s}
    \biggr|
    \leq
    \widehat{\delta} \log(1/\theta_0).
  \end{equation}
  Hence applying these bounds, we have for some $C(\beta,\widehat{\delta})$ sufficiently large
  \[
    (iii) \leq C(\beta,\widehat{\delta})\theta_0^{1-\delta_\beta} \log(1/\theta_0).
  \]
  Combining all of these, we conclude that for some $C(\beta,\widehat{\delta})$ sufficiently large
  \begin{equation}\label{eq:lsm3}
    \max_{T_- \leq s \leq t \leq T_+\wedge \tau}
    \biggl|
    \log\biggl(
    \frac{N_t}{N_{s}}
    \frac{M_{s}}{M_t}
    \biggr)
    \biggr|
    \leq C(\beta,\widehat{\delta})\theta_0^{1-\delta_\beta} \log(1/\theta_0).
  \end{equation}
 Hence we conclude from \eqref{eq:lsm2} that for $t \leq \tau$, for some $C(\beta,\widehat{\delta})$ and all $\theta_0$ sufficiently small (depending on $\beta, \widehat{\delta}$),
  \[
    |\Delta_{t}
    -L_t
    -
    \frac{M_{t}}
    {M_{T_-}}
    \Delta_{T_-}
    |
    \leq
    \biggl(
    \exp\bigl(
    C\theta_0^{1-\delta_\beta} \log(1/\theta_0)
    \bigr)-1\biggr)
    \cdot
    \biggl(
    \frac{M_t}{M_{T_-}}
    \Delta_{T_-}
    +
    \int_{T_{-}}^{t}
    \frac{M_t}{M_s}
    \bigl(\theta_0 e^s k_1^{-1}ds\bigr)
    \biggr).
  \]
  We conclude as in \eqref{eq:lsm1a} that for some $C=C(\beta,\widehat{\delta})$,
  \[
    |\Delta_{T_+\wedge \tau}
    -L_{T_+ \wedge \tau}
    -
    \frac{M_{T_+\wedge \tau}}
    {M_{T_-}}
    \Delta_{T_-}
    |
    \leq
    C\theta_0^{2(1-\delta_\beta)} \log(1/\theta_0)
    \bigl( \mathcal{W}(t) + (\log k_1)^{-\alpha/2}\bigr).
  \]
  By the definition of $\tau,$ we conclude that on the events considered, $E_0 \cap E_1 \cap E_2 \cap \mathscr{P}_j'(0)$, as well as on the event in which \eqref{eq:lsm3a} holds, that $\tau > T_+$, and hence \eqref{eq:lsm1a} holds for all $t \in [T_-,T_+].$
  In summary, we conclude there is an $\eta_\beta>1$ (see \eqref{eq:lsmetabeta}), a $\delta_\beta \in (0,1)$, a $C_\beta > 0$ and an $\epsilon >0$ so that for all $\theta_0$ sufficiently small,
  \begin{align}\label{eq:lsm5}
      &\Q\Big(
      \{
        \exists~t \in [T_-,T_+] ~:~\\
&\nonumber\qquad  |\Delta_t| > C_\beta
	(\theta_0)^{1-\delta_\beta}\bigl(e^{-(1-\widehat{\delta})(T_+-t)}
	+\log^{-\alpha/2} (k_1)
	\bigr)
      \}
      \cap E_0 \cap E_1 \cap E_2 \cap \mathscr{P}_j'(0)
      ~\vert~ \mathscr{H}_{T_-}
     \Big ) 
      \leq
      \theta_0^{\eta_\beta + \widehat{\delta}}.
  \end{align}

  \paragraph{Step 6: Control for the real part.}
  Going forward, we work on the event $F_{\theta_0}$ on which
  \[
    \forall~t \in [T_-,T_+] ~:~ |\Delta_t(\theta_0)| \leq C_\beta
    (\theta_0)^{1-\delta_\beta}
    \bigl(
    e^{-(1-\widehat{\delta})(T_+-t)}
    + (\log k_1)^{-\alpha/2}
    \bigr),
  \]
  the probability of which is estimated in  \eqref{eq:lsm5}.
  The real part of $\mathfrak{L}_t$ we will represent by
  \[
    \Delta_t^r(\theta)
    =\Re(
    \mathfrak{L}_t(\theta) - \mathfrak{L}_t(0)
    ).
  \]
  Using the SDE for $\mathfrak{L}$ in \eqref{eq:LU} % \eqref{eq:lsmXY},
  \[
    d\Delta_t^r
    =
    (\cos(\Delta_t(\theta))-1) d \mathfrak{X}_t
    -\sin(\Delta_t(\theta)) d\mathfrak{B}_t.
  \]
  Since $0 \leq \Delta_t(\theta) \leq \Delta_t(\theta_0)$
  we can estimate
  the finite variation parts on $F_{\theta_0} \cap E_1 \cap E_2 \cap \mathscr{P}_j'(0)$
  using a similar analysis as in \eqref{eq:lsm3}
  by
  \begin{equation}\label{eq:lsm8a}
    \int_{T_-}^{T_+ \wedge \tau}
    \Delta_s^2
    \biggl(
    \biggl| \frac{\one[s \leq T_*]}{\ell(s) + \mathfrak{X}_s} \biggr|
    +
    \biggl| \frac{\mathfrak{X}_{T_+}-\mathfrak{X}_s}{T_+ - s}\biggr|
    \biggr)
    ds
    \leq
    C(\beta,\widehat{\delta})
    \log(1/\theta_0)
    |\theta_0|^{2-2\delta_\beta}
    .
  \end{equation}
  The quadratic variation is dominated  on the event $F_{\theta_0}$ by, for some sufficiently large $C_\beta,$
  \[
    \langle \Delta_{T_+}^r(\theta) \rangle
    \leq
    C_\beta
    \int_{T_-}^{T_+}
    \Delta_t^2(\theta_0)
    dt
    \leq
    C_\beta^2 \bigl(\theta_0\bigr)^{2-2\delta_\beta}.
  \]
  Therefore, we have a tail bound that for some $C_\beta$ sufficiently large and for all $x > 0,$
  \[
    \Q\left(
    \{
      |\Delta_{T_+}^r(\theta_0)|
      > C_\beta(1+x)|\theta_0|^{1-\delta_\beta}
    \}
    \cap
    F_{\theta_0} \cap E_0 \cap E_1 \cap E_2 \cap \mathscr{P}_j'(0)
    ~\vert~ \mathscr{H}_{T_-}
    \right)
    \leq
    e^{-x^2}
  \]
  Thus taking $x = \theta_0^{-\varepsilon}$ for $\varepsilon < 1-\delta_\beta$
  we conclude that
  \[
    \Q\bigl( \mathscr{O}_{*}^c \cap
    F_{\theta_0} \cap E_0 \cap E_1 \cap E_2 \cap \mathscr{P}_j'(0) ~\vert~
    \Gfilt_{T_-}
    \bigr)
    \leq e^{-\theta_0^{-\delta}},
  \]
  which finally, using \eqref{eq:lhslope},\eqref{eq:lh2},\eqref{eq:lhE2} and \eqref{eq:lsm5}, concludes \eqref{eq:tlvreduce1}.
\end{proof}

\section{The diffusion approximation}\label{sec:diffusion}

In this section, we prove Proposition \ref{prop:uberdecoupling}.
The proof is given by a series of short lemmas.  A summary proof is given in the penultimate section.  In the final section, we develop a general tail bound which we use at multiple points in the development.

It is convenient in this section to work conditionally on the event $\mathscr{T}_{n_1^+}$ from \eqref{eq:good}.
We let, for the event $\mathscr{T}_{n_1^+}$ and $\sigma$--filtration $\filt_{n_1^+}$,
\[
  \Pr_M( \cdot ) \coloneqq \Pr ( \cdot ~|~ \mathscr{T}_{n_1^+}, \filt_{n_1^+}, (\Gamma_j^{a}: j > n_1^+)),
\]
and let $\Exp_M$ denote the associated expectation.
Under the law $\Pr_M,$  $(X_j,Y_j)$ (see \eqref{eq:gaussians}) are no longer independent of one another for $j \geq n_1^+$.  They do however still satisfy uniform Gaussian integrability, in that
\begin{equation}\label{eq:GI}
  \Exp_M\left[
    \exp(\lambda F(X_j,Y_j))
  \right]
  \leq
  \exp\left(
  \lambda \Exp_M \left[ F(X_j,Y_j)\right]
  +{\lambda^2 \|\nabla F(X_j,Y_j)\|^2_{\text{L}^{\infty}(\Pr_M)}}
  \right), \quad \text{ for all } \lambda \in \R,
\end{equation}
for all Lipschitz $F$ on the (convex) support of $(X_j,Y_j)$, see
\cite[Proposition 3.1]{BLe}.  We will use this subgaussian concentration for the family of functions that appear in the definitions of $\log \Phi^*_k(e^{i\theta})$, and in particular to control the linearization error.

\subsection{Locally linear processes}
\label{sec:llp}

In a similar fashion to \cite{CMN}, we introduce a process which in short windows of $k$ evolves linearly.
Let $\varkappa$ be a parameter, to be chosen later as a power of $n$, which will be the block length within which $\psi_k$ will evolve linearly.
Define a new recurrence, recalling $\beta_j = \sqrt{\frac{\beta}{2}(j+1)}$ for all $j \geq 0$
\begin{equation}\label{eqd:llp}
  \begin{aligned}
    &\lambda_{k+1}(\theta)=\lambda_{k}(\theta) + i\theta + 2\frac{Z_k e^{i\Im\lambda^*_k(\theta)}}{\beta_k}
  \quad \text{for all } k \geq {n_1^+}, \quad \text{where} \quad Z_k=X_k+iY_k,\\
  &\lambda_{{n_1^+}}(\theta) =
  -2\log \Phi^*_{{n_1^+}}(e^{i\theta})+i\theta n_1^+,
  \; \lambda^*_k(\theta) = \lambda_{n^*(k)}(\theta) +i (k-n^*(k))\theta,\\
  &
  \;  \text{and}\; n^*(k) = n_1^+ +\varkappa \lfloor \tfrac{k-{n_1^+}}{\varkappa}\rfloor
  \; \forall\, k \geq {n_1^+} .
  \end{aligned}
\end{equation}
As we will see below, the variables $\lambda_k(\theta)$
are good approximations for
$-2\log \Phi_k^*(\theta)+i\theta k$.

We begin with a simple observation that for $\varkappa$ sufficiently small with respect to ${n_1^+},$ the difference between $\lambda_k$ and $\lambda_k^*$ can be controlled.
\begin{lemma}
  For all $C>0$ there is constant $D >0$ so that for all ${n_1^+}$ sufficiently large (depending on $\beta$ and $M$),
  \[
    \Pr_M\left[
      \max_{{n_1^+} \leq k < n} |\lambda_k(\theta) - \lambda_k^*(\theta)|
      \geq D \sqrt{\tfrac{\varkappa \log n}{\beta {n_1^+}}}
    \right] \leq n^{-C}.
  \]
  \label{lemd:llstar}
\end{lemma}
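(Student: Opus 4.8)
\textbf{Proof plan for Lemma \ref{lemd:llstar}.}
The plan is to bound, for each fixed $k$ in a block, the difference $\lambda_k(\theta) - \lambda_k^*(\theta)$ by telescoping over the at most $\varkappa$ steps since the last block boundary, and then use the subgaussian concentration \eqref{eq:GI} together with a union bound over $k$. Write $m = n^*(k) + n_1^+$ for the left endpoint of the block containing $k$. By \eqref{eqd:llp}, for $m \le \ell \le k$ we have
\[
  \lambda_{\ell+1}(\theta) - \lambda_\ell(\theta)
  = i\theta + \frac{Z_\ell e^{i\Im \lambda_m^*(\theta)}}{\beta_\ell},
\]
since $\lambda_\ell^*(\theta)$ is constant (equal to $\lambda_m(\theta) + (\ell - m)\theta$ in the real part, with imaginary part $\Im\lambda_m^*(\theta)$ fixed) throughout the block. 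Hence
\[
  \lambda_k(\theta) - \lambda_k^*(\theta)
  = \sum_{\ell = m}^{k-1} \Bigl( \lambda_{\ell+1}(\theta) - \lambda_\ell(\theta) - i\theta\Bigr)
  = e^{i\Im\lambda_m^*(\theta)} \sum_{\ell = m}^{k-1} \frac{Z_\ell}{\beta_\ell}.
\]
So the whole problem reduces to controlling $\bigl|\sum_{\ell=m}^{k-1} Z_\ell/\beta_\ell\bigr|$ uniformly over $m \ge n_1^+$ and $m \le k < m + \varkappa$.

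First I would record that $\beta_\ell^2 = \tfrac{\beta}{2}(\ell+1) \ge \tfrac{\beta}{2} n_1^+$ for $\ell \ge n_1^+$, so that $\sum_{\ell=m}^{k-1} \beta_\ell^{-2} \le \tfrac{2\varkappa}{\beta n_1^+}$. Then I would treat the real and imaginary parts of $\sum Z_\ell/\beta_\ell$ separately; each is of the form $\sum_{\ell} a_\ell X_\ell$ (or $a_\ell Y_\ell$) with $\sum a_\ell^2 \le \tfrac{2\varkappa}{\beta n_1^+}$. Under $\Pr_M$ the pair $(X_\ell, Y_\ell)$ need not be independent of the others, but \eqref{eq:GI} applies to the Lipschitz function $F(x_{n_1^++1}, y_{n_1^++1}, \dots) = \sum_\ell a_\ell x_\ell$, whose gradient has sup-norm-squared exactly $\sum a_\ell^2$. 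This yields a subgaussian tail bound: for any fixed partial sum over a block,
\[
  \Pr_M\Bigl( \bigl| \Re \textstyle\sum_{\ell=m}^{k-1} Z_\ell/\beta_\ell \bigr| \ge t \Bigr)
  \le 2\exp\Bigl( - \frac{t^2}{C \varkappa/(\beta n_1^+)}\Bigr),
\]
for an absolute constant $C$, after noting $\Exp_M[X_\ell] $ and $\Exp_M[Y_\ell]$ are negligibly small (they are $0$ in the untruncated model, and $O(n^{-10})$, say, under $\mathscr{T}_{n_1^+}$ for $n_1^+$ large, so they contribute a lower-order term to the mean that is easily absorbed). The same holds for the imaginary part.

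Finally, I would take a union bound over the at most $n$ values of $k$ (the number of distinct endpoints $k$ is at most $n$). Choosing $t = D\sqrt{\varkappa \log n /(\beta n_1^+)}$ with $D$ chosen large enough (depending on $C$) so that $2\exp(-D^2 \log n / C) \le n^{-C-1}$, and summing over $k$, gives the claimed bound $n^{-C}$. The only mildly delicate points are (i) making sure the concentration inequality \eqref{eq:GI} is applied with the correct gradient bound — which is why reducing to a single linear functional of the Gaussian coordinates per block is worthwhile — and (ii) checking that on the truncation event $\mathscr{T}_{n_1^+}$ the conditional means $\Exp_M[X_\ell], \Exp_M[Y_\ell]$ are small enough that they do not spoil the estimate; the design of the truncation (chosen so that $\Exp[(\Gamma_j^a - \beta_j^2)\one[\mathscr{T}_M]] = 0$, see \eqref{eq:good}, and with $\Pr(\mathscr{T}_M)\to 1$) ensures this. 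Neither is a real obstacle; the main content of the lemma is simply the observation that $\lambda_k - \lambda_k^*$ is a sum of at most $\varkappa$ increments each of size $O(1/\sqrt{n_1^+})$, so its fluctuation is $O(\sqrt{\varkappa/n_1^+})$ up to logarithmic factors from the union bound.
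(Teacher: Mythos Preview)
Your approach is correct and essentially the same as the paper's: reduce $\lambda_k - \lambda_k^*$ to a sum of at most $\varkappa$ terms of the form $Z_\ell e^{i\phi_\ell}/\beta_\ell$, bound the tail, and union bound over~$k$. Two small remarks. First, the paper takes a slightly cleaner route than invoking \eqref{eq:GI}: under the \emph{unconditioned} law $\Pr$ the sum is exactly complex Gaussian with variance $\sum_\ell \beta_\ell^{-2} \le 4\varkappa/(\beta n_1^+)$, giving a Gaussian tail bound directly; one then passes to $\Pr_M$ via $\Pr_M(A) \le \Pr(A)/\Pr(\mathscr{T}_{n_1^+}) \le 2\Pr(A)$ for $n_1^+$ large. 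This sidesteps your concern about $\Exp_M[X_\ell]$ (which, incidentally, vanishes exactly by rotational invariance of the truncation) and also the issue that \eqref{eq:GI} is stated for a single $(X_j,Y_j)$ rather than a function of the whole vector --- your use of it for the sum requires iterating across $j$, which is fine since the truncation event factors over $j$, but you should say so. Second, the phase $e^{i\Im\lambda_\ell^*(\theta)}$ is not constant over the block: by the definition of $\lambda_\ell^*$ it picks up $e^{i\theta}$ at each step (the paper's displayed definition of $\lambda_k^*$ has a missing $i$), so your factored formula $e^{i\Im\lambda_m^*}\sum Z_\ell/\beta_\ell$ is not quite right. Since $|e^{i\phi_\ell}|=1$ this has no effect on the variance bound, but it is worth writing the difference correctly as $\sum_{\ell=m}^{k-1} Z_\ell e^{i\Im\lambda_\ell^*(\theta)}/\beta_\ell$.
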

\begin{proof}
  We show the proof for ${n_1^+} \leq k < {n_1^+} + \varkappa.$  For larger $k,$ we have the same estimate (and indeed it only improves).
  We have that
  \[
    \lambda_k(\theta) - \lambda_k^*(\theta) = \sum_{j = {n_1^+}}^{k-1} \frac{Z_j e^{i\Im\lambda^*_j(\theta)}}{\beta_j}.
  \]
  Under $\Pr$, this is Gaussian with variance $\tfrac{4}{\beta}\sum_{j={n_1^+}}^{k-1} \frac{1}{j+1} \leq \tfrac{4 \varkappa}{\beta {n_1^+}}.$  Hence there is an absolute constant so that for all $t \geq 0$ and all $\theta$
  \[
    \Pr_M\left[
      \max_{{n_1^+} \leq k < n_1^++\varkappa} |\lambda_k(\theta) - \lambda_k^*(\theta)|
      \geq t
    \right]
    \leq
    \frac{2}{\Pr(\mathscr{T}_{{n_1^+}})}
    \exp\left(
    -\beta (n_1^+ t)^2/(C\varkappa)
    \right).
  \]
  In particular for $t = D\sqrt{\tfrac{\varkappa \log n}{\beta {n_1^+}}}$ with $D$ sufficiently large, the claim follows.  %Hence for all $M$ sufficiently large, the theorem follows.
\end{proof}

We turn to comparing the differences
$R_k+i\Delta_k \coloneqq -2\log \Phi^*_{k} +ik\theta- \lambda_k$ (for real-valued $R$ and $\Delta$).
We begin by observing that the difference satisfies a recurrence for $k \geq {n_1^+}$:
\begin{equation}
  \begin{aligned}
    R_{k+1}(\theta)
    +i\Delta_{k+1}(\theta)
    &=
    R_{k}(\theta)
    +i\Delta_{k}(\theta)
    + 2\left( -\log(1-\gamma_ke^{i\Psi_k(\theta)}) - \left( \tfrac{Z_k e^{i\Im\lambda^*_k(\theta)}}{\beta_k} \right)\right) \\
    &=R_{k}(\theta)
    +i\Delta_{k}(\theta)
    + 2\left( -\log(1-\tilde{\gamma_k}e^{i\Delta_k(\theta)+i\Im(\lambda_k(\theta)-\lambda_k^*(\theta))})  - \left( \tfrac{\tilde{Z_k}}{\beta_k} \right)\right),
  \end{aligned}
  \label{eqd:deltak}
\end{equation}
where $\left\{ (\tilde{\gamma_k},\tilde{Z_k}) \right\}$ have the same law as $\left\{ (\gamma_k,Z_k) \right\}.$
The proof of the following lemma uses calculus computations
contained in Section
\ref{sec-prufest}.
\begin{lemma}
  Suppose $\varkappa \geq \sqrt{{n_1^+}}.$  For all $\delta > 0$ and all $C>0$.
\[
  \Pr_M\biggl[
    \max_{{n_1^+} \leq k \leq n} |R_k+i\Delta_k| \geq n^{\delta} \sqrt{\varkappa/{n_1^+}}
  \biggr]
  \leq n^{-C}
\]
for all $n$ sufficiently large.
  \label{lemd:delta}
\end{lemma}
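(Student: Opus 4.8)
\textbf{Proof plan for Lemma \ref{lemd:delta}.}
The plan is to run a Gronwall-type argument on the recursion \eqref{eqd:deltak}, controlling $|R_k+i\Delta_k|$ by a sum of "martingale-like" increments plus a deterministic drift that is quadratic in the current error. First I would use Lemma \ref{lemd:llstar} to replace $\Im(\lambda_k-\lambda_k^*)$ by a term that is bounded by $D\sqrt{\varkappa\log n/(\beta n_1^+)}$ with probability $1-n^{-C}$, so that on this event the only "argument'' entering the logarithm in \eqref{eqd:deltak} through the coefficient is $\Delta_k$ up to a negligible additive error. Then I would Taylor-expand the increment: writing $f_k \coloneqq -2\log(1-\tilde\gamma_k e^{i\Delta_k+i\epsilon_k}) - 2\tilde Z_k/\beta_k$ with $\epsilon_k$ the small shift, the calculus estimates of Section \ref{sec-prufest} (the same ones invoked for \eqref{eq:o10}) give a decomposition $f_k = L_k + Q_k$, where $L_k$ is linear in $(\Delta_k,\epsilon_k)$ with $\Exp_M[L_k\mid \mathscr{G}_{H_k}]$ of size $O(|\Delta_k|/k + |\epsilon_k|)$ and $Q_k$ is bounded by $C(|\Delta_k|^2+\epsilon_k^2)|\tilde Z_k|^2/\beta_k \lesssim (|R_k+i\Delta_k|^2+\epsilon_k^2)/k$ on $\mathscr{T}_{n_1^+}$. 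The key point, exactly as in \cite{CMN}, is that the truncation $\mathscr{T}_{n_1^+}$ makes $|\tilde Z_k|^2$ and $|\sqrt{\Gamma_k}-\beta_k|$ bounded by $O(\log k)$, so all the linearization error terms are summable against $\sum 1/k$ with only logarithmic loss.

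Next I would set $S_k \coloneqq \max_{n_1^+\le j\le k}|R_j+i\Delta_j|$ and show a stochastic Gronwall inequality of the form $S_k \le M_k^* + C\sum_{j=n_1^+}^{k-1}\frac{1}{j}\,S_j + (\text{deterministic error from }\epsilon_k)$, where $M_k^*$ is the running maximum of a martingale with conditional increment variance $O(S_j^2/j^2)$ (coming from the martingale part of $L_k$ and of $Q_k$). Because $\sum_{j>n_1^+}1/j^2 = O(1/n_1^+)$, the martingale $M^*$ has total variance $O(S_{n}^2/n_1^+)$ on the event $\{S_n\le R\}$ for a threshold $R$; using \eqref{eq:GI} (the uniform subgaussianity under $\Pr_M$) I would obtain a subgaussian tail $\Pr_M(M_n^* > x,\ S_n\le R) \le C\exp(-cn_1^+ x^2/R^2)$. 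The deterministic $\epsilon_k$-contribution is bounded (after summing $\sum 1/k$) by $O(\sqrt{\varkappa\log n/n_1^+}\cdot\log n)$, which is $\le n^{\delta/2}\sqrt{\varkappa/n_1^+}$ for large $n$ since $\varkappa\ge\sqrt{n_1^+}$ forces $\sqrt{\varkappa/n_1^+}\ge n_1^{+,-1/4}$, a power of $n$, dominating any power of $\log n$.

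The way to turn this into the stated bound is a bootstrap on the threshold $R$. Take $R = n^{\delta}\sqrt{\varkappa/n_1^+}$. On the event $\{S_n \le R\}$, the accumulated martingale and error terms are, by the above, of size $O(n^{\delta/2}\sqrt{\varkappa/n_1^+})\cdot(\text{polylog})$ with probability $1-n^{-C}$; plugging into the discrete Gronwall inequality $S_k \le (\text{small}) + C\sum \frac1j S_j$ and using $\prod(1+C/j) = O((k/n_1^+)^{C}) = O(n^{C})$ — here one must be slightly careful, but the point is that $\sum_{j=n_1^+}^n 1/j = \log(n/n_1^+)$ is bounded by a constant times $\log k_1 = O(\log n)$, so actually $\sum 1/j$ is $O(\log n)$, not a power, and the Gronwall factor is only polylogarithmic — we conclude $S_n \le n^{\delta}\sqrt{\varkappa/n_1^+}$ on an event of probability $1-n^{-C}$, closing the bootstrap. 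A standard first-exit-time argument (stopping the recursion at $\tau = \inf\{k: S_k > R\}$ and showing $\Pr_M(\tau \le n) \le n^{-C}$) makes this rigorous without circularity.

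\textbf{Main obstacle.} The delicate point is the drift in the linear term $L_k$: a priori $\Exp_M[L_k\mid \mathscr{G}_{H_k}]$ could be as large as $O(|\Delta_k|/k)$ with an unfavorable (positive) sign, and one needs $\sum 1/k$ over the range $[n_1^+, n]$ to be $O(\log n)$ rather than a power of $n$ — this is true since $n/n_1^+ \asymp k_1$ is subpolynomial, but it must be used. Equally delicate is tracking that the coupling shift $\epsilon_k = \Im(\lambda_k-\lambda_k^*)$ genuinely only costs $\sqrt{\varkappa\log n/n_1^+}$ and that, when multiplied through the Gronwall factor and summed, it stays below $n^{\delta}\sqrt{\varkappa/n_1^+}$ — this is where the hypothesis $\varkappa\ge\sqrt{n_1^+}$ is essential, guaranteeing $\sqrt{\varkappa/n_1^+}$ is a genuine (negative) power of $n$ that beats all the polylogarithmic losses. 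Extracting the precise form of the quadratic and linear parts of the logarithmic increment from Section \ref{sec-prufest}, uniformly on the truncation event, is the main piece of real work; the probabilistic envelope is then routine given \eqref{eq:GI}.
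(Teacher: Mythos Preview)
Your variance accounting has a real error that breaks the bootstrap. You claim the martingale increments have conditional variance $O(S_j^2/j^2)$, but the correct order is $O(S_j^2/j)$: the linear term $L_k$ is of size $\tilde Z_k\,(e^{i(\Delta_k+\epsilon_k)}-1)/\beta_k$ with $\beta_k\asymp\sqrt{k}$, so its conditional variance given $\filt_k$ is $|e^{i(\Delta_k+\epsilon_k)}-1|^2/\beta_k^2\asymp(\Delta_k+\epsilon_k)^2/k$. (The remainder $Q_k$ is smaller, but $L_k$ is the dominant random piece, not a drift.) With the correct variance, on $\{S_n\le R\}$ the martingale has total quadratic variation $\asymp R^2\log(n/n_1^+)\asymp R^2\log k_1^+$, a \emph{constant} multiple of $R^2$; the subgaussian tail from \eqref{eq:GI} then only gives $M_n^*\le C(k_1)R\sqrt{\log n}$ with probability $1-n^{-C}$, which is $\gg R$. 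So your single-threshold stopping time argument cannot close.

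This is exactly why the paper does not attempt a Gronwall argument here. The recursion for $\Delta_k$ has genuinely multiplicative noise: the paper first establishes the Laplace bound
\[
  \Exp_M\bigl[e^{\mu\Delta^T_{k+1}}\,\big|\,\filt_k\bigr]\le\exp\Bigl(\mu\Delta^T_k+\tfrac{C\mu^2}{\beta k}\bigl((\Delta^T_k)^2+D^2\tfrac{\varkappa\log n}{\beta n_1^+}\bigr)\Bigr)
\]
(stopping at the first failure of Lemma \ref{lemd:llstar}), and then invokes Proposition \ref{prop:logladder}, the ``logarithmic ladder''. The ladder places stopping times at geometric levels $\eta^p E$ and uses that the cost of crossing from level $\eta^p$ to $\eta^{p+1}$ is \emph{independent of $p$} (the variance scales with the square of the current level); compounding over $\sim(\log x)/\log\eta$ levels yields probability $\exp\bigl(-c(\log x)^2/\log(n/n_1^+)\bigr)$, which for $x=n^\delta$ is $\exp(-c\delta^2(\log n)^2)\le n^{-C}$. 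Once $\Delta_k$ is controlled by the ladder, the real part $R_k$ no longer has self-referential variance and a direct martingale/subgaussian bound of the kind you sketch does work --- the paper notes this explicitly at the end of its proof.
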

\begin{proof}
  We show the bound for the imaginary part.
  We can express
  \[
    \Exp_M\left[ e^{\mu (\Delta_{k+1} - \Delta_k)} \middle \vert \filt_k \right]
    =
    \Exp_M\left[ e^{\mu F(X_j,Y_j)} \right],
  \]
  where in the notation of Lemma \ref{lem:calculus},
  \[
    F(x,y) = \Re\left\{ i\log(1-u(x+iy) e^{i\Delta_k(\theta)+i\Im(\lambda_k(\theta)-\lambda_k^*(\theta))})+i(x+iy)/\beta_k\right\}.
  \]
  Hence by \eqref{eq:GI} and Lemma \ref{lem:calculus}, there is a constant $C>0$ so that for any $k \geq {n_1^+},$ and any $\mu \in \R$
  \[
    \Exp_M\left[ e^{\mu \Delta_{k+1}} \middle \vert \filt_k \right]
    \leq
    \exp\left(\mu \Delta_k + \frac{C\mu^2| e^{i\Delta_k(\theta)+i\Im(\lambda_k(\theta)-\lambda_k^*(\theta))} - 1|^2}{\beta_k^2} + \frac{C\mu^2\log k}{\beta_k^3}\right).
  \]
  Let $T$ be the first $k \geq {n_1^+}$ such that $|\Im(\lambda_k(\theta) - \lambda_{k}^*(\theta))|$ is larger than $D\sqrt{\tfrac{\varkappa \log n}{\beta {n_1^+}}}$ for some $D > \sqrt{\beta}.$ 
Let $\Delta_k^T=\Delta_{k\wedge T}$. We have supposed that $\varkappa \geq \sqrt{{n_1^+}},$
  and therefore there is an absolute constant $C>0$ so that for any $k \geq {n_1^+}$ and any $\mu \in \R,$
  \[
    \Exp_M\left[ e^{\mu \Delta^T_{k+1}} \middle \vert \filt_k \right]
    \leq
    \exp\left(\mu \Delta^T_k + \frac{C\mu^2((\Delta^T_k)^2+ D^2\tfrac{\varkappa \log n}{\beta {n_1^+}})}{\beta k}\right).
  \]

  In preparation to use Proposition~\ref{prop:logladder}, we observe from \eqref{eqd:deltak} and Lemma \ref{lem:calculus} that there is an absolute constant $C>0$ so that for all ${n_1^+}$ sufficiently large,
  \[
    \Delta^T_{k+1}(\theta)-\Delta^T_k(\theta) \leq C\Delta^T_k\sqrt{ \frac{\log({n_1^+})}{\beta {n_1^+}}} + CD\sqrt{\frac{\varkappa \log n}{\beta {n_1^+}}}.
  \]
  Hence by Proposition~\ref{prop:logladder},  there is an absolute constant $C>0$ so that for all $x \geq C\log(n/{n_1^+})/\beta$ $  + C,$
  \[
    \Pr_M\left[ \max_{{n_1^+} \leq k \leq n} \Delta^T_k \geq x D\sqrt{\varkappa \log(n)/(\beta {n_1^+})} \right]
    \leq
    \exp\left(
    -\frac{1}{C}\frac{\log(x)^2}{\log(n/{n_1^+})}
    \right).
  \]
  The same bound holds for $-\Delta_k^T,$ and therefore by Lemma \ref{lemd:llstar}, the lemma follows.

  To control the real part $R_k$, we again use Lemma \ref{lem:calculus}, although there is no longer a need for the ladder.  We suppress the details.
\end{proof}

\subsection{Band--resampled approximation}
Recall from \eqref{eq:gaussians}, that $Z_\ell = \sqrt{E_\ell} e^{i\Theta_\ell}$ is a complex Gaussian for each $\ell.$  For any $r \in \N,$ define $k_r = {n_1^+} + r\varkappa.$  The family of Gaussians $\left\{ Z_\ell : k_{r-1} \leq \ell < k_r \right\}$ are i.i.d.  Hence, we can represent for any $r \in \N$, these Gaussians through their discrete Fourier transform:
\begin{equation}
  Z_{\ell+k_{r-1}} = \frac{1}{\sqrt{\varkappa}}\sum_{p=1}^{\varkappa} e(\tfrac{-p\ell}{\varkappa}) \hat{Z}_{p}^{(r)}, \quad \text{for } 0 \leq \ell \leq \varkappa-1
\end{equation}
where $e(x) = e^{2\pi i x}$ and $\left\{ \hat Z_p^{(r)} : r \in \N, 1 \leq p \leq \varkappa \right\}$ are another family of i.i.d.\ complex Gaussians.

We shall estimate the effect on the recurrence $\lambda_k(\theta)$ wherein we resample some of the $\{\hat Z_p\}$ corresponding to modes that are far from $\theta.$
Let $\omega \leq \varkappa$ be a positive integer parameter, which will be the bandwidth.
For a fixed $j \in {\mathcal{D}}_{n/k_1},$ define for any $r \in \N$
\begin{equation}\label{eqd:bandresampled}
  Z_{\ell+k_{r-1}}^{(j)} = \frac{1}{\sqrt{\varkappa}}\sum_{p=1}^{\varkappa} e(\tfrac{-p\ell}{\varkappa}) (\hat{Z}_{p}^{(r)} + (\check{Z}_{p}^{(r,j)}- \hat{Z}_{p}^{(r)}) \one[|e(\tfrac{p}{\varkappa})-e(\tfrac{\theta_j}{2\pi})|> \tfrac{\omega}{\varkappa}]), \quad \text{for } 0 \leq \ell \leq \varkappa-1,
\end{equation}
where $\left\{ \check Z_p^{(r,j)} : r \in \N, 1 \leq p \leq \varkappa,j \in {\mathcal{D}}_{n/k_1} \right\}$ is another family of standard complex Gaussians. 

We also define locally linear processes driven by these band--resampled Gaussians:
\begin{equation}\label{eqd:llpre}
  \begin{aligned}
    &\lambda_{k+1}^{(j)}(\theta)=\lambda_{k}^{(j)}(\theta)
    + i\theta
    + 2
    \frac{Z_k^{(j)} e^{i\Im \lambda^{*,(j)}_k(\theta)}}{\beta_k}
   \quad \text{for all } k \geq {n_1^+}, \quad \text{where}\\
   &\lambda_{{n_1^+}}^{(j)}(\theta) = \lambda_{n_1^+}(\theta),
   \quad \text{ and } \quad
  \lambda^{*,(j)}_k(\theta) =
  \lambda^{(j)}_{n^*(k)}(\theta)
  +i (k-n^*(k))\theta
   \quad \text{for all } k \geq {n_1^+} .
  \end{aligned}
\end{equation}
We then define $R^{(j)}_k(\theta) + i\Delta^{(j)}_k(\theta) \coloneqq \lambda_k(\theta) - \lambda_k^{(j)}(\theta).$

\begin{lemma}\label{lemd:bandvariance}
  Suppose that $j \in \mathcal{D}_{n/k_1}$ and that $\theta \in \R$ satisfies
  \[
    |
    e(\tfrac{\theta_j}{2\pi})
    -e(\tfrac{\theta}{2\pi})|
    \leq \frac{\omega}{2\varkappa}.
  \]
There is an absolute constant $C>0$ so that for any $r \in \N$ and all $\mu \in \R$
\[
    \Exp_M\left[ e^{\mu \Delta_{k_r}^{(j)}} \middle \vert \filt_{k_{r-1}}  \right]
    \leq
    \exp\left(
    \mu \Delta_{k_{r-1}}^{(j)}
    +\frac{\mu^2 C\varkappa |\Delta^{(j)}_{k_{r-1}}|^2}{\beta k_{r-1}}
    +
    \frac{\mu^2 C\varkappa }{ \beta \omega k_{r-1}}
    \right),
\]
and so that for all $t \geq 0$
\[
  \Pr_M\left[
    \max_{k_{r-1} \leq k < k_r}
    |
    \Delta_k^{(j)}-\Delta_{k_{r-1}}^{(j)}
    | \geq t
     \middle \vert \filt_{k_{r-1}}
  \right]
  \leq 2\exp\left(
  -\frac{t^2k_{r-1}\omega \beta}{C\varkappa(\omega |\Delta_{k_{r-1}}^{(j)}|^2 + 1) }
  \right).
\]
\end{lemma}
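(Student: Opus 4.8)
\textbf{Proof plan for Lemma~\ref{lemd:bandvariance}.} The plan is to analyze the increment $\Delta^{(j)}_{k_r}-\Delta^{(j)}_{k_{r-1}}$ over a single block $[k_{r-1},k_r)$ by expanding the recurrence \eqref{eqd:llpre} against \eqref{eqd:llp}, conditionally on $\filt_{k_{r-1}}$, and then invoking the subgaussian concentration \eqref{eq:GI} together with Proposition~\ref{prop:logladder}. First I would fix $r$ and write, for $k_{r-1}\le k<k_r$,
\[
  \Delta^{(j)}_{k+1}-\Delta^{(j)}_k
  =
  2\Im\Bigl(
  \tfrac{Z_k e^{i\Im\lambda^*_k(\theta)}}{\beta_k}
  -
  \tfrac{Z_k^{(j)} e^{i\Im\lambda^{*,(j)}_k(\theta)}}{\beta_k}
  \Bigr),
\]
and split this into (i) the ``resampling'' part coming from $Z_k-Z_k^{(j)}$, which is a linear combination of the resampled Fourier modes $\{\check Z_p^{(r,j)}-\hat Z_p^{(r)}\}$ over indices $p$ with $|e(p/\varkappa)-e(\theta_j/2\pi)|>\omega/\varkappa$, and (ii) the ``phase mismatch'' part coming from $e^{i\Im\lambda^*_k(\theta)}-e^{i\Im\lambda^{*,(j)}_k(\theta)}$, which I would bound by $C|\Delta^{(j)}_{k}|$ (plus a negligible error controlled, as in Lemma~\ref{lemd:llstar}, by the difference of $\lambda$ and $\lambda^*$ within the block).

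The core computation is the conditional variance of part (i) summed over the block. Because $Z_{\ell+k_{r-1}}-Z^{(j)}_{\ell+k_{r-1}}=\tfrac{1}{\sqrt\varkappa}\sum_{p}e(-p\ell/\varkappa)(\check Z_p^{(r,j)}-\hat Z_p^{(r)})\one[\dots]$, and the $e^{i\Im\lambda^*_k(\theta)}$ are, up to the locally-linear correction, of the form $e^{ik\theta}\cdot(\text{const over the block})$, the sum $\sum_{\ell=0}^{\varkappa-1} e(-p\ell/\varkappa)e^{i\ell\theta}$ is a geometric-type Dirichlet kernel of magnitude $O(1/|e(p/\varkappa)-e(\theta/2\pi)|)$. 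Using the hypothesis $|e(\theta_j/2\pi)-e(\theta/2\pi)|\le\omega/(2\varkappa)$, for every resampled mode $p$ we have $|e(p/\varkappa)-e(\theta/2\pi)|\ge\omega/(2\varkappa)$ by the triangle inequality, so each Dirichlet kernel is $O(\varkappa/\omega)$. Squaring, summing in $p$ (there are $O(\varkappa)$ of them), dividing by $\varkappa$ from the normalization, and accounting for the $1/\beta_k^2\asymp 1/(\beta k_{r-1})$ prefactor, the total conditional variance of part (i) over the block is $O\bigl(\varkappa/(\beta\omega k_{r-1})\bigr)$. This is exactly the second error term in the claimed bound; I expect this Dirichlet-kernel bookkeeping to be the main obstacle, since one must keep the locally-linear approximation error from \eqref{eqd:llp} and Lemma~\ref{lemd:llstar} from contaminating the cancellation. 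Part (ii) contributes, after summing the block and using $\sum_{\ell}\tfrac1{k_{r-1}+\ell}\le C\varkappa/k_{r-1}$, the term $O\bigl(\varkappa|\Delta^{(j)}_{k_{r-1}}|^2/(\beta k_{r-1})\bigr)$ to the variance, with the caveat that $|\Delta^{(j)}_k|$ must first be shown to stay comparable to $|\Delta^{(j)}_{k_{r-1}}|$ across the block — this is a short bootstrap using the same moment bound on a stopped version of the process.

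Having these conditional variance bounds, the moment generating function estimate follows from \eqref{eq:GI}: the functions $F$ appearing (real/imaginary parts of $\log(1-u\cdot)$ and the linear corrector) are uniformly Lipschitz in $(X_k,Y_k)$ with the stated constants, so $\Exp_M[e^{\mu\Delta^{(j)}_{k_r}}\mid\filt_{k_{r-1}}]\le\exp\bigl(\mu\Delta^{(j)}_{k_{r-1}}+\mu^2\,(\text{conditional variance bound})\bigr)$, which is precisely the first displayed inequality. The maximal inequality over $k_{r-1}\le k<k_r$ then comes from applying the same per-step MGF bound to the stopped martingale $\Delta^{(j)}_{k\wedge T}-\Delta^{(j)}_{k_{r-1}}$ (with $T$ the exit time of a window of size comparable to $|\Delta^{(j)}_{k_{r-1}}|$), taking $\mu=\pm t k_{r-1}\omega\beta/(C\varkappa(\omega|\Delta^{(j)}_{k_{r-1}}|^2+1))$, a union bound over the two signs, and a Doob-type supremum bound as in the proof of Lemma~\ref{lemd:delta}; the exit time $T$ can be shown to exceed $k_r$ on the complement of the bad event, closing the bootstrap. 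The calculus inputs needed to bound $F$ and its gradient are those collected in Section~\ref{sec-prufest} (Lemma~\ref{lem:calculus}), which I would cite rather than reprove.
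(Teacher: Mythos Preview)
Your decomposition into a ``resampling'' part and a ``phase mismatch'' part is exactly what the paper does, and your Dirichlet-kernel estimate for the resampling contribution is the right idea. But you have overlooked the key structural fact that makes this lemma much simpler than you are treating it: conditionally on $\filt_{k_{r-1}}$, the increment $\Delta_k^{(j)}-\Delta_{k_{r-1}}^{(j)}$ is \emph{exactly Gaussian} under $\Pr$. This is because within the block $[k_{r-1},k_r)$ the starred phase satisfies $\lambda^*_k(\theta)=\lambda_{k_{r-1}}(\theta)+(k-k_{r-1})\theta$ (and likewise for $\lambda^{*,(j)}$), so the phase multipliers $e^{i\Im\lambda^*_k}$ and $e^{i\Im\lambda^{*,(j)}_k}$ are $\filt_{k_{r-1}}$-measurable times the common deterministic factor $e^{i(k-k_{r-1})\theta}$. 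The block increment is therefore a fixed linear functional of the Gaussians $\{\hat Z_p^{(r)},\check Z_p^{(r,j)}\}$, and one only needs to compute its variance.

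This observation removes two of the complications you introduce. First, the phase mismatch $e^{i\Im\lambda^*_k}-e^{i\Im\lambda^{*,(j)}_k}$ has modulus exactly $|e^{i\Delta^{(j)}_{k_{r-1}}}-1|\le|\Delta^{(j)}_{k_{r-1}}|$, \emph{not} the running $|\Delta^{(j)}_k|$; your bootstrap to keep $|\Delta^{(j)}_k|$ comparable to $|\Delta^{(j)}_{k_{r-1}}|$ is unnecessary. Second, there is no nonlinearity here at all: the locally linear processes $\lambda,\lambda^{(j)}$ are defined \emph{without} the logarithm (that was the whole point of introducing them), so your references to $\log(1-u\,\cdot)$, the calculus inputs of Section~\ref{sec-prufest}, and the Lipschitz concentration \eqref{eq:GI} are misplaced. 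Likewise, Lemma~\ref{lemd:llstar} controls $\lambda_k-\lambda^*_k$, which is irrelevant here since both recurrences already use their respective starred phases; there is no ``locally-linear approximation error'' contaminating anything in this lemma. Once you have the conditional variance bound, the MGF inequality is immediate from Gaussianity (passing to $\Pr_M$ only costs the factor $1/\Pr(\mathscr T_{n_1^+})\le 2$ in the subgaussian constant), and the maximal inequality is a direct Gaussian tail bound rather than a stopped-martingale argument.

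One smaller point: your Dirichlet-kernel bookkeeping is garbled. Bounding every coefficient by the worst case $O(\varkappa/\omega)$ and summing over $O(\varkappa)$ modes gives $\varkappa^2/\omega^2$, not $\varkappa/\omega$. The paper instead uses summation by parts to get $|c_p|\ll 1/(|e(\theta_j/2\pi)-e(p/\varkappa)|\sqrt{\varkappa}\,\beta_{k_{r-1}})$, so that $\sum_{|p|\ge\omega}|c_p|^2\ll \varkappa/(\omega\beta k_{r-1})$ via $\sum_{p\ge\omega}p^{-2}\asymp\omega^{-1}$; you need this decay in $p$, not the uniform bound.
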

\begin{proof}
  For any $r \in \N,$ we have that conditionally on $\filt_{k_{r-1}},$ $\left\{ \Delta_k^{(j)} : k_{r-1} \leq k < k_r \right\}$ are jointly Gaussian under $\Pr.$  Moreover, for such $k,$ we can write
  \[
    \Delta_k^{(j)}
    -\Delta_{k_{r-1}}^{(j)}
    =
    2\Im
    \sum_{\ell=0}^{k-k_{r-1}}
    \left\{
      \frac{Z_{\ell+k_{r-1}}e^{i\ell\theta + i\lambda_{k_{r-1}}}}{\beta_{\ell+k_{r-1}}}
      -\frac{Z_{\ell+k_{r-1}}^{(j)}e^{i\ell\theta + i\lambda^{(j)}_{k_{r-1}}}}{\beta_{\ell+k_{r-1}}}
    \right\}.
  \]
  We give an upper bound for the variance of $\Delta_k^{(j)}-\Delta_{k_{r-1}}^{(j)},$ which we do by separately bounding the variance of
  \[
    A=  2\Im\sum_{\ell=0}^{k-k_{r-1}}
    \left\{
      \frac{Z_{\ell+k_{r-1}}e^{i\ell\theta + i\lambda_{k_{r-1}}}}{\beta_{\ell+k_{r-1}}}
      -\frac{Z_{\ell+k_{r-1}}^{(j)}e^{i\ell\theta + i\lambda_{k_{r-1}}}}{\beta_{\ell+k_{r-1}}}
    \right\},
  \]
  and of
  \[
    B=
     2\Im\sum_{\ell=0}^{k-k_{r-1}}
    \left\{
      \frac{Z_{\ell+k_{r-1}}^{(j)}e^{i\ell\theta + i\lambda_{k_{r-1}}}}{\beta_{\ell+k_{r-1}}}
      -\frac{Z_{\ell+k_{r-1}}^{(j)}e^{i\ell\theta + i\lambda^{(j)}_{k_{r-1}}}}{\beta_{\ell+k_{r-1}}}
    \right\}.
  \]
  For the second one, we observe that
  \begin{equation}
    \Var(B \vert \filt_{k_{r-1}} ) \leq \frac{8\varkappa |\Delta^{(j)}_{k_{r-1}}|^2}{\beta \cdot {k_{r-1}}}.
    \label{eqd:varb}
  \end{equation}
  The main work is to control the variance of $A.$ %We suppose for simplicity of notation that $j \in {\mathcal{D}}_{n/k_1}.$
  By rotation invariance, we may drop the $e^{i\lambda_{k_{r-1}}}$ from both terms and write
  \[
    \begin{aligned}
    A&\lawequals 2\Im \sum_{\ell=0}^{k-k_{r-1}}
    \frac{e^{i\ell\theta}}{\sqrt{\varkappa}\beta_{\ell+k_{r-1}}}
    \sum_{p=1}^\varkappa
    e(\tfrac{-p\ell}{\varkappa})(\check{Z}_{p}^{(r,j)}- \hat{Z}_{p}^{(r)}) \one[|e(\tfrac{p}{\varkappa})-e(\tfrac{\theta_j}{2\pi})|> \tfrac{\omega}{\varkappa}]) \\
    &= 2\Im
    \sum_{p=1}^\varkappa
    c_p (\check{Z}_{p}^{(r,j)}-\hat{Z}_{p}^{(r)}) \one[|e(\tfrac{p}{\varkappa})-e(\tfrac{\theta_j}{2\pi})|> \tfrac{\omega}{\varkappa}]),
  \end{aligned}
  \]
  where we have set
  \[
    c_p = \sum_{\ell=0}^{k-k_{r-1}}
    \frac{ e(\ell(\tfrac{\theta}{2\pi} - \tfrac{p}{\varkappa}))}{\sqrt{\varkappa}\beta_{\ell+k_{r-1}}}.
  \]
  Moreover,
  \begin{equation}\label{eqd:varA0}
    \Var(A \vert \filt_{k_{r-1}})
    =4
    \sum_{p=1}^\varkappa
    |c_p|^2 \one[|e(\tfrac{p}{\varkappa})-e(\tfrac{\theta_j}{2\pi})|> \tfrac{\omega}{\varkappa}],
  \end{equation}
  and so it remains to estimate $|c_p|^2.$
  Note that we have the simple bound
  \[
    \left|  {\textstyle \sum_{\ell=0}^{k-k_{r-1}} e(\ell(\tfrac{\theta}{2\pi} - \tfrac{p}{\varkappa})) }\right|
    \leq
    %(k-k_{r-1}) \wedge
    \frac{2}{|e(\tfrac{\theta}{2\pi}) - e(\tfrac{p}{\varkappa})|}.
  \]
  We have that under the assumptions on $\theta$, when $|e(\tfrac{p}{\varkappa})-e(\tfrac{\theta_j}{2\pi})| > \tfrac{\omega}{\varkappa}$ then $|e(\tfrac{p}{\varkappa})-e(\tfrac{\theta}{2\pi})|> \tfrac12|e(\tfrac{p}{\varkappa})-e(\tfrac{\theta_j}{2\pi})|.$ Hence using summation-by-parts, there is an absolute constant $C>0$ so that
  \[
    |c_p|
    \leq
    \frac{C}{|e(\tfrac{\theta_j}{2\pi}) - e(\tfrac{p}{\varkappa})|\sqrt{\varkappa} \beta_{k_{r-1}}}.
  \]
  Thus turning to \eqref{eqd:varA0}, we can bound for some absolute constant $C>0$
  \[
    \Var(A \vert \filt_{k_{r-1}})
    \leq
    \frac{C}{\beta_{k_{r-1}}^2}
    +
    \sum_{p=\omega}^\infty
    \frac{C\varkappa}{p^2\beta_{k_{r-1}}^2}.
  \]
  Hence, we conclude from this equation and \eqref{eqd:varb} that there is an absolute constant $C >0$ so that
  \[
    \Var(\Delta_k^{(j)}-\Delta_{k_{r-1}}^{(j)} \vert \filt_{k_{r-1}})
    \leq
    \frac{C\varkappa |\Delta^{(j)}_{k_{r-1}}|^2}{\beta k_{r-1}}
    +
    \frac{C\varkappa }{ \beta \omega k_{r-1}}.
  \]
  As we condition on an event of probability at least $1/2$ for $M$ sufficiently large, it follows that $\Delta_k^{(j)}$ remains subgaussian conditionally on $\filt_{k_{r-1}},$ with subgaussian constant only an absolute constant more than the unconditioned standard deviation of $\Delta_{k}^{(j)}$ given $\filt_{k_{r-1}}.$  As $\Delta_k^{(j)}$ remains centered under $\Pr_M,$ using standard manipulations (see for example \cite[Proposition 2.5.2]{Vershynin}) it follows that there is another absolute constant $C >0$ so that for all $\mu \in \R$
  \[
    \Exp_M\left[ e^{\mu \Delta_{k_r}^{(j)}} \middle \vert \filt_{k_{r-1}}  \right]
    \leq
    \exp\left(
    \mu \Delta_{k_{r-1}}^{(j)}
    +\frac{\mu^2 C\varkappa |\Delta^{(j)}_{k_{r-1}}|^2}{\beta k_{r-1}}
    +
    \frac{\mu^2 C\varkappa }{ \beta \omega k_{r-1}}
    \right).
  \]
  Likewise, the desired concentration inequality follows for the maximum.
\end{proof}

\begin{lemma}
    Suppose that $j \in \mathcal{D}_{n/k_1}$ and that $\theta \in \R$ satisfies
  \[
    |
    e(\tfrac{\theta_j}{2\pi})
    -e(\tfrac{\theta}{2\pi})|
    \leq \frac{\omega}{2\varkappa}.
  \]
  For all $\delta$ sufficiently small and all $C>0$, if
  $n_1^+/\varkappa \geq n^{\delta}$ and $\omega \geq n^{\delta}$
  then for all $n$ sufficiently large
  \[
    \Pr_M\left[
      \max_{{n_1^+} \leq k \leq n} |R_{k}^{(j)}(\theta)+i\Delta_{k}^{(j)}(\theta)|
      \geq n^{\delta}/\sqrt{\omega}
    \right] \leq n^{-C}.
  \]
  \label{lemd:linearization}
\end{lemma}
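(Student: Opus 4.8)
\textbf{Proof plan for Lemma \ref{lemd:linearization}.}

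The plan is to follow the same strategy as in Lemma \ref{lemd:delta}, but now tracking the \emph{band-resampled} difference $R_k^{(j)}+i\Delta_k^{(j)}$ and exploiting the improved variance bounds from Lemma \ref{lemd:bandvariance}. First I would set up the one-step recurrence for $R_k^{(j)}+i\Delta_k^{(j)}$ analogous to \eqref{eqd:deltak}: subtracting \eqref{eqd:llpre} from the recurrence for $\lambda_k$ (and noting that $\lambda_k^{(j)}$ is driven by $Z_k^{(j)}$ rather than $Z_k$), one obtains that the increment of $R_k^{(j)}+i\Delta_k^{(j)}$ is $2\bigl(-\log(1-\gamma_k e^{i\psi_k(\theta)}) - Z_k e^{i\Im\lambda_k^*(\theta)}/\beta_k\bigr) - 2\bigl(-\log(1-\gamma_k^{(j)} e^{\cdots}) - Z_k^{(j)} e^{i\Im\lambda_k^{*,(j)}(\theta)}/\beta_k\bigr)$, which by the calculus estimates of Lemma \ref{lem:calculus} (Section \ref{sec-prufest}) can be decomposed into a term proportional to $\Delta_k^{(j)}$ (plus $\Im(\lambda_k - \lambda_k^{*,(j)})$-type errors controlled by Lemma \ref{lemd:llstar}) with a quadratic remainder, plus the driving-noise discrepancy $Z_k - Z_k^{(j)}$. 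The key new input is that over a block $[k_{r-1},k_r)$ the accumulated noise discrepancy has conditional variance at most $C\varkappa/(\beta\omega k_{r-1}) + C\varkappa|\Delta_{k_{r-1}}^{(j)}|^2/(\beta k_{r-1})$ by Lemma \ref{lemd:bandvariance} — the crucial gain being the factor $1/\omega$.

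Next I would run the multiplicative (log-ladder) argument exactly as in Lemma \ref{lemd:delta}. Working under $\Pr_M$, using the Gaussian integrability \eqref{eq:GI} together with the block-level moment-generating-function bound from Lemma \ref{lemd:bandvariance}, one gets for a stopping time $T$ (the first $k$ at which $|\Im(\lambda_k - \lambda_k^{*,(j)})|$ or $|\Delta_k^{(j)}|$ exits its envelope) a bound of the shape
\[
  \Exp_M\bigl[e^{\mu \Delta_{k_r}^{(j),T}}\mid \filt_{k_{r-1}}\bigr]
  \le \exp\Bigl(\mu \Delta_{k_{r-1}}^{(j),T} + \frac{C\mu^2\varkappa\bigl((\Delta_{k_{r-1}}^{(j),T})^2 + 1/\omega + \varkappa\log n/n_1^+\bigr)}{\beta k_{r-1}}\Bigr).
\]
Then Proposition \ref{prop:logladder} applied across the $O(n/\varkappa)$ blocks gives a stretched-exponential tail: $\Pr_M\bigl[\max_k \Delta_k^{(j),T} \ge x \cdot n^{\delta}/\sqrt{\omega}\bigr] \le \exp(-c\log(x)^2/\log(n/n_1^+))$, once $\varkappa \log n/n_1^+ \le n^{-\delta'}$ (guaranteed by $n_1^+/\varkappa \ge n^\delta$) and $\omega \ge n^\delta$. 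Summing the same bound for $-\Delta_k^{(j),T}$, and re-introducing the difference between $\lambda_k$ and $\lambda_k^{*,(j)}$ via Lemma \ref{lemd:llstar} (which handles the event $\{T < n\}$), one removes the stopping time and obtains the claim for the imaginary part. The real part $R_k^{(j)}$ is handled in the same way but is easier — there is no ladder, since the one-step drift does not have the reinforcing structure — exactly as remarked at the end of the proof of Lemma \ref{lemd:delta}.

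The main obstacle I expect is bookkeeping the interaction between three sources of error simultaneously inside the envelope: the linearization remainder (quadratic in $\Delta_k^{(j)}$, from Lemma \ref{lem:calculus}), the locally-linear approximation error $\Im(\lambda_k - \lambda_k^{*,(j)})$ (controlled by Lemma \ref{lemd:llstar} only up to $D\sqrt{\varkappa\log n/(\beta n_1^+)}$), and the band-resampling discrepancy $Z_k - Z_k^{(j)}$ (small only because of the $1/\omega$ gain). One must choose the stopping-time envelope and the exponents so that all three are simultaneously of size $o(n^{\delta}/\sqrt\omega)$ while still having a nontrivial range of $\delta$; this forces the hypotheses $n_1^+/\varkappa \ge n^\delta$ and $\omega \ge n^\delta$ and "$\delta$ sufficiently small", and getting the constants to close requires some care but no new ideas. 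Everything else is a direct transcription of the argument for Lemma \ref{lemd:delta}.
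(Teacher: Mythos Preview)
Your core strategy---feed the block-level moment-generating-function bound of Lemma~\ref{lemd:bandvariance} into Proposition~\ref{prop:logladder}, then treat the real part as a simpler block-martingale---is exactly what the paper does, and it works. But your setup is confused in a way that would waste effort and could derail the argument.

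The difference $R_k^{(j)}+i\Delta_k^{(j)}=\lambda_k(\theta)-\lambda_k^{(j)}(\theta)$ is a difference of two \emph{already-linear} recurrences \eqref{eqd:llp} and \eqref{eqd:llpre}; its one-step increment is simply
\[
  \frac{Z_k e^{i\Im\lambda_k^*(\theta)}}{\beta_k}
  -\frac{Z_k^{(j)} e^{i\Im\lambda_k^{*,(j)}(\theta)}}{\beta_k},
\]
with no logarithms anywhere. The formula you wrote, with $-\log(1-\gamma_k e^{i\psi_k})$ terms, is the increment of $2\log\Phi^*_k-\lambda_k$ (the object of Lemma~\ref{lemd:delta}), not of $\lambda_k-\lambda_k^{(j)}$. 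Consequently there is nothing to linearize: Lemma~\ref{lem:calculus} and Lemma~\ref{lemd:llstar} play no role here, the extra $\varkappa\log n/n_1^+$ term in your displayed MGF bound should not appear, and the ``three sources of error'' bookkeeping you flag as the main obstacle simply does not arise. The paper's proof is correspondingly short: it quotes the two conclusions of Lemma~\ref{lemd:bandvariance} verbatim, uses the tail bound with $t=1/\sqrt\omega$ to produce a good event $\mathcal{E}$ (probability $1-e^{-\Omega(n^\delta)}$ since $k_{r-1}\omega/\varkappa\ge n^\delta$), and then applies Proposition~\ref{prop:logladder} at the \emph{block} time scale with $V=C/\beta$, $W=C/(\beta\omega)$, $E=1/\sqrt\omega$, taking $x=n^\delta$. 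No one-step analysis, no stopping time beyond the event $\mathcal{E}$, no calculus lemma.
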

\begin{proof}

  Using Lemma \ref{lemd:bandvariance}, for any $t \geq 0$
 % we may then bound on the event $\mathcal{E}$ that all $|\Delta_{k_{r}}^{(j)}| \leq n^{-\delta/4},$
  \[
  \Pr_M\left[
    \max_{k_{r-1} \leq k < k_r}
    |
    \Delta_k^{(j)}-\Delta_{k_{r-1}}^{(j)}
    | \geq  t\bigl(\sqrt{\omega} |\Delta_{k_{r-1}}^{(j)}| +1\bigr)
     \middle \vert \filt_{k_{r-1}}
  \right]
  \leq 2\exp\left(
  -\frac{t^2k_{r-1}\omega \beta}{C\varkappa}
  \right).
  \]
  Hence taking $t=1/\sqrt{\omega}$, if we let $\mathcal{E}$ be the event that
  \[
    |\Delta_k^{(j)}-\Delta_{k_{r-1}}^{(j)}
    | \leq  |\Delta_{k_{r-1}}^{(j)}| + (1/\sqrt{\omega}),
    \quad
    \text{for all $r \geq 1$ such that $k_{r-1} \leq n$},
  \]
  then this event holds with probability $1-e^{-\Omega(n^{\delta})}$.
  We turn to controlling $\Delta_{k_{r}}^{(j)}$ on the event $\mathcal{E}$ using Proposition \ref{prop:logladder}.  From Lemma \ref{lemd:bandvariance}, $A_{r+\lfloor {n_1^+}/\varkappa \rfloor} = \Delta_{k_{r-1}}^{(j)}$ satisfies \eqref{eqd:loglaplacegrowth} with $V=C/\beta$, $W = C/(\beta \omega)$, $\epsilon = 1$ and $E=1/\sqrt{\omega}$.
  \[
    \Pr\left[
      \bigl\{
      \max_{r : k_{r-1} \leq n} |A_{r+\lfloor {n_1^+}/\varkappa \rfloor}| \geq x\sqrt{1/\omega}
    \bigr\} \cap \mathcal{E}
    \right]
    \leq 2\exp\left(
    -\frac{1}{C}\frac{\beta(\log x)^2}{\log(n/{n_1^+})}
    \right).
  \]
  Hence taking $x = n^{\delta}$ completes the proof for the imaginary part.

  Once more, for the real part the proof is simpler: having controlled the difference of imaginary parts, the difference of real parts admits a block martingale structure.  We suppress the details.
\end{proof}

\subsection{Coupling to Brownian motions}
We augment the probability space by creating a family of complex Brownian motions $\{\widehat{(\mathfrak{W}}_t^j : t \geq 0) : j \in \mathcal{D}_{n/k_1}\}$ having
\[
  \sqrt{\tfrac{2}{k+1}}
  Z_k^{(j)} =
  \widehat{\mathfrak{W}}^j_{H_{k+1}}
  -\widehat{\mathfrak{W}}^j_{H_{k}}
\]
for all $k \geq n_1^+$.
They may be constructed so that conditionally on all $\{Z_k^{(j)} : k,j\}$, the bridges
\[
  \biggl\{
  (\widehat{\mathfrak{W}}^j_{t}
  -\widehat{\mathfrak{W}}^j_{H_{k}}
  : t \in [H_k,H_{k+1}]), j \in \mathcal{D}_{n/k_1}, k \geq n_1^+
  \biggr\}
\]
are independent.
By construction we may extend $\lambda^{(j)}$ to a continuous function of time by setting (c.f.\,\eqref{eqd:llpre})
\[
  \lambda_{t}^{(j)}(\theta)=\lambda_{k}^{(j)}(\theta)
    + i\theta(t-k)
    + \sqrt{\tfrac{4}{\beta}}
    (\widehat{\mathfrak{W}}^j_{H_t}
  -\widehat{\mathfrak{W}}^j_{H_{k}})
    e^{i\Im \lambda^{*,(j)}_k(\theta)}
    \quad \text{for } t \in [k,k+1],
\]
and where $H_t = H_k + \tfrac{t-k}{k+1}.$
We make a time change by setting
\[
  k_n(t) = n_1^+\exp\biggl(  \frac{\log(n/n_1^+)}{T+-T_-} \bigl(t-T_-\bigr)\biggr)
  \quad
  t \in [T_-,T_+].
\]
In terms of this time change, we set
\[
  \widehat{\mathfrak{L}}^j_t(\theta) =
  \lambda_{k_n(t)}^{(j)}(\theta_j + \tfrac{\theta}{n})-i(k_n(t)+1)\theta_j.
\]
Finally, we define the Brownian motion $\mathfrak{W}_t^j$ by the identity
\begin{equation}\label{eqd:dW}
  d\mathfrak{W}_t^j =
  \frac{
    e^{i(k_n(t)+1)\theta_j}
  d\widehat{\mathfrak{W}}_{H_{k_n(t)}}^j
}{\sqrt{\tfrac{d}{dt} (H_{k_n(t)})}}
  \quad
  \text{on}
  \quad
  t \in [T_-,T_+].
\end{equation}

Recall $\mathfrak{L}_t^j$
which solves \eqref{eq:LU}.
The function $\widehat{\mathfrak{L}}^j_t(\theta)$ is an approximate solution of the same equation, and we can compare the two solutions.
\begin{lemma}
  For any $C>0$ and any $\delta > 0$ sufficiently small, for all $n$ sufficiently large
  \[
    \sup_{|\theta| \leq n^{1-\delta}}
    \Pr_M
    \bigl[
%      \biggl\{
      \sup_{t \in [T_-,T_+]}
      |\widehat{\mathfrak{L}}^j_t(\theta)-\mathfrak{L}^j_{t}(\theta)| > n^{-\delta/2}
%      \biggr\}
%      ~\big\vert~ \filt_{n_1^+}
    \bigr] \leq n^{-C} \quad \As,
  \]
  \label{lemd:final}
\end{lemma}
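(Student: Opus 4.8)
\textbf{Proof proposal for Lemma \ref{lemd:final}.}

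The plan is to show that $\widehat{\mathfrak{L}}^j_t(\theta)$ is an approximate strong solution of the same SDE \eqref{eq:LU} satisfied by $\mathfrak{L}^j_t(\theta)$, with an explicitly controllable error, and then invoke strong uniqueness together with a Gronwall-type stability estimate for the one-dimensional SDE driven by $\mathfrak{W}_t^j$. First I would write out the increment of $\widehat{\mathfrak{L}}^j_t$ from the defining relation $\widehat{\mathfrak{L}}^j_t(\theta) = \lambda^{(j)}_{k_n(t)}(\theta_j + \tfrac{\theta}{n}) - i(k_n(t)+1)\theta_j$ and the continuous-time extension of $\lambda^{(j)}$. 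After the time change $k_n(t)$, the drift contribution of the $i\theta'(t-k)$ term (with $\theta' = \theta_j+\theta/n$) becomes $i\theta e^t k_1^{-1}dt$ up to the correction coming from $\theta_j$, which is exactly cancelled by the $-i(k_n(t)+1)\theta_j$ subtraction up to a bounded drift, and the martingale part becomes $\sqrt{4/\beta}\, e^{i\Im\widehat{\mathfrak{L}}^{*,(j)}_t(\theta)} d\mathfrak{W}^j_t$ by the definition \eqref{eqd:dW} of $\mathfrak{W}^j_t$. The discrepancy between this and the true equation \eqref{eq:LU} driving $\mathfrak{L}^j_t$ comes from three sources: (i) the harmonic-number time change $H_{k_n(t)}$ versus $\log k_n(t)$, whose derivatives differ by $O(1/k_n(t))$, negligible since $k_n(t) \geq n_1^+ \to \infty$; (ii) the replacement of $\Im\widehat{\mathfrak{L}}^{*,(j)}_t$ (the piecewise-linearized phase frozen at block boundaries $n^*(k)$) by $\Im\widehat{\mathfrak{L}}^j_t$ itself, controlled by the block-oscillation estimate that $|\lambda^{(j)}_k - \lambda^{*,(j)}_k| $ is small on blocks of length $\varkappa$ — this is the analogue for $\lambda^{(j)}$ of Lemma \ref{lemd:llstar}, giving $O(\sqrt{\varkappa \log n / n_1^+})$; and (iii) the evaluation point $\theta_j + \theta/n$ versus the drift coefficient being $\theta k_1^{-1}$, which is bounded by $|\theta|/n$ and hence by $n^{-\delta}$ on the range $|\theta|\leq n^{1-\delta}$.

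Next I would set up the stability comparison. Writing $\mathfrak{E}_t = \widehat{\mathfrak{L}}^j_t(\theta) - \mathfrak{L}^j_t(\theta)$, both processes satisfy an SDE of the form $d\mathfrak{L} = b_t\,dt + \sqrt{4/\beta}\,e^{i\Im\mathfrak{L}_t}d\mathfrak{W}^j_t$ (with $\widehat{\mathfrak{L}}$ picking up an extra drift error term $d\mathcal{R}_t$ which is controlled above). Since $z\mapsto e^{iz}$ is globally Lipschitz on the reals and the drift is smooth, $\mathfrak{E}_t$ satisfies a linear-in-$\mathfrak{E}$ SDE: $d\mathfrak{E}_t = O(|\mathfrak{E}_t|)dt + O(|\mathfrak{E}_t|)d\mathfrak{W}^j_t + d\mathcal{R}_t$, where the implicit constants depend only on $\beta$ and $T_+ - T_-$ (which is $\log k_1$, a constant relative to $n$). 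A standard Gronwall / Burkholder--Davis--Gundy argument on the interval $[T_-,T_+]$ of fixed length then gives, for any $p\geq 1$, $\Exp_M\big[\sup_{t\in[T_-,T_+]}|\mathfrak{E}_t|^p\big] \leq C(\beta,k_1,p)\,\Exp_M\big[\sup_t |\mathcal{R}_t|^p\big]$, together with an exponential tail for $\sup_t|\mathfrak{E}_t|$ via the martingale exponential inequality. Feeding in the bound $\sup_t|\mathcal{R}_t| \leq n^{-\delta}$ (with high $\Pr_M$-probability, using Lemmas \ref{lemd:llstar}, \ref{lemd:delta}, \ref{lemd:bandvariance} to control the linearization error and block oscillations on the event $\mathscr{T}_{n_1^+}$, after choosing $\varkappa$ as an appropriate power of $n$ so that $\varkappa\log n/n_1^+$ is a negative power of $n$), a Markov-inequality / union-bound argument yields $\Pr_M[\sup_t|\mathfrak{E}_t| > n^{-\delta/2}] \leq n^{-C}$ for the desired $C$, uniformly over the stated range of $\theta$ (the supremum over $|\theta|\leq n^{1-\delta}$ is absorbed by taking $C$ large and the probabilities summable, since the increments in $\theta$ are Lipschitz).

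The main obstacle is item (ii): carefully bounding the mismatch between the \emph{frozen-phase} recurrence $\lambda^{*,(j)}$ that actually drives $\lambda^{(j)}$ within a $\varkappa$-block and the running value $\widehat{\mathfrak{L}}^j_t$ that drives the true diffusion, and doing so \emph{uniformly} over the $n^{1-\delta}$ values of $\theta$ in the band. This requires re-running the estimates of Section \ref{sec:llp} (Lemmas \ref{lemd:llstar}--\ref{lemd:delta}) for the band-resampled process $\lambda^{(j)}$ in place of $\lambda$, which is routine but bookkeeping-heavy, and then trading the block length $\varkappa$, the bandwidth $\omega$, and the loss in $\theta$-range against one another — one needs $\varkappa$ small enough that the block linearization error beats $n^{-\delta/2}$, yet large enough that $\omega \leq \varkappa$ can be taken a power of $n$ so Lemma \ref{lemd:linearization} applies. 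Everything else (the Gronwall stability, the harmonic-number correction, the BDG tail) is standard and I would treat it briefly.
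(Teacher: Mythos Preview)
Your proposal is correct and follows the same overall strategy as the paper: derive an approximate SDE for $\widehat{\mathfrak{L}}^j_t$, identify the three error sources (time-change defect, frozen-phase mismatch, drift-coefficient error), form the difference $D_t=\widehat{\mathfrak{L}}^j_t-\mathfrak{L}^j_t$, and run a stability argument. Two points of comparison are worth noting. First, for this lemma the paper only needs the analogue of Lemma~\ref{lemd:llstar} for $\lambda^{(j)}$ to control the frozen-phase error $E_3$; Lemmas~\ref{lemd:delta}, \ref{lemd:bandvariance}, \ref{lemd:linearization} are not invoked here but only later in assembling Proposition~\ref{prop:uberdecoupling}, so your ``main obstacle'' paragraph overstates what must be re-run. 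Second, and more substantively, the paper's stability step is different from your moments/Gronwall/Markov route: it applies It\^o's formula to $f_t=\log(1+n^{2\delta}|D_t|^2)$ and checks directly that $f_t$ has drift and diffusion coefficients bounded by $O_{k_1}(1)$ uniformly in $\theta$ (using that $|e^{i\Im D_t}-1|\le|D_t|$ and the smallness of $E_1,E_2,E_3$). A Gaussian tail bound on $f_{T_+}$ then yields $\Pr_M[\sup_t|D_t|>n^{-\delta/2}]\le n^{-C}$ in one stroke. Your approach also works---the Gronwall/BDG constant over $[T_-,T_+]$ is of order $(k_1^+)^{O(p^2)}$, harmless since $k_1$ is fixed before $n\to\infty$, and then Markov with $p\gtrsim C/\delta$ closes the argument---but the log-transformation is shorter and sidesteps tracking the $p$- and $k_1$-dependence of the constants.
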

\begin{proof}
  We begin by posing a stochastic differential equation for $\widehat{\mathfrak{L}}^j_t(\theta)$. We have that it is a strong solution of the differential equation
  \begin{equation}
    \label{eqd:sde1}
    d\widehat{\mathfrak{L}}^j_t(\theta)
    =
    i\frac{\theta k_n'(t)}{n}
    +\sqrt{\tfrac{4}{\beta}}
    d\bigl(\widehat{\mathfrak{W}}_{H_{k_n(t)}}^j\bigr)
    e^{i\Im \lambda^{*,(j)}_{k_n(t)}(\theta)}.
  \end{equation}
  We note the derivative $k_n'(t)$ satisfies
  \[
    \tfrac{1}{n} k_n'(t) = e^{t}k_1^{-1}\bigl(1+O_{k_1}(1/n)\bigr).
  \]
  Similarly, almost everywhere,
  \[
    \tfrac{d}{dt}\bigl( H_{k_n(t)} \bigr)
    =\frac{k_n'(t)}{\lfloor k_n(t) \rfloor +1} = t + O_{k_1}(1/n).
  \]
  Thus we can express the SDE as
    \begin{equation}
    \label{eqd:sde2}
    d\widehat{\mathfrak{L}}^j_t(\theta)
    =
    i\theta (e^{t}k_1^{-1} + E_1(t))dt
    +\sqrt{\tfrac{4}{\beta}}
    d\mathfrak{W}^j_t
    e^{i \Im\widehat{\mathfrak{L}}^j_t(\theta) + iE_3(t)}
    (1+E_2(t)),
  \end{equation}
  for deterministic errors $E_1,E_2$ which are $O_{k_1}(1/n)$ and a random error $E_3(t)$ which is controlled by Lemma \ref{lemd:llstar}.

  Hence if we form the difference $D_t \coloneqq \widehat{\mathfrak{L}}^j_t(\theta)-{\mathfrak{L}}^j_t(\theta)$, we have that
   \begin{equation}
    \label{eqd:sde3}
    dD_t
    =
    i\theta E_1(t)dt
    +
    \sqrt{\tfrac{4}{\beta}}
    d\mathfrak{W}^j_t e^{i \Im{\mathfrak{L}}^j_t(\theta)}
    (e^{i\Im D_t}-1
    +
    (e^{i \Im D_t + iE_3(t)}-e^{i \Im D_t})
    +E_2(t)
    ).
  \end{equation}
  We can furthermore check that $f_t = \log(1 + n^{2\delta} |D_t|^2)$ has both drift and diffusion coefficient which are bounded above by $O_{k_1}(1)$ uniformly in $\theta$ with probability at least $1-n^{-C}$ by Lemma \ref{lemd:llstar}.  It follows that we have a Gaussian tail bound for the difference with a variance which is $O_{k_1}(1)$, which implies the claim.
\end{proof}

The lemmas assembled give a proof of Proposition \ref{prop:uberdecoupling}.
\begin{proof}[Proof of Proposition \ref{prop:uberdecoupling}]
  We briefly survey the role of each lemma and combine them for the proof of the Proposition.
  We suppose $C$ is given and let $\delta > 0$ be as in the Proposition.
  We apply these lemmas with $\varkappa = n^{1-4\delta}$ and $\omega = 2n^{4\delta}$.
  Lemma \ref{lemd:final} connects the SDE $\mathfrak{L}_t^j(\theta)$
  to an approximate solution
  \(
  \widehat{\mathfrak{L}}^j_t(\theta) =
  \lambda_{k_n(t)}^{(j)}(\theta_j + \tfrac{\theta}{n})-i(k_n(t)+1)\theta_j
  \)
: for all $\delta>0$ sufficiently small
  \[
    \sup_{|\theta| \leq n^{1-2\delta}}
    \Pr_M
    \bigl[
      \sup_{t \in [T_-,T_+]}
      |\widehat{\mathfrak{L}}^j_t(\theta)-\mathfrak{L}^j_{t}(\theta)| \geq n^{-\delta}
    \bigr] \leq n^{-C}/3, \quad \As
  \]
  This approximate solution is a time--changed and spatially scaled version of the process $\lambda_k^{(j)}(\theta)$.
  Lemma \ref{lemd:linearization} bounds the difference between $\lambda_k^{(j)}$ and $\lambda_k$
  (and all $n$ sufficiently large and $\delta < \tfrac18$) as
  \[
    \sup_{|\theta| \leq n^{8\delta}}
    \Pr_M\left[
      \max_{{n_1^+} \leq k \leq n} |\lambda_k^{(j)}(\theta_j + \tfrac{\theta}{n})-\lambda_k(\theta_j + \tfrac{\theta}{n})|
      \geq n^{\delta-2\delta}
    \right] \leq n^{-C}/3,\quad \As
  \]  
  This shows that we can replace the driving Gaussian noise by band--resampled Gaussians, for which Fourier modes that are far from those $\theta_j$ are resampled.  Note that if we take $\delta < \tfrac{1}{10}$ the constraint on $\theta$ that $|\theta| \leq n^{8\delta}$ is more restrictive than $|\theta| \leq n^{1-2\delta}.$  
 
  Lemma \ref{lemd:delta} now shows that $\lambda_k(\theta)$, which is a locally linearized (in time) version of 
  a shift of $-2\log \Phi^*(k(e^{i\theta})$, is indeed close to it, 
  i.e.,
  for all $\delta > 0$ sufficiently small, $n$ sufficiently large 
  \[
    \sup_{\theta}
    \Pr_M\biggl[
    \max_{{n_1^+} \leq k \leq n} |-2\log \Phi^*_{k}(\theta_j + \tfrac{\theta}{n}) +i(\theta_j + \tfrac{\theta}{n}) k- \lambda_k(\theta_j + \tfrac{\theta}{n})| \geq k_1^+ n^{\delta-2\delta}
    \biggr]
    \leq n^{-C}/3,\quad \As
  \]
  Finally, this construction holds for every $j$, and for $j_1$ and $j_2$, if the sets 
  $\{ \theta : |e(\theta_{j_p}) - e(\theta)| \leq 2n^{8\delta-1}\}$ are disjoint for $p\in\{1,2\}$ then the processes $\mathfrak{L}_t^{j_p}$ are $\Pr_M$--independent.  
\end{proof}

\subsection{Logarithmic ladder}

We suppose that $\left\{ A_k \right\}$ is a sequence of random variables which roughly has the type of multiplicative recurrence structure of the Pr\"ufer phases.  This is to say, we let $\filt_k = \sigma(A_1,A_2, \dots, A_k)$ and we suppose there are constants $V$ and $W$ so that for all $\lambda \in \R$ and all $k \geq {n_1^+}$ for some ${n_1^+} \in \N,$
\begin{equation}\label{eqd:loglaplacegrowth}
  \Exp\left[ e^{\lambda A_{k+1}} \middle\vert \filt_k \right]
  \leq
  e^{\lambda A_k + \frac{\lambda^2}{k}( V A_k^2 + W)}.
\end{equation}

\begin{proposition} \label{prop:logladder}
  Let $\epsilon,E> 0$ be given and suppose that $E \leq \sqrt{W/V}.$
  Let $\mathcal{E}$ be the event such that
  \[
    A_{k+1} \leq (1+\epsilon) A_k + E \quad \text{for all }  {n_1^+} \leq k \leq n \quad \text{ and } A_{{n_1^+}} \leq E.
    %\quad \text{a.s.}
  \]
  There is an absolute constant $C>0$ so that for all $x \geq \max\{(2+\epsilon)^2,CV(2+\epsilon)^3\log(2+\epsilon)\log(n/{n_1^+})\},$
\[
  \Pr\bigl[
    \bigl\{
      \max_{{n_1^+} \leq k \leq n} A_k \geq x \sqrt{W/V}
    \bigr\} \cap \mathcal{E}
  \bigr]
  \leq
  \exp\left( -\tfrac{1}{C}
  \tfrac{ (\log x)^2}{V(2+\epsilon)^3\log(2+\epsilon)^2 \log(n/{n_1^+})}
  \right).
\]
\end{proposition}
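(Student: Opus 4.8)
\textbf{Proof plan for Proposition \ref{prop:logladder}.}

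The plan is to exploit the self-improving nature of the bound \eqref{eqd:loglaplacegrowth}: the growth of the log-Laplace transform is quadratic in $A_k$, so as long as $A_k$ stays bounded by some level $\ell$, the increments $A_{k+1}-A_k$ behave like those of a random walk whose step variance is $O((V\ell^2+W)/k)$. First I would set up a dyadic scheme in the \emph{level} of the process rather than in time: fix levels $\ell_m = (2+\epsilon)^m E$ for $m=0,1,2,\dots$, and for each $m$ let $\tau_m$ be the first time $k\ge n_1^+$ that $A_k \ge \ell_m$. On the event $\mathcal{E}$, a single step can increase $A$ by at most a factor $(1+\epsilon)$ plus $E$, so $A_{\tau_m} \le (1+\epsilon)\ell_{m-1}+E \le \ell_m$; thus at the stopping time the process is still controlled at level $\ell_m$, and moreover $A_k \le \ell_{m}$ for all $n_1^+ \le k < \tau_m$. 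This is the mechanism that lets us localize the quadratic term.

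Next I would run a one-level estimate. On the event $\{\tau_{m-1} \le n\}$, I want to bound the probability that $A$ ever reaches $\ell_m$ before time $n$, i.e.\ that $\tau_m \le n$, given $\mathcal{F}_{\tau_{m-1}}$ and on $\mathcal{E}$. Using optional stopping with the supermartingale $M_k = \exp(\lambda A_{k\wedge\tau_m} - \lambda^2\sum_{j} \tfrac{1}{j}(VA_j^2+W))$ — valid because on $k < \tau_m$ and on $\mathcal{E}$ we have $A_k \le \ell_m$, so the compensator is bounded by $\lambda^2 (V\ell_m^2 + W)\log(n/n_1^+)$ — together with $A_{\tau_{m-1}} \le \ell_{m-1} = \ell_m/(2+\epsilon)$, a Chernoff bound gives
\[
  \Pr\bigl[ \{\tau_m \le n\} \cap \{\tau_{m-1}\le n\} \cap \mathcal{E} ~\big|~ \mathcal{F}_{\tau_{m-1}} \bigr]
  \le \exp\Bigl( -\lambda \ell_m\bigl(1 - \tfrac{1}{2+\epsilon}\bigr) + \lambda^2 (V\ell_m^2+W)\log(n/n_1^+)\Bigr).
\]
Since $E \le \sqrt{W/V}$ and we will only care about levels with $\ell_m \lesssim x\sqrt{W/V}$ where eventually $V\ell_m^2$ can dominate $W$, I would choose $\lambda = \lambda_m \asymp 1/\bigl((2+\epsilon) V \ell_m \log(n/n_1^+)\bigr)$ to optimize, yielding a bound of the form $\exp\bigl(-c/(V(2+\epsilon)^3\log(n/n_1^+))\bigr)$ for each level, i.e.\ a \emph{constant} per-level gain. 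That alone is not summable, which is why the dyadic structure matters: we need the number of levels up to $x\sqrt{W/V}$, which is $\asymp \log x / \log(2+\epsilon)$, and we want the final exponent to scale like $(\log x)^2$.

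To get the $(\log x)^2$ I would instead iterate the one-level estimate multiplicatively: let $p_m = \Pr[\{\tau_m \le n\}\cap\mathcal{E}]$ and use
\[
  p_m \le \Exp\bigl[ \one_{\{\tau_{m-1}\le n\}\cap\mathcal{E}} \Pr[\{\tau_m\le n\}\cap\mathcal{E}~|~\mathcal{F}_{\tau_{m-1}}]\bigr] \le q_m\, p_{m-1},
\]
where $q_m = \exp(-c/(V(2+\epsilon)^3\log(2+\epsilon)\log(n/n_1^+)))$ is the per-level bound just derived (the extra $\log(2+\epsilon)$ absorbed from optimizing the Chernoff exponent carefully). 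Iterating down to $m=0$ (where $p_0 = \Pr[\tau_0\le n]$ is handled trivially since $A_{n_1^+}\le E = \ell_0$, so actually one starts the telescoping at $m=1$ with $p_1 \le q_1$), one gets $p_{m} \le \prod_{i=1}^{m} q_i = \exp(-c m /(V(2+\epsilon)^3\log(2+\epsilon)\log(n/n_1^+)))$. Taking $m = m(x)$ the largest index with $\ell_{m}\le x\sqrt{W/V}$, so that $m(x) \asymp \log x/\log(2+\epsilon)$, and noting that $\{\max_k A_k \ge x\sqrt{W/V}\}\cap\mathcal{E} \subseteq \{\tau_{m(x)}\le n\}\cap\mathcal{E}$ (up to adjusting constants so the level $x\sqrt{W/V}$ sits between $\ell_{m(x)}$ and $\ell_{m(x)+1}$), gives exactly
\[
  \Pr\bigl[\{\max_{n_1^+\le k\le n} A_k \ge x\sqrt{W/V}\}\cap\mathcal{E}\bigr]
  \le \exp\Bigl(-\tfrac{1}{C}\tfrac{(\log x)^2}{V(2+\epsilon)^3\log(2+\epsilon)^2\log(n/n_1^+)}\Bigr),
\]
which is the claim. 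The hypotheses $x \ge (2+\epsilon)^2$ and $x \ge CV(2+\epsilon)^3\log(2+\epsilon)\log(n/n_1^+)$ are precisely what is needed to ensure (i) $m(x)\ge 1$ so there is at least one level to telescope, and (ii) the Chernoff optimization stays in the regime where $\lambda_m\ell_m$ is not too small, so that the per-level exponent is genuinely of constant order rather than being killed by the $W$ term.

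\textbf{Main obstacle.} The delicate point is the per-level Chernoff optimization in the presence of \emph{two} regimes — when $V\ell_m^2 \lesssim W$ (small levels, variance dominated by the additive $W$) versus when $V\ell_m^2 \gtrsim W$ (large levels, variance dominated by the multiplicative term). One must choose $\lambda_m$ and verify that in both regimes the resulting per-level probability bound $q_m$ is bounded by the \emph{same} constant-order expression $\exp(-c/(V(2+\epsilon)^3\log(2+\epsilon)\log(n/n_1^+)))$ uniformly in $m$ (up to the range of $m$ permitted by the hypotheses on $x$); getting the powers of $(2+\epsilon)$ and $\log(2+\epsilon)$ to match the statement exactly is where the bookkeeping is most error-prone. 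A secondary technical point is justifying optional stopping for the exponential supermartingale $M_{k\wedge\tau_m}$ — one checks it is a genuine supermartingale using \eqref{eqd:loglaplacegrowth} directly, and the stopped version is bounded on $\mathcal{E}$ because the compensator is controlled there, so no integrability subtlety arises; this is routine.
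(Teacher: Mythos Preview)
Your telescoping argument has a genuine gap: the per-level bound $q_m = \exp\bigl(-c/(V(2+\epsilon)^3\log(2+\epsilon)\log(n/n_1^+))\bigr)$ you derive is \emph{independent of $m$}, so iterating over $m(x) \asymp \log x/\log(2+\epsilon)$ levels gives an exponent proportional to $\log x$, not $(\log x)^2$ --- your final substitution is off by a factor of $m(x)$. This cannot be repaired within your scheme, because at each level you bound the compensator by $(V\ell_m^2+W)\log(n/n_1^+)$, thereby discarding the constraint that the \emph{total} harmonic time $\sum_m \log(\tau_m/\tau_{m-1}) \leq \log(n/n_1^+)$ must be shared among all $m(x)$ levels; it is exactly this sharing that produces the extra factor of $m(x)\asymp \log x$.

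The paper recovers the missing factor by bounding instead $\Pr[\tau_{p+1}/\tau_p \leq t_p \mid \mathcal{F}_{\tau_p}] \leq \exp\bigl(-c/(V\eta^3\log t_p)\bigr)$, with the \emph{actual} time ratio in the denominator, and then coupling $\tau_{p+1}/\tau_p \geq \exp\bigl(1/(8V\eta^3 X_p)\bigr)$ for i.i.d.\ $\operatorname{Exp}(1)$ variables $X_p$. The event $\{\tau_r \leq n\}$ becomes $\sum_p 1/X_p \leq 8V\eta^3\log(n/n_1^+)$; the harmonic--arithmetic mean inequality converts this to $\sum_p X_p \geq (r-r_0)^2/(8V\eta^3\log(n/n_1^+))$, and Bernstein's inequality for subexponentials then yields the exponent $\asymp (r-r_0)^2 \asymp (\log x)^2$. (A side remark: your overshoot bound should read $A_{\tau_m} \leq (1+\epsilon)\ell_m + E$, since $A_{\tau_m-1}$ may lie anywhere below $\ell_m$; the bound on the \emph{starting point} $A_{\tau_{m-1}} \leq (1+\epsilon)\ell_{m-1}+E$ is correct and is what your Chernoff step actually uses.)
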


\begin{proof}

  Set $\eta = 1+\epsilon$.
  We define stopping times $\{\tau_p\}$, for $p \in \N$, as the first times $k \geq {n_1^+}$ that $A_{k}$ exceeds $\eta^p E$ or that $k=n$.  Then it follows that on $\mathcal{E}$
\[
  A_{\tau_{p}}
  \leq (1+\epsilon)\eta^{p}E + E.
\]
Define, for any $p \in \N$ the process
\[
  M_j = e^{\lambda A_{j}(\theta) - \lambda^2 (V\eta^{2p+2}E^2+W)H_j}, \quad\quad \text{where } H_j = \sum_{k=1}^j \frac{1}{k}.
\]
When stopped at $\tau_{p+1},$ $\left\{ M_j \right\}$ is a supermartingale, and so
\[
  \Exp \left[ M_{\tau_{p+1}} \middle\vert \filt_{\tau_p} \right] \leq M_{\tau_p}.
\]
It follows that
\[
  \Exp \left[
    e^{\lambda(\eta^{p}E(\eta-1-\epsilon) - E) - (H_{\tau_{p+1}}-H_{\tau_p})\lambda^2 (V\eta^{2p+2}E^2+W) }
    \one[\mathcal{E}]
    \middle\vert
    \filt_{\tau_p}
  \right]
  \leq 1.
\]
Now $H_{\tau_{p+1}} - H_{\tau_p} \leq \log( \tau_{p+1}/\tau_p).$
On the event that $\tau_{p+1}/\tau_p \leq t_p$ for some $t_p \geq 1,$ we conclude that
\[
  \Pr\left[
    \{\tau_{p+1}/\tau_p \leq t_p\} \cap \mathcal{E}
  \middle\vert
  \filt_{\tau_p}
  \right]
  \leq
  e^{-\lambda(\eta^{p}(\eta-1-\epsilon) - 1)E + \log(t_p)\lambda^2 (V\eta^{2p+2}E^2+W)}.
\]
Finally, optimizing in $\lambda,$ it follows that
\begin{align*}
  \Pr\left[
    \{\tau_{p+1}/\tau_p \leq t_p\} \cap \mathcal{E}
  \middle\vert
  \filt_{\tau_p}
  \right]
 & \leq
  \exp\left(
  -\frac{(\eta^{p}(\eta-1-\epsilon) - 1)^2E^2}{4\log(t_p)(V\eta^{2p+2}E^2+W)}
  \right)\\
&  \leq
   \exp\left(
  -\frac{\eta^{2p-1}E^2}{4\log(t_p)(V\eta^{2p+2}E^2+W)}
  \right),
\end{align*}
where in the final equality we have used that $\eta = 2+\epsilon$ and $p \geq 1.$

Let $r_0 \geq {n_1^+}$ be the smallest integer such that $V \eta^{2r_0}E^2 \geq W.$  Then by iterating the previous conditional expectation,
\[
  \begin{aligned}
  \Pr\left[
    \cap_{p=r_0}^r \{\tau_{p+1}/\tau_p \leq t_p\}
    \cap \mathcal{E}
  \middle\vert
  \filt_{\tau_{r_0}}
  \right]
  &\leq
   \exp\left(
   -\sum_{p=r_0}^r\frac{\eta^{2p-1}E^2}{4\log(t_p)(V\eta^{2p+2}E^2+W)}
  \right) \\
  &\leq
   \exp\left(
   -\sum_{p=r_0}^r\frac{1}{8V\eta^3\log(t_p)}
  \right) \\
 &\leq
 \Pr\left[
    \cap_{p=r_0}^r \biggl\{ \tfrac{1}{8V\eta^3 X_p} \leq \log( t_p ) \biggr\}
  \right],
  \end{aligned}
\]
where $\left\{ X_p \right\}$ are a family of independent $\Exponential(1)$ random variables.
Hence, we may couple $\left\{ \tau_p\! :\! p \geq r_0 \right\}$ with $\left\{ X_p \right\}$ in such a way that on $\mathcal{E}$
\[
  \tau_{p+1}/\tau_p \geq e^{\tfrac{1}{8V\eta^3 X_p}}, \quad \text{ for all } p \geq r_0.
\]
Moreover, we conclude that for any $t \geq 0,$
\[
  \begin{aligned}
    \Pr\left[ \{\tau_r/\tau_{r_0} \leq t\} \cap \mathcal{E} \right] \leq
  \Pr\left[
    \exp\left({\textstyle \sum_{p=r_0}^{r-1} }\tfrac{1}{8V\eta^3 X_p}\right) \leq t
  \right]
  &=
  \Pr\left[
    {\textstyle \sum_{p=r_0}^{r-1} }\tfrac{1}{X_p}
    \leq 8V\eta^3 \log(t)
  \right]. \\
%  &\leq
%  \Pr\left[
%    \min_{r_0 \leq p < r} X_p
%    \geq \tfrac{1}{8V\eta^3 \log(t)}
  %\right]
  \end{aligned}
\]
Using the harmonic--mean--arithmetic--mean inequality,
\[
  \frac{r-r_0}{ \sum_{p=r_0}^{r-1} X_p} \leq \frac{1}{r-r_0}  \sum_{p=r_0}^{r-1} \tfrac{1}{X_p}.
\]
Hence we arrive at, under the assumption that $\tfrac{ (r-r_0)^2}{8V\eta^3 \log(t)} \geq 2(r-r_0),$
\begin{equation}\label{eqd:laddertail}
  \Pr\left[ \{\tau_r/\tau_{r_0} \leq t\} \cap \mathcal{E} \right]
  \leq
  \Pr\left[
    {\textstyle \sum_{p=r_0}^{r-1} X_p}
    \geq \tfrac{ (r-r_0)^2}{8V\eta^3 \log(t)}
  \right]
  \leq
  \exp\left( -\frac{1}{C}
  \min \left\{
    \tfrac{ (r-r_0)^2}{V\eta^3 \log(t)},
    \tfrac{ (r-r_0)^3}{V^2\eta^6 \log(t)^2}
  \right\}
  \right),
\end{equation}
using Bernstein's inequality for subexponential random variables \cite[Theorem 2.8.1]{Vershynin}.
Observe that under the assumption the minimum is always attained by the first term.

Finally, we observe that for $r$ such that $V \eta^{2r}E^2 \geq W,$
\[
  \Pr\bigl[
    \bigl\{
    \max_{{n_1^+} \leq k \leq n} A_k \geq \eta^{r}E
  \bigr\} \cap \mathcal{E}
  \bigr]
  \leq
  \Pr\bigl[ \{\tau_{r}/\tau_{r_0} \leq n/{n_1^+} \} \cap \mathcal{E}\bigr]
  \leq
  \exp\bigl( -\tfrac{1}{C}
    \tfrac{ (r-r_0)^2}{V\eta^3 \log(n/{n_1^+})}
  \bigr),
\]
provided $r-r_0 \geq 16V \eta^3 \log(n/{n_1^+}).$
Moreover, for $x \geq 1,$ if we take $r = r_0 + \lfloor\log(x)/\log(\eta)\rfloor - 1$
\[
  \begin{aligned}
  \Pr\bigl[
    \bigl\{
    \max_{{n_1^+} \leq k \leq n} A_k \geq x \sqrt{W/V}
  \bigr\} \cap \mathcal{E}
  \bigr]
  &\leq
  \Pr\bigl[
    \bigl\{
    \max_{{n_1^+} \leq k \leq n} A_k \geq \eta^{r-r_0+1}\sqrt{W/V}
  \bigr\} \cap \mathcal{E}
  \bigr]\\
  &\leq
  \Pr\bigl[
    \bigl\{
    \max_{{n_1^+} \leq k \leq n} A_k \geq \eta^{r}E
  \bigr\} \cap \mathcal{E}
  \bigr].
\end{aligned}
\]
Hence for $x \geq \eta^2,$ we conclude there is an absolute constant $C>0$ such that
\[
    \Pr\bigl[
    \bigl\{
    \max_{{n_1^+} \leq k \leq n} A_k \geq x \sqrt{W/V}
  \bigr\} \cap \mathcal{E}
  \bigr]
  \leq
  \exp\bigl( -\tfrac{1}{C}
    \tfrac{ (\log x)^2}{V\eta^3\log(\eta)^2 \log(n/{n_1^+})}
  \bigr),
\]
provided $\log(x) \geq \log(\eta)+16V\eta^3\log(\eta) \log(n/{n_1^+}).$
This completes the proof.
\end{proof}

\section{Changing the initial condition}
\label{sec-init}

In our application, we will want to consider changing the initial conditions of $\mathfrak{L}_{T_-}.$  The real part of $(\mathfrak{L}_t : t)$ does not influence the evolution of the diffusion, and therefore any initial condition specified for $\Re \mathfrak{L}_{T_{-}}$ will simply appear as an additive perturbation to solution of $(\mathfrak{L}_t : t)$ with $\Re \mathfrak{L}_{T_{-}}(\theta)=0.$  On the other hand, we wish to show that for the imaginary part, the probability that a small perturbation of initial condition grows in magnitude \emph{and} the random walk performs an unusual growth (as is needed to be relevant for the maximum) is small.  In fact, it will be important in the real case $\sigma=1$ that having a large real part tends to compress the relative Pr\"ufer phase.  

\begin{proposition}\label{prop:coic}
  Fix some $j \in \mathcal{D}_{n/k_1}$ and some $\theta \in [-2\pi k_1,0].$
  Let $(\mathfrak{L}_t:t \in [T_-,T_+])$ solve \eqref{eq:LU}
  and
  let $(\mathfrak{L}_t^o:t \in [T_-,T_+])$ solve \eqref{eq:dSDE}.
  Suppose
  $|\Im(\mathfrak{L}_{T_-}(\theta)-\mathfrak{L}_{T_-}(0))| \leq \frac{k_1(\log k_1)^{50}}{k_1^+}.$
  Set $\Delta_t \coloneqq \Im\mathfrak{L}_{t}(\theta)-\Im\mathfrak{L}_{t}^o(\theta)-\Im \mathfrak{L}_{T_-}(0).$
  %Furthermore,
  On the event
  \[
    \sqrt{\tfrac{8}{\beta}}
    \mathcal{A}_{T_-}^{-}
    \leq
    \mathfrak{U}_{T_-}(\theta)
    \leq
    \sqrt{\tfrac{8}{\beta}}
    \mathcal{A}_{T_-}^{+},
  \]
  there is a $\delta>0$ so that for all $k_1$ sufficiently large
  \[
    \Pr(
    |\Delta_{T_+}| > \sqrt{\tfrac{k_1}{\hat{k}_1}},
    \left\{ \mathfrak{U}_{T_+}(\theta)
    \in [-(\log k_1)^{1/100},3k_6] \right\}
    ~\vert~ \filt_{n_1^+})
    \leq \frac{1}{k_1^+}
    e^{-\delta(\log k_1)^{19/20}}.
  \]
  The same holds if we replace $\mathfrak{U}$ by $\mathfrak{U}^o+\mathfrak{U}(0)$ in the above two equations.
  If $\sigma=1$, we also have the conclusion in the last display with
  $\Delta^r_t \coloneqq \Re\mathfrak{L}_{t}(\theta)-\Re\mathfrak{L}_{t}^o(\theta)
  -(\Re\mathfrak{L}_{T_-}(\theta)-\Re\mathfrak{L}_{T_-}^o(\theta))$ replacing $\Delta_t$.
\end{proposition}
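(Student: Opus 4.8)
\textbf{Proof proposal for Proposition \ref{prop:coic}.}

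The plan is to analyze the two pieces separately and in the stated order: first the difference of imaginary parts $\Delta_t$, then (in the $\sigma=1$ case) the difference of real parts $\Delta_t^r$. For the imaginary part, I would write down the SDE satisfied by $\Delta_t$. Subtracting the equation \eqref{eq:dSDE} for $\Im\mathfrak{L}_t^o$ from the equation \eqref{eq:LU} for $\Im\mathfrak{L}_t$, the drift terms $i\theta e^t k_1^{-1}$ cancel (after $t\geq T_\dagger$, and on $[T_-,T_\dagger]$ there is only the $E_1$--type bookkeeping since $\mathfrak{L}^o$ has no drift there), and the diffusive terms combine into an expression of the form $\sqrt{4/\beta}\,\Im\big((e^{i\Delta_t}-1)e^{i\Im\mathfrak{L}_t^o(\theta)}e^{i\Im\mathfrak{L}_{T_-}(\theta_j)}d\mathfrak{W}_t^j\big)$ plus a linear-in-$\Delta_t$ martingale term, exactly in the shape already handled in Step 5 of the proof of Proposition \ref{prop:lsmcontinuity}. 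The key structural fact is that $\Delta_t$ satisfies $d\Delta_t = \Delta_t\,dU_t + (\text{small drift})$ where $dU_t$ has a principal part $d\mathfrak{X}_t$ (the real driving Brownian motion) plus terms bounded by $C_\beta|\Delta_t|$. Integrating the linear SDE gives $\Delta_{T_+} = \frac{M_{T_+}}{M_{T_-}}\Delta_{T_-}(1+o(1))$, where $M_t=\exp(\mathfrak{X}_t-\tfrac2\beta t)$ is the integrating factor from \eqref{eq:lsmMM}, so the growth of $\Delta$ is controlled by $M_{T_+}/M_{T_-}=\exp(\mathfrak{X}_{T_+}-\mathfrak{X}_{T_-}-\tfrac2\beta(T_+-T_-))$.

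Next I would couple the event in question to a barrier/endpoint event. On the event $\{\mathfrak{U}_{T_+}(\theta)\in[-(\log k_1)^{1/100},3k_6]\}$, the real part $\mathfrak{X}_t=-\mathfrak{U}_t(\theta)$ (up to additive constants) climbs from roughly $-\mathfrak{U}_{T_-}(\theta)$ to roughly $-\sqrt{8/\beta}\log k_1^+ + O(\log k_1)^{1/100}$; combined with the a priori upper bound from $\mathfrak{U}_{T_-}(\theta)\leq\sqrt{8/\beta}\mathcal{A}_{T_-}^+$ (which places $-\mathfrak{U}_{T_-}(\theta)$ at height $\approx\sqrt{8/\beta}(T_+-T_-)$ minus a lower-order term), this forces $\mathfrak{X}_{T_+}-\mathfrak{X}_{T_-}$ to be close to its ``ballot'' value $\sqrt{8/\beta}(T_+-T_-)$ from below by at most $O((\log k_1)^{29/30})$. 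Therefore on that event $M_{T_+}/M_{T_-}\leq \exp\big((\sqrt{8/\beta}-\tfrac2\beta)(T_+-T_-) + O((\log k_1)^{29/30})\big)$. Recalling $T_+-T_-=\log(k_1^+/k_1)=(\log k_1)^{29/30}(1+o_{k_1})$ from \eqref{eq:k2k3}, and using $|\Delta_{T_-}|\leq \tfrac{k_1(\log k_1)^{50}}{k_1^+}=e^{-(1+o(1))(\log k_1)^{29/30}}(\log k_1)^{50}$, the product $M_{T_+}M_{T_-}^{-1}|\Delta_{T_-}|$ is at most $e^{-\delta'(\log k_1)^{29/30}}$ for a suitable $\delta'>0$ provided the exponents balance — which they do because $\sqrt{8/\beta}-\tfrac2\beta < \sqrt{8/\beta}$ always, but more to the point $(\sqrt{8/\beta}-\tfrac2\beta)(T_+-T_-)$ is subtracted from the $-(1)(\log k_1)^{29/30}$ coming from $\Delta_{T_-}$ and one needs the remainder negative; I would verify the constant $1 - (\sqrt{8/\beta}-\tfrac2\beta)>0$... actually the safe route is to instead absorb the deterministic growth into the barrier-probability estimate. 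Concretely, I would split: $\Pr(|\Delta_{T_+}|>\sqrt{k_1^+/k_1},\ \mathfrak{U}_{T_+}(\theta)\in[\cdot])\leq \Pr(\mathfrak{X}_{T_+}-\mathfrak{X}_{T_-}\geq a,\ \text{endpoint event})$ where $a$ is chosen so that the deterministic growth bound forces $|\Delta_{T_+}|\leq\sqrt{k_1^+/k_1}$ whenever $\mathfrak{X}_{T_+}-\mathfrak{X}_{T_-}<a$. Then apply the standard change of measure flattening the linear drift (as in \eqref{eq:aha1}, \eqref{eq:ahb7}, or as cited from \cite[Corollary A.6]{CMN}) together with a Gaussian tail estimate for an excess climb of size $\asymp (\log k_1)^{29/30}$ over a time window of length $\asymp (\log k_1)^{29/30}$; this yields a factor $e^{-\delta(\log k_1)^{29/30}}\leq e^{-\delta(\log k_1)^{19/20}}$, and the overall prefactor $1/k_1^+$ comes from the endpoint density exactly as in Lemma \ref{lem:bananadensity} / the Girsanov computation in Step 1 of Proposition \ref{prop:lsmcontinuity}.

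For the real part in the case $\sigma=1$, I would follow Step 6 of the proof of Proposition \ref{prop:lsmcontinuity} essentially verbatim: from $d\Delta_t^r = (\cos\Delta_t-1)d\mathfrak{X}_t - \sin(\Delta_t)d\mathfrak{B}_t$ and the bound $|\Delta_t|\leq C_\beta|\Delta_{T_-}|e^{-(1-\widehat\delta)(T_+-t)}+(\text{smaller})$ (which is the output of the imaginary-part analysis above, now on the good event), the quadratic variation of $\Delta_{T_+}^r$ is $\leq C_\beta\int_{T_-}^{T_+}\Delta_t^2\,dt\leq C_\beta |\Delta_{T_-}|^2$, and the finite-variation terms are bounded similarly by $C_\beta\log(1/|\Delta_{T_-}|)|\Delta_{T_-}|^2$, all much smaller than $\sqrt{k_1^+/k_1}$ on the event $\{|\Delta_{T_-}|\leq k_1(\log k_1)^{50}/k_1^+\}$; a Gaussian tail bound then gives the same $e^{-\delta(\log k_1)^{19/20}}$ decay. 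The statement with $\mathfrak{U}^o$ replacing $\mathfrak{U}$ is identical, since $\mathfrak{U}^o$ and $\mathfrak{U}$ differ only through the initial condition, whose oscillation is controlled by hypothesis, and the barrier event $\mathcal{A}_{T_-}^\pm$ is imposed symmetrically. The main obstacle I anticipate is bookkeeping the constants so that the deterministic growth of the integrating factor $M_{T_+}/M_{T_-}$ on the endpoint event, multiplied by $|\Delta_{T_-}|\leq e^{-(1+o(1))(\log k_1)^{29/30}}(\log k_1)^{50}$, genuinely lands below $\sqrt{k_1^+/k_1}=e^{(1/2+o(1))(\log k_1)^{29/30}}$ with room to spare — this is where the precise value $T_+-T_-=(\log k_1)^{29/30}(1+o_{k_1})$ and the entropic-envelope lower bound $\mathfrak{U}_{T_-}(\theta)\geq\sqrt{8/\beta}\mathcal{A}_{T_-}^-$ must be used carefully, and where I would, if needed, trade a polynomial-in-$\log k_1$ loss in the prefactor for the stretched-exponential gain, exactly as is done in the analogous estimates in Section \ref{sec:milieu}.
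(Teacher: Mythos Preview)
Your plan is in the right spirit, but there are two factual slips and a genuine gap in the imaginary-part argument. The slips: from $T_-=\log k_1-\log k_1^+$ and $T_+=\log k_1$ one has $T_+-T_-=\log k_1^+\approx \log k_1$, not $\log(k_1^+/k_1)=(\log k_1)^{29/30}$ (what you computed is $-T_-$). And on the endpoint event $\mathfrak U_{T_+}\approx 0$ while $\mathfrak U_{T_-}\approx -\sqrt{8/\beta}(T_+-T_-)$, so with your identification $\mathfrak X\sim -\mathfrak U$ the increment $\mathfrak X_{T_+}-\mathfrak X_{T_-}\approx -\sqrt{8/\beta}(T_+-T_-)$ is large and \emph{negative}; hence $M_{T_+}/M_{T_-}$ is tiny, and your exponent-balancing discussion is aimed at the wrong inequality.

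The actual gap is that $\Delta_{T_+}=\tfrac{M_{T_+}}{M_{T_-}}\Delta_{T_-}(1+o(1))$ is not free. The integrating-factor representation also carries the drift integral $\int_{T_-}^{T_\dagger}\tfrac{M_{T_+}}{M_s}\,\theta e^s k_1^{-1}\,ds$ and the nonlinear corrections, all of which depend on $M_{T_+}/M_s$ for \emph{intermediate} $s$ and are not determined by the endpoint increment $\mathfrak X_{T_+}-\mathfrak X_{T_-}$; in particular your ``safe route'' split does not control them. Moreover the drift on $[T_-,T_\dagger]$ (your ``$E_1$-type bookkeeping'') is of order $e^{T_\dagger}=e^{-(\log k_1)^{19/20}}$ and dominates $|\Delta_{T_-}|\le e^{-(1+o(1))(\log k_1)^{29/30}}$ --- it is this contribution that fixes the scale of $\Delta_{T_\dagger}$ and needs its own argument. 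Carrying your route through would require re-running Steps 2--4 of Proposition~\ref{prop:lsmcontinuity} (the events $E_0,E_1,E_2$) in this setting. The paper takes a shorter path: after the simple Girsanov $d\Q/d\Pr=\exp\bigl(\sqrt{\beta/2}(\mathfrak U_{T_+}-\mathfrak U_{T_-})-(T_+-T_-)\bigr)$, which on the endpoint event yields the $1/k_1^+$ prefactor, one has $d\mathfrak U_t=d\mathfrak X_t+\sqrt{8/\beta}\,dt$ under $\Q$, so the SDE for $\Delta_t$ after $T_\dagger$ acquires a \emph{deterministic self-stabilizing} drift $-\sqrt{8/\beta}\sin\Delta_t$. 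Passing to $\log\Delta_t$ then gives $d\log\Delta_t\le dM_t-(\sqrt{8/\beta}+\tfrac2\beta-o(1))\,dt$ with $d\langle M\rangle_t\le C\beta^{-1}dt$, and the event that $\log\Delta$ grows by $(\log k_1)^{19/20}$ becomes a single line-crossing estimate for a martingale against its own negative drift, giving $e^{-c(\log k_1)^{19/20}}$ under $\Q$ directly; combined with a separate easy bound $|\Delta_{T_\dagger}|\le 4e^{T_\dagger}$ on $[T_-,T_\dagger]$, this finishes the imaginary part with no $E_2$-type pathwise event. Your real-part treatment is fine once this is in place.
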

\begin{proof}
  We show the case $\sigma=1$ first.  We shall show how to modify the argument for $\sigma=i$ after completing the $\sigma=1$ case.
  \paragraph{Step 1: change of measure.}
  In this case, we have
  \(
  d\mathfrak{U}_t =
  \sqrt{\tfrac{4}{\beta}}
  \Re (
  e^{i \Im \mathfrak{L}_t(\theta)}
  d \mathfrak{W}_t^j
  ).
  \)
  Let $d\mathfrak{B}_t = \sqrt{\tfrac{4}{\beta}}
  \Im (
  e^{i \Im \mathfrak{L}_t(\theta)}
  d \mathfrak{W}_t^j
  ),$ which (for fixed $\theta$) is an independent Brownian motion of $\mathfrak{U}$.
  Define a change of measure
  \[
    \frac{d\Q}{d\Pr} =
    \exp\left(
    \sqrt{\frac{\beta}{2}} (\mathfrak{U}_{T_+}(\theta)-\mathfrak{U}_{T_-}(\theta)) - (T_+-T_-)
    \right)
  \]
  then under $\Q,$ $d\mathfrak{U}_t = d\mathfrak{X}_t + \sqrt{\tfrac{8}{\beta}}dt$ on $[T_-,T_+]$ for a $\Q$--Brownian motion $(\mathfrak{X}_t: t \in [T_-,T_+])$ (with quadratic variation $\tfrac{4}{\beta}(t-T_{-})$).  Under $\Q,$ $\mathfrak{B}$ remains an independent Brownian motion with the same quadratic variation as $\mathfrak{X}.$

   On the event given for $\mathfrak{U}_{T_+}(\theta)$ in the statement of the Lemma, this Radon--Nikodym derivative is also in control and is given by $e^{(\log k_1^+) + O( \log k_1)^{9/10}}.$  Hence it suffices to show that
   \[
     \mathbb{Q}(
     |\Delta_{T_+}| > \sqrt{\tfrac{k_1}{\hat{k}_1}}
    ~\vert~ \filt_{n_1^+})
    \leq
    e^{-\delta(\log k_1)^{19/20}}.
   \]

   To prove the statements with $\mathfrak{U}^o$, we instead need to use the change of measure
   \[
         \frac{d\Q^o}{d\Pr} =
    \exp\left(
    \sqrt{\frac{\beta}{2}} (\mathfrak{U}^o_{T_+}(\theta)-\mathfrak{U}^o_{T_-}(\theta)) - (T_+-T_-)
    \right),
   \]
   but everything proceeds with obvious changes.  We continue with the case $\mathfrak{U}.$

  The difference $\Delta_t$ satisfies the SDE
  \begin{equation}\label{eq:sbd1}
    \begin{aligned}
      d\Delta_t
      &=
      \theta e^tk_1^{-1}\one[t \leq T_\dagger]dt
      +
      \sqrt{\tfrac{4}{\beta}} \Im \biggl((e^{i \Im \mathfrak{L}_t(\theta)}-e^{i \Im (\mathfrak{L}_t^o(\theta) + \mathfrak{L}_{T_-}^j(0) )}) d \mathfrak{W}_t^j	      \biggr) \\
      &=
      \theta e^tk_1^{-1}\one[t \leq T_\dagger]dt
      +
      \sqrt{\tfrac{4}{\beta}}
      \biggl(
      \Im (
      e^{i \Im \mathfrak{L}_t(\theta)}
      d \mathfrak{W}_t^j
      )
      (1-\cos \Delta_t)
      +
      \Re (
      e^{i \Im \mathfrak{L}_t(\theta)}
      d \mathfrak{W}_t^j
      )
      \sin \Delta_t
      \biggr) \\
      &=
      \theta e^tk_1^{-1}\one[t \leq T_\dagger]dt
      +
      d\mathfrak{B}_t
      (1-\cos \Delta_t)
      -
      d\mathfrak{U}_t
      \sin \Delta_t.
    \end{aligned}
  \end{equation}
  In the case that $\Delta_{T-} > 0$, the process remains nonnegative for all time.

  \paragraph{Step 2: no movement before $T_\dagger$.}
  Recall that $T_\dagger=-(\log k_1)^{19/20}<0$.
  Set $\mathfrak{e}(t) = \exp(\sqrt{\tfrac{8}{\beta}}(t-T_-)).$
  Note that
  \begin{align*}
  &  d(\Delta_t \mathfrak{e}(t))\\
  &    =
      \mathfrak{e}(t) \theta e^tk_1^{-1}\one[t \leq T_\dagger]dt
      +
      \mathfrak{e}(t)
      d\mathfrak{B}_t
      (1-\cos \Delta_t)
      -
      \mathfrak{e}(t)
      d\mathfrak{X}_t
      \sin \Delta_t
      -
      \sqrt{\tfrac{8}{\beta}}
      \mathfrak{e}(t)
      dt
      (\sin \Delta_t-\Delta_t).
  \end{align*}
  Let $\vartheta$ be the first time in $[T_-,T_\dagger]$ that $|\Delta_{t}| >
  10\exp(T_\dagger)$.
  Then
  \[
    \Delta_t = \frac{\Delta_{T_-}}{\mathfrak{e}(t)}
    +\frac{\theta}{k_1 \mathfrak{e}(t)}\int_{T_-}^t \mathfrak{e}(s)e^s\,ds
    + \frac{1}{\mathfrak{e}(t)}
    \bigl(M(t) + \mathcal{Y}(t)
    \bigr)
  \]
  for a martingale $M$ and a finite variation term $\mathcal{Y}$.  The first two terms, prior to $T_\dagger$ are bounded by $7\exp(T_\dagger)$.  The final term is bounded, before $T_\dagger \wedge \vartheta$, by $Ce^{3T_\dagger}$.  The martingale, up to time $T_\dagger \wedge \vartheta$, has quadratic variation bounded above by $\mathfrak{e}^2(t)e^{4T_\dagger}$.  Thus, summing over integer times between $[T_-,T_\dagger]$, the $\mathbb{Q}$-probability that it reaches height $\mathfrak{e}(t)e^{1.9T_\dagger}$ is at most $\exp(-c\exp(-cT_\dagger))$ for some $c> 0$. As $\Delta_t$ is continuous we conclude that with probability $1-\exp(-c\exp(-cT_\dagger))$, $\vartheta = \infty,$ which is to say
  $|\Delta(T_\dagger)| \leq 10\exp(T_\dagger).$

  \paragraph{Step 3: self-stabilizing after $T_\dagger$.}

  From time $t > T_\dagger$ the SDE \eqref{eq:sbd1} becomes
  \[
    d\Delta_t
    =
      d\mathfrak{B}_t
      (1-\cos \Delta_t)
      -
      d\mathfrak{U}_t
      \sin \Delta_t.
  \]
  From the vanishing of the drift and diffusion terms, this equation cannot cross any multiple of $2\pi \Z$.
  By passing to its negative if necessary, we may assume $\Delta_{T_\dagger} \in (0,2\pi)$ % Otherwise, we do the remainder of the analysis with $-\Delta_{T_\dagger}.$
  We let $\tau$ be the first hitting time of $\Delta_{t}$ to $\delta \pi.$  Then by comparison, before $\tau,$ $\Delta_{t}$ for any $\epsilon >0$ there is $\delta > 0$ sufficiently small that $\Delta_t$ is dominated by the solution to
  \[
    d
    \Delta_t' =
    d\mathfrak{B}_t
    (1-\cos \Delta_t')
    -
    d\mathfrak{X}_t
    \sin \Delta_t'
    -(\sqrt{\tfrac{8}{\beta}}-\epsilon)\Delta_t'dt,
  \]
  at for all $t \leq \tau$, where $\Delta_{T_\dagger}'=\Delta_{T_\dagger}$.
  Taking logarithms, we have from It\^o's Lemma,
  \[
    d
    \log(\Delta_t') =
    d\mathfrak{B}_t
    \frac{(1-\cos \Delta_t')}{\Delta'_t}
    -
    d\mathfrak{X}_t
    \frac{\sin \Delta_t'}{\Delta'_t}
    -\biggl(\sqrt{\frac{8}{\beta}}-\epsilon\biggr)dt
    -\frac{4}{\beta}
    \frac{(1-\cos \Delta_t')}{(\Delta'_t)^2}
    dt.
  \]
  The stopped martingale part has uniformly bounded quadratic variation.
  The drift is bounded as well, using $\cos(x) \leq 1 - x^2(\tfrac12-\epsilon)$ before the stopping time $\tau',$ the first time $\Delta_t'$ reaches $\pi\delta'$ for $\delta'$ sufficiently small.
  In particular for $t \leq \tau'$,
  \[
    d
    \log(\Delta_t')
    \leq
    dM_t
    -
    \left( \sqrt{\frac{8}{\beta}}-(1+\frac4\beta)\epsilon+\frac{2}{\beta} \right)
    dt,
  \]
  for a martingale $M_t$ with $d\langle M \rangle_t \leq \frac{C}{\beta}dt$ and $M_{T_\dagger}=0.$
  Hence to bound the $\Q$-probability that
  $\log(\Delta_{T_+}') \geq \log(\Delta_{T_\dagger}') +0.4 (\log k_1)^{19/20},$
  we can instead bound the probability that there is a $t \leq \tau'$ such that
  \[
    M_t \geq
    0.4(\log k_1)^{19/20}
    +(1-\epsilon)\left( \sqrt{\frac{8}{\beta}}-(1+\frac4\beta)\epsilon+\frac{2}{\beta} \right) (t-T_\dagger),
  \]
  noting that
  on the complement of this event $\tau' > (T_+-T_\dagger)$ and so also $\tau > T_+-T_\dagger.$
  From a time-change, this probability is dominated above by the probability that a $\frac{\beta}{C}t$--quadratic variation Brownian motion crosses the same linear barrier, and so
  \[
    \begin{aligned}
      \Q(\log(\Delta_{T_+\wedge \tau'}') \geq \log(\Delta_{T_\dagger}')+
      0.4(\log k_1)^{19/20})
      &\leq
      \exp\left( -(\log k_1)^{19/20}\frac{0.4\sqrt{\beta}}{\sqrt{C}}\left( \sqrt{\frac{8}{\beta}}-(1+\frac4\beta)\epsilon+\frac{2}{\beta} \right)   \right)\\
      &=e^{-c (\log k_1)^{19/20}}
      .
    \end{aligned}
  \]
  for some $c(\beta) >0$ and
  for all $k_1$ sufficiently large.

  \paragraph{Step 4: Control of the real part.}
  Having controlled the difference of imaginary parts, we can then control the difference of real parts $\Delta^r.$  We derive the diffusion for $\Delta^r,$ whose behavior is determined entirely by that of $\Delta:$
  \begin{equation*}
    \begin{aligned}
      d\Delta_t^r
      &= \sqrt{\tfrac{4}{\beta}} \Re \biggl((e^{i \Im \mathfrak{L}_t(\theta)}-e^{i \Im (\mathfrak{L}_t^o(\theta) + \mathfrak{L}_{T_-}^j(\theta_j) )}) d \mathfrak{W}_t^j	      \biggr) \\
      &=
      \sqrt{\tfrac{4}{\beta}}
      \biggl(
      \Re (
      e^{i \Im \mathfrak{L}_t(\theta)}
      d \mathfrak{W}_t^j
      )
      (1-\cos \Delta_t)
      -
      \Im (
      e^{i \Im \mathfrak{L}_t(\theta)}
      d \mathfrak{W}_t^j
      )
      \sin \Delta_t
      \biggr) \\
      &=
      d\mathfrak{U}_t
      (1-\cos \Delta_t)
      -
      d\mathfrak{B}_t
      \sin \Delta_t \\
      &=
      dt
      \sqrt{\tfrac{8}{\beta}}
      (1-\cos \Delta_t)
      +
      d\mathfrak{X}_t
      (1-\cos \Delta_t)
      -
      d\mathfrak{B}_t
      \sin \Delta_t.
    \end{aligned}
  \end{equation*}
  The $\Q$--probability of the event that $|\Delta_t| \leq e^{-0.51(\log k_1)^{19/20} - \epsilon(t-T_-)}$ for all $t \in [T_-,T_+]$ is $1-e^{-\delta (\log k_1)^{19/20}}$ for some $\delta,\epsilon > 0$ and all $k_1$ sufficiently large.
  On that event, both the drift and the quadratic variation of the
  martingale part of $\Delta_t^r$ are bounded by $O( e^{- 1.01(\log k_1)^{19/20}})$ for all $k_1$ sufficiently large.  Thus the probability that this reaches $e^{-0.5(\log k_1)^{19/20}}$ has the claimed probability.

  \paragraph{Step 5: The imaginary case.}
  We now have
  \[
    d\mathfrak{U}_t =
    -
    \sqrt{\tfrac{4}{\beta}}
    \Re ( \sigma
    e^{i \Im \mathfrak{L}_t(\theta)}
    d \mathfrak{W}_t^j
    )
    =
    \sqrt{\tfrac{4}{\beta}}
    \Im (
    e^{i \Im \mathfrak{L}_t(\theta)}
    d \mathfrak{W}_t^j
    ),
  \]
  and we let
  \[
    d\mathfrak{B}_t
    =
    \sqrt{\tfrac{4}{\beta}}
    \Im ( \sigma
    e^{i \Im \mathfrak{L}_t(\theta)}
    d \mathfrak{W}_t^j
    )
    =
    \sqrt{\tfrac{4}{\beta}}
    \Re (
    e^{i \Im \mathfrak{L}_t(\theta)}
    d \mathfrak{W}_t^j
    )
  \]
  which again is an independent Brownian motion.
  The difference $\Delta_t$ satisfies the SDE (see the first two lines of \eqref{eq:sbd1})
  \[
    \begin{aligned}
      d\Delta_t
      &=
      \theta e^tk_1^{-1}\one[t \leq T_\dagger]dt
      +
      d\mathfrak{U}_t
      (1-\cos \Delta_t)
      +
      d\mathfrak{B}_t
      \sin \Delta_t
      .
    \end{aligned}
  \]
  Step 1 and 2 proceed in exactly the same way.
  For step 3, after the change of measure
  we have the SDE for $t \geq T_\dagger$,
  \[
    \begin{aligned}
      d\Delta_t
      =
      \sqrt{\tfrac{8}{\beta}}
      (1-\cos \Delta_t)
      dt
      +
      d\mathfrak{X}_t
      (1-\cos \Delta_t)
      +
      d\mathfrak{B}_t
      \sin \Delta_t.
    \end{aligned}
  \]
  In the case $\sigma=i$, the drift is positive, but weak (as it is quadratic in $\Delta_t$).  In particular before $\tau,$ we can dominate the solution by
  \[
    \begin{aligned}
      d\Delta_t'
      =
      \sqrt{\tfrac{8}{\beta}}
      \delta
      \Delta_t'
      dt
      +
      d\mathfrak{X}_t
      (1-\cos \Delta_t')
      +
      d\mathfrak{B}_t
      \sin \Delta_t'.
    \end{aligned}
  \]
  The proof now continues the same way as in the real case.
\end{proof}

\section{Calculus estimates for the Pr\"ufer phases}
\label{sec-prufest}
\begin{lemma}
  Let $\lambda_1,\lambda_2 \in \mathbb{C}$ and let $\alpha,\Gamma \in \R$ with $\Gamma > 1$ be fixed real numbers and define, for $z=x+iy$ with $x,y \in \mathbb{R},$
  \[
    \begin{aligned}
      &F(x,y)=F(z)
      =
      \Re\left\{
        \lambda_1\log(1-u(z)e^{i\alpha}) - \lambda_1\log(1-u(z))
        +\lambda_2\log(1-u(z))
      \right\}, \\
      &\text{where}
      \quad
      u=u(z) = \frac{z}{\sqrt{ |z|^2 + \Gamma}}.
    \end{aligned}
  \]
  Then there is an absolute constant $C> 0$ so that for $x^2+y^2 = r^2 \leq \Gamma/2$
  \begin{equation}
    \label{eq-partialF}
    \begin{aligned}
      &|\partial_x F(x+iy) + \Re\{\lambda_1(e^{i\alpha}-1) + \lambda_2\}\Gamma^{-1/2}|
      \leq
      \frac{C(|\lambda_1(e^{i\alpha}-1)| + |\lambda_2|)r}
      {\Gamma}
      , \text{ and } \\
      &|\partial_y F(x+iy) - \Im\{\lambda_1(e^{i\alpha}-1) + \lambda_2\}\Gamma^{-1/2}|
      \leq
      \frac{C(|\lambda_1(e^{i\alpha}-1)| + |\lambda_2|)r}
      {\Gamma},
    \end{aligned}
  \end{equation}
  \label{lem:calculus}
\end{lemma}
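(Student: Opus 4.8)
\textbf{Proof plan for Lemma \ref{lem:calculus}.}

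The plan is to compute the partial derivatives of $F$ directly and then Taylor-expand the resulting expressions around $r=0$. First I would record the elementary derivatives of $u(z)=z(|z|^2+\Gamma)^{-1/2}$: writing $z=x+iy$ and $\rho=|z|^2+\Gamma$, one has $\partial_x u = \rho^{-1/2} - x^2\rho^{-3/2} + \cdots$ (and similarly for $\partial_y u$), so that $\nabla u(0) $ corresponds to multiplication by $\Gamma^{-1/2}$ up to an error of order $r\Gamma^{-1}$ on the disk $r^2\le\Gamma/2$ (on which $\rho\asymp\Gamma$, keeping all quantities bounded away from the branch point). Next, since $F=\Re G$ with $G(z)=\lambda_1\log(1-u e^{i\alpha})-\lambda_1\log(1-u)+\lambda_2\log(1-u)$ holomorphic in $z$ (as a composition of holomorphic maps, the map $z\mapsto u(z)$ being holomorphic away from the real locus $\rho=0$, and $|u|<1$ so the logarithms are well-defined and analytic), the Cauchy--Riemann equations give $\partial_x F = \Re G'$ and $\partial_y F = -\Im G'$. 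Then I would compute
\[
  G'(z) = u'(z)\Bigl( \frac{-\lambda_1 e^{i\alpha}}{1-ue^{i\alpha}} + \frac{\lambda_1 - \lambda_2}{1-u}\Bigr)
  = u'(z)\,\bigl(\lambda_1(e^{i\alpha}-1)+\lambda_2\bigr) + u'(z)\cdot O\bigl(|u|(|\lambda_1(e^{i\alpha}-1)|+|\lambda_2|)\bigr),
\]
where the second equality is the first-order expansion of the bracket in $u$ around $u=0$, valid since $|u|\le r\Gamma^{-1/2}\le 1/\sqrt2<1$ keeps $1-ue^{i\alpha}$ and $1-u$ bounded away from zero.

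Combining the two expansions, $G'(z) = \Gamma^{-1/2}\bigl(\lambda_1(e^{i\alpha}-1)+\lambda_2\bigr) + R(z)$ with $|R(z)| \le C(|\lambda_1(e^{i\alpha}-1)|+|\lambda_2|)\,r\,\Gamma^{-1}$, the error $R$ collecting both the $r\Gamma^{-1}$ correction from $u'(z)-\Gamma^{-1/2}$ and the $|u|\Gamma^{-1/2}\le r\Gamma^{-1}$ correction from the bracket; one checks these combine with an absolute constant since $\rho\asymp\Gamma$ throughout the disk. Taking real and imaginary parts, and noting the sign flip $\partial_y F = -\Im G'$ is precisely what produces the $+\Im\{\cdots\}$ on the left side of the second inequality in \eqref{eq-partialF}, gives the two claimed bounds.

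I do not expect a genuine obstacle here; the only points requiring a little care are (i) verifying that $u(z)$ is genuinely holomorphic on a neighbourhood of the relevant disk so that Cauchy--Riemann applies to $F=\Re G$ — this is why the hypothesis $r^2\le\Gamma/2$ (equivalently $\rho\ge\Gamma/2>0$) is imposed, and since $\sqrt{\cdot}$ is holomorphic off the negative reals and $\rho$ stays positive, $z\mapsto(|z|^2+\Gamma)^{-1/2}$ would \emph{not} be holomorphic as written — so I would instead note that $z\mapsto (z^2+\Gamma)^{-1/2}$ is the holomorphic object and that the stated $F$ should be read with $|z|$ meaning the radial variable, or more cleanly just compute $\partial_x F,\partial_y F$ by brute force from $F=\Re\{\cdots\}$ using $\partial_x u, \partial_y u$ directly without invoking holomorphy, which is the safer route and avoids any subtlety; and (ii) bookkeeping the absolute constant $C$ so that it genuinely does not depend on $\lambda_1,\lambda_2,\alpha,\Gamma$, which follows because every denominator ($\rho$, $1-u$, $1-ue^{i\alpha}$) is bounded below by an absolute constant times its typical size on the disk $r^2\le\Gamma/2$. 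The brute-force route is: expand $F = \Re\{\lambda_1(e^{i\alpha}-1)u + \lambda_2 u\} + (\text{quadratic and higher in }u)$, differentiate term by term, and bound the tail using $|u|\le r\Gamma^{-1/2}$ and $|\nabla u|\le C\Gamma^{-1/2}$, $|\nabla(u - z\Gamma^{-1/2})|\le Cr\Gamma^{-1}$.
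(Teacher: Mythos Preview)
Your suspicion in caveat (i) is right and should be promoted from a caveat to the main argument: $u(z)=z/\sqrt{|z|^2+\Gamma}$ is genuinely \emph{not} holomorphic, since $|z|^2=z\bar z$. (The paper really does mean $|z|^2$, not $z^2$ --- this $u$ is modelling the Verblunsky coefficient $\gamma=Z/\sqrt{|Z|^2+\Gamma^a}$.) Consequently the Cauchy--Riemann identities $\partial_x F=\Re G'$, $\partial_y F=-\Im G'$ and the expression ``$u'(z)$'' in your displayed computation of $G'$ are not valid as written. Your brute-force route at the end is the correct one, and it is exactly what the paper does: since $G$ is holomorphic in the variable $u$, the chain rule gives $\partial_x F=\Re\bigl(G'(u)\,\partial_x u\bigr)$ and $\partial_y F=\Re\bigl(G'(u)\,\partial_y u\bigr)$, where $\partial_x u,\partial_y u$ are computed as ordinary partials of a $\C$-valued function of $(x,y)$.

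The one substantive step you gloss over is \emph{why} the answer nonetheless has the Cauchy--Riemann shape. The paper computes explicitly
\[
  \partial_x u(x+iy)=\frac{y^2+\Gamma-ixy}{(x^2+y^2+\Gamma)^{3/2}},\qquad
  \partial_y u(x+iy)=\frac{i(x^2+\Gamma)-xy}{(x^2+y^2+\Gamma)^{3/2}},
\]
and observes that on $r^2\le\Gamma/2$ these are \emph{nearly real} and \emph{nearly purely imaginary}, respectively: $|\Im\partial_x u|,|\Re\partial_y u|\le r^2\Gamma^{-3/2}$ and $|\Re\partial_x u-\Gamma^{-1/2}|,|\Im\partial_y u-\Gamma^{-1/2}|\le 2r^2\Gamma^{-3/2}$. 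This is precisely what converts $\Re(G'(u)\,\partial_y u)$ into $-\Gamma^{-1/2}\Im G'(u)$ plus an acceptable error, and together with your expansion $G'(u)=-(\lambda_1(e^{i\alpha}-1)+\lambda_2)+O\bigl(|u|(|\lambda_1(e^{i\alpha}-1)|+|\lambda_2|)\bigr)$ it yields \eqref{eq-partialF}. So once you discard the holomorphy shortcut and insert this $\partial_x u,\partial_y u$ computation, your proof is the paper's proof.
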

\begin{proof}
  We begin by computing the partial of $F$ with respect to $u,$ giving
  \begin{equation}\label{eqd:FU}
    \frac{d}{du}F
    =
    -
    \Re\left\{
      \lambda_1\frac{e^{i\alpha}-1}{(1-ue^{i\alpha})(1-u)}
      +\lambda_2\frac{1}{1-u}
    \right\}
  \end{equation}
  Further
  \[
    \partial_x(u(x+iy))
    %=\frac{x^2+y^2+\Gamma - (x+iy)x}{(x^2+y^2+\Gamma)^{3/2}}
    =\frac{y^2+\Gamma - ixy}{(x^2+y^2+\Gamma)^{3/2}}
    \quad
    \text{and}
    \quad
    \partial_y(u(x+iy))
    %=\frac{i(x^2+y^2+\Gamma) - (x+iy)y}{(x^2+y^2+\Gamma)^{3/2}}
    =\frac{i(x^2+\Gamma) - xy}{(x^2+y^2+\Gamma)^{3/2}}.
  \]
  We have that $\partial_x(u(x+iy))$ is nearly real, as is  $\partial_y(u(x+iy))$ nearly imaginary, when $x$ and $y$ are much smaller than $\Gamma:$
    \begin{equation}\label{eqd:ducross}
    \max\{
      |
      \Im \partial_x(u(x+iy))
      |,
      |
      \Re \partial_y(u(x+iy))
    |\}
    \leq
    \frac{|xy|}{\Gamma^{3/2}} \leq \frac{r^2}{\Gamma^{3/2}}.
  \end{equation}
  As for the principal terms, using $r^2 \leq \Gamma,$
  \[
    |\Re \partial_x(u(x+iy)) - \Gamma^{-1/2}|
    \leq \frac{(x^2+y^2+\Gamma)^{3/2} - \Gamma^{1/2}(y^2+\Gamma)}{(x^2+y^2+\Gamma)^{3/2}\Gamma^{1/2}}
    \leq \frac{2r^2}{\Gamma^{3/2}}.
  \]
  Likewise,
  \[
    |\Im \partial_y(u(x+iy)) - i\Gamma^{-1/2}|
    \leq \frac{2r^2}{\Gamma^{3/2}}.
  \]
  Hence using $|u(x+iy)| \leq r\Gamma^{-1/2} \leq \frac{1}{\sqrt{2}}$, we conclude \eqref{eq-partialF} for
  an absolute constant $C>0$
by combining the previous displays.
\iffalse
  so that
  \[
    \begin{aligned}
      &|\partial_x F(x+iy) + \Re\{\lambda_1(e^{i\alpha}-1) + \lambda_2\}\Gamma^{-1/2}|
    \leq
    \frac{C(|\lambda_1(e^{i\alpha}-1)| + |\lambda_2|)r}
    {\Gamma}
    , \text{ and } \\
    &|\partial_y F(x+iy) - \Im\{\lambda_1(e^{i\alpha}-1) + \lambda_2\}\Gamma^{-1/2}|
    \leq
    \frac{C(|\lambda_1(e^{i\alpha}-1)| + |\lambda_2|)r}
    {\Gamma},
    \end{aligned}
  \]
  which was to be proven.
  \fi
\end{proof}

By a similar second order expansion, we arrive at:
\begin{lemma}\label{lem:calculus2}
  Let $\lambda_1,\lambda_2 \in \mathbb{C}$ and let $\alpha,\Gamma \in \R$ with $\Gamma > 1$ and let $F$ be as in Lemma \ref{lem:calculus}.
  Then there is an absolute constant $C> 0$ so that for $x^2+y^2 = r^2 \leq \Gamma/2$
  \[
    \left|
    F(x+iy) + 
    \Re\left\{
      (\lambda_1(e^{i\alpha}-1) + \lambda_2)
      \frac{x+iy}{\Gamma^{1/2}}
      +
      (\lambda_1(e^{2i\alpha}-1) + \lambda_2)
      \frac{(x+iy)^2}{2\Gamma}
      \right\}
    \right|
    \leq
    \frac{C(|\lambda_1(e^{i\alpha}-1)| + |\lambda_2|)r^3}{\Gamma^{3/2}}.
  \]
\end{lemma}
\begin{proof}
  Differentiating \eqref{eqd:FU} and composing with $u(x+iy)$ and its derivatives, the claimed bound is easily checked.
\end{proof}

\section{Polynomial interpolation}
\label{sec:interpolation}

We will use some results for the \emph{a priori} stability of polynomials.  The first of these is a classical inequality due to Bernstein:
\begin{theorem}\label{thm:Bernstein} For any polynomial $Q$ of degree $k \geq 1,$
  \[
    \max_{|z|=1} |Q'(z)| \leq k \cdot \max_{|z|=1} |Q(z)|.
  \]
\end{theorem}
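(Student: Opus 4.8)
The plan is to deduce Bernstein's inequality from its classical trigonometric counterpart: every trigonometric polynomial $T(t)=\sum_{|j|\le n}c_je^{ijt}$ of degree $n$ satisfies $\max_t|T'(t)|\le n\,\max_t|T(t)|$. Granting this, let $f(\theta)=Q(e^{i\theta})$. By the chain rule $f'(\theta)=ie^{i\theta}Q'(e^{i\theta})$, so $\max_\theta|f'(\theta)|=\max_{|z|=1}|Q'(z)|$, while $\max_\theta|f(\theta)|=\max_{|z|=1}|Q(z)|=:M$. Since $f$ is a finite exponential sum with frequencies in $\{0,1,\dots,k\}$, the shifted function $g(\theta):=e^{-ik\theta/2}f(\theta)$ has frequencies in $\{-k/2,\dots,k/2\}$ and satisfies $|g|\equiv|f|$, hence $\max_\theta|g(\theta)|=M$.

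First I would record that $\max_\theta|g'(\theta)|\le\tfrac k2 M$. If $k$ is even, $g$ is a genuine trigonometric polynomial of degree $k/2$ and this is the trigonometric Bernstein inequality. If $k$ is odd, substitute $\theta=2\phi$: then $\phi\mapsto g(2\phi)=\sum_{j=0}^k a_je^{i(2j-k)\phi}$ is a trigonometric polynomial of degree $k$, so trigonometric Bernstein gives $\max_\phi|2g'(2\phi)|\le k\max_\phi|g(2\phi)|=kM$, i.e.\ again $\max_\theta|g'(\theta)|\le\tfrac k2 M$. Finally, from $f(\theta)=e^{ik\theta/2}g(\theta)$ we get $f'(\theta)=\tfrac{ik}{2}e^{ik\theta/2}g(\theta)+e^{ik\theta/2}g'(\theta)$, so $|f'(\theta)|\le\tfrac k2|g(\theta)|+|g'(\theta)|\le\tfrac k2 M+\tfrac k2 M=kM$, which is the assertion (the argument works verbatim for complex coefficients, since nothing above used that $Q$ be real).

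It remains to prove the trigonometric Bernstein inequality, and here I would invoke M.\ Riesz's interpolation formula: with $\theta_\nu=\tfrac{(2\nu-1)\pi}{2n}$ for $\nu=1,\dots,2n$, every trigonometric polynomial $T$ of degree $n$ satisfies the identity $T'(t)=\sum_{\nu=1}^{2n}\gamma_\nu\,T(t+\theta_\nu)$ with $\gamma_\nu=\dfrac{(-1)^{\nu-1}}{4n\sin^2(\theta_\nu/2)}$. This is a finite trigonometric identity: both sides are trigonometric polynomials of degree $n$ in $t$, and the functional $T\mapsto\sum_\nu\gamma_\nu T(\,\cdot\,+\theta_\nu)$ annihilates constants and reproduces $e^{\pm ijt}$ for $1\le j\le n$, which identifies it as the (Lagrange-type) representation of $T'$ through the $2n$ samples $T(t+\theta_\nu)$, the coefficients $\gamma_\nu$ being pinned down by the partial-fraction expansion of $1/\sin^2$. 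From the identity one reads off $\max_t|T'(t)|\le\bigl(\sum_\nu|\gamma_\nu|\bigr)\max_t|T(t)|$, so it suffices to check $\sum_\nu|\gamma_\nu|=n$; this I would do by applying the identity to $T_0(t)=\sin(nt)$ at $t=0$, where $T_0'(0)=n$, $\|T_0\|_\infty=1$, and $T_0(\theta_\nu)=\sin\!\bigl((\nu-\tfrac12)\pi\bigr)=(-1)^{\nu-1}$, so $\gamma_\nu T_0(\theta_\nu)=|\gamma_\nu|$ and hence $n=T_0'(0)=\sum_\nu|\gamma_\nu|$.

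The one genuinely nontrivial point — and the step I would be most careful with — is the verification of Riesz's interpolation identity with exactly those coefficients; the spectral shift by $e^{-ik\theta/2}$, the even/odd bookkeeping, and the evaluation of $\sum_\nu|\gamma_\nu|$ through the test polynomial $\sin(nt)$ are all routine once the identity is in hand. (An alternative route, which I would mention but not pursue, runs through the reversed polynomial $Q^{*}(z)=z^{k}\overline{Q(1/\overline z)}$ — the same operation relating $\Phi$ to $\Phi^{*}$ in \eqref{eq:szego} — using that for $|\lambda|>M$ the polynomial $Q(z)-\lambda z^{k}$ has all $k$ zeros in the open disk by Rouch\'e, so that $Q^{*}(z)-\overline\lambda$ has all its zeros outside the closed disk while retaining modulus $|Q(z)-\lambda z^{k}|$ on the circle, and then comparing the two by a Blaschke-product argument before letting $|\lambda|\downarrow M$; see e.g.\ the treatment in Rahman--Schmeisser.) Either way the proof is complete.
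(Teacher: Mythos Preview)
Your proof is correct; the reduction via the frequency shift $g(\theta)=e^{-ik\theta/2}Q(e^{i\theta})$ and the even/odd bookkeeping are clean, and the Riesz interpolation route to the trigonometric Bernstein inequality is one of the standard proofs. The paper, however, does not prove this statement at all: it simply records it as a classical result and cites \cite[Chapter 14]{RahmanSchmeisser}. So there is nothing to compare at the level of argument --- you have supplied a proof where the paper gives only a reference (indeed, both the Riesz-interpolation proof and the $Q^{*}$/Rouch\'e alternative you sketch at the end can be found in that source).
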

\noindent See \cite[Chapter 14]{RahmanSchmeisser}.

We also need a quantitative interpolation result for polynomials of a given degree.  The following is in some sense a generalization of \cite[Lemma 4.3]{CMN}.  Related inequalities have been published before, see especially \cite{RakhmanovShekhtman} and \cite[Theorem 8]{FrappierRahmanRuscheweyh}.
\begin{theorem}
  For any polynomial $Q$ of degree $k \geq 1,$ and any natural number $m \geq 2,$
  \[
    \max_{|z|=1} |Q(z)|^2 \leq \frac{m}{m-1} \cdot \max_{\omega : \omega^{2mk} = 1}|Q(\omega)|^2.
  \]
  Furthermore, if for any $b > 0$ we partition the $(2mk)$-th roots of unity into $\mathcal{N}$ and $\mathcal{F}$ so that $\mathcal{N}$ are all those roots of unity $\omega$ so that $|\omega-1| \leq \frac{2b}{k},$ then there is an absolute constant $C>0$ so that
  \[
    \begin{aligned}
      &\max_{\substack{|z-1|\leq \frac{b}{k},\\ |z|=1}} |Q(z)|^2 \leq \frac{m}{m-1}\cdot \max_{\omega \in \mathcal{N}} |Q(\omega)|^2 + \frac{C}{b(m-1)}\cdot\max_{\omega \in \mathcal{F}} |Q(\omega)|^2 \quad\text{and}\\
    &\min_{\substack{|z-1|\leq \frac{b}{k}, \\ |z|=1}} |Q(z)|^2 \geq \frac{m}{m-1}\cdot \min_{\omega \in \mathcal{N}} |Q(\omega)|^2 - \biggl(1+\frac{C}{b}\biggr)\frac{1}{(m-1)}\cdot\max_{\omega : \omega^{2mk}=1} |Q(\omega)|^2. \\
  \end{aligned}
  \]
  \label{thm:interpolation}
\end{theorem}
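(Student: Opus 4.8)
\textbf{Plan for the proof of Theorem \ref{thm:interpolation}.}

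The plan is to obtain all three inequalities from a single interpolation identity expressing $Q$ at an arbitrary point $z$ of the unit circle in terms of its values at the $(2mk)$-th roots of unity, and then extract the pointwise bounds by careful estimation of the interpolation kernel. First I would reduce to the case of interpolation at roots of unity: since $Q$ has degree $k$, the polynomial $Q(z)Q^*(z)$, where $Q^*(z) = z^k\overline{Q(1/\bar z)}$, has degree $2k$ and agrees with $|Q(z)|^2$ on $|z|=1$. Writing $N = 2mk$, the key identity is the discrete reproducing formula for polynomials of degree $\le N-1$: for any $z$ with $z^N \neq 1$,
\[
  P(z) = \frac{1 - z^N}{N}\sum_{\omega^N = 1} \frac{\omega\, P(\omega)}{z-\omega},
\]
applied to $P = Q\cdot Q^*$ (of degree $2k = N/m \le N-1$). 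Evaluating the modulus and using $|1-z^N| \le 2$ together with the fact that $|P(\omega)| = |Q(\omega)|^2$ on the circle, one gets $|Q(z)|^2 \le \frac{2}{N}\sum_{\omega^N=1}\frac{|Q(\omega)|^2}{|z-\omega|}$. The elementary sum $\frac1N\sum_{\omega^N=1}\frac1{|z-\omega|}$ can be bounded (for $|z|=1$) by a quantity of the form $\frac{1}{2} \cdot \frac{m}{m-1}$ after splitting off the nearest root; this is where the factor $\frac{m}{m-1}$ enters, and it yields the first (global) inequality.

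For the localized inequalities, I would refine the estimate of the kernel sum for $z$ in the short arc $|z-1|\le b/k$. Split $\sum_{\omega^N=1} \frac{1}{|z-\omega|}$ into the contribution of the near roots $\omega \in \mathcal{N}$ (those with $|\omega-1|\le 2b/k$) and the far roots $\omega\in\mathcal{F}$. For the near part, the crude bound $|Q(\omega)|^2 \le \max_{\mathcal{N}}|Q(\omega)|^2$ reproduces the $\frac{m}{m-1}$ factor as before. For the far part, the point is that when $z$ is within $b/k$ of $1$ and $\omega$ is a root of unity at angular distance $\ge 2b/k$ from $1$, one has $|z-\omega| \ge \tfrac12|1-\omega|$, and the sum $\frac1N\sum_{\omega\in\mathcal F}\frac{1}{|1-\omega|}$ is a harmonic-type sum over $\asymp N$ roots whose distances grow linearly; it is bounded by $\frac{C}{b}\cdot\frac{1}{m-1}$ for an absolute constant $C$ (the $\frac{1}{m-1}$ coming from the same normalization bookkeeping, the $\frac1b$ from the gap of size $b/k$). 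Combining gives the upper bound. The lower bound follows the same template but one writes $|Q(z)|^2 \ge \frac{2}{N}\cdot\frac{|Q(\omega_0)|^2}{|z-\omega_0|}$-type reverse estimates are not available directly, so instead I would use the reproducing identity in the form $Q(z)Q^*(z) = \sum_{\omega} \ell_\omega(z) Q(\omega)Q^*(\omega)$ with Lagrange weights $\ell_\omega(z) = \frac{(1-z^N)\omega}{N(z-\omega)}$, isolate the term from the nearest near-root, bound $|Q(z)|^2 = |Q(z)Q^*(z)|$ below by that term minus the sum of the rest, and estimate the remainder using the crude global bound $\max_{\omega^N=1}|Q(\omega)|^2$; the near-root terms other than the closest contribute a $\frac{1}{m-1}$ factor and the far ones a $\frac{C}{b(m-1)}$ factor, giving the stated $\bigl(1 + \frac Cb\bigr)\frac{1}{m-1}$.

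The main obstacle I anticipate is the bookkeeping that produces the precise constant $\frac{m}{m-1}$ rather than a weaker absolute multiple: this requires carefully summing $\frac1N\sum_{\omega^N=1, \omega\neq\omega_*}\frac{1}{|z-\omega|}$ where $\omega_*$ is the root nearest to $z$, and showing it is at most $\frac12\cdot\frac{1}{m-1}$ uniformly in $z$ on the circle. Concretely, parametrizing $z = e^{i\phi}$ and $\omega = e^{2\pi i j/N}$, one has $|z-\omega| = 2|\sin(\tfrac{\phi}{2} - \tfrac{\pi j}{N})|$, and the sum becomes $\frac{1}{N}\sum_{j\neq j_*} \frac{1}{2|\sin(\cdot)|}$, which must be compared against an integral $\frac{1}{2\pi}\int \frac{dx}{2|\sin x|}$ (divergent) with the divergence exactly absorbed by dropping the term $j_*$; a convexity/monotonicity argument on each half-period shows the discrete sum without $j_*$ is dominated by $\frac{N}{2m}$ times the reciprocal-sine sampled away from zero, giving the $\frac{1}{m-1}$. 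Once this lemma is in hand, all three inequalities of the theorem are immediate. I would also need the trivial observation that $\max_{|z|=1}|Q(z)|^2 = \max_{|z|=1}|Q(z)Q^*(z)|$ and that $|Q^*(\omega)| = |Q(\omega)|$ for $|\omega|=1$, which is where the degree-$k$ hypothesis is used.
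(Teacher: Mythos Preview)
Your plan has a genuine gap that makes the approach unworkable as stated. You propose to bound
\[
  |Q(z)|^2 \le \frac{2}{N}\sum_{\omega^N=1}\frac{|Q(\omega)|^2}{|z-\omega|}
\]
(with $N=2mk$) and then extract the factor $\frac{m}{m-1}$ from the kernel sum $\frac{2}{N}\sum_\omega \frac{1}{|z-\omega|}$. But this sum is \emph{not} bounded uniformly in $N$: with $z=e^{i\phi}$ and $\omega_j=e^{2\pi i j/N}$ one has $|z-\omega_j|=2|\sin(\tfrac{\phi}{2}-\tfrac{\pi j}{N})|$, and the resulting harmonic-type sum is of order $\log N$, not $O(1)$. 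Dropping the single nearest root $\omega_*$ removes a term of size $O(1)$ only; the remaining $\sum_{j\neq j_*}$ is still $\asymp\log N$. So no amount of bookkeeping will squeeze the constant $\frac{m}{m-1}$ (or any $k$-independent constant) out of the Lagrange kernel, because $\frac{1}{|z-\omega|}$ decays only like the reciprocal of the distance and the sum diverges logarithmically.

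The paper circumvents this by using a kernel with quadratic decay. It writes $|Q(z)|^2$, as a Laurent polynomial supported on $[-k,k]$, via
\[
  |Q(z)|^2=\sum_{\omega^{2mk}=1}|Q(\omega)|^2\,R(z\bar\omega),
  \qquad
  R(z)=\frac{kmF_{km}(z)-kF_k(z)}{k(m-1)(2mk)},
\]
where $F_m$ is the Fej\'er kernel. The point is twofold: first, $R(z)\le \frac{F_{km}(z)}{2k(m-1)}$ by positivity of $F_k$; second, and crucially, the Fej\'er kernel satisfies the exact summation identity $\sum_{\omega^{2mk}=1}F_{km}(z\bar\omega)=2mk$ for every $z$ on the circle (Lemma~\ref{lem:Fejersums}), which immediately gives the precise constant $\frac{m}{m-1}$. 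The localized bounds then follow by splitting the Fej\'er sum over $\mathcal N$ and $\mathcal F$ and using that $F_{km}(z\bar\omega)\ll \frac{1}{km|z-\omega|^2}$ on the far set, whose sum over $\mathcal F$ is $O(1/b)$ rather than divergent. Your Lagrange identity is correct, but the kernel it produces is simply too singular to deliver the theorem; replacing it with a Fej\'er-based reproducing kernel is the missing idea.
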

%\begin{remark}
%  The $2$ chosen to define $\mathcal{N}$ is arbitrary and could be replaced by any $r > 1$ at the cost of increasing $C$ to some larger absolute constant.
%\end{remark}
We give a proof of this fact.
For any $m \in \N$ let $F_m$ be the Fej\'er kernel, which for $|z|=1$ has the representation
\begin{equation}\label{eq:Fejer}
  F_m(z)
  =\frac{1}{m}\sum_{r=0}^{m-1}\sum_{s=-r}^r z^s
  =\frac{1}{m}
  \biggl[\sum_{s=0}^{m-1} z^s \biggr]
  \biggl[\sum_{s=0}^{m-1} z^{-s} \biggr]
  =\frac{1}{m}\frac{|1-z^m|^2}{|1-z|^2}.
\end{equation}
We will need the following identity. In what follows we use the shorthand notation $e(t)=e^{i2\pi t}$.
\begin{lemma}\label{lem:Fejersums}
  For all $m,r \in \N$ and all $t \in \R,$
  \[
    \sum_{j=1}^{rm}
    F_{m}(e(t+j/(rm))
    =rm.
  \]
\end{lemma}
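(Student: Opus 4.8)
The plan is to prove the identity $\sum_{j=1}^{rm} F_m(e(t+j/(rm))) = rm$ by expanding the Fej\'er kernel into exponentials and using the orthogonality of characters on a finite cyclic group. First I would use the Fourier-series representation of the Fej\'er kernel coming from the first equality in \eqref{eq:Fejer}, namely $F_m(z) = \sum_{|s| \le m-1} \bigl(1 - \tfrac{|s|}{m}\bigr) z^s$ for $|z|=1$. Substituting $z = e(t + j/(rm))$ and summing over $j = 1, \dots, rm$ gives
\[
  \sum_{j=1}^{rm} F_m(e(t+j/(rm)))
  = \sum_{|s| \le m-1} \Bigl(1 - \tfrac{|s|}{m}\Bigr) e(ts) \sum_{j=1}^{rm} e\bigl(\tfrac{js}{rm}\bigr).
\]

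The inner sum is a geometric sum of the $(rm)$-th roots of $e(s/(rm))^{rm} = e(s)=1$, i.e.\ $\sum_{j=1}^{rm} e(js/(rm))$, which equals $rm$ when $rm \mid s$ and $0$ otherwise. Since $|s| \le m - 1 < m \le rm$ for the terms in the sum, the only index with $rm \mid s$ is $s = 0$. Hence only the $s=0$ term survives, contributing $\bigl(1 - 0\bigr) e(0) \cdot rm = rm$, which is exactly the claimed identity.

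There is no real obstacle here; the only point requiring a moment's care is confirming that the geometric-sum vanishing genuinely kills every nonzero $s$ in range, which follows because the range $|s|\le m-1$ is strictly narrower than one full period $rm$ (as $r \ge 1$). One could alternatively present this as a discrete-time sampling statement: sampling $F_m$ at $rm$ equally spaced points and summing recovers $rm$ times the mean value (the constant Fourier coefficient) of $F_m$, which is $1$ since $F_m$ is a probability kernel; this is just the content of the character-orthogonality computation above. I would write it in the first, more explicit form to keep the argument self-contained.
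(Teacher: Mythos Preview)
Your proof is correct and takes a genuinely different route from the paper's. You use the Fourier expansion $F_m(z)=\sum_{|s|\le m-1}(1-|s|/m)z^s$ and the orthogonality relation $\sum_{j=1}^{rm} e(js/(rm))=rm\cdot\one[rm\mid s]$, which kills every term except $s=0$ since $|s|\le m-1<rm$. The paper instead works with the closed form $F_m(e(\cdot))=\tfrac{1}{m}\sin^2(\pi m\,\cdot)/\sin^2(\pi\,\cdot)$, invokes the classical partial-fraction identity $1/\sin^2 x=\sum_{k\in\Z}1/(x+k\pi)^2$, and then groups the $rm$ terms by residue class modulo $r$ to unfold and refold the sum. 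Your argument is shorter and avoids the continuity reduction to irrational $t$ that the paper needs (since the sine-ratio form is a priori $0/0$ at integer arguments, while your Fourier representation is polynomial in $z$ and valid everywhere on the circle). The paper's approach, on the other hand, makes the connection to the Mittag--Leffler expansion of $\csc^2$ explicit, which is perhaps why \cite{Hofbauer} is cited; but for the purpose at hand your character-orthogonality argument is the cleaner choice.
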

\noindent See \cite{Hofbauer} for a discussion of this.  We give a proof below:
\begin{proof}
  Observe that using \eqref{eq:Fejer} we can write
  \[
    F_{m}(e(t+j/(rm))
    =
    \frac{1}{m}
    \frac{ \sin({\pi t m}+\tfrac{\pi j}{r})^2}{\sin({\pi t}+\tfrac{\pi j}{rm})^2}.
  \]
  We use the well-known identity that for any $x \in \mathbb{C} \setminus \pi\Z$
  \begin{equation}\label{eq:sineresidue}
    \frac{1}{\sin(x)^2} = \sum_{k \in \Z} \frac{1}{(x + k\pi)^2}.
  \end{equation}
  By continuity, it suffices to establish the identity for irrational $t.$
  We have, by grouping the terms in the sum over $j$ according to their residue class $\ell$ modulo $r$
  \[
    \begin{aligned}
      \sum_{j=1}^{rm}
      F_{m}(e(t+j/(rm))
      &=
      \sum_{\ell=0}^{r-1}
      \sum_{p=1}^{m}
      \frac{1}{m}
      \frac{ \sin({\pi t m}+\tfrac{\pi \ell}{r})^2}{ \sin({\pi t}+\tfrac{\pi (\ell+rp)}{rm})^2 } 
      =
      \sum_{\ell=0}^{r-1}
      \sum_{p=1}^{m}
      \sum_{k \in \Z}
      \frac{1}{m}
      \frac{ \sin({\pi t m}+\tfrac{\pi \ell}{r})^2}{({\pi t}+\tfrac{\pi (\ell + rp + rmk)}{rm})^2} \\
      &=
      m
      \sum_{\ell=0}^{r-1}
      \frac{ \sin({\pi t m}+\tfrac{\pi \ell}{r})^2}{\sin(\pi tm + \tfrac{\pi \ell}{r})^2}
      =rm.
    \end{aligned}
  \]
  In the penultimate display, we have extracted a factor of $m$ and again applied \eqref{eq:sineresidue}.
\end{proof}

We can now give a proof of Theorem \ref{thm:interpolation}.
\begin{proof}

  Define for any $m \in \N$ with $m > 1$
  \[
    R(z) = \frac{kmF_{km}(z) - kF_{k}(z)}{k(m-1)(2mk)}.
  \]
  Then we can write
  \[
    R(z) = \frac{1}{2mk} \sum_{s=-km}^{km} \lambda_s z^s,
  \]
  where $\lambda_s = 1$ for $-k \leq s \leq k.$
  In particular, for any $0 \leq r < km$
  \[
    \sum_{ \omega:\omega^{2mk}=1}
    \omega^r R(z \bar{\omega})
    =
    \frac{1}{2mk} \sum_{s=-km}^{km}
    \lambda_s z^s
    \cdot
    \biggl[
      \sum_{ \omega:\omega^{2mk}=1}
      \omega^r \bar{\omega}^s
    \biggr]
    = \lambda_r z^r,
  \]
  and so it follows that for any polynomial $Q(z)$ of degree $k$ and $z$ on the unit circle,
  \begin{equation}
    %\begin{aligned}
      |Q(z)|^2
      =
      \sum_{ \omega:\omega^{2mk} = 1}
      |Q(\omega)|^2 R(z \bar{\omega}) 
      \leq
      \sum_{ \omega:\omega^{2mk} = 1}
      |Q(\omega)|^2 \frac{F_{km}(z\bar{\omega})}{2k(m-1)}
      \eqqcolon X(z),
    %\end{aligned}
    \label{eq:Qinterpolation}
  \end{equation}
  where the first equality follows as $|Q(z)|^2$ for $|z|=1$ can be represented as a Laurent polynomial with Fourier support contained in $[-k,k]$
  and the inequality follows from the positivity of the Fej\'er kernel.

  Using Lemma \ref{lem:Fejersums}, we have
  \[
    \sum_{ \omega:\omega^{2mk} = 1}
    \frac{F_{km}(z \bar \omega)}{2k(m-1)}
    =\frac{m}{m-1},
  \]
  and hence for all $|z|=1,$
  \[
    X(z) \leq \frac{m}{m-1}\cdot \max_{\omega : \omega^{2mk} = 1}|Q(\omega)|^2.
  \]
  If we further partition the roots of unity into $\mathcal{N}$ and $\mathcal{F}$ as in the statement of the theorem, we can bound $X(z)$ using \eqref{eq:Fejer} by
  \[
    X(z) \leq
    \frac{m}{m-1}\cdot \max_{\omega \in \mathcal{N}}|Q(\omega)|^2
    +
    \max_{\omega \in \mathcal{F}}|Q(\omega)|^2
    \cdot
    \sum_{\omega \in \mathcal{F}} \frac{2}{km(m-1)|1-z\bar{\omega}|^2}.
  \]
  If $z$ satisfies that $|z-1| \leq \frac{b}{k},$ then $|\omega - z| \geq \frac{b}{k}$ and there is therefore an absolute constant $C > 0$ so that
  \begin{equation}\label{eq:FB}
    \sum_{\omega \in \mathcal{F}} \frac{2}{km(m-1)|1-z\bar{\omega}|^2}
    \leq \frac{C}{b(m-1)},
  \end{equation}
  which completes the proof of the upper bound.

  For the lower bound, starting from \eqref{eq:Qinterpolation},
  \[
    \begin{aligned}
      |Q(z)|^2
      &=
      \sum_{ \omega:\omega^{2mk} = 1}
      |Q(\omega)|^2 R(z \bar{\omega}) \\
      &\geq
      \sum_{ \omega:\omega^{2mk} = 1}
      |Q(\omega)|^2 \frac{F_{km}(z\bar{\omega})}{2k(m-1)}
      -
      \max_{\omega:\omega^{2mk}=1} |Q(\omega)|^2
      \cdot
      \sum_{ \omega:\omega^{2mk} = 1}
      \frac{F_{k}(z\bar{\omega})}{2mk(m-1)}.
    \end{aligned}
  \]
  Using Lemma \ref{lem:Fejersums}, we conclude
  \[
      |Q(z)|^2
      \geq
      X(z)
      -
      \frac{1}{m-1}
      \cdot
      \max_{\omega:\omega^{2mk}=1} |Q(\omega)|^2.
  \]
  Furthermore
  \[
      X(z)
      \geq
      \min_{\omega \in \mathcal{N}} |Q(\omega)|^2
      \cdot
      \sum_{\omega \in \mathcal{N}}
      \frac{F_{km}(z\bar{\omega})}{2k(m-1)}
      \geq
      \min_{\omega \in \mathcal{N}} |Q(\omega)|^2
      \biggl(\frac{m}{m-1} - \frac{C}{b(m-1)}\biggr),
  \]
  using \eqref{eq:FB}.
\end{proof}
%\begin{lemma}\label{lem:Fejersums}
%  For all $m \in \N,$
%  \[
%    \sum_{j=1}^{2m}
%    F_{m}(e(j/(2m))
%    =2m.
%  \]
%  There is an absolute constant $C>0$ so that for all $d,m \in \N$
%  \[
%      \sum_{j=1}^{2dm}
%      F_{m}(e(j/(2dm))
%      \leq
%      m(1+Cd).
%  \]
%\end{lemma}
%\begin{proof}
%  For the case $d=1,$ we use an exact identity of \cite{Hofbauer} that shows
%  \[
%    \sum_{j=1}^{2m}
%    F_{m}(e(j/(2m))
%    =m+
%    \frac{1}{m}
%    \sum_{j=1}^{2m-1}
%    \frac{\one[j \in 2\Z + 1]}{\sin(\pi j/(2m))^2}
%    =2m.
%  \]
%\end{proof}

\section{Convergence of the derivative martingale}
\label{sec:mgle}

%We follow the argument of \cite{DRSV} to show convergence of the critical chaos measure.
We recall from \eqref{eq:upsilon}
\begin{equation*}%\label{eq:upsilon}
  \varphi_{k+1}(\theta)=\varphi_{k}(\theta) + 2\Re \{ \sigma \left( \log(1-\gamma_ke^{i\Psi_k(\theta)}) \right) \}
  ,\quad \varphi_0(\theta) = 0,
\end{equation*}
for $\sigma \in \left\{ 1, i \right\}.$

We also recall (from \eqref{eq:betagamma}) that $\gamma_j$ are independent, rotationally invariant in law, and have $|\gamma_j|^2$ distributed as $\Beta(1, \beta_j)$ where $\beta_j^2 = \tfrac{\beta}{2}(j+1)$.  Hence we have an explicit expression for the moment generating functions of $\varphi,$ given by (see \cite[Proposition 2.5]{CMN} or \cite[Lemma 2.3]{BHNY}).
\begin{lemma}\label{lem:mgf}
  For any $s,t \in \C$ with $\Re s \geq -1$
  \[
    \Exp[ e^{s \Re( \log(1-\gamma_j))+ t \Im( \log(1-\gamma_j))}]
    = \frac{\Gamma(1+\beta_j^2)\Gamma(1+s+\beta_j^2)}{\Gamma(1+\beta_j^2 + (s+it)/2)\Gamma(1+\beta_j^2 + (s-it)/2)}.
  \]
  For $s \geq 0$ and $t \in \R,$
  \[
    \Exp[ e^{s \Re( \log(1-\gamma_j))+ t \Im( \log(1-\gamma_j))}]
    \leq
    \exp\left(
    \frac{s^2+t^2}{2}\frac{1}{1+\beta(j+1)}
    \right).
  \]
\end{lemma}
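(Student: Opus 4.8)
The plan is to prove the exact identity first by the standard Beta--Gamma calculus, and then to extract the sub-Gaussian bound from it by a convexity estimate for $\log\Gamma$.

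For the identity, write $L:=\log(1-\gamma_j)$ using the principal branch, which is legitimate since $|\gamma_j|<1$ forces $1-\gamma_j$ into the right half-plane. Since $s\Re L+t\Im L=\tfrac{s-it}{2}L+\tfrac{s+it}{2}\overline L$, one has $e^{s\Re L+t\Im L}=(1-\gamma_j)^{a}\,\overline{(1-\gamma_j)}^{\,b}$ with $a=(s-it)/2$, $b=(s+it)/2$ (complex powers defined via the principal logarithm). Using $\gamma_j=\sqrt U\,e^{i\Theta}$ with $\Theta\sim\Unif([0,2\pi])$ and $U\sim\Beta(1,\beta_j^2)$ independent, I would condition on $U$, expand each factor by the binomial series $(1-z)^{-c}=\sum_{k\ge0}\tfrac{(c)_k}{k!}z^k$ (absolutely convergent since $|z|=\sqrt U<1$ a.s.), and integrate over $\Theta$: the cross terms vanish and one is left with $\sum_{k\ge0}\tfrac{(-a)_k(-b)_k}{(k!)^2}U^k={}_2F_1(-a,-b;1;U)$. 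Averaging this over $U$ against the density $\beta_j^2(1-u)^{\beta_j^2-1}$ on $(0,1)$ and integrating term by term using $\int_0^1u^k(1-u)^{\beta_j^2-1}\,du=k!/(\beta_j^2)_{k+1}$ collapses the average to ${}_2F_1(-a,-b;\,1+\beta_j^2;\,1)$; Gauss's evaluation ${}_2F_1(p,q;c;1)=\Gamma(c)\Gamma(c-p-q)/(\Gamma(c-p)\Gamma(c-q))$, valid because $\Re(c-p-q)=\beta_j^2+1+\Re s>0$ under the hypothesis $\Re s\ge-1$, then gives $\tfrac{\Gamma(1+\beta_j^2)\Gamma(1+s+\beta_j^2)}{\Gamma(1+\beta_j^2+(s+it)/2)\Gamma(1+\beta_j^2+(s-it)/2)}$ after substituting $a+b=s$. (Alternatively one may simply quote this from \cite[Lemma~2.3]{BHNY} or \cite[Proposition~2.5]{CMN}.) The interchanges of sums and integrals are routine once one observes $\sqrt U<1$ a.s.\ and that, for $\Re s>-1$, ${}_2F_1(-a,-b;1;u)$ stays bounded as $u\to1$.

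For the inequality, set $\alpha:=1+\beta_j^2$ and write $\psi=(\log\Gamma)'$, $\psi'=(\log\Gamma)''$. Inserting $\pm2\log\Gamma(\alpha+s/2)$ into the logarithm of the exact formula splits it as $G_1+G_2$, with $G_1=\log\Gamma(\alpha)+\log\Gamma(\alpha+s)-2\log\Gamma(\alpha+s/2)$ and $G_2=-\bigl[\log\Gamma(\alpha+s/2+it/2)+\log\Gamma(\alpha+s/2-it/2)-2\log\Gamma(\alpha+s/2)\bigr]$. Using $\psi(\bar z)=\overline{\psi(z)}$ one gets $G_2=2\int_0^{|t|/2}\Im\psi(\alpha+s/2+iv)\,dv$, and since $\Im\psi(\alpha+s/2+iv)=\sum_{n\ge0}v\bigl[(\alpha+s/2+n)^2+v^2\bigr]^{-1}$, the bound $\log(1+x)\le x$ gives $G_2\le\tfrac{t^2}{4}\psi'(\alpha+s/2)$. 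For $s\ge0$, $G_1=\int_0^{s/2}\!\!\int_0^{s/2}\psi'(\alpha+u+w)\,dw\,du\le\tfrac{s^2}{4}\psi'(\alpha)$ because $\psi'>0$ is decreasing on $(0,\infty)$; since also $\psi'(\alpha+s/2)\le\psi'(\alpha)$, this yields $\log(\text{LHS})\le\tfrac{s^2+t^2}{4}\,\psi'(1+\beta_j^2)$. Finally the telescoping estimate $\psi'(1+\beta_j^2)=\sum_{m\ge1}(\beta_j^2+m)^{-2}\le\sum_{m\ge1}\bigl(\tfrac1{\beta_j^2+m-1/2}-\tfrac1{\beta_j^2+m+1/2}\bigr)=(\beta_j^2+\tfrac12)^{-1}=\tfrac{2}{1+\beta(j+1)}$ gives the claimed bound, with the constant coming out exactly.

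There is no serious obstacle here; the only genuine care is in the first part, where one must justify the termwise integrations and the boundary behaviour of the hypergeometric function at $u=1$ (which is precisely why the hypothesis is phrased as $\Re s\ge-1$). If desired, that entire computation can be cited from \cite{BHNY} or \cite{CMN}, after which the whole substance of the lemma is the three-line $\log\Gamma$ estimate of the last paragraph.
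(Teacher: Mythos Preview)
Your proof is correct. The paper itself does not prove this lemma: it simply states it with the parenthetical citation to \cite[Proposition~2.5]{CMN} and \cite[Lemma~2.3]{BHNY} preceding the statement, and moves on. So there is no ``paper's own proof'' to compare against; you have supplied what the paper chose to outsource.

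Your argument for the identity is the standard route (binomial expansion, angular averaging to collapse the double sum, Beta integral, Gauss summation), and your remark that it can simply be cited is exactly what the paper does. Your proof of the sub-Gaussian inequality is clean and sharper than a mere citation: the split $G_1+G_2$ via $\pm 2\log\Gamma(\alpha+s/2)$, the bound $G_2\le \tfrac{t^2}{4}\psi'(\alpha+s/2)$ from $\log(1+x)\le x$, the second-difference representation $G_1=\int_0^{s/2}\!\int_0^{s/2}\psi'(\alpha+u+w)\,dw\,du$ together with monotonicity of $\psi'$, and the telescoping estimate $\psi'(1+\beta_j^2)\le 2/(1+\beta(j+1))$ all check out and deliver the stated constant exactly. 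One minor point: your justification of termwise integration/boundary behaviour is phrased for $\Re s>-1$; at the endpoint $\Re s=-1$ the ${}_2F_1$ may fail to be bounded at $u=1$, but the weighted integral against $(1-u)^{\beta_j^2-1}$ still converges since $\beta_j^2>0$, so the identity extends (or one can take it directly from the cited references).
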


We define for any $j\in \N$ and any $|\sigma|=1,$ $H^{(\sigma)}_j(s) = H_j(s) \coloneqq \log \Exp[ e^{2s\Re(\sigma\log(1-\gamma_{j-1}))}].$ Define
\[
  \mathscr{M}_j(\theta,s) \coloneqq e^{ s \varphi_j(\theta) - \sum_{k=1}^j H_k(s)},
\]
which is a martingale.  Set $s_\beta \coloneqq \sqrt{\tfrac{\beta}{2}}$ and define
\[
  \widehat{\mathscr{D}}_j(\theta) \coloneqq -\partial_s  \mathscr{M}_j(\theta,s) \vert_{s=s_\beta}
  =e^{ s_\beta \varphi_j(\theta) - \sum_{k=1}^j H_k(s_\beta)}
  \biggl(\sum_{k=1}^j H_k'(s_\beta) - \varphi_j(\theta)\biggr),
\]
which is also a martingale.  Define
\begin{equation}
  \widehat{\mathscr{B}}_j
  \coloneqq
  \frac{1}{2\pi}\int_0^{2\pi} \widehat{\mathscr{D}}_j(\theta) d\theta.
  \label{eq:Bmart}
\end{equation}
We need an elementary computation of the asymptotic behavior of the sums of $H_k.$
\begin{lemma}\label{lem:gmc_hksum}
  The following limits exist and are finite
  \[
    \lim_{j\to \infty}
    \sum_{k=1}^j H_k(s_\beta)
    -\log j
    = \mathfrak{g}_\beta,
    \quad
    \text{and}
    \quad
    \lim_{j\to \infty}
    \sum_{k=1}^j H_k'(s_\beta)
    - \sqrt{\frac{8}{\beta}}\log j
    = \mathfrak{h}_\beta.
  \]
\end{lemma}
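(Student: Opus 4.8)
The plan is to establish both limits by showing that the summands decay like $1/k$ with a sufficiently precise error term, so that the partial sums differ from $\log j$ (resp.\ $\sqrt{8/\beta}\log j$) by a convergent series. The starting point is the exact moment generating function identity in Lemma \ref{lem:mgf}, specialized to the relevant linear combination of $\Re \log(1-\gamma_k)$ and $\Im \log(1-\gamma_k)$ determined by $\sigma$. Writing $H_k^{(\sigma)}(s) = \log \Exp[e^{2s\Re(\sigma \log(1-\gamma_k))}]$, the case $\sigma = 1$ gives directly
\[
  H_k(s) = \log \frac{\Gamma(1+\beta_k^2)\Gamma(1+2s+\beta_k^2)}{\Gamma(1+s+\beta_k^2)^2},
\]
while the case $\sigma = i$ produces $\Gamma(1+\beta_k^2+is)\Gamma(1+\beta_k^2-is)$ in the denominator; both are handled in the same way. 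Recall $\beta_k^2 = \tfrac{\beta}{2}(k+1) \to \infty$.

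The first step is an asymptotic expansion of $H_k(s_\beta)$ as $k \to \infty$. Using the expansion $\log \Gamma(x+a) - \log\Gamma(x+b) = (a-b)\log x + \tfrac{(a-b)(a+b-1)}{2x} + O(x^{-2})$ for large $x$ (a consequence of Stirling's formula), applied with $x = \beta_k^2$, one finds that the leading $\log \beta_k^2$ terms cancel and
\[
  H_k(s_\beta) = \frac{c_\beta}{\beta_k^2} + O(\beta_k^{-4}) = \frac{1}{k+1} + O(k^{-2}),
\]
where the constant multiplying $1/\beta_k^2$ works out to $\tfrac{\beta}{2}$ precisely when $s = s_\beta = \sqrt{\beta/2}$; this is the choice that makes the derivative martingale critical. (Concretely, for $\sigma=1$ the second-order Stirling term contributes $\tfrac{1}{2x}\bigl[(2s)(2s+2\beta_k^2-1) - 2s(s+2\beta_k^2-1)\bigr]|_{\beta_k^2 = x} \cdot$ the $x^{-1}$ coefficient, which simplifies to $2s^2/x$; setting $2s^2 = \beta = \tfrac{2}{k+1}\beta_k^2$ gives $1/(k+1)$.) Since $\sum_{k=1}^j \tfrac{1}{k+1} = \log j + O(1)$ converges after subtracting $\log j$, and the $O(k^{-2})$ remainder is summable, $\sum_{k=1}^j H_k(s_\beta) - \log j$ converges to a finite limit $\mathfrak{g}_\beta$.

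For the second limit, differentiate the $\Gamma$-function expression for $H_k(s)$ in $s$ and evaluate at $s_\beta$; this yields $H_k'(s_\beta)$ as a combination of digamma functions $\psi(1+a+\beta_k^2)$, which by the digamma asymptotics $\psi(x+a) = \log x + \tfrac{a - 1/2}{x} + O(x^{-2})$ again has all $\log \beta_k^2$ terms cancel, leaving
\[
  H_k'(s_\beta) = \frac{d_\beta}{\beta_k^2} + O(k^{-2}) = \frac{\sqrt{8/\beta}}{k+1} + O(k^{-2}),
\]
where the constant $d_\beta$ must equal $\tfrac{\beta}{2}\sqrt{8/\beta} = \sqrt{2\beta}$ for consistency with the fact that $\varphi_k$ has diffusivity $\tfrac{4}{\beta}$ per unit harmonic time and the centered process must decay at rate $\sqrt{8/\beta}$ per unit $\log$-time — indeed $H_k'(s_\beta)$ is, up to $o(1/k)$, the per-step drift of $\varphi_k$ under the $s_\beta$-tilt. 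Summing and subtracting $\sqrt{8/\beta}\log j$ gives convergence to a finite $\mathfrak{h}_\beta$.

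The only mild obstacle is bookkeeping: one must verify that the second-order Stirling/digamma coefficients really do conspire to give exactly $1/(k+1)$ and $\sqrt{8/\beta}/(k+1)$ at $s = s_\beta$ rather than merely $\Theta(1/k)$ — if the coefficient were not exactly these values the partial sums would diverge. This is a routine but essential check, and it is where the specific critical parameter $s_\beta = \sqrt{\beta/2}$ enters. Everything else (summability of remainders, termwise differentiation justified by local uniform convergence of the $\Gamma$-ratios in a complex neighborhood of $s_\beta$) is standard. An alternative, slightly cleaner route avoiding explicit constant-chasing is to note that $\mathscr{M}_j(\theta, s)$ is a martingale with $\Exp \mathscr{M}_j(\theta,s) = 1$, so $\sum_{k=1}^j H_k(s) = \log \Exp[e^{s\varphi_j(\theta)}]$, and then invoke the Gaussian approximation $\varphi_j(\theta) \approx \sqrt{\tfrac{4}{\beta}}\mathfrak{Z}_{H_j}(\theta)$ from Proposition \ref{prop:mgapproximation} together with tightness of the error: since $\mathfrak{Z}_{H_j}$ is Brownian with $H_j = \log j + \gamma + o(1)$, one reads off $\log \Exp[e^{s_\beta \varphi_j}] = \tfrac{s_\beta^2}{2}\cdot\tfrac{4}{\beta} H_j + o(1) = H_j + o(1) = \log j + O(1)$, giving $\mathfrak{g}_\beta$ directly, and differentiating in $s$ under the expectation gives $\mathfrak{h}_\beta$ analogously; however making the $o(1)$ control of moment generating functions rigorous requires uniform integrability, so I would present the direct $\Gamma$-function computation as the main argument.
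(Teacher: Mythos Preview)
Your approach is correct and essentially identical to the paper's: both use the Stirling/Gamma-ratio expansion to show $H_k(s) = s^2/\beta_k^2 + O(\beta_k^{-4})$ (uniformly on compacts in $s$), whence $H_k(s_\beta) = \tfrac{1}{k+1} + O(k^{-2})$ and, after differentiating, $H_k'(s_\beta) = \tfrac{2s_\beta}{\beta_k^2} + O(\beta_k^{-4}) = \sqrt{8/\beta}\,\tfrac{1}{k+1} + O(k^{-2})$. One small slip in your parenthetical: the bracketed expression $(2s)(2s+2\beta_k^2-1) - 2s(s+2\beta_k^2-1)$ equals $2s^2$, so after dividing by $2x$ the $x^{-1}$ coefficient is $s^2/x$, not $2s^2/x$; with $x=\beta_k^2 = \tfrac{\beta}{2}(k+1)$ and $s_\beta^2 = \beta/2$ this gives exactly $\tfrac{1}{k+1}$, resolving the ``only mild obstacle'' you flagged. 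The paper's presentation is marginally cleaner in that it first records the general-$s$ formula $H_k(s) = s^2/\beta_k^2 + O(\beta_k^{-4})$ and only then specializes and differentiates, which makes the derivative step transparent and avoids separate digamma bookkeeping.
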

\begin{proof}
  We recall the ratio asymptotic for the $\Gamma$ function (see \cite[5.11]{DLMF})
  \[
    \frac{\Gamma(\beta_k^2 + x)}
    {\Gamma(\beta_k^2+y)}
    =\beta_k^{2(x-y)}\left( 1+ \frac{\tfrac12(x-y)(x+y-1)}{\beta_k^2} + O(\beta_k^{-4}) \right).
  \]
  Then for any $s,t \in \C,$
  \[
    \Exp[ e^{s \Re( \log(1-\gamma_k))+ t \Im( \log(1-\gamma_k))}]
    =
    1
    + \frac{s^2+t^2}{4\beta_k^2}
    + O(\beta_k^{-4}),
  \]
  and the asymptotic can be differentiated on both sides with respect to $s$ and $t$ as well.
  Thus
  \[
    H_k(s) = \frac{s^2}{\beta_k^2}
    + O(\beta_k^{-4}).
  \]
  For $H_k(s_\beta)$ we therefore have
  \[
    H_k(s_\beta)
    = \frac{1}{k+1} + O(k^{-2}),
  \]
  which leads directly to the claimed asymptotic.
  For the derivative, we have that
  \[
    H_k'(s_\beta) = \frac{2s_\beta}{\beta_k^2} + O(\beta_{k}^{-4})
    = \sqrt{\frac{8}{\beta}} \frac{1}{k+1}+ O(\beta_{k}^{-4}).
  \]
\end{proof}
We observe that the field $\{\sqrt{\frac{8}{\beta}}\log j -  \varphi_j(\theta)\}$ is rarely very negative, and is in fact almost surely positive for all $j$ sufficiently large (but random).
\begin{lemma}\label{lem:gmc_maxbnd}
  \[
    \inf\biggl\{ \sqrt{\frac{8}{\beta}}(\log j - \tfrac{1}{8}\log\log j) -  \varphi_j(\theta) : j \in 2^{\N}, \theta \in [0,2\pi]\biggr\} > -\infty \quad \As
  \]
\end{lemma}
\begin{proof}
  We use Proposition \cite[Propositions 3.1, 4.5]{CMN}, due to which
  \begin{equation}\label{eq:upsilongmc}
    \sup\biggl\{
      \sup_{\theta \in [0,2\pi]} \varphi_j(\theta)
      -
      \sup_{0 \leq k \leq 2j} G_j( \tfrac{\pi k}{j})
      : j \in \N
    \biggr\}
    < \infty \As
  \end{equation}
%  It is now straightforward to show that for any $j$ and any $t \geq 1$
  Using \cite[(4.6) and the display following with $C=t$ ]{CMN} for any $j$ and any $t \geq 1$ (we take $t = (\tfrac12+\epsilon)\log\log j$),
  \[
    \Pr
    [\sup_{0 \leq k \leq 2j} G_j( \tfrac{\pi k}{j})
      > \sqrt{\tfrac{8}{\beta}}( \log j - \tfrac34 \log\log j + t)
    ]
    \leq C_\beta (1+t)^3e^{-2t}
  \]
  for some constant $C_\beta>0.$  By Borel--Cantelli applied to the sequence $j \in 2^\N,$
  we have for any $\epsilon > 0$ and for all such $j\in 2^\N$ sufficiently large
  \[
    \sup_{0 \leq k \leq 2j} G_j( \tfrac{\pi k}{j})
    \leq \sqrt{\tfrac{8}{\beta}}( \log j - (\tfrac14-\epsilon) \log\log j ).
  \]
\end{proof}

With the control provided by the proof of Lemma \ref{lem:gmc_maxbnd} and Lemma \ref{lem:goodstuff1}, we may work on an event $\mathcal{E}_\kappa$
\[
  \mathcal{E}_\kappa
  \coloneqq
  \biggl\{
    \sup\{
      \max\{
        |G_j(\theta) - \varphi_j(\theta)|,
        \varphi_j(\theta) - \sqrt{\tfrac{8}{\beta}}(\log j-\tfrac 18 \log\log j)
      \} : j \in 2^{\N}, \theta \in [0,2\pi]
    \} \leq  \sqrt{\tfrac{8}{\beta}}\kappa
  \biggr\}.
\]
On the event $\mathcal{E}_\kappa$ we can use Girsanov and the ballot theorem for the Gaussian random walk $j \mapsto G_j(\theta)$ to conclude for any $\delta >0$ any $\theta \in [0,2\pi],$
\begin{equation}\label{eq:properballot}
  \Pr[ \mathcal{E}_\kappa, \log j + \kappa - \sqrt{\tfrac{\beta}{8}} \varphi_j \in
    %[2^{k-1},2^k]
    [{k-1},k]
  ] \leq
  \begin{cases}
    \frac{C_{\beta,\kappa,\delta}k}{(\log j)^{3/2}}{\exp\left( -\frac{(\log j - k)^2}{\log j} \right)},
    & k \leq (1-\delta)\log j, \\
    \frac{C_{\beta,\kappa,\delta}}{(\log j)^{1/2}}{\exp\left( -\frac{(\log j - k)^2}{\log j} \right)},
    & k > (1-\delta)\log j.
  \end{cases}
\end{equation}
%	      These follow
%	      For larger $k$, the ballot effect (being the polylogarithmic prefactor) is lost, but it is not necessary to gain the polynomial factor, as the trivial Gaussian decay factor suffices for any estimate we need (and moreover once, $k \gg \log j,$ the trivial Gaussian density estimate is the truth).

We begin with the observation that the improperly normalized mass tends almost surely to $0$ at the critical $s=s_\beta.$
\begin{lemma}\label{lem:Mj0}
  For
  any $\sigma \in \left\{ 1, i \right\}$
  and any $\beta > 0,$
  \[
    Z_j \coloneqq \frac{1}{2\pi}\int_0^{2\pi} \mathscr{M}_j(\theta,s_\beta) d\theta
    \Asto[j] 0.
  \]
  Furthermore
  \(
  \{Z_j \sqrt{\log j} : j \in \N\}
  \)
  is tight, and for any $\epsilon > 0,$ there is a compact $K\subset (0,\infty)$
  so that if
  \[
  \chi(\theta) = \one[{ (\sqrt{\tfrac{8}{\beta}} \log j -  \varphi_j(\theta))/\sqrt{\log j} \not\in K }]
  \]
  then for any $j \in \N$
  \begin{equation*}
    \Pr
    \biggl(\int_0^{2\pi}
    \mathscr{M}_j(\theta,s_\beta) \bigl|\sqrt{\tfrac{8}{\beta}} \log j -  \varphi_j(\theta) \bigr| \chi(\theta) d\theta > \epsilon
    \biggr)
    <\epsilon.
  \end{equation*}
\end{lemma}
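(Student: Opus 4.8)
The statement has three parts: (i) almost sure convergence $Z_j \to 0$; (ii) tightness of $\{Z_j\sqrt{\log j}\}$; and (iii) the uniform integrability-type tail bound \eqref{eq:Btight}. The natural framework is the comparison between the critical martingale $\mathscr{M}_j(\theta,s_\beta)$ and the marginally Gaussian random walk $G_j(\theta)$, together with barrier (ballot) estimates for Gaussian random walks, exactly as prepared in Lemmas~\ref{lem:gmc_hksum}, \ref{lem:gmc_maxbnd} and the ballot bound \eqref{eq:properballot}. The key heuristic is that $\mathscr{M}_j(\theta,s_\beta) = e^{s_\beta\varphi_j(\theta) - \sum_{k\le j} H_k(s_\beta)}$, and by Lemma~\ref{lem:gmc_hksum} the normalization is $\sim e^{-\log j} = 1/j$ up to a constant, so $Z_j \asymp \frac1{2\pi j}\int_0^{2\pi} e^{s_\beta \varphi_j(\theta)}\,d\theta$; on the good event $\mathcal{E}_\kappa$ the integrand is controlled by the height of the Gaussian walk, which at the critical exponent produces a mass decaying like $1/\sqrt{\log j}$ --- this is the classical ``critical GMC vanishes'' phenomenon.

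\textbf{Step 1: first-moment decomposition on good events.} I would fix $\kappa$ and work on $\mathcal{E}_\kappa$ from Lemma~\ref{lem:gmc_maxbnd}, which has probability tending to $1$. On $\mathcal{E}_\kappa$, slice the integral $\int_0^{2\pi}\mathscr{M}_j(\theta,s_\beta)\,d\theta$ according to the value of $w_j(\theta) := \log j + \kappa - \sqrt{\beta/8}\,\varphi_j(\theta)$, i.e.\ according to the dyadic band $w_j(\theta)\in[k-1,k]$. On such a band, $\mathscr{M}_j(\theta,s_\beta)\asymp j^{-1}e^{s_\beta\varphi_j(\theta)}\asymp j^{-1} e^{\sqrt2(\log j - w_j(\theta))}\asymp e^{-\sqrt 2 w_j(\theta)}$ (using $s_\beta\sqrt{8/\beta}=2$ and $s_\beta\cdot\sqrt{\beta/8}^{-1}\cdot$ bookkeeping, plus Lemma~\ref{lem:gmc_hksum} for the normalization constant). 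Combining with the ballot estimate \eqref{eq:properballot}, which bounds the expected Lebesgue measure of $\{\theta : w_j(\theta)\in[k-1,k]\}$ by $C k(\log j)^{-3/2}\exp(-(\log j - k)^2/\log j)$ for $k\le(1-\delta)\log j$ (and a variant near $k\approx \log j$), I get
\[
  \Exp\Bigl[\one_{\mathcal{E}_\kappa}\!\!\int_{\{w_j\in[k-1,k]\}}\!\!\!\mathscr{M}_j(\theta,s_\beta)\,d\theta\Bigr]
  \le C_{\beta,\kappa}\,\frac{k\,e^{-\sqrt2 k}}{(\log j)^{3/2}}e^{-(\log j-k)^2/\log j}.
\]
Summing over $k$ from $1$ to $O(\log j)$ gives a bound of order $(\log j)^{-1/2}$, which establishes tightness of $Z_j\sqrt{\log j}$ (part (ii)) and, once multiplied by the extra factor $|\sqrt{8/\beta}\log j - \varphi_j(\theta)|\asymp w_j(\theta)/\sqrt{\beta}$, the analogous bound with that weight; restricting $k$ to lie outside a compact window in units of $\sqrt{\log j}$ --- which is exactly what the indicator $\chi$ encodes --- makes the corresponding partial sum arbitrarily small, uniformly in $j$, giving part (iii) by Markov's inequality.

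\textbf{Step 2: almost sure convergence $Z_j\to 0$.} Here the plan is to upgrade the first-moment estimate. One route: $Z_j$ is \emph{not} a martingale in $j$ directly, but along the dyadic subsequence $j=2^n$ the excerpt's framing (and the paragraph after Theorem~\ref{thm:Bj}) indicates that $\widehat{\mathscr{B}}_{2^n}$ in \eqref{eq:Bmart} \emph{is} a martingale, and $Z_j$ is comparable to a positive martingale plus a manageable error. A positive martingale converges a.s.; to see the limit is $0$, combine the $L^1$ bound from Step~1 ($\Exp[Z_j\one_{\mathcal{E}_\kappa}]\to 0$) with Fatou along $j=2^n$ and then interpolate between consecutive dyadic scales using monotonicity/continuity of the integrand (the increments $\varphi_{2^{n+1}}-\varphi_{2^n}$ are controlled on $\mathscr{G}$-type events). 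Since $\Pr(\mathcal{E}_\kappa)\to1$ as $\kappa\to\infty$ and the bound is uniform, a diagonal argument removes the restriction to $\mathcal{E}_\kappa$. \textbf{The main obstacle} I anticipate is precisely this last step: passing from the clean $L^1/$first-moment decay to the genuine \emph{almost sure} statement and controlling the excursion of $Z_j$ \emph{between} dyadic times, since $Z_j$ itself is not a supermartingale; this requires either a careful maximal inequality for the martingale $\widehat{\mathscr{B}}_{2^n}$ together with a deterministic interpolation bound on $\theta\mapsto\varphi_k(\theta)$ across scales $k\in[2^n,2^{n+1}]$ (for which one can borrow the a priori regularity from Bernstein's inequality, Theorem~\ref{thm:Bernstein}, in the $\sigma=1$ case and Pr\"ufer monotonicity, Lemma~\ref{lem:psikmonotone}, in the $\sigma=i$ case), or a slightly stronger second-moment input on the good event to run Borel--Cantelli directly. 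I would pursue the martingale-plus-interpolation route as the cleanest.
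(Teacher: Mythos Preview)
Your Step~1 is essentially the paper's computation: restrict to $\mathcal{E}_\kappa$, slice according to the level of $w_j(\theta)$, apply the ballot bound \eqref{eq:properballot}, and sum. This yields the tightness of $Z_j\sqrt{\log j}$ and the tail estimate (iii) exactly as the paper does.

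The problem is in Step~2. You assert that ``$Z_j$ is not a martingale in $j$ directly'' and then build an elaborate workaround via $\widehat{\mathscr{B}}_{2^n}$ and dyadic interpolation. This is a misreading: you have conflated $Z_j = \tfrac{1}{2\pi}\int \mathscr{M}_j(\theta,s_\beta)\,d\theta$ with $\mathscr{B}_j = \int \mathscr{D}_j(\theta)\,d\theta$. It is $\mathscr{B}_j$ (the \emph{derivative} martingale mass, with the extra linear factor) that fails to be a martingale --- that is what the remark after Theorem~\ref{thm:Bj} refers to. By contrast, $\mathscr{M}_j(\theta,s_\beta)$ is an exponential martingale for each fixed $\theta$ by the very definition of $H_k(s_\beta)$, and hence $Z_j$ \emph{is} a nonnegative martingale in $j$.

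Once you see this, part (i) is immediate and the ``main obstacle'' you anticipate evaporates: a nonnegative martingale converges almost surely to some limit $Z_\infty$; your Step~1 shows $Z_j \to 0$ in probability (indeed $Z_j\sqrt{\log j}$ is tight), so along a subsequence $Z_j \to 0$ almost surely, forcing $Z_\infty = 0$. No dyadic interpolation, no Bernstein inequality, no second-moment upgrade is needed. This is exactly the two-line argument the paper gives.
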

\begin{proof}
  The process $Z_j$ is a positive martingale and so it converges almost surely.
  After establishing  the claimed tightness, it follows that $Z_j\Prto[j]0.$
%$Z_{2^j} \Prto[j] 0.$ 
 Hence along some subsequence, it converges almost surely to $0.$ The almost sure convergence of $Z_j$ then completes the proof of the first point.
%  As the sequence $\left\{ Z_j : j \in 2^\N \right\}$ converges almost surely, it follows that the full sequence converges almost surely to $0.$

  The remaining statements now follow by taking expectations of $Z_j$ on the event $\mathcal{E}_\kappa.$  In particular using \eqref{eq:properballot}
  we have (with $\log j - \sqrt{\tfrac{8}{\beta}} \varphi_j = {u}{\sqrt{\log j}}$) and $t=(1-\delta)\sqrt{\log j}$
                %We first observe that we can bound $\mathscr{M}_j$ on the event $\mathcal{E}_\kappa$ by
  \[
    \begin{aligned}
      \Exp \biggl(
      \one[\mathcal{E}_\kappa]
      \mathscr{M}_j(\theta, s_\beta)
      \biggr)
      &\leq
      C_{\beta,\kappa}
      \int\limits_{0}^t
      \frac{u}{\sqrt{\log j}}
      \exp(-2u\sqrt{\log j} + \log j)
      \exp\left( -\frac{(\log j - u\sqrt{\log j})^2}{\log j} \right)
      du\\
      &+
      C_{\beta,\kappa}
      \int\limits_{t}^{\infty}
      \exp(-2u\sqrt{\log j} + \log j)
      \exp\left( -\frac{(\log j - u\sqrt{\log j})^2}{\log j} \right)
      du
    \end{aligned}.
  \]
  This simplifies to
  \[
    \Exp \biggl(
    \one[\mathcal{E}_\kappa]
    \mathscr{M}_j(\theta, s_\beta)
    \biggr)
    \leq
    C_{\beta,\kappa}
    \int\limits_{0}^t
    \frac{u}{\sqrt{\log j}}
    \exp(-u^2)
    du
    +O( (\log j)^{(1-\delta)^2}).
  \]
  In particular, we conclude that
  \[
    \biggl\{
      \one\{\mathcal{E}_\kappa\} \sqrt{\log j}\int_0^{2\pi} \mathscr{M}_j(\theta,s_\beta)\,d\theta : j \in \N
    \biggr\}
  \]
  is tight, and as $\cup_{\kappa \in \N} \mathcal{E}_\kappa$ has probability $1$, the tightness without the indicator holds.

  Essentially the same computation shows the claimed estimates for the final display of the lemma.  With $K = [\eta/2, 2\eta^{-1}],$ for all $j$ sufficiently large
  \[
    \Exp \biggl(
    \one[\mathcal{E}_\kappa]
    \mathscr{M}_j(\theta, s_\beta)
    |\log j - \sqrt{\tfrac{8}{\beta}}\varphi_j(\theta)|
    \chi(\theta)
    \biggr)
    \leq
    C_{\beta,\kappa}
    \biggl(
    \int\limits_{0}^\eta
    +
    \int\limits_{\eta^{-1}}^\infty
    \biggr)
    u^2
    \exp(-u^2)
    du.
  \]
  This may be made as small as desired by picking $\eta$ small.
\end{proof}
\noindent We will use this convergence to compare $\widehat{\mathscr{B}}_j$ and ${\mathscr{B}_j},$ and  $\widehat{\mathscr{D}}_j$ and ${\mathscr{D}_j}$.
 Before doing this comparison, we will show the convergence of $\widehat{\mathscr{B}}_j$ and of $\int \widehat{\mathscr{D}}_j (\theta) f(\theta) d\theta$ for positive bounded test functions $f$.  Similar ideas appear in \cite{DRSV}.
\iffalse
  Define the positive and negative parts
  \[
    \mathscr{D}^{\pm}_j(\theta) \coloneqq e^{ s_\beta \varphi_j(\theta) - \sum_{k=1}^j H_k(s_\beta)}
    \biggl(\sum_{k=1}^j H_k'(s_\beta) - \varphi_j(\theta)\biggr)_{\pm},
  \]
  and define
  \[
    \widehat{\mathscr{B}}_{2^{\ell}}^{\pm}
    \coloneqq
    \frac{1}{2\pi}
    \int_0^{2\pi}
    \mathscr{D}^{\pm}_{2^\ell}(\theta)
    d\theta.
  \]
\fi
%\todo{1. Fix typo $\widehat B$. 2. Cite Sheffield-Duplantier}
%\todo[inline]{Modify statement to measure statement by adding test function}
\begin{lemma}\label{lem:Bhat}
  There is an almost surely nonnegative finite random variable $\widehat{\mathscr{B}}_\infty$ and nonnegative measure $\widehat{\mathscr{D}}_\infty$ so that
for any bounded positive deterministic test function $f$,
  \[
    \widehat{\mathscr{B}}_{2^\ell}
    \Asto[\ell]
    \widehat{\mathscr{B}}_\infty,\qquad   \int_0^{2\pi} \widehat{\mathscr{D}}_{2^\ell}(\theta) f(\theta) d\theta
    \Asto[\ell]
    \int_0^{2\pi} \widehat{\mathscr{D}}_\infty(\theta) f(\theta) d\theta.
  \]
\end{lemma}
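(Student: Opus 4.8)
The plan is to prove the almost sure convergence of $\widehat{\mathscr{B}}_{2^\ell}$ (and of the smoothed measures) by the classical \emph{derivative martingale} trick: decompose $\widehat{\mathscr{D}}_j = \widehat{\mathscr{D}}_j^+ - \widehat{\mathscr{D}}_j^-$ into positive and negative parts according to the sign of $\bigl(\sum_{k\le j} H_k'(s_\beta) - \varphi_j(\theta)\bigr)$, and show that the negative part $\widehat{\mathscr{B}}_j^- := \frac{1}{2\pi}\int_0^{2\pi}\widehat{\mathscr{D}}_j^-(\theta)\,d\theta$ converges to $0$ almost surely. Granting this, $\widehat{\mathscr{B}}_j = \widehat{\mathscr{B}}_j^+ - \widehat{\mathscr{B}}_j^-$ where $\widehat{\mathscr{B}}_j^+ \ge 0$; since $\widehat{\mathscr{B}}_j$ is a martingale, $\widehat{\mathscr{B}}_j + \widehat{\mathscr{B}}_j^- = \widehat{\mathscr{B}}_j^+$ is a nonnegative supermartingale (because $\widehat{\mathscr{B}}_j^-$ is a submartingale: conditional expectation of the convex function $x\mapsto x_-$), hence converges a.s.\ to a finite limit; combined with $\widehat{\mathscr{B}}_j^- \to 0$ we get $\widehat{\mathscr{B}}_{2^\ell} \to \widehat{\mathscr{B}}_\infty$ a.s. The same argument run against a bounded positive test function $f$ (replace $\frac{1}{2\pi}d\theta$ by $f(\theta)\,d\theta$ throughout, noting $\widehat{\mathscr{D}}_j^\pm(\theta) f(\theta)$ still has the right monotonicity in expectation) gives the convergence of $\int_0^{2\pi}\widehat{\mathscr{D}}_{2^\ell}(\theta) f(\theta)\,d\theta$; by taking $f$ ranging over a countable convergence-determining family and invoking a Riesz-type argument one obtains a limiting nonnegative measure $\widehat{\mathscr{D}}_\infty$.

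The crux is therefore to prove $\widehat{\mathscr{B}}_{2^\ell}^- \Asto[\ell] 0$. Here I would localize onto the good event $\mathcal{E}_\kappa$ of Lemma \ref{lem:gmc_maxbnd}, on which $|G_j(\theta) - \varphi_j(\theta)| \le \sqrt{8/\beta}\,\kappa$ and $\varphi_j(\theta) \le \sqrt{8/\beta}(\log j - \tfrac18\log\log j)$ uniformly over dyadic $j$ and $\theta\in[0,2\pi]$; since $\Pr(\mathcal{E}_\kappa) \to 1$ as $\kappa\to\infty$ it suffices to show convergence to $0$ in probability, which together with the a.s.\ convergence of $\widehat{\mathscr{B}}_j + \widehat{\mathscr{B}}_j^-$ upgrades to a.s.\ convergence. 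On $\mathcal{E}_\kappa$, using Lemma \ref{lem:gmc_hksum} to write $\sum_{k\le j} H_k'(s_\beta) = \sqrt{8/\beta}\log j + \mathfrak{h}_\beta + o(1)$ and $\sum_{k\le j}H_k(s_\beta) = \log j + \mathfrak{g}_\beta + o(1)$, the negative part $\widehat{\mathscr{D}}_j^-(\theta)$ is supported on the (rare) set where $\varphi_j(\theta)$ exceeds $\sqrt{8/\beta}\log j$ by more than an additive constant, i.e.\ where $\varphi_j$ is essentially at its extreme value. On this set $\mathscr{M}_j(\theta,s_\beta)$ is small, and the prefactor $\bigl(\sqrt{8/\beta}\log j - \varphi_j(\theta)\bigr)_-$ is of size at most $O(\sqrt{\log j})$ by the upper barrier in $\mathcal{E}_\kappa$. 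Taking expectations and using the ballot estimate \eqref{eq:properballot} (applied with $k > (1-\delta)\log j$, the near-extreme regime), one gets
\[
  \Exp\bigl[\one[\mathcal{E}_\kappa]\,\widehat{\mathscr{B}}_j^-\bigr]
  \le
  C_{\beta,\kappa}\sqrt{\log j}
  \int_{\log j}^{\infty}
  \exp(\log j - 2u)\,\exp\Bigl(-\tfrac{(\log j - u)^2}{\log j}\Bigr)\,du,
\]
where I have substituted $u$ for the height; the change of variables $u = \log j + w$ turns this into $C_{\beta,\kappa}\sqrt{\log j}\int_0^\infty e^{-2w}e^{-w^2/\log j}\,dw \le C_{\beta,\kappa}\sqrt{\log j}$, which is not yet summable — so a cruder bound is not enough and one must instead exploit Lemma \ref{lem:Mj0}.

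Indeed the clean way is to bound $\widehat{\mathscr{D}}_j^-(\theta)$ directly by a multiple of $\mathscr{M}_j(\theta,s_\beta)\,\bigl|\sqrt{8/\beta}\log j - \varphi_j(\theta)\bigr|\,\chi(\theta)$ plus a small correction, where $\chi$ is the cutoff of Lemma \ref{lem:Mj0}: the negative part forces $(\sqrt{8/\beta}\log j - \varphi_j(\theta))/\sqrt{\log j}$ to be near $0$ (in fact negative up to an $O(1/\sqrt{\log j})$ shift), hence outside any fixed compact $K\subset(0,\infty)$ for $j$ large, so $\chi(\theta) = 1$ on the support of $\widehat{\mathscr{D}}_j^-$. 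Then the final display of Lemma \ref{lem:Mj0} says exactly that $\int_0^{2\pi}\mathscr{M}_j(\theta,s_\beta)\,|\sqrt{8/\beta}\log j - \varphi_j(\theta)|\,\chi(\theta)\,d\theta$ is small with high probability, uniformly in $j$; combined with the a.s.\ convergence of $Z_j = \frac{1}{2\pi}\int \mathscr{M}_j(\theta,s_\beta)\,d\theta$ to $0$ (which handles the correction term coming from the $\mathfrak{h}_\beta$-shift and the region where the prefactor is $O(1)$), this gives $\widehat{\mathscr{B}}_j^- \Prto[j] 0$. The main obstacle is precisely this quantitative control of the negative part — the naive first-moment bound is borderline divergent and one genuinely needs the sharper tightness statement of Lemma \ref{lem:Mj0}; once that is in hand, the supermartingale convergence and the passage from convergence in probability to almost sure convergence (using monotonicity along the dyadic subsequence and the positivity of $\widehat{\mathscr{B}}_j^+$) are routine, as is the extension to smoothed measures $\int \widehat{\mathscr{D}}_{2^\ell}(\theta) f(\theta)\,d\theta$ and the construction of $\widehat{\mathscr{D}}_\infty$ as a weak limit.
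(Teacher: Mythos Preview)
Your decomposition into $\widehat{\mathscr{D}}_j^{\pm}$ is the right starting point, but the core martingale step contains a sign error that breaks the argument. You write that $\widehat{\mathscr{B}}_j^+ = \widehat{\mathscr{B}}_j + \widehat{\mathscr{B}}_j^-$ is a nonnegative \emph{super}martingale because $\widehat{\mathscr{B}}_j^-$ is a submartingale. But a martingale plus a submartingale is a \emph{sub}martingale, not a supermartingale: $\Exp[\widehat{\mathscr{B}}_{j+1}^+ \mid \filt_j] = \widehat{\mathscr{B}}_j + \Exp[\widehat{\mathscr{B}}_{j+1}^- \mid \filt_j] \geq \widehat{\mathscr{B}}_j + \widehat{\mathscr{B}}_j^-$, the inequality going the wrong way for you. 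A nonnegative submartingale need not converge, so you lose the a.s.\ convergence of $\widehat{\mathscr{B}}_j^+$, and with it the mechanism for upgrading your (correct) observation $\widehat{\mathscr{B}}_{2^\ell}^- \Prto 0$ to almost sure convergence of $\widehat{\mathscr{B}}_{2^\ell}$. Knowing only $\widehat{\mathscr{B}}_{2^\ell}^- \to 0$ a.s.\ is not enough either: that controls $\liminf \widehat{\mathscr{B}}_{2^\ell} \geq 0$ but not the $L^1$ bound on $(\widehat{\mathscr{B}}_{2^\ell})_-$ needed for Doob's convergence theorem, since $\Exp[\widehat{\mathscr{B}}_{2^\ell}^-]$ is nondecreasing (submartingale) and its boundedness is not implied by a.s.\ convergence to zero.

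The paper's fix is to introduce the one-step compensator $Y_{2^\ell} \coloneqq \Exp[\widehat{\mathscr{B}}_{2^{\ell+1}}^- \mid \filt_{2^\ell}]$ and show $\sum_\ell Y_{2^\ell} < \infty$ almost surely. Then $T_{2^\ell}^+ \coloneqq \widehat{\mathscr{B}}_{2^\ell}^+ - \sum_{k<\ell} Y_{2^k}$ \emph{is} a supermartingale (check: $\Exp[\widehat{\mathscr{B}}_{2^{\ell+1}}^+ \mid \filt_{2^\ell}] = \widehat{\mathscr{B}}_{2^\ell} + Y_{2^\ell} \leq \widehat{\mathscr{B}}_{2^\ell}^+ + Y_{2^\ell}$), and after stopping when the predictable sum first exceeds $R$ it is bounded below by $-R$, hence converges a.s. The summability $\sum_\ell Y_{2^\ell} < \infty$ is the key quantitative input, and it is obtained not via Lemma~\ref{lem:Mj0} but by a sharper bound on the \emph{increment}: on $\mathcal{E}_\kappa$, having $\widehat{\mathscr{D}}_{2^{\ell+1}}^-(\theta) > 0$ forces $\varphi_{2^{\ell+1}}(\theta) - \varphi_{2^\ell}(\theta)$ to climb at least $\sqrt{8/\beta}(\kappa + \tfrac18\log\log 2^\ell)$, which by uniform subgaussianity of the increment (Lemma~\ref{lem:mgf}) costs $e^{-c_{\beta,\kappa}(\log \ell)^2}$; this multiplies $Z_{2^\ell} = \tfrac{1}{2\pi}\int \mathscr{M}_{2^\ell}(\theta,s_\beta)\,d\theta$, which is a.s.\ bounded by Lemma~\ref{lem:Mj0}, and the resulting series converges. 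Your route through the tightness display of Lemma~\ref{lem:Mj0} gives only convergence in probability of $\widehat{\mathscr{B}}_{2^\ell}^-$, which (as you correctly flagged near the end) is borderline and does not deliver the summable control that the compensator argument needs.
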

\begin{proof}
%  \textcolor{red}{Add test function}.
  Define the positive and negative parts
  \[
    \mathscr{D}^{\pm}_j(\theta) \coloneqq e^{ s_\beta \varphi_j(\theta) - \sum_{k=1}^j H_k(s_\beta)}
    \biggl(\sum_{k=1}^j H_k'(s_\beta) - \varphi_j(\theta)\biggr)_{\pm},
  \]
  and define
  \[
    \widehat{\mathscr{B}}_{2^{\ell}}^{\pm}
    \coloneqq
    \frac{1}{2\pi}
    \int_0^{2\pi}
    \mathscr{D}^{\pm}_{2^\ell}(\theta)
    d\theta.
  \]
 Define
  \[
    Y_{2^{\ell}} \coloneqq
    \Exp[
      \widehat{\mathscr{B}}_{2^{\ell+1}}^{-}
      ~\vert~ \filt_{2^\ell}
    ].
%     -
%     \widehat{\mathscr{B}}_{j}^{-}.
  \]
  We will show $\sum_{\ell=1}^\infty Y_{2^\ell} < \infty$ almost surely.  Having done so, the lemma will follow, as we now explain.  Observe
  \begin{equation}
\label{eq-comp}
    \Exp[
      \widehat{\mathscr{B}}_{{2^{\ell+1}}}^{+}
      ~\vert~ \filt_{2^{\ell}}
    ]
    =
    \Exp[
      \widehat{\mathscr{B}}_{{2^{\ell+1}}}
      ~\vert~ \filt_{2^{\ell}}
    ]
    +Y_{2^{\ell}}
    =
    \widehat{\mathscr{B}}_{{2^{\ell}}}
    +Y_{2^{\ell}}
    \leq
    \widehat{\mathscr{B}}_{{2^{\ell}}}^{+}
    +Y_{2^{\ell}},
  \end{equation}
  it follows that both $T_{2^\ell}^{\pm} \coloneqq \widehat{\mathscr{B}}_{2^{\ell}}^{\pm}-\sum_{k=1}^{{\ell-1}} Y_{2^k}$ are supermartingales with respect to the filtration $(\filt_{2^\ell} : \ell \in \N)$.  Letting $\tau$ be the stopping time that $\ell \mapsto \sum_{k=1}^{\ell} Y_{2^k}$ exceeds $R,$ then $T_{2^{j\wedge \tau}}^{\pm}$ is a supermartingale bounded below by $-R$ (by predictability) which converges almost surely.  As this holds for any $R \in \N$ it follows that $T_{2^\ell}^{\pm}$ converges almost surely.  As the sum of $Y_{2^k}$ converges almost surely as well, $\widehat{\mathscr{B}}_{2^{\ell}}^{\pm}$ converges almost surely.  Hence so does their difference.
  \begin{remark}
    From Lemma \ref{lem:gmc_maxbnd},
  %$\widehat{\mathscr{B}}_{j}^{-}$ converges to $0$ along a subsequence, and so
    it follows that in fact $\widehat{\mathscr{B}}_{j}^{-}$ is eventually $0$ for all $j$ sufficiently large, almost surely.% \Asto[j] 0.$
  \end{remark}
The statement concerning $\int_0^{2\pi}\widehat{\mathscr{D}_j}(\theta)f(\theta) d\theta$ follows similarly: instead of \eqref{eq-comp}, use that
  \begin{align}
\label{eq-comp1}
    \Exp[
      \int_0^{2\pi} \widehat{\mathscr{D}}_{{2^{\ell+1}}}^{+}(\theta) f(\theta) d\theta
      ~\vert~ \filt_{2^{\ell}}
    ]
   & \leq 
    \Exp[
      \int_0^{2\pi} \widehat{\mathscr{D}}_{{2^{\ell+1}}} f(\theta) d\theta
      ~\vert~ \filt_{2^{\ell}}
    ]
    +2 \pi \|f\|_\infty Y_{2^{\ell}}\\
 &   =
   \int_0^{2\pi}  \widehat{\mathscr{D}}_{{2^{\ell}}}(\theta) f(\theta) d\theta
   +2 \pi \|f\|_\infty  Y_{2^{\ell}}
    \leq
    \int_0^{2\pi} \widehat{\mathscr{D}}_{{2^{\ell}}}^{+}(\theta) f(\theta) d\theta
    +2\pi \|f\|_\infty Y_{2^{\ell}},\nonumber
\end{align}
and the rest follows as in the treatment of $\widehat{\mathscr{B}}_{2^\ell}$.

  So, it remains to show $\sum_{j=1}^\infty Y_{2^j} < \infty$ almost surely.
%To do so, it suffices to show that
%\[
%  \sum_{j=1}^\infty \Exp \bigl(\one \{ \mathcal{E}_\kappa \} Y_{2^j}\bigr) < \infty \quad \As
%\]
%On the event $\mathcal{E}_\kappa,$ we can bound, for all $j$ sufficiently large
  There is a $C_\beta$ so that for all $j$ sufficiently large
  \[
    Y_{2^{j+1}}
    \leq
    C_\beta \int_0^{2\pi}
    \mathscr{M}_{2^j}(\theta,s_\beta)
    \Exp\left[
      e^{
        \sqrt{\beta/2}\left( \varphi_{2^{j+1}}(\theta)-\varphi_{2^{j}}(\theta)\right)
        -\log 2
      }
      \bigl(
      \varphi_{2^{j+1}} - \sqrt{\tfrac{8}{\beta}}\log 2^{j+1}
      \bigr)_+
      ~\vert~ \filt_{2^{j}}
    \right].
  \]
  The increment $\sqrt{\beta/2}\bigl(\varphi_{2^{j+1}}(\theta)-\varphi_{2^{j}}(\theta)\bigr)$ is uniformly subgaussian over all $j$ (from Lemma \ref{lem:mgf}).  Hence on the event $\mathcal{E}_\kappa,$ the restriction that
  \[
    \varphi_{2^{j+1}} - \varphi_{2^{j}}
    >\sqrt{\tfrac{8}{\beta}}\log 2^{j+1} - \varphi_{2^{j}}
    >\sqrt{\tfrac{8}{\beta}}(\kappa + \tfrac{1}{8}\log\log 2^j)
  \]
  implies that there is a constant $c_{\beta,\kappa} > 0$ so that for all $j \in \N$
  \[
    \Exp\left[
      e^{
        \sqrt{\beta/2}\left( \varphi_{2^{j+1}}(\theta)-\varphi_{2^{j}}(\theta)\right)
        -\log 2
      }
      \bigl(
      \varphi_{2^{j+1}} - \sqrt{\tfrac{8}{\beta}}\log 2^{j+1}
      \bigr)_+
      ~\vert~ \filt_{2^{j}}
    \right]
    \leq
    e^{-c_{\beta,\kappa} (\log j)^2}.
  \]
  Hence we arrive at the conclusion that almost surely
  \[
    \sum_{j=1}^\infty \bigl(\one \{ \mathcal{E}_\kappa \} Y_{2^j}\bigr)
    < \sum_{j=1}^\infty \biggl(e^{-c_{\beta,\kappa} (\log j)^2}\int_0^{2\pi}
    \mathscr{M}_{2^j}(\theta,s_\beta)d\theta\biggr)
    \quad
    \As
  \]
  From Lemma \ref{lem:Mj0} this is finite almost surely, which completes the proof as the $(\mathcal{E}_\kappa : \kappa \in \N)$ exhaust the probability space.
\end{proof}

We conclude with the proof of Theorem \ref{thm:Bj}.
\begin{proof}[Proof of Theorem \ref{thm:Bj}]
  %We begin by observing that $\mathscr{B}_j$ and $\widehat{\mathscr{B}}_j$ are sufficiently close that we may deduce the converge
  We begin by noting that
  $\widehat{\mathscr{D}}_j$
  and $\mathscr{D}_j$ are exactly related by the identity
  \[
    \sqrt{\tfrac{4}{\beta}}
    \mathscr{D}_j(\theta)
    e^{\log j - \sum_{k=1}^j H_k(s_\beta)}
    -\widehat{\mathscr{D}_j}(\theta)
    =
    \mathscr{M}_j(\theta,s_\beta)
    \biggl(\sum_{k=1}^j H_k'(s_\beta) - \sqrt{\tfrac{8}{\beta}} \log j\biggr)
  \]
  Hence integrating against a positive bounded test function $f$, from Lemma \ref{lem:gmc_hksum}, Lemma \ref{lem:Mj0}, and Lemma \ref{lem:Bhat},
  \begin{equation} \label{eq:masslimitrelationship}
    \int_0^{2\pi} f(\theta)\mathscr{D}_{2^\ell}(\theta) d\theta
    \Asto[\ell]
    \sqrt{\tfrac{\beta}{4}}
    e^{\mathfrak{g}_\beta}
    \int_0^{2\pi} f(\theta)\widehat{\mathscr{D}}_{\infty}(\theta) d\theta
    \quad
    \text{and}\quad
    \mathscr{B}_{2^\ell}
    \Asto[\ell]
    \sqrt{\tfrac{\beta}{4}}
    e^{\mathfrak{g}_\beta}
    \widehat{\mathscr{B}}_\infty.
  \end{equation}

It remains to prove the non-atomicity of $\widehat{\mathscr{D}}_\infty$.
\iffalse
 Toward this end,  for $j_0$ fixed, introduce the function
\[ \mathscr{M}_{j_0,j(\theta,s) }\coloneqq e^{ s (\varphi_j(\theta)-\varphi_{j_0}(\theta)) - \sum_{k=j_0}^j H_k(s)},\]
and the measure 
\[ \widehat{\mathscr{D}}_{j_0,j}(\theta)d\theta \coloneqq (\log j-\sqrt{8/\beta} \varphi_j(\theta)(\theta))\mathscr{M}_{j_0,j(\theta,s_\beta) } d\theta.\]
The same argument as for $\widehat{\mathscr{D}}$ shows that for fixed $j_0$,  $\widehat{\mathscr{D}}_{2^{j_0},2^j}(\theta)d\theta$ converges to a positive measure denoted 
$\widehat{\mathscr{D}}_{2^{j_0},\infty}(d\theta)$, which is absolutely continuous with respect to $\widehat{\mathscr{D}}_\infty(d\theta)$. It is thus enough to prove the claimed
non-atomicity for $\widehat{\mathscr{D}}_{2^{j_0},\infty}(d\theta)$. 
\fi
Mimicking Lemma \ref{lem:Mj0},  with $\epsilon\in (0,1/3)$, introduce the interval $K_k=[k^{-\epsilon}, k^\epsilon]$ and, for $j_0$ fixed,  the function
  \[
  \widehat{ \chi}_{j_0,j}(\theta) =\prod_{k=j_0}^j \one[{ (\sqrt{\tfrac{8}{\beta}}  k\log 2-  \varphi_{2^k}(\theta))/\sqrt{ k} \in K_k }].
  \]
The same argument as in Lemma \ref{lem:Mj0}  shows that 
\[\int  \mathscr{M}_{2^j}(\theta,s_\beta)
\biggl|\biggl(
\varphi_{2^j}(\theta)
   - \sqrt{\tfrac{8}{\beta}}  j\log 2\biggr)\biggr| (1-\widehat{\chi}_{j_0,j}(\theta)) d\theta\leq C \sum_{k=j_0}^j \frac{1}{k^{3\epsilon}}\leq \frac{C'}{j_0^{3\epsilon-1}}.\]
Taking $j_0$ large, it thus suffices to prove the non-atomicity of the limit of the positive  measures $\mathscr{M}_{2^j}(\theta,s_\beta)
   \biggl|\biggl(\varphi_{2^j}(\theta)
   - \sqrt{\tfrac{8}{\beta}}  j\log 2\biggr)\biggr| \widehat{\chi}_{j_0,j}(\theta) d\theta$ for large fixed $j_0$. Toward this end, 
divide $[0,2\pi]$ to intervals $\Delta_i$ of length $\delta$. Using Lemma \ref{lem:goodstuff}, we can replace $\varphi_{2^k}(\theta)-\varphi_{2^{j_0}}(\theta)$ by  $Z_{2^k}^{2^{j_0}}(\theta)$.

Let $A_{i,j}=\int_{\Delta_i} \mathscr{M}_{2^j}(\theta,s_\beta)
   \biggl|\biggl(\varphi_{2^j}(\theta)
   - \sqrt{\tfrac{8}{\beta}}  j\log 2\biggr)\biggr| \widehat{\chi}_{j_0,j}(\theta) d\theta$. Using Lemma \ref{Prmomen1Lower}
we have that 
\begin{equation}
\label{eq-firstmommeas}
\Exp\big( A_{i,j}\mid \filt_{2^{j_0}})=C_{i,j_0} \Delta
\end{equation}
where $\max_i C_{i,j_0}<\infty$ is a random variable independent of $\Delta$ or $j$. Using Lemmas \ref{lemma:kDebut} and \ref{lemma:ktoutDebut} as in Proposition
\ref{prop:2ray},
%\todo{add ref},
we  obtain that 
\begin{equation}
\label{eq-secmommeas}
\Exp\big( A_{i,j}^2\mid \filt_{2^{j_0}})=C_{i,j_0}' \Delta o(\Delta),
\end{equation}
where $\max_i C_{i,j_0}'<\infty$ is a random variable independent of $\Delta$ or $j$. Hence, for any $\delta>0$,
\( \Pr (A_{i,j}>\delta\mid \filt_{2^{j_0}})\leq \frac{\max_i C_{i,j_0}' \Delta o(\Delta)}{\delta^2},\)
and therefore
\[ \Pr (\exists i: A_{i,j}>\delta\mid \filt_{2^{j_0}})\leq \frac{\max_i C_{i,j_0}' o(\Delta)}{\delta^2}.\]
Since this is true uniformly in $j$, taking the limit as first $j\to\infty$ and then $\Delta\to 0$ gives the claim of nonatomiticity of $\widehat{\mathscr{D}}_\infty$.
\end{proof}

\section{Decoration process technical estimates}
\label{sec:decoration2nd}

In this section, we collect other estimates about the decoration.  In all, we need a relatively sharp upper estimate on a single ray of the decoration being high (that is sharp up to constants $c(k_4)$).  We also provide a weak upper estimate on two rays being high which is sharp up to a power of $(\log k_1)$.

We also need a few technical estimates of a different nature.
Due to the technical nature of the two-ray estimates we produce up to level $n_1^+$,
we show a continuity estimate for the markov kernel $(x,df)\mapsto\mathfrak{s}(x,e^x df)$ in the parameter $x$.
Finally, we show a mixing estimate of the decoration measure comparing $\mathfrak{s}(x,e^{i\alpha+x} df)$ in the case of $\sigma=1$ to the same with uniformly random $\alpha$.

\subsection{Moment estimates}
The following are needed to produce estimates for $\mathfrak{s}$.  These concern the diffusions $\mathfrak{U}^{o,j}$ and the events $\mathscr{P}_j$,
see \eqref{eq:decorationbarrier2}.  As the estimates
have no dependence on $j$, we suppress the $j$.
\begin{lemma}\label{lem:SDE1st2nd}
  For some $0 < c(k_7,x) < c(k_7,\infty)$ which is a continuous function of $k_7$,
  \begin{equation}
    \label{eq-0407aa}
    \Pr\biggl( \bigl\{ \mathfrak{U}^o_{T_+}(\theta)-\sqrt{\tfrac{4}{\beta}}h \in
  [-k_7,x] \bigr\} \cap \mathscr{P}(\theta,h) ~\big\vert~\filt_{n_1^+} \biggr)
    =
    (1+o_{k_4})\frac{c(k_7,x) h \sqrt{k_4}}{(T_+-T_-)^{3/2}}
    e^{-(T_+-T_-)} e^{-\sqrt{2}h -h^2/2(T_+-T_-)}
    ,
  \end{equation}
  for all 
  \begin{equation}
    \label{eq-defh}
    h \in [(\log k_1^+)^{1/14}, (\log k_1^+)^{13/14}].
  \end{equation}
  Further, the upper bound in \eqref{eq-0407aa} holds for all $h \geq 0.$
  Next, if $\theta_1,\theta_2$ are such that $|\theta_1|,|\theta_2| \leq 3n^{8\delta}$ and $h_1,h_2$ are as in \eqref{eq-defh} then, for some $c(k_7)>0$, with
  $|\theta_1-\theta_2|e^{T_*}=k_1$ and $h=(h_1\vee h_2)$,
  \begin{align}
    &\Pr\biggl(
    \bigcap_{i\in \{1,2\}}
    \bigl\{ \mathfrak{U}^o_{T_+}(\theta_i) 
    -\sqrt{\tfrac{4}{\beta}}h_i 
    \in [-k_7,\infty) \bigr\} \cap \mathscr{P}(\theta_i,h_i)
    ~\big\vert~\filt_{n_1^+} \biggr)\label{eq-0407ba} \\
    &\leq\!
    \begin{cases}
      c(k_7)h\frac{(T_+-T_*)\wedge (T_*-T_-)}{(T_*-T_-+1)^{3/2}}\exp\biggl(\! -S(T_-,T_+,h)\!-\!
      \Big(T_+-T_*+(T_+-T_*)^{1/14}\Big)\!
      -\!{h^2/2(T_+-T_-)}
      \biggr), &\\
      \qquad\qquad\qquad\qquad\qquad\qquad \qquad\qquad\qquad\qquad\qquad\qquad
      \text{if } e^{k_4}\leq |\theta_1-\theta_2| \leq k_1e^{-T_-},  &\\
      c(k_7)\exp \biggl(
      -S(T_-,T_+,h_1)-S(T_-,T_+,h_2)-(h_1^2+h_2^2)/2(T_+-T_-)\biggr), \\
      \qquad\qquad\qquad\qquad\qquad\qquad \qquad\qquad\qquad\qquad\qquad\qquad
      \text{if } |\theta_1-\theta_2| \geq k_1e^{-T_-}, &
    \end{cases}
    \nonumber
  \end{align}
  where
  \[
    S(T_-,T_+,x)=
    (T_+-T_-)+\sqrt{2}x.
  \]
\end{lemma}
We emphasize that in \eqref{eq-0407aa}, we implicitly used that $k_{i+1}\ll k_i$ in the $o$ notation. We also recall that $T_-<0$, so that the bottom inequality in \eqref{eq-0407ba} represents separation of angles much larger than $k_1$.
\begin{proof}
We write the proof for $\sigma=1$, the case $\sigma=i$ being similar but simpler.  To alleviate notation, we introduce a  useful change of variables.
Recall that
\begin{equation}\label{eq:LUbis}
  \begin{aligned}
    d \mathfrak{L}_t(\theta)
    &=
    %\frac{i\theta e^t}{k_1} dt
    {i\theta e^t}{k_1^{-1}} dt
    +\sqrt{\tfrac{4}{\beta}} e^{i \Im \mathfrak{L}_t(\theta)} d \mathfrak{W}_t.
    %\text{ and }\\
      \end{aligned}
\end{equation}
Let $(W_t^1,W_t^2)=(\Re \mathfrak{W}_t,\Im \mathfrak{W}_t)$ and set
\begin{equation}
  \label{eq-defV}
  d\left(\begin{array}{l} V_t^1\\
    V_t^2
  \end{array}\right)=
  \left(\begin{array}{cc}
    \cos(\Im L_t(0)) & -\sin(\Im L_t(0))\\
    \sin(\Im L_t(0)) & \cos (\Im L_t(0))
  \end{array}
  \right)d\left(\begin{array}{l}
    W_t^1\\
    W_t^2
  \end{array}
    \right).
\end{equation}
Note that the pair of processes $V_t^1,V_t^2$ are
independent standard Brownian motions.
Let $(R_t(\theta),I_t(\theta))$ $=$ $(\Re \mathfrak{L}_t(\theta),
\Im \mathfrak{L}_t(\theta))$, and set $(R_t,I_t)=(R_t(0),I_t(0))$.
%\todo{Fix initial conditions!}
We then have, with $\hat I_t(\theta)=I_t(\theta)-I_t$, that
\begin{equation}
  \label{eq-0207a}
  dR_t= \sqrt{\tfrac{4}{\beta}}dV_t^1, \quad dI_t=\sqrt{\tfrac{4}{\beta}}
  dV_t^2,
\end{equation}
and
\begin{eqnarray}
  \label{eq-0207b}
  dR_t(\theta)&=&\sqrt{\tfrac{4}{\beta}}(\cos(\hat I_t(\theta)) dV_t^1+
  \sin(\hat I_t(\theta)) dV_t^2),\\
  d\hat I_t(\theta)&=&\theta e^t k_1^{-1} dt+\sqrt{\tfrac{4}{\beta}}(\sin(\hat I_t(\theta)) dV_t^1-(1-
  \cos(\hat I_t(\theta)) dV_t^2).
  \nonumber
\end{eqnarray}

We begin with the proof of \eqref{eq-0407aa}.
Note that for any $\theta$, we have $t\mapsto \mathfrak{U}^o_t(\theta)=R_t(\theta)$ has the same law as that of $t\mapsto
R_t$, and further $R_t$ is a $\sqrt{4/\beta}$ multiple of a standard Brownian motion. Thus, by standard barrier estimates and the Markov property, see e.g. \cite[Proof of Lemma 5.3]{BRZ}, we  obtain, with $h'$ as in
\eqref{eq-defh}, that
\begin{eqnarray*}
  &&\Pr\biggl( 
    \bigl\{ 
    \mathfrak{U}^o_{T_+}(\theta) -\sqrt{\tfrac{4}{\beta}}h  \in [-k_7,x]
    \bigr\} 
    \cap \mathscr{P}(\theta,h) ~\big\vert~\filt_{n_1^+} 
    \biggr)
  = \frac{(1+o_{k_4})}{\sqrt{2\pi}}
  \frac{h}{(T_+-T_--k_4)^{3/2}}\\
  &&\qquad \times
  \int_{-x}^{k_7} dy \int_{z\in J_{k_4}} dz e^{-(\sqrt{2}(T_+-T_--k_4)+h-z)^2/2(T_+-T_--k_4)}z\tfrac{1}{\sqrt{2\pi k_4}} e^{-( z-y+\sqrt{2}k_4)^2/2k_4}, \
\end{eqnarray*}
where $J_{k_4}=[-k_4^{13/14}, -k_4^{1/14}]$.
We then obtain (recalling that $k_i\gg k_{i+1}$)
\begin{equation}
  \label{eq-0607a}
  \begin{aligned}
  &\Pr\biggl( 
    \bigl\{ 
      \mathfrak{U}^o_{T_+}(\theta) -\sqrt{\tfrac{4}{\beta}}h \in [-k_7,x] 
      \bigr\} 
      \cap \mathscr{P}(\theta,h)
       ~\big\vert~\filt_{n_1^+} 
       \biggr) \\
  &= (1+o_{k_4}) \frac{\sqrt{k_4}}{2{\pi}}
  \frac{h}{(T_+-T_-)^{3/2}}
  e^{-(T_+-T_-)} e^{-\sqrt{2}h -h^2/2(T_+-T_-)}
  \int_{-x}^{k_7}
  e^{\sqrt{2}y}\,dy.
\end{aligned}
\end{equation}

We turn to the two ray estimates; note that we are shooting for a rough bound only, essentially accurate at the exponential scale. We may and will take $\theta_1=0$ and write $\theta=\theta_2$.
Set $ L_t= e^{\lambda (R_t+R_t(\theta))}$ and $\theta_t= \theta e^{t} k_1^{-1}$. We emphasize that the estimates we will obtain will be uniform with respect to the initial condition
$\hat I_{T_-}(\theta)$.
Fix $\tilde  I_t=\hat I_t-\theta_t$. We have from \eqref{eq-0207b} that
$$ dL_t = \sqrt{\tfrac{4\lambda^2 }{\beta}}L_t
\big( (1+ \cos(\theta_t+\tilde I_t)) dV_t^1+\sin(\theta_t+\tilde I_t) dV_t^2\big)+\tfrac{4\lambda^2}{\beta}(1+\cos(\theta_t+\tilde I_t))L_t dt$$
and therefore, setting
$\ell_t=\E L_t$, $\ell_t^c= \E(\cos(\tilde I_t) L_t)$, $\ell_t^s=
\E(\sin(\tilde I_t) L_t)$, we obtain that
\begin{equation}
  \label{eq-0307a}
  \frac{d\ell_t}{dt}=\tfrac{4\lambda^2}{\beta}\big(\ell_t+\cos(\theta_t) \ell_t^c-
  \sin(\theta_t) \ell_t^s\big).
\end{equation}
On the other hand, for some explicit martingale $M_t$,
\begin{eqnarray*}
  d(\cos(\tilde I_t) L_t)&=& dM_t
  +
\tfrac{4\lambda^2}{\beta}(1+\cos(\theta_t+\tilde I_t))\cos(\tilde I_t)L_t dt-
\tfrac{4}{\beta}\cos(\tilde I_t) (1-\cos(\theta_t+\tilde I_t))L_t dt\\
&&- \tfrac{4\lambda}{\beta} \sin(2(\theta_t+\tilde I_t))\sin(\tilde I_t)L_t dt,
\end{eqnarray*}
with a similar expression for $d(\sin(\tilde I_t) L_t)$.
Therefore, there exists a constant $\alpha=\alpha(\lambda,\beta)$
so that
\begin{equation}
  \label{eq-0307b}
  \left|\tfrac{d\ell_t^c}{dt}\right|\leq \alpha \ell_t,\quad
  \left|\tfrac{d\ell_t^s}{dt}\right|\leq \alpha \ell_t.
\end{equation}
 From now on, we take $4\lambda^2/\beta=a$ ($a\sim2$ is the exponent we will need for Chebychev's inequality).
  Define  $\hat \ell_t =\ell_t e^{-a (t-T_-)}$, and similarly
  $\hat \ell_t^c,\hat\ell_t^s$.
  Then,
  $$  \frac{d \hat \ell_t}{dt}=a  e^{-a (t-T_-)} \big(\cos(\theta_t)\ell_t^c-\sin(\theta_t) \ell_t^s\big)=
  a   \big(\cos(\theta_t)\hat \ell_t^c-\sin(\theta_t) \hat \ell_t^s\big) ,$$
  and therefore, with $t\geq  T_{-}$,
  $$ \hat \ell_{t}-\hat \ell_{T_-}=a
  \int_{T_-}^t \big(\cos(\theta_u)\hat \ell_u^c-\sin(\theta_u) \hat
  \ell_u^s\big)du.$$
Using the change of variables
  $e^{u}=s$, we have that $\theta_u=\theta s/k_1$ and
  thus we obtain that
  \begin{equation}
    \label{eq-0307c}\hat \ell_t-\hat \ell_{T_-}=
 a \int_{e^{T_-}}^{e^{t}} s^{-1}\big(\cos(\theta s/k_1) \hat\ell^c_{\log s}-\sin(\theta s/k_1) \hat \ell^s_{\log s}\big) ds.
\end{equation}
Consider first the case $\theta e^{T_-}/k_1> c$ for some fixed constant
$c>1$, that is $T_*\leq T_--\log c$.
Set
$T_{\ell}=T_-+\ell$, $\ell\geq 0$. We consider first $s\in [e^{T_\ell},e^{T_{\ell+1}}] \eqqcolon I_\ell$.
  Let
  \[F_c(s)=\int_{e^{T_\ell}}^s u^{-1}\cos(\theta u/k_1)du,
  \quad
   F_s(s)=\int_{e^{T_\ell}}^s u^{-1}\sin(\theta u/k_1)du.\]
   Then,
   for $s\in I_\ell$, we have (using integration by parts)  that
   $$ |F_c(s)|, |F_s(s)|\leq 3 \tfrac{k_1 e^{-T_\ell} }{\theta}=:b_\ell.$$
   Performing in \eqref{eq-0307c} integration by parts and using \eqref{eq-0307b} and that $|\hat \ell_t^c|,|\hat \ell_t^s|\leq \hat \ell_t$, we obtain
   (using that $4ab_\ell\ll 1$ for $\lambda$ bounded by say $4$ if $c$ is chosen large enough)
   that
   for $e^t\in I_\ell$,
   $$ \hat \ell_t \leq \hat \ell_{T_\ell}+4ab_\ell \hat \ell_t+4\alpha ab_\ell
\int_{e^{T_\ell}}^{e^t} s^{-1}\hat \ell_{\log s} ds
   =\hat \ell_{T_\ell}+4ab_\ell \hat \ell_t+ 4\alpha ab_\ell
\int_{T_\ell}^t \hat \ell_u du .$$
   An application of Gronwall's inequality then gives that
   for such $t$,
   $$\hat \ell_{t} \leq  \tfrac{1}{1-4ab_\ell} \hat \ell_{T_\ell}e^{4ab_\ell\alpha(t-T_\ell)/(1-4ab_\ell)}.$$
Unravelling the definitions, we have obtained that for $e^t\in I_\ell$,
\begin{equation}
\label{eq-0407a}
\E\big(e^{\lambda(R_t+R_t(\theta))}\big)=\ell_t \leq
\left(\prod_{j=1}^\ell \tfrac{1}{1-4ab_j}\right)e^{a(t-T_-)+4a\sum_{j=1}^\ell a b_j
\alpha(T_{j+1}-T_j)/(1-4ab_j)}
\leq C e^{a(t-T_-)},
%=
%\tfrac{1}{1-2ab} e^{4\beta^{-1}\lambda^2(t-T_-) (1+4\alpha k_1e^{-T_-}/\theta (1-2ab))}.
\end{equation}
uniformly in the initial conditions of $\hat I_{T_-}(\theta)$,
%\todo[inline]{Add initial conditions dependence.}
where $C$ is a universal constant independent of $\ell$ and we used that
$\sum_{j=1}^\ell a b_j
\alpha(T_{j+1}-T_j)/(1-4ab_j)$ is uniformly bounded.
In particular, for $\theta_1,\theta_2$ with $|\theta_1-\theta_2|\geq c
k_1 e^{-T_-}$,
%(which ensures in particular that $ab=o_{k_1}(1)$ for $\lambda$ bounded independently of $k_1$),
using Chebycheff's inequality with $\lambda\sim 2$
%=\sqrt{\beta/2}(T_+-T_--(h+k_6)/\sqrt{2}$ \corO{Fix this!}
we obtain that
\begin{eqnarray}
  \label{eq-finallargegap}
  &&\Pr (R_{{T_+}}(\theta_i)\geq \sqrt{8/\beta} ((h_i-k_7)/\sqrt{2}+(T_+-T_-)), i=1,2)\\
  &&\leq C \nonumber
  e^{-
  %(1+ O(k_1 e^{-T_-}/|\theta_2-\theta_1|)
  \sum_{i=1}^2 \big(\sqrt{2} (T_+-T_-)+(h_1-k_7)\big)^2/2(T_+-T_-) },
%  \leq c(k_6)
%  e^{-S(T_-,T_+,h_1,h_2)}.
  %(1+ O(k_1 e^{-T_-}/|\theta_2-\theta_1|)\Big(-2(T_+-T_-)-\sqrt{2}(h_1+h_2-2k_6)- (h_1+h_2-2k_6)^2/8(T_+-T_-)\Big)
  %}.
\end{eqnarray}
%where $\eta=4\alpha k_1 e^{-T_{-}} /|\theta_1-\theta_2|$.
which yields the bottom inequality in \eqref{eq-0407ba}.

We next turn to the case {$T_+-k_4\geq T_*>T_- -\log c$}. In that case, we apply the barrier estimate for time $(T_*-T_- -\log c)_+$ and the same computation
as above for larger times. That is, assuming without loss of generality that $h=h_1$, we write
 $T_*'=(T_*\vee T_-)$ and, with $\mathscr{P}(\theta_i,h_i)^{T_*'}$ the part of the barrier event $\mathscr{P}(\theta_i,h_i)$ up to time $T_*'$,
\begin{eqnarray}
\label{eq-240522}
  &&\Pr (R_{{T_+}}(\theta_i)\geq \sqrt{8/\beta} ((h_i-k_7)/\sqrt{2}+(T_+-T_-))\cap \mathscr{P}(\theta_i,h_i), i=1,2)
\\
&&\leq \int  \Pr(\sqrt{\beta/8}R_{{T_*'}}(\theta_1)\in dz\cap\mathscr{P}(\theta_1,h_1)^{T_*'} )\times\nonumber\\
&&\qquad
\Pr\Big(R_{T_+}(\theta_1)-R_{T_*'}(\theta_1)\geq  \sqrt{8/\beta} ((h_1-k_7)/\sqrt{2}+(T_+-T_-)-z),\nonumber\\
&&\qquad \qquad \qquad
R_{T_+}(\theta_2)-R_{T_*'}(\theta_2)\geq \sqrt{8/\beta} (-k_7/\sqrt{2}+(T_+-T_*)+T_*^{1/14}\Big).
\nonumber
\end{eqnarray}
Note that the integration over the variable $z$ in the right side of \eqref{eq-240522} is restricted in particular to the range $z\in [(T_+-T_*')^{1/14},(T_+-T_*')^{13/14}]$.
We control the second probability in the right hand side of \eqref{eq-240522} as in the case $T_*\leq T_--\log c$, while the first probability is controlled by a standard
Gaussian bound. This yields the top inequality in \eqref{eq-0407ba}. Further details are omitted.
\end{proof}

\subsection{Regularity of the kernel}
\begin{lemma}
  Let $F$ be the subset of $\mathcal{C}([-2\pi k_1,0],\mathbb{C})$ for which $\max_\theta |f(\theta)| > e^{-k_7}$ (which corresponds to those decorations in $\Gamma_{k_7}^+$).
  Let $\mathfrak{r}(x,\alpha,f)$ be the Radon--Nikodym derivative of the measure $\mathfrak{s}(\alpha+x, e^{\sqrt{4/\beta}(\alpha+x)}df)$ with respect to $\mathfrak{s}(x,e^{\sqrt{4/\beta}x}df)$ on $F$.  Then, uniformly in $|x|\leq (\log k_1)^{17/18}$ and $|\alpha| \leq 1$,
  \[
    \mathfrak{r}(x,\alpha,f)=e^{\sqrt{2}\alpha}(1+o_{k_1}).
  \]
  \label{lem:kregular}
\end{lemma}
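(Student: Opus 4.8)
\textbf{Proof plan for Lemma \ref{lem:kregular}.}

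The statement compares the law $\mathfrak{s}(e^{\sqrt{4/\beta}(\alpha+x)}df)$ to $\mathfrak{s}(e^{\sqrt{4/\beta}x}df)$, both restricted to the set $F$ of decorations whose maximum modulus exceeds $e^{-k_7}$. Recall from \eqref{eq:decoration} that $\mathfrak{s}=\mathfrak{s}_{k_1,k_5}$ is the law of the decoration $D_j^o$, built from the diffusion $(\mathfrak{L}^{o}_t,\mathfrak{U}^{o}_t)$ on the interval $[T_-,T_+]$ with flat initial condition, killed off $\mathscr{P}_j(\theta)$. Multiplying the decoration by $e^{\sqrt{4/\beta}x}$ is, by the definition of $\mathfrak{U}^o$ (the real part of $\mathfrak{L}^o$ in the case $\sigma=1$, and $\mathfrak{U}^o$ itself in the case $\sigma=i$), exactly equivalent to shifting the starting height of the real part of the diffusion by $x$. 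So the lemma is really a statement about how the killed diffusion measure, viewed through the map $f\mapsto \one[f\in F]$, depends on the starting height.

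The plan is to use the change of measure that flattens the linear drift of the relevant barrier, exactly as in the proof of Proposition \ref{prop:lsmcontinuity} (the measure $\mathbb{M}$ there, or equivalently the measures in Lemma \ref{lem:SDE1st2nd}). First I would observe that the event $\{f\in F\}$ forces $\mathfrak{U}^o_{T_+}(\theta)\geq -k_7$ for some mesh point $\theta$, together with the ray event $\mathscr{P}_j(\theta)$; by the first-moment bound of Lemma \ref{lem:SDE1st2nd} (the single-ray estimate \eqref{eq-0407aa}), the dominant contribution comes from the portion of the height space where $\sqrt{\tfrac{\beta}{4}}\mathfrak{U}^o_{T_-}$ lies in a window of width $O((\log k_1)^{1/2})$ around its typical value $\sqrt 2\log k_1^+$, i.e. $h$ of order $(\log k_1)^{1/10}$ to $(\log k_1)^{9/10}$. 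Then, under the tilted measure that removes the $\sqrt{\tfrac{8}{\beta}}t$ drift, shifting the starting height from $x$ to $x+\alpha$ changes the Radon--Nikodym weight between $\Pr$ and the tilted measure precisely by a multiplicative factor $e^{\sqrt{2}\alpha}$ — this is the deterministic Girsanov factor $\exp(\sqrt{\tfrac{\beta}{2}}\cdot\sqrt{\tfrac{4}{\beta}}\alpha)=e^{\sqrt 2\alpha}$ — while the law of the centered diffusion, hence of the event $\{f\in F\}$ and of the whole decoration profile $f$ conditioned on its value, is left unchanged. The $(1+o_{k_1})$ error comes from (i) the Gaussian density factor $e^{-(h\pm\alpha)^2/2(T_+-T_-)}$ which, since $|h|\leq O((\log k_1)^{9/10})$, $|\alpha|\le 1$ and $T_+-T_-=(\log k_1)^{29/30}(1+o_{k_1})\to\infty$ by \eqref{eq:k2k3}, contributes $1+O(h\alpha/(T_+-T_-))=1+o_{k_1}$ uniformly; and (ii) from the assumed range $|x|\leq(\log k_1)^{17/18}$, which keeps $h$ (and $h\pm \alpha$) within the barrier-compatible window where \eqref{eq-initialcondbar}-type estimates of Lemma \ref{lem:SDE1st2nd} apply with $1+o_{k_4}$-type control. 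Making this precise amounts to writing $\mathfrak{r}(x,\alpha,f)$ as the ratio of the two killed-diffusion densities at the common profile $f$ and reading off that the $\alpha$-dependence factors out of everything except these two negligible pieces.

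The main obstacle is bookkeeping the uniformity: one must check that for \emph{every} profile $f\in F$ (not just in an averaged sense) the Radon--Nikodym derivative $\mathfrak{r}$ is within $e^{\sqrt 2\alpha}(1+o_{k_1})$, which requires that the decomposition ``Gaussian endpoint density $\times$ Girsanov shift $\times$ law of the centered path'' holds pathwise. The clean way to get this is to disintegrate $\mathfrak{s}$ over the pair $(\mathfrak{U}^o_{T_-}(\cdot),\mathfrak{U}^o_{T_+}(\cdot))$: the conditional law of the centered decoration given these endpoint data does not see the additive shift at all, the endpoint data transform affinely under the shift, and the only place the shift survives is in the joint density of the endpoints, where — by the same Girsanov computation used throughout Section \ref{sec:bessels} and in Lemma \ref{lem:SDE1st2nd} — it produces exactly $e^{\sqrt 2\alpha}$ times a Gaussian correction that is $1+o_{k_1}$ on the stated ranges of $x,h,\alpha$. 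I would then invoke Lemma \ref{lem:SDE1st2nd} (and, for $x$ outside the central barrier window but still $|x|\le(\log k_1)^{17/18}$, the ``no restriction \eqref{eq-initialcondbar}'' version of its upper bound) to guarantee that the mass of $F$ under both measures is concentrated on the range where these estimates are valid, which closes the argument.
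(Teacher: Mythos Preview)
Your proposal has the right flavor—the exponent $\sqrt{2}\alpha$ does ultimately come from a Gaussian tilt—but it rests on an incorrect identification and misses the structural ingredient the paper actually uses.

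You write that ``multiplying the decoration by $e^{\sqrt{4/\beta}x}$ is \dots exactly equivalent to shifting the starting height of the real part of the diffusion by $x$.'' This is not true as stated: $D_j^o$ carries the barrier indicator $\one[\mathscr{P}_j(\theta)]$ from \eqref{eq:decorationbarrier2}, and that indicator constrains the \emph{absolute} level $\mathfrak{U}^{o}_t(\theta)$ on $[T_\dagger,T_+]$. Multiplying $D_j^o$ by a constant leaves the indicator untouched, whereas shifting the initial height shifts $\mathfrak{U}^{o}_t$ for all $t$ and hence moves the path relative to the (fixed) barrier. Your Girsanov computation—``the tilt removes the drift, the centered bridge law does not see the shift''—would be correct for a bare Brownian motion hitting a level, but the barrier event breaks exactly the shift-covariance you are invoking. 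Disintegrating over the endpoint profile $(\mathfrak{U}^o_{T_-},\mathfrak{U}^o_{T_+})$ does not help, because $\mathscr{P}_j$ depends on the whole path, not just its endpoints.

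The paper's route is different and exploits a feature of \eqref{eq:dSDE} you do not mention: on $[T_-,T_\dagger]$ there is \emph{no} barrier and no $\theta$-dependence, so $\mathfrak{U}^o_{T_\dagger}(0)$ is a clean Gaussian of variance $T=T_\dagger-T_-\sim(\log k_1)^{29/30}$, independent of the post-$T_\dagger$ driving noise. The paper writes $\mathfrak{s}(e^{\sqrt{4/\beta}x}df)$ as a mixture over this Gaussian variable; after the change of variable the only $x$-dependence sits in the Gaussian density $e^{-(y-x+\sqrt{2}T)^2/(2T)}$, whose logarithmic derivative in $x$ equals $\sqrt{2}+(y-x)/T=\sqrt{2}+o_{k_1}$ uniformly on the barrier-constrained support $J$ (width $O((\log k_1)^{9/10})$) and the allowed range $|x|\le(\log k_1)^{17/18}$. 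Gronwall then gives $e^{\sqrt{2}\alpha}(1+o_{k_1})$. The point is that the barrier only acts \emph{after} $T_\dagger$, so the free Gaussian on $[T_-,T_\dagger]$ absorbs the $x$-shift without touching the barrier event. Your endpoint disintegration does not isolate a barrier-free Gaussian piece, which is why the argument does not close.
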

   The same holds for $\mathfrak{p}$ trivially by integrating over the random phase:
   \begin{corollary}\label{cor:pregularity}
   Using the same $F$, and for the same set of $|x| \leq (\log k_1)^{17/18}$,
   the Radon-Nikodym derivative $\mathfrak{t}(x,\alpha,f)$ of $\mathfrak{p}(\alpha+x, e^{\sqrt{4/\beta}(\alpha+x)}df)$ with respect to $\mathfrak{p}(x,e^{\sqrt{4/\beta}x}df)$ satisfies
   \[
     \mathfrak{t}(x,\alpha,f) = e^{\sqrt{2}\alpha}(1+o_{k_1}).
   \]
 \end{corollary}
 \begin{proof}[Proof of Lemma \ref{lem:kregular}]
We recall the definition of $\mathfrak{U}_t^{o,j}(h)$
  from below \eqref{eq:decoration}.
  In particular, we have a decomposition
  \begin{equation}\label{eq:Djo_1old}
    \begin{aligned}
    &\mathfrak{U}_{T_+}^{o}(\theta)
    =\mathfrak{U}_{T_\dagger}^o(0)
    +(\mathfrak{U}_{T_+}^o(\theta)-\mathfrak{U}_{T_\dagger}^o(0)), \\
    &\mathfrak{L}_{T_+}^{o}(\theta)
    =\mathfrak{L}_{T_\dagger}^o(0)
    +(\mathfrak{L}_{T_+}^o(\theta)-\mathfrak{L}_{T_\dagger}^o(0)), \\
    \end{aligned}
  \end{equation}
  where the entire process $\{(\mathfrak{L}_{T_+}^o(\theta)-\mathfrak{L}_{T_\dagger}^o(0)) : \theta\}$ is independent of $\mathfrak{L}_{T_\dagger}^o(0)$ (and hence also $\mathfrak{U}_{T_\dagger}^o(0)$).
  Then $D_j^o$ can be decomposed as
  \begin{equation}\label{eq:Djo_1}
    D_j^o(x)
    \eqqcolon
    e^{\mathfrak{U}_{T_\dagger}^o(0)}
    \times
    \widehat{D}(x+\sqrt{\tfrac{\beta}{4}}\mathfrak{U}_{T_\dagger}^o(0)),
  \end{equation}
  which makes $\widehat{D}(x)$ independent of $\mathfrak{U}_{T_\dagger}^o(0)$.  Let $\widehat{\mathfrak{s}}(x,\cdot)$ be the law of $\widehat{D}(x)$.
  Note to have $D_j^o(x) \in e^{\sqrt{4/\beta}x}F$, we must have
  \[
    \sqrt{\tfrac8\beta}\mathcal{A}_{T_\dagger}^-
    \leq
    \sqrt{\tfrac4\beta}x
    +
    \mathfrak{U}_{T_\dagger}^{o}(0)
    \leq
    \sqrt{\tfrac8\beta}\mathcal{A}_{T_\dagger}^+,
  \]
  as if not, then by definition $D_j^o \equiv 0$.  Define the interval 
  \[
    J = 
    \sqrt{2}(T_\dagger - T_-)
    +
    [
      \sqrt{2}\mathcal{A}_{T_\dagger}^-,
      \sqrt{2}\mathcal{A}_{T_\dagger}^+
    ],
  \] 
  which has width which is at most $O( (\log k_1^+)^{13/14})$ and allows us to write the previous display as $x+\sqrt{\tfrac{\beta}{4}}\mathfrak{U}_{T_\dagger}^{o}(0) + \sqrt{2}(T_\dagger - T_-) \in J$.
  Thus we have a representation for $f \in F$, with $T=T_\dagger-T_-$, and where integrate over the law of $y = x+\sqrt{\tfrac{\beta}{4}}\mathfrak{U}_{T_\dagger}^{o}(0) + \sqrt{2}T$,
  \[
    \begin{aligned}
    \mathfrak{s}(x,e^{\sqrt{{4}/{\beta}}x}df)
    &=
    \int_{J}
    e^{-(y-x-\sqrt{2}T)^2/(2T)}\widehat{\mathfrak{s}}(y-\sqrt{2}T,e^{\sqrt{{4}/{\beta}}(y-\sqrt{2}T)} df)
    \,\frac{dy}{\sqrt{2\pi T}}.
    \end{aligned}
  \]
  
  Hence this probability is differentiable in $x$ and in fact
  \[
    \frac{d}{dx} \mathfrak{s}(e^{\sqrt{{4}/{\beta}}x}df)
    =
    \int_{J}
    (\sqrt{2}-(x-y)/T)
    e^{-(y-x-\sqrt{2}T)^2/(2T)}\widehat{\mathfrak{s}}(y-\sqrt{2}T,e^{\sqrt{{4}/{\beta}}(y-\sqrt{2}T)} df)
    \,\frac{dy}{\sqrt{2\pi T}}.
  \]
  On the assumptions on $x,y$, the factor $(\sqrt{2}-(x-y)/T)$ is bounded uniformly over the $x$ and $y$ considered by $\sqrt{2} + (\log k_1)^{-\delta}$ for some $\delta>0,$ and hence we conclude
  \[
    \frac{d}{dx} \mathfrak{s}(x,e^{\sqrt{{4}/{\beta}}x}df)
    =
    (\sqrt{2} + o_{k_1})\mathfrak{s}(x,e^{\sqrt{{4}/{\beta}}x}df).
  \]
  Hence from Gronwall's inequality, we conclude the claimed estimate.
\end{proof}

\subsection{Mixing of phase}

The final technical estimate that must be done for the decoration, which only concerns the case $\sigma=1$, is the mixing of phase.  Specifically, we must estimate the statistical difference between
\[
  \mathfrak{s}(x,e^{i\alpha+x} df)
  \quad\text{and}\quad
  \Exp\mathfrak{s}(x,e^{i 2\pi U + x} df),
\]
where $U \lawequals \Unif([0,1])$.  This is the measure that appears in $\overline{\mathfrak{m}}$ in \eqref{eq:2ndintensityprime}.
Once more we let $F$ be the subset of $\mathcal{C}([-2\pi k_1,0],\mathbb{C})$ for which $\max_\theta |f(\theta)| > e^{-k_7}$.

We use a representation that is similar to \eqref{eq:Djo_1}, namely that
\begin{equation}\label{eq:Djo_2}
    D_j^o(x) \eqqcolon
    e^{i W}
    \times
    \widecheck{D}(x),
  \end{equation}
  where $W$ is a centered Gaussian variable, independent of $\widecheck{D}$ of variance $\tfrac{4}{\beta}(T_\dagger-T_-)$, corresponding to the imaginary part of the increment of $\mathfrak{L}_{T_\dagger} - \mathfrak{L}_{T_-}$.
  Now by virtue of the variance being large, we can make a total variation comparison of $e^{i(\alpha + W)}$ to $e^{iU}$.
\begin{lemma}
  With $\operatorname{dtv}$ the total variance distance, for a real Gaussian
  $Z$
  of variance $V$ and a uniform variable $U$ on $[0,1]$, there is an absolute constant $C>0$ so that
  \[
    \sup_{\alpha \in \R} \operatorname{dtv}( e^{i(\alpha + Z)}, e^{i2\pi U}) \leq
    Ce^{-2\pi^2 V}.
  \]
  \label{lem:dtv}
\end{lemma}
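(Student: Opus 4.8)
\textbf{Proof plan for Lemma \ref{lem:dtv}.}

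The plan is to compute the total variation distance directly using Fourier analysis on the circle $\R/2\pi\Z$. Pushing forward the law of $\alpha + Z$ (with $Z$ a centered Gaussian of variance $V$) to the circle via $x \mapsto e^{ix}$ gives the \emph{wrapped Gaussian} density
\[
  g_\alpha(\theta) = \frac{1}{\sqrt{2\pi V}} \sum_{k \in \Z} \exp\Bigl( -\frac{(\theta - \alpha + 2\pi k)^2}{2V} \Bigr),
  \qquad \theta \in [0,2\pi),
\]
while $e^{i 2\pi U}$ has the uniform density $\frac{1}{2\pi}$ on the circle. By the Poisson summation formula, the Fourier coefficients of $g_\alpha$ are
\[
  \widehat{g_\alpha}(n) = \frac{1}{2\pi} \int_0^{2\pi} g_\alpha(\theta) e^{-in\theta}\,d\theta = \frac{1}{2\pi} e^{-in\alpha} e^{-n^2 V/2}, \qquad n \in \Z,
\]
so that $g_\alpha(\theta) - \frac{1}{2\pi} = \frac{1}{2\pi}\sum_{n \neq 0} e^{in(\theta-\alpha)} e^{-n^2 V/2}$. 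This is the key identity, and I would derive it at the start.

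Next, bound the total variation distance by $\tfrac12 \|g_\alpha - \tfrac{1}{2\pi}\|_{L^1([0,2\pi])}$, and estimate the $L^1$ norm. Since the function $\theta \mapsto g_\alpha(\theta) - \frac{1}{2\pi}$ is continuous with the uniformly (in $\alpha$) absolutely convergent Fourier series above, I would bound
\[
  \Bigl\| g_\alpha - \tfrac{1}{2\pi} \Bigr\|_{L^1([0,2\pi])}
  \leq 2\pi \Bigl\| g_\alpha - \tfrac{1}{2\pi} \Bigr\|_{L^\infty([0,2\pi])}
  \leq \sum_{n \neq 0} e^{-n^2 V/2}
  = 2\sum_{n=1}^\infty e^{-n^2 V/2}.
\]
The dominant term is $n = 1$, giving $e^{-V/2}$; the remaining terms $n \geq 2$ contribute $e^{-n^2 V/2} \leq e^{-2V}e^{-(n^2-4)V/2}$, which for $V$ bounded below is geometrically smaller, but for small $V$ this crude bound only yields the exponent $V/2$, not $2\pi^2 V$. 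To recover the sharp exponent $2\pi^2 V$ asserted in the statement, I would instead \emph{not} pass through $L^\infty$ but work directly: apply Poisson summation in the \emph{other direction}, writing $g_\alpha$ as a single Gaussian bump wrapped around, so that $\|g_\alpha - \frac{1}{2\pi}\|_{L^1}$ is controlled by the mass of the Gaussian $N(\alpha, V)$ that falls outside a fundamental domain of length $2\pi$ — i.e.\ the probability that $|\alpha + Z - \alpha'| > \pi$ for the nearest lattice representative — which by the Gaussian tail bound is $\lesssim \exp(-\pi^2/(2V))$ for \emph{small} $V$. Wait: this has the opposite monotonicity. The correct reading must be that $V$ ranges over values where $V$ is large (which is the regime of the application, $V = \tfrac{4}{\beta}(T_\dagger - T_-) \to \infty$), so the wrapped Gaussian is nearly uniform; then the Fourier estimate above \emph{is} sharp and the $n=1$ term $2 e^{-V/2}$ governs it. The exponent in the statement is $2\pi^2 V$, which suggests the intended normalization of the variance differs by a factor of $(2\pi)^2$ (i.e.\ $U$ is uniform on $[0,1]$ and the Gaussian should be thought of as living on the same scale); under the rescaling $\theta \mapsto \theta/(2\pi)$ the Fourier mode $n=1$ contributes $e^{-(2\pi)^2 V/2} = e^{-2\pi^2 V}$, which matches. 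So I would carefully track which circle ($\R/2\pi\Z$ versus $\R/\Z$) each variable lives on: $e^{i2\pi U}$ with $U \sim \Unif[0,1]$ lives on $\R/2\pi\Z$ but its "phase" $2\pi U$ is uniform on $[0,2\pi]$, matching $\alpha + Z$ directly, so in fact the relevant Fourier computation gives $e^{-n^2 V/2}$ and hence exponent $V/2$, not $2\pi^2 V$ — unless the lemma intends the variable $U$ and the Gaussian to be compared on $\R/\Z$ with $Z$ rescaled.

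Given this ambiguity, the cleanest route is: (1) establish the Poisson-summation Fourier identity for $\widehat{g_\alpha}(n)$; (2) bound $\dtv \leq \tfrac12\sum_{n\neq 0}|\widehat{g_\alpha}(n)| \cdot (\text{factor})$ using that $\dtv$ between two circle measures equals half the $L^1$ distance of densities, and $L^1 \leq 2\pi\,L^\infty$, and $L^\infty$ of a function with Fourier coefficients $c_n$ is at most $\sum|c_n|$; (3) sum the resulting geometric-type series $\sum_{n\geq 1} e^{-2\pi^2 n^2 V'}$ where $V'$ is the variance \emph{in the $\R/\Z$ normalization} (so $V' = V/(2\pi)^2$ if $Z$ has variance $V$ on the $2\pi$-scale, but the statement's $e^{-2\pi^2 V}$ indicates $V$ is already the $\R/\Z$-scale variance), bounding it by $C e^{-2\pi^2 V}$ via $\sum_{n\geq 1} e^{-2\pi^2 n^2 V} \leq e^{-2\pi^2 V}\sum_{n\geq 1} e^{-2\pi^2(n^2-1)V} \leq e^{-2\pi^2 V} \cdot \frac{1}{1 - e^{-6\pi^2 V}}$ for $V$ bounded below, absorbing everything into $C$. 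The \textbf{main obstacle} is purely bookkeeping: pinning down precisely which scaling convention makes the asserted constant $2\pi^2$ correct, and then ensuring the constant $C$ can indeed be taken absolute (uniform over all $V > 0$, not just $V$ large) — for very small $V$ the distance is close to $1$ and $e^{-2\pi^2 V}$ is close to $1$, so one only needs $C \geq 1$ and a crude bound $\dtv \leq 1 \leq C e^{-2\pi^2 V} e^{2\pi^2 V} $, handled by splitting into $V \leq V_0$ (trivial, using $\dtv \leq 1$) and $V \geq V_0$ (the Fourier estimate), with $V_0$ chosen so the geometric series converges with a clean constant.
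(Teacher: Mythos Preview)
Your approach---compute the Fourier coefficients of the wrapped Gaussian on the circle and bound the total variation distance by the $\ell^1$ sum of the non-zero modes---is exactly what the paper does. The paper's entire proof is: the $k$-th coefficient is $\Exp e^{i2\pi k(\alpha+Z)}=e^{i2\pi k\alpha}e^{-2\pi^2 k^2 V}$, so $\operatorname{dtv}\leq \tfrac{1}{\sqrt{2\pi}}\sum_{k\neq 0}e^{-2\pi^2 k^2 V}\leq Ce^{-2\pi^2 V}$.

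Your confusion over the exponent is legitimate and not a gap in your argument. Taking the statement literally, the Fourier modes of the law of $e^{i(\alpha+Z)}$ on the unit circle are $\Exp e^{ik(\alpha+Z)}=e^{ik\alpha}e^{-k^2V/2}$, giving exponent $V/2$; the paper instead computes $\Exp e^{i2\pi k(\alpha+Z)}$, which are the modes of $(\alpha+Z)\bmod 1$ viewed via $t\mapsto e^{i2\pi t}$, tacitly reinterpreting the random variable. This is a minor normalization slip in the lemma statement/proof, not something you are missing. In the only application (Corollary~\ref{cor:dtv}), the variance $V=\tfrac{4}{\beta}(T_\dagger-T_-)\to\infty$ and all that is used is exponential decay in $V$, so either exponent suffices; your handling of small $V$ by the trivial bound $\operatorname{dtv}\leq 1$ is also fine and more careful than the paper's one-line treatment.
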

\begin{proof}
  By periodicity, we may take $\alpha \in [0,2\pi]$.  We compute the Fourier coefficients of the law of $e^{i(\alpha + Z)}$.  Note that the $k$-th coefficient is given by
  \[
    \Exp e^{i2\pi k(\alpha + Z)}
    =e^{i2\pi k \alpha} e^{ -2\pi^2 k^2 V}.
  \]
  As the total variation distance is given by integrating the
  $\text{L}^1$--distance of the densities, we have
  \[
    \sup_{\alpha \in \R} \operatorname{dtv}( e^{i(\alpha + Z)}, e^{i2\pi U}) \leq
    {2\pi}
    \sum_{k \neq 0} \bigl|\Exp e^{i2\pi k(\alpha + Z)}\bigr|
    \leq
    Ce^{-2\pi^2 V}.
  \]
\end{proof}
This leads to:
\begin{corollary}
\label{cor:dtv}
Over any subset $F \subseteq \mathcal{C}( (-2\pi k_1,0), \C)$ for which $e^{i\alpha}F = F$ for any $\alpha \in \R$,
\[
  \sup_{\alpha \in \R}
  d_{\operatorname{BL}}
  (\mathfrak{s}(x,e^{i\alpha+x} df),
  \Exp\mathfrak{s}(x,e^{i 2\pi U + x} df)
  )
  \leq
  Ce^{-(8/\beta)\pi^2 (T_\dagger - T)}
  \mathfrak{s}(x,e^{x}F).
\]
\end{corollary}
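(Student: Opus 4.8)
The plan is to reduce the claim to Lemma \ref{lem:dtv} via the decomposition \eqref{eq:Djo_2} together with a maximal coupling of the two competing phase distributions. Recall from \eqref{eq:Djo_2} that $D_j^o(\theta)=e^{iW}\widecheck{D}(\theta)$ with $W$ a centered real Gaussian of variance $V\coloneqq\tfrac{4}{\beta}(T_\dagger-T_-)$, independent of $\widecheck{D}$. First I would unwind the two measures in the statement: $\mathfrak{s}(e^{i\alpha+x}\,df)$ is the law of $e^{i(\alpha+W)+x}\widecheck{D}$, while $\Exp_U\,\mathfrak{s}(e^{i2\pi U+x}\,df)$, with $U\sim\Unif([0,1])$ independent of everything, is the law of $e^{i(2\pi U+W)+x}\widecheck{D}$; since $2\pi U$ is uniform on the circle and is independent of $(W,\widecheck{D})$, the pair $\bigl(e^{i(2\pi U+W)},\widecheck{D}\bigr)$ has the same law as $\bigl(e^{i2\pi U'},\widecheck{D}\bigr)$ for a uniform $U'$ independent of $\widecheck{D}$. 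Thus both measures are obtained from $e^{x}\widecheck{D}$ by independently multiplying by a random phase, that phase being $e^{i(\alpha+W)}$ in the first case and $e^{i2\pi U'}$ in the second.

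Next I would take the maximal coupling of the two random phases $e^{i(\alpha+W)}$ and $e^{i2\pi U'}$, constructed so as to be independent of $\widecheck{D}$. By Lemma \ref{lem:dtv} applied with variance $V$, these phases coincide off an event $E$ with $\Pr(E)\le C e^{-2\pi^2 V}$, uniformly in $\alpha$; note $2\pi^2 V=\tfrac{8}{\beta}\pi^2(T_\dagger-T_-)$, the claimed exponent. Using the identity coupling of $\widecheck{D}$, on $E^{c}$ the random elements $e^{i(\alpha+W)}e^{x}\widecheck{D}$ and $e^{i2\pi U'}e^{x}\widecheck{D}$ are equal, and since $e^{i\theta}F=F$ for every $\theta$ they both lie in $F$ exactly when $e^{x}\widecheck{D}\in F$, so their contributions to the two measures restricted to $F$ cancel on $E^{c}$. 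Hence the signed measure (restriction to $F$ of) $\mathfrak{s}(e^{i\alpha+x}\,df)-\Exp_U\,\mathfrak{s}(e^{i2\pi U+x}\,df)$ is carried by $E$; and because $E$ is independent of $\{e^{x}\widecheck{D}\in F\}$ while $\Pr(e^{x}\widecheck{D}\in F)=\Pr(e^{x}D_j^{o}\in F)=\mathfrak{s}(e^{x}F)$ by rotation invariance of $F$, its total mass is at most $2\,\Pr(E)\,\mathfrak{s}(e^{x}F)$.

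Finally, since every test function admissible in $d_{\operatorname{BL}}$ is bounded by $1$, the bounded Lipschitz distance is dominated by this total mass, giving $d_{\operatorname{BL}}\le 2Ce^{-2\pi^2V}\mathfrak{s}(e^{x}F)$; absorbing the factor $2$ into $C$ and taking the supremum over $\alpha$, which is harmless by the uniformity in Lemma \ref{lem:dtv}, yields the corollary. The only place requiring care is the bookkeeping around the restriction to $F$: one must use the rotation invariance $e^{i\theta}F=F$ both to identify $\Exp_U\,\mathfrak{s}(e^{i2\pi U+x}\,df)$ on $F$ with a phase randomization of $e^{x}\widecheck{D}$ on $F$, and to ensure that on the agreement event the membership indicator $\one[e^{x}\widecheck{D}\in F]$ is common to the two coupled elements so their contributions genuinely cancel. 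Everything else is the standard maximal-coupling-plus-Lemma \ref{lem:dtv} argument, the exponential gain being just the total variation bound transported through the mass-preserving phase multiplication.
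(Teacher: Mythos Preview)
Your proof is correct and follows essentially the same approach as the paper: normalize by $\mathfrak{s}(e^{x}F)$ to reduce to probability measures, use the decomposition \eqref{eq:Djo_2} to separate the independent Gaussian phase $e^{iW}$ from $\widecheck{D}$, couple the two phase distributions via Lemma~\ref{lem:dtv}, and bound $d_{\operatorname{BL}}$ by the resulting total variation. Your write-up is in fact more careful than the paper's about the role of the rotation invariance $e^{i\theta}F=F$ in making the restriction to $F$ and the normalization $\mathfrak{s}(e^xF)$ behave correctly under the coupling.
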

\begin{proof}
  Rescaling both sides of the equation by $\mathfrak{s}(e^{x}F),$ we then have a bound for the bounded variation distance of two probability measures,
  \[
    \mathfrak{s}(x,e^{i\alpha+x}df)/\mathfrak{s}(x,e^{x}F)
    \quad\text{and}\quad
    \Exp\mathfrak{s}(x,e^{i 2\pi U + x} df)/\mathfrak{s}(x,e^{x}F).
  \]
  From \eqref{eq:Djo_2}, we have that the phase $e^{iW}$ is independent of $\widecheck{D}$, and hence we have a coupling of these two measures that holds with probability $1-\operatorname{dtv}( e^{i(\alpha + Z)}, e^{i2\pi U})$.  By the definition of bounded--Lipschitz metric, the claimed bound follows.
\end{proof}

\appendix

{\bf  Appendices}
\vspace{-0.5cm}
\section{Point processes}
\label{sec:pointprocesses}

We record some elementary properties about point processes, and in particular approximation of point processes by Poisson processes.
We will let $\Gamma$ be a complete separable metric space, with a metric $\partial_0$ which is bounded by $1.$
We will use the metrics $\partial_1$ and $\partial_2$ from the introduction.
%  Over this space, 
We also define two metrics over finite point measures on $\Gamma$ and their laws: for any finite point measures
 $\xi_1 = \sum_{i=1}^m \delta_{y_i}$ and $\xi_2 = \sum_{i=1}^n \delta_{z_i}$  
on  $\Gamma$,
\[
  d_{1}(\xi_1, \xi_2) \coloneqq
  \begin{cases}
    0, & \text{if } m = n = 0, \\
    \min_{\pi} \frac{1}{n}\sum_{i=1}^n \partial_0(y_i, z_{\pi(i)}), & \text{if } m = n > 0, \\
    1, & \text{if } m \neq n, \\
  \end{cases}
\]
with the minimum being 
 over all permutations of $\{1,2,\dots,n\}.$
Finally, for two laws on point processes $Q_1$ and $Q_2$ we let
\[
  d_2( Q_1, Q_2) \coloneqq \inf_{(\xi_1,\xi_2)} \Exp( d_1( \xi_1, \xi_2)).
\]
We note that it is possible to bound
\(
\partial_1(\xi_1,\xi_2) \leq d_{1}(\xi_1, \xi_2)\max\{|\xi_1|,|\xi_2|\}
%  \begin{cases}
%    d_{1}(\xi_1, \xi_2)m & \text{if } m = n, \\
%    m
%  \end{cases}
\)
where $|\xi_j|$ is the cardinality of the point set for $j=1,2$,
and hence
\begin{equation}\label{eq:d2del2}
  \partial_2( Q_1, Q_2)
  \leq
  \inf_{(\xi_1,\xi_2)}
  \biggl\{
  \sqrt{\Exp( d_1^2( \xi_1, \xi_2))}
  \sqrt{\vspace{3em}\Exp( |\xi_1|^2 + |\xi_2|^2)}
\biggr\}.
\end{equation}

We will formulate a corollary of \cite{ChenXia} for Poisson process approximation with local dependency structure.
We let $\left\{ \Xi_i : i \in \mathcal{I} \right\}$ be a set of Bernoulli point processes, so that $\Xi_i(\Gamma) \in \{0,1\}.$   We will suppose that 
elements of  this set have local dependence, in that for each $i \in \mathcal{I}$ there are sets $A_i \subseteq B_i \subseteq \mathcal{I}$ with $i \in A_i$ so that
\begin{equation}\label{eq:PPndp}
  \forall~i \in \mathcal{I}
  \quad
  \{ \Xi_j : j \in A_i \}
  \quad
  \text{is independent of}\quad
  \{ \Xi_j : j \in \mathcal{I}\setminus B_i\}.
\end{equation}
For all $i \in \mathcal{I}$  we let ${\lambda}_i$ be the intensity measure of $\Xi_i$, and we set $\Xi := \sum_{i \in \mathcal{I}} \Xi_i$,  ${\lambda} \coloneqq \sum_i \mathbf{\lambda}_i$ and $\Lambda \coloneqq {\lambda}(\Gamma).$
\begin{theorem}\label{thm:PP}
  Suppose $\left\{ \Xi_i : i \in \mathcal{I} \right\}$ satisfy \eqref{eq:PPndp}, and define for any $i \in \mathcal{I},$
  \[
    T_i \coloneqq \sum_{j \in A_i \setminus \{i\}} \Xi_i(\Gamma),
    \quad
    P_i \coloneqq \Xi_i(\Gamma),
    \quad
    \text{and}
    \quad
    L_i \coloneqq  \sum_{j \in \mathcal{I}\setminus B_i}\Exp \bigl(P_j\bigr).
  \]
  Then there is a numerical constant $C>0$ so that for a Poisson process $\Pi$ with intensity $\lambda,$
  \[
    \partial_2( \Xi, \Pi) \leq
    C
    (\Var(\Xi(\Gamma)) + 3\Lambda)^{3/2}
    \sum_{i \in \mathcal{I}}
    \biggl( \frac{ \Exp(P_i)\Exp(T_i) + \Exp( T_i P_i) + (\Exp( P_i) )^2}{ L_i^2 }\biggr).
  \]
\end{theorem}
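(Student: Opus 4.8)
The plan is to derive Theorem \ref{thm:PP} as a corollary of the Poisson process approximation results of \cite{ChenXia}, specialized to the local dependence structure \eqref{eq:PPndp}. The work is entirely in translating their bounds, stated in terms of the metric $d_1$ (and its induced Wasserstein-type distance $d_2$) on point configurations, into a bound on $\partial_2$. First I would invoke \cite{ChenXia} directly: for a superposition $\Xi = \sum_{i \in \mathcal I} \Xi_i$ of Bernoulli point processes with the $A_i \subseteq B_i$ structure of \eqref{eq:PPndp}, and $\Pi$ Poisson with the matching intensity $\lambda$, they provide an upper bound on $d_2(\Xi,\Pi)$ of the form $\sum_{i} (\Exp(P_i)\Exp(T_i) + \Exp(T_i P_i) + (\Exp P_i)^2)/L_i^2$, up to a numerical constant, where $T_i,P_i,L_i$ are exactly as defined in the statement. (Their $L_i$ plays the role of a ``magnitude of the expected number of points far from $i$'' normalization, and the hypotheses $L_i>0$ are implicit.) This is the engine; the remaining issue is purely the passage from $d_2$ to $\partial_2$.

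\textbf{From $d_2$ to $\partial_2$.} The key step is the inequality $\partial_1(\xi_1,\xi_2) \leq d_1(\xi_1,\xi_2)\max\{|\xi_1|,|\xi_2|\}$, recorded just before \eqref{eq:d2del2}: when the configurations have equal cardinality $n$, a permutation $\pi$ achieving the $d_1$-minimum has $\frac1n\sum_i \partial_0(y_i,z_{\pi(i)}) = d_1(\xi_1,\xi_2)$, and then $\max_i \partial_0(y_i,z_{\pi(i)}) \leq n \cdot d_1(\xi_1,\xi_2)$; when the cardinalities differ both distances equal $1$. Taking the same coupling $(\xi_1,\xi_2)$ that nearly realizes $d_2(\Xi,\Pi)$, applying Cauchy--Schwarz gives
\[
  \partial_2(\Xi,\Pi) \leq \sqrt{\Exp(d_1^2(\xi_1,\xi_2))}\,\sqrt{\Exp(|\xi_1|^2 + |\xi_2|^2)},
\]
which is precisely \eqref{eq:d2del2}. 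Here $\xi_1 \sim \Xi$ and $\xi_2 \sim \Pi$, so $\Exp|\xi_1|^2 = \Var(\Xi(\Gamma)) + \Lambda^2$ and $\Exp|\xi_2|^2 = \Lambda + \Lambda^2$, whence $\Exp(|\xi_1|^2+|\xi_2|^2) \leq 2(\Var(\Xi(\Gamma)) + \Lambda^2 + \Lambda) \leq C(\Var(\Xi(\Gamma)) + 3\Lambda)^2$ after bounding $\Lambda^2 + \Lambda$ by a constant multiple of $(\Var(\Xi(\Gamma))+3\Lambda)^2$ — here one uses $\Lambda \leq \Lambda^2 + 1$ is not available for small $\Lambda$, so instead note $\Lambda^2 \le (\Var(\Xi(\Gamma)) + 3\Lambda)^2$ and $\Lambda \le (\Var(\Xi(\Gamma)) + 3\Lambda)^2$ when the latter is $\geq 1$, while for $(\Var(\Xi(\Gamma))+3\Lambda) \le 1$ the trivial bound $\partial_2 \le 1$ together with the fact that the right-hand sum is then automatically large handles the case; in practice one absorbs everything into $(\Var(\Xi(\Gamma)) + 3\Lambda)^{2}$. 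Thus $\sqrt{\Exp(|\xi_1|^2+|\xi_2|^2)} \leq C(\Var(\Xi(\Gamma))+3\Lambda)$. Finally, using $d_1 \leq 1$ always, $\Exp(d_1^2) \leq \Exp(d_1) = d_2(\Xi,\Pi)$ up to the choice of near-optimal coupling, so $\sqrt{\Exp(d_1^2)} \leq \sqrt{d_2(\Xi,\Pi)}$.

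\textbf{Combining.} Substituting the \cite{ChenXia} bound $d_2(\Xi,\Pi) \leq C\sum_i (\Exp(P_i)\Exp(T_i)+\Exp(T_iP_i)+(\Exp P_i)^2)/L_i^2$ into $\partial_2(\Xi,\Pi) \leq C(\Var(\Xi(\Gamma))+3\Lambda)\sqrt{d_2(\Xi,\Pi)}$ would yield a bound with a square root on the sum, which is \emph{weaker} than the claimed statement. So the plan must instead avoid Cauchy--Schwarz in this crude form: one should track the coupling from \cite{ChenXia} more carefully, exploiting that in their construction $\xi_1$ and $\xi_2$ agree except on a small (in expectation) set of discrepant points, so that $\Exp(d_1(\xi_1,\xi_2)\max\{|\xi_1|,|\xi_2|\})$ — which is what $\partial_2$ actually needs — is controlled directly by a \emph{size-biased} version of the $d_2$ estimate without a square root. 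Concretely, \cite{ChenXia} produce the coupling via a Stein/size-bias construction in which the number of discrepant points is dominated by $\sum_i (\text{local terms})$ and the total mass $\max\{|\xi_1|,|\xi_2|\}$ on those events has second moment governed by $\Var(\Xi(\Gamma)) + \Lambda$; a Hölder or direct moment argument then gives the stated power $3/2$ on $(\Var(\Xi(\Gamma))+3\Lambda)$ and a \emph{linear} (not square-root) dependence on $\sum_i(\cdots)/L_i^2$. \emph{This is the main obstacle}: correctly reading off from \cite{ChenXia} the size-biased coupling and verifying that the product $d_1 \cdot \max\{|\xi_1|,|\xi_2|\}$ integrates to the advertised form, rather than the lossy $\sqrt{\cdot}$ bound that \eqref{eq:d2del2} would naively give. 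Once that bookkeeping is in place, the theorem follows by collecting constants into the single numerical $C$.
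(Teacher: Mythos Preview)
Your plan goes astray at the very first step: the Chen--Xia bound \cite[(2.6)]{ChenXia} is \emph{not} of the form $d_2(\Xi,\Pi) \leq C\sum_i(\cdots)/L_i^2$. Their estimate carries a prefactor involving a quantity $\kappa_i$, which the paper bounds by $\Var(\Xi(\Gamma))/L_i$; after the elementary estimate $\sqrt{\kappa_i} \leq 1+\kappa_i$ one obtains
\[
  d_2(\Xi,\Pi) \;\leq\; C\sum_{i\in\mathcal I}\left(\frac{1}{L_i} + \frac{\Var(\Xi(\Gamma))}{L_i^2}\right)\bigl(\Exp(P_i)\Exp(T_i)+\Exp(T_iP_i)+(\Exp P_i)^2\bigr).
\]
Since $L_i \leq \Lambda$, the bracket is at most $(\Var(\Xi(\Gamma)) + 3\Lambda)/L_i^2$, so \emph{one full power} of $(\Var(\Xi(\Gamma))+3\Lambda)$ is already built into the $d_2$ bound. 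The paper then combines this with $\partial_2(\Xi,\Pi) \leq d_2(\Xi,\Pi)\sqrt{\Var(\Xi(\Gamma))+3\Lambda}$, obtained from \eqref{eq:d2del2} together with $d_1 \leq 1$, to produce the exponent $3/2$ and a \emph{linear} sum.

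Consequently the ``main obstacle'' you diagnose --- a square root on the sum that would force you back into the internal structure of the Chen--Xia coupling --- is an artifact of having dropped the $\kappa_i$ term from their result. No size-biased refinement of the coupling is needed; once the correct $d_2$ bound is in hand, the combination is a one-line algebraic manipulation using $L_i \leq \Lambda$. You also omit a technical detail that the paper does address: \cite{ChenXia} require $\Gamma$ to be locally compact, which is handled here by restricting both point processes to a $\sigma$-compact subset of full $\lambda$-measure furnished by tightness of the finite Borel measure $\lambda$.
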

\begin{proof}
  We start by using \cite[(2.6)]{ChenXia} to establish that there is a numerical constant $C>0$ so that
  \begin{equation}\label{eq:CX1}
    d_2( \Xi, \Pi) \leq
    C\sum_{i \in \mathcal{I}}
    \biggl( \frac{1}{ L_i } + \frac{\Var(\Xi(\Gamma))}{L_i^2} \biggr)
    \cdot
    \bigl( \Exp(P_i)\Exp(T_i) + \Exp( T_i P_i) + (\Exp( P_i) )^2\bigr).
  \end{equation}
  We note that in the notation of that equation $|V_i| = T_i,$ and their  
  {\boldmath{$\lambda$}}
  are our $\lambda.$  Finally, we bound their $\kappa_i$ by
  \(
  \Var( |\Xi(\Gamma)|)/L_i,
  \)
  and we bound $\sqrt{\kappa_i}$ by $1+\kappa_i.$

  We also note that \cite{ChenXia} requires that $\Gamma$ is a locally compact metric space.  As $\lambda$ is a finite Borel measure on $\Gamma,$ it is tight.  Hence there are compact sets $K_i$ so that with $\tilde{\Gamma} =\cup_{i=1}^\infty K_i,$  $\lambda( \Gamma \setminus \tilde{\Gamma} ) = 0.$ The space $\tilde{\Gamma}$ is locally compact, and both point processes $\Xi$ and $\Pi$ put $0$ mass on $\Gamma \setminus \tilde{\Gamma}$ with probability $1.$  Hence we may apply the Theorem of \cite{ChenXia} to $\Xi$ and $\Pi$ considered as point processes on $\tilde{\Gamma}$ with the same metrics $\partial_0,d_1$ and $d_2$.  Furthermore, coupling the point processes as random measures on $\tilde{\Gamma}$ gives a coupling of the point processes as random measures on $\Gamma,$ and hence \eqref{eq:CX1} follows.

  From \eqref{eq:d2del2}, and using that $d_1$ is bounded by $1,$
  \[
    \partial_2
    ( \Xi, \Pi)
    \leq
    d_2
    ( \Xi, \Pi)
    \sqrt{\Var(\Xi(\Gamma)) + 3\Lambda}.
  \]
  As $L_i \leq \Lambda,$ the claim follows.
\end{proof}

We also formulate a change-of-intensity lemma that controls the distance between two Poisson processes of similar intensity.  Define for any two finite Borel measures $\pi$ and $\lambda$ on $\Gamma,$
\begin{equation}\label{eq:BL}
  d_{\operatorname{BL}}(\pi,\lambda) = \sup_{ f } \biggl|\int_{\Gamma} f d(\pi -\lambda)\biggr|,
\end{equation}
with the supremum over all $f$ with both $|f(x)-f(y)| \leq \partial_0(x,y)$ for all $x,y \in \Gamma$ and $|f(x)| \leq 1$ for all $x \in \Gamma.$  Then:
\begin{theorem}\label{thm:PPcom}
  Let $\pi$ and $\lambda$ be two finite measures on $\Gamma.$
  Let $\alpha = \min\{ \pi(\Gamma), \lambda(\Gamma) \}.$  Then
  \[
    d_2(\Pi(\pi), \Pi(\lambda)) \leq \tfrac{1-e^{-\alpha}}{\alpha} d_{\operatorname{BL}}(\pi,\lambda)
    \quad\text{and}\quad
    \partial_2(\Pi(\pi), \Pi(\lambda)) \leq \sqrt{2}\max\{ \pi(\Gamma), \lambda(\Gamma) \}\tfrac{1-e^{-\alpha}}{\alpha} d_{\operatorname{BL}}(\pi,\lambda).
    %\partial_2(\Pi(\pi), \Pi(\lambda)) \leq d_{\operatorname{BL}}(\pi,\lambda).
  \]
\end{theorem}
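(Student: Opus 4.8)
The final statement to prove is Theorem~\ref{thm:PPcom}, which bounds the distances $d_2$ and $\partial_2$ between two Poisson processes $\Pi(\pi)$ and $\Pi(\lambda)$ in terms of the bounded-Lipschitz distance $d_{\operatorname{BL}}(\pi,\lambda)$. The plan is to build an explicit coupling of $\Pi(\pi)$ and $\Pi(\lambda)$ and estimate the expected $d_1$-cost of that coupling directly, then deduce the $\partial_2$ bound using the inequality \eqref{eq:d2del2} that relates $\partial_2$ to $d_2$ through the point counts.

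First I would recall the standard maximal coupling of Poisson processes through a common dominating intensity. Write $\mu = \pi \wedge \lambda$ for the (Borel) infimum measure, and decompose $\pi = \mu + \pi'$, $\lambda = \mu + \lambda'$ where $\pi'$ and $\lambda'$ are mutually singular. Then $\Pi(\pi)$ can be realized as $\Pi(\mu) + \Pi(\pi')$ and $\Pi(\lambda)$ as $\Pi(\mu) + \Pi(\lambda')$ with the $\Pi(\mu)$ part shared and the three pieces independent. Under this coupling, the two configurations agree on the $\Pi(\mu)$ points, so the only contribution to $d_1$ comes from the ``extra'' points of $\Pi(\pi')$ versus $\Pi(\lambda')$. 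The key quantitative fact is that $\|\pi'\|=\|\lambda'\|$ need not hold, but $\|\pi'\| + \|\lambda'\| = \|\pi - \lambda\|_{TV}$ roughly speaking, and more importantly one has the classical bound $d_{TV}(\pi,\lambda)_{\text{as measures}} = \tfrac12\|\pi'\|+\tfrac12\|\lambda'\|$, while $d_{\operatorname{BL}}$ controls this total variation mass because $\partial_0 \le 1$ forces the test functions in \eqref{eq:BL} to include (approximations of) indicators. Actually the cleaner route: on the event that $\Pi(\pi')$ and $\Pi(\lambda')$ are both empty — which has probability $e^{-\|\pi'\|-\|\lambda'\|}$ — the two configurations coincide and $d_1 = 0$; on the complementary event $d_1 \le 1$ trivially. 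Hence $d_2(\Pi(\pi),\Pi(\lambda)) \le 1 - e^{-\|\pi'\|-\|\lambda'\|} \le \|\pi'\| + \|\lambda'\|$. It then remains to show $\|\pi'\|+\|\lambda'\| \le \tfrac{1-e^{-\alpha}}{\alpha}\, d_{\operatorname{BL}}(\pi,\lambda)$; in fact one expects $\|\pi'\| = \|\lambda'\|$ is false in general, so one should instead keep track of the count mismatch. The correct estimate uses that the number of extra points, $N = \Pi(\pi')(\Gamma) \vee \Pi(\lambda')(\Gamma)$ after pairing off as many as possible, has expectation controlled by $\|\pi'\|+\|\lambda'\|$, and $d_1$-cost is at most $\min\{1, N/\max\{\Pi(\pi)(\Gamma),\Pi(\lambda)(\Gamma)\}\}$ when the base configuration is nonempty. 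This is where the factor $\tfrac{1-e^{-\alpha}}{\alpha}$ enters: conditioning on having at least one point in $\Pi(\mu)$ (probability $1-e^{-\|\mu\|} \ge 1-e^{-\alpha}$ only after noting $\|\mu\|$ could be small — so one must be a bit more careful and use the total count, whose mean is $\ge \alpha$), dividing by the total count gives the $1/\alpha$-type factor after taking expectations via Jensen or a direct computation on the Poisson law.

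More precisely, here is the order of steps I would carry out. (1) Set up the $\mu = \pi\wedge\lambda$ decomposition and the product coupling described above. (2) Observe that $d_{\operatorname{BL}}(\pi,\lambda) = \|\pi-\lambda\|_{TV}^{(\partial_0)}$ is comparable to, and in fact bounds below, $\tfrac12(\|\pi'\|+\|\lambda'\|)$ — this uses only that $\partial_0 \le 1$ so that signed-measure total variation is attained by $\pm 1$-valued bounded Lipschitz functions in the metric-$1$ space (a standard density/Lusin argument), giving $\|\pi'\| + \|\lambda'\| \le 2 d_{\operatorname{BL}}(\pi,\lambda)$; but we need the sharper constant so instead note $\|\pi'\|$ and $\|\lambda'\|$ individually are each at most $d_{\operatorname{BL}}$ after symmetrizing, hence $\|\pi'\|+\|\lambda'\|\le$ something like $d_{\operatorname{BL}}$ plus the count discrepancy $|\|\pi\|-\|\lambda\||$ which is itself $\le d_{\operatorname{BL}}$ (take $f\equiv 1$, legal since $|f|\le1$ and $1$-Lipschitz). (3) Compute $\Exp[d_1(\Pi(\pi),\Pi(\lambda))]$ under the coupling: $d_1 \le \Exp[\min\{1, D/T\}]$ where $D$ is the number of unmatched points and $T$ the total; bound $\min\{1,D/T\} \le D/(T\vee 1)$ and take expectations, using independence of $D$ (a Poisson-difference quantity with mean $\le \|\pi'\|+\|\lambda'\|$) from $T\vee 1$ — actually they are not independent, so split on $\{$base nonempty$\}$. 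Using $\Exp[1/(T\vee1)]$ for $T\sim\text{Poisson}(s)$ equals $(1-e^{-s})/s$, and monotonicity in $s$ gives the $\tfrac{1-e^{-\alpha}}{\alpha}$ factor. (4) For $\partial_2$, invoke \eqref{eq:d2del2}: $\partial_2 \le \sqrt{\Exp[d_1^2]}\sqrt{\Exp[|\xi_1|^2+|\xi_2|^2]}$, and since $d_1 \le 1$ we have $\Exp[d_1^2]\le \Exp[d_1] \le d_2$, while $\Exp[|\xi_1|^2 + |\xi_2|^2]$ is dominated by $2\max\{\pi(\Gamma),\lambda(\Gamma)\}^2$ up to lower order — actually for Poisson, $\Exp[N^2] = s^2+s$, so one gets $\sqrt{2}\max\{\pi(\Gamma),\lambda(\Gamma)\}$ as the leading factor as claimed, absorbing the lower-order term by the crude bound $d_2 \le 1$.

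The main obstacle I anticipate is pinning down the exact constant $\tfrac{1-e^{-\alpha}}{\alpha}$ rather than an order-of-magnitude version: this requires being careful that the ``number of unmatched points'' random variable is genuinely dominated by a $\text{Poisson}(\|\pi'\|)$ and $\text{Poisson}(\|\lambda'\|)$ pair, that the division by the total count is handled without spuriously losing a factor, and that the step relating $\|\pi'\|+\|\lambda'\|$ to $d_{\operatorname{BL}}$ does not cost an extra constant. The cleanest fix is probably to \emph{not} optimize the coupling of $\pi'$ against $\lambda'$ at all, but rather couple $\Pi(\pi)$ and $\Pi(\lambda)$ by thinning: realize both from a single $\Pi(\pi\vee\lambda)$ by independent thinning, so the per-point disagreement probability integrates exactly to $\tfrac12\int |d\pi/d(\pi\vee\lambda) - d\lambda/d(\pi\vee\lambda)| d(\pi\vee\lambda)$; this automatically produces the $d_{\operatorname{BL}}$ bound and, after conditioning on the total Poisson count $T\sim \text{Poisson}(\|\pi\vee\lambda\|)$ and using $\Exp[(\text{number disagreeing})/(T\vee1)] \le \tfrac{1-e^{-\|\pi\vee\lambda\|}}{\|\pi\vee\lambda\|}d_{\operatorname{BL}} \le \tfrac{1-e^{-\alpha}}{\alpha}d_{\operatorname{BL}}$ (monotonicity of $s\mapsto (1-e^{-s})/s$ is decreasing, and $\|\pi\vee\lambda\|\ge\alpha$), yields the result. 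Everything else is routine.
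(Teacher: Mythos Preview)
Your approach to the first inequality has a genuine gap. The metric $d_1$ equals $1$ whenever the two configurations have different cardinalities, regardless of how many points they share. Under either of your couplings --- the $\mu = \pi\wedge\lambda$ decomposition or the thinning from $\Pi(\pi\vee\lambda)$ (these are the same coupling) --- the residual processes $\Pi(\pi-\mu)$ and $\Pi(\lambda-\mu)$ are \emph{independent}, so the event $\{|\Pi(\pi)| \neq |\Pi(\lambda)|\}$ has probability of order $\|\pi-\mu\| + \|\lambda-\mu\| = \|\pi-\lambda\|_{TV}$, and on that event $d_1 = 1$. Your claimed pointwise inequality $d_1 \leq D/(T\vee 1)$ is therefore false there. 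Concretely, take $\Gamma = [0,1]$, $\pi = \epsilon\delta_0$, $\lambda = \epsilon\delta_h$ with $h$ small: then $d_{\operatorname{BL}}(\pi,\lambda) = \epsilon h$ and the theorem asserts $d_2 \leq (1-e^{-\epsilon})h$, yet your coupling makes $\Pi(\pi)$ and $\Pi(\lambda)$ independent and gives $\Exp[d_1] \geq \Pr(|\Pi(\pi)| \neq |\Pi(\lambda)|) \sim 2\epsilon$, which does not vanish with $h$.

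The paper does not build a coupling at all: it simply cites Brown--Xia, where the bound comes from Stein's method for spatial Poisson processes; the factor $(1-e^{-\alpha})/\alpha$ is the Stein factor, and $d_{\operatorname{BL}}$ arises because the Stein solution $g(\xi)$ has $x \mapsto g(\xi + \delta_x) - g(\xi)$ both bounded and Lipschitz in $x$. If you want a direct coupling argument, the one that works is a \emph{transportation} coupling rather than a total-variation one: when $\pi(\Gamma) = \lambda(\Gamma)$, take an optimal plan $\gamma$ on $\Gamma\times\Gamma$ with marginals $\pi,\lambda$, realize a single $\Pi(\gamma)$, and project to the coordinates --- this forces $|\Pi(\pi)| = |\Pi(\lambda)|$ deterministically and makes $d_1$ an average of $\partial_0(x_i,y_i)$. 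In the example above this means drawing one Poisson$(\epsilon)$ count $N$, putting $N$ points at $0$ for $\xi_1$ and $N$ at $h$ for $\xi_2$, which gives $\Exp[d_1] = h(1-e^{-\epsilon})$ on the nose. Your derivation of the $\partial_2$ bound from the $d_2$ bound via \eqref{eq:d2del2} is the same as the paper's.
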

\begin{proof}
  The first is nearly a corollary of \cite[Theorem 1.2]{BrownXia}.
  %and \cite[Theorem 1.5]{BrownXia}.
  To derive it, apply Lemma 2.6 directly to the second line of the displayed equation at the top of \cite[p.259]{BrownXia}.
  The second follows from the first by using \eqref{eq:d2del2}.
\end{proof}
\begin{corollary}\label{cor:PPcom}
  For any compact $\Gamma$, and for any $\epsilon > 0$, there is a finite list $f_1,f_2, \dots, f_n$ of non-negative functions $f_j$ with $|f_j(x)-f_j(y)| \leq \partial_0(x,y)$ and $|f_j(x)| \leq 1$ for all $x,y \in \Gamma$ and a $\delta$ so that if
  \[
    \max_{j=1,\dots,n} \biggl|\int_\Gamma f_jd(\pi-\lambda)\biggr| \leq \delta
  \]
  then
  \(
    \partial_2(\Pi(\pi), \Pi(\lambda)) \leq \epsilon.
  \)
\end{corollary}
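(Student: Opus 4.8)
The plan is to deduce Corollary \ref{cor:PPcom} from Theorem \ref{thm:PPcom} by a standard compactness-plus-approximation argument: since $\Gamma$ is compact, the bounded Lipschitz metric $d_{\operatorname{BL}}$ metrizes weak convergence of finite measures of bounded total mass, and hence can be approximated using only finitely many test functions. Concretely, first I would reduce to the case where $\pi(\Gamma)$ and $\lambda(\Gamma)$ are both bounded by some fixed $R$: if either total mass exceeds a threshold to be chosen, we can detect this using the single test function $f\equiv 1$ (up to replacing it by functions bounded by $1$, one takes $f_0$ identically $1$, which is $1$-Lipschitz and bounded by $1$ on $\Gamma$), so requiring $|\int f_0 \,d(\pi-\lambda)|=|\pi(\Gamma)-\lambda(\Gamma)|\le\delta$ together with one more test function of the form $f$ small forces both masses to be comparable and, as we shall see, either both small (in which case $\partial_2$ is automatically tiny since both processes are empty with high probability) or both bounded.

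Next, on the set of finite measures on the compact space $\Gamma$ with total mass at most $R$, the topology of weak convergence is metrizable and the space is compact (by Prokhorov, tightness being automatic on a compact space). The function $\mu\mapsto \partial_2(\Pi(\mu),\Pi(\nu))$ for fixed $\nu$ is continuous in the weak topology — this follows from Theorem \ref{thm:PPcom}, which bounds $\partial_2(\Pi(\pi),\Pi(\lambda))$ by $\sqrt 2 R\cdot\tfrac{1-e^{-\alpha}}{\alpha}\,d_{\operatorname{BL}}(\pi,\lambda)\le \sqrt 2 R\,d_{\operatorname{BL}}(\pi,\lambda)$, together with the fact that $d_{\operatorname{BL}}$ itself induces the weak topology on bounded-mass measures over a compact metric space. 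So it suffices to show: for every $\epsilon>0$ there is a finite family $\{f_1,\dots,f_n\}$ of nonnegative $1$-Lipschitz functions bounded by $1$ and a $\delta>0$ such that $\max_j|\int f_j\,d(\pi-\lambda)|\le\delta$ implies $d_{\operatorname{BL}}(\pi,\lambda)\le \epsilon/(\sqrt2 R)$. This is a compactness statement about the (compact) space $\mathrm{Lip}_1(\Gamma)\cap\{|f|\le1\}$ of test functions in the supremum norm: cover it by finitely many balls of radius $\epsilon/(4R)$ centered at functions $g_1,\dots,g_m$; then $d_{\operatorname{BL}}(\pi,\lambda)\le \max_j|\int g_j\,d(\pi-\lambda)| + \tfrac{\epsilon}{4R}(\pi(\Gamma)+\lambda(\Gamma))\le \max_j|\int g_j\,d(\pi-\lambda)| + \tfrac{\epsilon}{2}$. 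Each $g_j$ is $1$-Lipschitz and bounded by $1$ but need not be nonnegative; writing $g_j = (g_j+1) - 1$ and noting $g_j+1$ is nonnegative and $1$-Lipschitz and bounded by $2$ (rescale by $1/2$), and that the constant function $1$ contributes $|\pi(\Gamma)-\lambda(\Gamma)|$, one converts the family $\{g_j\}$ into a finite family $\{f_j\}$ of the required form such that controlling $\max_j|\int f_j\,d(\pi-\lambda)|$ controls $\max_j|\int g_j\,d(\pi-\lambda)|$.

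Assembling the pieces: include $f_0\equiv 1$ in the list, so that $|\int f_0\,d(\pi-\lambda)|\le\delta$ pins $|\pi(\Gamma)-\lambda(\Gamma)|\le\delta$; choose $R$ large enough (or rather, split into the case $\min\{\pi(\Gamma),\lambda(\Gamma)\}\le 1$, say, handled directly since then both masses are at most $1+\delta$ and $\partial_2$ is bounded crudely by $\sqrt2(1+\delta)\tfrac{1-e^{-\alpha}}{\alpha}d_{\operatorname{BL}}$ with $\alpha$ small, or even both processes are empty up to probability $O(\alpha)$), and the complementary case where both masses lie in a fixed compact range $[c_0,R_0]$. On that range apply the finite-cover argument above with parameter $\epsilon/(\sqrt2 R_0)$, and take $\delta$ small enough that $\max_j|\int f_j\,d(\pi-\lambda)|\le\delta$ yields $d_{\operatorname{BL}}(\pi,\lambda)\le\epsilon/(\sqrt2 R_0)$; Theorem \ref{thm:PPcom} then gives $\partial_2(\Pi(\pi),\Pi(\lambda))\le\epsilon$.

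The one genuinely technical point — and the main obstacle — is the claim that on the compact metric space $\Gamma$, uniform control of a finite family of bounded Lipschitz test functions controls the full $d_{\operatorname{BL}}$ distance; this rests on the compactness of the unit ball of $\mathrm{Lip}_1(\Gamma)$ in the uniform norm (Arzel\`a--Ascoli), which requires $\Gamma$ compact, precisely the hypothesis we are given. The only bookkeeping nuisance is keeping the test functions nonnegative and bounded by $1$ while this cover is in terms of arbitrary (signed) $1$-Lipschitz functions, which is handled by the affine shift $g\mapsto(g+1)/2$ and by absorbing the constant-function contribution into the $f_0\equiv1$ bound. Everything else is routine.
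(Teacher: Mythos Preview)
Your approach is essentially the paper's: use Arzel\`a--Ascoli to produce a finite $\epsilon/2$-net $\{g_j\}$ of the unit bounded-Lipschitz ball, turn these into nonnegative test functions, adjoin the constant $1$, and then invoke Theorem~\ref{thm:PPcom}. The differences are cosmetic: the paper converts the $g_j$ to nonnegative functions by taking positive and negative parts $g_j^{\pm}$ rather than your affine shift $(g_j+1)/2$, and the paper does not split into cases on total mass. It simply records $|\pi(\Gamma)-\lambda(\Gamma)|\le\delta$ (from the constant test function) and writes the bound from Theorem~\ref{thm:PPcom} directly as
\[
\partial_2(\Pi(\pi),\Pi(\lambda))\ \le\ \sqrt{2}\,\frac{\max\{\pi(\Gamma),\lambda(\Gamma)\}}{\max\{\pi(\Gamma),\lambda(\Gamma)\}-\delta}\,(\epsilon/2+\delta),
\]
then takes $\delta$ small.

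Your case-splitting on total mass is therefore extra, and one step in it does not work as written: the hypothesis $\max_j|\int f_j\,d(\pi-\lambda)|\le\delta$ only yields $|\pi(\Gamma)-\lambda(\Gamma)|\le\delta$, not an absolute upper bound on either mass, so your ``complementary case where both masses lie in a fixed compact range $[c_0,R_0]$'' is not available. Since the paper does not attempt this reduction, this is not so much a divergence from the paper's argument as an unnecessary detour.
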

\begin{proof}
  By Arzel\`a--Ascoli, we can find a list $\{g_j\}$ for $j=1,\dots,n$ so that for all $1$--Lipschitz, 1--bounded $f$,
  \[
    \min_j \sup_{x \in \Gamma} |f(x)-g_j(x)|  \leq \epsilon/2.
  \]
  Let $\{f_j\}$ be all the positive and negative parts of the functions in this list.
  We may also assume the constant-1 function is in the list.  Now suppose that
  \[
    \max_{j=1,\dots,n} \biggl|\int_\Gamma f_jd(\pi-\lambda)\biggr| \leq \delta.
  \]
  Then $|\pi(\Gamma)-\lambda(\Gamma)|\leq \delta$, and so by Theorem \ref{thm:PPcom}
  \[
    \partial_2(\Pi(\pi), \Pi(\lambda)) \leq \sqrt{2}\max\{ \pi(\Gamma), \lambda(\Gamma) \}\tfrac{1-e^{-\alpha}}{\alpha} d_{\operatorname{BL}}(\pi,\lambda)
    \leq \sqrt{2}\tfrac{\max\{ \pi(\Gamma), \lambda(\Gamma) \}}{\max\{ \pi(\Gamma), \lambda(\Gamma) \}-{\delta}}(\epsilon/2 + \delta).
  \]
  If $\max\{ \pi(\Gamma), \lambda(\Gamma) \}=0,$ there was nothing to prove in the first place.  Otherwise by taking $\delta$ sufficiently small, we conclude the claim.
\end{proof}

\def\Nc{{\mathcal N}}

\def\e{\mathtt e}
\def\k{\mathtt k}
\def\half{\frac{1}{2}}
\def\quart{\frac{1}{4}}
\def\I{\mathfrak I}
\def\E{{\mathbb E}}
\def\R{{\mathbb R}}
\def\C{{\mathbb C}}
\def\D{{\mathbb D}}
\def\G{{\mathbb G}}
\def\Lc{{\mathcal L}}
\def\Gc{{\mathcal G}}

\def\P{{\mathbb P}}

\section{Auxilliary one and two-rays estimates}
\label{section:lower_bound}

We collect in this appendix the key two-rays estimates used in the bulk 
of the evolution. The argument were developed in details in \cite{CMN}, 
and in fact even our tex file is based on theirs. The 
hurried reader may skip the content of
the appendix, keeping only the statements of Propositions
\ref{Prmomen1Lower}, \ref{Prmomen1Lowerbis} and Lemmas 
 \ref{lemma:kDebut}, \ref{lemma:ktoutDebut}, \ref{lemma:kMil},
 \ref{lemma:kDebutbis} and \ref{lemma:ktoutDebutbis}
in mind.

Throughout, we adopt exactly, for ease of reference, 
the notation of \cite{CMN}.

\paragraph{Further notation:}
For any $n\geq p$, $\theta\in [0,2\pi)$, we define
\begin{align*}
Z_n^{(p)}(\theta) \coloneqq Z_n(\theta) - Z_p(\theta) =  \sum_{j=p}^{n-1} \frac{ \Nc_j^\C }{\sqrt{j+1}} e^{i((j+1)\theta + A_j(\theta))} .
\end{align*}
We stress that $Z_n^{(p)}$ depends implicitly of $\beta$ because of the Pr\"ufer phase. Similarly,
we define
\begin{align*}
A_n^{(p)}(\theta) \coloneqq \sum_{j=p}^{n-1} a_j(\theta) = A_n(\theta) - A_p(\theta),\ .
\end{align*}
where
$$a_j(\theta) \coloneqq A_{j+1}(\theta) - A_j(\theta).$$
 More generally, for any family of quantities depending on an index $k$, we will denote the difference
 of the quantities indexed by $k$ and $p$ by the same notation, with $k$ as an index and $p$ as
 an superscript.

In the following, it will be convenient to study the field at times which are powers of $2$. 
%In the sequel,
%we denote $2^k := 2^k$.

\subsection{The Chhaibi-Madaule-Najnudel coupling}
\label{section:new_coupling}

As in \cite{CMN},
we introduce a new process in order to gain more independence. Recall that 
$N=\log_2 n$.

\paragraph{A more independent field:}
For each fixed $\theta$,  $(Z_{2^k}(\theta))_{k\geq 0}$ is a complex Gaussian random walk. Moreover we
could compute the correlations of $Z_{2^k}(\theta)$ and $Z_{2^k}(\theta')$ and observe that
they behave logarithmically with respect to the distance between $\theta$ and $\theta'$ modulo $2  \pi$.
However, $Z$ is not globally Gaussian, so we cannot directly apply known results on the maximum of Gaussian fields,
but we will still provide its approximative branching structure. To achieve this aim we will  gain
some independence by making small changes on $Z$.

Let us fix some integer $r$, which will be  assumed to be larger than some suitable universal constant.
For $l \geq r$, we denote 
$\Delta=\Delta^{(l)} \coloneqq e^{\sqrt{\log r}}
+ 100\lfloor  \log^2 l \rfloor $. Observe that for any $N \geq r$, we can rewrite formula \cite[(3.3)]{CMN} as:
\begin{align}
Z_{2^{N}}^{(2^r)}(\theta) \coloneqq & \sum_{l=r}^{N -1 } \sum_{p=0}^{2^{\Delta}-1}  \left(\sum_{j=0}^{2^{l-\Delta}-1}
\Nc^\C_{2^{l}+p 2^{l-\Delta} + j}
\frac{e^{i\psi_{ 2^{l}+p 2^{l-\Delta}+j}(\theta )}}
     {\sqrt{ 2^{l}+p 2^{l-\Delta} + j +1 }}
\right).
\end{align}
Note that $\Delta^{(l)}$ and $l - \Delta^{(l)} $
are strictly positive if $l \geq r$ and $r$ is large enough.
Now, let $Z^{(2^r,\Delta)}$ be the process defined by
\begin{align}
Z_{2^{N}}^{(2^r,\Delta)}(\theta) \coloneqq & \sum_{l=r}^{N -1 } \sum_{p=0}^{2^{\Delta}-1}  \left(\sum_{j=0}^{2^{l-\Delta}-1}    \Nc^\C_{2^{l}+p 2^{l-\Delta} + j} e^{i j\theta } \right)
\frac{e^{i\psi_{ 2^{l}+p 2^{l-\Delta}}(\theta )}}
     {\sqrt{ 2^{l}+p 2^{l-\Delta}}}
.
\end{align}
Observe that $Z_{2^{N}}^{(2^r)}(\theta)$ and $Z_{2^{N}}^{(2^r,\Delta)}(\theta)$ only differ
by the change in the square root of the denominator, and by the replacement of
some increments of the  Pr\"ufer phases by their mean. The following is a slight variation of \cite[Proposition 5.2]{CMN}; the additional statement
\eqref{eq-addstat} actually follows from the proof 
in \cite{CMN}.
\begin{proposition}
For $r$ large enough,
	\label{proposition:new_coupling}
	\begin{align*}
\sum_{l\geq r} \P\left(	\sup_{\theta \in [0,2\pi)} | Z_{2^{l+1}}^{(2^{l},\Delta)}(\theta) - Z_{2^{l+1}}^{(2^{l})}(\theta)|\geq 2 l^{-2} \right) <+\infty
	\end{align*}
In particular, as $\sum_{l\geq r } 2 l^{-2}<+\infty$, we have that almost surely,
\begin{align}
\sum_{l \geq r} \sup_{\theta \in [0,2\pi)}
|  Z_{2^{l+1}}^{(2^{l},\Delta)}(\theta) - Z_{2^{l+1}}^{(2^{l})}(\theta) | < \infty \ 
\end{align}
and
\begin{align}
  \label{eq-addstat}
  \limsup_{r\to\infty}
  \sum_{l \geq r} \sup_{\theta \in [0,2\pi)}
|  Z_{2^{l+1}}^{(2^{l},\Delta)}(\theta) - Z_{2^{l+1}}^{(2^{l})}(\theta) | =
0.
\end{align}
\end{proposition}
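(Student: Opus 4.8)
The plan is to run the proof of \cite[Proposition 5.2]{CMN} essentially verbatim and then to observe that the quantitative tail bound it produces is strong enough to give \eqref{eq-addstat} for free. Write $D_l(\theta):=Z_{2^{l+1}}^{(2^{l},\Delta)}(\theta)-Z_{2^{l+1}}^{(2^{l})}(\theta)$ for the block increment, with $m=m(l,p)=2^{l}+p2^{l-\Delta}$ and $\Delta=\Delta^{(l)}$. The key structural point, as in \cite{CMN}, is that conditionally on the Pr\"ufer phases $(\psi_k)$ (which are $\filt_k$-measurable), $D_l(\theta)$ is a sum of independent complex Gaussians, so the entire analysis reduces to a conditional variance bound followed by a subgaussian tail estimate and a union bound over a fine net in $\theta$.

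First I would split a single summand of $D_l$: the difference of the $(l,p,j)$-terms equals
\[
  \Nc^\C_{m+j}\,\frac{e^{ij\theta+i\psi_m(\theta)}}{\sqrt m}\Bigl[\sqrt{\tfrac{m}{m+j+1}}\,e^{i(\psi_{m+j}(\theta)-\psi_m(\theta)-j\theta)}-1\Bigr].
\]
Since $0\le j< 2^{l-\Delta}$ and $m\ge 2^{l}$ we have $\sqrt{m/(m+j+1)}=1+O(2^{-\Delta})$, and on the good event where the relative Pr\"ufer phase increments behave diffusively (handled exactly as in \cite{CMN} via the $\mathscr{T}_M$-type truncation behind Proposition \ref{prop:mgapproximation}) the phase defect $\psi_{m+j}(\theta)-\psi_m(\theta)-j\theta$ is $O(2^{-\Delta/2})$ uniformly in $j,\theta$. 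Hence the bracket is $O(2^{-\Delta/2})$, the conditional variance of each term is $O(2^{-\Delta}2^{-l})$, summing over $j$ gives $O(2^{-2\Delta})$ per sub-block $p$, and summing the $2^{\Delta}$ independent sub-blocks gives conditional variance of $D_l(\theta)$ of order $2^{-\Delta}$. Because $\Delta^{(l)}\ge 100\lfloor\log^2 l\rfloor$, the subgaussian scale $2^{-\Delta/2}\le l^{-c\log l}$ beats every fixed polynomial in $l$. To pass from pointwise-in-$\theta$ to uniform control I would use a net $\mathcal N_l\subset[0,2\pi)$ of cardinality $\mathrm{poly}(2^l)$ together with a crude deterministic Lipschitz bound for $\theta\mapsto D_l(\theta)$ (a finite trigonometric-type sum of degree $O(2^l)$, so its derivative is at most $2^{O(l)}$ on the good event), so that interpolating between net points costs only $2^{-\Omega(l)}$; a union bound then yields
\[
  \P\Bigl(\sup_{\theta\in[0,2\pi)}|D_l(\theta)|\ge 2l^{-2}\Bigr)\le 2^{O(l)}\exp\bigl(-c\,l^{-4}2^{\Delta}\bigr)+\epsilon_l\le C l^{-10},
\]
with $\epsilon_l$ the (summable) probability of the good event failing. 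This gives the first displayed assertion, and Borel--Cantelli then gives almost-sure finiteness of $\sum_{l\ge r}\sup_\theta|Z^{(2^l,\Delta)}_{2^{l+1}}(\theta)-Z^{(2^l)}_{2^{l+1}}(\theta)|$.

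For \eqref{eq-addstat}, note that once the full sum over $l\ge r_0$ is almost surely finite, its tails $\sum_{l\ge r}$ converge to $0$ almost surely as $r\to\infty$, which is precisely the claimed statement. The main obstacle is the bookkeeping for the phase-freezing error: one must carry the Pr\"ufer-phase good event all the way through so that the factors $e^{i\psi_m(\theta)}$ enter only as bounded multipliers and the conditional-Gaussian structure can be exploited, and one must keep the diffusive control $O(2^{-\Delta/2})$ on $\psi_{m+j}-\psi_m-j\theta$ uniform in $\theta$; but this is exactly the step carried out in \cite{CMN}, so no genuinely new estimate is needed beyond tracking the explicit dependence on $\Delta^{(l)}$.
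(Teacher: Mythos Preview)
Your proposal is correct and follows essentially the same route as the paper, which in turn defers entirely to \cite[Proposition 5.2]{CMN}: the paper does not reprove the result but only records the key intermediate bounds \eqref{eq:one_point_square} and \eqref{eq:laplace_blacksquare}, and remarks that \eqref{eq-addstat} ``actually follows from the proof in \cite{CMN}''. Your observation that \eqref{eq-addstat} is just the statement that the tail of an almost surely convergent series tends to $0$ is exactly what is meant.

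One small bookkeeping correction: on the good event used in \cite{CMN} (the event $G_l(\theta)$ behind \eqref{eq:laplace_blacksquare}), the phase defect is controlled by $2^{-\Delta/4}$ rather than $2^{-\Delta/2}$, which is why the subgaussian variance proxy in \eqref{eq:laplace_blacksquare} is $(2^{-\Delta/4}+2^{-\Delta})^2$ and the resulting one-point tail is $e^{-2^{\Delta/4}}$ in \eqref{eq:one_point_square}. This does not affect your argument, since $\Delta^{(l)}\ge 100\lfloor\log^2 l\rfloor$ makes $2^{c\Delta}$ dominate any polynomial in $2^l$ for any fixed $c>0$, so the union bound over a net of size $2^{O(l)}$ still succeeds with room to spare.
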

In the proof of Proposition \ref{proposition:new_coupling}, 
\cite{CMN} introduce the variables,
for any $l\geq 0$, $\theta\in [0,2\pi)$,
\begin{align}
\label{tighti2}
\blacksquare_l(\theta) & \coloneqq  Z_{2^{l+1}}^{(2^{l},\Delta)}(\theta) - Z_{2^{l+1}}^{(2^{l})}(\theta)
\\
\nonumber &  = \sum_{p=0}^{2^{\Delta}-1}
               \frac{ e^{i\psi_{ 2^{l}+p 2^{l-\Delta} }(\theta )} }
                    { \sqrt{{ 2^{l}+p 2^{l-\Delta}}} }
               \left(\sum_{j=0}^{2^{l-\Delta}-1}
               \Nc^\C_{2^{l}+p 2^{l-\Delta} + j} e^{i j \theta }   \square_j^{ (2^{l}+p 2^{l-\Delta}) }(\theta)
               \right) \ ,
\end{align}
with
\begin{align}
\label{eq:ineq_square}
       \left|\square_j^{(2^{l}+p 2^{l-\Delta} )}(\theta)\right|
\leq &  \left| A_{j+ 2^{l}+p 2^{l-\Delta} }^{(2^{l}+p 2^{l-\Delta})}(\theta) \right| + 2^{-\Delta}.
\end{align}
The main steps in the proof consist of the estimates
\begin{align}
\label{eq:one_point_square}
\P\left( | \blacksquare_l(\theta) | \geq l^{-2} \right)
\ll_{\beta} e^{- 2^{ \quart \Delta} }.
\end{align}
 and, for $\lambda \in \R$:
\begin{align}
\label{eq:laplace_blacksquare}
\sup\left(
\E\left( e^{ \lambda \Re  \blacksquare_l(\theta)} \mathds{1}_{G_l(\theta)}  \Big| \mathcal{G}_{2^l}\right),
\E\left( e^{ \lambda \Im  \blacksquare_l(\theta)} \mathds{1}_{G_l(\theta)}  \Big| \mathcal{G}_{2^l}\right)
\right)
\leq &
e^{\lambda^2 \left( 2^{-\quart \Delta} + 2^{-\Delta} \right)^2 } \ .
\end{align}
(see \cite[(5.6), (5.8)]{CMN}.)
\subsection{One and two ray estimates}
\label{subsection:second_moment}

In the sequel, for all fields denoted by $Z$ with some indices and superscripts, we write $R$ with the same indices
and superscripts for the real part of $\sigma$ times the initial field (recall that $\sigma \in \{1,i,-i\}$).
\paragraph{An envelope for the paths of $R_{2^{N}}^{(2^r,\Delta)}(\theta) $:} For $j \geq r$, let
\begin{align}
\label{eq:def_tau_r}   \tau^{(r)}_j& \coloneqq  \sum_{l=r}^{j-1} \sum_{p=0}^{2^{\Delta^{(l)} }-1}
\frac{2^{l-\Delta^{(l)}}}{2^l +p2^{l-\Delta^{(l)}}}=  \sum_{l=r}^{j-1} \sum_{p=0}^{2^{\Delta^{(l)} }-1}
\frac{1}{2^{\Delta^{(l)}} + p} =: (j-r) \log 2 + \lambda_{j}^{(r)},
\end{align}
where $(\lambda_j^{(r)})_{j\geq r}$ is a nonnegative and
increasing sequence, tending, when $j \rightarrow \infty$, to a limit $\lambda_{\infty}^{(r)}$ such that
$$\lambda_\infty^{(r)} < \sum_{l\geq r} 2^{-\Delta^{(l)}} \leq  
2^{-e^{\sqrt{\log r}}} 
\sum_{l=1}^{\infty} 
2^{- 100 \lfloor \log^2 l \rfloor }  
 \ll  
 2^{-e^{\sqrt{\log r}}} 
\to 0  $$
when $r$ goes to $\infty$.
 Let $\alpha_+ \coloneqq 1 - \frac{1}{10}$ and $\alpha_-\coloneqq \frac{1}{10}$,
and for $N \geq 2r$, $k \in [|r, N|]$,  define
\begin{align}
  \label{eq-ukplus}
u_k^{(N)} \coloneqq \left\{ \begin{array}{ll} - k^{\alpha_-},\qquad &\text{if  } k\leq \lfloor N/2\rfloor,
\\
 - (N-k)^{\alpha_-} -   \frac{3}{4}  \log N,\qquad &\text{if  } \lfloor N/2\rfloor <k \leq N,
\end{array} \right.
\end{align}
and
\begin{align}
  \label{eq-ukminus}
l_k^{(N)} \coloneqq \left\{ \begin{array}{ll} - k^{\alpha_+},\qquad &\text{if  } k\leq \lfloor N/2\rfloor,
\\
 - (N-k)^{\alpha_+} -    \frac{3}{4} \log  N,\qquad &\text{if  } \lfloor N/2\rfloor <k \leq N.
\end{array} \right.
\end{align}
We then define an envelope by its lower bound and its upper bound at each $k\in [|r,N|]$:
\begin{align*}
U_k^{(N)} \coloneqq     \tau^{(r)}_k+  u_k^{(N)} \quad \text{and    }\quad L_k^{(N)} \coloneqq       \tau^{(r)}_k+ l_k^{(N)}  .
\end{align*}

Fix $x<0$ (depending on $k_2$), $z<0$ (depending on $k_1^+$), and $\upsilon\in (0,1)$. Set $N_+=\log n_1^+$. The basic event  we need to consider is, for $\theta\in [0,2\pi)_{2^N}$,
\begin{align*}
& \I_N(\theta)=\I_N(\theta,x,z) \coloneqq \\
&{\{ \forall k\in [|r,N_1^+|],\,  L_k^{(N)}  \leq x+R_{2^k}^{(2^r,\Delta)}(\theta) \leq U_k^{(N)}, 
x+R_{2^{N_+}}^{(2^r,\Delta)} (\theta)\in \tau^{(r)}_{N_+}-\frac34 \log N +[z,z+\upsilon)\}}.\end{align*}
We  will consider $x\in [- r^{1/20},- r^{19/20}]$ and $z\in [-(k_1^+)^{1/20},-(k_1^+)^{19/20}]$.
The  walk  $(R_{2^k}^{(2^r,\Delta)}(\theta))_{r \leq k \leq N}$ is a Gaussian random walk whose distribution is the same as $\sqrt{\frac{1}{2}} (W_{\tau_j^{(r)}})_{r \leq k \leq N}$.
In the case where an  event  $\I_N(\theta)$ occurs for some $\theta \in [0, 2\pi)_{2^N}$, it means that  
$x+R_{2^k}^{(2^r,\Delta)}(\theta)$ is around $\tau_k^{(r)}$ for $r \leq k \leq N$, i.e. the Brownian motion $W$ is roughly growing linearly with rate $\sqrt{2}$. For this reason, in the sequel of this part of the paper, 
we will often compare the probability of an event $Ev$ 
concerning the random walk $(R_{2^k}^{(2^r,\Delta)}(\theta))_{r \leq k \leq N}$ to the probability of a similar event $GEv$, where 
a linear function $t \mapsto t \sqrt{2}$ has been subtracted from the possible trajectories of the underlying Brownian motion $W$ for which the event $Ev$ is satisfied. If $Ev$ depends only on the trajectory of $W$ up to a certain time $T$, we get, by using the Girsanov transfomation, an equality of the form
$$\P [Ev((W_t)_{0 \leq t \leq T})] =
  \P [ GEv((W_t - t \sqrt{2})_{0 \leq t \leq T})]
= \E[ e^{-\sqrt{2}W_T - T} \mathds{1}_{GEv((W_t)_{0 \leq t \leq T})}],$$
and then the inequality
$$\P [Ev] \leq e^{-T - \sqrt{2} \mu}
\P [GEv]$$
where $\mu$ denotes the smallest possible value of $W_T$ for which the event $GEv$ can occur.

The following proposition gives a lower bound for the first moment of $\I_N$.
\begin{proposition}[First moment of $\I_N$]
\label{Prmomen1Lower} For any $\upsilon\in (0,1)$, $r\in \N$ large enough
and  $N$ large enough depending on $r$:
\begin{align}
 \label{momen1Lower}
\P(\I_N(\theta)) &\geq  
 e^{2(x-z)}2^{r-N_+}e^{-2   \upsilon -2\lambda_\infty^{(r)} } 
 N^{\frac{3}{2} }  \P(Event_{r,N}),\\
 %\gg |xz|e^{2(x-z)}2^{r-N_+} ,
\P(\I_N(\theta)) &\leq  
 e^{2(x-z)}2^{r-N_+}e^{2   \upsilon +2\lambda_\infty^{(r)} } 
 N^{\frac{3}{2} }  \P(Event_{r,N}),
 %\gg |xz|e^{2(x-z)}2^{r-N_+} ,
\end{align}
where, with $W$ being a standard Brownian motion,
\begin{align*}
& Event_{r,N} = Event_{r,N}(x,z) \coloneqq \\
&\left\{  \forall j\in [|r,N_+|],\, l_j^{(N)}  \leq x+ \sqrt{\frac{1}{2}} W_{\tau^{(r)}_j}
\leq u_j^{(N)} , x+ \sqrt{\frac{1}{2}} W_{\tau^{(r)}_{N_+}}\in \tau^{(r)}_{N_+}-\frac34 \log N+[z,z+\upsilon)\right\}.\end{align*}
Further, for $x,z$ as above, the asymptotics of $\P(Event_{r,N})$ are given
in  Lemma \ref{lem-onerayLB} below.
 \end{proposition}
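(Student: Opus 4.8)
The plan is to prove Proposition \ref{Prmomen1Lower} by reducing the probability of $\I_N(\theta)$, which concerns the non-Gaussian field $R^{(2^r,\Delta)}_{2^k}(\theta)$, to the probability of the purely Gaussian event $Event_{r,N}$, which concerns the Brownian motion $W$ with the linear drift removed via a Girsanov change of measure. The first step is to recall that for fixed $\theta$, the real random walk $k \mapsto R^{(2^r,\Delta)}_{2^k}(\theta)$ is exactly a Gaussian random walk with the law of $(\sqrt{1/2}\,W_{\tau^{(r)}_k})_{r \le k \le N}$, where $\tau^{(r)}_k$ is the time change in \eqref{eq:def_tau_r}. This is not quite true: $R^{(2^r,\Delta)}$ differs from $R^{(2^r)}$ (the true field increment) only through the replacement of the Pr\"ufer-phase increments by their means in the denominators and phases, but for a single fixed $\theta$ the process $Z^{(2^r,\Delta)}_{2^\bullet}(\theta)$ \emph{is} genuinely Gaussian by construction (the phases $\psi_{2^l + p2^{l-\Delta}}(\theta)$ are $\mathcal{G}_{2^l+p2^{l-\Delta}}$-measurable and the blocks of Gaussians feeding into it are independent of that $\sigma$-algebra). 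So the one-ray marginal is exactly Gaussian, and the quadratic variation accumulated up to level $2^k$ is exactly $\tfrac12\tau^{(r)}_k$.

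Given this, I would write $\I_N(\theta)$ as the event that $x + \sqrt{1/2}\,W_{\tau^{(r)}_k}$ stays in the envelope $[L^{(N)}_k, U^{(N)}_k] = \tau^{(r)}_k + [l^{(N)}_k, u^{(N)}_k]$ for all $r \le k \le N_+$, and that the endpoint at level $2^{N_+}$ lands in $\tau^{(r)}_{N_+} - \tfrac34 \log N + [z, z+\upsilon)$. I would then apply the Girsanov transformation to tilt out the linear slope $\sqrt{2}$: writing $\widetilde{W}_t = W_t - \sqrt{2}\,t$, the density is $\exp(-\sqrt{2}W_T - T)$ evaluated at $T = \tau^{(r)}_{N_+}$, so
\[
  \P(\I_N(\theta)) = \E\!\left[ e^{-\sqrt{2}\sqrt{1/2}\,W_{\tau^{(r)}_{N_+}} - \tfrac12 \cdot 2 \cdot \tau^{(r)}_{N_+}}\,\mathds{1}_{Event'} \right],
\]
where $Event'$ is the drift-removed version of $Event_{r,N}$ in which $\sqrt{1/2}\,W$ now plays the role of the recentered path. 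On the endpoint constraint $x + \sqrt{1/2}\,W_{\tau^{(r)}_{N_+}} \in \tau^{(r)}_{N_+} - \tfrac34 \log N + [z,z+\upsilon)$, the exponent $-\sqrt{1/2}\cdot\sqrt2\, W_{\tau^{(r)}_{N_+}} - \tau^{(r)}_{N_+}$ is pinned: $\sqrt{1/2}\,W_{\tau^{(r)}_{N_+}} \in (-x + \tau^{(r)}_{N_+} - \tfrac34\log N + [z,z+\upsilon))$, so $-\sqrt2\cdot\sqrt{1/2}\,W_{\tau^{(r)}_{N_+}} = -\sqrt2(-x + \tau^{(r)}_{N_+} - \tfrac34 \log N + z) + O(\upsilon)$ with the $O(\upsilon)$ error of absolute size at most $\sqrt2\,\upsilon$. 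Using $\tau^{(r)}_{N_+} = (N_+ - r)\log 2 + \lambda^{(r)}_{N_+}$ and $e^{-\sqrt2 \cdot (N_+ - r)\log 2} = 2^{-\sqrt2(N_+-r)}$ — wait, one must be careful: the clean power $2^{r - N_+}$ in the statement comes from $e^{-\sqrt2 \cdot \sqrt{1/2}\cdot\sqrt2\,\tau^{(r)}_{N_+}}$; collecting, $-\sqrt2\cdot\sqrt{1/2}\,W - \tau^{(r)}_{N_+}$ with $W$ pinned gives exactly $2(x - z) + (2 - 2)\tau^{(r)}_{N_+}$... the bookkeeping here is the routine-but-delicate part, and I would carry it out so that the $\tau^{(r)}_{N_+}$-terms cancel (the martingale exponent is $-\sqrt2\,W_T - T$ and $\Var$ under the tilted measure of the relevant endpoint makes the net exponential prefactor $e^{2(x-z)} 2^{r - N_+} N^{3/2} e^{\pm(2\upsilon + 2\lambda^{(r)}_\infty)}$), with $\lambda^{(r)}_{N_+} \le \lambda^{(r)}_\infty$ absorbed into the stated $e^{\pm 2\lambda^{(r)}_\infty}$ slack and the $N^{3/2}$ arising from $e^{\sqrt2 \cdot \tfrac34 \log N} = N^{3\sqrt2/4}$ — no, again this needs the factor $\sqrt2 \cdot \tfrac34 \cdot \sqrt2 = 3/2$, which is exactly why it appears as $N^{3/2}$. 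After pulling out all these deterministic factors, what remains is precisely $\P(Event_{r,N})$ up to the $e^{\pm(2\upsilon)}$ two-sided error from the interval width $\upsilon$ and the $e^{\pm 2\lambda^{(r)}_\infty}$ from replacing $\lambda^{(r)}_{N_+}$ by $\lambda^{(r)}_\infty$.

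The main obstacle — and the reason the statement is phrased with both an upper and a lower bound rather than an asymptotic equivalence — is keeping the constants genuinely two-sided and clean. Concretely: (i) the endpoint is only constrained to a window of width $\upsilon$, not pinned, so the Girsanov density varies by a multiplicative factor in $[e^{-\sqrt2\upsilon}, e^{\sqrt2\upsilon}] \subseteq [e^{-2\upsilon}, e^{2\upsilon}]$ across that window, which is where the $e^{\pm 2\upsilon}$ comes from; (ii) $\tau^{(r)}_{N_+}$ is not exactly $(N_+ - r)\log 2$ but carries the correction $\lambda^{(r)}_{N_+} \in [0, \lambda^{(r)}_\infty]$, giving the $e^{\pm 2\lambda^{(r)}_\infty}$; (iii) one must verify that the envelope $[l^{(N)}_k, u^{(N)}_k]$ defined in \eqref{eq-ukplus}--\eqref{eq-ukminus} for the drift-removed Gaussian walk is \emph{literally the same} envelope as $[L^{(N)}_k, U^{(N)}_k] - \tau^{(r)}_k$ used in $\I_N$, so that after the change of measure the barrier event matches $Event_{r,N}$ on the nose. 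All three of these are transparent from the definitions, so I expect no serious difficulty — this is essentially the one-ray Girsanov computation from \cite{CMN} transcribed with the block-resampled field $Z^{(2^r,\Delta)}$ in place of $Z$, using that the one-ray marginal of $Z^{(2^r,\Delta)}(\theta)$ is exactly Gaussian with quadratic variation $\tfrac12\tau^{(r)}_k$ at level $2^k$. Finally, I would note that the Gaussian probability $\P(Event_{r,N})$ itself is then handed off to Lemma \ref{lem-onerayLB}, which provides its asymptotics (of order $N^{-3/2} |x||z|$ up to constants), so nothing further is needed here.
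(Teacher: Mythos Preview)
Your approach is correct and is essentially identical to the paper's proof: use that the one-ray marginal of $R^{(2^r,\Delta)}_{2^\bullet}(\theta)$ is exactly the Gaussian walk $\sqrt{1/2}\,W_{\tau^{(r)}_\bullet}$, apply the Girsanov tilt with density $e^{\sqrt{2}W_{\tau^{(r)}_{N_+}}-\tau^{(r)}_{N_+}}$ so that the envelope $[L^{(N)}_k,U^{(N)}_k]=\tau^{(r)}_k+[l^{(N)}_k,u^{(N)}_k]$ becomes exactly $Event_{r,N}$, and then read off the deterministic prefactor from the endpoint constraint. The bookkeeping you flagged indeed collapses cleanly: on $Event_{r,N}$ one has $-\sqrt{2}W_{\tau^{(r)}_{N_+}}\in 2(x-z)+\tfrac32\log N+(-2\upsilon,0]$, and $e^{-\tau^{(r)}_{N_+}}=2^{r-N_+}e^{-\lambda^{(r)}_{N_+}}$ with $\lambda^{(r)}_{N_+}\in[0,\lambda^{(r)}_\infty]$, giving the two-sided bounds (your $e^{\pm\sqrt{2}\upsilon}$ should in fact be $e^{\pm 2\upsilon}$, but your stated inclusion absorbs this).
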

\begin{proof}
 Since  $(R_{2^j}^{(2^r, \Delta)}(\theta))_{r \leq j \leq N_+}$ is a Gaussian random walk whose distribution does not depend on $\theta$, we have
\begin{align*}
&\P\left( \I_N (\theta)\right) =\\
& \P\left( \forall j\in [|r,N_+|],\,  L_j^{(N)}  \leq  x+R_{2^j}^{(2^r,\Delta)}(0) \leq U_j^{(N)},
x+R_{2^{N_+}}^{(2^r,\Delta)} (0)\in \tau^{(r)}_{N_+}-\frac34 \log N+[z,z+\upsilon) \right).
\end{align*}
More precisely, we know that $(R_{2^j}^{(2^r,\Delta)}(0))_{j\geq r}$ is distributed like $  \sqrt{\frac{1}{2}}   (W_{    \tau^{(r)}_j})_{j\geq r}  $. By Girsanov's transform, with density $ e^{\sqrt{2}W_{\tau^{(r)}_{N_+} }-   \tau^{(r)}_{N_+}}   $, we have
\begin{align*}
&\P(\I_N(\theta))\\
&=  \E\left( e^{ - \sqrt{2} \left( W_{\tau^{(r)}_{N_+}} + \sqrt{2} \tau^{(r)}_{N_+} \right) + \tau_{N_+}^{(r)} } \mathds{1}_{\{ \forall j\in 
[|r, N_+]|],\, l_j^{(N)}
\leq x+ \sqrt{\frac{1}{2}}  W_{\tau^{(r)}_j} \leq u_j^{(N)}, x+\sqrt{\frac{1}{2}}  W_{\tau^{(r)}_{N_+}}\in-\frac34 \log N + [z,z+\upsilon)}  \}      \right)
\\
&=  e^{ -\tau^{(r)}_{N_+}} \E\left( e^{- \sqrt{2}  W_{\tau^{(r)}_{N_+}} }   \mathds{1}_{\{ \forall j\in [|r,N_+|],\, l_j^{(N)}  \leq  x+\sqrt{\frac{1}{2}}  W_{\tau^{(r)}_j} \leq u_j^{(N)} ,  x+  \sqrt{\frac{1}{2}}W_{\tau^{(r)}_{N_+}}\in-\frac34 \log N+  [z,z+\upsilon)  \}}   \right)
\\
&\geq 2^{r-N_+} e^{2(x-z)- 2\upsilon -\lambda_\infty^{(r)} } N^{\frac{3}{2} }  \P\left(  Event_{r,N} \right),
\end{align*}
with a similar upper bound replacing $- 2\upsilon -\lambda_\infty^{(r)} $ by
$ 2\upsilon +\lambda_\infty^{(r)}$. 
Using  Lemma \ref{lem-onerayLB} completes the proof of
the right inequality in \eqref{momen1Lower}.
\end{proof}
We will also need a similar estimate for shorter times.
Let $t<N/4$ and set
\begin{align*}
  & \I_{t,f}(\theta)=\I_{N,t}(\theta,x,z):=\\
&{\{ \forall k\in [|r,t|],\,  L_k^{(N)}  \leq x+R_{2^k}^{(2^r,\Delta)}(\theta) \leq U_k^{(N)}, 
x+R_{2^{t}}^{(2^r,\Delta)} (\theta)\in \tau^{(r)}_{t}+[z,z+\upsilon)\}}.\end{align*}
Note that because of our choice of $t$, 
only the first part of the barrier is employed in $\I_{t,f}$,
and therefore $\I_{t,f}(\theta)$  does not depend on $N$. We have the following
analogue of Proposition \ref{Prmomen1Lower}.
\begin{proposition}[First moment of $\I_{N,t}$]
\label{Prmomen1Lowerbis} For any $\upsilon\in (0,1)$, $C>0$, $r\in \N$ large enough,
$x$ as above, $z\in [-\sqrt{t}/C, -C\sqrt{t}]$
and  $t$ large enough depending on $r$:
\begin{align}
 \label{momen1Lowerbis}
 \P(\I_{t,f}(\theta)) &\geq  
 e^{2(x-z)}2^{r-N_+}e^{-2   \upsilon -2\lambda_\infty^{(r)} } 
\P(Event_{r,t,f}) \gg_C \frac{|xz|}{t^{3/2}}e^{2(x-z)}2^{r-t} ,
\end{align}
where, with $W$ being a standard Brownian motion,
\begin{align*}
& Event_{r,t,f} = Event_{r,N,t}(x,z):= \\
&\left\{  \forall j\in [|r,t|],\, l_j^{(N)}  \leq x+ \sqrt{\frac{1}{2}} W_{\tau^{(r)}_j}
\leq u_j^{(N)} , x+ \sqrt{\frac{1}{2}} W_{\tau^{(r)}_{t}}\in \tau^{(r)}_{t}+[z,z+\upsilon)\right\}.\end{align*}
 \end{proposition}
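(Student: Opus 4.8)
\textbf{Proof proposal for Proposition \ref{Prmomen1Lowerbis}.}
The plan is to mirror the argument used for Proposition \ref{Prmomen1Lower}, but with the terminal time $N_+$ replaced by the intermediate time $t$ (with $t < N/4$, so only the first, concave, branch of the barrier functions $u^{(N)}_{(\cdot)}$ and $l^{(N)}_{(\cdot)}$ intervenes), and with the endpoint window shifted by the straight line $\tau^{(r)}_t$ rather than by $\tau^{(r)}_{N_+} - \tfrac34\log N$. First I would use that the random walk $(R_{2^j}^{(2^r,\Delta)}(\theta))_{r\le j\le t}$ has a $\theta$-independent law, distributed as $\sqrt{\tfrac12}\,(W_{\tau^{(r)}_j})_{r\le j\le t}$ for a standard Brownian motion $W$, so that $\P(\I_{t,f}(\theta))$ equals the same probability at $\theta=0$. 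Then I would apply the Girsanov change of measure with density $e^{\sqrt{2}W_{\tau^{(r)}_t}-\tau^{(r)}_t}$, exactly as in the display inside the proof of Proposition \ref{Prmomen1Lower}: this removes the slope $\sqrt 2$ from the Brownian motion, converts the barrier event $\{L^{(N)}_j\le x+R^{(2^r,\Delta)}_{2^j}(0)\le U^{(N)}_j\}$ into $\{l^{(N)}_j\le x+\sqrt{\tfrac12}W_{\tau^{(r)}_j}\le u^{(N)}_j\}$, and produces the prefactor $e^{-\tau^{(r)}_t}\,e^{-\sqrt 2\cdot(\text{smallest endpoint})}$. Using $\tau^{(r)}_t=(t-r)\log 2+\lambda^{(r)}_t$ with $0\le \lambda^{(r)}_t\le\lambda^{(r)}_\infty$ and that the endpoint of $x+\sqrt{\tfrac12}W_{\tau^{(r)}_t}$ lies in $\tau^{(r)}_t+[z,z+\upsilon)$, the smallest value of $\sqrt 2 W_{\tau^{(r)}_t}$ contributing is $2(\tau^{(r)}_t - x + z)$, which gives the factor $e^{2(x-z)}2^{r-t}e^{-2\upsilon-2\lambda^{(r)}_\infty}$ in front of $\P(Event_{r,t,f})$, yielding the first inequality in \eqref{momen1Lowerbis}.

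The second (lower) bound $\gg_C |xz|/t^{3/2}\,e^{2(x-z)}2^{r-t}$ then reduces to the Brownian ballot/barrier estimate $\P(Event_{r,t,f})\gg_C |xz|/t^{3/2}$. Here I would invoke a standard barrier estimate for the Brownian motion $W$ conditioned to start at (rescaled) $x$ and end at (rescaled) $z$ near time $\tau^{(r)}_t\asymp t$, staying below the concave upper barrier $u^{(N)}_{(\cdot)}$ (which curves above the straight line, so restricting below it costs only a constant) and above the lower barrier $l^{(N)}_{(\cdot)}$ (which curves far below, hence costs a negligible multiplicative factor): by the reflection principle / ballot theorem for Brownian bridge, the probability that a bridge from height $\sim |x|$ to height $\sim|z|$ over a time interval of length $\asymp t$ stays positive is of order $|xz|/t$, and integrating the Gaussian endpoint density over the window of width $\upsilon$ around $\tau^{(r)}_t$ contributes the extra $t^{-1/2}$, giving the claimed $|xz|/t^{3/2}$. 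This is precisely the short-time analogue of what Lemma \ref{lem-onerayLB} does on $[r,N_+]$; the hypothesis $z\in[-\sqrt t/C,-C\sqrt t]$ keeps the endpoint at a macroscopic (but sub-diffusive in the right way) distance from the barriers so that the constants in the ballot estimate depend only on $C$ (and $\beta$), and the hypothesis $x\in[-r^{1/20},-r^{19/20}]$ with $r$ large plays the same harmless role as in Proposition \ref{Prmomen1Lower}.

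The main obstacle, as in \cite{CMN}, is not the Girsanov bookkeeping but the careful quantitative barrier estimate for $\P(Event_{r,t,f})$: one must check that replacing the straight barriers by the curved ones $u^{(N)}_{(\cdot)}, l^{(N)}_{(\cdot)}$ only distorts the ballot asymptotics by multiplicative constants (uniformly in $t$), and that the discretization (the event is imposed only at integer times $j\in[|r,t|]$ rather than continuously) does not change the order of magnitude — this is handled by the same oscillation/union-bound arguments as in the proof of Lemma \ref{lem-onerayLB}, using that the Brownian increments over unit time intervals are uniformly subgaussian. Since all of these ingredients are already developed in \cite{CMN} and adapted in Lemma \ref{lem-onerayLB}, the proof of Proposition \ref{Prmomen1Lowerbis} is obtained by transcribing those arguments with $t$ in place of $N_+$ and the endpoint centering $\tau^{(r)}_t$ in place of $\tau^{(r)}_{N_+}-\tfrac34\log N$; I would simply indicate these changes and omit the repeated details.
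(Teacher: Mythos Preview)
Your proposal is correct and is exactly the approach the paper intends: the paper itself simply writes ``The proof is similar to that of Proposition \ref{Prmomen1Lower}, and is omitted,'' and you have correctly spelled out that similarity---the $\theta$-independence of the law, the Girsanov tilt with density $e^{\sqrt{2}W_{\tau^{(r)}_t}-\tau^{(r)}_t}$, the bookkeeping of the prefactor, and the ballot-type estimate for $\P(Event_{r,t,f})$ in the spirit of Lemma \ref{lem-onerayLB}. One cosmetic remark: the displayed statement in the paper carries the factor $2^{r-N_+}$ in the first inequality of \eqref{momen1Lowerbis}, which is a leftover from copying Proposition \ref{Prmomen1Lower}; your Girsanov computation correctly produces $2^{r-t}$, consistent with the second inequality.
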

\noindent
The proof is similar to that of Proposition \ref{Prmomen1Lower}, and is omitted.

%\subsection{Proof of Proposition \ref{PrSecondmo}}
\subsection{Two rays estimate}
We will bound here
\begin{align}
\label{eq:def_P_N}
\P_N(\theta,\theta')=\P_N(\theta, \theta',x,z,x',z'):= \P(\I_N(\theta,x,z)\cap \I_N(\theta',x',z')).
\end{align}
(For the sake of shortness of notation, when no confusion occurs 
we write $\P_N(\theta,\theta')$, omitting the $x,z,x',z'$ from the notation.)
This study, which is technical, is based on the fact that the random walks
$(R_{2^j}^{(2^r,\Delta)}(\theta))_{r \leq j \leq N}$ and $(R_{2^j}^{(2^r,\Delta)}(\theta'))_{r \leq j \leq N}$ are Gaussian random walks whose increments are approximately independent after some branching time which is roughly minus
the logarithm in base $2$ of the distance modulo $2 \pi$ between $\theta$ and $\theta'$.

The general idea is as follows. For given $\theta, \theta'$, we will consider the integer $\k$ such that $2^{-\k} \leq \frac{||\theta- \theta'||}{2\pi} < 2^{-(\k-1)}$, $|| \cdot ||$ denoting the distance on the set $\R / 2 \pi \Z$. One can understand $\k$ as the time of (approximate) branching between the field at $\theta $ and at $\theta' $. We will show that after some time $\k_+ = \k_+(\k) $ ``slightly larger'' than $\k$, we are able to bring out independence between the increments of  $(R_{2^k}^{(2^r,\Delta)}(\theta))_{k\geq r} $ and $ (R_{2^k}^{(2^r,\Delta)}(\theta'))_{k\geq r}$. By analogy with the Gaussian field, we will see this time as a time of decorrelation. It is defined as follows:
 \begin{itemize}
 \item For $\k  \leq  N/2$, the time of decorrelation is $\k_+ := \k + 3 \Delta^{(\k)}$. Recall that:
   $$ \Delta^{(\k)}:= e^{\sqrt{\log r}}
   + 100\lfloor  \log^2 \k \rfloor \ .$$
 In particular, for $\k \leq r/2$ and $r$ large enough, the  walks  $(R_{2^k}^{(2^r,\Delta)}(\theta))_{k\geq r} $ and $ (R_{2^k}^{(2^r,\Delta)}(\theta'))_{k\geq r}$ will have ``almost independent increments'' from the starting time $r$, since $r \leq \k_+$.
 \item For $N/2 < \k \leq N _+$, we will require a faster decorrelation. We take $\k_+
 := \k + 3 \kappa^{(\k)}$, where
 $$\kappa^{(\k)} :=  \lfloor r/100 \rfloor + 100 \lfloor \log (N-\k) \rfloor^2.$$
 However, the price to pay is that we will have to modify our field $(R_{2^j}^{(2^r,\Delta)}(\theta))_{r \leq j \leq N}$ in the spirit of subsection \ref{section:new_coupling}.
% \item For $\k > N - (r/2)$, the branching time is close to the end and then we do not need to use any time of decorrelation.
 \end{itemize}
 
 Our two rays estimates are divided to three lemmas,
 numbered \ref{lemma:kDebut}, \ref{lemma:ktoutDebut} and \ref{lemma:kMil}.
 The statement of each of these lemmas gives a suitable majorization of $\P_N(\theta, \theta')$, for a given range of values of $\k$.
\begin{lemma}[Time of branching $\k \leq N/2$]
  \label{lemma:kDebut}
  For any $\upsilon \in  (0,1)$, $r$ large enough, $N$ large enough depending on $r$,   $\k \leq \frac{N}{2}$ and $2^{-\k}\leq \frac{||\theta-\theta'||}{2\pi} < 2^{-(\k-1)}$,  we have
  \begin{align}
  \label{eqkDebut}
  \P_N(\theta, \theta')&\ll \left\{
  \begin{array}{ll}
   |zz' xx'|  2^{2r-2N_+ +2(x-z+x'-z')}                                    ,\qquad &\text{when } \k_+ \leq r,\\
  |zz'x| e^{2(x-z-z')}2^{r-N_+} 2^{\k_+-N_+} e^{- \k_+^{\alpha_-}} ,\qquad &\text{when } \k_+ \geq r.
  \end{array}
  \right.
  \end{align}
\end{lemma}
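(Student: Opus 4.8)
The plan is to prove Lemma \ref{lemma:kDebut} by a two‑rays second moment argument that mirrors the single‑ray computation in Proposition \ref{Prmomen1Lower}, exploiting the (approximate) independence of the increments of $R^{(2^r,\Delta)}_{2^k}(\theta)$ and $R^{(2^r,\Delta)}_{2^k}(\theta')$ after the decorrelation time $\k_+$. First I would decompose the event $\I_N(\theta,x,z)\cap\I_N(\theta',x',z')$ according to the value of the common portion of the two walks up to time $\k_+$: since $2^{-\k}\le \|\theta-\theta'\|/2\pi<2^{-(\k-1)}$, the Prüfer phases $\psi_j(\theta)$ and $\psi_j(\theta')$ agree up to negligible error for $j\le 2^{\k}$, and the coupling of Proposition \ref{proposition:new_coupling} (with the $\blacksquare_l$ estimates \eqref{eq:one_point_square}, \eqref{eq:laplace_blacksquare}) lets us replace the genuine field by the more independent one $Z^{(2^r,\Delta)}$, so that $(R^{(2^r,\Delta)}_{2^k}(\theta))_{k\ge\k_+}$ and $(R^{(2^r,\Delta)}_{2^k}(\theta'))_{k\ge\k_+}$ become conditionally independent Gaussian random walks given $\mathcal G_{2^{\k_+}}$, up to subgaussian errors absorbed into the constants. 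Up to time $\k_+$, the two walks coincide (again up to negligible error), so $\P_N(\theta,\theta')$ is essentially the probability that a single walk stays in the entropic envelope up to $\k_+$ and then the two conditionally independent walks each complete their barrier crossing to the prescribed window at time $N_+$.

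Next I would carry out the Gaussian estimates in the two cases. When $\k_+\le r$, both walks are effectively independent from the starting time $r$; applying the one‑ray barrier estimate (as in Proposition \ref{Prmomen1Lower}, i.e. Lemma \ref{lem-onerayLB}) separately to each walk, together with the Girsanov change of measure removing the linear drift $\sqrt2 t$ — which produces the factor $e^{-\tau^{(r)}_{N_+}}e^{-\sqrt2(\text{endpoint})}=2^{r-N_+}e^{2(x-z)}$ per ray and the ballot‑type polynomial factor $|xz|/N^{3/2}$ per ray, the $N^{3/2}$ being cancelled against the $N^{3/2}$ coming from the $-\frac34\log N$ in the envelope — gives the product bound $|zz'xx'|\,2^{2r-2N_+}e^{2(x-z+x'-z')}$. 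When $\k_+\ge r$, the two walks share a common stretch from time $r$ to time $\k_+$; I would apply a one‑ray barrier estimate to the common walk on $[r,\k_+]$, which contributes a factor comparable to $|x|\,e^{2x}2^{r-\k_+}\cdot(\text{entropic gain } e^{-\k_+^{\alpha_-}})$ from the probability of staying below the envelope $u^{(N)}_{\k_+}\approx -\k_+^{\alpha_-}$ at the (early‑time) branching level — this is exactly where the stretched‑exponential factor $e^{-\k_+^{\alpha_-}}$ comes from — and then multiply by the two conditionally independent completions on $[\k_+,N_+]$, each contributing $e^{2\cdot(\text{drop from }\k_+\text{ to }N_+)}2^{\k_+-N_+}$ together with the $z,z'$ endpoint factors. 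Collecting these gives $|zz'x|\,e^{2(x-z-z')}2^{r-N_+}2^{\k_+-N_+}e^{-\k_+^{\alpha_-}}$.

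The main technical obstacle, and the place where I would be most careful, is controlling the coupling errors uniformly over all $\theta,\theta'$ with the prescribed separation, and in particular justifying that the replacement of $R$ by $R^{(2^r,\Delta)}$ and the replacement of $\psi_j(\theta')$ by $\psi_j(\theta)$ for $j\le 2^{\k_+}$ only costs multiplicative constants (not powers of $N$): this requires the Laplace transform bound \eqref{eq:laplace_blacksquare} on the good event $G_l(\theta)$ to tilt through the Girsanov change of measure without destroying the barrier, exactly as in \cite{CMN}. A secondary subtlety is that when $\k_+$ is close to but below $N/2$ one must check the envelope functions $u^{(N)}_k,\ell^{(N)}_k$ behave correctly at the branching level (still in the ``$k\le N/2$'' regime $-k^{\alpha_\pm}$), so that the one‑ray ballot estimate applies with the stated power $e^{-\k_+^{\alpha_-}}$. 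Once the coupling is in place, each piece is a standard Gaussian random walk / Brownian‑motion barrier computation of the type already assembled in Proposition \ref{Prmomen1Lower} and Lemma \ref{lem-onerayLB}, and combining them in the right order yields \eqref{eqkDebut}.
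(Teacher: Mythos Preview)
Your high-level plan (decorrelation after $\k_+$, Girsanov, ballot-type barrier estimates) is correct, and your bookkeeping of the exponential and polynomial factors lands on the right answer. But two aspects of your decomposition differ from the paper in ways that matter.

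First, in the case $\k_+\ge r$ you propose to factor the event as ``common walk on $[r,\k_+]$'' times ``two independent completions on $[\k_+,N_+]$''. This requires that $R^{(2^r,\Delta)}_{2^{\k_+}}(\theta)$ and $R^{(2^r,\Delta)}_{2^{\k_+}}(\theta')$ coincide up to negligible error, which you assert but do not justify; the walks are highly correlated before $\k_+$ but not equal, and the Pr\"ufer-phase agreement you mention does not by itself imply this. The paper sidesteps the issue entirely: it bounds one full ray event $Ev(r,x,z)$ over the whole interval $[r,N_+]$, and for the second ray simply drops the barrier constraint before $\k_+$ and bounds only $\sup_{w}\P(Ev(\k_+,w,z'))$ over the admissible window $w\in[l^{(N)}_{\k_+},u^{(N)}_{\k_+}]$. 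The entropic gain $e^{-\k_+^{\alpha_-}}$ then comes from $w\le u^{(N)}_{\k_+}=-\k_+^{\alpha_-}$ inside $e^{2w}$, not from a barrier estimate on a ``common stretch''. This decomposition needs no claim about the walks agreeing.

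Second, and more seriously, you claim that the coupling of Proposition~\ref{proposition:new_coupling} makes the post-$\k_+$ increments of $R^{(2^r,\Delta)}(\theta)$ and $R^{(2^r,\Delta)}(\theta')$ conditionally independent. This is only \emph{exactly} true in the dyadic case $\|\theta-\theta'\|/2\pi=2^{-\k}$, where the correlation $\sum_j e^{i(\theta-\theta')j}$ over a block of length $2^{l-\Delta^{(l)}}$ vanishes. For general separation the residual correlation $|C_{l,p}(\theta,\theta')|\ll 2^{\k-l}$ does not vanish and must be handled explicitly: the paper writes $I_{l,p}(\theta)=\tfrac{C_{l,p}(\theta,\theta')}{C_{l,p}}I_{l,p}(\theta')+I^{ind}_{l,p}(\theta,\theta')$, controls the error process $E^{(2^{\k_+})}_{2^l}$ via the events $\mathtt{E}^{(l)}_m=\{|E|^{(2^l)}_{2^{l+1}}\ge m\,2^{-\Delta^{(l)}/4}\}$ with tails $\P(\mathtt{E}^{(l)}_m)\ll e^{-m^2 2^{l-\k_+}}$, and checks that these errors survive the Girsanov tilt. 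This is the ``main technical obstacle'' you allude to, but it is a genuine additional layer of argument (the Facts~1--4 in the proof), not something absorbed by the $\blacksquare_l$ estimates of Proposition~\ref{proposition:new_coupling} alone.
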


The following lemma studies pairs whose time of branching happen before $r/2$.  It refines the estimate obtained in the previous lemma.
\begin{lemma}[Time of branching $\k \leq r/2$]
  \label{lemma:ktoutDebut}
  For any $\upsilon \in (0,1)$, $r\in \N$ large enough, $ \k \leq \frac{r}{2}$, $2^{-\k} \leq \frac{||\theta-\theta'||}{2\pi} \leq 2^{-(\k-1)}$ and  $N$ large enough depending on $r$, we have
  \begin{align*}
  \P_N(\theta, \theta')&\leq   (1+\eta_{r,\upsilon  })    2^{2r -2N_+}e^{2(x-z+x'-z')} N^{3}   \P(Event_{r,N}(x,z) ) \P(Event_{r,N}(x',z')), 
  \end{align*}
  where
  $$\underset{\upsilon \rightarrow 0}{\limsup} \, \underset{r \rightarrow \infty} {\limsup} \, \eta_{r,\upsilon} = 0.$$
\end{lemma}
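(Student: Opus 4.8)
\textbf{Plan for the proof of Lemma \ref{lemma:ktoutDebut}.}
The plan is to run the two-ray second moment method of \cite{CMN}, but with the initial barrier window pushed all the way down to the starting time $r$ so that the two walks $(R_{2^j}^{(2^r,\Delta)}(\theta))_{j\ge r}$ and $(R_{2^j}^{(2^r,\Delta)}(\theta'))_{j\ge r}$ are already essentially independent from the first step, and to track constants with the precision $(1+\eta_{r,\upsilon})$ rather than the crude $c(k_3)$ of the other cases. First I would fix $\k\le r/2$ with $2^{-\k}\le \|\theta-\theta'\|/2\pi\le 2^{-(\k-1)}$ and set the decorrelation time $\k_+=\k+3\Delta^{(\k)}\le r$ (using $r$ large, so that $3\Delta^{(\k)}\le r/2$). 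The point of $\k_+\le r$ is that the band-resampled field $Z^{(2^r,\Delta)}$ from Section \ref{section:new_coupling} is, by construction of the blocks, built so that increments after time $\k_+$ associated to $\theta$ and to $\theta'$ depend on disjoint families of the Gaussians $\Nc^\C_\cdot$; since $\k_+\le r$, this means the \emph{entire} walks from time $r$ onward are driven by independent Gaussian noise, up to the error controlled by Proposition \ref{proposition:new_coupling} and \eqref{eq-addstat}. Hence, writing $\I_N(\theta,x,z)\cap\I_N(\theta',x',z')$ in terms of $R^{(2^r,\Delta)}$ (paying the sub-polynomial correction from \eqref{eq:one_point_square}--\eqref{eq:laplace_blacksquare} to pass from $R^{(2^r)}$ to $R^{(2^r,\Delta)}$, which is negligible and absorbed into $\eta_{r,\upsilon}$), the joint probability factorizes into $\P_N(\theta,\theta')\le (1+\eta_{r,\upsilon})\,\P(\I_N(\theta,x,z))\,\P(\I_N(\theta',x',z'))$ with $\I_N$ now meaning the $R^{(2^r,\Delta)}$-event.

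Next I would invoke Proposition \ref{Prmomen1Lower}, which gives the two-sided estimate
\[
  \P(\I_N(\theta,x,z)) \le 2^{r-N_+}e^{2(x-z)}e^{2\upsilon+2\lambda_\infty^{(r)}}N^{3/2}\,\P(Event_{r,N}(x,z)),
\]
and likewise for $(\theta',x',z')$. Multiplying the two bounds gives exactly
\[
  \P_N(\theta,\theta') \le (1+\eta_{r,\upsilon})\,2^{2r-2N_+}e^{2(x-z+x'-z')}N^{3}\,\P(Event_{r,N}(x,z))\,\P(Event_{r,N}(x',z')),
\]
since the factor $e^{2(2\upsilon+2\lambda_\infty^{(r)})}$ together with the band-resampling correction and the $R^{(2^r)}\!\to\!R^{(2^r,\Delta)}$ error is of the form $1+\eta_{r,\upsilon}$ with $\limsup_{\upsilon\to0}\limsup_{r\to\infty}\eta_{r,\upsilon}=0$ (use $\lambda_\infty^{(r)}\ll 2^{-e^{\sqrt{\log r}}}\to0$). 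This is precisely the asserted inequality. The one place requiring care is making sure the independence really holds down to time $r$ and not just after $\k_+$: here one uses that a Gaussian $Z_p(\theta)$ at the common base point $p=2^r$, conditioned on $\mathcal{G}_{2^r}$, contributes only a bounded (in fact, through the $\mathscr{U}$-type window, $O(r^{19/20})$) shift to both walks, and on the good event this shift is a fixed $\mathcal{G}_{2^r}$-measurable vector common to both, which does not spoil the conditional independence of the \emph{increments} that drive the barrier events — the barrier events $Event_{r,N}$ are phrased in terms of the walk itself, so one absorbs the common base-point value as in the Girsanov computation of Proposition \ref{Prmomen1Lower}.

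The main obstacle I expect is \textbf{bookkeeping the error terms uniformly in $\upsilon$ and $r$ while keeping the leading constant exactly $1$}: one must verify that (i) the passage $R^{(2^r)}\to R^{(2^r,\Delta)}$ via \eqref{eq:laplace_blacksquare} costs only a multiplicative $e^{O(2^{-\Delta^{(r)}/4})}$ on each ray, (ii) the block structure of $Z^{(2^r,\Delta)}$ genuinely yields exact independence of the $\theta$- and $\theta'$-driving noise once $\k_+\le r$ (this is where the choice $\k_+=\k+3\Delta^{(\k)}$ and $\k\le r/2$ is used, together with the definition of the resampled increments), and (iii) the windowing in $z,z'$ of width $\upsilon$ contributes the $e^{2\upsilon}$-type factors that vanish as $\upsilon\to0$. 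None of these is deep — they are all present in \cite{CMN} — but assembling them so that the final constant is $(1+\eta_{r,\upsilon})$ with the stated double-$\limsup$ is the delicate point, and it is exactly this sharp form that is needed in the proof of non-atomicity of $\mathscr{D}_\infty$ in Theorem \ref{thm:Bj} and in the far-pair estimate \eqref{zrh:2ndfar}.
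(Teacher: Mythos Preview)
Your high-level strategy is right and matches the paper, but there is a genuine gap in how you obtain the near-independence. You claim that for $\k_+\le r$ the increments of $Z^{(2^r,\Delta)}$ at $\theta$ and $\theta'$ ``depend on disjoint families of the Gaussians $\Nc^\C_\cdot$''. This is false: both $I_{l,p}(\theta)$ and $I_{l,p}(\theta')$ in \eqref{defI_n} are built from the \emph{same} block $\{\Nc^\C_{2^l+p2^{l-\Delta^{(l)}}+j}\}_j$, only with different phase weights $e^{i\theta j}$ vs.\ $e^{i\theta' j}$. (You may be confusing $Z^{(2^r,\Delta)}$ with the band-resampled field $Z^{(j)}_k$ of \eqref{eqd:bandresampled} in Section~\ref{sec:diffusion}, which does inject fresh copies $\check Z^{(r,j)}_p$, but that construction plays no role here.) The pair $(I_{l,p}(\theta),I_{l,p}(\theta'))$ is jointly complex Gaussian; in the dyadic case $\|\theta-\theta'\|/2\pi=2^{-\k}$ exactly, the cross-correlation $\sum_j e^{i(\theta-\theta')j}$ over a block of length $\ge 2^\k$ vanishes and independence is exact. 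For general $\theta,\theta'$ in the stated range, however, $|C_{l,p}(\theta,\theta')|\ll 2^{\k-l}$ is small but nonzero. So item~(ii) in your list of obstacles is not something to verify---it fails, and the clean factorization $\P_N(\theta,\theta')\le(1+\eta)\P(\I_N(\theta))\P(\I_N(\theta'))$ is not available directly.

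The paper does not attempt exact factorization. It uses the decomposition \eqref{decomposi}: one writes $R^{(2^{\k_+},\Delta)}_{2^l}(\theta)=R^{(2^{\k_+},ind)}_{2^l}+E^{(2^{\k_+})}_{2^l}$, where $R^{ind}$ is genuinely independent of the $\theta'$-walk (at the price of a perturbed clock $\tau^{(r)}_l-\mathtt C^{(r)}_l$ with $\sup_l|\mathtt C^{(r)}_l|\ll 2^{-\Delta^{(\k)}}$) and $|E|$ is controlled by the tail bounds $\P(\mathtt E^{(l)}_m)\ll e^{-m^2 2^{l-\k_+}}$ (Fact~4). This yields the two-term bound \eqref{REM}; the error sum over $m\ge 1$ is then dominated as in \eqref{Gillet} by $\mathcal O_\upsilon\bigl(2^{2(r-N_+)}e^{-(2^{r/3}-1)/8}\bigr)$, negligible against the main term. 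The leading product still carries the barrier widening $2^{1-e^{\sqrt{\log r}}/8}$ and the time-shift $\mathtt C^{(r)}$ in one factor; reducing $\P(GEv(r,2^{1-e^{\sqrt{\log r}}/8},\mathtt C^{(r)},x',z'))$ to $(1+\eta_r)\P(Event_{r,N}(x',z'))$ requires \cite[Lemmas~2.1,\,2.3]{BRZ} together with $\|\mathtt C^{(r)}\|_1\to 0$. Your treatment of the $e^{2\upsilon+2\lambda^{(r)}_\infty}$ factors is correct once this machinery is in place, and the final constant $(1+\eta_{r,\upsilon})$ emerges exactly as you describe.
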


\begin{lemma}[Time of branching $N/2 < \k \leq N_+$]
  \label{lemma:kMil}
  For $\upsilon \in (0,1)$, $r\in \N$ large enough, $N$ large enough depending on $r$,  $N/2 < \k \leq N_+$ and $2^{-\k}\leq \frac{||\theta-  \theta'||}{2\pi}  < 2^{-(\k-1)}$,  we have $\k_+ < N$ and
  \begin{align}
  \P_N(\theta,\theta') \ll  |xz| e^{2(x-z-z')} 2^{r-N_+} 2^{\k_+-N_+} e^{- \half (N-\k_+ )^{\alpha_-}} ,
  \end{align}
\end{lemma}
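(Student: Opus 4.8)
The plan is to adapt the argument of \cite{CMN} for pairs of angles that branch in the second half of the evolution, reusing the one--ray ballot estimates behind Propositions \ref{Prmomen1Lower} and \ref{Prmomen1Lowerbis} together with the decorrelation scheme of Section \ref{section:new_coupling}. First I would dispose of the elementary claim $\k_+<N$: since $\k\le N_+$ and $N-N_+=\log k_1^+ + O(1)$ grows, while $\kappa^{(\k)}=\lfloor r/100\rfloor+100\lfloor\log(N-\k)\rfloor^2\le r/100+100(\log N)^2$, the parameter orderings (in particular $\log k_1^+\gg r$ and $N\gg r$) force $\k+3\kappa^{(\k)}<N$.

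For the main bound, set $\k_+=\k+3\kappa^{(\k)}$ and recall that on $2^{-\k}\le\|\theta-\theta'\|/2\pi<2^{-(\k-1)}$ the walks $(R_{2^k}^{(2^r,\Delta)}(\theta))$ and $(R_{2^k}^{(2^r,\Delta)}(\theta'))$ share almost all their increments up to time $\k$; hence on $\I_N(\theta)\cap\I_N(\theta')$ the common walk stays in the envelope $[\tau^{(r)}_k+l^{(N)}_k,\ \tau^{(r)}_k+u^{(N)}_k]$ for $r\le k\le\k$. I would condition on its height at time $\k$ and on the two heights at time $\k_+$, and split $\P_N(\theta,\theta')$ over the windows $[r,\k]$, $[\k,\k_+]$ and $[\k_+,N_+]$. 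Because $\k$ may be as large as $N_+$, the natural decorrelation time $\k+3\Delta^{(\k)}$ of Section \ref{section:new_coupling} is too slow, so the first genuine step is to replace the field on $[\k,N]$ by a modified one, exactly in the spirit of Section \ref{section:new_coupling} but with the faster parameter $\kappa^{(\k)}$ in place of $\Delta^{(\k)}$, so that the increments of the $\theta$-- and $\theta'$--walks on $[\k_+,N_+]$ become conditionally independent given $\mathcal{G}_{2^{\k_+}}$. The cost of this substitution is controlled by the one--point bound \eqref{eq:one_point_square} and the conditional Laplace bound \eqref{eq:laplace_blacksquare}, read with $\kappa^{(\k)}$ in place of $\Delta^{(\k)}$; the resulting errors are summable and are absorbed into a harmless constant loosening of the barriers defining $\I_N$ (these have slack as the index $p$ in $A^{p,\pm}$ varies).

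With independence available on $[\k_+,N_+]$, the three windows are estimated by Girsanov plus ballot bounds as in Propositions \ref{Prmomen1Lower} and \ref{Prmomen1Lowerbis}. The contributions from $[r,\k]$ and the $\theta$--marginal of $[\k_+,N_+]$ reassemble — up to the short transition window $[\k,\k_+]$, which is only $3\kappa^{(\k)}=O(r+(\log N)^2)$ free steps and costs at most a multiplicative factor depending on $r$ and $\log N$ — the full one--ray estimate for the $\theta$ walk, contributing $|x|\,|z|\,e^{2(x-z)}\,2^{r-N_+}$ up to powers of $N$, just as in Proposition \ref{Prmomen1Lower}. The $\theta'$--marginal of $[\k_+,N_+]$ supplies the extra split--off cost, for which I would use only the crude upper bound that the $\theta'$ walk stays below the curved upper barrier $u^{(N)}_k=-(N-k)^{\alpha_-}-\tfrac34\log N$ on $[\k_+,N_+]$ and lands in the final window near $\tau^{(r)}_{N_+}-\tfrac34\log N+z'$; after Girsanov this yields $e^{-2z'}\,2^{\k_+-N_+}$ — the power of $2$ being the Gaussian cost of the free descent over $N_+-\k_+$ steps — times the stretched exponential $e^{-\tfrac12(N-\k_+)^{\alpha_-}}$, coming from the curvature $-(N-k)^{\alpha_-}$ of $u^{(N)}_k$ at $k\approx\k_+$, which forces the $\theta'$ walk to lie a distance $\sim(N-\k_+)^{\alpha_-}$ below its linear level there (the analogue of the factor $e^{-\k_+^{\alpha_-}}$ in Lemma \ref{lemma:kDebut}, but now governed by the distance $N-\k_+$ to the end because $\k_+>N/2$). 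I would also choose the decorrelation window so that $(N-\k_+)^{\alpha_-}$ differs from $(N-\k)^{\alpha_-}$ by at most a constant.

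Multiplying the three contributions and summing over the conditioned heights — each sum converging by the entropic--repulsion gains of the form $e^{-2(\text{height})}$ — the powers of $N$ cancel against the $\tfrac34\log N$ endpoint shifts exactly as in the proofs of Proposition \ref{Prmomen1Lower} and Lemma \ref{lemma:kDebut}, which delivers the claimed bound. I expect the main obstacle to be the interplay, in the last window, between the forced field modification (the faster decorrelation $\kappa^{(\k)}$ rather than $\Delta^{(\k)}$, since $\k$ can be close to $N_+$) and the curved upper barrier: one must check simultaneously that the modification errors remain negligible at this faster rate and that the ballot bound below the curved barrier produces exactly the factor $e^{-\tfrac12(N-\k_+)^{\alpha_-}}$. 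This is, however, precisely the content of the analogous second--half lemma in \cite{CMN}, whose argument I would follow closely.
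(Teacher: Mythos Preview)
Your plan is essentially the paper's: modify the field on $[\k_+,N]$ using the faster parameter $\kappa^{(\k)}$ in place of $\Delta^{(l)}$, control the substitution error by analogues of \eqref{eq:one_point_square}--\eqref{eq:laplace_blacksquare}, and then use independence after $\k_+$ together with Girsanov and ballot bounds. Two organizational points differ and are worth flagging.

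First, your verification of $\k_+<N$ bounds $\log^2(N-\k)$ by $(\log N)^2$, which is too crude: for $\k$ near $N_+$ you would need $(\log N)^2\ll N-N_+\asymp\log k_1^+$, and this fails as $n\to\infty$ with $k_1$ fixed. The paper instead uses directly that $300\log^2(N-\k)\le(N-\k)/32$ once $N-\k$ is large, which holds since $N-\k\ge N-N_+$ and the latter is large by parameter ordering.

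Second, the paper does not split the $\theta$-walk at $\k$ and $\k_+$. It keeps the full $\theta$-event on $[r,N_+]$ intact (after the modification, the $\theta$-increments beyond $\k_+$ are written as an independent piece plus an $|E|$-error, and the independent piece simply feeds back into the full one-ray event), drops all constraints on the $\theta'$-walk before $\k_+$, and bounds the $\theta'$-tail from $\k_+$ by $\sup_w\P(Ev(\k_+,\ldots,w,z'))$. This sidesteps any transition-window bookkeeping. Note also that the factor $e^{-\half(N-\k_+)^{\alpha_-}}$ in the statement does not come from the main term (which actually yields $e^{-(N-\k_+)^{\alpha_-}}$) but from the separate bound on the substitution-error piece $Q_N$, where the bad event $\cup_l\{|\blacklozenge^{(\k)}_l|\ \text{large}\}$ is intersected with the barrier on $[r,\k_+]$; you treat that error only implicitly via barrier loosening, whereas the paper bounds $Q_N$ directly.
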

%\todo{Ofer-Check that late branching is still OK}
\begin{rmk}
The proof of the Lemma \ref{lemma:kMil} is the unique place where we use $(l_j^{(N)})_{r \leq j\leq N}$, the lower part of the envelope.
\end{rmk}
\subsubsection{Dyadic case}
We start by assuming $\frac{||\theta-\theta'||}{2\pi} = 2^{-\k}$ and prove Lemmas \ref{lemma:kDebut} and \ref{lemma:kMil} in this dyadic case. This part is mainly for pedagogical purposes while laying the ground for the general case. It illustrates perfectly the machinery of the proof in a simpler setting.

It will be convenient to
denote,  for any $l\in [|r,N_+|]$, $p\in [|0,2^{\Delta^{(l)}}-1|]$,
\begin{align}
 \label{defI_n}
 I_{l,p}(\theta) := \frac{ \sum_{j=0}^{2^{l-\Delta^{(l)}}-1} \Nc^\C_{2^{l}+p 2^{l-\Delta^{(l)}} + j} e^{i \theta j} }
                         { \sqrt{{ 2^{l}+p 2^{l-\Delta^{(l)}}  }}  }
                 \eqlaw \Nc^{\C}\left(0, (2^{\Delta^{(l)}}+ p)^{-1} \right).
\end{align}
Recall that $R_{2^N}^{(2^r,\Delta)}(\theta)$ and $R_{2^N}^{(2^r, \Delta)}(\theta')$ can be written as
$$
R_{2^N}^{(2^r,\Delta)}(\theta)= \Re \left( \sigma \sum_{l=r}^{N -1 } \sum_{p=0}^{2^{\Delta^{(l)}}-1}  I_{l,p}(\theta) e^{ i\psi_{2^{l}+p2^{l-\Delta^{(l)}}} (\theta) } \right),
$$
$$
R_{2^N}^{(2^r,\Delta)}(\theta')= \Re \left( \sigma \sum_{l=r}^{N -1 } \sum_{p=0}^{2^{\Delta^{(l)}}-1}  I_{l,p}(\theta') e^{ i\psi_{2^{l}+p2^{l-\Delta^{(l)}}} (\theta') } \right).
$$
{\it The crucial observation} is that for any $  l \in [|\k_+,N_+|] $ 
 (we easily check that $\k_+ <N_+$ if $r$ is large enough, $N \geq r$ and $\k \leq N/2$) and any $p\leq 2^{\Delta^{(l)}}-1$, the random variables $ I_{l,p}(\theta)$ and $ I_{l,p}(\theta')$  are independent and identically distributed. Indeed, they form a complex Gaussian vector, and they are uncorrelated, since for $l \geq \k_+$, one has $l - \Delta^{(l)} \geq \k$ if $l \geq r $ and $r$ is large enough,
and then
$$ \sum_{j=0}^{2^{l - \Delta^{(l)}} } e^{i( \theta - \theta') j}
 = \sum_{j=0}^{2^{l - \Delta^{(l)}}} e^{\pm 2  i j \pi / 2^{\k}} = 0.$$

We deduce that the increments of $R_{2^{j}}^{(2^r,\Delta)}(\theta) $ and  $R_{2^{j}}^{(2^r,\Delta)}(\theta') $ after the time $\k_+$ are independent and identically distributed. Recalling the definition of $\tau^{(r)}$ in (\ref{eq:def_tau_r}) and \eqref{defI_n}, it follows that
\begin{align*}
(R_{2^{j}}^{(2^r,\Delta)}(\theta))_{j\geq r} \eqlaw (R_{2^{j}}^{(2^r,\Delta)}(\theta'))_{j\geq r} \eqlaw \sqrt{\frac{1}{2}} ( W_{\tau_j^{(r)}})_{j\geq r}.
\end{align*}
For any $k \in [|r, N_+|]$, $x, z<0$, we introduce the events
\begin{align}
\nonumber
Ev(k,x,z)&:= \left\{   \forall j\in [|k,N_+|],\,  l_j^{(N)}  \leq\sqrt{\frac{1}{2}}W_{\tau_j^{(k)}}-  \tau^{(k)}_j  +x
 \leq u_j^{(N)}, \right.\nonumber\\
&\qquad  \left. \sqrt{\frac{1}{2}}W_{\tau_{N_+}^{(k)}}-  \tau^{(k)}_{N_+} +x\in -\frac34 \log N +[z,z+\upsilon)    \right\},\nonumber
\\
\label{eq:def_Ev_GEv}
GEv(k,x,z)&:=\left\{   \forall j\in [|k,N_+|],\,  l_j^{(N)}  \leq\sqrt{\frac{1}{2}}W_{\tau_j^{(k)}}  +x
 \leq u_j^{(N)}, \right.\nonumber\\
&\qquad  \left. \sqrt{\frac{1}{2}}W_{\tau_{N_+}^{(k)}}+x\in -\frac34 \log N +[z,z+\upsilon)    \right\},
\end{align}
Note that $GEv(r,x,z)=Event_{r,N}(x,z)$ from Proposition \ref{Prmomen1Lower}, we only keep the separate notation for
compatibility with \cite{CMN}. Furthermore note that $GEv(k,x,z)$ is equal to the event obtained from $Ev(k,x,z)$ after  Girsanov transform with density $ \exp\left( \sqrt{2}W_{\tau^{(k)}_{N_+} }- \tau^{(k)}_{N_+} \right) $. Performing the transform yields:
\begin{align}
\label{eq:girsanov1}
\P\left(Ev(k,x,z)\right) & = e^{-\tau_{N_+}^{(k)}} \E\left( e^{-\sqrt{2} W_{\tau^{(k)}_{N_+} }  } \mathds{1}_{ GEv(k,x,z) } \right)
                         \leq 2^{k-N_+} e^{2(x-z+\upsilon)} N^{\frac{3}{2}} \P\left( GEv(k,x,z) \right),
\end{align}
where in the last inequality we used the definition \eqref{eq:def_tau_r} of $\tau_{N_+}^{(k)}$ and the fact that $e^{-\sqrt{2} W_{\tau^{(k)}_{N_+} }  }\leq N^{\frac{3}{2}} e^{2(x-z+\upsilon)} $ on $GEv(k,x,z)$.

In order to allow for more flexibility and for later use, let us record the following analogous events. For $k \in [|r,N_+|]$, $a\geq 0$, $E=(E_j)_{j\geq k}$ a sequence of reals such that $(\tau_{j}^{(k)} - E_j)_{j\geq k}$ is positive and nondecreasing, $x,z\in \R$, define
\begin{align}
\nonumber
Ev(k,a,E,x.z)  &:= \left\{   \forall j\in [|k, N_+ |] ,\, l_j^{(N)}-a \leq \sqrt{\frac{1}{2}}W_{\tau_j^{(k)}-E_j}+x- \tau^{(k)}_{j}  \leq u_j^{(N)}+a, \right.\\
&\quad \left.  \sqrt{\frac{1}{2}}W_{\tau_{N_+}^{(k)}-E_{N_+}}+x-\tau^{(k)}_{N_+}\in -\frac34 \log N +[z,z+\upsilon)   \right\},\\
\label{eq:def_Ev_GEv_ext}
GEv(k,a,E,x,z) &:= \left\{   \forall j\in [|k, N_+ |] ,\, l_j^{(N)}-a \leq \sqrt{\frac{1}{2}}W_{\tau_j^{(k)}-E_j} +x\leq u_j^{(N)}+a ,\right.\nonumber\\
&\quad \left.   \sqrt{\frac{1}{2}}W_{\tau_{N_+}^{(k)}-E_{N_+}}+x\in -\frac34 \log N +[z,z+\upsilon)  \right\}.
\end{align}
Again, the event $GEv(k,a,E,x,z) $ is, up to an error due to the time shift $E$, "quasi equal" to what we obtain when we apply the Girsanov' transform with density
$$ \exp\left( \sqrt{2}W_{\tau_{N_+}^{(k)}-E_{N_+} } - (\tau_{N_+}^{(k)}- E_{N_+}) \right) $$ to the event $ Ev(k,a,E,x,z)$. This time, the inequality takes the form:
\begin{align}
\label{eq:girsanov2}
\P\left(Ev(k,a,E,x,z)\right) & \leq 2^{k-N_+} e^{-E_{N_+}+2(x-z+a+\upsilon)} N^{\frac{3}{2}} \P\left( GEv(k,a+\sup_{k \leq j \leq N} |E_j|,E,x,z) \right).
\end{align}
Indeed, by the Girsanov transform and then using the barrier at time $N_+$:
\begin{align*}
  & \P\left(Ev(k,a,E,x,z)\right) \\
&=  e^{-\tau_{N_+}^{(k)}+E_{N_+}} \E\left( e^{-\sqrt{2} W_{\tau^{(k)}_{N_+} - E_{N_+}} }  
\mathds{1}_{ \left\{   \forall j\in [|k, N_+ |] ,\, l_j^{(N)}-a \leq \sqrt{\frac{1}{2}} W_{\tau_j^{(k)}-E_j}+x- E_j  \leq u_j^{(N)}+a \right\}}\right.\\
&\qquad\qquad\qquad \qquad \qquad \left.\mathds{1}_{ \left\{\sqrt{\frac{1}{2}}W_{\tau_{N_+}^{(k)}-E_{N_+}}+x\in -\frac34 \log N +[z,z+\upsilon) \right\}  }\right)\\
\leq & 2^{k-N_+} e^{E_{N_+}+2(x-z-E_{N_+}+a+\upsilon)} N^{\frac{3}{2}} \P\left( \forall j\in [|k, N_+ |] , l_j^{(N)}\!-\!a \leq \sqrt{\frac{1}{2}}W_{\tau_j^{(k)}-E_j}+x- E_j  \leq u_j^{(N)}\!+\!a ,\right.\\
&\qquad \qquad\qquad\qquad \qquad \qquad  \left.  \left\{\sqrt{\frac{1}{2}}W_{\tau_{N_+}^{(k)}-E_{N_+}}+x\in -\frac34 \log N +[z,z+\upsilon) \right\} 
\right) \\
\leq &  2^{k-N_+} e^{-E_{N_+}+2(x-z+a+\upsilon)} N^{\frac{3}{2}} \P\left( GEv(k,a+\sup_{k \leq j \leq N_+} |E_j|,E,x,z) \right).
\end{align*}

\begin{proof}[Proof of Lemma \ref{lemma:kDebut} in dyadic case]{ \quad }

{ \bf When $\k_+ \leq r$:}
The increments of $R_{2^{j}}^{(2^r,\Delta)}(\theta) $ and  $R_{2^{j}}^{(2^r,\Delta)}(\theta') $ after the time $\k_+$ are independent and identically distributed, thus we have
\begin{align*}
\P_N(\theta,\theta) = \quad & \P\left(Ev(r,x,z)\right) \P\left(Ev(r,x',z')\right)\\
                     \stackrel{Eq. \eqref{eq:girsanov1}}{\ll_v} & 
 2^{2r-2N_+} e^{2(x-z+x'-z')} \left( N^{\frac{3}{2}} \P\left( GEv(r,x,z) \right) \right) \left( N^{\frac{3}{2}} \P\left( GEv(r,x',z') \right) \right).
\end{align*}
Finally by applying \textcolor{red}{\cite[(A.15)]{CMN}}
 (with $E_{j-r}=\lambda_j^{(r)}/\log 2$, which implies $||E|| \leq 1$ for $r$ large enough), we obtain \eqref{eqkDebut}.

{ \bf When $\k_+ \geq r$:}
The increments of $R_{2^{j}}^{(2^r,\Delta)}(\theta) $ and  $R_{2^{j}}^{(2^r,\Delta)}(\theta') $ after time $\k_+$ are independent and identically distributed. Moreover,
all these increments are independent of
those of $R_{2^{j}}^{(2^r,\Delta)}(\theta) $
for $j$ between $r$ and $\k_+$ (we see this fact  by first conditioning with respect to the $\sigma$-algebra $\mathcal{G}_{2^{\k_+}}$). We then have:
\begin{align*}
&\P_N(\theta, \theta'x,z,x',z') \leq   \P\left(  Ev(r,x,z) \right) \sup_{ x'+l_{\k_+}^{(N)}  \leq w\leq x'+u_{\k_+}^{(N)} }\P( Ev(\k_+,w,z' ) )\\
\stackrel{Eq. \eqref{eq:girsanov1}}{\ll}
                                  & 2^{r-N_+}e^{2(x-z)} \left( N^{\frac{3}{2}} \P\left( GEv(r,x,z) \right) \right)\\
&\qquad\qquad\qquad
                              2^{\k_+-N_+} \sup_{ l_{\k_+}^{(N)}  \leq w+x'\leq u_{\k_+}^{(N)} } \left( N^{\frac{3}{2}} e^{2(w+x'-z)} \P\left( GEv(\k_+,w+x',z') \right) \right) \\
                      \ll   \quad & |xzz'|2^{r-N_+}
                                    2^{\k_+-N_+} e^{2(x-z-z')}\sup_{ l_{\k_+}^{(N)}  \leq w'\leq u_{\k_+}^{(N)} } \left( N^{\frac{3}{2}} e^{2(w')} \P\left( GEv(\k_+,w',z') \right) \right).
\end{align*}
If $\k_+ \leq N/4$, according to  \cite[(A.15]{CMN},
for any $w'\in \R$, we have
\begin{align*}
\P\left( GEv(\k_+,w',z' ) \right) \ll  |w'| |z'|(N-\k_+)^{-\frac{3}{2}} \ .
\end{align*}
Recalling that $ w'\leq u_{\k_+}^{(N)}=  -(\k_+)^{\alpha_-} $, we have
\begin{align}
\label{tocliam}
\sup_{ l_{\k_+}^{(N)}  \leq w'\leq u_{\k_+}^{(N)} } \left( N^{\frac{3}{2}} e^{2(w')} \P\left( GEv(\k_+,w',z') \right) \right)
\ll e^{- \k_+^{\alpha_-}}.
\end{align}
On the other hand, for $\k_+ > N/4$, we can crudely bound $\P\left( GEv(\k_+,w',z; ) \right)$ by $1$, and using the fact that $w' \leq - \k_+^{\alpha_-}$ if $\k_+<N/2$ and
$w'\leq -(N-\k_+)^{\alpha_-}\leq -0.9 \k_+^{\alpha_-}$ if $\k\leq N/2\leq \k_+$,
we obtain that
\begin{align}
\label{tocliam1}
\sup_{ l_{\k_+}^{(N)}  \leq w'\leq u_{\k_+}^{(N)} } \left( N^{\frac{3}{2}} e^{2w'} \P\left( GEv(\k_+,w',z) \right) \right)
\leq e^{2 w'} N^{\frac{3}{2}}
\ll e^{- 1.8 \k_+^{\alpha_-}} N^{3/2}
\ll e^{- \k_+^{\alpha_-}},
\end{align}
since $\k_+\geq N/5$ for $N$ large. (Recall that we take $N\to\infty$ before we take $k_1\to\infty$.)

In all cases, by combining Eq. \eqref{tocliam} and \eqref{tocliam1}, we deduce:
\begin{align*}
\P_N(\theta,\theta') \ll |xzz'|   2^{r-N_+} 2^{\k_+-N_+}e^{2(x-z-z')} e^{- \k_+^{\alpha_-}} .
\end{align*}
It concludes the proof of Lemma \ref{lemma:kDebut} when $\frac{||\theta- \theta'||}{2\pi}$ is a negative power of $2$.
\end{proof}

\begin{proof}[Proof of Lemma \ref{lemma:kMil} in the dyadic case]
 Now we shall study $\P_N(\theta,\theta')$, when the branching between the field in $\theta$ and the field in $\theta'$  appears after the time $N/2$. This time we shall prove that when one restricts to the paths which are in the envelope, the increments of the path of the field at $\theta$ and at $\theta'$ are approximately independent after the time of decorrelation $k_+$.  We recall that for this range
 $\k_+ = \k+ 3\kappa^{(\k)}$ where $\kappa^{(\k)}:= \lfloor \frac{r }{100}\rfloor + 100 \lfloor \log (N-\k)\rfloor^2$, if
$\k \leq N-1$.

We need to exhibit the independence between the increments of the processes $(R_{2^j}^{(2^{\k_+},\Delta)}(\theta))_{j\geq \k_+}$ and $(R_{2^j}^{(2^{\k_+},\Delta)}(\theta'))_{j\geq \k_+}$. The {\it crucial observation} we used in case of $\k \leq N/2$ does not work anymore for such a short decorrelation time. We first need to modify our field using similar arguments to those used for the proof of Proposition \ref{proposition:new_coupling}.

In the following we shall use the quantity
\begin{align*}
	J_{l,p}^{(\k)}(\theta) := \frac{ \sum_{j=0}^{2^{l-\kappa^{(\k)}}-1}    \Nc^\C_{2^{l}+p 2^{l-\kappa^{(\k)}} + j} e^{i \theta j}     }{\sqrt{{ 2^{l}+p 2^{l-\kappa^{(\k)}}  }}  } \eqlaw \Nc^{\C}\left((0, 2^{\kappa^{(\k)}} +p)^{-1} \right)
\end{align*}

Let $\k \in [N/2,N_+]$ and $2^{-\k}\leq \frac{||\theta- \theta'||}{2\pi} < 2^{-(\k-1)}$. We have, for $r$ large enough, since $N - \k \geq r/2$ is large,
$$\k_+ \leq  \k + (r/33) + 300 \log^2 (N-\k) \leq \k + (N- \k)/16 < N. $$
Recall that for $ \tilde{\theta}\in \{\theta,\theta'  \}$, $r\leq k\leq N$, $ R_{2^k}^{(2^{r},\Delta )}( \tilde{\theta})   =  \Re(\sigma Z_{2^k}^{(2^{r},\Delta )}( \tilde{\theta}))   $.  Since for $l \geq \k_+$, $\kappa^{(\k)} \leq \Delta^{(l)}$, we can write for all $\k_+ \leq k \leq N$, $ \tilde{\theta}\in \{  \theta,\theta'\}$:
\begin{align*}
	Z_{2^k}^{(2^{\k_+},\Delta )}( \tilde{\theta}) &=    \sum_{l=\k_+}^{k-1 } \sum_{p=0}^{2^{\Delta^{(l)}}-1}  I_{l,p}( \tilde{\theta}) e^{ i\psi_{ 2^{l}+p 2^{l-\Delta^{(l)}}} ( \tilde{\theta} )  }\\
	&=    \sum_{l=\k_+}^{k -1 } \sum_{p=0}^{2^{\Delta^{(l)}}-1}  \frac{ \sum_{j=0}^{2^{l-\Delta^{(l)}}-1}    \Nc^\C_{2^{l}+p 2^{l-\Delta^{(l)}} + j} e^{i   \tilde{\theta} j}     }{\sqrt{{ 2^{l}+p 2^{l-\Delta^{(l)}}  }}  }e^{ i\psi_{ 2^{l}+p 2^{l-\Delta^{(l)}}}(   \tilde{\theta}) }
	\\
	=  &  \sum_{l=\k_+}^{k-1 } \sum_{p=0}^{2^{\kappa^{(\k)}}-1}     \left(\sum_{j=0}^{2^{l-\kappa^{(\k)}}-1}        \frac{   \Nc^\C_{2^{l}+p 2^{l-\kappa^{(\k)}} + j} e^{i  \tilde{\theta} \left\{  j-      \mathfrak{j}       \right\}} }{\sqrt{  2^{l}+p 2^{l-\kappa^{(\k)}} + \mathfrak{j}  }  } e^{ i  \psi_{ 2^{l}+p 2^{l-\kappa^{(\k)} } +   \mathfrak{j}   }(  \tilde{\theta})}  \right), \text{(with $\mathfrak{j}=   \lfloor \frac{j}{2^{l-\Delta^{(l)}}}\rfloor    2^{l-\Delta^{(l)}}  $)}
	\\
	&=  Z_{2^k}^{(2^{\k_+},\kappa^{(\k)})}(  \tilde{\theta})  + \mathfrak{E}_{2^k}( \tilde{\theta})
\end{align*}
where
\begin{align*}
	Z_{2^k}^{(2^{\k_+},\kappa^{(\k)})}(   \tilde{\theta}) &:= \sum_{l=\k_+}^{k-1} \sum_{p=0}^{2^{\kappa^{(\k)}}-1}  J_{l,p}^{(\k)}(  \tilde{\theta}) e^{ i  \psi_{ 2^{l}+p 2^{l-\kappa^{(\k)}}  }(  \tilde{\theta})}  ,
	\\
	\mathfrak{E}_{2^k}(  \tilde{\theta})&:=   \sum_{l=\k_+}^{k-1} \sum_{p=0}^{2^{\kappa^{(\k)}}-1} \frac{  e^{ i  \psi_{ 2^{l}+p 2^{l-\kappa^{(\k)}}  }( \tilde{\theta} )}  }{\sqrt{{ 2^{l}+p 2^{l-\kappa^{(\k)}}  }}  } \left(\sum_{j=0}^{2^{l-\kappa^{(\k)}}-1}    \Nc^\C_{2^{l}+p 2^{l-\kappa^{(\k)}} + j} e^{i  \tilde{\theta} j}   \lozenge_j^{ (2^{l}+p 2^{l-\kappa^{(\k)}})  }(  \tilde{\theta}) \right),
\end{align*}
with
\begin{align}
	\label{eq:ineq_squarebis}
	\left|\lozenge_j^{(2^{l}+p 2^{l-\kappa^{(\k)}} )}(  \tilde{\theta})\right|
	\leq &  \left|  A_{ 2^{l}+p 2^{l-\kappa^{(\k)}} +  \mathfrak{j} }^{(2^{l}+p 2^{l-\kappa^{(\k)}})    }(  \tilde{\theta})   ) \right| + 2^{-\kappa^{(\k)}}.
\end{align}
Indeed:
\begin{align*}
	\lozenge_j^{  (2^{l}+p 2^{l-\kappa^{(\k)}} )  }(  \tilde{\theta})
	= & -1+ \sqrt{\frac{2^l+p2^{l-\kappa^{(\k)}}}{2^l+p2^{l-\kappa^{(\k)}} +  \mathfrak{j}   }  }e^{i A_{ 2^{l}+p 2^{l-\kappa^{(\k)}} +\mathfrak{j}  }^{(2^{l}+p 2^{l-\kappa^{(\k)}})}(  \tilde{\theta})  }
	\\
	= & - 1+ \left( 1 + \Theta \right)
	e^{i A_{2^{l}+p 2^{l-\kappa^{(\k)}} +  \mathfrak{j}}^{(2^{l}+p 2^{l-\kappa^{(\k)}})    }(  \tilde{\theta})  },
\end{align*}
with $|\Theta| \leq 2^{-\kappa^{(\k)}}$
which implies inequality \eqref{eq:ineq_squarebis}. In the following, we shall denote:
\begin{align*}
	\blacklozenge^{(\k)}_l(  \tilde{\theta}):=\sum_{p=0}^{2^{\kappa^{(\k)}}-1} \frac{  e^{ i  \psi_{ 2^{l}+p 2^{l-\kappa^{(\k)}}  }(  \tilde{\theta} )}  }{\sqrt{{ 2^{l}+p 2^{l-\kappa^{(\k)}}  }}  } \left(\sum_{j=0}^{2^{l-\kappa^{(\k)}}-1}    \Nc^\C_{2^{l}+p 2^{l-\kappa^{(\k)}} + j} e^{i   \tilde{\theta} j}   \lozenge_j^{ (2^{l}+p 2^{l-\kappa^{(\k)}})  }(   \tilde{\theta}) \right).
\end{align*}

Notice that, on the contrary of the proof of Proposition \ref{proposition:new_coupling}, where $\Delta^{(l)}$ varies with $l$, here we fix $\kappa^{(\k)}$ as soon as we know that  $2^{-\k} \leq \frac{||\theta- \theta'||}{2\pi}  < 2^{-(\k-1)} $.  By using the same arguments used to prove  \eqref{eq:one_point_square} and (\ref{eq:laplace_blacksquare}), one can show similarly that for any $l\geq \k_+$, $ \tilde{\theta}\in \{\theta, \theta'\}$,
\begin{align}
\label{nonind}
	& \P\left( |  \blacklozenge^{(\k)}_l( \tilde{\theta}) |\geq 2^{-\frac{\kappa^{(\k)}}{8}},  \cap_{l \geq \k_+} \widetilde{G}_{l}( \tilde{\theta})  \Big|  \Gc_{2^{\k_+}}  \right)  \ll e^{-2^{\frac{\kappa^{(\k)}}{8}   }},
	\\ & \qquad  \P\left(( \cap_{l\geq \k_+} \widetilde{G}_{l}(  \tilde{\theta} ))^c  \Big|    \Gc_{2^{\k_+}} \right)  \ll_{\beta}      \exp\left( - \frac{\beta}{33} 2^{\half \kappa^{(\k)}}    \right).
\end{align}
with
\begin{align*}
	\widetilde{G}_{l}(  \tilde{\theta} ) := & \ \bigcap_{p=0}^{2^{\kappa^{(\k)}}-1}
	\left\{ \sup_{0\leq j\leq 2^{l-{\kappa^{(\k)}}}-1} \left| A_{j+ 2^{l}+p 2^{l- \kappa^{(\k)}} }^{(2^{l}+p 2^{l-  \kappa^{(\k)}})}(  \tilde{\theta} ) \right|
	\leq 2^{-\quart \kappa^{(\k)} }
	\right\} \ .
\end{align*}
Moreover it is plain to observe that for any $r$ large enough, $N$ large enough depending on $r$, and $\k\in [|N/2, N-r/2|]$, and under the complement of the two events just above,
\begin{align}
	\sum_{l=\k_+}^{N-1}  |  \blacklozenge^{(\k)}_l(  \tilde{\theta}) |  \leq   \sum_{l=\k_+}^{N-1} 2^{- \frac{\kappa^{(\k)}}{8}} \leq (N-\k_+) 2^{- \frac{1}{8} \lfloor \frac{r}{100} \rfloor -  \frac{25}{2}\lfloor\log (N-\k)\rfloor^2  } \leq 1.
\end{align}
So for any $\tilde{\theta}\in \{\theta,\theta'\}$,  we can replace $(R_{2^k}^{(2^{\k_+},\Delta )}(  \tilde{\theta}))_{\k_+ \leq k \leq N} $ by $(R_{2^k}^{(2^{\k_+},\kappa^{(\k)})}(  \tilde{\theta}))_{ \k_+ \leq k \leq N}  $ with an error at most $1$.
 Thus we have
\begin{align*}
 &  \P_N(\theta,\theta') \\
&\leq   \P\left( \forall j\in [|r,N_+|],\,  L_j^{(N)} -1 \leq  x+ R_{2^{ j\wedge \k_+}}^{(2^{r},\Delta )}(\theta) +  R_{2^j}^{(2^{\k_+},\kappa^{(\k)})}(\theta)\mathds{1}_{{\{ j >k_0  \}}} \leq U_j^{(N)} +1,\, \right.
 \\
 & \left. \qquad\qquad\qquad \forall j\in [|\k_+,N_+|],\,  L_j^{(N)} -1 \leq  x'+ R_{2^{ \k_+}}^{(2^{r},\Delta )}(\theta') +  R_{2^j}^{(2^{\k_+},\kappa^{(\k)})}(\theta') \leq U_j^{(N)} +1,\right.\\
& \qquad\qquad\qquad\left. x+ R_{2^{ \k_+}}^{(2^{r},\Delta )}(\theta) +  R_{2^j}^{(2^{\k_+},\kappa^{(\k)})}(\theta)\in -\frac34 \log N +[z,z+v],\right.\\
&\qquad\qquad\qquad\left.
 x'+ R_{2^{ \k_+}}^{(2^{r},\Delta )}(\theta') +  R_{2^j}^{(2^{\k_+},\kappa^{(\k)})}(\theta')\in -\frac34 \log N +[z,z+v]  \right)\\
 + & \sum_{\tilde{\theta}\in \{\theta,\theta'\}} \E\Bigg(  \mathds{1}_{\{  \forall j\in [|r,\k_+|],\,  L_j^{(N)}  \leq  \tilde x
 +R_{2^j}^{(2^r,\Delta)}( \tilde{\theta}) \leq U_j^{(N)}\}}  \Big( \sum_{l=\k_+}^{N_+} \P\left( |  \blacklozenge^{(\k)}_l(  \tilde{\theta}  ) |\geq 2^{-\frac{\kappa^{(\k)}}{8}},  \cap_{l\geq \k_+} \tilde{G}_{l}( \tilde{\theta} ) \Big| \Gc_{2^{\k_+}}  \right)\\
 & \qquad \qquad\qquad \qquad\qquad\qquad \qquad\qquad\qquad \qquad\qquad\qquad+   \P\left( ( \cap_{l\geq \k_+} \tilde{G}_{l}( \tilde{\theta} ))^c \Big|  \Gc_{2^{\k_+}}  \right) \Big) \Bigg),\\
&=: \P_N^{(\Delta,\kappa)}(\theta,\theta')+ Q_N(\theta,\theta'),
\end{align*}
where $\tilde x=x'$ if $\tilde\theta=\theta'$ and $\tilde x=x$ otherwise.

We first deal with the sum in $\tilde{\theta}\in \{  \theta,\theta'\}$, that is with $Q_N(\theta,\theta')$. By using \eqref{nonind}, then
the Girsanov transfom with density $e^{\sqrt{2 }W_{\tau^{(r)}_{\k_+} }-    \tau_{\k_+}^{(r)} }$, and  \cite[Corollary A.6]{CMN}
(when $\k_+\geq \frac{2N}{3}$), the sum is
\begin{align*}
 \ll 2^{r-\k_+} N^{\frac{3}{2}} e^{ 2 (N-\k_+)^{\alpha_+}}  \left( (N-\k_+)e^{-2^{\frac{\kappa^{(\k)}}{8}}}   +    \exp\left( - \frac{\beta}{33} 2^{\half \kappa^{(\k)}} \right)  \right),\;\text{when }\, \k_+ \leq \frac{2N}{3},
\\
 \ll 2^{r-\k_+} \frac{N^{\frac{3}{2}}( N-\k_+)^{2\alpha_+}   }{(\k_+-r)^{\frac{3}{2}}} e^{ 2 (N-\k_+)^{\alpha_+}}  \left( (N-\k_+)e^{-2^{\frac{\kappa^{(\k)}}{8}}}   +    \exp\left( - \frac{\beta}{33} 2^{\half \kappa^{(\k)}} \right)  \right),\; \text{when }\, \k_+ \geq \frac{2N}{3}
\end{align*}
which are both dominated by $|xz| e^{-(x-z-z')}2^{-2N_++ \k_+ +r}      e^{- \half (N-\k_+ )^{\alpha_-}} $. It remains to bound the expression 
$\P_N^{(\Delta,\kappa)}(\theta,\theta')$.
Let $\tau_j^{(r,\k_+)}:= \tau_j^{(r)}$ if $j\leq \k_+$ and $\tau_j^{(r,\k_+)}:= \tau_{\k_+}^{(r)}+ \sum_{l=\k_+}^{j-1} \sum_{p=0}^{2^{\kappa^{(\k)}}-1} (2^{\kappa^{(\k)}} +p)^{-1}$ if $j\geq \k_+$ and $E^{(\k_+)}_j:= \tau_j^{(r)}-\tau_j^{(r,\k_+)}$, for any $r\leq j\leq N_+$ . It is plain to check  that $\sum_{j=r}^{N_+}|E^{(\k_+)}_{j+1} -E^{(\k_+)}_{j} | \ll 2^{- \frac{r}{100}}$ and
\begin{align*}
(R_{2^{ j\wedge \k_+}}^{(2^{r},\Delta )}(\theta') +  R_{2^j}^{(2^{\k_+},\kappa^{(\k)})}(\theta')\mathds{1}_{{\{ j >k_0  \}}})_{r\leq j\leq N} \eqlaw \sqrt{\frac{1}{2}} ( W_{\tau_j^{(r,\k_+)}})_{j\geq r} \ .
\end{align*}
Now it suffices to reproduce the proof of Lemma \ref{lemma:kDebut}. In this first part, we assume $\frac{||\theta - \theta'||}{2 \pi} = 2^{-\k}$. In this case, we check the independence of $J_{l,p}^{(\k)}( \theta)$ and $J_{l,p}^{(\k)}( \theta')$ for $l \geq \k_+$, since $\k_+ - \kappa^{(\k)} \geq \k$. We then show, by doing the suitable conditionings, that the increments of $(R_{2^k}^{(2^{\k_+},\kappa^{(\k)})}(\theta))_{\k_+ \leq k \leq N_+}$, $(R_{2^k}^{(2^{\k_+},\kappa^{(\k)})}(\theta'))_{\k_+ \leq k \leq N_+}$,
$(R_{2^k}^{(2^{r},\Delta)}(\theta'))_{r \leq k \leq \k_+}$ are independent. Thus we have
\begin{align}
\nonumber
  &  \P_N^{(\Delta,\kappa)}(\theta,\theta')\leq  \P\left(  Ev(r,1,E^{(\k_+)},x,z) \right) \max_{    l_{\k_+}^{(N)} -1 \leq w\leq u_{\k_+}^{(N)}+1   }\P( Ev(\k_+,1,E^{(\k_+)},w,z')   ).
\end{align}

By the same arguments as in the proof of Lemma \ref{lemma:kDebut} in the dyadic case we have
\begin{align*}
	\max_{    l_{\k_+}^{(N)}-1 \leq w\leq u_{\k_+}^{(N)} +1  }\P( Ev(\k_+,1,E^{(\k_+)},w,z')   )\ll_{\upsilon}
&	|z'|2^{-(N_+-\k_+)}  
% e^{- (N-\k_+)^{\alpha_-}}
\max_{    l_{\k_+}^{(N)}-1 \leq w\leq u_{\k_+}^{(N)} +1  }|e^{2(w-z')}\\
&\leq |z'|2^{-(N_+-\k_+)}  e^{-2z'-(N-\k_+)^\alpha_-},
\end{align*}
where we used that in the stated range, $|w'|\geq (N-\k_+)^\alpha_-/2$,
	and
	$$
	\P\left(  Ev(r,1,E^{(\k_+)},x,z)\right) \ll |xz|2^{-N_++r}.$$
 Finally one gets
\begin{align*}
	\P_N(\theta,\theta') \ll|xzz'| 2^{r-N_+}  2^{\k_+-N_+} e^{-  (N-\k_+)^{\alpha_-}}e^{2(x-z-z')}   \ .
\end{align*}
This concludes the proof of Lemma \ref{lemma:kMil}, when $\frac{||\theta-\theta'||}{2\pi}$ is a negative power of $2$.
\end{proof}

\subsubsection{General case}

\begin{proof}[Proof of Lemma \ref{lemma:kDebut} in general case]
Fix $\upsilon >0$, $r$ large enough, $N$ large enough depending on $r$, and $\k$ such that  $ \k \leq \frac{N}{2} $. Unlike the previous dyadic case, for $ \k_+ \leq l \leq N-1$ and $p\leq 2^{\Delta^{(l)}}-1$, the random variables $ I_{l,p}(\theta)$ and $ I_{l,p}(\theta')$ are not rigorously independent. However observe that for any $\k_+ \leq l \leq N-1$, the absolute value of their correlations decreases exponentially with $l$. Indeed,
if $C_{l,p}(\theta,\theta') := \E \left( I_{l,p}(\theta)\overline{I_{l,p}(\theta')} \right)$, then
\begin{align*}
\left| C_{l,p}(\theta,\theta')\right| &=\frac{1}{{2^l+p2^{l-\Delta^{(l)}}}}\left|  \sum_{j=0}^{2^{l-\Delta^{(l)}}-1}  e^{i(\theta-\theta') j} \right|  \leq \frac{4}{2^l ||\theta-\theta'|| } \ll 2^{\k-l}.
\end{align*}
Since $ \E \left( I_{l,p}(\theta)I_{l,p}(\theta') \right) = 0$, and $(I_{l,p}(\theta), I_{l,p} (\theta'))$ is a centered complex Gaussian vector, one checks, by computing  covariances, that it is possible to write
 $$I_{l,p} (\theta) = \frac{C_{l,p}(\theta,\theta')}{C_{l,p} (\theta', \theta')}
 I_{l,p} (\theta')
 + I^{ind}_{l,p} (\theta, \theta'),$$
where the two terms of the sums are independent, with an expectation of the square equal to zero. Note that
 $$C_{l,p} := C_{l,p}(\theta', \theta')
 = \frac{2^{l-\Delta^{(l)}}}{2^l + p2^{l-\Delta^{(l)}}} = (2^{\Delta^{(l)}} + p)^{-1}$$
does not depend on $\theta'$. Moreover, we have by Pythagoras' theorem:
 $$\E [|I_{l,p}^{ind}(\theta, \theta')|^2] = \E [|I_{l,p}(\theta)|^2]
 - \left|\frac{C_{l,p}(\theta, \theta')}{C_{l,p}} \right|^2 \E [|I_{l,p}(\theta')|^2]
 = C_{l,p} - \frac{|C_{l,p}(\theta, \theta')|^2}{C_{l,p}}$$
 Using this decomposition of $I_{l,p}(\theta)$ and the measurability of the different quantities with respect to the $\sigma$-algebras of the form $\mathcal{G}_j$, we deduce that one can write:
\begin{align}
\label{decomposi}
 (R_{2^l}^{(2^{\k_+},\Delta)}(\theta))_{l\geq \k_+}= (R_{2^l}^{(2^{\k_+},ind)}+ E_{2^l}^{(2^{\k_+})} )_{l\geq\k_+} \ .
\end{align}
Here $(R_{2^l}^{(2^{\k_+},ind)})_{l\geq \k_+}$ is a Gaussian process, independent of $(R_{2^l}^{(2^{\k_+},\Delta)}(\theta'))_{l\geq \k_+}$, and distributed as $  ( \sqrt{\frac{1}{2}}W_{ \tau^{(\k_+)}_l- \mathtt{C}_l}  )_{l\geq \k_+}$ with
$$\mathtt{C}_l:= \sum_{t=\k_+}^{l-1}  \sum_{p=0}^{2^{\Delta^{(t)}} -1 } \frac{ |C_{t,p}(\theta, \theta')|^2}{C_{t,p}} \ .$$
Notice that $\mathtt{C}=\mathtt{C}^{(\k_+)}$ implicitly depends of $\k_+$. $(E_{2^l}^{(2^r)} )_{l\geq\k_+}$ on the other hand is defined by
\begin{align}
E_{2^{l+1}}^{(2^{l})}  = \Re \left(\sigma \sum_{p=0}^{2^{\Delta^{(l)}} -1 }e^{ i\psi_{ 2^{l}+p 2^{l-\Delta^{(l)}}} (\theta )   }
\frac{C_{l,p} (\theta, \theta')}{C_{l,p}}
 I_{l,p}(\theta') \right).  \label{DefE}
\end{align}

Furthermore notice that
\paragraph{Fact 1:} For any $l\geq \k_+$,
\begin{align*}
|E_{2^{l+1}}^{(2^{l})}| \leq |E|_{2^{l+1}}^{(2^{l})}:= \sum_{p=0}^{2^{\Delta^{(l)}} -1 } |C_{l,p}(\theta, \theta')| (2^{\Delta^{(l)}}+p ) |I_{l,p}(\theta') |
\end{align*}
is measurable with respect to the sigma field $\sigma\left(  \Nc_t^\C,\, t\in  [|2^{l},2^{l+1}-1|]\right)$.

\paragraph{Fact 2:} The process $(R_{2^l}^{(2^{\k_+},ind)})_{l\geq\k_+} $ is independent of the couple $( R_{2^l}^{(2^{\k_+},\Delta)}(\theta') , |E|_{2^l}^{(2^r)})_{l\geq \k_+}$

\paragraph{Fact 3:}  $\sup_{l\geq \k_+}|\mathtt{C}_l| \ll 2^{-\Delta^{(\k)}}$ if $r$ is large enough. Indeed, in this case,
since $\k_+ \geq 3 \Delta^{(\k)} \geq 3e^{\sqrt{\log r}}$
is also large, we have
\begin{align*}
\sup_{l\geq \k_+}|\mathtt{C}_l|
& \ll \sum_{l = \k_+}^{\infty}
\sum_{p=0}^{2^{\Delta^{(l)}} - 1}  (2^{\k - l})^2
(2^{\Delta^{(l)}}  + p)
\ll  \sum_{l = \k_+}^{\infty} 2^{2\k - 2l + 2 \Delta^{(l)}}
\ll \sum_{l = \k_+}^{\infty} 2^{2\k - 2l + 2e^{\sqrt{\log r}}+
200 \log^2 l}
\\ &  \ll \sum_{\k_+ \leq l \leq 100 \k}
2^{2\k - 2 l + 2e^{\sqrt{\log r}}
+ 300 \log^2 \k}
+ \sum_{l \geq \max (100\k, \k_+)} 2^{2\k - 2 l + 0.8 \k_+ + 200 \log^2 l}
\\ & \ll \sum_{\k_+ \leq l \leq 100 \k}
2^{2 \k - 2 l + 3 \Delta^{(\k)}}
+ \sum_{l\geq \max (100\k, \k_+)} 2^{2 \k + 0.8 \k_+ - 1.99 l}
\\ & \ll 2^{2 \k - 2 \k_+ +  3 \Delta^{(\k)}}
+ 2^{2 \k + 0.8 \k_+  - 1.99[(0.05)(100 \k) +
0.95 (\k_+)]}
\ll  2^{- 3 \Delta^{(\k)}} + 2^{- \k_+}
\ll 2^{- 3 \Delta^{(\k)}}.
\end{align*}
It means (see \cite[Lemma A.5]{CMN})
that the process $(R_{2^l}^{(2^{\k_+},ind)})_{l\geq \k_+}$ is very "similar" to the process
$ \sqrt{\frac{1}{2}} (W_{ \tau_{l}^{(\k_+)}})_{l\geq \k_+ }$.
 Moreover, if for $\k_+ \leq r \leq l$,
  $\mathtt{C}_l^{(r)} := \mathtt{C}_l -
  \mathtt{C}_r$, then we have for $r$ large enough:
  $$\sup_{l \geq r} |\mathtt{C}_l^{(r)}|
  \leq \sup_{l \geq \k_+}|\mathtt{C}_l|
  \leq 2^{-  \Delta^{(\k)}}
  \leq 2^{-e^{\sqrt{\log r}} } \leq 1/2.$$

\paragraph{Fact 4:} $|E|$ is small. For any $m \geq 0$, $l\geq \k_+$, we introduce the event
$$\mathtt{E}_m^{(l)}:=\{ |E|_{2^{l+1}}^{(2^l)}  \geq 2^{-\frac{\Delta^{(l)}}{4}}m\} \ .$$
For some universal constants $c, c' > 0$, and $m \geq 1/2$, the probability of $\mathtt{E}_m^{(l)} $ is smaller than
\begin{align}
 \nonumber  \P\left( |E|_{2^{l+1}}^{(2^l)}
 \geq 2^{-\frac{\Delta^{(l)}}{4}}m \right) \leq 2^{\Delta^{(l)}} \P\left( c 2^{\k-l} |I_{l,0}(\theta')|\geq m  2^{-\frac{9}{4}\Delta^{(l)}} \right) &\ll 2^{\Delta^{(l)}} e^{- c'm^2 2^{2(l-\k - \frac{7}{4}\Delta^{(l)}) }    }
\\
\label{fact4}&\ll  e^{- c'm^2 2^{2(l-\k - 2\Delta^{(l)}) }    }.
\end{align}
For $r$ (and then $\k_+$ and $l$) large enough and $l \leq 100 \k$, $2\Delta^{(l)} \leq 3 \Delta^{(\k)}
+(\log (c')/\log 4)$, and
then
$$ \P\left( |E|_{2^{l+1}}^{(2^l)} \geq 2^{-\frac{\Delta^{(l)}}{4}}m \right)  \ll e^{-m^2 2^{2(l-\k_+)} }.$$
If $r$ is large enough and $l \geq \sup(100 \k, \k_+)$ (and therefore,  also large), we use that
\begin{eqnarray*}
  2(l-\k-2\Delta^{(l)})&=&l-\k_++(l-2\k+\k_+-4\Delta^{(l)})
=l-\k_++(l-k+3\Delta^{(\k)}-4\Delta^{(l)})\\
&\geq&
l-\k+(l-\k-400(\log l)^2)\geq l-\k+(0.99 l-400 (\log l)^2)\\
&\geq& l-\k-\log c',
\end{eqnarray*}
and then, using \eqref{fact4},
$$\P\left( |E|_{2^{l+1}}^{(2^l)}  \geq
2^{-\frac{\Delta^{(l)}}{4}}m \right)
\ll e^{-c' m^2 2^{2(l - \k - 2\Delta^{(l)})}}
\leq  e^{-m^2(l-\k_+)}.
$$
Hence in any case, for $r$ large and $l \geq \k_+$,
$$\P\left(\mathtt{E}_m^{(l)}\right)=\P\left( |E|_{2^{l+1}}^{(2^l)}  \geq
2^{-\frac{\Delta^{(l)}}{4}}m \right)
\ll e^{-m^2 2^{l -\k_+}}.$$

{\bf When $\k_+\leq r$:} Using the decomposition (\ref{decomposi}) and  Fact 2, and noticing that 
$\sum_{l=r}^{+\infty} m 2^{-\frac{\Delta^{(l)}}{4}} \leq  m2^{-e^{\sqrt{\log r}}}$,
for $r$ large enough, we can affirm that
\begin{align}
\label{REM}
\nonumber &\P_N(\theta,\theta')\leq \P\left(  Ev(r,0,0,x,z)  \right) \P\left( Ev(r, 2^{-e^{\sqrt{\log r}}},
\mathtt{C}^{(r)},x',z')\right)
\\
&\qquad +\sum_{m\geq 1} \P\left(  Ev(r,0,0,x,z) ,\, \cup_{j\in [|r,N_+|]}\mathtt{E}_m^{(j)}  \right) \P\left( Ev(r,(m+1)2^{-e^{\sqrt{\log r}}},
\mathtt{C}^{(r)},x',z')\right),
\end{align}
where the Brownian motion involved in the event $Ev(r,0,0,x,z)$ is suitably coupled with the complex Gaussian random walk whose increments are of the form
$ I_{l,p}(\theta') e^{i \psi_{2^l + p2^{l-\Delta^{(l)}}} (\theta')}$
for $r \leq l \leq N-1$ and $0 \leq p \leq 2^{\Delta^{(l)}} - 1$.

By using Eq. \eqref{eq:girsanov2} and then the fact that 
$\sup_{r \leq j \leq N}\mathtt{C}^{(r)}_j \leq 2^{1-e^{\sqrt{\log r}}}$
if $r$ is large enough, we have for any $m \geq 0$:
\begin{align}
  \nonumber &\P\left( Ev(r,(m+1)2^{-e^{\sqrt{\log r}}},
\mathtt{C}^{(r)},x',z')\right)
\\
\nonumber &\leq e^{-2(x'-z')}2^{r -N_+} e^{2\upsilon + 2(m+1)2^{-e^{\sqrt{\log r}}}
- \mathtt{C}^{(r)}_N}N^{\frac{3}{2}}
                \P\left( GEv(r,(m+1)2^{-e^{\sqrt{\log r}}}
		+ \sup_{r \leq j \leq N}\mathtt{C}^{(r)}_j,\mathtt{C}^{(r)},x',z') \right)
\\
\label{subiel}&\leq e^{-2(x'-z')} 2^{r -N_+} e^{2\upsilon + 2(m+1) 2^{-e^{\sqrt{\log r}}}
- \mathtt{C}^{(r)}_N}N^{\frac{3}{2}} \P\left( GEv(r,2(m+2)2^{-e^{\sqrt{\log r}}},
\mathtt{C}^{(r)},x',z') \right) \ .
\end{align}
Now, we invoke \cite[Corollary A.6]{CMN}
as, by the Fact 3, with the notation of the corollary, $||E||_1 \leq 1$ if $r$ is large enough. Thus, we deduce, for $N$ large enough depending on $r$, that
\begin{align}
\label{kpluspetit0}
\P\left( Ev(r,(m+1)2^{-e^{\sqrt{\log r}}},
\mathtt{C}^{(r)},x',z')\right)
\ll_{\upsilon}  |x'z'|e^{2(x'-z')}2^{r -N} e^{ 2m 2^{-\frac{r}{400}} } (1 + (m+1)
2^{- e^{\sqrt{\log r}}})^3 \ .
\end{align}

Similarly, to compute $\P\left(Ev(r,0,0,x,z) ,\,  \cup_{j\in [|r,N_+|]}\mathtt{E}_m^{(j)}\right)$ we will apply the Girsanov transform with the density $e^{\sqrt{2 }W_{\tau_{N_+}^{(r)}}- \tau_{N_+}^{(r)} }$. It requires to study what is the effect of this density on the event $ \cup_{j\in [|r,N_+|]}\mathtt{E}_m^{(j)}$.
The increments of the complex random walk which
were $I_{j,p}(\theta') e^{i \psi_{2^j + p 2^{j - \Delta^{(j)}}}(\theta')}$ before the Girsanov transform, increase
by $\sigma^{-1} C_{j,p}$ afterwards. Hence, between the two situations, before and after the Girsanov
transform, $|E|_{2^{j+1}}^{(2^{j})}$, defined as the sum, for $0 \leq p \leq 2^{\Delta^{(j)}} - 1$, of
the absolute value of the increments of the random walk multiplied by $|C_{j,p} (\theta, \theta')|/C_{j,p}$, vary, for $r$ large enough, at most by $  2^{2\Delta^{(j)} +\k -j}$, since
$$\sum_{p = 0}^{2^{\Delta^{(j)}}-1}
|C_{j,p} (\theta, \theta')|
\ll 2^{\Delta^{(j)}+ \k - j}.$$
Now, for $j\geq r\geq \k_+$ and $r$ large enough, we  have:
$$  2^{2\Delta^{(j)} +\k -j}
< \frac{1}{2} 2^{-\frac{1}{4}\Delta^{(j)}}.$$
Indeed, for $j \leq 100 \k$,
$$ 2^{2\Delta^{(j)} +\k -j}
\leq 2^{2\Delta^{(j)} +\k -\k_+}
\leq 2^{2\Delta^{(j)} - 3\Delta^{(\k)}}
\leq 2^{-0.9\Delta^{(j)} },
$$
and for $j \geq \max(\k_+, 100 \k)$ (necessarily large, since $r$ is large),
\begin{align*} 2^{2\Delta^{(j)} +\k -j}
& \leq 2^{2\Delta^{(j)} -0.99j}
\leq 
2^{e^{\sqrt{\log r}}+ 200 \log^2 j  -0.99j}
\leq 2^{e^{\sqrt{\log j}}+ 200 \log^2 j  -0.99j}
\\ & \leq 2^{-0.96j} \leq 2^{-e^{\sqrt{\log r}}
- 0.94j}
\leq 2^{-\Delta^{(j)}}.
\end{align*}

Hence, if for $m \geq 1$, before (respectively after) the Girsanov transform, $ \cup_{j\in [|r,N_+|]}\mathtt{E}_m^{(j)} $ occurs, then  $  \cup_{j\in [|r,N_+|]}\mathtt{E}_{\half m}^{(j)}  $ still occurs after (respectively before) the transform. Finally we get, for any $m \geq 1$,
\begin{align*}
&\P\left(  Ev(r,0,0,x,z) ,\, \cup_{j\in [|r,N_+|]}\mathtt{E}_m^{(j)}  \right)\\
  &\leq  e^{2(x-z)}2^{r -N}  \E\left( e^{-\sqrt{2}W_{\tau_N^{(r)}}  }  \mathds{1}_{\{   \forall j\in [|r, N_+ |] ,\, l_j^{(N)}\leq x+\sqrt{\frac{1}{2}}  W_{\tau_j^{(r)}}  \leq u_j^{(N)} \}} \mathds{1}_{ \cup_{j\in [|r,N_+|]}\mathtt{E}_{\half m}^{(j)}   }   \right.\\
&\qquad\qquad\qquad\qquad \qquad  \left.
\mathds{1}_{x+\sqrt{\frac{1}{2}}  W_{\tau_{N_+}^{(r)}} \in -\frac34 \log N+[z,z+v)}\right)
\\
&\ll_{\upsilon} e^{2(x-z)}2^{r-N} N^{\frac{3}{2}} \P\left( GEv(r,0,0,x,z)\cap\left\{ \cup_{j\in [|r,N_+|]}\mathtt{E}_{\half m}^{(j)} \right\} \right)
\end{align*}
As $\mathtt{E}_{\frac{m}{2}}^{(l)}$ is measurable with respect to $\sigma( \Nc_t^\C,\, t\in  [|2^{l},2^{l+1}-1|])$, by applying  
\cite[Corollary A.6]{CMN}
 and using the {\bf Fact 4} we get, for $r$ large enough and $N$ large enough depending on $r$:
\begin{align*}
\P\left( GEv(r,0,0,x,z)\cap \left\{ \cup_{j\in [|r,N_+|]}\mathtt{E}_{\half m}^{(j)} \right\}  \right)   & \ll_{\upsilon} |xz| e^{2(x-z)}N^{-\frac{3}{2}} \sum_{j\geq r} \sqrt{\P(\mathtt{E}_{\frac{1}{2}m}^{(j)})  }\\
& \ll |xz| e^{2(x-z)} N^{-\frac{3}{2}}
\sum_{j \geq r} e^{-(m^2/8) 2^{j - \k_+}}\\&
 \ll |xz| e^{2(x-z)} N^{-\frac{3}{2}} e^{-2^{r-\k_+}m^2/8},
\end{align*}
since $r \geq \k_+$.

By combining this inequality with  \eqref{kpluspetit0} and \eqref{REM}, we get:
\begin{align}
\nonumber & \P_N(\theta, \theta',x,z,x',z')\ll_{\upsilon} |xzx'z'|e^{2(x-z+x'-z')}2^{2 (r-N_+)}  \\
& \nonumber  \qquad+   \sum_{m\geq 1}
|xzx'z'|e^{2(x-z+x'-z')}2^{r-N_+} 2^{2m 2^{-e^{\sqrt{\log r}}
}} (1+ (m+1) 2^{-e^{\sqrt{\log r}}})^3
2^{r-N} e^{-(m^2/8) 2^{r-\k_+}}
\\ & \leq |xzx'z'|e^{2(x-z+x'-z')}2^{2 (r-N_+)}   \left[ 1+ \left(\sum_{m \geq 1}
2^{2m} (1+m)^3 e^{-m^2/8} \right) e^{-(1/8)(2^{r-\k_+} - 1)} \right]\nonumber\\
&
\ll|xzx'z'|e^{2(x-z+x'-z')}2^{2 (r-N_+)}  \label{Gillet},
\end{align}
which concludes the case $\k_+\leq r$.

{\bf When $\k_+\geq r$:} Using the decomposition (\ref{decomposi}) and  Fact 2 and noticing that $\sum_{l=\k_+}^{+\infty} m 2^{-\frac{\Delta^{(l)}}{4}}\leq m2^{-e^{\sqrt{\log r}}/8}$
for $r$ large enough,  we can affirm that
\begin{align}
  \label{REM2}  \P_N(\theta,\theta',x,z,x',z') &\leq \sum_{m\geq 0} \P\left(  Ev(r,0,0,x,z) ,\, \cup_{j\in [|\k_+,N_+|]}\mathtt{E}_m^{(j)}  \right)\\
& \sup_{    l_{\k_+}^{(N)}  \leq 
x'+w\leq u_{\k_+}^{(N)} } \P\left( Ev(\k_+,(m+1) 2^{-e^{\sqrt{\log r}}/8}
,\mathtt{C},x'+w,z')\right).
\end{align}
 By a similar computation as what we have done in the case $\k_+ \leq r$, we get:
\begin{align}
\label{oto}
\P\left(  Ev(r,0,0,x,z) ,\, \cup_{j\in [|\k_+,N_+|]}\mathtt{E}_m^{(j)}  \right)
\ll_{\upsilon} |xz|e^{2(x-z)} 2^{r-N_+-(m^2/8)}.
\end{align}
On the other hand, by using Eq. \eqref{eq:girsanov2}, for any $w'\in [ l_{\k_+}^{(N)} , u_{\k_+}^{(N)} ]$,  we obtain:
\begin{align}
  \nonumber &\P\left( Ev(\k_+, (m+1) 2^{-e^{\sqrt{\log r}}/8}
  ,\mathtt{C},w',z')\right)
\\
\nonumber &\ll_{\upsilon}
e^{2(w'-z')}2^{\k_+  -N_+}  e^{   2(m+1) 2^{-\frac{r}{400}}} N^{\frac{3}{2}} \P\left( GEv(\k_+, (m+1) 2^{-e^{\sqrt{\log r}}/8}
+ \sup_{\k_+ \leq j \leq N} |\mathtt{C}_j|,\mathtt{C},w',z') \right)
\\
\label{subielbi}&  \leq
e^{2(w'-z')}2^{\k_+  -N_+}  e^{   2(m+1) 2^{-\frac{r}{400}}} N^{\frac{3}{2}}  \P\left( GEv(\k_+,   2(m+1) 2^{-e^{\sqrt{\log r}}/8}
,\mathtt{C},w',z') \right),
\end{align}
where we used that $\sup_{j\geq \k_+} |\mathtt{C}_j|\leq 2^{-\Delta^{(\k)}} \leq \frac{1}{2}(m+1) 2^{-e^{\sqrt{\log r}}/8}$.

For $\k_+ \leq N/4$, we can use \cite[Corollary A.6]{CMN}
 to deduce, for $w'\leq u_{\k^+}^{(N)}=- (\k_+)^{\alpha_-}$,
\begin{align}
  &\P\left( Ev(\k_+,   (m+1) 2^{-e^{\sqrt{\log r}}/8}
  ,\mathtt{C},w',z')\right) \nonumber\\&
  \ll e^{2(w'-z')} 2^{\k_+ -N_+} e^{   2(m+1) 2^{-e^{\sqrt{\log r}}/8}
  }  (1+ 2(m+1) 2^{-e^{\sqrt{\log r}}/8}
  )^3 |w'z'|  
\nonumber \\ & \ll_{\upsilon} |w'z'| e^{2(w'-z')}2^{\k_+ -N_+} e^{3(m +1)- (\k_+)^{\alpha_-}}\label{kpluspetit0bi} \ .
\end{align}

For $\k_+ \geq N/4$, we bound the probability of the $GEv$ event by $1$ and use the fact that $w'\leq  - 0.9(\k_+ )^{\alpha_-}$
(the factor $0.9$ coming from the case $\k \leq N/2 \leq \k_+$). We then get
\begin{align*}
  P\left( Ev(\k_+,   (m+1) 2^{-e^{\sqrt{\log r}}/8}
  ,\mathtt{C},w',z')\right) & \ll_{\upsilon}
  |w'z'|e^{2(w'-z')}2^{\k_+  -N_+}  e^{   2(m+1) 2^{-e^{\sqrt{\log r}}/8}
  }N^{\frac{3}{2}}
\\ & \ll_{\upsilon} |z'|e^{-2z'}2^{\k_+  -N_+}  e^{2m+2} e^{-(\k_+ )^{\alpha_-}} (N^{3/2} e^{-0.8(\k_+ )^{\alpha_-}})
\\ & \ll|z'|e^{-2z'}  2^{\k_+  -N_+}  e^{2m+2} e^{-(\k_+ )^{\alpha_-}}.
\end{align*}
 This again implies \eqref{kpluspetit0bi}.
Finally, by combining  this equation with (\ref{oto}) and (\ref{REM2}), we get
\begin{align*}
\P_{N}(\theta, \theta')& \ll_{\upsilon}  |xzz'|e^{2(x-z-z')} 2^{r-N_+} 2^{\k_+ -N_+}  e^{- (\k_+)^{\alpha_-}}   \sum_{m\geq 0}  e^{3m+3-(m^2/8)}\\
&
\ll  |xzz'|e^{2(x-z-z')} 2^{r-N_+} 2^{\k_+ -N_+}  e^{- (\k_+)^{\alpha_-}}
\end{align*}
which concludes the proof of Lemma \ref{lemma:kDebut}.
\end{proof}

\begin{proof}[Proof of Lemma \ref{lemma:ktoutDebut}]
We can use \eqref{subiel} in order to get (for $r$ large enough and $N$ large enough depending on $r$):
\begin{align*}
&\P(Ev(r,0,0,x,z))
\P(Ev(r, 2^{-e^{\sqrt{\log r}}/8}
, \mathtt{C}^{(r)}, x',z'))
\\ & \leq e^{2(x-z+x'-z')}e^{4 \upsilon + 2^{1-e^{\sqrt{\log r}}/8}}
2^{2(r-N_+)} N^{3}
\P(GEv(r,0,0,x,z))
\P(GEv(r,2^{1-e^{\sqrt{\log r}}/8}
,\mathtt{C}^{(r)},x',z')),
\end{align*}
and then, by the majorization of the second term of \eqref{REM} which is involved in \eqref{Gillet}:
\begin{eqnarray*}
&&  \P_N(\theta, \theta',x,z,x',z')\\
&\leq &
e^{2(x-z+x'-z')} e^{4 \upsilon + 2^{1-e^{\sqrt{\log r}}/8}}
2^{2(r-N_+)} N^{3}
\P(GEv(r,0,0,x,z))
\P(GEv(r,2^{1-e^{\sqrt{\log r}}/8}
,\mathtt{C}^{(r)},x',z'))\\
&&+ \mathcal{O}_{\upsilon}
\left( 2^{2(r-N_+)} e^{-\frac{2^{r-\k_+} - 1}{8}} \right).
\end{eqnarray*}
Hence, we have:
\begin{align*} \P_N(\theta, \theta')
  & \leq e^{2(x-z+x'-z')} e^{4 \upsilon + 2^{1 -e^{\sqrt{\log r}}/8}}
2^{2(r-N_+)}\\
&\qquad \times  N^{3}
\P(GEv(r,0,0,x,z))
\P(GEv(r,2^{1-e^{\sqrt{\log r}}/8}
,\mathtt{C}^{(r)},x',z'))
\\ & + \mathcal{O}_{\upsilon}
\left( 2^{2(r-N_+)}
 e^{-\frac{2^{r - (r/2)- 3e^{\sqrt{\log r}}/8 - 100 \log^2 (r/2)} - 1}{8}} \right).
 \end{align*}
By applying \cite[Lemmas 2.1,2.3]{BRZ} with the same argument as 
in Lemma \ref{lem-onerayLB},
using the fact that 
$||E||_1$  goes to zero when $r$ goes to infinity, we get
\begin{align*}
\P_N(\theta, \theta')
&\leq e^{2(x-z+x'-z')}
e^{4 \upsilon} 2^{2(r -N_+)} N^{3}   \P(Event_{r,N}(x,z)) \P(Event_{r,N}(x',z'))(1+ \eta_r)\\
&\qquad+  \mathcal{O}_{\upsilon}
\left( 2^{2(r-N)} e^{-\frac{2^{r/3} - 1}{8}} \right),
\end{align*}
 where $\eta_r$ goes to zero when $r$ goes to infinity. Using \eqref{momen1Lower}, we obtain  Lemma \ref{lemma:kDebut}.
 \end{proof}

\begin{proof}[Proof of Lemma \ref{lemma:kMil} in the general case]
The general case needs to uses exactly the same arguments as 
in the general case of the proof of  Lemma \ref{lemma:kDebut}.  This time, for $ \k_+ \leq l \leq N-1$ and $p\leq 2^{\kappa^{(\k)}}-1$, the random variables $ J_{l,p}^{(\k)}(\theta)$ and $ J_{l,p}^{(\k)}(\theta')$ are not rigorously independent. However, we observe that for $\k_+ \leq l \leq N_+$, the absolute value of their correlations,  decreases exponentially with $l$. Indeed, if $C_{l,p}^{(\k)}(\theta,\theta') := \E \left( J_{l,p}^{(\k)}(\theta)\overline{J_{l,p}^{(\k)}(\theta')} \right)$, then

\begin{align*}
\left| C_{l,p}^{(\k)}(\theta,\theta')\right| &=\frac{1}{{2^l+p2^{l-\kappa^{(\k)}}}}\left|  \sum_{j=0}^{2^{l-\kappa^{(\k)}}-1}  e^{i(\theta-\theta') j} \right|  \leq \frac{4}{2^l ||\theta-\theta'|| } \ll 2^{\k-l} \ .
\end{align*}

Since $ \E \left( J_{l,p}^{(\k)}(\theta)J_{l,p}^{(\k)}(\theta') \right) = 0$, and the vector
$(J_{l,p}^{(\k)}(\theta), J_{l,p}^{(\k)} (\theta'))$ is centered complex Gaussian, one checks, by computing  covariances, that it is possible to write
$$J_{l,p}^{(\k)} (\theta) = \frac{C_{l,p}^{(\k)}(\theta,\theta')}{C_{l,p}^{(\k)} (\theta', \theta')}
J_{l,p}^{(\k)} (\theta')
+ J^{(\k),ind}_{l,p} (\theta, \theta'),$$
where the two terms of the sums are independent, with an expectation of the square
equal to zero. Note that
$$C^{(\k)}_{l,p} := C^{(\k)}_{l,p}(\theta', \theta')
= \frac{2^{l-\kappa^{(\k)}}}{2^l + p2^{l-\kappa^{(\k)}}} = (2^{\kappa^{(\k)}} + p)^{-1},$$
does not depend on
$\theta'$. Moreover, we have by Pythagoras' theorem:
$$\E [|J_{l,p}^{(\k),ind}(\theta, \theta')|^2] = \E [|J_{l,p}^{(\k)}(\theta)|^2]
- \left|\frac{C_{l,p}^{(\k)}(\theta, \theta')}{C_{l,p}^{(\k)}} \right|^2 \E [|J_{l,p}^{(\k)}(\theta')|^2]
= C_{l,p}^{(\k)} - \frac{|C_{l,p}^{(\k)}(\theta, \theta')|^2}{C_{l,p}^{(\k)}}.$$
Using this decomposition of $J_{l,p}^{(\k)}(\theta)$ and the measurability
of the different quantities with respect to
the $\sigma$-algebras of the form $\mathcal{G}_j$, we deduce that one can write:
\begin{align}
\label{decomposiBIS}
(R_{2^l}^{(2^{\k_+},\kappa^{(\k)})}(\theta))_{l\geq \k_+}= (R_{2^l}^{(2^{\k_+},ind)}+ E_{2^l}^{(2^{\k_+})} )_{l\geq\k_+},
\end{align}
with $(R_{2^l}^{(2^{\k_+},ind)})_{l\geq \k_+}$ is a Gaussian process, independent of
$(R_{2^l}^{(2^{\k_+},\kappa^{(\k)})}(\theta'))_{l\geq \k_+}$, and
distributed as $  ( \sqrt{\frac{1}{2}}W_{ \tau^{(\k_+,\k_+)}_l- \mathtt{C}^{(\k)}_l}  )_{l\geq \k_+}$ with $\tau^{(\k_+,\k_+)}_j=  \sum_{l=\k_+}^{j-1} \sum_{p=0}^{2^{\kappa^{(\k)}}-1} (2^{\kappa^{(\k)}} +p)^{-1}$ and
$$\mathtt{C}^{(\k)}_l:= \sum_{t=\k_+}^{l-1}  \sum_{p=0}^{2^{\kappa^{(\k)}} -1 } \frac{ |C_{t,p}^{(\k)}(\theta, \theta')|^2}{C_{t,p}^{(\k)}} $$ and $(E_{2^l}^{(2^{\k_+})} )_{l\geq\k_+}$ defined by
\begin{align}
E_{2^{l+1}}^{(2^{l})}  = \Re \Bigg(\sigma \sum_{p=0}^{2^{\kappa^{(\k)}} -1 }e^{ i\psi_{ 2^{l}+p 2^{l-\kappa^{(\k)}}} (\theta )   }
\frac{C_{l,p}^{(\k)}(\theta, \theta')}{C_{l,p}^{(\k)}}
J_{l,p}^{(\k)}(\theta') \Bigg).  \label{DefEbis}
\end{align}
Note that $\mathtt{C}^{(\k)}_l$ and  $E_{2^{l+1}}^{(2^{l})}$ here represent quantities which are different from those denoted in the same way in the proof of Lemma \ref{lemma:kDebut}.
Furthermore notice that
\paragraph{Fact 1:} For any $l\geq \k_+$,
\begin{align*}
|E_{2^{l+1}}^{(2^{l})}| \leq |E|_{2^{l+1}}^{(2^{l})}:= \sum_{p=0}^{2^{\kappa^{(\k)}} -1 } |C_{l,p}^{(\k)}(\theta, \theta')| (2^{\kappa^{(\k)}}+p ) |J_{l,p}^{(\k)}(\theta') |
\end{align*}
is measurable with respect to the sigma field $\sigma\left(  \Nc_t^\C,\, t\in  [|2^{l},2^{l+1}-1|]\right)$.

\paragraph{Fact 2:} The process $(R_{2^l}^{(2^{\k_+},ind)})_{l\geq\k_+} $ is independent of the couple $( R_{2^l}^{(2^{\k_+},\kappa^{(\k)})}(\theta') , |E|_{2^l}^{(2^{\k_+})})_{l\geq \k_+}$.

\paragraph{Fact 3:}  $\sup_{l\geq \k_+}|\mathtt{C}_l^{(\k)}| \ll 2^{-\kappa^{(\k)}}$ if $r$ is large enough. Indeed we have
\begin{align*}
\sup_{l\geq \k_+}|\mathtt{C}_l^{(\k)}|
& \ll \sum_{l = \k_+}^{\infty}
\sum_{p=0}^{2^{\kappa^{(\k)}} - 1}  (2^{\k - l})^2
(2^{\kappa^{(\k)}}  + p)
\ll  \sum_{l = \k_+}^{\infty} 2^{2\k - 2l + 2 \kappa^{(\k)}}
\ll \sum_{l = \k + 3\kappa^{(\k)}}^{\infty} 2^{2 (\k +3\kappa^{(\k)} - l) -4\kappa^{(\k)}}   	\ll 2^{- 2\kappa^{(\k)}}.
\end{align*}
It means that the process $(R_{2^l}^{(2^{\k_+},ind)})_{l\geq \k_+}$ is very "similar" to $ \sqrt{\frac{1}{2}} (W_{ \tau_{l}^{(\k_+,\k_+)}})_{l\geq \k_+ }$.

\paragraph{Fact 4:} $|E|$ is small. For any $m \geq 0$, $l\geq \k_+$, we introduce the event
$$\mathtt{E}_m^{(l)}:=\{ |E|_{2^{l+1}}^{(2^l)}  \geq 2^{-\frac{\kappa^{(\k)}}{4}}m\} \ . $$
For some universal constants $c, c' > 0$, and $m  \geq 1/2$, the probability of $\mathtt{E}_m^{(l)} $ is smaller than
\begin{align*}
    \P\left( |E|_{2^{l+1}}^{2^l} \geq 2^{-\frac{\kappa^{(\k)}}{4}}m \right)
\leq 2^{\kappa^{(\k)}} \P\left( c 2^{\k-l} |J_{l,0}^{(\k)}(\theta')|\geq m  2^{-\frac{9}{4}\kappa^{(\k)}} \right)
& \ll 2^{\kappa^{(\k)}} e^{- c'm^2 2^{2(l-\k - \frac{7}{4}\kappa^{(\k)}) } }\\
& \ll e^{- c'm^2 2^{2(l-\k - 2\kappa^{(\k)}) } }.
\end{align*}
Then for $r$ large and $l \geq \k_+$,
$$\P\left( |E|_{2^{l+1}}^{2^l}  \geq 2^{-\frac{\kappa^{(\k)}}{4}}m \right) \ll e^{-m^2 2^{l -\k_+}} \ .$$

Using the decomposition (\ref{decomposiBIS}) and Fact 2 and noticing that $\sum_{l=\k_+}^{N} m 2^{-\frac{\kappa^{(\k)}}{4}}\leq m2^{-\frac{r}{400}}$ for $r$ large enough,  we can affirm that
\begin{align}
     & \P_N^{(\Delta,\kappa)}(\theta,\theta') \label{REM2BIS} \\
\nonumber
\leq & \sum_{m\geq 0} \P\left(  Ev(r,1,E^{(\k_+)},x,z) ,\, \cup_{j\in [|\k_+,N-1|]}\mathtt{E}_m^{(j)}  \right) \\
\nonumber
& \quad \quad \sup_{    l_{\k_+}^{(N)}-1  \leq x'+w'\leq u_{\k_+}^{(N)}+1  } \P\left( Ev(\k_+,1 +(m+1) 2^{-\frac{r}{400}},\mathtt{C}^{(\k)}+E^{(\k_+)},x'+w',z')\right).
\end{align}
Here, by abuse of notation, we refer to the same event $Ev(k, a, E, z)$ as in Eq. \eqref{eq:def_Ev_GEv_ext} but for the new time clock $\tau_.^{(\k_+, \k_+)}$. By the same arguments used to prove \eqref{Gillet} we have:
\begin{align}
\label{otoBIS}
\P\left(  Ev(r,1,E^{(\k_+)},x,z) ,\, \cup_{j\in [|\k_+,N-1|]}\mathtt{E}_m^{(j)}  \right)
\ll_{\upsilon}  e^{2(x-z)}|xz|2^{r-N_+-(m^2/8)}.
\end{align}
On the other hand, by using the inequality \eqref{eq:girsanov2} deduced from the Girsanov transform (which still holds for the time clock $\tau^{(\k_+,\k_+)}_j =  \sum_{l=\k_+}^{j-1} \sum_{p=0}^{2^{\kappa^{(\k)}}-1} (2^{\kappa^{(\k)}} +p)^{-1}$), we obtain for any $x'+w'\in [ l_{\k_+}^{(N)}-1, u_{\k_+}^{(N)}+1 ]$:
\begin{align}
\nonumber &\P\left( Ev(\k_+,1+ (m+1) 2^{-\frac{r}{400}},\mathtt{C}^{(\k)}+E^{(\k_+)},x'+w',z')\right)
\\
\label{subielbiBIS}&  \ll_{\upsilon}
2^{\k_+  -N_+}  e^{  1+ 2(m+1) 2^{-\frac{r}{400}}} N^{\frac{3}{2}} e^{2(x'+w'-z')  }\nonumber \\
&\qquad \times\P\left( GEv(\k_+,  1+ 2(m+1) 2^{-\frac{r}{400}},\mathtt{C}^{(\k)}+E^{(\k_+)},x'+w',z')) \right),
\end{align}
where we used that $\sup_{j\geq \k_+} |\mathtt{C}^{(\k)}_j|\leq 2^{-\kappa^{(\k)}} \leq (m+1) 2^{-\frac{r}{400}} $.  We bound the probability
of the $GEv$ event by $1$ and use the fact that
$x'+w' \leq 1+ u_{k_0}^{(N)}\leq 1 -(N_+-\k_+)^{\alpha_-}-\frac{3}{4}\log N$.
%\todo[inline]{Ofer:Double check!}
We then get
\begin{align*}
&P\left( Ev(\k_+,  1+ (m+1) 2^{-\frac{r}{400}} ,\mathtt{C}^{(\k)}+E^{(\k_+)},x'+w',z')\right) \\
&\ll
2^{\k_+ -N_+}  e^{   2(m+1) 2^{-\frac{r}{400}}} N^{\frac{3}{2}} e^{2(x'+w'-z')} 
\\ & \ll_{\upsilon} 2^{-2z'}2^{\k_+  -N_+}  e^{2m+2}   N^{3/2} e^{- 2(N_+ - \k_+)^{\alpha_-} -\frac{3}{2}\log N}
\\ & \ll 2^{-2z'}2^{\k_+  -N_+}  e^{2m+2} e^{-(N_+ - \k_+)^{\alpha_-}},
\end{align*}
Finally, by combining  this equation with (\ref{otoBIS}) and (\ref{REM2BIS}), we get
\begin{align*}
  \P_{N}^{(\Delta,\kappa)}(\theta, \theta')& \ll_{\upsilon} 
  |xz| 2^{2(x-z-z')}
  2^{r-N_+} 2^{\k_+ -N}  e^{- (N_+-\k_+)^{\alpha_-}}   
  \sum_{m\geq 0}  e^{2m+2-(m^2/8)}\\
  &
  \ll |xz|2^{2(x-z-z')} 2^{r-N_+} 2^{\k_+ -N}  e^{- (N_+-\k_+)^{\alpha_-}} \ ,
\end{align*}
which concludes the proof of Lemma \ref{lemma:kMil}.
\end{proof}

\subsection{Short time barriers}
We will also need the analogue of Lemmas \ref{lemma:kDebut} and \ref{lemma:ktoutDebut}
with $\I_N$ replaced by $\I_{t,f}$. Thus, let
\begin{align}
\label{eq:def_P_Nbis}
\P_{t,f}(\theta,\theta')=\P_{t,f}(\theta, \theta',x,z,x',z'):= \P(\I_{t,f}(\theta,x,z)\cap \I_{t,f}(\theta',x',z')).
\end{align}
We have the following.
\begin{lemma}[Time of branching $\k \leq N/2$]
  \label{lemma:kDebutbis}
  For any $\upsilon \in  (0,1)$, $r$ large enough, $C>1$,
  $t$ large enough depending on $r$,   $z,z'\in [-\sqrt{t}/C,C\sqrt{t}]$, 
  $\k \leq t$ and $2^{-\k}\leq \frac{||\theta-\theta'||}{2\pi} < 2^{-(\k-1)}$,  we have
  \begin{align}
  \label{eqkDebutbis}
  \P_{t,f}(\theta, \theta')&\ll \left\{
  \begin{array}{ll}
    \frac{|zz' xx'|}{t^3}  2^{2r-2t +2(x-z+x'-z')}                                    ,\qquad &\text{when } \k_+ \leq r,\\
    \frac{|zz'x|}{t^{3/2}} e^{2(x-z-z')}2^{r-t} 2^{\k_+-t} e^{- \k_+^{\alpha_-}} ,\qquad &\text{when } \k_+ \geq r.
  \end{array}
  \right.
  \end{align}
\end{lemma}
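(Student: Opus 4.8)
The plan is to run the proof of Lemma \ref{lemma:kDebut} with the time horizon $N_+$ replaced by $t$ and the banana event $Ev(k,a,E,x,z)$ of \eqref{eq:def_Ev_GEv_ext} replaced by its restriction $Ev_f(k,a,E,x,z)$ to the index range $[|k,t|]$, with final window $\tau^{(r)}_t+[z,z+\upsilon)$ instead of $\tau^{(r)}_{N_+}-\tfrac34\log N+[z,z+\upsilon)$; then $GEv_f(r,0,0,x,z)=Event_{r,t,f}(x,z)$ from Proposition \ref{Prmomen1Lowerbis}. Since $t<N/4$, only the upslope part of the envelope is active, so $u^{(N)}_k=-k^{\alpha_-}$ and $l^{(N)}_k=-k^{\alpha_+}$ on the whole range, exactly as in \eqref{eq-ukplus}--\eqref{eq-ukminus}, and $\I_{t,f}$ does not depend on $N$.

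The first observation is that the entire decomposition machinery of that proof carries over untouched: the correlation bound $|C_{l,p}(\theta,\theta')|\ll 2^{\k-l}$, the splitting $R^{(2^{\k_+},\Delta)}_{2^l}(\theta)=R^{(2^{\k_+},ind)}_{2^l}+E^{(2^{\k_+})}_{2^l}$ of \eqref{decomposi}--\eqref{DefE} with $(R^{(2^{\k_+},ind)}_{2^l})_{l\geq\k_+}$ a Gaussian walk independent of the walk at $\theta'$, and Facts 1--4 (in particular $\sup_l|\mathtt C_l|\ll 2^{-\Delta^{(\k)}}$ and $\P(\mathtt E^{(l)}_m)\ll e^{-m^2 2^{l-\k_+}}$) are statements about the field at dyadic times and do not involve the barrier. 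Similarly the Girsanov inequalities \eqref{eq:girsanov1}--\eqref{eq:girsanov2} hold verbatim with $N_+$ replaced by $t$; the only difference is that on the events $GEv_f$ the density $e^{-\sqrt2 W_{\tau^{(k)}_t}}$ is now bounded by $e^{2(x-z+a+\upsilon)}$ with no factor $N^{3/2}$, since the endpoint sits in $\tau^{(r)}_t+[z,z+\upsilon)$ rather than at height $-\tfrac34\log N$ below the linear trend.

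Next I would treat the two cases exactly as in Lemma \ref{lemma:kDebut}. When $\k_+\le r$, the increments of the two walks are i.i.d.\ from time $r$ on, so following \eqref{REM}--\eqref{Gillet} the quantity $\P_{t,f}(\theta,\theta')$ factorizes, up to geometrically summable errors from the events $\mathtt E^{(l)}_m$, into $\P(Ev_f(r,0,0,x,z))\,\P(Ev_f(r,2^{-e^{\sqrt{\log r}}},\mathtt C^{(r)},x',z'))$; each factor is estimated by Girsanov followed by the ballot theorem exactly as in the proof of Proposition \ref{Prmomen1Lowerbis} (its two-sided version, using $|z|,|z'|\in[\sqrt t/C,C\sqrt t]$, i.e.\ $z^2,(z')^2\ll t$, which is the hypothesis of the lemma), giving the matching upper bound $\P(\I_{t,f}(\theta))\ll |xz|\,t^{-3/2}\,e^{2(x-z)}\,2^{r-t}$, and multiplying the two yields the first line of \eqref{eqkDebutbis}. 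When $\k_+\ge r$, conditioning at time $\k_+$ as in \eqref{REM2} bounds $\P_{t,f}(\theta,\theta')$ by $\P(Ev_f(r,0,0,x,z))\,\sup_{w}\P(Ev_f(\k_+,\cdot,\mathtt C^{(r)},x'+w,z'))$ plus summable errors, the supremum being over $w$ with $x'+w\in[l^{(N)}_{\k_+},u^{(N)}_{\k_+}]$. The first factor is $\ll |xz|\,t^{-3/2}\,2^{r-t}$; for the second one splits according to whether $\k_+\le t/4$, where the ballot estimate gains $|w'z'|(t-\k_+)^{-3/2}$ as in \eqref{kpluspetit0bi}, or $\k_+>t/4$, where one bounds the barrier probability by $1$ and uses $w'\le-\k_+^{\alpha_-}$ together with $t^{3/2}e^{-1.8\k_+^{\alpha_-}}\ll e^{-\k_+^{\alpha_-}}$ as in \eqref{tocliam1}; either way the supremum is $\ll |z'|\,2^{\k_+-t}\,e^{-2z'-\k_+^{\alpha_-}}$, which gives the second line of \eqref{eqkDebutbis}.

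I do not expect a genuine obstacle beyond bookkeeping: the sole subtlety is that the endpoint heights $z,z'$ are of order $\sqrt t$ rather than $\sqrt{\log n}$, so the ballot theorem must be applied with that scaling (hence the hypothesis $|z|,|z'|\in[\sqrt t/C,C\sqrt t]$), and one must split the range of $\k_+$ at $t/4$ so that the stretched-exponential gain $e^{-\k_+^{\alpha_-}}$ absorbs the polynomial prefactor $t^{3/2}$ from the first-moment normalization; the absence of the $-\tfrac34\log N$ shift is precisely what leaves the extra factors $t^{-3}$ and $t^{-3/2}$ in the two lines of \eqref{eqkDebutbis} compared with Lemma \ref{lemma:kDebut}.
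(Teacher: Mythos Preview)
Your proposal is correct and is exactly the approach the paper intends: the paper does not give a separate proof of this lemma, stating only that it is the analogue of Lemma~\ref{lemma:kDebut} with $\I_N$ replaced by $\I_{t,f}$, and your write-up supplies precisely those details.

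One small remark: in the case $\k_+>t/4$ you invoke ``$t^{3/2}e^{-1.8\k_+^{\alpha_-}}\ll e^{-\k_+^{\alpha_-}}$'' by analogy with \eqref{tocliam1}. In the short-time setting there is no $t^{3/2}$ to absorb, because (as you yourself note) the Girsanov bound for $Ev_f$ reads $\P(Ev_f(\k_+,w',z'))\le 2^{\k_+-t}e^{2(w'-z')}\P(GEv_f(\k_+,w',z'))$ with no polynomial prefactor. Bounding $\P(GEv_f)\le 1$ gives directly $2^{\k_+-t}e^{-2z'}e^{-2\k_+^{\alpha_-}}$, and since $|z'|\ge \sqrt{t}/C\ge 1$ one has $e^{-\k_+^{\alpha_-}}\ll |z'|$, which is all that is needed to match the second line of \eqref{eqkDebutbis}. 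So this step is in fact easier than its counterpart in Lemma~\ref{lemma:kDebut}; the rest of your argument goes through as written.
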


\begin{lemma}[Time of branching $\k \leq r/2$]
  \label{lemma:ktoutDebutbis}
  For any $\upsilon \in (0,1)$, $r\in \N$ large enough, $r$ large enough, $C>1$,
  $t$ large enough depending on $r$,   $z,z'\in [-\sqrt{t}/C,C\sqrt{t}]$, 
  $ \k \leq \frac{r}{2}$, $2^{-\k} \leq \frac{||\theta-\theta'||}{2\pi} \leq 2^{-(\k-1)}$ and  $t$ large enough depending on $r$, we have
  \begin{align*}
    \P_{t,f}(\theta, \theta')&\leq   (1+\eta_{r,\upsilon  })    2^{2r -2t}
    e^{2(x-z+x'-z')}    \P(Event_{r,N}(x,z) ) \P(Event_{r,N}(x',z')), 
  \end{align*}
  where
  $$\underset{\upsilon \rightarrow 0}{\limsup} \, \underset{r \rightarrow \infty} {\limsup} \, \eta_{r,\upsilon} = 0.$$
\end{lemma}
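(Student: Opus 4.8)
The plan is to refine the short-time estimate of Lemma \ref{lemma:kDebutbis} in exactly the way Lemma \ref{lemma:ktoutDebut} refines Lemma \ref{lemma:kDebut}: the hypothesis $\k\le r/2$ forces the decorrelation time $\k_+=\k+3\Delta^{(\k)}$ to satisfy $\k_+\le r$ (and even $r-\k_+\ge r/3$) once $r$ is large, so that after an error that is super-exponentially small in $r$ the walks $(R^{(2^r,\Delta)}_{2^l}(\theta))_{l\ge r}$ and $(R^{(2^r,\Delta)}_{2^l}(\theta'))_{l\ge r}$ are \emph{independent from the starting time} $r$; this is what allows the two-ray probability to factorize with only a $1+\eta_{r,\upsilon}$ multiplicative loss. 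Throughout, $Ev_t$ denotes the short-time analogue of \eqref{eq:def_Ev_GEv_ext}, with the barrier restricted to $[r,t]$ and the endpoint pinned near $\tau^{(r)}_t$ (only the upsloping part of the barrier is active, since $t<N/4$).

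First I would invoke the decomposition \eqref{decomposi} together with Facts 1--4 in the proof of Lemma \ref{lemma:kDebut}: write $(R^{(2^r,\Delta)}_{2^l}(\theta))_{l\ge r}=(R^{(2^r,ind)}_{2^l}+E^{(2^r)}_{2^l})_{l\ge r}$ with $R^{(2^r,ind)}$ a Gaussian walk independent of $(R^{(2^r,\Delta)}_{2^l}(\theta'))_{l\ge r}$, distributed as $\sqrt{1/2}\,(W_{\tau^{(r)}_l-\mathtt C^{(r)}_l})_{l\ge r}$ with $\sup_l|\mathtt C^{(r)}_l|\ll 2^{-\Delta^{(\k)}}\le 2^{-e^{\sqrt{\log r}}}$, and with $\P(\mathtt E^{(l)}_m)\ll e^{-m^2 2^{l-\k_+}}$, where $\mathtt E^{(l)}_m=\{|E|^{(2^l)}_{2^{l+1}}\ge 2^{-\Delta^{(l)}/4}m\}$. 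Conditioning as in \eqref{REM}, but with the short-time barrier, gives
\begin{align*}
\P_{t,f}(\theta,\theta')&\le \P\bigl(Ev_t(r,0,0,x,z)\bigr)\,\P\bigl(Ev_t(r,2^{-e^{\sqrt{\log r}}},\mathtt C^{(r)},x',z')\bigr)\\
&\quad+\sum_{m\ge1}\P\bigl(Ev_t(r,0,0,x,z),\,\textstyle\bigcup_{r\le j\le t}\mathtt E^{(j)}_m\bigr)\,\P\bigl(Ev_t(r,(m+1)2^{-e^{\sqrt{\log r}}},\mathtt C^{(r)},x',z')\bigr).
\end{align*}

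Next I would apply the Girsanov transform with density $e^{\sqrt2 W_{\tau^{(r)}_t}-\tau^{(r)}_t}$ to each factor, exactly as in \eqref{eq:girsanov2}, producing the prefactors $2^{r-t}e^{2(x-z)}$ and $2^{r-t}e^{2(x'-z')}$ up to $e^{O(\upsilon)}$ and $e^{O(2^{-e^{\sqrt{\log r}}})}$, and turning the $Ev_t$'s into $GEv_t$'s. For the $m=0$ term I would use the short-time ballot estimates underlying Proposition \ref{Prmomen1Lowerbis} (the analogues of \cite[Lemmas 2.1, 2.3]{BRZ}), noting that $\|\mathtt C^{(r)}\|_1\le 2^{-e^{\sqrt{\log r}}}\to0$ so that the time shift perturbs the $GEv_t$-probability only by $1+o_r(1)$; this identifies the $m=0$ contribution with $(1+\eta_{r,\upsilon})\,2^{2(r-t)}e^{2(x-z+x'-z')}\P(Event_{r,t,f}(x,z))\P(Event_{r,t,f}(x',z'))$ (these $Event$-factors being written $Event_{r,N}$ in the lemma statement, cf.\ Proposition \ref{Prmomen1Lowerbis}). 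For the $m\ge1$ terms, the extra $e^{-(m^2/8)2^{r-\k_+}}$ summed against $2^{2m}(1+m)^3$ yields $\mathcal O_\upsilon\!\bigl(e^{-(2^{r/3}-1)/8}\bigr)$; keeping the same ballot factors as in the main term, the whole $m\ge1$ sum is $\ll e^{-(2^{r/3}-1)/8}\cdot|xzx'z'|\,t^{-3}2^{2(r-t)}e^{2(x-z+x'-z')}$, which by the lower bound of Proposition \ref{Prmomen1Lowerbis} is $o_r(1)$ times the main term. Absorbing $o_r(1)$, $e^{O(\upsilon)}-1$ and $o_\upsilon(1)$ into $\eta_{r,\upsilon}$ and using \eqref{momen1Lowerbis} to re-express the product of first moments yields the claim.

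The main obstacle is carrying the $1+\eta_{r,\upsilon}$ precision through every step: one must verify that replacing $R^{(2^r,\Delta)}(\theta)$ by $R^{(2^r,ind)}+E$, summing the $E$-tail over $m\ge1$, inserting the time shift $\mathtt C^{(r)}$ in the Girsanov step, and losing an $O(\upsilon)$ factor from pinning the endpoint in an interval of length $\upsilon$, each cost only a multiplicative $1+o_r(1)$ or $1+o_\upsilon(1)$, and in particular \emph{never a $t$-dependent loss} that could dominate the polynomially small ($\asymp t^{-3/2}$) ballot probabilities. This is precisely where $\k\le r/2$ is used (to get $r-\k_+\ge r/3$, hence a super-exponentially small $E$-error in $r$) and where $z,z'\in[-\sqrt t/C,C\sqrt t]$ is used (so the sharp short-time ballot asymptotics of Proposition \ref{Prmomen1Lowerbis} apply in the stated form). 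Everything else is a verbatim transcription of the $\I_N$ computation in Lemma \ref{lemma:ktoutDebut} with $N_+$ replaced by $t$ and the endpoint window $\tau^{(r)}_{N_+}-\tfrac34\log N+[z,z+\upsilon)$ replaced by $\tau^{(r)}_t+[z,z+\upsilon)$.
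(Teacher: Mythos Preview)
Your proposal is correct and is precisely the argument the paper intends: the paper states Lemma~\ref{lemma:ktoutDebutbis} without proof, as the short-time analogue of Lemma~\ref{lemma:ktoutDebut}, and your write-up carries out exactly that adaptation (decomposition \eqref{decomposi} with Facts 1--4, the analogue of \eqref{REM} for the short barrier, Girsanov, and the $m\ge1$ tail summed via $r-\k_+\ge r/3$). You also correctly note that the $Event_{r,N}$ in the statement should be read as $Event_{r,t,f}$ from Proposition~\ref{Prmomen1Lowerbis}.
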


\section{Classical estimates on Gaussian walks}
\label{section:appendix}

The following estimates are classical and extend those in
\cite{CMN} by keeping track of the dependence in starting and ending points. 
In the following, the process $\left( W_s ; s \in \R_+ \right)$ is a standard Brownian motion. We use the notation from \ref{subsection:second_moment}.

The following is classical. Since the proof is short, we bring it.
%\todo[inline]{Added $v$}
\begin{lemma}
  \label{lem-onerayLB}
  Notation
as in  Section 
\ref{subsection:second_moment}, with $x,z$ in the appropriate range. 
Then,
\begin{equation}
  \label{eq-precise}
   (1-\eta_{N,r,k_1,v}) \sqrt{\frac{2}{\pi}}|xz|v\leq 
  N^{\frac{3}{2} }  \P(Event_{r,N}) \leq (1+\eta_{N,r,k_1,v}) 
  \sqrt{\frac{2}{\pi}}|xz|v,
\end{equation}
where
\[ 
  \lim_{v\to 0}
\lim_{r\to\infty}\lim_{k_1\to\infty}\lim_{N\to\infty} \eta_{N,r,k_1}=0.\]
\iffalse
\begin{equation}
  \P\left(  Event_{r,N}(x,z) \right) \frac{|xz|}{N^{3/2}}.
  \P\left(  Event_{r,N}(x,z) \right)\gg \frac{|xz|}{N^{3/2}}.
\end{equation}
\fi 
\end{lemma}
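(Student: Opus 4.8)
\textbf{Proof proposal for Lemma \ref{lem-onerayLB}.}

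The plan is to reduce the estimate of $\P(Event_{r,N})$ to a classical ballot-type asymptotic for a standard Brownian motion confined below an $n$-independent concave-curved barrier and forced to land, at the final time, in a narrow window of width $v$ at height $\sqrt 2 \tau_{N_+}^{(r)} - \tfrac34\log N + z$ (after the deterministic rescaling $s \mapsto \tau^{(r)}_j$ that relates the Gaussian walk $R^{(2^r,\Delta)}_{2^j}(\theta)$ to $\sqrt{\tfrac12}W_{\tau^{(r)}_j}$). First I would perform the Girsanov change of measure with density $e^{\sqrt 2 W_{\tau^{(r)}_{N_+}} - \tau^{(r)}_{N_+}}$, exactly as in the proof of Proposition \ref{Prmomen1Lower}, which turns the event $Event_{r,N}$ into the event that a Brownian motion stays between the (now upward-linear plus curved) functions $l^{(N)}_j$ and $u^{(N)}_j$ and ends near $-\tfrac34\log N + z$; the change of measure contributes the explicit prefactor already extracted in \eqref{momen1Lower}, and the remaining probability is a pure Brownian barrier probability with $n$-independent barrier shape.

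Next I would split the time interval $[r,N_+]$ into the three pieces: an initial microscopic window $[r, r+K]$, the bulk $[r+K, N_+ - K']$, and a final microscopic window $[N_+-K', N_+]$, with $K,K'$ large but fixed. On the initial window, the walk starts at height $x$ which is of order $-(\log k_2)^{1/2\pm}$ (a quantity tending to $-\infty$ after $N$), so the contribution of the lower barrier $l^{(N)}_j$ and the requirement of staying below $u^{(N)}_j \approx -k^{\alpha_-}$ is governed, to leading order, by the probability that a Brownian motion started at $x$ stays negative, which behaves like a constant times $|x|$; this is where the factor $|x|$ in \eqref{eq-precise} comes from. Symmetrically, on the final window the conditioning that the walk lands in an interval of width $v$ at distance $|z|$ below the barrier contributes the factor $\sqrt{2/\pi}\,|z|\,v/N^{3/2}$ (the $N^{-3/2}$ being the Gaussian density normalization and the $(\log N)$-shift absorbed into $\tau^{(r)}_{N_+}$, cancelling against the $N^{3/2}$ in \eqref{eq-precise}); here I would invoke the sharp ballot estimates of \cite{bramson83} / \cite{BRZ} (e.g. \cite[Lemmas 2.1, 2.3]{BRZ}) in the form that gives matching upper and lower bounds up to a factor $1+o(1)$ as the window parameters go to infinity. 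The bulk contribution, by the same ballot lemmas, is $1+o(1)$ because the curved barriers $t\mapsto \pm t^{\alpha_\pm}$ are sub-linear and the entropic repulsion cost is negligible relative to the endpoint constraints; one must be a little careful that $\alpha_- = 1/10$ gives an exponent $<1/2$ so that the curved barrier is genuinely weaker than the diffusive envelope, which is exactly what makes the standard ballot machinery applicable.

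Finally I would assemble the three pieces, verify that the product of the three $1+o(1)$ factors and the explicit prefactors from Girsanov gives precisely $\sqrt{2/\pi}\,|xz|\,v\,N^{-3/2}$ with a multiplicative error $\eta_{N,r,k_1,v}$, and check the order of limits: $N\to\infty$ first (so that $x = x(k_2)$ and $z = z(k_1^+)$ are large, the bulk error is controlled, and $\lambda^{(r)}_\infty$ terms stabilize), then $k_1\to\infty$ and $r\to\infty$ (so the curved-barrier and $\Delta^{(l)}$-coupling corrections vanish), then $v\to 0$ (so the endpoint window is truly infinitesimal and the Gaussian density is constant across it). The main obstacle is the bookkeeping in the two microscopic windows: one needs the ballot theorem in a sufficiently uniform form that the constants in the $|x|$ and $|z|\,v$ asymptotics do not depend on the barrier curvature or on $N$, and that the error is genuinely $o(1)$ uniformly over $x$ in $[-r^{1/20},-r^{19/20}]$ and $z$ in $[-(k_1^+)^{1/20},-(k_1^+)^{19/20}]$; this uniformity is available from \cite{BRZ} but requires stating it carefully, and is the only place where real work (as opposed to routine assembly) is needed.
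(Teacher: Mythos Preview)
Your overall plan is right, but the opening Girsanov step is a confusion you should drop. The event $Event_{r,N}$ (once one corrects the evident typo in its endpoint condition---the stray $\tau^{(r)}_{N_+}$ there is inconsistent with the barriers $l^{(N)}_j,u^{(N)}_j$ and with the derivation in the proof of Proposition~\ref{Prmomen1Lower}) is \emph{already} the post-Girsanov, drift-free Brownian barrier event. Applying the change of measure $e^{\sqrt{2}W_T-T}$ again would reintroduce linear drift and break the ballot-theorem setup; what you describe as the outcome of your transform is simply $Event_{r,N}$ itself. So the first paragraph of your plan collapses to ``start from $Event_{r,N}$.''

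From there your three-window (entrance/bulk/exit) decomposition is a valid route to the sharp asymptotic $\sqrt{2/\pi}\,|xz|\,v\,N^{-3/2}$, but it amounts to re-deriving \cite[Lemma~2.1]{BRZ} rather than citing it. The paper's proof is much shorter and invests its effort in the one step your outline leaves implicit: passing from the \emph{discrete-time} constraints (at times $\tau^{(r)}_j$) to a \emph{continuous-time} barrier event. For the lower bound it introduces slightly tightened continuous barriers $u^{+,(N)}_t,\,l^{+,(N)}_t$ (with exponents $2\alpha_-$ and $\alpha_+-\alpha_-$ in place of $\alpha_-,\alpha_+$) so that the continuous event $Event'_{r,N}$ sits inside $Event_{r,N}$, and then invokes the BRZ ballot lemma in one line; the upper bound is symmetric with loosened barriers. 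Your approach would work, and the uniformity-in-$x,z$ caveat you flag is exactly the point that needs care, but all of that is already packaged in the BRZ citation---the paper's reduction buys you the constant and the uniformity simultaneously, at the cost only of the discrete-to-continuous comparison.
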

\begin{proof}
  We show the lower bound, the upper bound is similar.
Recall that
\begin{align*}
& Event_{r,N} = Event_{r,N}(x,z)\coloneqq \\
&\left\{  \forall j\in [|r,N_+|],\, l_j^{(N)}  \leq x+ \sqrt{\frac{1}{2}} W_{\tau^{(r)}_j}
\leq u_j^{(N)} , x+ \sqrt{\frac{1}{2}} W_{\tau^{(r)}_{N_+}}\in \tau^{(r)}_{N_+}-\frac34 \log N+[z,z+\upsilon)\right\}.\end{align*}
For $r,k_1$ large, we have that $|\tau^{(r)}_j-(j-r)\log 2|\leq 1$.
Set, comparing with \eqref{eq-ukplus} and \eqref{eq-ukminus}, for $k\in [r,N_++1]$,
\begin{align}
  \label{eq-ukplus'}
u_k^{+,(N)}\coloneqq-2+\left\{ \begin{array}{ll} - k^{2\alpha_-},\qquad &\text{if  } k\leq \lfloor N/2\rfloor,
\\
 - (N-k)^{2\alpha_-} -   \frac{3}{4}  \log N,\qquad &\text{if  } \lfloor N/2\rfloor <k \leq N,
\end{array} \right.
\end{align}
and
\begin{align}
  \label{eq-ukminus'}
l_k^{+,(N)}\coloneqq2+\left\{ \begin{array}{ll} - k^{\alpha_+-\alpha_-},\qquad &\text{if  } k\leq \lfloor N/2\rfloor,
\\
 - (N-k)^{\alpha_+-\alpha_-} -    \frac{3}{4} \log  N,\qquad &\text{if  } \lfloor N/2\rfloor <k \leq N.
\end{array} \right.
\end{align}

The crucial fact is that for $N$ large we have that for all $k\in[r,N_+]$,
\begin{equation}
  \label{eq-comp+}
  \inf_{\theta\in [(k-1)\vee r, k+1]} u_k^{(N)} \geq u_k^{+,(N)},
  \quad
  \sup_{\theta\in [(k-1)\vee r, k+1]} l_k^{(N)} \leq l_k^{+,(N)}.
\end{equation}
In particular, we have that
\begin{align*}
 & Event_{r,N}'\coloneqq\\
&\left\{  \forall t\in [r,\tau^{(r)}_{N_+}],\, l_t^{+,(N)}  \leq 
x+ \sqrt{\frac{1}{2}} W_{t}
\leq u_t^{+,(N)} , x+ \sqrt{\frac{1}{2}} W_{\tau^{(r)}_{N_+}}\in \tau^{(r)}_{N_+}-\frac34 \log N+[z,z+\upsilon)\right\}\\
&\subset  Event_{r,N}.\end{align*} 
The conclusion of the lemma follows from a variant of
\cite[Lemma 2.1]{BRZ}, and our assumptions on $x,z$.
\end{proof}  
% --------------------------------------------------------------------
%\bibliographystyle{halpha}
%\bibliography{Bib_Max_CBE}

%\end{document}\grid
%\input{appnew.tex}

%printbibliography[heading=bibliography]%,title={References}]
%\bibliographystyle{alpha}
%\bibliography{conv}
\iffalse
\begin{Backmatter}
\paragraph{Acknowledgments}
We thank Gaultier Lambert and Joseph Najnudel for their interest 
and useful comments, and anonymous referees for a 
detailed reading and very constructive comments and corrections.

\paragraph{Competing interests}
None.

\paragraph{Funding statement}
The research of E. P. was partially supported by a
NSERC discovery grant.
The research of O. Z. was partially 
supported by the European Research Council (ERC) under the European Union's Horizon 2020 research and innovation programme (grant agreement No. 692452), 
and by the Israel Science Foundation grant number 421/20.

\paragraph{Data availability statement}
No data was produced in connection with this research.

\paragraph{Ethical standards}
The research meets all ethical guidelines, including adherence to the legal requirements of the study country.

\paragraph{Author contributions}
A.A. and A.B.C. designed the study, abstracted the data wrote the first draft, and approved the final
version of the manuscript. A.R.E.J., M.R.L., K.L.S., and A.D.P. revised the manuscript and approved the final version.

\paragraph{Supplementary material}
State whether any supplementary material intended for publication has been provided with the submission.
\fi

\newcommand{\etalchar}[1]{$^{#1}$}

%\end{Backmatter}

\end{document}